\documentclass[11pt,letterpaper]{amsart}
\usepackage[
    paperwidth=8.5in,
    paperheight=11in,
    margin=1.25in,
    marginparwidth=0.75in,
    marginparsep=0.1in,
    footskip=0.25in
]{geometry}

\usepackage[shortlabels]{enumitem}
\setlist{
    itemsep=1ex,
    listparindent=\parindent,
    parsep=0em,
    topsep=2ex,
}
\allowdisplaybreaks[1]

\usepackage{xpatch}
\usepackage{amsfonts}
\usepackage{amsmath}
\usepackage{amssymb}
\usepackage{amsthm,thmtools,thm-restate}
\usepackage{bm}
\usepackage{mathrsfs}
\usepackage{mathtools}

\usepackage{booktabs}
\usepackage{comment}
\usepackage{float}
\usepackage{graphicx}
\usepackage{multirow}
\usepackage{tikz,pgfplots}
\pgfplotsset{compat=1.18}
\usepgfplotslibrary{groupplots}
\usetikzlibrary{calc}

\usepackage{silence}
\usepackage{caption}
\usepackage[
    backend=biber,
    style=numeric,
    giveninits=true,
    sorting=nyt,
    maxnames=100,
    backref=false,
    doi=false,
    url=false,
    isbn=false,
]{biblatex}
\renewbibmacro{in:}{}
\DeclareFieldFormat[
	article,
	inbook,
	incollection,
	inproceedings,
	patent,
	thesis,
	unpublished
]{title}{#1}

\newcommand\linkcolor{black}

\usepackage{hyperref}
\hypersetup{
    colorlinks,
    linktoc=all,
    citecolor=black,
    linkcolor=\linkcolor,
    urlcolor=\linkcolor
}
\usepackage[capitalize,nameinlink,noabbrev]{cleveref}
\crefname{claim}{Claim}{Claims}
\crefname{fact}{Fact}{Facts}
\crefname{inequality}{Inequality}{Inequalities}
\creflabelformat{inequality}{#2(#1)#3}
\crefname{lemma}{Lemma}{Lemmas}
\crefname{subsection}{Subsection}{Subsections}

\makeatletter
\def\cref@short@range@type{}
\def\cref@lemma@type{lemma}
\def\cref@section@type{section}
\newcommand{\cref@reinsert@short@range}[1]{%
  \cref@isstackfull{\cref@consecutive@stack}%
  \@whilesw\if@cref@stackfull\fi{%
    \edef\@tempa{\cref@stack@top{\cref@consecutive@stack}}%
    \expandafter\cref@stack@push\expandafter{\@tempa}{#1}%
    \cref@stack@pop{\cref@consecutive@stack}%
    \cref@isstackfull{\cref@consecutive@stack}}}
\xpatchcmd{\cref@processconsecutive}
  {\let#4\relax%
  #5=1\relax}
  {\let#4\relax%
  #5=1\relax%
  \cref@stack@init{\cref@consecutive@stack}}
  {}
  {\PackageWarning{preamble}{Could not initialize cleveref range stack}}
\xpatchcmd{\cref@processconsecutive}
  {\advance#5 1\relax%
      \let#4\@nextref%
      \cref@stack@pop{#2}}
  {\advance#5 1\relax%
      \let#4\@nextref%
      \expandafter\cref@stack@push\expandafter{\@nextref}{\cref@consecutive@stack}%
      \cref@stack@pop{#2}}
  {}
  {\PackageWarning{preamble}{Could not record cleveref range members}}
\xpatchcmd{\@cref}
  {\ifnum\count@consecutive=2\relax%
        \expandafter\cref@stack@push\expandafter{\@endref,}{\@refsubstack}%
        \let\@endref\relax%
        \count@consecutive=1\relax%
      \fi}
  {\ifnum\count@consecutive>1\relax%
        \ifnum\count@consecutive=2\relax%
          \cref@reinsert@short@range{\@refsubstack}%
          \let\@endref\relax%
          \count@consecutive=1\relax%
        \else%
          \ifnum\count@consecutive=3\relax%
            \expandafter\cref@gettype\expandafter{\@beginref}{\cref@short@range@type}%
            \ifx\cref@short@range@type\cref@lemma@type%
              \cref@reinsert@short@range{\@refsubstack}%
              \let\@endref\relax%
              \count@consecutive=1\relax%
            \else%
              \ifx\cref@short@range@type\cref@section@type%
                \cref@reinsert@short@range{\@refsubstack}%
                \let\@endref\relax%
                \count@consecutive=1\relax%
              \fi%
            \fi%
          \fi%
        \fi%
      \fi}
  {}
  {\PackageWarning{preamble}{Could not adjust cleveref range threshold}}
\makeatother

\theoremstyle{plain}
\newtheorem{theorem}{Theorem}[section]
\newtheorem{claim}[theorem]{Claim}
\newtheorem{corollary}{Corollary}[theorem]
\newtheorem{fact}[theorem]{Fact}
\newtheorem{lemma}[theorem]{Lemma}
\newtheorem{proposition}[theorem]{Proposition}
\newtheorem{example}[theorem]{Example}
\newtheorem{definition}[theorem]{Definition}

\theoremstyle{definition}
\newtheorem{conjecture}[theorem]{Conjecture}
\newtheorem{remark}[theorem]{Remark}

\newcommand\E{{\mathbb E}}
\newcommand\N{{\mathbb N}}
\newcommand\bbP{{\mathbb P}}
\newcommand\R{{\mathbb R}}
\newcommand\Z{{\mathbb Z}}
\newcommand\bsone{\boldsymbol{1}}
\newcommand\cB{{\mathcal B}}
\newcommand\cC{{\mathcal C}}
\newcommand\cE{{\mathcal E}}
\newcommand\cF{{\mathcal F}}
\newcommand\cG{{\mathcal G}}
\newcommand\cH{{\mathcal H}}
\newcommand\cI{{\mathcal I}}
\newcommand\cJ{{\mathcal J}}
\newcommand\cK{{\mathcal K}}
\newcommand\cM{{\mathcal M}}
\newcommand\cN{{\mathcal N}}
\newcommand\cP{{\mathcal P}}
\newcommand\cR{{\mathcal R}}
\newcommand\cS{{\mathcal S}}
\newcommand\cT{{\mathcal T}}
\newcommand\cV{{\mathcal V}}
\newcommand\cW{{\mathcal W}}
\newcommand\cX{{\mathcal X}}
\newcommand\cY{{\mathcal Y}}
\newcommand\cZ{{\mathcal Z}}
\newcommand\frp{{\mathfrak p}}
\newcommand\frP{{\mathfrak P}}
\newcommand\frt{{\mathfrak t}}
\newcommand\sB{{\mathscr B}}
\newcommand\sD{{\mathscr D}}
\newcommand\sE{{\mathscr E}}
\newcommand\sP{{\mathscr P}}
\newcommand\sR{{\mathscr R}}
\newcommand{\red}{{\normalfont\texttt r}}
\newcommand{\green}{{\normalfont\texttt g}}
\newcommand{\blue}{{\normalfont\texttt b}}

\newcommand\erdos{Erd\H{o}s}
\newcommand\renyi{R\'enyi}
\newcommand\erdosrenyi{\erdos--\renyi}

\makeatletter
\newcommand{\nHookrightarrow}{\mathrel{\mathpalette\@nHookrightarrow\relax}}
\newcommand{\@nHookrightarrow}[2]{%
  \begin{tikzpicture}[baseline={(H.base)}]
    \node[inner sep=0pt, outer sep=0pt] (H) {$#1\hookrightarrow$};
    \draw[line cap=round, line width=.07em]
      ([xshift=.425em,yshift=.15ex]H.south west) --
      ([xshift=-.525em,yshift=-.055ex]H.north east);
  \end{tikzpicture}%
}
\makeatother

\pgfmathsetmacro{\opacity}{50}
\colorlet{lightred}{red!\opacity}
\colorlet{lightgreen}{green!\opacity}
\definecolor{lightblue}{RGB}{144,202,249}

\newcommand{\grg}{\tikz[baseline=-.6ex,x=1em,y=1em]{%
  \draw[lightred, line width=3pt,line cap=round] (0,0)--(1.25,0);
  \fill[lightgreen] (0,0) circle (3pt);
  \fill[lightgreen] (1.25,0) circle (3pt);
}}

\newcommand{\gbg}{\tikz[baseline=-.6ex,x=1em,y=1em]{%
  \draw[lightblue, line width=3pt,line cap=round] (0,0)--(1.25,0);
  \fill[lightgreen] (0,0) circle (3pt);
  \fill[lightgreen] (1.25,0) circle (3pt);
}}

\newcommand{\brb}{\tikz[baseline=-.6ex,x=1em,y=1em]{%
  \draw[lightred, line width=3pt,line cap=round] (0,0)--(1.25,0);
  \fill[lightblue] (0,0) circle (3pt);
  \fill[lightblue] (1.25,0) circle (3pt);
}}

\newcommand{\rrbtriangle}{\tikz[baseline=-0.15ex,x=1em,y=1em,scale=0.75]{%
  \coordinate (A) at (0,0);
  \coordinate (B) at (1.25,0);
  \coordinate (C) at (.625,1.08);
  \draw[lightblue,line width=2.5pt,line cap=round] (A)--(B); 
  \draw[lightred,line width=2.5pt,line cap=round] (B)--(C); 
  \draw[lightred,line width=2.5pt,line cap=round] (C)--(A); 

  \fill[lightblue] (C) circle (3pt);
  \filldraw[fill=white, draw=black, very thin] (A) circle (3pt);
  \filldraw[fill=white, draw=black, very thin] (B) circle (3pt);
}}

\newcommand{\rrgtriangle}{\tikz[baseline=-0.15ex,x=1em,y=1em,scale=0.75]{%
  \coordinate (A) at (0,0);
  \coordinate (B) at (1.25,0);
  \coordinate (C) at (.625,1.08);
  \draw[lightgreen,line width=2.5pt,line cap=round] (A)--(B); 
  \draw[lightred,line width=2.5pt,line cap=round] (B)--(C); 
  \draw[lightred,line width=2.5pt,line cap=round] (C)--(A); 

  \fill[lightgreen] (C) circle (3pt);
  \filldraw[fill=white, draw=black, very thin] (A) circle (3pt);
  \filldraw[fill=white, draw=black, very thin] (B) circle (3pt);
}}

\newcommand{\exampletriangle}{\tikz[baseline=-0.15ex,x=1em,y=1em,scale=0.75]{%
  \coordinate (A) at (0,0);
  \coordinate (B) at (1.25,0);
  \coordinate (C) at (.625,1.08);
  \draw[lightred,line width=2.5pt,line cap=round] (A)--(B); 
  \draw[lightred,line width=2.5pt,line cap=round] (B)--(C); 
  \draw[lightred,line width=2.5pt,line cap=round] (C)--(A); 

  \fill[lightgreen] (A) circle (3pt);
  \fill[lightgreen] (B) circle (3pt);
  \fill[lightblue] (C) circle (3pt);
}}

\makeatletter
\newcommand{\vast}{\bBigg@{3}}
\makeatother

\newcommand\blambda{{\boldsymbol{\lambda}}}
\newcommand\bLambda{{\boldsymbol{\Lambda}}}
\newcommand\bsa{\boldsymbol{a}}
\newcommand\bsm{\boldsymbol{m}}
\newcommand{\deltatilde}{\tilde{\delta}}

\newcommand\close{{\hspace{0.2mm}\ensuremath{\mathsf{close}}}}
\newcommand\far{{\hspace{0.2mm}\ensuremath{\mathsf{far}}}}
\newcommand\high{{\hspace{0.2mm}\ensuremath{\mathsf{hi}}}}
\newcommand\mat{{\hspace{0.2mm}\ensuremath{\mathsf{mat}}}}
\newcommand\ind{{\hspace{0.2mm}\ensuremath{\mathrm{ind}}}}
\newcommand\spl{\ensuremath{\mathrm{spl}}}
\newcommand\degenerate{{\hspace{0.2mm}\ensuremath\mathsf{deg}}}
\newcommand\med{{\hspace{0.2mm}\ensuremath\mathsf{med}}}
\newcommand\res{{\hspace{0.2mm}\ensuremath\mathsf{res}}}

\newcommand\nar{{\ensuremath\mathsf{nar}}}
\renewcommand\root{{\ensuremath\mathsf{root}}}
\newcommand\leftsf{{\hspace{0.2mm}\ensuremath\mathsf{left}}}
\newcommand\insf{{\hspace{0.2mm}\ensuremath\mathsf{in}}}
\newcommand\outsf{{\hspace{0.2mm}\ensuremath\mathsf{out}}}
\newcommand\Tail{{\ensuremath\mathsf{Tail}}}
\newcommand\Upper{{\ensuremath\mathsf{U}}}
\newcommand\Lower{{\ensuremath\mathsf{L}}}
\newcommand\bin{{\ensuremath\mathsf{bin}}}
\newcommand\Ent{{\ensuremath\mathsf{Ent}}}
\newcommand\Err{{\ensuremath\mathsf{Err}}}
\newcommand\ret{{\ensuremath\mathsf{ret}}}
\newcommand\matsf{{\ensuremath\mathsf{mat}}}
\newcommand\lgsf{{\ensuremath\mathsf{lg}}}
\newcommand\smsf{{\ensuremath\mathsf{sm}}}
\newcommand\sparse{{\ensuremath\mathsf{sp}}}

\DeclareMathOperator{\argmax}{argmax}
\DeclareMathOperator{\image}{image}
\DeclareMathOperator{\Bin}{Bin}
\DeclareMathOperator{\rand}{rand}

\DeclarePairedDelimiter\ceil{\lceil}{\rceil}
\DeclarePairedDelimiter\floor{\lfloor}{\rfloor}
\DeclarePairedDelimiter\abs{\lvert}{\rvert}
\DeclarePairedDelimiter\norm{\lVert}{\rVert}
\renewcommand\geq{\geqslant}
\renewcommand\leq{\leqslant}

\newcommand{\clean}{{\mathrm{cl}}}
\newcommand{\type}{{\mathrm{ty}}}

\title[Excluding an induced star in dense random graphs]{Excluding an induced star in dense random graphs}

\author{Sam van der Poel}
\address{Georgia Institute of Technology}
\email{samvanderpoel@gatech.edu}

\begin{document}

\begin{abstract}
For fixed $k\geq3$, we study the asymptotic number and typical structure of dense graphs with no induced copy of the star $K_{1,k}$. We solve the associated graphon variational problems both at fixed constant edge density $\gamma$ and for the conditioned Erd\H{o}s--R\'enyi random graph $G(n,p)$ for constant $p$. As consequences, we obtain explicit formulas for the entropy density of induced-$K_{1,k}$-free graphs with $\Theta(n^2)$ edges and for the large deviation rate function for the event that $G(n,p)$ is induced-$K_{1,k}$-free. The entropy density exhibits a second-order phase transition at an explicit critical density $\gamma_k$, while the rate function exhibits a first-order phase transition at a critical parameter $p_k$.

We completely characterize the optimizers of both variational problems. Both models have parameter values for which there are infinitely many optimal graphons, but there is always a unique graphon that represents the typical structure in cut metric. We refine the graphon-level results by giving a detailed structural description of both models. For supercritical parameters, each random graph model is the complement of a $(k-1)$-partite graph with high probability. In the subcritical regime of the fixed-density model, the typical structure is the disjoint union of the complement of a $(k-1)$-partite graph, and a sparse remainder. In the subcritical regime of the conditioned Erd\H{o}s--R\'enyi random graph, a typical sample has $o(n^2)$ edges.
\end{abstract}

\maketitle

\section{Introduction}
\label{sec:intro}
Determining the structure of a typical $H$-free graph is a problem at the heart of combinatorics.
Because it lies at the interface of several subjects including asymptotic enumeration, extremal graph theory, and nonlinear large deviations, the problem has bridged different areas and given new perspectives on classical results.
An example is the \citeyear{erdos1976enumeration} theorem of \citeauthor{erdos1976enumeration} \cite{erdos1976enumeration} stating that almost all triangle-free graphs are bipartite. This theorem introduced the idea that typical $H$-free graphs closely resemble a subgraph of an extremal $H$-free graph. In the case of triangle-free graphs, Mantel's theorem \cite{mantel1907vraagstuk} states that the extremal structure is complete balanced bipartite.
Powerful results have since shown that the asymptotic enumeration and typical structure of graph properties often reduce to a related optimization problem. This paper is about the solution to a graphon variational problem and a finer counting argument that yield a precise structural description of typical graphs of given density not containing an induced star.

A graph is \emph{induced-$H$-free} if none of its induced subgraphs is isomorphic to $H$. \citeauthor{promel1991excluding} pioneered the study of typical induced-$H$-free graphs in papers on induced-$C_4$-free graphs \cite{promel1991excluding} and induced-$C_5$-free graphs \cite{promel1992almost}. These papers are notable for determining the first-order asymptotics of the number of graphs in question. \citeauthor{promel1992excluding} \cite{promel1992excluding} also determined the first-order asymptotics of the logarithm of the number of induced-$H$-free graphs for \emph{all} $H$ in terms of the coloring number of $H$, a quantity we define below. \citeauthor{alekseev1993entropy} \cite{alekseev1993entropy} and \citeauthor{bollobas1997hereditary} \cite{bollobas1997hereditary} generalized this to hereditary properties, and \citeauthor{alon2011structure} \cite{alon2011structure} strengthened it to a rough structure result for hereditary properties. \citeauthor{balogh2011excluding} \cite{balogh2011excluding} proved a precise typical structure result for all $H$ that satisfy a certain notion of criticality, and this has been extended by \citeauthor{norin2025typicalI} \cite{norin2025typicalI,norin2025typicalII}. Detailed typical structure results have also been proven for induced-$C_{2\ell}$-free graphs \cite{kim2018forbidding} and induced-$(K_{a+b}\setminus K_b)$-free graphs \cite{keevash2017structure}.

The typical structure of $H$-free graphs with a given number of edges $m=m(n)$ was also first studied by \citeauthor{promel1996asymptotic}. Refining \erdos{}--Kleitman--Rothschild, they proved that if $m=\Omega(n^{7/4}\log n)$ then almost all triangle-free graphs on $n$ vertices and $m$ edges are bipartite \cite{promel1996asymptotic}. Today, the typical structure of triangle-free graphs \cite{osthus2003densities,steger2005evolution,stark2018probability,jenssen2025evolution} and more generally $K_{r+1}$-free graphs \cite{luczak2000triangle,balogh2009typical,balogh2015independent,balogh2016typical} with $m$ edges are well understood. By contrast, induced-$H$-free graphs with $m$ edges have received much less attention. \citeauthor{bottcher2012perfect} \cite{bottcher2012perfect} proved that induced-$C_5$-free graphs and perfect graphs of fixed constant density have the same exponential growth rate. \citeauthor{morris2024asymmetric} \cite{morris2024asymmetric} proved that if $n^{4/3}(\log n)^4\leq m \leq o(n^2)$ then almost all induced-$C_4$-free graphs with $m$ edges are within $o(m)$ edit distance of a split graph, while if $m=o(n^{4/3}(\log n)^{1/3})$ then this is not the case. Recently, Perkins and the author \cite{perkins2025typical} proved that claw-free graphs of fixed constant edge density exhibit a phase transition, characterized the optimizers of the associated graphon variational problem, and proved detailed typical structure results.

From another perspective, conditioning the \erdosrenyi{} random graph $G(n,p)$ to be $H$-free places the problem in lower-tail large deviations for subgraph counts. Writing $X_H$ for the number of copies of $H$ in $G(n,p)$, the lower-tail event is $X_H\leq(1-\eta)\E X_H$ for a parameter $0<\eta\leq1$.
A seminal result of \citeauthor{chatterjee2011large} \cite{chatterjee2011large} implies as a special case that for constant $p$, the lower-tail \emph{rate function} (i.e.\ the constant in the leading term of the exponent of the lower-tail probability) is given by a variational problem over graphons.
\citeauthor{zhao2017lower} \cite{zhao2017lower} proved that the lower-tail variational problem exhibits a phase transition in the sense of a non-analyticity of the rate function in $p$ and $\eta$: for small $\eta$, the optimizer is constant, while for $\eta$ close to 1, this is not the case. A full characterization of the optimizers for all $p$ and $\eta$ remains a difficult open problem, even for triangles. \citeauthor{kozma2023lower} \cite{kozma2023lower} proved that the lower-tail rate function is still given by a variational problem down to $p=\Omega(n^{-1/m_2(H)})$, where $m_2(H)$ is the 2-density
of $H$, which is the full range of densities where the reduction could hold. A similar reduction for the rate function of the lower tail for \emph{induced} copies is only known in the dense setting due to \citeauthor{chatterjee2011large}.

In this paper, we completely solve the variational problem over graphons with zero induced $K_{1,k}$ density and fixed constant edge density. The solution yields the entropy density of induced-$K_{1,k}$-free graphs of given edge density, the large deviation rate function for induced-$K_{1,k}$-freeness in $G(n,p)$ for constant $p$, and a rough structural description of both random graph models. Most notably, the variational problem exhibits a phase transition that is second-order in the fixed-density model and first-order in the conditioned \erdosrenyi{} random graph. Both models have infinitely many optimal graphons for some parameter values, but there is always a unique graphon that represents the typical structure in cut metric. A more detailed analysis of the counts and probabilities yields a finer structural characterization of a typical sample from each model. Finally, we show that the induced-$H$-free fixed-density variational problem does not necessarily exhibit a phase transition in general: we prove that almost all induced-$C_4$-free graphs of fixed constant edge density are split graphs, resolving the dense case of a conjecture of \citeauthor{morris2024asymmetric} \cite[Conjecture~6.1]{morris2024asymmetric} and refining a theorem of \citeauthor{promel1991excluding} \cite{promel1991excluding}.

\subsection{Main results}
For a graph $H$, let $\cF^\ast_{n,m}(H)$ be the set of induced-$H$-free graphs on $n$ vertices and $m$ edges, and let $N^\ast_{n,m}(H):=\abs{\cF^\ast_{n,m}(H)}$. The \emph{entropy density} of a property $\cP(n)$ of graphs is defined as $\lim_{n\to\infty}\binom{n}{2}^{-1}\log|\cP(n)|$, and its \emph{large deviation rate function} is defined as $-\lim_{n\to\infty}\binom{n}{2}^{-1}\log\bbP\{G(n,p)\in\cP(n)\}$, provided the limits exist. For an integer $r\geq2$, a graph is \emph{co-$r$-partite} if it is the complement of an $r$-partite graph.

Our first main result establishes the entropy density of $\cF^\ast_{n,m}(K_{1,k})$ for $m=\Theta(n^2)$. For every integer $k\geq3$, let $p_k\in(0,1)$ be the unique solution to the equation $p_k=(1-p_k)^{k-1}$. Define the constant
\begin{equation}\label{eqn:gammak-dfn}
\gamma_k:=\frac{1+(k-2)p_k}{k-1}
\end{equation}
and define the function $\sE_k:[0,1]\to\R$ by
\begin{equation}\label{eqn:entropy-dfn}
\sE_k(x):=\begin{cases}\frac{(k-2)x}{1+(k-2)p_k}\,H(p_k)&0\leq x\leq\gamma_k\\[5pt]\big(1-\frac{1}{k-1}\big)\,H\!\left(\frac{(k-1)x-1}{k-2}\right)&\gamma_k\leq x\leq1\end{cases}\,,
\end{equation}
where $H(x)=-x\log x-(1-x)\log(1-x)$ is the binary entropy. The notation $f(n)\sim g(n)$ means $\lim_{n\to\infty}f(n)/g(n)=1$.

\begin{theorem}\label{thm:main-entropy}
Fix constants $k\geq3$ and $\gamma\in(0,1)$. If $m\sim\gamma\binom{n}{2}$ then
\begin{equation*}
\lim_{n\to\infty}\frac{1}{\binom{n}{2}}\log N^\ast_{n,m}(K_{1,k})=\sE_k(\gamma) \,.
\end{equation*}
\end{theorem}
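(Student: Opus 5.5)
Theorem~\ref{thm:main-entropy} reduces to a graphon variational problem, which we then solve explicitly. By the standard graphon large-deviation and counting machinery (Chatterjee--Varadhan style, or hypergraph containers combined with the graph removal lemma), the entropy density of $\cF^\ast_{n,m}(K_{1,k})$ with $m\sim\gamma\binom n2$ equals
\[
\max\Big\{ \textstyle\int H(W) \;:\; W \text{ a graphon},\ \int W=\gamma,\ t_{\ind}(K_{1,k},W)=0\Big\}.
\]
We use the following inputs. Induced-$K_{1,k}$-freeness is a hereditary property, so the graphons of its limits form a closed set. On this closed set $\int H(W)$ is upper semicontinuous in the cut metric. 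The number of graphs near a graphon $W$ with $m$ edges is $\exp\big((\int H(W)+o(1))\binom n2\big)$. The lower bound comes from sampling $W$-random graphs, using the fact that a graphon with zero induced $K_{1,k}$ density, given by explicit block structure below, produces induced-$K_{1,k}$-free graphs. The upper bound comes from the weak regularity lemma together with induced removal: every induced-$K_{1,k}$-free graph is close to a graphon with zero induced density. The substantive work is computing the maximum.

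\textbf{Lower bound constructions.}
\begin{itemize}
\item \emph{Supercritical regime $\gamma\ge\gamma_k$.} Take a co-$(k-1)$-partite graphon: $k-1$ equal parts that are complete inside, with constant density $q$ between parts. Any vertex has its neighbourhood covered by the $k-1$ cliques, so it has at most $k-1$ pairwise nonadjacent neighbours and no induced $K_{1,k}$ exists. The edge density is $\frac1{k-1}+\frac{k-2}{k-1}q=\gamma$, so $q=\frac{(k-1)\gamma-1}{k-2}$. The entropy is $(1-\frac1{k-1})H(q)$, matching $\sE_k$.
\item \emph{Subcritical regime $\gamma<\gamma_k$.} Use the same structure on a vertex set of measure $a$ with $q=p_k$, and an empty remainder. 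This gives edge density $a^2\gamma_k=\gamma$ and entropy $a^2\frac{k-2}{k-1}H(p_k)$, which is linear in $\gamma$ as $\sE_k$ requires.
\end{itemize}
The value $p_k$ is the tangency point: the chord from the origin to the concave curve $x\mapsto (1-\frac1{k-1})H(\frac{(k-1)x-1}{k-2})$ touches it at $\gamma_k$. Checking this tangency gives exactly $\log\frac{1-q}{q}\cdot(\ldots)=H(q)$-type relations. Equivalently, the optimal $q$ maximizes $H(q)/(1+(k-2)q)$, and differentiating yields $q=(1-q)^{k-1}$. So $\sE_k$ is the concave envelope of the supercritical curve and $0$.

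\textbf{Upper bound (main obstacle).} We must show that every $W$ with $t_{\ind}(K_{1,k},W)=0$ satisfies $\int H(W)\le \sE_k(\int W)$. The plan:
\begin{enumerate}
\item Let $S=\{(x,y):0<W(x,y)<1\}$ be the region where entropy is positive.
\item Zero induced star density implies that for a.e.\ $x$, the neighbourhood graphon restricted to $N(x)$, with the partial-density region $S$ viewed as "possibly non-edges", has no $k$ points pairwise in $S\cup\{W=0\}$.
\item Hence for a.e.\ $x$, the fractional independence structure of $N(x)$ with respect to the "possibly absent" relation has size less than $k$. This is a graphon-level statement that the complement restricted to $N(x)$ is $K_k$-free in the sense of supports.
\item By a Tur\'an/Motzkin--Straus type argument, the $S$-measure inside any neighbourhood is at most a $\frac{k-2}{k-1}$ fraction of $\mu(N(x))^2$.
\end{enumerate}

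Combining this with Zykov symmetrization on the entropy functional, which is possible because entropy and edge density are both integrals of pointwise functions, we reduce to graphons that are blow-ups of a co-$(k-1)$-partite pattern on a set of measure $a$, plus a part with $W\in\{0,1\}$ a.e. Concavity of $H$ then forces constant $q$ between parts. Finally we optimize over $a$ and $q$ subject to the density constraint, which recovers the concave envelope $\sE_k$.

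I expect the symmetrization step to be the hardest part, because the constraint is induced and therefore not monotone. The first thing I would try is a Lagrangian argument: treat $\int H(W)-\lambda\int W$ and show that vertex-cloning preserves both zero induced density and the objective's linearity in vertex weights.
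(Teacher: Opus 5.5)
Your reduction to the graphon problem and your lower-bound constructions are correct and match the paper. Above $\gamma_k$ you use the co-$(k-1)$-partite graphon. Below $\gamma_k$ you use the same graphon with off-diagonal density $p_k$ on a set of measure $(\gamma/\gamma_k)^{1/2}$, plus an empty remainder. Your derivation of $p_k=(1-p_k)^{k-1}$ from maximizing $H(q)/(1+(k-2)q)$ is also right.

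The gap is the upper bound, which is the whole content of the theorem. You never isolate the inequality that drives it, namely that for every $W$ with $t_\ind(K_{1,k},W)=0$,
\[
|R_W|\le (k-2)\,|O_W|\,,\qquad R_W=\{0<W<1\}\,,\quad O_W=\{W=1\}\,.
\]
With this inequality the proof is short. By Jensen, $H(W)\le |R_W|\,H(p)$, where $p$ is the mean of $W$ on $R_W$. Setting $x=|O_W|$ and $y=|R_W|$, this gives $H(W)\le yH((\gamma-x)/y)$ subject to $y\le\min\{1-x,(k-2)x\}$, and this two-variable problem has value $\sE_k(\gamma)$ (\Cref{lemma:k1k-calc-prob}).

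Your step 4 does not supply the inequality. It is a true local Tur\'an statement, but integrating it over $x$, using $\mu(N(y)\cap N(z))\le\frac12(d(y)+d(z))$, yields only the degree-weighted bound $\int d(x)\big((k-2)b(x)-r(x)\big)\,dx\ge0$. Here $b$, $r$, $d$ are the $\{W=1\}$-degree, the $\{0<W<1\}$-degree and the total degree. That bound does not imply $\int r\le (k-2)\int b$.

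The step you call hardest is where the argument actually has to happen, and as described it cannot be run. Cloning must create pairwise fully adjacent twins, because $k$ non-adjacent clones of a neighbour of $v$ would form the leaves of an induced star at $v$. Cloning also changes $\int W$, so there is no symmetrization of $\int H(W)$ at fixed density. Your Lagrangian instinct is the right fix, and it leads exactly to the inequality above: on $\{0,p,1\}$-valued graphons, $\int H(W)-\lambda\int W$ is a linear combination of $|R_W|$ and $|O_W|$.

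The paper proves the linear inequality combinatorially, in three steps.
\begin{enumerate}
\item \Cref{lemma:UFHatGraphonSeq} approximates (an equivalent copy of) $W$ in $L^1$ by types of induced-$K_{1,k}$-free graphs.
\item The reduced graphs of these types give 3-edge-colorings of $K_q$ in which no vertex has red edges to $k-1$ pairwise non-blue vertices.
\item For such colorings, \Cref{lemma:kthOrderMantel} shows $e_\red-(k-2)e_\blue\le (k-2)q/2$. It clones with blue twins, which preserves pattern-freeness, and reduces to a blow-up with blue clusters whose red reduced graph has maximum degree at most $k-2$. The bound $\sum_{ij\in E}v_iv_j\le\frac{k-2}{2}\sum_i v_i^2$ then finishes.
\end{enumerate}
This end state is a blow-up of an arbitrary reduced graph of red degree at most $k-2$. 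It is not the ``co-$(k-1)$-partite blow-up plus a $\{0,1\}$-valued part'' you aim for; the estimate comes from the degree bound, not from a specific shape.
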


The graph of $\sE_k$ is shown on the left axis of \Cref{fig:rate-funcs} for several values of $k$. Since $\sE_k$ has a continuous first derivative and a discontinuous second derivative at $\gamma_k$, the uniformly random graph $G\in\cF^\ast_{n,m}(K_{1,k})$ is said to exhibit a \emph{second-order phase transition} at edge density $\gamma_k$. See \cite{jaeger1998ehrenfest} for background on the Ehrenfest classification of phase transitions. To understand the entropy density $\sE_k(\gamma)$, note that the convex piece on the interval $[\gamma_k,1]$ is the entropy density of co-$(k-1)$-partite graphs of edge density $\gamma+o(1)$, while the linear piece on the interval $[0,\gamma_k]$ is the entropy density of graphs of edge density $\gamma+o(1)$ that are the disjoint union of a co-$(k-1)$-partite graph and an empty remainder.

In the next theorem, we present the rate function for induced-$K_{1,k}$-freeness. For all $k\geq3$ define the function $\sR_k:[0,1]\to\R$ by
$$\sR_k(x):=\begin{cases}\log(\frac{1}{1-x})&0\leq x\leq p_k\\[4pt]\frac{1}{k-1}\log(\frac{1}{x})&p_k\leq x\leq1\end{cases}\,.$$
Let us write $H\leq G$ to declare that $H$ is an induced subgraph of $G$.

\begin{theorem}\label{thm:main-rate}
Fix constants $k\geq3$ and $p\in(0,1)$. Then
\begin{equation*}
\lim_{n\to\infty}\frac{1}{\binom{n}{2}}\log\bbP\{G(n,p)\not\geq K_{1,k}\}=-\,\sR_k(p) \,.
\end{equation*}
\end{theorem}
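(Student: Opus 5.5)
Via the Chatterjee--Varadhan large deviation principle for dense graphs, the rate function for the closed hereditary property ``induced-$K_{1,k}$-free'' equals
$$\inf\Big\{ I_p(W) := \iint \Big(W\log\tfrac{W}{p}+(1-W)\log\tfrac{1-W}{1-p}\Big) : t_{\ind}(K_{1,k},W)=0\Big\}.$$
Here $I_p$ is the relative entropy of $W$ with respect to the constant graphon $p$. The upper bound on $\bbP$ follows from the LDP with the closed set of graphons of zero induced $K_{1,k}$ density. The lower bound needs a matching construction of induced-$K_{1,k}$-free random graphs. Equivalently, we can go through \Cref{thm:main-entropy}: summing over $m$ gives
$$\bbP\{G(n,p)\not\geq K_{1,k}\}=\sum_m N^\ast_{n,m}\,p^m(1-p)^{\binom n2-m},$$
so
$$-\sR_k(p)=\sup_{\gamma}\big[\sE_k(\gamma)+\gamma\log p+(1-\gamma)\log(1-p)\big],$$
with $\gamma=0,1$ handled by continuity and trivial bounds (the empty graph gives probability $(1-p)^{\binom n2}$). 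Since only $O(n^2)$ values of $m$ appear, the sum is dominated by its largest term at exponential scale. I will take this Legendre-transform route, since \Cref{thm:main-entropy} is available.

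The main step is computing this supremum. Write $f(\gamma)=\sE_k(\gamma)+\gamma\log\frac{p}{1-p}+\log(1-p)$. On $[0,\gamma_k]$, $f$ is linear with slope
$$\frac{(k-2)H(p_k)}{1+(k-2)p_k}+\log\frac{p}{1-p},$$
so its maximum there is at an endpoint: $\gamma=0$ gives $\log(1-p)$, and $\gamma=\gamma_k$ gives the value of the second piece at its left endpoint. On $[\gamma_k,1]$, substitute $q=\frac{(k-1)\gamma-1}{k-2}\in[p_k,1]$, so $\gamma=\frac{1+(k-2)q}{k-1}$. Then
$$f=\tfrac{k-2}{k-1}\big[H(q)+q\log p+(1-q)\log(1-p)\big]+\tfrac{1}{k-1}\log p.$$
The bracket is $-D(q\|p)\le0$, maximized at $q=p$ with value $0$. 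Hence if $p\ge p_k$ the supremum over this piece is $\frac{1}{k-1}\log p$, attained by co-$(k-1)$-partite graphs whose non-edges form a $(k-1)$-partite $G(n,1-p)$. If $p<p_k$, this piece is maximized at $q=p_k$ by concavity.

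It remains to compare $\log(1-p)$ with the best value on $[\gamma_k,1]$. For $p\le p_k$, I need
$$-\tfrac{k-2}{k-1}D(p_k\|p)+\tfrac{1}{k-1}\log p\le\log(1-p),$$
with equality exactly at $p=p_k$; there both sides reduce to $\frac{1}{k-1}\log p_k=\log(1-p_k)$ by the defining equation. For $p\ge p_k$, I need $\frac1{k-1}\log p\ge\log(1-p)$, that is, $p\ge(1-p)^{k-1}$, which holds exactly when $p\ge p_k$ since $p-(1-p)^{k-1}$ is increasing. The first inequality is the expected obstacle. My plan is to show that
$$g(p)=\log(1-p)+\tfrac{k-2}{k-1}D(p_k\|p)-\tfrac{1}{k-1}\log p$$
is decreasing on $(0,p_k]$ with $g(p_k)=0$. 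Its derivative is
$$-\tfrac1{1-p}+\tfrac{k-2}{k-1}\Big(\tfrac{1-p_k}{1-p}-\tfrac{p_k}{p}\Big)-\tfrac{1}{(k-1)p}.$$
Multiplying by $(k-1)p(1-p)$ gives a linear function of $p$, and I will check that it is negative on $(0,p_k]$. Equivalently, the linear piece of $\sE_k$ is the tangent line to the concave piece at $\gamma_k$, and the bound follows by concavity.

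This gives $\sR_k$. For the lower bound on $\bbP$ I will exhibit explicit events rather than rely only on \Cref{thm:main-entropy}. For $p<p_k$, take the empty graph, with probability $(1-p)^{\binom n2}$. For $p\ge p_k$, fix a balanced partition into $k-1$ parts, require all edges inside each part to be present, and leave the cross edges random. Every such graph is co-$(k-1)$-partite, and hence induced-$K_{1,k}$-free: $k$ pairwise nonadjacent leaves would lie in distinct parts, so there would be at least $k$ parts. The probability of this event is $p^{(1/(k-1)+o(1))\binom n2}$.
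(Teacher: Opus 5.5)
Your argument is correct. It reaches the same one-variable optimization as the paper, but through a different reduction.

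\textbf{Comparison with the paper.} The paper does not sum over $m$. It invokes the graphon large deviation principle for $G(n,p)$ (\Cref{lemma:LDPforGNPkl}) to write the rate as $\inf\{I_p(W):t_\ind(K_{1,k},W)=0\}$. It then splits this infimum by edge density $\gamma$ using $\Phi_k(\gamma)=\sE_k(\gamma)$ from \Cref{prop:graphon-char-fixed-gamma}, and minimizes $N_p(\gamma)=-\sE_k(\gamma)+\gamma\log\frac{1-p}{p}$, which equals $\log(1-p)-f(\gamma)$ in your notation. You instead work with finite counts: you take the Legendre transform of \Cref{thm:main-entropy} directly, and use explicit witnesses (the empty graph, and a balanced co-$(k-1)$-partite frame) for the lower bound. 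Given \Cref{thm:main-entropy}, your route is more elementary and avoids the $G(n,p)$ graphon LDP altogether. The paper's route works at the graphon level, so the same computation also identifies the optimizer set $\cX^p_\ast$ in \eqref{eqn:CpStarFormalDef}. That is what \Cref{thm:gnp-graphons} and the conditional structure results of \Cref{sec:erdosrenyi} need.

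\textbf{Uniformity in the upper bound.} Saying the sum is dominated by its largest term requires $\binom n2^{-1}\log N^\ast_{n,m}\le\sE_k(m/\binom n2)+o(1)$ uniformly in $m$. \Cref{thm:main-entropy} is stated only for a fixed $\gamma\in(0,1)$, so you should write out the following compactness step.
\begin{itemize}
\item Take a maximizing $m_n$ and pass to a subsequence along which $m_n/\binom n2\to\gamma^\ast$.
\item If $\gamma^\ast\in(0,1)$, apply the theorem after extending the subsequence to a full sequence with $m\sim\gamma^\ast\binom n2$.
\item If $\gamma^\ast\in\{0,1\}$, use $N^\ast_{n,m}\le\binom{\binom n2}{m}$.
\item Conclude using continuity of $\sE_k$ and $\sE_k(0)=\sE_k(1)=0$.
\end{itemize}

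\textbf{The step you flagged as the obstacle.} This comparison is immediate. With natural logarithms, your computation gives $(k-1)p(1-p)g'(p)=-(1+(k-2)p_k)$, a negative constant. More directly, the identity $H(p_k)=-(1+(k-2)p_k)\log(1-p_k)$ shows that the slope of $f$ on $[0,\gamma_k]$ is $\log\frac{p(1-p_k)}{(1-p)p_k}$, which is $\le 0$ exactly when $p\le p_k$. This is how the paper argues, and it is your tangent-line observation in another form.
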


To understand the rate function $\sR_k(p)$, note that the supercritical piece on $[p_k,1]$ is the rate function for the event that $G(n,p)$ is co-$(k-1)$-partite, while the subcritical piece on $[0,p_k]$ is the rate function for the event that $G(n,p)$ is empty. In fact, \Cref{thm:main-rate} can be read off from results of \citeauthor{thomason2011graphs} \cite{thomason2011graphs} (specifically, by combining Theorem 4.4 and Section 7.5), who studied extremal problems that complement those of this paper. Connections to this prior literature are discussed further below.

\begin{figure}
	\begin{center}
	\includegraphics{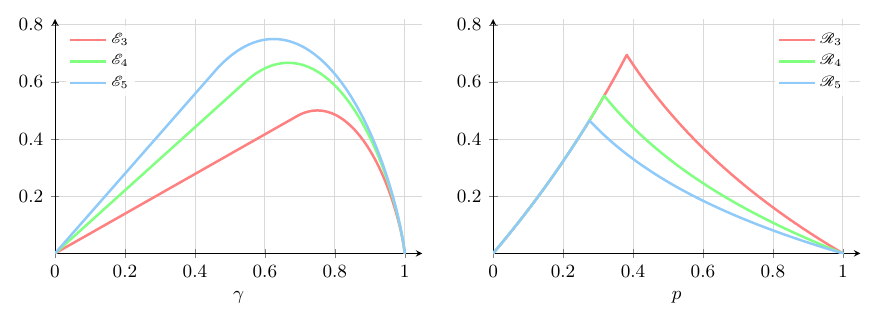}
	\end{center}
	\caption{The entropy density $\sE_k$ (left) and rate function $\sR_k$ (right) for $k=3,4,5$. The curves $\sR_3$, $\sR_4$, and $\sR_5$ coincide on the interval $[0,p_5]$.}
	\label{fig:rate-funcs}
\end{figure}

The most important feature of both \Cref{thm:main-entropy,thm:main-rate} is that the entropy density and rate function are given by piecewise formulas, reflecting that there are competing structural mechanisms that determine the exponential rates. These competing structures were heuristically described above by noting that the entropy densities or rate functions match the formulas given in $\sE_k$ and $\sR_k$. Our next set of results show that, in fact, these heuristics precisely characterize the typical structure of the corresponding random graph models.

Here and throughout, \emph{almost all} and \emph{almost every} mean all but a $o(1)$ fraction.

\begin{theorem}\label{thm:main-almostall}
Fix constants $k\geq3$ and $\gamma\in(0,1)$. If $m\sim\gamma\binom{n}{2}$ then the following hold:
\begin{enumerate}[
  label=\textit{(\roman*)},
  ref=(\textit{\roman*}),
  topsep=5pt
]
	\item\label{item:thm:main-almostall-super} If $\gamma\in(\gamma_k,1)$ then almost every $G\in\cF^\ast_{n,m}(K_{1,k})$ is co-$(k-1)$-partite.
	\item\label{item:thm:main-critical} If $m=\floor{\gamma_k\binom{n}{2}}$ then almost every $G\in\cF^\ast_{n,m}(K_{1,k})$ is the disjoint union of a co-$(k-1)$-partite graph and a graph on $O_k(\log n)$ vertices.
	\item\label{item:thm:main-almostall-sub} If $\gamma\in(0,\gamma_k)$ then almost every $G\in\cF^\ast_{n,m}(K_{1,k})$ is the disjoint union of two graphs $G=G_1\sqcup G_2$ such that $G_1$ is a co-$(k-1)$-partite graph on
	$$(1+o(1))\left(\frac{\gamma(k-1)}{1+(k-2)p_k}\right)^{1/2}n$$
	vertices and $G_2$ is a graph with $\Omega(n)\leq e(G_2)\leq o(n^2)$ edges.
\end{enumerate}
\end{theorem}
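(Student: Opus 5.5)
I would prove \Cref{thm:main-almostall} in two stages: a graphon stage giving rough structure, then a counting stage that upgrades it to exact statements.

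\textbf{Graphon stage.} By the standard large-deviation and regularity machinery (Chatterjee--Varadhan together with the hypergraph container or regularity method for induced properties), almost every $G\in\cF^\ast_{n,m}(K_{1,k})$ is $o(1)$-close in cut metric to the set of maximizers of the entropy functional $\int H(W)$ over graphons $W$ with $t(\bar{K}_{1,k},W)=0$ (the induced density) and $\int W=\gamma$. The maximum value is $\sE_k(\gamma)$ by \Cref{thm:main-entropy}.

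I would then characterize these maximizers. Zero induced $K_{1,k}$ density forces the following: on the set where $0<W<1$, the neighbourhood of almost every vertex has no $k$ pairwise "possibly nonadjacent" points. Combined with entropy maximality, this forces
\begin{itemize}
\item a $(k-1)$-part block structure where $W=1$ inside parts and $W=q$ between parts, and
\item a remainder of measure $1-\alpha$ on which $W$ has zero entropy density.
\end{itemize}
The remainder can carry only $o(n^2)$ edges at the level of the exponential rate.

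Optimizing over $\alpha$ and $q$ gives the unique cut-metric optimizer. In the supercritical regime this is $\alpha=1$ with $q=\frac{(k-1)\gamma-1}{k-2}$. In the subcritical regime it is $q=p_k$ with $\alpha^2\gamma_k=\gamma$, which yields $\alpha=\bigl(\gamma(k-1)/(1+(k-2)p_k)\bigr)^{1/2}$, matching item~\ref{item:thm:main-almostall-sub}. I expect the claim that the remainder has no linear-scale contribution to follow from the linearity of $\sE_k$ below $\gamma_k$.

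\textbf{Counting stage (supercritical).} I would stabilize the near-structure. Take an optimal partition $V_1,\dots,V_{k-1}$, chosen to minimize the number of non-edges inside parts, and call such non-edges and other deviations "bad". I would then show that the number of graphs with a given nonzero amount of bad structure is an $o(1)$ fraction of the co-$(k-1)$-partite count. The standard approach has three steps:
\begin{enumerate}
\item Show that vertices with atypical degree pattern (not adjacent to almost all of their own part) are few, since each costs a factor $e^{-\Omega(n)}$ in choices.
\item Show that a non-edge $uv$ inside $V_i$ forces, via induced-$K_{1,k}$-freeness, linear-size restrictions on the bipartite edges. For example, a centre $w\in V_i$ adjacent to $u,v$, together with a non-neighbour from each of the other parts, would create an induced star. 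This costs $2^{-\Omega(n)}$ per bad pair, which beats the $n^2$ choices of the pair.
\item Handle the fixed edge count $m$ by local-limit comparison of the hypergeometric counts $\binom{N}{m'}$. Because $q$ lies strictly inside $(0,1)$ and $\sE_k$ is strictly concave there, shifting a few edges changes the count only by a bounded factor.
\end{enumerate}

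\textbf{Counting stage (subcritical and critical).} Here $G=G_1\sqcup G_2$ approximately. I would first show that $G_1$ can be cleaned to be exactly co-$(k-1)$-partite and that there are no edges between $G_1$ and $G_2$. The cleaning uses the same star-forcing argument: an edge from $G_2$ into a part forces many non-edges to complete to an induced star, unless the outside vertex is essentially absorbed into $G_1$.

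Next I would count in more detail. The number of choices for $G_2$ is at most $\binom{\binom{|G_2|}{2}}{e_2}$. Since trading edges between $G_2$ and $G_1$ at the margin is exactly balanced in the linear regime (this is the first-order optimality of $\alpha$), a second-order expansion shows the following:
\begin{itemize}
\item For $\gamma<\gamma_k$, the optimal $e(G_2)$ is at least $\Omega(n)$, because sparse random graphs on the remainder contribute entropy $\Theta(e\log(n^2/e))$ that beats the fixed cost. It is also $o(n^2)$ by the graphon stage.
\item At criticality, moving $t$ vertices into $G_2$ costs $\exp(-\Omega(t^2))$ against a gain of roughly $\exp(O(t\log n))$ from edges, so $|G_2|=O(\log n)$.
\end{itemize}

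\textbf{Main obstacle.} I expect the hardest part to be this last fine balancing in the critical and subcritical regimes. It requires precise asymptotics, going beyond the exponential-scale estimates, for the number of co-$(k-1)$-partite graphs with prescribed part sizes and edge counts, including polynomial factors. My first attempt would be to write these counts as explicit sums of binomials and apply Stirling's formula with a Gaussian approximation around the optimum.
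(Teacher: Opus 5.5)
There is a genuine gap in part~\ref{item:thm:main-almostall-sub}, and it starts in your graphon stage. For $\gamma\in(0,\gamma_k)$ the maximizers of $\Phi_k(\gamma)$ are \emph{not} unique (\Cref{thm:ent-graphons}).

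Take any graphon built from disjoint blocks, each a blow-up of a connected $(k-2)$-regular graph $G_i$, with value $1$ on the diagonal squares and $p_k$ on the squares of edges of $G_i$. Such a graphon has zero induced $K_{1,k}$ density, since every neighbourhood meets at most $k-1$ cliques. It also has entropy exactly $\sE_k(\gamma)$, provided the block lengths satisfy $\sum_i\mu_i\leq1$ and $\sum_i\mu_i^2/v(G_i)=\gamma/(1+(k-2)p_k)$. Two examples:
\begin{itemize}
\item two disjoint co-$(k-1)$-partite blocks with $\mu_1^2+\mu_2^2=\mu^2$, where $\mu:=\bigl(\gamma(k-1)/(1+(k-2)p_k)\bigr)^{1/2}$;
\item for $k\geq4$ and small $\gamma$, a blow-up of a larger $(k-2)$-regular graph.
\end{itemize}
So zero induced-star density plus entropy maximality does not force a single $(k-1)$-part block with an entropy-free remainder. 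The linearity of $\sE_k$ on $[0,\gamma_k]$ is exactly what \emph{creates} this degeneracy, because splitting edge mass among blocks costs nothing at order $n^2$; it cannot be what makes the remainder negligible. Consequently, cut-metric concentration only says $G$ is close to \emph{some} member of this infinite family. That gives neither $|V(G_1)|=(1+o(1))\mu n$ nor $e(G_2)=o(n^2)$.

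The missing idea is a comparison at order $n\log n$ that singles out $W^\ast$, the single $K_{k-1}$-block of length $\mu$.
\begin{itemize}
\item Since $v(G_i)\geq k-1$, the constraint forces $\sum_i\mu_i\geq\mu$, with equality only for $W^\ast$. Quantitatively, an optimizer at cut distance at least $\omega$ from $W^\ast$ has total block length at least $\mu+c\omega$.
\item Graphs near such an optimizer therefore have $\Theta(n)$ fewer vertices for the sparse remainder. Graphs near $W^\ast$ can put a matching on those extra vertices and gain a factor $n^{\Theta(n)}$. This beats the $e^{o(n\log n)}$ losses from shifting edges and choosing divisions.
\item This is the same matching mechanism you use for $e(G_2)=\Omega(n)$, but you never use it to exclude the other optimizers.
\end{itemize}
Making this rigorous also requires an upper bound, sharp up to $e^{O(n)}$, for the cut-ball around \emph{every} such optimizer. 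That means carrying out your cleaning argument (Janson penalties, degree-tail penalties, optimality of the division) for blow-ups of arbitrary $(k-2)$-regular graphs, including ones with many tiny parts. Your sketch only treats a $(k-1)$-partite core, and this uniform analysis is the bulk of the paper's subcritical section.

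Two smaller points.
\begin{itemize}
\item At criticality, the $e^{-\Theta(s^2)}$ penalty for $s$ vertices outside the core competes not only with $\binom ns$ but also with the $e^{\Theta(s^2)}$ graphs on those vertices, which after weighting contribute $(1-p_k)^{-\binom s2}$. So $s=O(\log n)$ needs an explicit check that the quadratic penalty constant exceeds $\frac12\log\frac1{1-p_k}$.
\item In the supercritical count, the $e^{-\Omega(n)}$ penalties of defective pairs sharing a vertex use the same random cross edges and need not multiply. They should be charged per edge of a maximum matching of the defect graph, with low defect degrees controlling the number of defect graphs.
\end{itemize}
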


For $\gamma\in(\gamma_k,1)$, \Cref{thm:main-almostall} \ref{item:thm:main-almostall-super} significantly strengthens \Cref{thm:main-entropy} since it gives the first-order asymptotics of $N^\ast_{n,m}(K_{1,k})$: letting $r=k-1$ and assuming $r\mid n$, the number $C^r_{n,m}$ of co-$r$-partite graphs on $n$ vertices and $m\sim\gamma\binom{n}{2}$ edges is
$$C^r_{n,m}=(1+o(1))\vast(\sum_{\substack{z_1,\dots,z_r\in\mathbb Z\\ z_1+\cdots+z_r=0}}
\left(\frac{r\gamma-1}{r-1}\right)^{\frac12(z_1^2+\cdots+z_r^2)}\vast)\,\frac{1}{r!}\,
\binom{n}{\frac{n}{r},\dots,\frac{n}{r}}
\binom{\binom{r}{2}\frac{n^2}{r^2}}{\,m-r\binom{n/r}{2}}\,.$$
Meanwhile, in the subcritical regime in \Cref{thm:main-almostall} \ref{item:thm:main-almostall-sub}, there is a nonempty \emph{sparse} remainder on $\Theta(n)$ vertices outside the co-$(k-1)$-partite core.

The next main result describes the typical structure of the conditioned \erdosrenyi{} random graph $G(n,p)$ for constant $p$, where we again observe a structural dichotomy.

\begin{theorem}\label{thm:gnp-typ-struc}
Fix constants $k\geq3$ and $p\in(0,1)$. Let $G$ be the \erdosrenyi{} random graph $G(n,p)$ conditional on the event $G\in\cF^\ast_n(K_{1,k})$. Then the following hold:
\begin{enumerate}[
  label=\textit{(\roman*)},
  ref=(\textit{\roman*}),
  topsep=5pt
]
	\item If $p\in(p_k,1)$ then with high probability $G$ is co-$(k-1)$-partite and
	$$e(G)=(1+o(1))\left(p+\frac{1-p}{k-1}\right)\binom{n}{2}\,.$$
	\item If $p\in(0,p_k]$ then with high probability $G$ has $o(n^2)$ edges.
\end{enumerate}
\end{theorem}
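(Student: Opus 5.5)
Both parts come from comparing the conditioned probability with explicit lower bounds, using \Cref{thm:main-rate} as the benchmark. The common starting point is
$$\bbP\{G(n,p)\in\cA\mid G(n,p)\in\cF^\ast_n(K_{1,k})\}\leq \frac{\bbP\{G(n,p)\in\cA\cap\cF^\ast_n(K_{1,k})\}}{\bbP\{G(n,p)\in\cF^\ast_n(K_{1,k})\}},$$
with the denominator equal to $e^{-(\sR_k(p)+o(1))\binom n2}$. Writing $\cF^\ast_n(K_{1,k})$ as a disjoint union over edge counts $m$ gives
$$\bbP\{G(n,p)\in\cF^\ast_n(K_{1,k}),\,e(G)=m\}=N^\ast_{n,m}(K_{1,k})\,p^m(1-p)^{\binom n2-m}.$$
By \Cref{thm:main-entropy} (made uniform in $\gamma$ on compact subintervals of $(0,1)$ by monotonicity/continuity of $\sE_k$ and a standard argument), this equals $\exp\bigl(\binom n2[\sE_k(\gamma)+\gamma\log p+(1-\gamma)\log(1-p)+o(1)]\bigr)$ for $m\approx\gamma\binom n2$. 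So the problem reduces to maximizing
$$\Phi_p(\gamma):=\sE_k(\gamma)+\gamma\log p+(1-\gamma)\log(1-p)$$
over $\gamma\in[0,1]$. As a consistency check, the maximum should equal $-\sR_k(p)$.

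For the calculus step, note that on $[0,\gamma_k]$ the function $\Phi_p$ is linear with slope $\frac{(k-2)H(p_k)}{1+(k-2)p_k}+\log\frac{p}{1-p}$. Using $p_k=(1-p_k)^{k-1}$, I expect this slope to vanish exactly at $p=p_k$: the supremum over the linear piece is then $\log(1-p)$ at $\gamma=0$ when $p<p_k$, ties when $p=p_k$, and is otherwise beaten by the concave piece. On $[\gamma_k,1]$, set $q=\frac{(k-1)\gamma-1}{k-2}$, so $\gamma=\frac{1+(k-2)q}{k-1}$. Then $\Phi_p=\frac{k-2}{k-1}\bigl[H(q)+q\log p+(1-q)\log(1-p)\bigr]+\frac{1}{k-1}\log p$. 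This is uniquely maximized at $q=p$, that is, at $\gamma^\ast=p+\frac{1-p}{k-1}$, with value $\frac{1}{k-1}\log p$.

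Combining these gives the following picture. For $p>p_k$ we have $q=p>p_k$, so $\gamma^\ast>\gamma_k$ and $\Phi_p$ has a unique maximizer $\gamma^\ast$. For $p<p_k$ the unique maximizer is $\gamma=0$, with value $\log(1-p)$. For $p=p_k$ the two values coincide and the slope of the linear piece is zero, so the whole interval $[0,\gamma_k]$ is optimal. This last case is the subtle one.

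To pass from the variational picture to the statements, I would argue as follows.

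\textbf{Part (i), edge count.} For $\abs{\gamma-\gamma^\ast}>\varepsilon$ we get $\Phi_p(\gamma)<\Phi_p(\gamma^\ast)-\delta$. Summing over the $O(n^2)$ choices of $m$, the conditional probability of $\abs{e(G)/\binom n2-\gamma^\ast}>\varepsilon$ is $e^{-\Omega(n^2)}$. The endpoints $\gamma\to0,1$ need care, but there I would use the trivial bound $N^\ast\leq\binom{\binom n2}{m}$ together with continuity of $\Phi_p$.

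\textbf{Part (i), structure.} Conditioned on $e(G)=m$, the graph $G$ is uniform on $\cF^\ast_{n,m}(K_{1,k})$. Since the edge count concentrates at $\gamma^\ast\in(\gamma_k,1)$, \Cref{thm:main-almostall}~\ref{item:thm:main-almostall-super} applies to every $m$ in the window. To make this valid uniformly, I would use that the $o(1)$ in that theorem can be taken uniform over $\gamma$ in a compact subinterval of $(\gamma_k,1)$, or else use a diagonal argument over sequences $m(n)$.

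\textbf{Part (ii), $p<p_k$.} The same concentration argument puts the edge density near $0$, which gives $o(n^2)$ edges.

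\textbf{Part (ii), $p=p_k$.} This is the main obstacle. The exponential rate is flat on $[0,\gamma_k]$, so first-order entropy cannot distinguish between edge densities in that interval. Instead I would compare subexponential factors directly. Graphs consisting of a co-$(k-1)$-partite core on $\alpha n$ vertices plus a sparse remainder are weighted by about $\binom{n}{\alpha n}$ times polynomial factors, whereas the empty-graph-like configurations gain, from the remainder, a factor of roughly $e^{\Theta(n\log n)}$, namely the count of sparse induced-$K_{1,k}$-free graphs weighted by $(p/(1-p))^{e}$.

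The refined count underlying \Cref{thm:main-almostall}~\ref{item:thm:main-almostall-sub} should show that
$$N^\ast_{n,m}(K_{1,k})\,p_k^m(1-p_k)^{\binom n2-m}$$
is smaller by a factor of $e^{-\omega(n)}$ when $m\geq\varepsilon n^2$ than when $m=o(n^2)$. Concretely, I would try to show that a core on $\alpha n$ vertices costs an extra $(1-p_k)^{\Theta(\alpha n\cdot(1-\alpha)n)}$ for the missing cross edges, with no compensating entropy at the tie point. I would reuse the finer counting lemmas behind \Cref{thm:main-almostall}~\ref{item:thm:main-critical}, which force the non-core part to have only $O(\log n)$ vertices precisely at the critical balance.
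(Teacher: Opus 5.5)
Your arguments for (i) and for (ii) with $p<p_k$ are sound. Getting edge-count concentration from a uniform version of \Cref{thm:main-entropy} plus the one-variable maximization is a fine substitute for the paper's use of \Cref{lemma:rough-struc-gnp} with \eqref{eqn:CpStarFormalDef}. The structural half of (i) is the paper's own argument.

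The gap is the case $p=p_k$. There your proposal is only a heuristic, and its concrete mechanism is wrong. A co-$(k-1)$-partite core on $\alpha n$ vertices pays no extra $(1-p_k)^{\Theta(\alpha(1-\alpha)n^2)}$ for its missing cross edges, because the competing sparse configurations lack the same pairs, each also weighted $1-p_k$. Such a quadratic penalty would force your $\Phi_{p_k}(\gamma)<\log(1-p_k)$ on $(0,\gamma_k]$, contradicting the flatness you derived yourself. In fact, compare a retained $\Delta$-regular blow-up with the empty graph on the same vertex set, after summing over its active cross-part pairs. Its relative weight is $(p/(1-p))^{C_\Pi}(1-p)^{-A_\Pi}=(1-p)^{\Delta C_\Pi-A_\Pi}$, using $p_k/(1-p_k)=(1-p_k)^{\Delta}$. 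This is $e^{O(n)}$, because $A_\Pi\le\Delta C_\Pi+O(n)$: a $\Delta$-regular graph has adjacency spectral radius $\Delta$. So the whole comparison lives at the $e^{O(n)}$ versus $e^{\Theta(n\log n)}$ scale.

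What decides it is the effect you mention only in passing. Once the core's $\ge c_0n$ vertices are emptied, they can host a matching of size $q=\Theta(n)$. That contributes $M_q(p/(1-p))^q=e^{q\log n+O(n)}$ configurations near $W_0$. To turn this into a proof you need three things:
\begin{enumerate}
\item[(a)] Treat every optimizer in $\bigcup_{\gamma\le\gamma_k}\cV_\gamma$, i.e.\ blow-ups of arbitrary connected $\Delta$-regular graphs with possibly many components, not only a single $K_{k-1}$ core.
\item[(b)] Bound the $p_k$-weighted count of \emph{all} graphs in $\cB_n(W,\tau)$, defects included, by $e^{O(n)}$ times the clean weighted count. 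The paper does this by rerunning the subcritical profile machinery with the edge count left free (\Cref{lemma:critical-gnp-comparison-K1k}).
\item[(c)] Close with a finite net over $\cX^{p_k}_\ast$ together with \Cref{lemma:rough-struc-gnp}.
\end{enumerate}
Your suggestion to reuse the counting behind \Cref{thm:main-almostall}~\ref{item:thm:main-critical} points the wrong way. That argument fixes $m=\lfloor\gamma_k\binom n2\rfloor$, works near $W^\ast_{\gamma_k}$, and shows the sparse remainder is small there. It gives no comparison across different edge counts, which is exactly what is needed when the first-order rate is flat.
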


We give intuition further below for the subcritical behavior in \Cref{thm:gnp-typ-struc}. First, we describe the graphon variation problems and their solutions in more detail. Formally, a \emph{graphon} is a symmetric Lebesgue-measurable function $W:[0,1]^2\to[0,1]$, where symmetric means $W(x,y)=W(y,x)$ for all $x,y\in[0,1]$. For background on graphons, we refer the reader to \cite{lovasz2006limits,borgs2008convergent,lovasz2012large}. The set of all graphons is denoted $\cW$. Two graphons $W_1$ and $W_2$ are \emph{equivalent} if there exist measure-preserving maps $\sigma_1,\sigma_2:[0,1]\to[0,1]$ such that $W_1(\sigma_1(x),\sigma_1(y))=W_2(\sigma_2(x),\sigma_2(y))$ almost everywhere. Statements below about the uniqueness of graphons are meant up to this notion of equivalence.

For all $k\geq3$ and $\gamma\in(0,1)$, define the variational problem over graphons $W\in\cW$
\begin{equation}\label{eqn:var-prob-intro-gamma}
\arraycolsep=3pt
\begin{array}[t]{lll}
\Phi_k(\gamma) \hspace{0.5mm}:= & \sup & \displaystyle\int_{[0,1]^2}H(W(x,y)) \, dx \, dy \\[16pt]
&\text{s.t.} & \displaystyle\int_{[0,1]^{k+1}}\Bigg(\prod_{i=2}^kW(x_1,x_i)\Bigg)\Bigg(\prod_{2\leq i<j\leq k+1}(1-W(x_i,x_j))\Bigg) \prod_{i=1}^{k+1}dx_i = 0 \,, \\[22pt]
&& \displaystyle\int_{[0,1]^2}W(x,y) \, dx \, dy=\gamma \,.
\end{array}
\end{equation}
The first constraint states that $W$ has zero $K_{1,k}$ density, or equivalently, for the familiar reader, that the $W$-random graph $G(n,W)$ is induced-$K_{1,k}$-free with probability 1. The second constraint states that $W$ has edge density $\gamma$.

\begin{theorem}\label{thm:ent-graphons}
Fix $k\geq3$. For all $\gamma\in[\gamma_k,1)$, $\Phi_k(\gamma)$ has a unique optimizer up to equivalence, while for all $\gamma\in(0,\gamma_k)$, there are infinitely many non-equivalent optimizers.
\end{theorem}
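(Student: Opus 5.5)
The plan is to first determine the structure of every graphon with zero induced $K_{1,k}$ density, then optimize the entropy within each structural class, and finally read off uniqueness or non-uniqueness of the optimizer from the shape of the optimization.

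\textbf{Step 1: structure of the feasible set.} If $W$ has zero $K_{1,k}$ density, then for almost every $x$ the neighbourhood measure $W(x,\cdot)$, viewed as a graphon restricted to its support, must have zero density of the empty graph $\overline{K_k}$. The aim is to show that $[0,1]$ splits, up to null sets, into three kinds of parts: a set $Z$ of vertices with $W(x,\cdot)=0$ a.e.; at most $k-1$ ``cliques'' $A_1,\dots,A_r$ with $r\le k-1$ and $W\equiv1$ on each $A_i\times A_i$; and arbitrary values of $W$ between different $A_i$ and $A_j$. The heuristic is as follows. A vertex $x$ carrying positive neighbourhood mass sees a set $N(x)$ on which $W$ has no independent $k$-set in density. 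Hence the sub-graphon on $N(x)$ has independence number below $k$, and in the entropy-optimal regime this forces $N(x)$ to be covered by at most $k-1$ sets on which $W=1$.

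I am not sure the exact classification holds for every feasible $W$; for example, components built by blow-up could behave differently. In that case I would instead prove an entropy upper bound. Every $W$ with $t(K_{1,k},W)=0$ should satisfy
\[
\int H(W)\le \sup_{\alpha\in[0,1]}\ \alpha^2\Big(1-\tfrac{1}{k-1}\Big)\,\max\Big\{H(q):\ \alpha^2\big(\tfrac{1}{k-1}+(1-\tfrac1{k-1})q\big)\ \text{matches the density}\Big\},
\]
using only that the entropy of each component $C$ with parts $A_i\subset C$ is at most $\sum_{i\neq j}|A_i||A_j|\,H(\cdot)$. I would establish this by applying concavity of $H$ and using that diagonal blocks contribute zero entropy.

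\textbf{Step 2: reduction to a finite-dimensional problem.} Given the structure, $\int H(W)$ is maximized by making $W$ constant on off-diagonal blocks, by concavity of $H$. Balancing the parts by convexity then reduces the problem to choosing the mass $\alpha$ of the nonempty part, with $r=k-1$ equal parts, and the off-diagonal value $q$. Several disjoint co-partite components are never better, because entropy scales like $\alpha^2$ (superadditivity of $\alpha\mapsto\alpha^2$ favours merging). The resulting problem is
\[
\max\ \alpha^2\tfrac{k-2}{k-1}H(q)\quad\text{s.t.}\quad \alpha^2\cdot\tfrac{1+(k-2)q}{k-1}=\gamma,\ \alpha\le1 .
\]
Writing $s=\alpha^2$, the objective becomes $\gamma(k-2)H(q)/(1+(k-2)q)$ with $q$ constrained by $s\le1$. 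The unconstrained maximizer of $H(q)/(1+(k-2)q)$ satisfies $\log\frac{1-q}{q}(1+(k-2)q)=(k-2)H(q)$, which simplifies to $q=(1-q)^{k-1}$, i.e.\ $q=p_k$. This is feasible exactly when $\gamma\le\gamma_k$, recovering $\sE_k$. For $\gamma\ge\gamma_k$ the constraint $\alpha=1$ binds, so $q=((k-1)\gamma-1)/(k-2)$ is forced by strict concavity and the optimizer is unique.

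\textbf{Step 3: non-uniqueness for $\gamma<\gamma_k$.} Here the mass $1-\alpha$ on $Z$ contributes zero entropy and zero edges. The optimizer is only determined up to the ``sparse remainder'': any graphon on $Z$ with zero edge density is a.e.\ zero, which by itself does not give non-equivalent optimizers. So the non-uniqueness must come from elsewhere. I expect it to arise from the freedom of $W$ on the off-diagonal block between the core and the remainder, or from how the remainder mass is split. Since $W=0$ there is forced, I would look more carefully at the block values between the cliques. In the linear regime, the constrained problem in $(\alpha,q)$ is degenerate: the optimum holds with $q=p_k$ on average, while only $\int H$ and the edge count of each off-diagonal block matter. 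Perturbing $W$ across different pairs $(i,j)$ of parts, for example using unequal part sizes together with compensating values $q_{ij}$, can keep the entropy fixed because the objective is linear in $s$. This yields a one-parameter family of non-equivalent optimizers; for instance, several disjoint co-$(k-1)$-partite components with the same total $\sum\alpha_c^2$ are all optimal, since the objective is linear in $\alpha^2$ with the ratio fixed at $p_k$. This is consistent with Step 2: superadditivity makes merging weakly, not strictly, better in this regime.

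\textbf{Main obstacle.} The hardest step is Step 1, the rigorous structural classification, or equivalently the entropy upper bound for arbitrary graphons with zero $K_{1,k}$ density. I would attempt it by induction on $k$ via neighbourhood restriction.
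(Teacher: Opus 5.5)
There is a genuine gap, at exactly the step you flag as the main obstacle. Nothing in the proposal establishes the upper bound $\Phi_k(\gamma)\le\sE_k(\gamma)$ or its equality cases, and both halves of the theorem rest on it. Without it you cannot certify your Step~3 graphons as optimizers, nor exclude other optimizers above $\gamma_k$.

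The classification in Step~1 is false as stated. The two-component graphons you use in Step~3 already contain $2(k-1)$ cliques. For $k=4$, take the blow-up of $C_5$: value $1$ on $I_a\times I_a$ for five equal intervals $I_1,\dots,I_5$ partitioning $[0,\mu]$, value $p_4$ between cyclically consecutive intervals, and $0$ elsewhere. It has zero induced $K_{1,4}$-density, because every support $\{W(x,\cdot)>0\}$ is covered by three cliques. It lies in $\cV_\gamma$ for small $\gamma$, so it is even an optimizer, yet it is a single component with five clique parts.

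Your fallback bound has the right value, but the justification you sketch (entropy of ``each component $C$ with parts $A_i$'') presupposes the same clique decomposition, so it is circular. Note also that the inequality you actually need cannot be proved vertex by vertex, as an induction via neighbourhood restriction would suggest. Two cliques of measures $a>(k-2)b$ joined at value $p$ form a feasible graphon. In it, every vertex of the smaller clique sees measure $a$ at value $p$ but only measure $b$ at value $1$, even though $2ab\le(k-2)(a^2+b^2)$ holds after integrating.

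The missing idea is the global area inequality $|\{0<W<1\}|\le(k-2)\,|\{W=1\}|$ for every graphon with $t_\ind(K_{1,k},W)=0$. Combine it with the Jensen reduction to $W\in\{0,p,1\}$, which is valid for every feasible $W$ because feasibility depends only on the sets $\{W=0\}$ and $\{W=1\}$. This reduces the problem to precisely the two-variable problem you solve in Step~2, with $x=|\{W=1\}|$, $y=|\{0<W<1\}|$ and $y\le\min\{1-x,(k-2)x\}$. That gives $\sE_k(\gamma)$ and pins down $(x,y)$ at any optimizer.

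The paper obtains the area inequality in two moves. It approximates $W$ by regularity types (\Cref{lemma:UFHatGraphonSeq}). It then proves $e_\red-(k-2)e_\blue\le(k-2)q/2$ for $3$-edge-colourings of $K_q$ in which no vertex is joined in red to $k-1$ vertices spanning no blue edge (\Cref{lemma:kthOrderMantel}). That proof is a cloning symmetrization reducing to blow-ups whose red reduced graph has maximum degree at most $k-2$.

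For uniqueness when $\gamma\ge\gamma_k$ you still need the equality case. The calculus forces $W\in\{p,1\}$ a.e.\ with $|\{W=1\}|=1/(k-1)$. The paper then applies its colouring stability theorem (\Cref{thm:kth-order-stability}). Here you could instead argue directly: since $W\ge p>0$, zero induced $K_{1,k}$-density becomes $t(K_k,\bsone_{\{W<1\}})=0$ at the Tur\'an density $(k-2)/(k-1)$. Tur\'an's theorem with Erd\H{o}s--Simonovits stability, applied to $G(n,\bsone_{\{W<1\}})$, then gives $W\cong W^\ast_\gamma$.

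Finally, the Step~3 perturbations by unequal part sizes with compensating $q_{ij}$ are strictly suboptimal. Unbalanced parts make $|\{0<W<1\}|<(k-2)|\{W=1\}|$, and non-constant off-diagonal values lose entropy by strict concavity. Your family of disjoint co-$(k-1)$-partite components with value $p_k$, $\sum_c\alpha_c\le1$ and $\sum_c\alpha_c^2=\gamma(k-1)/(1+(k-2)p_k)<1$ is, however, a valid continuum of non-equivalent optimizers once the upper bound is in place.
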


Next, we state the solution to the variational problem for the conditioned \erdosrenyi{} random graph. For $p\in(0,1)$, define the relative entropy
\begin{equation}\label{eqn:rel-ent-dfn}
I_p(x):=x\log\frac{x}{p}+(1-x)\log\frac{1-x}{1-p}\,.
\end{equation}
For all $k\geq3$ and $p\in(0,1)$, define the variational problem over graphons $W\in\cW$
\begin{equation}\label{eqn:var-prob-intro-p}
\arraycolsep=3pt
\begin{array}[t]{lll}
\Psi_k(p) \hspace{0.5mm}:= & \inf & \displaystyle\int_{[0,1]^2}I_p(W(x,y)) \, dx \, dy \\[16pt]
&\text{s.t.} & \displaystyle\int_{[0,1]^{k+1}}\Bigg(\prod_{i=2}^kW(x_1,x_i)\Bigg)\Bigg(\prod_{2\leq i<j\leq k+1}(1-W(x_i,x_j))\Bigg) \prod_{i=1}^{k+1}dx_i = 0 \,.
\end{array}
\end{equation}

\begin{theorem}\label{thm:gnp-graphons}
Fix $k\geq3$. For all $p\in(0,p_k)$, the unique optimizer of $\Psi_k(p)$, up to equivalence, is the all-zero graphon; for $p=p_k$, there are infinitely many non-equivalent optimizers; and for $p\in(p_k,1)$, there is again a unique optimizer up to equivalence.
\end{theorem}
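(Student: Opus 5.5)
The plan is to prove matching upper and lower bounds on $\Psi_k(p)$ and, along the way, to identify exactly which graphons attain them. The bounds will come from a finite-dimensional family of candidate graphons that I expect contains every optimizer. The candidates are defined as follows. Take a set $V_0\subseteq[0,1]$ and disjoint sets $V_1,\dots,V_r$ with $r\leq k-1$ and measures $a_i=|V_i|$, and write $s=a_1+\cdots+a_r$. Set $W\equiv0$ on every pair touching $V_0$, $W\equiv1$ on each $V_i\times V_i$, and $W\equiv p$ on $V_i\times V_j$ for $i\neq j$. Such a $W$ is the disjoint union of an empty graphon and a co-$r$-partite graphon, so it has zero induced $K_{1,k}$ density: a centre vertex $x\in V_i$ has all of its neighbours in $V_1\cup\dots\cup V_r$, and among any $k$ of them two must lie in a common $V_j$, where $W\equiv1$. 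The cost of this candidate is
\[
\Big(\sum_{i=1}^r a_i^2\Big)\log\tfrac1p+(1-s^2)\log\tfrac1{1-p}\;\geq\; s^2\Big(\tfrac{1}{k-1}\log\tfrac1p-\log\tfrac1{1-p}\Big)+\log\tfrac1{1-p}\,,
\]
by Cauchy--Schwarz, with equality iff $r=k-1$ and all $a_i=s/(k-1)$. The bracket has the sign of $p_k-p$, because $\log(1/p)=(k-1)\log(1/(1-p))$ holds exactly when $p=(1-p)^{k-1}$. The right-hand side is linear in $s^2\in[0,1]$, so it is minimised at the following points.
\begin{itemize}
\item For $p<p_k$ the minimum is at $s=0$, the all-zero graphon.
\item For $p>p_k$ it is at $s=1$, the balanced co-$(k-1)$-partite graphon with cross value $p$.
\item At $p=p_k$ every $s\in[0,1]$ is optimal. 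Different $s$ give non-equivalent graphons, for instance because their edge densities differ, so there are infinitely many optimizers.
\end{itemize}
Combined with the lower bound below, this matches $\sR_k$ in \Cref{thm:main-rate}.

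For the lower bound I will show that every feasible $W$ satisfies $\int I_p(W)\geq\min_s(\cdot)$, and that equality forces $W$ to be equivalent to one of the candidates above. The first step is a pointwise structural fact about feasible graphons. For a.e.\ $x$, let $N(x)=\{y:W(x,y)>0\}$. The zero-density constraint says that, for a.e.\ $x$, the set $\{W<1\}\cap N(x)^2$ contains no $k$ points that are pairwise in it, up to null sets. In other words, the complement of $W$ restricted to $N(x)$ is $K_k$-free, so the graphon Turán theorem gives $|\{W=1\}\cap N(x)^2|\geq|N(x)|^2/(k-1)$. The second step is a Zykov-type symmetrization, adapted to $I_p$ and carried out as in the proof of \Cref{thm:ent-graphons}. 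The idea is to show that an optimizer can be assumed, or at equality must already be, a blow-up in which vertices with disjoint supports or equal neighbourhood types are merged. This should collapse the structure so that $V_0=\{x:|N(x)|=0\}$, so that the nonzero part splits into classes that are cliques ($W=1$) inside, so that there are at most $k-1$ such classes, and so that the cross values are unconstrained. Once the cross values are unconstrained, pointwise minimisation of $I_p$ sets them equal to $p$, which lands us in the candidate family.

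The main obstacle is this symmetrization step, specifically two things about the classes. One is that they are forced to carry $W\equiv1$ exactly, rather than some intermediate value, on the pairs that the induced-star constraint protects. The other is that two vertices with overlapping neighbourhoods must lie in the same co-partite component. My first attempt at both would be a local-perturbation (first-variation) argument. Replacing $W(x,\cdot)$ by the cheaper of two neighbourhood profiles preserves feasibility, because copying a vertex cannot create an induced $K_{1,k}$ that was not already present. If I also handle the corresponding row and column consistently, the change in $\int I_p(W)$ is linear in the profile being swapped, so at an optimizer all vertices in the support of a given component can be taken to share one profile. Strict convexity of $I_p$ then gives uniqueness up to equivalence for $p\neq p_k$, and the cost formula above gives the family of optimizers at $p=p_k$.
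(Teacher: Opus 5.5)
There is a genuine gap: the lower bound $\Psi_k(p)\geq\sR_k(p)$ is never established for an arbitrary feasible $W$.

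\textbf{The symmetrization endpoint is wrong.} Your Cauchy--Schwarz computation only compares candidates with one another, so everything rests on the symmetrization. The endpoint you predict for it is not what symmetrization delivers. Cloning along pairs with $W=1$ is exactly what the paper does for finite colorings in \Cref{lemma:kthOrderMantel}. (The clones must be joined to the original by value $1$, or copying can create an induced star.) After merging twins, this yields a blow-up into $W\equiv1$ classes of measures $v_i$ whose reduced graph $R$ of intermediate-value pairs has maximum degree at most $k-2$. Such blow-ups can have arbitrarily many classes and components, and pairs outside $R$ must stay $0$, so the cross values are not free. For example, take $k=4$ and the balanced blow-up of $C_5$: value $1$ on diagonal blocks, $p$ on the blocks of $C_5$-edges, $0$ elsewhere. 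It is feasible for every $p$ and already fixed by twin-merging. It also has two classes with overlapping neighbourhoods that lie in no common co-partite block.

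\textbf{The missing inequality.} What you need is the global bound $2\sum_{ij\in E(R)}v_iv_j\leq(k-2)\sum_iv_i^2$, i.e.\ $|\{0<W<1\}|\leq(k-2)\,|\{W=1\}|$ for every feasible $W$. The paper proves it for colorings and transfers it to graphons via the type sequences of \Cref{lemma:UFHatGraphonSeq}; this also avoids the problem that rows of single points are null sets. With it, replace intermediate values by $p$ and write $x=|\{W=1\}|$. Then $I_p(W)\geq\log\frac{1}{1-p}+x\bigl(\log\frac1p-(k-1)\log\frac{1}{1-p}\bigr)$ for $x\leq\frac{1}{k-1}$, and $I_p(W)\geq x\log\frac1p$ otherwise, which is the matching lower bound. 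Your pointwise Tur\'an bound on $N(x)$ concerns single neighbourhoods, and you give no route from it to this global comparison of red and blue measure.

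\textbf{The equality analysis also fails as stated.} Equality does not force $W$ into your candidate family at $p=p_k$. The $C_5$ blow-up above with $p=p_4$ costs exactly $\log\frac{1}{1-p_4}$, and in fact every graphon in $\bigcup_{\gamma\leq\gamma_k}\cV_\gamma$ is optimal, see \eqref{eqn:CpStarFormalDef}. The $p=p_k$ clause of the theorem survives, since you only need infinitely many optimizers. But this shows that no argument from feasibility plus local (first-variation) optimality can force at most $k-1$ classes. That reduction must come from the global cost comparison, which is the only place $p\neq p_k$ enters.

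\textbf{Uniqueness for $p\neq p_k$.} ``Can be taken to share one profile'' only produces \emph{some} symmetrized optimizer. Strict convexity of $I_p$ only forces $W=p$ on $\{0<W<1\}$. Uniqueness requires showing that \emph{every} optimizer has the structure. Given the inequality this is short:
\begin{itemize}
\item For $p<p_k$, equality forces $x=0$, hence $W=0$ a.e.
\item For $p>p_k$, equality forces $W\in\{p,1\}$ a.e.\ with $x=\frac{1}{k-1}$. Since $W\geq p$, the constraint gives $t(K_k,\bsone_{\{W=p\}})=0$ at the Tur\'an density $\frac{k-2}{k-1}$. Uniqueness of the Tur\'an extremal graphon then identifies $W^\ast_{\gamma_p}$ with $\gamma_p=\frac{1+(k-2)p}{k-1}$.
\end{itemize}

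\textbf{The paper's route.} It writes $I_p(W)=-H(W)+t(K_2,W)\log\frac{1-p}{p}-\log(1-p)$ and optimizes over the edge density $\gamma$. It then reads the optimizers off the fixed-density characterization \Cref{prop:graphon-char-fixed-gamma}, whose uniqueness part uses \Cref{thm:kth-order-stability}.
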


To understand heuristically why the all-zero graphon is the subcritical optimizer in \Cref{thm:gnp-graphons}, consider that the rate function for induced-$K_{1,k}$-freeness in $G(n,p)$ can be written as an infimum over fixed edge densities $\gamma\in(0,1)$. For all $\gamma$, \Cref{thm:main-almostall} implies that for the uniformly random graph $G\in\cF^\ast_{n,m}(K_{1,k})$ with $m\sim\gamma\binom{n}{2}$, with high probability there is a partition of $V(G)$ into parts of size $\Theta(n)$ whose internal and between-part densities are either $o(1)$, $1-o(1)$, or some $p_\gamma+o(1)\geq p_k$. Hence, if $p<p_k$ then there is no optimizer of $\Phi_k(\gamma)$ that takes the value $p$ anywhere. But since the relative entropy $I_p(x)$ is uniquely minimized at $x=p$, this means no fixed-density optimizer of $\Phi_k(\gamma)$ for $\gamma\in(0,1)$ can also be an optimizer of $\Psi_k(p)$ whenever $p<p_k$, leaving only the all-zero graphon.

\begin{figure}
	\begin{center}
	\includegraphics{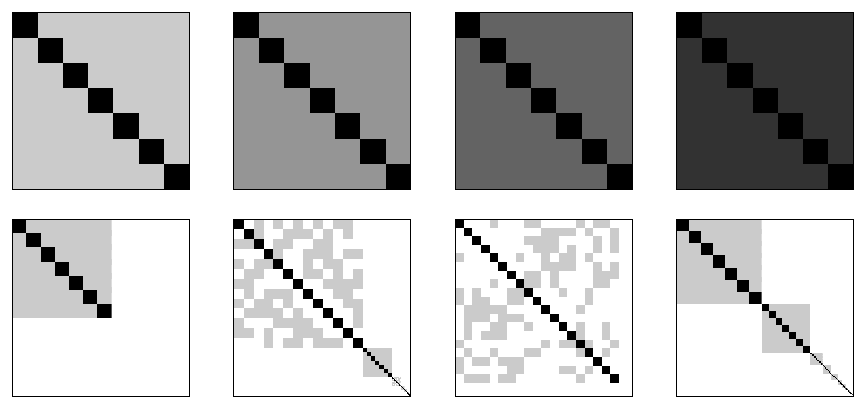}
	\end{center}
	\caption{Optimal graphons for induced-$K_{1,8}$-free graphs of fixed edge density. The top row shows the unique optimal graphons of edge densities $\gamma_8\approx0.317$, $\frac{1}{2}$, $\frac{2}{3}$, and $\frac{5}{6}$. The bottom row shows examples of optimal graphons of edge density $\frac{1}{10}$. The white areas represent density 0, gray areas density $p_\gamma\in(0,1)$, and black areas density 1.}
	\label{fig:graphons}
\end{figure}

The next theorem shows that the induced-$H$-free fixed-density variational problem does \emph{not} necessarily exhibit a phase transition for general $H$. A graph $G$ is a \emph{split graph} if there is a partition $V(G)=A\cup B$ such that $G[A]$ is an independent set and $G[B]$ is a clique.

\begin{theorem}\label{thm:c4-main}
Fix a constant $\gamma\in(0,1)$ and let $m\sim\gamma\binom{n}{2}$. Almost all induced-$C_4$-free graphs on $n$ vertices and $m$ edges are split graphs. In particular,
\begin{equation}\label{eqn:c4-entropy-main}
\lim_{n\to\infty}\frac{1}{\binom{n}{2}}\log N^\ast_{n,m}(C_4)=\max_{\lambda\in[0,1]}\left\{2\lambda(1-\lambda)\,H\!\left(\frac{\gamma-\lambda^2}{2\lambda(1-\lambda)}\right)\right\}\,,
\end{equation}
where the maximum is over $\lambda$ for which the right-hand side is well-defined.
\end{theorem}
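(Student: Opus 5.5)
The plan has two parts: first the graphon variational problem, then a stability-plus-exact-counting argument at the level of graphs.

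\emph{Lower bound and graphon step.} Counting split graphs with a clique of size $\lambda n$ and an independent set of size $(1-\lambda)n$ gives the lower bound. The clique contributes $\lambda^2\binom{n}{2}$ edges, so we choose $m-\binom{\lambda n}{2}$ cross edges among the $\lambda(1-\lambda)n^2$ cross pairs. This yields the right-hand side of \eqref{eqn:c4-entropy-main}. For the matching upper bound, we use the standard graphon reduction: the entropy density equals the supremum of $\int H(W)$ over graphons with zero induced $C_4$ density and edge density $\gamma$. This is the $C_4$ analogue of \eqref{eqn:var-prob-intro-gamma}, and I would assume it follows as for \Cref{thm:main-entropy} from the container and regularity machinery implicit there.

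We then solve this variational problem. Let $S=\{(x,y):0<W(x,y)<1\}$ be the ``random'' region. Zero induced $C_4$ density forces the following: if two pairs $(x_1,x_2),(x_3,x_4)$ of points are such that the four cross values are all strictly in $(0,1)$, then $W(x_1,x_3)$ and $W(x_2,x_4)$ cannot both be less than $1$. Applying this to a.e.\ quadruple shows that the set of points meeting $S$ in positive measure splits into two classes, $A$ and $B$. On $A\times A$ we have $W=0$ a.e., on $B\times B$ we have $W=1$ a.e., and $S\subseteq A\times B$. Here one uses that $C_4$ and $2K_2=\overline{C_4}$ are both forbidden, as in the classical split-graph characterization (induced $C_4$, $\overline{C_4}$, $C_5$ free). \textbf{Concern:} only $C_4$ is forbidden, so $2K_2$ is allowed. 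The correct statement is therefore that $S$ is a.e.\ contained in a bipartite region $A\times B$, with $A$ independent ($W=0$) and $B$ a clique ($W=1$). This is exactly the content needed, since the remaining $0/1$ parts contribute no entropy.

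Given this, we have $\int H(W)\le 2|A||B|\,H(\text{average of }W\text{ on }A\times B)$ by concavity. The edge constraint then pins the average to $(\gamma-\lambda^2)/(2\lambda(1-\lambda))$, where $\lambda=|B|$, up to extra $0/1$ mass that only worsens the bound. This gives the upper bound.

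\emph{Main obstacle.} The hard step is the structural lemma: for a $C_4$-free graphon, the region where $W\in(0,1)$ lies inside a product $A\times B$ with $A$ empty and $B$ complete. I would prove it by considering two points $x,x'$ whose neighborhoods in $S$ overlap on a set $Y$ of positive measure. Sampling $y,y'\in Y$ and requiring no induced $C_4$ on $x,y,x',y'$ forces $W(x,x')=1$ or $W(y,y')=1$ a.e. A further case analysis, in the spirit of Prömel--Steger's proof for induced-$C_4$-free graphs, then yields the split structure.

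\emph{Almost all are split.} Finally, for the ``almost all'' statement we use stability. The maximizer $\lambda^*$ is unique; I would check that the function of $\lambda$ is strictly concave near its optimum. Hence almost every graph is $o(n^2)$-close to a split graph with parts of sizes $\approx\lambda^* n$ and $(1-\lambda^*)n$. We then run the usual cleanup, as in the proof of \Cref{thm:main-almostall}\ref{item:thm:main-almostall-super}. Choose the partition minimizing the number of bad pairs. Every vertex has typical density $p^*\in(0,1)$ into the opposite side, so a single misplaced edge inside $A$ (or non-edge inside $B$) combines with the random bipartite part to create $\Theta(n^2)$ induced $C_4$'s unless it is forbidden. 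A standard counting argument then shows that configurations with $t\ge1$ bad pairs contribute a vanishing fraction $\sum_t (n^{O(1)}e^{-cn})^t$. This cleanup is routine once the exact graphon characterization and $p^*\in(0,1)$ are in hand.
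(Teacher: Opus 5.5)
\textbf{Gap: your structural lemma is false, and both the upper bound and the uniqueness you need for the ``almost all'' part depend on it.}

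A disjoint union of two split graphons is a counterexample. Let $A_1=[0,\tfrac14)$, $B_1=[\tfrac14,\tfrac12)$, $A_2=[\tfrac12,\tfrac34)$, $B_2=[\tfrac34,1]$. Put $W=0$ on $A_i\times A_i$, $W=1$ on $B_i\times B_i$, $W=\tfrac12$ on $A_i\times B_i$, and $W=0$ between the two halves. Since $C_4$ is connected, a disjoint union of split graphs is induced-$C_4$-free, so $t_\ind(C_4,W)=0$. Every point meets $S$ in positive measure, yet no admissible $A,B$ exist. Both $B_1$ and $B_2$ carry random edges and $W\equiv1$, so both must lie in $B$, but $W=0$ on $B_1\times B_2$.

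Your quadruple argument, applied to both cyclic orderings, gives only a local dichotomy: one of $W(x,x'),W(y,y')$ equals $1$ and the other equals $0$. What survives is that random edges join ``independent-type'' points to ``clique-type'' points. It also shows that the random neighbourhood $Y_x$ of an independent-type point satisfies $W=1$ a.e.\ on $Y_x\times Y_x$, and symmetrically for clique-type points. It does not glue into one split structure. This is exactly where your claim that extra $0/1$ mass ``only worsens the bound'' breaks. An extra clique block lowers the required average on $A\times B$, and this can increase $H$ when that average exceeds $\tfrac12$.

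\textbf{Missing idea: compare with a single split structure of the same $1$-mass.} From $Y_x\times Y_x\subseteq O_W$ you get $|Y_x|\le |O_W|^{1/2}$, hence $|R_W|\le 2g\,|O_W|^{1/2}$, where $g$ is the measure of the independent-type points. Now compare with the split graphon whose clique has measure $b^\ast:=|O_W|^{1/2}$.
\begin{itemize}
\item If $g\le 1-b^\ast$, the split graphon has random mass $2(1-b^\ast)b^\ast\ge|R_W|$.
\item If $g>1-b^\ast$, then $W$ has zero-mass at least $g^2>(1-b^\ast)^2$, so the split graphon again has more random mass.
\end{itemize}
In either case the $1$-mass is unchanged. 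Monotonicity of $R\mapsto R\,H\big((\gamma-B)/R\big)$ in $R$ then gives the upper bound. A quantitative version (\Cref{lemma:entropy-BOOST}) gives stability: near-optimal implies $\epsilon n^2$-close to split. The paper runs this argument on colored graphs coming from regularity types, where vertex colours are available (\Cref{lemma:c4-stability}), rather than on graphons. You need that stability, or equivalently uniqueness of the graphon optimizer. Strict concavity of $\lambda\mapsto 2\lambda(1-\lambda)H(\cdot)$ only ranks split graphons among themselves and does not exclude non-split optimizers.

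\textbf{Secondary issue: the cleanup is not routine as written.} The penalty $e^{-cn}$ from induced $C_4$'s is earned per edge of a matching in $T[A]$ or $T[B]$ (\Cref{lemma:c4-matching}), not per defect edge. A defect star of linear degree earns only one such factor while having $e^{\Theta(n)}$ choices. It needs a separate $e^{-\Omega(n^2)}$ Janson bound, which the paper supplies in \Cref{lemma:FPi1,lemma:FPi2}. Positive defects also shift the number of cross edges, which requires a binomial comparison. So $\sum_t(n^{O(1)}e^{-cn})^t$ is not justified as stated, though it can be repaired along the paper's lines.
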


The right-hand side of \eqref{eqn:c4-entropy-main} is an analytic function of $\gamma\in(0,1)$, reflecting the absence of a phase transition. \Cref{thm:c4-main} resolves the dense case of a conjecture of \citeauthor{morris2024asymmetric} \cite[Conjecture~6.1]{morris2024asymmetric} and refines the theorem of \citeauthor{promel1991excluding} \cite{promel1991excluding} stating that almost all induced-$C_4$-free graphs are split graphs.

\subsection{Overview of proofs}\label{subsec:overview}
Our starting point is the graphon variational problem \eqref{eqn:var-prob-intro-gamma}. We solve \eqref{eqn:var-prob-intro-gamma} via a reduction to an extremal problem over 3-edge-colored graphs. This extremal problem is closely related to prior literature discussed in \Cref{subsec:edit-distance}. We summarize the reduction here.

First, \Cref{lemma:UFHatGraphonSeq} shows that a graphon that is feasible for \eqref{eqn:var-prob-intro-gamma} is the limit of a sequence of \emph{types} $T$ of induced-$K_{1,k}$-free graphs. A type is an $\epsilon$-regular partition equipped with a 2-vertex-colored, 3-edge-colored graph $J$ that encodes whether clusters and pairs of clusters have density close to 0 (green), 1 (blue), or an intermediate value (red). If $T$ is a type for an induced-$K_{1,k}$-free graph, then the induced embedding lemma implies there is no induced homomorphism from $K_{1,k}$ into $J$, written $K_{1,k}\nHookrightarrow J$. If $0\leq d_{ij}\leq1$ represents the density between clusters $i$ and $j$ for all $1\leq i<j\leq\ell$, then the number of induced-$K_{1,k}$-free graphs of edge density $\gamma+o(1)$ that are consistent with $T$ and $\{d_{ij}\}_{1\leq i<j\leq\ell}$ is $e^{o(n^2)}\prod_{i<j}\binom{(n/\ell)^2}{d_{ij}(n/\ell)^2}$. For a fixed type $T$, if the logarithm of this quantity is near-maximum then concavity of the entropy implies the densities $d_{ij}$ corresponding with red pairs $ij$ are nearly equal to one another. The common value of these $d_{ij}$ is $(\gamma\binom{\ell}{2}-e_\blue(J))/e_\red(J)+o(1)$, where $e_\red(J)$ and $e_\blue(J)$ are the numbers of red and blue edges of $J$, respectively.

This motivates the following finite objective. For a 3-edge-colored graph $J$ on $\ell$ vertices with $e_\red(J)>0$, define
\begin{equation}\label{eqn:overview-fgamma}
f_\gamma(J):=e_\red(J)\cdot H\!\left(\frac{\gamma\binom{\ell}{2}-e_\blue(J)}{e_\red(J)}\right)\,.
\end{equation}
Up to the normalizing factor $2/\ell^2$ and a $o(1)$ error, the entropy contribution of a type with colored graph $J$ is governed by $f_\gamma(J)$. Consequently, solving the graphon variational problem reduces to proving extremal and stability results for $f_\gamma(J)$ over colored graphs $J$ satisfying $K_{1,k}\nHookrightarrow J$.

For excluded induced stars, we first prove stability for the linear extremal inequality
\begin{equation}\label{eqn:overview-ineq}
e_\red(J)\leq (k-2)e_\blue(J)+O_k(\ell)\,.
\end{equation}
That is, we prove that every $J$ with $K_{1,k}\nHookrightarrow J$ satisfies \eqref{eqn:overview-ineq} and, moreover, every $J$ for which \eqref{eqn:overview-ineq} is met with near equality is close in Hamming distance to an explicit extremal structure. Passing back to graphons, this gives the constraint $|R_W|\leq (k-2)|O_W|$, where $R_W=\{0<W<1\}$ and $O_W=\{W=1\}$. Jensen's inequality then reduces \eqref{eqn:var-prob-intro-gamma} to a two-variable calculus problem in $|R_W|$ and $|O_W|$, whose solution is $\sE_k(\gamma)$, and whose optimizers satisfy $|R_W|=(k-2)|O_W|$. The proofs of \eqref{eqn:overview-ineq} and the stability of the extremal structure use symmetrization, a local \erdos{}--Simonovits stability step, and an iterative extraction of reduced $(k-2)$-regular components. The extremal structure for \eqref{eqn:overview-ineq} is reflected in the graphons in \Cref{fig:graphons}. For induced-$C_4$-free graphs, we directly prove stability of the objective $f_\gamma(J)$ for every fixed $\gamma\in(0,1)$; in this case, the stable extremal structures are split colorings.

The entropy density and rough structure of fixed-density induced-$H$-free graphs follow from the solution to the graphon variational problem. The finer typical structure assertions in \Cref{thm:main-almostall,thm:gnp-typ-struc,thm:c4-main} require a more detailed analysis. These proofs have the following common architecture, inspired by prior work including \cite{balogh2020efficient}. For every graph $G$ that is close in cut metric to an optimal graphon, we fix a partition $\Pi(G)$ of its vertex set minimizing the number of \emph{defect edges}, that is, edges not consistent with the pure structure given by this optimal graphon. We then classify different defect structures and prove that in each case, the penalty to the count that is incurred by the defect outweighs the number of ways of choosing it. The term `penalty' reflects the fact that the number of graphs with a given optimal partition $\Pi$ and defect graph $T$ is constrained quantitatively in terms of $T$. We establish these penalties using random multipartite graphs that are specifically attuned to a given optimal partition and class of defect graph.

\Cref{sec:subcritical}, covering the fixed-density subcritical regime for excluded induced stars, uses a bookkeeping object we call a \emph{profile} to carefully track the defect edges occurring at multiple vertices along with the penalties incurred by each defect. There are three distinct structural mechanisms that lead to penalties for defect edges:
\begin{enumerate}[
	label=\textit{(\roman*)},
	ref=(\textit{\roman*}),
	topsep=5pt
]
	\item Defect edges give rise to a family of potential copies of $K_{1,k}$ that the aforementioned multipartite random graph must avoid for it to be induced-$K_{1,k}$-free. Choosing these copies positively correlated, we apply Janson's inequality (\Cref{thm:JansonsInequality}), giving a bound for the probability of induced-$K_{1,k}$-freeness. In some cases, this penalty compensates for all possibilities for the defect graph.
	\item Positive defect edges (as opposed to missing defect edges) could instead be placed in the bipartite interfaces that give the leading order of the entropy density. The comparison between the number of choices in both cases leads to a penalized count.
	\item Finally, the closeness of an induced-$K_{1,k}$-free graph $G$ to an optimal graphon $W$ deterministically excludes certain adjacency patterns. These excluded patterns imply that if a vertex has a certain defect profile then it must exhibit upper- or lower-tail adjacency into neighboring parts in the corresponding multipartite random graph, yielding a per-vertex large deviation penalty of order $e^{-\Theta(n)}$.
\end{enumerate}
By collecting distinct penalties and matching them with the choices for the defect graph, we show almost all induced-$K_{1,k}$-free graphs close in cut metric to an optimal graphon $W$ have the pure structure given by $W$.

To prove that the one-part graphon on the lower-left of \Cref{fig:graphons} is the unique optimal graphon that represents the typical structure in the subcritical regime, we analyze lower-order terms of the count, analogously to \cite{perkins2025typical}. Specifically, for the distinguished one-part graphon $W^\ast$, the set $\{W^\ast=0\}$ on which $W^\ast$ is zero has strictly larger measure than that of every other optimal graphon. For graphs close in cut metric to $W^\ast$, this translates to an additional $\Theta(n)$ vertices on which a graph with $o(n^2)$ edges can exist. In particular, since there are $e^{\Theta(n\log n)}$ matchings on $\Theta(n)$ vertices, the gain from using the additional $\Theta(n)$ vertices among graphs close to $W^\ast$ outweighs the $e^{O(n)}$ choices for the vertex partition.

\subsection{Related work}
\subsubsection{Coloring number and criticality.}\label{subsec:col-num}
Generalizing the \erdos--Kleitman--Rothschild theorem, \citeauthor{promel1992asymptotic} \cite{promel1992asymptotic} proved that for a non-bipartite graph $H$ of chromatic number $r+1$, almost all $H$-free graphs are $r$-partite if and only if $\chi(H-e)<\chi(H)$ for some edge $e$. The natural induced analogue of this theorem, proved by \citeauthor{balogh2011excluding} \cite{balogh2011excluding}, is formulated as follows and gives important context for our main conjecture below.

For nonnegative integers $s$ and $t$, a graph $H$ is \emph{$(s,t)$-colorable} if there exists a partition $V(H)=V_1\cup\cdots\cup V_{s+t}$ such that $H[V_i]$ is a clique for all $1\leq i\leq s$ and $H[V_j]$ is an independent set for all $s+1\leq j\leq s+t$. The \emph{coloring number} of $H$, denoted $\tau(H)$, is the minimum integer $k$ such that for every pair $(s,t)$ of nonnegative integers with $s+t=k$, $H$ is $(s,t)$-colorable. If $s+t=\tau(H)-1$ and $H$ is not $(s,t)$-colorable then $(s,t)$ is called a \emph{witnessing pair} for $H$.
Thus if $(s,t)$ is a witnessing pair for $H$ then every $(s,t)$-colorable graph is induced-$H$-free.
Write $\cF(H,s,t)$ for the set of graphs $F$, minimal under induced subgraph containment, such that $H$ can be covered by $s$ cliques, $t$ independent sets, and an induced subgraph isomorphic to $F$. Also define
$$\cP_n(H,s,t):=\bigcap_{F\in\cF(H,s,t)}\cF^\ast_n(F)\,.$$
The main theorem of \citeauthor{balogh2011excluding} \cite{balogh2011excluding} states that for a graph $H$ of coloring number at least 3, almost all induced-$H$-free graphs are $(s,t)$-colorable for some witnessing pair $(s,t)$ if and only if $\cP_n(H,s,t)\subseteq\{K_n,K_n^c\}$ for all $s+t=\tau(H)-2$ and large enough $n$. We say $H$ is \emph{critical} if it satisfies the latter property. In other words, $\mathcal{F}(H,s,t)$ contains the minimal residual induced subgraphs of $H$ after using $s$ cliques and $t$ independent sets, and criticality states that when $s+t=\tau(H)-2$, the corresponding residual class is eventually just $\{K_n,K_n^c\}$, so no positive-entropy internal structure can exist in the $\tau(H)-1$ parts.

Below, we state a conjecture that criticality leads to a pure $(s,t)$-colorable structure in almost all $G\in\cF^\ast_{n,m}(H)$ when $m\sim\gamma\binom{n}{2}$, where the witnessing pair $(s,t)$ depends on $\gamma$.

\subsubsection{Edit distance and hereditary properties.}\label{subsec:edit-distance}
Our work is related to
literature on extremal problems for multigraphs, most notably by \citeauthor{marchant2010extremal} \cite{marchant2010extremal,marchant2011structure} and \citeauthor{thomason2011graphs} \cite{thomason2011graphs}. This literature focuses on calculating an extremal parameter $\kappa_p$ that is analogous to the Tur\'an density. Theorem 4.4 in \cite{marchant2010extremal} shows how to convert between $\kappa_p$ and the large deviation rate function for a hereditary property in $G(n,p)$. Consequently, one can read off from \cite{marchant2010extremal} the large deviation rate function for induced-$H$-freeness for various specific graphs $H$. An example in \cite{thomason2011graphs} states that for excluded induced stars and the specific parameter value $p=p_k$, there are infinitely many colored extremal structures, which matches the extremal characterization given in this paper.
Our work differs from this previous literature in that we focus mainly on the fixed-density setting, characterize the graphon optimizers for all parameter values, and explore fine-grained structural results.

\subsubsection{Other density-constrained models}
Numerous prior works have studied typical structure and phase transitions in random graph models defined by subgraph density constraints. A well-studied case is the uniformly random graph of given edge and triangle density; several works have characterized optimal graphons in subsets of the feasible region of densities and proven the existence of phase transitions \cite{radin2013phase,radin2014asymptotics,radin2015singularities}.

\subsection{Future directions}
Here we present a conjecture that generalizes \Cref{thm:main-entropy}. Let $H$ be a graph of coloring number $\tau(H)=k\geq3$. For $\gamma\in(0,1)$ and nonnegative integers $s$ and $t$ with $s+t=k-1$, define the following optimization problem:
\[\arraycolsep=1.5mm
\begin{array}{llll}
\Phi_{s,t}(\gamma) ~:= &\max & \displaystyle H(\beta)\sum_{1\leq i<j\leq s+t}\sigma_i\sigma_j \\[18pt]
&\text{s.t.} & 0\leq \sigma_1,\dots,\sigma_{s+t}\leq1 \,, \\[6pt]
&& 0\leq \beta\leq1 \,, \\[6pt]
&& \sigma_1+\cdots+\sigma_{s+t} \leq 1 \,, \\[6pt]
&& \displaystyle \sum_{i=1}^s\sigma_i^2 + 2\beta\sum_{1\leq i<j\leq s+t}\sigma_i\sigma_j = \gamma \,.
\end{array}\]
If $H$ has no isolated vertices and $(s,t)$ is a witnessing pair for $H$ then since every $(s,t)$-colorable graph along with some $\ell$ isolated vertices is induced-$H$-free, one can see that $\Phi_{s,t}(\gamma)$ gives a lower bound for the entropy density of induced-$H$-free graphs of edge density $\gamma+o(1)$. We expect that for many graphs $H$, the maximum of $\Phi_{s,t}(\gamma)$ over witnessing pairs $(s,t)$ characterizes the entropy density.

\begin{conjecture}
Let $H$ be a critical graph (as defined in \Cref{subsec:col-num}) of coloring number $\tau(H)\geq3$ with no isolated vertices. If $\gamma\in(0,1)$ is fixed and $m\sim\gamma\binom{n}{2}$ then
$$\lim_{n\to\infty}\frac{1}{\binom{n}{2}}\log N^\ast_{n,m}(H)=\max_{s,t}\,\Phi_{s,t}(\gamma)\,,$$
where the maximum is over witnessing pairs $(s,t)$ of $H$.
\end{conjecture}

From the results of this paper, the conjecture is true for stars and $C_4$. The assumption of criticality rules out nontrivial residual hereditary classes after covering $H$ with $\tau(H)-2$ witness classes, and is thus a natural hypothesis for the leading entropy to come only from $(s,t)$-colorable constructions. If $H$ has an isolated vertex then we do not expect witnessing pairs to fully characterize the entropy density. For example, for $H=K_{1,3}\sqcup K_1$, we expect there to be a phase transition in which the supercritical structure matches that of $K_{1,4}$ (i.e.\ a co-3-partite graph), but where the subcritical structure is the union of a co-bipartite graph and a sparse graph; the subcritical structure observed for $K_{1,4}$ is \emph{not} induced-$H$-free and thus cannot carry over to $H$. In this case, the subcritical entropy density would be given by a maximum over a larger class of covering pairs than just witnessing pairs.

In general, $\max\,\Phi_{s,t}(\cdot)$ need not be unimodal, convex, or continuously differentiable. For $C_7$, which has witnessing pairs $(3,0)$ and $(2,1)$, the graph of $\max\,\Phi_{s,t}(\cdot)$ is bimodal and has a first-order non-analyticity at $\gamma^\ast\approx0.622$. Another example is the complete multipartite graph $M_k$ with $k\geq3$ color classes of sizes $1,\dots,k$. One can show that $\tau(M_k)=k$ and every pair $(s,t)$ with $s+t=k-1$ is a witnessing pair. In this case, $\max\,\Phi_{s,t}(\cdot)$ has $k-1$ first-order non-analyticities.

\subsection{Organization}
In \Cref{sec:prelim}, we include background on colored graphs, regularity, graphons, and cut metric that will be used throughout the paper. In \Cref{sec:graphon-prob}, we solve the graphon variational problems, fully characterizing all optimizers. In \Cref{sec:entropy} we prove \Cref{thm:main-entropy,thm:main-rate,thm:ent-graphons,thm:gnp-graphons}. In \Cref{sec:supercritical,sec:critical,sec:subcritical}, we prove \Cref{thm:main-almostall} \ref{item:thm:main-almostall-super}, \ref{item:thm:main-critical}, and \ref{item:thm:main-almostall-sub}, respectively. In \Cref{sec:erdosrenyi}, we prove \Cref{thm:gnp-typ-struc}. In \Cref{sec:edge-coloring}, we establish the extremal and stability results for edge colorings that underlie the graphon analysis. In \Cref{sec:deferred}, we give deferred proofs from the earlier sections. Finally, in \Cref{sec:c4-free} we prove \Cref{thm:c4-main}.

\section{Preliminaries}
\label{sec:prelim}
\subsection{Notation}
We will write both $V(K_n)$ and $V$ for a vertex set of size $n$. The complement of a set or graph $X$ is denoted $X^c$. The base-2 logarithm is denoted $\log$ and the natural logarithm is denoted $\ln$. Induced subgraph containment is denoted $H\leq G$. For subsets $A,B\subseteq V$, we write $G[A,B]$ for the graph on $A\cup B$ containing all edges of $G$ with one endpoint in $A$ and another in $B$. To emphasize the underlying graph $G$, we write $e_G(\cdot)$, $d_G(\cdot)$, $N_G(\cdot)$, etc. We use the superscript $c$ to denote neighborhood and degree in the complementary graph $G^c$; for example, $N^c(v)=V\setminus(N(v)\cup\{v\})$, $N^c(v,A):=N^c(v)\cap A$, $d^c(v):=|N^c(v)|$, and $d^c(v,A):=|N^c(v,A)|$. For vertices $i,j$ we use the shorthand $ij:=\{i,j\}$. If $\cH$ is a collection of graphs then we write $\cH(n)$ for the set of $H\in\cH$ on $n$ vertices; conversely, if $\cH(n)$ is defined for various $n$ then we write $\cH:=\bigcup_{n\geq1}\cH(n)$.

\subsection{Janson's inequality}
The following version of Janson's inequality due to \citeauthor{riordan2015janson} \cite{riordan2015janson} applies to families of events that are positively correlated and plays an important role in this paper. On several occasions, we will define a family of copies of $K_{1,k}$ and use \Cref{thm:JansonsInequality} to bound the probability that none of these copies appears in a random graph.

\begin{theorem}[\normalfont{\cite{riordan2015janson}}]\label{thm:JansonsInequality}
Let $(\Omega,\cF,\bbP)$ be a probability space. Let $\cI\subseteq\cF$ be a family of events such that for all $A,B\in\cI$ we have $\bbP\{A\cap B\}\geq\bbP\{A\}\bbP\{B\}$, $A\cap B\in\cI$, and $A\cup B\in\cI$. Let $B_1,\dots,B_n\in\cI$ and define
$$\mu:=\sum_{i=1}^n\bbP\{B_i\}\hspace{7mm}\text{and}\hspace{7mm}\Delta:=\sum_{i\sim j}\bbP\{B_i\cap B_j\}\,,$$
the second sum being over unordered pairs $i\neq j$ such that $B_i$, $B_j$ are not independent. Then
$$\bbP\!\left\{\bigcap_{i=1}^nB_i^c\right\}\leq\exp\!\left(-\min\left\{\frac{\mu}{2}\,,\,\frac{\mu^2}{4\Delta}\right\}\right)\,.$$
\end{theorem}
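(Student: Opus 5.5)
The plan is the standard "sequential conditioning plus thinning" proof of Janson's inequality, adapted to the abstract setting of the family $\cI$. I will read the hypothesis that $B_i$ and $B_j$ are \emph{independent} for $i\not\sim j$ as the stronger convention used by \citeauthor{riordan2015janson}: for each $i$, the event $B_i$ is independent of the $\sigma$-algebra generated by $\{B_j: j\not\sim i\}$. This holds automatically in the product-space applications in this paper. The proof has two stages. First I prove the basic bound
\[
\bbP\Big\{\bigcap_{i=1}^n B_i^c\Big\}\leq \exp(-\mu+\Delta).
\]
Then I obtain the stated $\min\{\mu/2,\mu^2/(4\Delta)\}$ by applying this bound to a well-chosen subfamily.

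For the first stage, write
\[
\bbP\Big\{\bigcap_i B_i^c\Big\}=\prod_{i=1}^n\big(1-r_i\Big),\qquad r_i:=\bbP\Big\{B_i\,\Big|\,\bigcap_{j<i}B_j^c\Big\}.
\]
Since $1-r_i\leq e^{-r_i}$, it suffices to show $r_i\geq \bbP\{B_i\}-\sum_{j<i,\,j\sim i}\bbP\{B_i\cap B_j\}$. Fix $i$ and set $D_0:=\bigcup_{j<i,\,j\not\sim i}B_j$ and $D_1:=\bigcup_{j<i,\,j\sim i}B_j$. Both lie in $\cI$ because $\cI$ is closed under unions. I will then establish the chain
\begin{align*}
\bbP\{B_i\cap D_0^c\cap D_1^c\}&\geq \bbP\{B_i\cap D_0^c\}-\sum_{j<i,\,j\sim i}\bbP\{B_i\cap B_j\cap D_0^c\}\\
&\geq \bbP\{B_i\}\bbP\{D_0^c\}-\sum_{j<i,\,j\sim i}\bbP\{B_i\cap B_j\}\,\bbP\{D_0^c\}.
\end{align*}
The three ingredients are as follows.
\begin{itemize}
\item The first line is a union bound.
\item The equality $\bbP\{B_i\cap D_0^c\}=\bbP\{B_i\}\bbP\{D_0^c\}$ is the independence convention.
\item The bound $\bbP\{A\cap D_0^c\}\leq \bbP\{A\}\bbP\{D_0^c\}$ for $A=B_i\cap B_j\in\cI$ follows from positive correlation within $\cI$, since $\bbP\{A\cap D_0^c\}=\bbP\{A\}-\bbP\{A\cap D_0\}\leq \bbP\{A\}(1-\bbP\{D_0\})$. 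This Harris-type step is exactly why closure of $\cI$ under $\cap$ and $\cup$ is assumed.
\end{itemize}
Dividing by $\bbP\{\bigcap_{j<i}B_j^c\}$, which is at most $\bbP\{D_0^c\}$, gives the claimed lower bound on $r_i$. Summing over $i$ yields $\exp(-\mu+\Delta)$, because each dependent pair is counted once, namely with $j<i$.

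For the second stage, observe that for any subset $S\subseteq[n]$ we have $\bigcap_{i\in[n]}B_i^c\subseteq\bigcap_{i\in S}B_i^c$. Hence the basic bound gives $\bbP\{\bigcap B_i^c\}\leq\exp(-\mu_S+\Delta_S)$, where $\mu_S$ and $\Delta_S$ are computed over the subfamily $S$. Choose $S$ at random by keeping each index independently with probability $q\in(0,1]$; then $\E[\mu_S-\Delta_S]=q\mu-q^2\Delta$, so some deterministic $S$ attains $\mu_S-\Delta_S\geq q\mu-q^2\Delta$. The choice of $q$ splits into two cases:
\begin{itemize}
\item If $\Delta\leq\mu/2$, take $q=1$, giving exponent at least $\mu-\Delta\geq\mu/2$.
\item Otherwise take $q=\mu/(2\Delta)<1$, giving exponent $\mu^2/(2\Delta)-\mu^2/(4\Delta)=\mu^2/(4\Delta)$.
\end{itemize}
In either case the exponent is at least $\min\{\mu/2,\mu^2/(4\Delta)\}$, which is the claim.

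The main obstacle is the middle inequality of the chain. There one must justify both the exact factorization $\bbP\{B_i\cap D_0^c\}=\bbP\{B_i\}\bbP\{D_0^c\}$, which needs joint rather than merely pairwise independence from the non-neighbours, and the correlation inequality for $B_i\cap B_j$ against the union $D_0$. I would handle both by working in the precise setting of the cited source: events in $\cI$ are increasing and determined by coordinates of a product space, and "$i\sim j$" means the coordinate sets overlap.
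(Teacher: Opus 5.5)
Your argument is the standard one, and its estimates are correct: sequential conditioning in the style of Boppana--Spencer, with the Harris-type step applied to $B_i\cap B_j$ against $D_0$, followed by random thinning with $q=\min\{1,\mu/(2\Delta)\}$. That step is $\bbP\{B_i\cap B_j\cap D_0^c\}\le\bbP\{B_i\cap B_j\}\bbP\{D_0^c\}$, and it follows directly from the hypotheses since $B_i\cap B_j,D_0\in\cI$. The paper gives no proof of its own and only cites Riordan--Warnke.

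The gap is your opening move. The statement sums $\Delta$ over pairs for which $B_i,B_j$ are not \emph{pairwise} independent. You replace this with the convention that $B_i$ is independent of the $\sigma$-algebra generated by all its non-neighbours, claiming the factorization $\bbP\{B_i\cap D_0^c\}=\bbP\{B_i\}\bbP\{D_0^c\}$ needs it. As written, you therefore prove the bound only under an extra hypothesis. Alternatively, you prove it with $\Delta$ summed over a possibly coarser relation, which gives a larger $\Delta$ and a weaker bound. Retreating to increasing events in a product space gives up the abstract lattice setting of the statement altogether.

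The missing observation is that the hypotheses on $\cI$ upgrade pairwise independence to independence from unions. Let $J$ be a nonempty set of indices with $B_j$ independent of $B_i$ for every $j\in J$, and put $D_J:=\bigcup_{j\in J}B_j\in\cI$. Then $\bbP\{B_i\cap D_J\}=\bbP\{B_i\}\bbP\{D_J\}$. The case $|J|=1$ is the hypothesis. For $J=J'\cup\{j\}$ with $J'\neq\emptyset$,
\begin{align*}
\bbP\{B_i\cap D_J\}&=\bbP\{B_i\cap D_{J'}\}+\bbP\{B_i\cap B_j\}-\bbP\{B_i\cap(D_{J'}\cap B_j)\}\\
&\le\bbP\{B_i\}\bigl(\bbP\{D_{J'}\}+\bbP\{B_j\}-\bbP\{D_{J'}\cap B_j\}\bigr)=\bbP\{B_i\}\bbP\{D_J\}\,.
\end{align*}
This uses the induction hypothesis, pairwise independence, and positive correlation of $B_i$ with $D_{J'}\cap B_j\in\cI$. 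The reverse inequality is positive correlation of $B_i$ with $D_J\in\cI$.

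Applying this with $J=\{j<i:\ j\not\sim i\}$ gives your factorization under exactly the stated hypothesis. The rest of your proof then goes through unchanged, including the thinning, since restricting to $S$ does not change which pairs are independent. So neither half of your ``main obstacle'' actually requires leaving the setting of the statement.
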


\subsection{Colored homomorphisms}\label{subsec:colored-graphs}
Graph regularity and induced embeddings are important tools in our proofs. It is convenient and standard \cite{marchant2010extremal,marchant2011structure,bottcher2012perfect,martin2013edit} to work with vertex- and edge-colored graphs that encode which induced subgraphs can be embedded into a regular partition. We first introduce colored graphs and homomorphisms in this subsection, and then their applications to our setting in \Cref{subsec:regularity}.

A \emph{colored graph} $J=(F,\sigma)$ is a graph $F$ with a 3-coloring $\sigma:V(F)\cup E(F)\to\{\red,\green,\blue\}$ of its vertices and edges such that $\sigma(V(F))\subseteq\{\green,\blue\}$. We write $V(J):=V(F)$ and $E(J):=E(F)$. Let $G$ be a graph and $J$ a colored graph. A \emph{colored homomorphism} from $G$ to $J$ is a mapping $\phi:V(G)\to V(J)$ such that the following conditions hold:
\begin{enumerate}[
  label=\textit{(\roman*)},
  ref=(\textit{\roman*}),
  topsep=5pt
]
    \item If $ij\in E(G)$ and $\phi(i)\neq\phi(j)$ then $\phi(i)\phi(j)\in E(J)$.
    \item If $uv\in E(G)$ then either $\phi(u)\neq\phi(v)$ and $\sigma(\phi(u)\phi(v))\in\{\red,\blue\}$, or $\phi(u)=\phi(v)$ and $\sigma(\phi(u))=\blue$.
    \item If $uv\not\in E(G)$ then either $\phi(u)\neq\phi(v)$ and $\sigma(\phi(u)\phi(v))\in\{\green,\red\}$, or $\phi(u)=\phi(v)$ and $\sigma(\phi(u))=\green$.
\end{enumerate}
In other words, edges (and only edges) are embedded into \blue, non-edges (and only non-edges) are embedded into \green, and both can be embedded into \red. We write $G\hookrightarrow J$ if there exists a colored homomorphism from $G$ to $J$. For example, $C_4\hookrightarrow\grg$ and $K_{1,3}\hookrightarrow\exampletriangle$.

Let $\cC(n)$ be the set of colored graphs on $n$ vertices, and let
$$\cC(n,G):=\{J\in\cC(n):G\nHookrightarrow J\}\,.$$
An \emph{edge-colored graph} $J=(F,\tau)$ is a graph $F$ with a 3-coloring $\tau:E(F)\to\{\red,\green,\blue\}$ of its edges. Again, we write $V(J):=V(F)$ and $E(J):=E(F)$.

\subsection{Graph regularity}\label{subsec:regularity}
Here we introduce \emph{types}, which are regularity graphs used for subgraph embedding. We mostly follow the framework of \citeauthor{bottcher2012perfect} \cite{bottcher2012perfect}. To simplify the exposition, we instead directly define a type as a structure that satisfies the conclusions of the induced embedding lemma \cite[Lemma~2.9]{bottcher2012perfect}. In analogy with the regularity lemma, the results of \cite{bottcher2012perfect} imply such a structure always exists, as we show in \Cref{lemma:type-lemma}.

A pair $A,B\subseteq V(G)$ is \emph{$\epsilon$-regular} in a graph $G$ if for all $A'\subseteq A$ and $B'\subseteq B$ such that $\abs{A'}\geq\epsilon\abs{A}$ and $\abs{B'}\geq\epsilon\abs{B}$, $\abs{d(A',B')-d(A,B)}\leq\epsilon$, where $d(X,Y):=\frac{e(X,Y)}{|X|\cdot|Y|}$. An \emph{$\epsilon$-regular partition} of $G$ is a partition $V(G)=V_0\cup V_1\cup\cdots\cup V_k$ such that $\abs{V_1}=\cdots=\abs{V_k}$, $\abs{V_0}\leq\epsilon\abs{V(G)}$, and all but at most $\epsilon\binom{k}{2}$ of the pairs $1\leq i<j\leq k$ are $\epsilon$-regular.

\begin{definition}[Type]\label{dfn:type}
Let $G$ be a graph on $n$ vertices, and let $\epsilon,\delta\in(0,\frac{1}{2})$ and $\ell\in\N$. An \emph{$(\epsilon,\delta,\ell)$-type} for $G$ is a pair $(P,R)$ where $P=\{V_0,V_1,\dots,V_k\}$ is an $\epsilon$-regular partition of $G$ and $R=([k],E_R,\sigma)$ is a colored graph such that the following conditions hold:
\begin{enumerate}[
  label=\textit{(\roman*)},
  ref=(\textit{\roman*}),
  topsep=5pt
]
    \item For all $1\leq i<j\leq k$, $ij\in E_R$ if and only if $\{V_i,V_j\}$ is $\epsilon$-regular in $G$.
    \item For all $ij\in E_R$, we have $\sigma(ij)=\green$ if $d(V_i,V_j)<\delta$; $\sigma(ij)=\red$ if $d(V_i,V_j)\in[\delta,1-\delta]$; and $\sigma(ij)=\blue$ if $d(V_i,V_j)>1-\delta$.
    \item\label{item:type-colored-homo} If $H$ is a graph on at most $\ell$ vertices and $H\hookrightarrow R$ then $H\leq G$.
\end{enumerate}
If all the above conditions hold then we say $G$ \emph{has} type $(P,R)$.
\end{definition}

\subsection{Graphons and entropy}\label{subsec:graphons}
The \emph{entropy} of a graphon $W\in\cW$ is defined as
$$H(W):=\int_{[0,1]^2}H(W(x,y))\,dx\,dy\,,$$
and for all $p\in[0,1]$, the \emph{relative entropy} of $W$ is defined as
$$I_p(W):=\int_{[0,1]^2}I_p(W(x,y))\,dx\,dy\,,$$
The following lemmas, which are \cite[Propositions~2.10~\&~2.11]{perkins2025typical}, state that the entropy density and rate function for induced-$H$-freeness are given by variational problems over graphons. We write $t_\ind$ and $t$ for the induced and non-induced homomorphism densities; these are written explicitly in \eqref{eqn:var-prob-intro-gamma} in the case of $K_{1,k}$ and $K_2$, and we refer the reader to \cite{lovasz2012large} for the general definition. We also define
$$\rand(W):=\abs{\{(x,y)\in[0,1]^2:0<W(x,y)<1\}}\,.$$

\begin{lemma}\normalfont{(\cite{perkins2025typical})}\label{lemma:LDPforGNPkl}
Let $p\in(0,1)$ be a constant and $G\sim G(n,p)$. If $H$ is a fixed graph and
$$\cF:=\{W\in\cW:t_{\ind}(H,W)=0,\,\rand(W)>0\}$$
is nonempty then
$$\lim_{n\to\infty}\frac{1}{\binom{n}{2}}\log\bbP\{G\in\cF^\ast_n(H)\}=-\inf_{W\in\cF}I_p(W)\,.$$
\end{lemma}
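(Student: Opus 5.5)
The plan is to derive both bounds from the Chatterjee--Varadhan large deviation principle for $G(n,p)$ in the cut metric on the space $\widetilde{\cW}$ of graphons modulo equivalence \cite{chatterjee2011large}. This principle has good rate function $I_p$ and speed $\binom n2$. Write $\cF_0:=\{W:t_\ind(H,W)=0\}$, so that $\cF=\cF_0\cap\{\rand(W)>0\}$. Throughout I use that $t_\ind(H,\cdot)$ is continuous in the cut metric, and that $I_p$ is lower semicontinuous on the compact space $\widetilde{\cW}$.

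\emph{Upper bound.} If $G$ is induced-$H$-free on $n$ vertices, every induced homomorphism counted by $t_\ind(H,W_G)$ is non-injective, so $t_\ind(H,W_G)=O_H(1/n)$. Hence for each $\eta>0$ and large $n$, the event $G\in\cF^\ast_n(H)$ implies $W_G\in\cF_\eta:=\{t_\ind(H,W)\leq\eta\}$, which is a closed set. The LDP upper bound gives
\[\limsup_{n\to\infty}\binom n2^{-1}\log\bbP\{G\in\cF^\ast_n(H)\}\leq-\inf_{\cF_\eta}I_p .\]
Letting $\eta\downarrow0$, compactness of $\widetilde{\cW}$ and lower semicontinuity of $I_p$ give $\inf_{\cF_\eta}I_p\to\inf_{\cF_0}I_p$. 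Since $\cF\subseteq\cF_0$ trivially gives $\inf_{\cF_0}I_p\leq\inf_{\cF}I_p$, this yields the upper bound once we also have the reverse inequality $\inf_{\cF}I_p\le\inf_{\cF_0}I_p$, which is the main obstacle below.

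\emph{Lower bound.} The set $\cF_0$ has empty interior, so I would not use the LDP lower bound. Instead I would use a direct change of measure. Fix $W\in\cF_0$, sample $x_1,\dots,x_n$ uniformly, and let $Q$ be the law of $G(n,W)$ given these labels. Since $t_\ind(H,W)=0$, the sample is induced-$H$-free with probability one, so $Q(\cF^\ast_n(H))=1$. The entropy inequality $\bbP(A)\geq Q(A)\exp\bigl(-(D(Q\Vert\bbP)+e^{-1})/Q(A)\bigr)$ then applies with
\[D(Q\Vert\bbP)=\sum_{i<j}I_p(W(x_i,x_j)),\]
measured in the same logarithm base as $I_p$, which is $(1+o(1))\binom n2 I_p(W)$ for typical labels by the law of large numbers for $U$-statistics. (If $W$ takes values $0$ or $1$ this is still fine, since $I_p(0),I_p(1)<\infty$.) This gives the lower bound $-I_p(W)$ for every $W\in\cF_0$, and hence $-\inf_{\cF_0}I_p$.

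\emph{Main obstacle.} It remains to show $\inf_{\cF}I_p=\inf_{\cF_0}I_p$, i.e.\ that $\{0,1\}$-valued feasible graphons (those with $\rand=0$) do not beat $\cF$, where $\cF$ is nonempty by hypothesis. My first attempt is a perturbation argument. Take $W\in\cF_0$ with $\rand(W)=0$ and $W'\in\cF$, and build $W_\delta$ by letting $W$ occupy a $(1-\delta)$-fraction of vertices and letting a $\delta$-fraction carry a rescaled copy of $W'$. The interface between the two parts must be chosen, for instance as all-$0$ or all-$1$, or by copying the adjacency of a vertex of $W$, so that $t_\ind(H,W_\delta)=0$. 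Since $I_p$ is bounded on $[0,1]$, $I_p(W_\delta)\leq I_p(W)+O(\delta)$, and letting $\delta\to0$ gives the claim.

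The delicate point is feasibility of this gluing. My first choice is a vertex-blowup: pick a vertex class of a step approximation of $W$ and replace a small sub-part of it by $W'$ scaled, keeping the original adjacency pattern to the rest. Then any induced $H$ in $G(n,W_\delta)$ must use at least two vertices in the blown-up part, and I would need to rule this out using the structure of $W'$. If this fails for general $H$, the fallback is to note that in the paper's applications the infimum is attained by graphons with $\rand>0$ (for instance the optimizers in \Cref{thm:gnp-graphons}), so the comparison can be verified directly there.
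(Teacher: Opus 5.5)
Your two analytic steps are sound. The Chatterjee--Varadhan upper bound on the closed sets $\{t_{\ind}(H,W)\le\eta\}$, together with the change of measure against $G(n,W)$ for any $W$ with $t_{\ind}(H,W)=0$, proves that the limit equals $-\inf_{\cF_0}I_p$, where $\cF_0:=\{W:t_{\ind}(H,W)=0\}$. This $\cF_0$-form is what the proof of \Cref{thm:main-rate} actually invokes, since $\cF$ is defined there without the condition $\rand(W)>0$. The paper itself does not prove the lemma; it cites \cite{perkins2025typical}.

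The gap is the identity $\inf_{\cF}I_p=\inf_{\cF_0}I_p$, which you leave open.

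The vertex-blowup you settle on is not feasible for an arbitrary $W'\in\cF$ when $H$ has a nontrivial module, and you give no general rule for choosing $W'$. For example, take $H=K_{1,k}$, $W\equiv1$, and $W'=W_{\blambda}$ with $\blambda=((\lambda_1,K_{k-1}))$ and $\lambda_1<1$. Then a vertex outside the blown-up part, together with $k$ vertices inside it from the region where $W'$ vanishes, spans an induced $K_{1,k}$ with positive density.

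Your fallback is also incorrect. By \Cref{thm:gnp-graphons}, for $p<p_k$ the unique minimizer is the all-zero graphon, which has $\rand=0$. So the infimum over $\cF$ is not attained, and there is no optimizer with $\rand>0$ at which to verify the comparison directly. This is exactly the regime where the approximation statement is needed.

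The missing observation is that $H$ or $\overline{H}$ is connected, and this makes your first gluing idea work with a constant interface. Given $W\in\cF_0$ and $W'\in\cF$, define $W_\delta$ as follows:
\begin{itemize}
\item a rescaled copy of $W$ on $[0,1-\delta]^2$;
\item a rescaled copy of $W'$ on $(1-\delta,1]^2$;
\item the constant $c$ on the two cross rectangles, with $c=0$ if $H$ is connected and $c=1$ otherwise.
\end{itemize}
Consider the integrand of $t_{\ind}(H,W_\delta)$ for any placement of the vertices of $H$ that meets both sides. If $c=0$, some edge of $H$ crosses. If $c=1$, some non-edge of $H$ crosses, because $\overline{H}$ is connected. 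Either way the corresponding factor vanishes, so $t_{\ind}(H,W_\delta)=(1-\delta)^{v(H)}t_{\ind}(H,W)+\delta^{v(H)}t_{\ind}(H,W')=0$.

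Moreover $\rand(W_\delta)\ge\delta^2\rand(W')>0$, and $I_p(W_\delta)=(1-\delta)^2I_p(W)+\delta^2I_p(W')+2\delta(1-\delta)I_p(c)$. Since $I_p$ is bounded on $[0,1]$ for $p\in(0,1)$, this tends to $I_p(W)$ as $\delta\to0$. Hence every $W\in\cF_0$ is an $I_p$-limit of elements of $\cF$, which gives $\inf_{\cF}I_p\le\inf_{\cF_0}I_p$ and completes your argument.
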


\begin{lemma}\normalfont{(\cite{perkins2025typical})}\label{lemma:FnmGraphonOptAsymps}
Let $\gamma\in(0,1)$, $n,m\in\N$, and $m\sim\gamma\binom{n}{2}$. If $H$ is a fixed graph and
$$\cF_\gamma:=\{W\in\cW:t(K_2,W)=\gamma,\,t_{\ind}(H,W)=0,\,\rand(W)>0\}$$
is nonempty then
$$\lim_{n\to\infty}\frac{1}{\binom{n}{2}}\log|\cF^\ast_{n,m}(H)|=\sup_{W\in\cF_\gamma}H(W)\,.$$
\end{lemma}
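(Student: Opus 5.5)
The plan is to prove the two inequalities separately. Throughout, $N:=\binom{n}{2}$ and $W_G$ denotes the graphon of a graph $G$. The upper bound will come from compactness of graphon space in the cut metric together with a large-deviation upper bound for uniform graphs. The lower bound will come from an explicit construction: sample from a slightly perturbed version of an almost-optimal $W\in\cF_\gamma$, with the vertex positions fixed in advance.

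\emph{Upper bound.} Write $\overline{\cF}_\gamma:=\{W:t(K_2,W)=\gamma,\ t_\ind(H,W)=0\}$; this set is closed in the cut metric, since both densities are continuous. Every $G\in\cF^\ast_{n,m}(H)$ has $t_\ind(H,W_G)=O(1/n)$ and $t(K_2,W_G)=\gamma+o(1)$. Hence, for each fixed $\eta>0$ and all large $n$, every such $W_G$ lies within cut distance $\eta$ of $\overline{\cF}_\gamma$.

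\begin{enumerate}[(i)]
\item The space of unlabeled graphons is compact, so $\overline{\cF}_\gamma$ can be covered by finitely many cut balls $B(W_i,\eta)$.
\item The Chatterjee--Varadhan large deviation principle for $G(n,1/2)$ bounds the number of $n$-vertex graphs $G$ with $\delta_\square(W_G,W_i)\le 2\eta$ by $2^{N(\sup_{B(W_i,3\eta)}H+o(1))}$.
\item The entropy $H(W)$ is upper semicontinuous in the cut topology. Indeed, $H(\cdot)$ is concave, and cut convergence of step-averages implies weak convergence, so $\limsup H(W_n)\le H(W)$. Letting $\eta\to0$ then gives
$$\limsup_{n\to\infty}\tfrac1N\log N^\ast_{n,m}(H)\le\sup_{\overline{\cF}_\gamma}H.$$
\end{enumerate}

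The graphons in $\overline{\cF}_\gamma\setminus\cF_\gamma$ have $\rand(W)=0$ and hence entropy $0$. The supremum over $\cF_\gamma\ne\emptyset$ is at least $0$, so the upper bound equals $\sup_{\cF_\gamma}H$.

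\emph{Lower bound.} Fix $W\in\cF_\gamma$ and $\epsilon>0$.

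First I would adjust the edge density to exactly $\gamma_n:=m/N=\gamma+o(1)$ without changing the support. Since $\rand(W)>0$, the family $W_s:=W+s\,W(1-W)$, $s\in[-1,1]$, has edge density that is continuous and strictly increasing in $s$. Its values $0$ and $1$ occur exactly where $W$ takes them, so $t_\ind(H,W_s)=0$: the integrand vanishes on the same set. Choosing $s=s_n\to0$ gives density $\gamma_n$, and $H(W_{s_n})\to H(W)$ by dominated convergence.

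Next, sample $x_1,\dots,x_n$ i.i.d.\ uniform. Almost surely, every $v(H)$-tuple of distinct positions satisfies
$$\Pr\{G(x_1,\dots,x_n;W_{s_n})[\text{tuple}]\cong H\}=0,$$
because $t_\ind(H,W_{s_n})=0$ means this probability is zero for almost every tuple of positions. Fix such a realization, also chosen so that the law of large numbers for U-statistics gives
$$\sum_{i<j}H(W_{s_n}(x_i,x_j))=N(H(W)+o(1))\quad\text{and}\quad\sum_{i<j}W_{s_n}(x_i,x_j)=m+o(n^2)$$
with a $\Theta(n^2)$ variance term. Then the product measure $\mu$ of independent Bernoulli$(W_{s_n}(x_i,x_j))$ edges is supported on induced-$H$-free graphs.

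It then suffices to show $\mu\{e(G)=m\}\ge 2^{-o(N)}$, and that with probability $1-o(1)$ each outcome has $\mu$-mass at most $2^{-N(H(W)-\epsilon)}$. The second statement is an AEP/Chebyshev bound on $\log\mu(G)$, which is a sum of independent terms with variance $O(n^2)$. For the first, I would also fix the mean exactly at $m$ by re-tuning $s$ given $x$, and then use a local limit theorem (or simply Chebyshev plus unimodality) to get $\mu\{e=m\}\ge \Omega(1/n^2)$. Intersecting the two events shows at least $2^{N(H(W)-\epsilon)-O(\log n)}$ graphs lie in $\cF^\ast_{n,m}(H)$. Letting $\epsilon\to0$ and taking the supremum over $W$ finishes the proof.

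The main obstacle is this exact-edge-count step: perturbing the edge density without enlarging the support of $W$, and showing the conditioned product measure still carries near-full entropy. The support-preserving family $W+sW(1-W)$ together with $\rand(W)>0$ is what makes it work.
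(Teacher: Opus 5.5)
The paper does not prove this lemma; it imports it from \cite{perkins2025typical}, so there is no in-paper argument to match. Your route is the standard one: the Chatterjee--Varadhan upper bound on finitely many closed cut balls covering $\overline{\cF}_\gamma$, combined with upper semicontinuity of $H$, and for the lower bound a $W$-random graph with the positions $x_1,\dots,x_n$ frozen. Its main steps are sound.

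Two points in the lower bound are particularly good. First, once the positions avoid a null set, the product measure $\mu$ is supported on induced-$H$-free graphs: there are finitely many tuples, and each induces $H$ with probability equal to the integrand of $t_\ind(H,W)$ at a uniform point. Second, $W_s=W(1+s(1-W))$ and $1-W_s=(1-W)(1-sW)$. So for $|s|<1$ the family $W_s$ has exactly the zero and one sets of $W$, and $t_\ind(H,W_s)=0$ for every such $s$. This is what lets you retune $s$ after seeing $x$.

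The step that fails as written is the final reduction. An event of probability $1-o(1)$ need not intersect an event of probability $2^{-o(N)}$, or of probability $\Omega(n^{-2})$. So ``$\mu\{e(G)=m\}\ge 2^{-o(N)}$ and typical mass with probability $1-o(1)$'' does not suffice. What you need is $\mu(\text{atypical})=o(\mu\{e(G)=m\})$.

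With your own numbers this comparison does not close. Chebyshev applied to $-\log\mu(G)$, whose variance is $O(n^2)$, bounds the atypical mass by $O(\epsilon^{-2}n^{-2})$. That is not dominated by the $\Omega(n^{-2})$ lower bound you state for $\mu\{e(G)=m\}$.

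Either of the following repairs it.
\begin{enumerate}[(a)]
\item Your local estimate is really $\Theta(1/n)$. Once the mean is exactly $m$, $\mathrm{Var}_\mu(e(G))=\sum_{i<j}W_s(1-W_s)(x_i,x_j)=\Theta(n^2)$ because $\rand(W)>0$. The local limit theorem for sums of independent Bernoulli variables with diverging variance then gives $\mu\{e(G)=m\}\ge c/n$, which beats $\epsilon^{-2}n^{-2}$.
\item Replace Chebyshev by an exponential bound. The quantity $-\log\mu(G)$ is a sum of independent nonnegative terms with uniformly bounded second moments $p\log^2 p+(1-p)\log^2(1-p)$. The lower-tail inequality $\bbP\{S\le \E S-t\}\le\exp\bigl(-t^2/(2\sum_i\E X_i^2)\bigr)$ for nonnegative summands then gives atypical mass $e^{-\Omega(\epsilon^2n^2)}$, which beats any polynomial.
\end{enumerate}
With either fix, the number of graphs in $\cF^\ast_{n,m}(H)$ is at least $\frac{c}{2n}\,2^{N(H(W)-\epsilon)}$, and the rest of your argument goes through.
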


We now define graphons naturally associated to matrices, graphs, and types. For integers $i,j\in[n]$, define
\begin{equation}\label{dfn:snij}
S^n_{ij}:=\left(\frac{i-1}{n},\frac{i}{n}\right)\times\left(\frac{j-1}{n},\frac{j}{n}\right)\cup\left(\frac{j-1}{n},\frac{j}{n}\right)\times\left(\frac{i-1}{n},\frac{i}{n}\right)\subseteq[0,1]^2\,.
\end{equation}
For a symmetric matrix $M\in\R^{n\times n}$, the graphon $W_M$ associated to $M$ is defined as
$$W_M:=\sum_{1\leq i\leq j\leq n}M_{ij}\,\bsone_{S_{ij}^n}\,.$$
If $G$ is either an unweighted or weighted graph on $n$ vertices with adjacency or weight matrix $A$ then we define the graphon $W_G:=W_A$.
If a graph $G$ has the $(\epsilon,\delta,\ell)$-type $T=(P,R)$, where $P=\{V_0,V_1,\dots,V_k\}$, then the graphon $W_T$ associated with $T$ and $G$ is defined by
$$W_T:=\sum_{1\leq i\leq j\leq k}d_G(V_i,V_j)\,\bsone_{S_{ij}^k}\,.$$
Note that the exceptional set $V_0$ is not represented in the graphon $W_T$.

The cut metric and related notions of distance between graphons are important tools in this paper. We record the necessary definitions here and refer the reader to \cite{lovasz2012large} for more details. The \emph{cut norm} of a graphon $W\in\cW$ is defined as
$$\norm{W}_\square:=\sup_{S,T\subseteq[0,1]}\bigg|\int_{S\times T}W(x,y)\,dx\,dy\bigg|\,.$$
For a graphon $W\in\cW$ and a map $\varphi:[0,1]\to[0,1]$, define the graphon $W^\varphi$ by $W^\varphi(x,y)=W(\varphi(x),\varphi(y))$. The \emph{cut metric} is a distance between graphons $W,W'\in\cW$ defined as
$$\delta_\square(W,W'):=\inf_\varphi\,\norm{W^\varphi-W'}_\square\,,$$
where the infimum is over measure-preserving bijections $\varphi$. The cut metric $\delta_\square$ is defined on pairs of graphons, (weighted or unweighted) graphs, and matrices, and any combination thereof by using the corresponding graphon associated to those objects (i.e.\ $W_G$ or $W_M$). The \emph{cut norm} of a real $n\times n$ matrix $A$ is defined as
$$\norm{A}_\square:=\frac{1}{n^2}\,\max_{S,T\subseteq[n]}\Bigg|\sum_{i\in S,j\in T}A_{ij}\Bigg|\,.$$
For weighted graphs $G$ and $H$ on $n$ vertices with adjacency matrices $A_G$ and $A_H$, we define the two notions of distance
\begin{align*}
d_\square(G,H) &:= \norm{A_G-A_H}_\square \,, \\[3pt]
\widehat{\delta}_\square(G,H) &:= \min_P\,\norm{A_G-P^TA_HP}_\square \,,
\end{align*}
where the minimum is over $n\times n$ permutation matrices. Finally, the \emph{edit distance} between two graphs $G$ and $H$ is defined as
$$d_1(G,H):=\frac{1}{n^2}\,|E(G)\,\triangle\,E(H)|\,,$$
where $\triangle$ is the symmetric difference.

The following lemma plays an important role in the solution to the graphon variational problem in \Cref{sec:graphon-prob}: it allows us to transfer from a graphon $W$ with zero induced $H$ density to a sequence of types that converge to $W$. The lemma is proven in \Cref{sec:deferred}.

\begin{restatable}{lemma}{GraphonSeq}\label{lemma:UFHatGraphonSeq}
Fix a graph $F$. For all $W\in\cW$ such that $t_{\ind}(F,W)=0$ and all $\delta>0$, there exists a sequence $\{W_m\}_{m\in\N}$ of graphons such that the following conditions hold:
\begin{enumerate}[
  label=\textit{(\roman*)},
  ref=(\textit{\roman*}),
  topsep=5pt
]
	\item $\delta_\square(W_m,W)\to0$ as $m\to\infty$.
    \item There exists a graphon $W'$ such that $\norm{W_m-W'}_1\to0$ as $m\to\infty$.
	\item For all $m$ there exists $\eta_m>0$ such that $W_m$ is the graphon associated with an $(\eta_m,\delta,v(F))$-type $(P_m,R_m)$ of an induced-$F$-free graph.
	\item $\eta_m\to0$ and $v(R_m)\to\infty$ as $m\to\infty$.
\end{enumerate}
\end{restatable}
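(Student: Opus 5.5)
The plan is to build the sequence from $W$-random graphs, which are induced-$F$-free with probability one, and pass each through the type lemma (\Cref{lemma:type-lemma}).

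\textbf{Sampling.} Since $t_{\ind}(F,W)=0$, for every $N$ the $W$-random graph $G_N:=G(N,W)$ is induced-$F$-free almost surely. Moreover $\delta_\square(G_N,W)\to0$ almost surely. Fix an increasing sequence $N_m\to\infty$ and realizations $G_m:=G_{N_m}$ that are induced-$F$-free and satisfy $\delta_\square(G_m,W)\leq 1/m$.

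\textbf{Types.} Choose $\eta_m\to0$ slowly. Apply \Cref{lemma:type-lemma} to $G_m$ with parameters $(\eta_m,\delta,v(F))$ and a lower bound $k_m\to\infty$ on the number of clusters. This produces an $(\eta_m,\delta,v(F))$-type $(P_m,R_m)$ of $G_m$ with $v(R_m)\geq k_m$, and we set $W_m:=W_{T_m}$. This gives (iii) and (iv).

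\textbf{Convergence in cut metric.} The exceptional set $V_0$ has measure at most $\eta_m$, so discarding it and rescaling changes the graphon by $O(\eta_m)$ in cut norm. All but an $\eta_m$ fraction of pairs are $\eta_m$-regular, so the standard regularity argument bounds $\norm{W_{G_m}-W_{T_m}}_\square$ by $O(\eta_m)+O(1/k_m)$. The $O(1/k_m)$ term accounts for the diagonal cells. For irregular pairs we still define the value as $d_G(V_i,V_j)$; they have total measure at most $\eta_m$, so they contribute only $O(\eta_m)$. Hence $\delta_\square(W_m,W)\to0$, which is (i).

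\textbf{The main obstacle: (ii).} Cut convergence does not give $L^1$ convergence after re-alignment. To handle this, replace $W$ by a Lebesgue-measurable representative and use the martingale convergence theorem: the step-function averages $W_{\mathcal P_j}$ of $W$ over the dyadic partitions $\mathcal P_j$ converge to $W$ in $L^1$. Then build $G_m$ coupled to this dyadic structure. Sample the points $x_1<\dots<x_N$ sorted, so that vertex $i$ lies near $i/N$. Take the regular partition to refine the partition of $[N]$ into consecutive blocks matching $\mathcal P_{j(m)}$; the regularity lemma allows refining a given initial partition, and we assume the type lemma also does. With this alignment, $W_{T_m}$ is close in $L^1$ to the block averages of $W$ on a partition refining $\mathcal P_{j(m)}$. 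The degree-regular pairs approximate the conditional expectation by the law of large numbers for the sampled edges.

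By passing to a subsequence in which the refining partitions are nested, martingale convergence again gives $\norm{W_m-W'}_1\to0$, where $W'$ is a version of $W$ under a fixed measure-preserving rearrangement. If aligning exactly proves delicate, the fallback is that (ii) only asks for some $W'$. In that case it suffices that the $W_m$, suitably relabeled, form an $L^1$-Cauchy sequence, which follows from the nested refinement structure.
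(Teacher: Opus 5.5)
Your treatment of (i), (iii) and (iv) is fine. Aligning clusters with dyadic blocks is also a good idea; item (iii) of \Cref{lemma:type-lemma} provides exactly the refinement you assume, with the same number $r$ of clusters in every block.

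The gap is in (ii), which is the real content of the lemma. You claim that $W_{T_m}$ is $L^1$-close to ``block averages of $W$ on a partition refining $\mathcal P_{j(m)}$,'' but this is not justified. Clusters inside a block are arbitrary, edge-dependent vertex sets, not sub-intervals. So their densities are not averages of $W$ over any fixed measurable refinement, and the law of large numbers only controls the \emph{coarse} averages of $W_{T_m}$ on $\mathcal P_{j(m)}$-cells.

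The remedy you then invoke cannot be run. Each $W_m$ comes from a different graph $G_m$, so no level-$(m+1)$ partition refines a level-$m$ partition, and the $W_m$ are not conditional expectations of a common function. Your fallback, that the relabeled $W_m$ are $L^1$-Cauchy, is just (ii) restated.

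This is why the paper follows Lov\'asz--Szegedy. It refines partitions of the \emph{same} graphs at every level, feeding the previous clean partition into \Cref{lemma:type-lemma} as $P'$. It then passes to subsequences so the clean density matrices converge to matrices $Q_m$. These satisfy exact block averaging, so $W_{Q_m}$ is a bounded martingale with an $L^1$ limit $W'$, and a diagonal choice $i(m)$ finishes.

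Your one-graph-per-$m$ scheme can still be completed, but the missing step is an averaging estimate, not a martingale. Place the $r$ clusters of block $a$ inside the dyadic interval $I_a$. Let $\bar W_j$ be the $\mathcal P_j$-block average of $W$, with value $\bar w_{ab}$ on $I_a\times I_b$. Write $d_G(V_i,V_{i'})-\bar w_{ab}$ as the average over $V_i\times V_{i'}$ of $(\bsone_{uv\in E}-W(x_u,x_v))+(W(x_u,x_v)-\bar w_{ab})$. This gives
$$\norm{W_{T_m}-\bar W_j}_1\leq(1-\eta_m)^{-2}\,\frac{1}{N^2}\sum_{u,v}\big|W(x_u,x_v)-\bar W_j(x_u,x_v)\big|+O\big(k_m^{-1}+k_m^2N_m^{-1/2}\big)+o(1)\,.$$
Here the edge noise is handled by $d_\square$-concentration of $G(N,W)$ around the weighted graph with weights $W(x_u,x_v)$. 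An empirical-CDF bound handles the few vertices whose sorted block differs from their dyadic interval.

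The sum tends to $\norm{W-\bar W_j}_1$ by the law of large numbers. Since $k_m$ is bounded independently of $N_m$, taking $N_m$ large gives $\norm{W_m-W}_1=O(\norm{W-\bar W_{j(m)}}_1)+o(1)\to0$. This is (ii) with $W'=W$, and (i) then follows. Without this estimate, or the paper's nested construction, your proof of (ii) is incomplete.
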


\section{Solving the Graphon Variational Problem}
\label{sec:graphon-prob}
In this section we solve the variational problems \eqref{eqn:var-prob-intro-gamma} and \eqref{eqn:var-prob-intro-p}. Let $k\geq3$ be an integer fixed throughout this section, and let $\Delta:=k-2$. For all $\gamma\in(0,1)$ define the set of graphons
$$\cX_\gamma:=\{W\in\cW:t_\ind(K_{1,k},W)=0\,,\,t(K_2,W)=\gamma\}\,.$$
Then the variational problems \eqref{eqn:var-prob-intro-gamma} and \eqref{eqn:var-prob-intro-p} can be written
\begin{align}
\Phi_k(\gamma) &= \sup\{H(W):W\in\cX_\gamma\} \,,\label{eqn:var-prob-fixed-gamma} \\[4pt]
\Psi_k(p) &= \inf\{I_p(W):t_\ind(K_{1,k},W)=0\} \,.
\end{align}
Let $\cX_\gamma^\ast$ be the set of optimizers in \eqref{eqn:var-prob-fixed-gamma}, which is a nonempty set since $\cX_\gamma$ is compact and $H$ is upper semicontinuous in the cut metric topology (see \cite[Theorem~3.7]{borgs2008convergent} and \cite[Lemma~2.1]{chatterjee2011large}).

We now define the set of graphons we will prove are the optimizers in \eqref{eqn:var-prob-fixed-gamma}.
For a graph $G$ on $[n]$, define the function $\xi_G:\R^2\to[0,1]$ by
\begin{equation}\label{eqn:xiG-dfn}
\xi_G(x,y)=\sum_{i=1}^n\bsone_{S^n_{ii}}+\sum_{ij\in E(G)}p_k\,\bsone_{S_{ij}^n}\,,
\end{equation}
where $S_{ij}^n$ is as defined in \eqref{dfn:snij}.

Let $\bLambda$ denote the set of all (finite or infinite) sequences $((\lambda_1,G_1),(\lambda_2,G_2),\dots)$ of pairs where the numbers $\lambda_i\in[0,1]$ are strictly increasing in $i\geq1$, the numbers $\lambda_i-\lambda_{i-1}$ are nonincreasing in $i\geq1$ (where $\lambda_0:=0$), and $G_i$ is a connected $\Delta$-regular graph. For all $\blambda=((\lambda_i,G_i))_{i\geq1}\in\bLambda$ define the graphon $W_\blambda$ by
\begin{equation}\label{eqn:Wblambda}
W_\blambda(x,y)=\sum_{i=1}^{\abs{\blambda}}\xi_{G_i}\!\left(\frac{x-\lambda_{i-1}}{\lambda_{i}-\lambda_{i-1}}\,,\,\frac{y-\lambda_{i-1}}{\lambda_{i}-\lambda_{i-1}}\right)\,.
\end{equation}
For all $\gamma\in(0,\gamma_k)$, let $\cV_{\gamma}$ be the set of graphons defined by
\begin{align}
\cV_{\gamma} &:= \left\{W_\blambda:\int_{[0,1]^2}W_\blambda(x,y)\,dx\,dy=\gamma\,,\,\blambda\in\bLambda\right\} \nonumber\\
&= \Bigg\{W_\blambda:\sum_{i=1}^{\abs{\blambda}}\frac{(\lambda_{i}-\lambda_{i-1})^2}{v(G_i)}=\frac{\gamma}{1+\Delta\,p_k}\,,\,\blambda=((\lambda_i,G_i))_{i\geq1}\in\bLambda\Bigg\}\,,\label{eqn:vgamma-condition}
\end{align}
while for $\gamma\geq\gamma_k$, let $\cV_{\gamma}$ be the set containing the single graphon $W^\ast_{\gamma}$ defined by
\begin{equation}\label{eqn:opt-graphon-super}
W^\ast_{\gamma}(x,y):=\begin{cases}1&(x,y)\in\bigcup_{i=1}^{k-1}S^{k-1}_{ii}\\[4pt]\frac{(k-1)\gamma-1}{k-2}&\text{otherwise}\end{cases}\,.
\end{equation}
Define the union $\cV=\bigcup_{\gamma\in(0,1)}\cV_\gamma$. Note that both $\cV_\gamma$ and $\cV$ are closed sets in the cut metric topology. We also write $\cV_0:=\{W_0\}$, where $W_0\equiv0$ is the all-zero graphon.

\begin{example}
Several graphons in $\cV_\gamma$ for $k=8$ are shown in \Cref{fig:graphons}. The unit squares are rotated 90 degrees in the figure to match the ``adjacency matrix'' interpretation. We now concretely describe some of these graphons to clarify the definitions given above. The leftmost supercritical graphon in \Cref{fig:graphons} is precisely $W^\ast_{\gamma_k}$ from \eqref{eqn:opt-graphon-super}. At this edge density, we can see from \eqref{eqn:gammak-dfn} and \eqref{eqn:opt-graphon-super} that the off-diagonal density is $p_k$. The second supercritical graphon is $W^\ast_{1/2}$, the unique element of $\cV_{1/2}$; in this case, the off-diagonal density is $\frac{5}{12}$.

All four subcritical graphons are of edge density $\gamma=\frac{1}{10}$ and are given by $W_\blambda$ for some $\blambda\in\bLambda$. The first of these graphons has $\blambda=((\lambda_1,K_7))$, where $\lambda_1:=\big(\frac{7\,\cdot\,1/10}{1+6p_8}\big)^{1/2}\approx0.561$, which can be seen from the condition in \eqref{eqn:vgamma-condition}; this is the unique graphon in $\cV_\gamma$ satisfying both $|\blambda|=1$ and $G_1=K_7$. The third subcritical graphon has $\blambda=((\lambda_1,G_1))$, where $\lambda_1=\big(\frac{19\,\cdot\,1/10}{1+6p_8}\big)^{1/2}\approx0.925$ and $G_1$ is a 6-regular connected graph on 19 vertices. The fourth subcritical graphon has $\blambda=((\lambda_1,K_7),\dots,(\lambda_{19},K_7))$.
\end{example}

The following proposition is the main result of this section. Before proving it, we will state and prove two lemmas.

\begin{proposition}\label{prop:graphon-char-fixed-gamma}
For all $\gamma\in(0,1)$ we have $\cX^\ast_\gamma=\cV_\gamma$ up to equivalence of graphons.
\end{proposition}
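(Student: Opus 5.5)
We prove the two inclusions $\cV_\gamma\subseteq\cX_\gamma^\ast$ and $\cX_\gamma^\ast\subseteq\cV_\gamma$ through the reduction sketched in \Cref{subsec:overview}. The central quantities are the measures $r:=|R_W|=|\{0<W<1\}|$ and $o:=|O_W|=|\{W=1\}|$.

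The first and main step is the graphon constraint $r\leq(k-2)\,o$ for every $W$ with $t_\ind(K_{1,k},W)=0$. We get it by applying \Cref{lemma:UFHatGraphonSeq} with a small $\delta$, which gives types $(P_m,R_m)$ of induced-$K_{1,k}$-free graphs whose graphons converge to $W$ in cut metric, and also converge in $L^1$ to some $W'$. Property \ref{item:type-colored-homo} of \Cref{dfn:type} then gives $K_{1,k}\nHookrightarrow R_m$. The edge-coloring inequality \eqref{eqn:overview-ineq} from \Cref{sec:edge-coloring} yields $e_\red(R_m)\leq(k-2)e_\blue(R_m)+O_k(v(R_m))$. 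Dividing by $v(R_m)^2/2$ and letting $m\to\infty$, then $\delta\to0$, gives the bound in the limit. Two issues need care here. First, we must relate the red and blue measures of $W'$ to those of $W$; we will use that $L^1$-limits preserve the relevant sets up to $O(\delta)$ error. Second, the diagonal blocks, whose vertex colors are green or blue, contribute only $O(1/v(R_m))$ and so vanish.

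The second step is the calculus reduction. Since $W$ takes values only in $\{0,1\}$ outside $R_W$, Jensen's inequality gives $H(W)\leq r\,H\bigl((\gamma-o)/r\bigr)$. We maximize this over $(r,o)$ subject to $r\leq(k-2)o$, $o+r\leq1$ and $o\leq\gamma\leq o+r$. Since $r\mapsto rH(c/r)$ is increasing in $r$, the optimum either saturates $r=(k-2)o$ with $r+o<1$ (subcritical), or has $r+o=1$. The second case forces the diagonal structure: $o\geq 1/(k-1)$, with equality exactly when the $(k-1)$-block co-partite pattern is present. A one-variable optimization then returns exactly $\sE_k(\gamma)$. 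In the subcritical case it forces the common red density to be $p_k$, via $H'(x)$ and the defining equation $p_k=(1-p_k)^{k-1}$. For the inclusion $\cV_\gamma\subseteq\cX_\gamma$, we check directly that each $W_\blambda$ and $W^\ast_\gamma$ has zero $K_{1,k}$ density and attains this value. The key local fact is that a center $x$ in a block built from a $\Delta$-regular $G_i$ has its positive-density neighborhood covered by $\Delta+1=k-1$ cliques (its own diagonal block and its $\Delta$ neighbors' blocks), so $k$ leaves cannot be pairwise nonadjacent. Hence $\cV_\gamma\subseteq\cX_\gamma^\ast$.

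The third step proves $\cX^\ast_\gamma\subseteq\cV_\gamma$ using equality cases. An optimizer must have constant value $p_\gamma$ on $R_W$ (strict concavity) and must satisfy $r=(k-2)o$ (or $r+o=1$ in the supercritical regime). Feeding this back through the type sequence shows that $R_m$ is nearly extremal for \eqref{eqn:overview-ineq}. The stability result from \Cref{sec:edge-coloring} then says $R_m$ is close in Hamming distance to a disjoint union of blue cliques joined in red according to connected $\Delta$-regular reduced graphs, with all remaining pairs green. Passing to the limit identifies $W$ as some $W_G$-type block structure, which is $W_\blambda$ after reordering components. The ordering conventions on $\blambda$ (increasing $\lambda_i$ and nonincreasing gaps) are only a normal form up to equivalence. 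I expect this stability-to-limit transfer to be the main obstacle, because countably many components can appear in the limit and we must show that the $o(1)$ Hamming errors cannot accumulate into a different graphon. My first attempt will be to argue component by component, using that each component of measure at least $\epsilon$ is captured stably and that the components of total measure below $\epsilon$ contribute $O(\epsilon)$ in $L^1$.
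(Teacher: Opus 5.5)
Your proposal is correct and follows essentially the paper's route. Types from \Cref{lemma:UFHatGraphonSeq} together with \Cref{lemma:kthOrderMantel} give $|R_W|\leq(k-2)|O_W|$, \Cref{lemma:k1k-calc-prob} yields $\sE_k(\gamma)$, and the equality case plus \Cref{thm:kth-order-stability} identifies the optimizers. The differences are minor. The paper runs the type argument only for an optimizer, after reducing it to a $\{0,p,1\}$-valued graphon; thresholding at $\delta<\frac12\min\{p,1-p\}$ is then $L^1$-stable, which is the clean version of your ``up to $O(\delta)$'' step. In the subcritical case, the paper avoids your anticipated limit obstacle by building $U_m\in\cV$ with $\|U_m-W\|_1\to0$ and invoking closedness of $\cV$, which is exactly what your component-by-component argument would prove.
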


\begin{lemma}\label{lemma:Vgamma-opt}
For all $\gamma\in(0,1)$ and $W\in\cV_\gamma$, we have $t_\ind(K_{1,k},W)=0$ and $H(W)=\sE_k(\gamma)$.
\end{lemma}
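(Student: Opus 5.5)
Both claims reduce to direct computation on the explicit graphons in $\cV_\gamma$, and I would treat the two regimes separately.

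\emph{Zero induced star density.} It suffices to show that a.s.\ the $W$-random graph $G(k+1,W)$ contains no induced $K_{1,k}$. Equivalently, any map from $K_{1,k}$ to blocks that is compatible with the zero/one/intermediate pattern of $W$ must fail. This is a colored homomorphism into the colored graph recording where $W$ is $0$ (green), $1$ (blue) or in $(0,1)$ (red), with diagonal blocks blue.

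For $\gamma\ge\gamma_k$, $W^\ast_\gamma$ corresponds to $k-1$ blue vertices with all pairs red. If the off-diagonal value is $1$ (at $\gamma=1$), this is trivial. Otherwise, by pigeonhole two of the $k$ leaves of the star lie in the same part. Since diagonal blocks are blue, those two leaves are then adjacent, so the star is not induced.

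For $W_\blambda$, distinct components $i\neq j$ are green, so the center and all its leaves must lie in a single component $G_i$. There the blocks are the vertices of $G_i$: diagonal blocks are blue, blocks for edges of $G_i$ are red with value $p_k$, and blocks for non-edges of $G_i$ are green. The center's block $v$ and each leaf's block must therefore lie in $N_{G_i}[v]$, a set of size $\Delta+1=k-1$. Pigeonhole again puts two of the $k$ leaves in a common block, which is blue, so they are adjacent. Hence $t_\ind(K_{1,k},W)=0$.

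\emph{Entropy.} For $W^\ast_\gamma$, the red region has measure $1-\frac{1}{k-1}$ and value $q=\frac{(k-1)\gamma-1}{k-2}$, while the rest has entropy $0$. So $H(W)=(1-\frac1{k-1})H(q)$, matching $\sE_k$ on $[\gamma_k,1)$; the edge density checks out as $\frac{1}{k-1}+\frac{k-2}{k-1}q=\gamma$.

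For $W_\blambda$, set $a_i=\lambda_i-\lambda_{i-1}$ and $n_i=v(G_i)$. Component $i$ contributes the following:
\begin{itemize}
\item a blue measure $a_i^2/n_i$, from $n_i$ diagonal squares of side $a_i/n_i$;
\item a red measure $\Delta a_i^2/n_i$, from $n_i\Delta$ ordered edge squares, all with value $p_k$.
\end{itemize}
Writing $S=\sum_i a_i^2/n_i$, the edge density is $(1+\Delta p_k)S$. The constraint $\int W=\gamma$ gives $S=\gamma/(1+\Delta p_k)$, which is the stated equivalence in \eqref{eqn:vgamma-condition}. The entropy is then
\[
H(W)=\Delta S\,H(p_k)=\frac{(k-2)\gamma}{1+(k-2)p_k}H(p_k)=\sE_k(\gamma).
\]

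\emph{Nonemptiness.} One also needs $S\le\sum_i a_i^2/(k-1)\le 1/(k-1)$, which holds since $n_i\ge k-1$. This confirms $\gamma<\gamma_k$ is feasible. If desired, continuity at $\gamma_k$ can be checked via $p_k=(1-p_k)^{k-1}$, but that is not needed for this lemma.

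The main obstacle is the pigeonhole step for $W_\blambda$. There one must carefully justify that "measure-zero" issues do not arise: boundaries of the squares $S^n_{ij}$ are null, and the integrand of $t_\ind$ vanishes pointwise whenever some pair of leaves lands in a common diagonal block, so the integral is exactly zero.
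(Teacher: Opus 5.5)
Your proposal is correct and follows essentially the same route as the paper. For zero density, the $k$ leaves must all land in the closed neighborhood of the center's block, which has size $\Delta+1=k-1$ (or in one of the $k-1$ diagonal blocks of $W^\ast_\gamma$), so two leaves share a diagonal block where $1-W=0$; the entropy is then the same block-area computation via \eqref{eqn:vgamma-condition}, and your closed-neighborhood pigeonhole is a slightly more precise version of the paper's ``degree at least $k$'' phrasing. The nonemptiness paragraph is not needed for the lemma, and the bound $S\le 1/(k-1)$ there only shows that $\gamma\le\gamma_k$ is necessary, not that $\cV_\gamma\neq\emptyset$; for that you would exhibit a single $K_{k-1}$ block.
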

\begin{proof}
If $\gamma\in(0,\gamma_k)$ then write $W=W_\blambda$ with $\blambda=((\lambda_i,G_i))_{i\geq1}\in\bLambda$ and $\alpha_i:=\lambda_i-\lambda_{i-1}$. Since different blocks are joined by zero density and each $G_i$ is $\Delta$-regular, the integrand defining $t_\ind(K_{1,k},W)$ can be positive only on tuples lying in a single block; such a tuple would require some vertex of $G_i$ to have degree at least $k>\Delta+1$, which is impossible. Also,
$$H(W)=H(p_k)\sum_{i=1}^{|\blambda|}\frac{\Delta\alpha_i^2}{v(G_i)}=\frac{\Delta\gamma}{1+\Delta p_k}H(p_k)=\sE_k(\gamma)\,,$$
where we used \eqref{eqn:vgamma-condition}. If $\gamma\in[\gamma_k,1)$ then $W=W^\ast_\gamma$, and since $W^\ast_\gamma$ has only $\Delta+1=k-1$ diagonal blocks, we have $t_\ind(K_{1,k},W)=0$ as above. Moreover, we have
$$H(W)=\frac{\Delta}{\Delta+1}H\!\left(\frac{(\Delta+1)\gamma-1}{\Delta}\right)=\sE_k(\gamma)\,,$$
completing the proof.
\end{proof}

\begin{lemma}\label{lemma:k1k-calc-prob}
Let $\gamma\in(0,1)$, $k\geq3$, and $\Delta:=k-2$. Let $p_k$ and $\gamma_k$ be as defined in \eqref{eqn:gammak-dfn}. The optimization problem
\begin{equation}\label{eqn:calculus-problem}
\begin{array}{ll}
\max & y\,H\big(\frac{\gamma-x}{y}\big) \\[6pt]
\mathrm{s.t.} & 0\leq x\leq 1 \,, \\[3pt]
& 0\leq y\leq\min\{1-x,\Delta\,x\} \,, \\[3pt]
& 0\leq \frac{\gamma-x}{y} \leq 1 \,,
\end{array}
\end{equation}
has the unique optimizer
\begin{equation}\label{eqn:xsys-opt-coords}
(x^\ast,y^\ast)=\begin{cases}
\left(\dfrac{\gamma}{1+\Delta p_k}\,,\,\dfrac{\Delta\gamma}{1+\Delta p_k}\right)\,, & 0<\gamma\leq \gamma_k\,,\\[10pt]
\left(\dfrac{1}{\Delta+1}\,,\,\dfrac{\Delta}{\Delta+1}\right)\,, & \gamma_k\leq \gamma<1\,,
\end{cases}
\end{equation}
and has optimal value $\sE_k(\gamma)$, where $\sE_k$ is the function defined in \eqref{eqn:entropy-dfn}.
\end{lemma}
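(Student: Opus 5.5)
The plan is to reduce \eqref{eqn:calculus-problem} to a one-variable problem along the boundary $y=\min\{1-x,\Delta x\}$, using that the objective increases in $y$. Write $g(x,y):=y\,H\big(\tfrac{\gamma-x}{y}\big)$ and $u:=\gamma-x\geq0$. Then
$$\partial_y g=H(u/y)-(u/y)H'(u/y)=-\log\big(1-u/y\big)\geq0,$$
with strict inequality whenever $u>0$. If $u=0$ the objective is $0$; this is not optimal, since $\sE_k(\gamma)>0$ is attained below. Hence every optimizer satisfies $y=\min\{1-x,\Delta x\}$. The two boundary pieces meet at $x=1/(\Delta+1)$.

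\emph{Branch $y=\Delta x$, with $x\leq 1/(\Delta+1)$.} Put $t=(\gamma-x)/(\Delta x)$, so $x=\gamma/(1+\Delta t)$ and the objective becomes
$$\phi(t):=\frac{\Delta\gamma\,H(t)}{1+\Delta t}.$$
Compute
$$\phi'(t)\propto (1+\Delta t)\log\frac{1-t}{t}-\Delta H(t)=\log\frac{1-t}{t}+\Delta\log(1-t)=\log\frac{(1-t)^{\Delta+1}}{t}.$$
This is strictly decreasing in $t$ and vanishes exactly at $t=p_k$, because $p_k=(1-p_k)^{k-1}$ and $k-1=\Delta+1$. So $\phi$ is strictly unimodal with peak at $t=p_k$. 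The constraint $x\leq1/(\Delta+1)$ is equivalent to $t\geq((\Delta+1)\gamma-1)/\Delta$.
\begin{itemize}
\item[$\circ$] If $\gamma\leq\gamma_k$, this lower bound is at most $p_k$, so the maximizer is $t=p_k$. It gives $x^\ast=\gamma/(1+\Delta p_k)$, $y^\ast=\Delta x^\ast$, and value $\phi(p_k)=\sE_k(\gamma)$.
\item[$\circ$] If $\gamma>\gamma_k$, the constraint forces $t>p_k$, so the maximum on this branch is at the endpoint $x=1/(\Delta+1)$.
\end{itemize}

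\emph{Branch $y=1-x$, with $x\geq1/(\Delta+1)$.} Put $s=(\gamma-x)/(1-x)$, so $1-x=(1-\gamma)/(1-s)$ and the objective is
$$\psi(s):=\frac{(1-\gamma)H(s)}{1-s}.$$
Here
$$\psi'(s)\propto (1-s)H'(s)+H(s)=-\log s>0,$$
so $\psi$ is increasing in $s$. Since $s$ decreases in $x$, the objective on this branch decreases in $x$, and the best point is $x=1/(\Delta+1)$. That point is the corner, with value
$$\frac{\Delta}{\Delta+1}H\!\left(\frac{(\Delta+1)\gamma-1}{\Delta}\right).$$

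\emph{Combining the branches.}
\begin{itemize}
\item[$\circ$] For $\gamma\geq\gamma_k$, both branches are maximized at the corner, which gives the second case of \eqref{eqn:xsys-opt-coords}. Its value is $\sE_k(\gamma)$.
\item[$\circ$] For $\gamma<\gamma_k$, the corner is a point of the first branch, and $\phi$ beats it strictly there. The interior point $t=p_k$ is therefore the global maximum.
\item[$\circ$] At $\gamma=\gamma_k$ the two formulas in \eqref{eqn:xsys-opt-coords} coincide, since $1+\Delta p_k=(\Delta+1)\gamma_k$.
\end{itemize}

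Two points need care. The first is feasibility: the corner requires $(\Delta+1)\gamma-1\geq0$, which holds because $\gamma_k>1/(\Delta+1)$. The second is the degenerate edge cases ($y=0$, or $x$ near $\gamma$), where the objective tends to $0$. Uniqueness then follows from strict monotonicity in $y$ together with the strict unimodality and monotonicity along each branch. The main obstacle is simply keeping the change of variables and the endpoint constraints consistent; the derivative identities above do the real work.
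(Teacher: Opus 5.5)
Your proposal is correct and follows the paper's proof essentially step for step. In both, monotonicity of the objective in $y$ forces $y=\min\{1-x,\Delta x\}$, the objective decreases along $y=1-x$, and the substitution $t=(\gamma-x)/(\Delta x)$ reduces the remaining branch to maximizing $\Delta\gamma H(t)/(1+\Delta t)$ over $t\geq\max\{0,((\Delta+1)\gamma-1)/\Delta\}$, whose unique critical point is $p_k$.

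You also write out the derivative identities that the paper leaves as an easy calculation. The only slip is harmless: for $\gamma<1/(\Delta+1)$ the corner is infeasible, so the branch $y=1-x$ is empty rather than maximized at the corner.
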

\begin{proof}
For every fixed $x$, the objective is increasing in $y$. Thus, every optimizer lies on the boundary $y=\min\{1-x,\Delta\,x\}$. The boundaries intersect at $x_0=1/(\Delta+1)$, $y_0=\Delta/(\Delta+1)$. Along $y=1-x$ for $x\geq1/(\Delta+1)$, the objective is decreasing in $x$. Thus, its maximum on this segment is at the intersection point, where $y=\Delta\,x$. Otherwise if $y\neq1-x$ then we must have $y=\Delta\,x$ per our previous observation.

It remains to maximize the univariate objective $\Delta x\,H\big(\frac{\gamma-x}{\Delta x}\big)$ subject to $x\leq x_0$. With the change of variable $p=\frac{\gamma-x}{\Delta x}$, this becomes $f(p):=\frac{\Delta\gamma}{1+\Delta p}\,H(p)$ subject to $\max\{0,p_0\}\leq p\leq1$ where $p_0:=\frac{\gamma(\Delta+1)-1}{\Delta}$. An easy calculation shows $f'(p)=0$ if and only if $p=p_k$. Thus if $p_k\geq p_0$ then the global optimizer is $p^\ast=p_k$; otherwise if $p_k<p_0$ then since $f$ is strictly decreasing for $p\geq p_k$, the optimizer is $p^\ast=p_0$.

The condition $p_k\geq p_0$ is equivalent to $\gamma\leq\gamma_k$. Hence using $y^\ast=\Delta x^\ast$, we establish \eqref{eqn:xsys-opt-coords} by obtaining $x^\ast$ and $y^\ast$ in the equation $p_k=\frac{\gamma-x^\ast}{\Delta x^\ast}$ if $\gamma\leq\gamma_k$, and $p_0=\frac{\gamma-x^\ast}{\Delta x^\ast}$ if $\gamma\geq\gamma_k$. The objective value $\sE_{\Delta+2}(\gamma)$ follows immediately by substituting $(x,y)=(x^\ast,y^\ast)$.
\end{proof}

For a graphon $W\in\cW$ define
\begin{align*}
R_W&:=\{(x,y)\in[0,1]^2:0<W(x,y)<1\}\,, \\
O_W&:=\{(x,y)\in[0,1]^2:W(x,y)=1\}\,,
\end{align*}
and write $|\cdot|$ for Lebesgue measure.

\begin{proof}[Proof of \Cref{prop:graphon-char-fixed-gamma}]
The proof will go as follows. First, we will show every graphon $W\in\cX^\ast_\gamma$ satisfies $|R_W|\leq\Delta|O_W|$ and $H(W)\leq\sE_k(\gamma)$ by invoking the optimization problem \eqref{eqn:calculus-problem} and the extremal inequality in \Cref{lemma:kthOrderMantel}. Since every graphon in $\cV_\gamma$ has entropy $\sE_k(\gamma)$, this means $\cV_\gamma\subseteq\cX^\ast_\gamma$. To show that every optimal graphon is equivalent to a graphon in $\cV_\gamma$, we will need the stability of the extremal structure from \Cref{thm:kth-order-stability}.

\smallskip
\noindent\textbf{Preparation.}
This part applies to the whole remainder of the proof.
Let $W\in\cX^\ast_\gamma$ and
$$p:=\frac{1}{|R_W|}\int_{R_W}W(x,y)\,dx\,dy\,,$$
which is well-defined since $|R_W|>0$, as otherwise we would have $H(W)=0$, contradicting optimality of $W$. Strict concavity of $H(\cdot)$ implies $W$ takes the three values $\{0,p,1\}$ almost everywhere: otherwise the graphon defined by $W':=\bsone_{O_W}+p\,\bsone_{R_W}$, which also satisfies $t(K_2,W')=\gamma$ and $t_\ind(K_{1,k},W')=0$, would have $H(W')>H(W)$ by Jensen's inequality.

Let $\delta\in\big(0,\frac12\min\{p,1-p\}\big)$. Apply \Cref{lemma:UFHatGraphonSeq} with $W_{\ref{lemma:UFHatGraphonSeq}}=W$, $F_{\ref{lemma:UFHatGraphonSeq}}=K_{1,k}$ and $\delta_{\ref{lemma:UFHatGraphonSeq}}=\delta$. This yields graphons $W_m$, a graphon $W'$, real numbers $\eta_m\to0$, and $(\eta_m,\delta,k+1)$-types $(P_m,R_m)$ such that $\delta_\square(W_m,W)\to0$ and $\norm{W_m-W'}_1\to0$. Since $\delta_\square(W_m,W')\leq \norm{W_m-W'}_1$, we have $\delta_\square(W,W')=0$. Thus $W$ and $W'$ are equivalent, so $|R_W|=|R_{W'}|$ and $|O_W|=|O_{W'}|$. Replacing $W$ by $W'$ if necessary, we may assume that $\norm{W_m-W}_1\to0$.

For each $m$, write $R_m=([q_m],E_m,\sigma_m)$ and define the coloring $\varphi_m$ of $E(K_{q_m})$ by
\begin{equation}\label{eqn:varphim-ij}
\varphi_m(ij)=\begin{cases}\red&\text{$ij\in E_m$ and $\sigma_m(ij)=\red$}\\\green&\text{$ij\in E_m$ and $\sigma_m(ij)=\green$}\\\blue&\text{otherwise}\end{cases}\,.
\end{equation}
Note that $\varphi_m\in\cC_k(q_m)$, where $\cC_k$ is the set of 3-edge-colorings defined in \Cref{dfn:Fk-free-cols}. Indeed, if $\varphi_m$ contained a forbidden pattern from $\cF_k$ then all pairs used by that pattern would lie in $E_m$ (because the pattern has no blue edges), and the induced embedding lemma would give an induced copy of $K_{1,k}$ in the underlying graph of the type, contradicting the choice of $R_m$.

Define $\tau\colon[0,1]\to\{0,p,1\}$ by
$$\tau(t):=\begin{cases}
0\,,&t<\delta\,,\\
p\,,&\delta\leq t\leq1-\delta\,,\\
1\,,&t>1-\delta\,.
\end{cases}$$
Since $\delta<\frac12\min\{p,1-p\}$, there exists a constant $C=C(p,\delta)$ such that $|\tau(t)-s|\leq C|t-s|$ for all $t\in[0,1]$ and $s\in\{0,p,1\}$.
Hence if we define the graphon $G'_m:=\tau\circ W_m$ then
$$\norm{G'_m-W}_1\leq C\norm{W_m-W}_1=o(1)\,.$$
Let $G_m\in\cW$ be the graphon obtained from $\varphi_m$ defined by
$$G_m:=\sum_{ij\in E_\red(\varphi_m)}p\,\bsone_{S^{q_m}_{ij}}+\sum_{ij\in E_\blue(\varphi_m)}\bsone_{S^{q_m}_{ij}}+\sum_{i=1}^{q_m}\bsone_{S^{q_m}_{ii}}\,,$$
where $E_\red$ and $E_\blue$ are the red and blue edge sets of $\varphi_m$, respectively. Since $G_m$ and $G'_m$ differ only on diagonal squares and on irregular pairs of the type,
$$\norm{G_m-G'_m}_1=O\!\left(\eta_m+\frac{1}{q_m}\right)=o(1)\,,$$
and hence $\norm{G_m-W}_1=o(1)$. Because both $G_m$ and $W$ take values in $\{0,p,1\}$, we have $|R_{G_m}|=|R_W|+o(1)$ and $|O_{G_m}|=|O_W|+o(1)$. Since
\begin{equation}\label{eqn:RGm-OGm-ana}
|R_{G_m}|=\frac{2e_\red(\varphi_m)}{q_m^2}\qquad\text{and}\qquad|O_{G_m}|=\frac{2e_\blue(\varphi_m)}{q_m^2}+O\!\left(\frac{1}{q_m}\right)
\end{equation}
while \Cref{lemma:kthOrderMantel} gives $e_\red(\varphi_m)-\Delta e_\blue(\varphi_m)\leq\Delta q_m/2$, it follows that $|R_W|\leq \Delta|O_W|$.

\smallskip
\noindent\textbf{Proving $\cV_\gamma\subseteq\cX^\ast_\gamma$.}
Let $x:=|O_W|$ and $y:=|R_W|$. Above we showed $0\leq y\leq\Delta x$, and we also have $0\leq y\leq 1-x$ and $0\leq\frac{\gamma-x}{y}\leq1$. Hence $(x,y)$ is feasible for \eqref{eqn:calculus-problem} and \Cref{lemma:k1k-calc-prob} implies $H(W)\leq \sE_k(\gamma)$. It is easy to calculate that every $W'\in\cV_\gamma$ satisfies $H(W')=\sE_k(\gamma)$, so $\cV_\gamma\subseteq\cX^\ast_\gamma$. In the remainder we prove $W\in\cX^\ast_\gamma$ is equivalent to a graphon in $\cV_\gamma$.

\smallskip
\noindent\textbf{Proving $\cX^\ast_\gamma\subseteq\cV_\gamma$ up to equivalence.} Since $H(W)=\sE_k(\gamma)$, \Cref{lemma:k1k-calc-prob} proves that the pair $(|O_W|,|R_W|)$ is given precisely by the right-hand side of \eqref{eqn:xsys-opt-coords}, which in particular gives $|R_W|=\Delta|O_W|$.
Moreover, since $W$ takes the values $\{0,p,1\}$ almost everywhere and $t(K_2,W)=\gamma$, this means that for all $\gamma\in(0,\gamma_k)$ we have $p=p_k$, while for all $\gamma\in[\gamma_k,1)$ we have $p=\frac{(k-1)\gamma-1}{k-2}$.

Combining the identities $|R_{G_m}|=|R_W|+o(1)$ and $|O_{G_m}|=|O_W|+o(1)$ proven above with \eqref{eqn:RGm-OGm-ana} and $|R_W|=\Delta|O_W|$ implies
\begin{equation}\label{eqn:graphon-prob-near-extr}
\Phi(\varphi_m)=e_\red(\varphi_m)-\Delta e_\blue(\varphi_m)=o(q_m^2)\,.
\end{equation}
Let $\epsilon>0$, and let $\delta_{\epsilon}>0$ be given by \Cref{thm:kth-order-stability}. Since $q_m=v(R_m)\to\infty$ by \Cref{lemma:UFHatGraphonSeq} and \eqref{eqn:graphon-prob-near-extr} gives $\Phi(\varphi_m)\geq-\delta_{\epsilon}q_m^2$ for all sufficiently large $m$, \Cref{thm:kth-order-stability} gives $d(\varphi_m,\cE_k(q_m))\leq\epsilon q_m^2$ for all sufficiently large $m$. Since $\epsilon>0$ was arbitrary, there exist colorings $\psi_m\in\cE_k(q_m)$, where $\cE_k$ is the extremal family defined in \Cref{dfn:k1k-extremal-fam}, satisfying
$$d(\varphi_m,\psi_m)=o(q_m^2)\,.$$
Similar to $G_m$ above, let $H_m\in\cW$ be the graphon obtained from $\psi_m$ by assigning the values $p$, $0$, and $1$ to red, green, and blue off-diagonal squares, respectively, and $1$ to diagonal squares. We then have
$$\norm{H_m-G_m}_1\leq \frac{2\,d(\varphi_m,\psi_m)}{q_m^2}=o(1)\,,$$
which implies $\norm{H_m-W}_1=o(1)$.

By the definition of $\cE_k(q_m)$, after relabeling the vertices of $K_{q_m}$, there exist disjoint sets $C_1^m,\dots,C_{s_m}^m\subseteq[q_m]$ such that all edges not contained in one of the sets $C_a^m$ are green, and, for every $a\in[s_m]$, the coloring $\psi_m|_{C_a^m}$ is obtained from a connected $\Delta$-regular graph $G_a^m$ on $[\ell_a^m]$ by replacing each vertex $i\in[\ell_a^m]$ by a set $C_{a,i}^m$, where
$$C_a^m=C_{a,1}^m\cup\cdots\cup C_{a,\ell_a^m}^m\qquad\text{and}\qquad\big||C_{a,i}^m|-|C_{a,j}^m|\big|\leq1$$
for all $i,j\in[\ell_a^m]$. Write $\kappa_a^m:=|C_a^m|$ and $\alpha_a^m:=\kappa_a^m/q_m$. Let $I_a^m\subseteq[0,1]$ be the interval of length $\alpha_a^m$ corresponding to $C_a^m$.

We now define a graphon $U_m$. Outside $\bigcup_{a\in[s_m]}I_a^m\times I_a^m$, set $U_m=0$. For each $a\in[s_m]$, partition $I_a^m$ into intervals $I_{a,1}^m,\dots,I_{a,\ell_a^m}^m$ of equal length $\alpha_a^m/\ell_a^m$. On $I_a^m\times I_a^m$, set $U_m=1$ on $I_{a,i}^m\times I_{a,i}^m$ for all $i\in[\ell_a^m]$, set $U_m=p$ on $I_{a,i}^m\times I_{a,j}^m$ whenever $ij\in E(G_a^m)$, and set $U_m=0$ otherwise.

We claim $\norm{U_m-H_m}_1=O(1/q_m)$. Indeed, fix $a\in[s_m]$ and write $n_{a,i}:=|C_{a,i}^m|$. On the square corresponding to $C_a^m$, passing from the intervals of lengths $n_{a,i}/q_m$ to equal intervals of length $\alpha_a^m/\ell_a^m$ changes the area of each square or rectangle on which the value may be positive by at most
$$O\!\left(\left|\frac{n_{a,i}^2}{q_m^2}-\frac{(\alpha_a^m)^2}{(\ell_a^m)^2}\right|+\left|\frac{n_{a,i}n_{a,j}}{q_m^2}-\frac{(\alpha_a^m)^2}{(\ell_a^m)^2}\right|\right)
=O\!\left(\frac{\kappa_a^m}{\ell_a^mq_m^2}\right)\,,$$
since $n_{a,i}=\kappa_a^m/\ell_a^m+O(1)$. There are at most $\ell_a^m+\Delta\ell_a^m/2$ such squares or rectangles, so the total contribution to $\norm{U_m-H_m}_1$ from $C_a^m$ is $O(\kappa_a^m/q_m^2)$. Summing over all $a\in[s_m]$ gives $\norm{U_m-H_m}_1=O(1/q_m)$, which implies $\norm{U_m-W}_1=o(1)$.

If $\gamma\in(0,\gamma_k)$ then $p=p_k$, so $U_m\in\cV$ for every $m$. Hence $\delta_\square(W,\cV)\leq\norm{U_m-W}_1=o(1)$. Thus $W$ is equivalent to a graphon in the $L^1$-closure of $\cV$. Since the family $\cV$ is closed up to equivalence, and since $t(K_2,W)=\gamma$, it follows that $W$ is equivalent to a graphon in $\cV_\gamma$.

If instead $\gamma\in[\gamma_k,1)$ then \eqref{eqn:xsys-opt-coords} gives $|O_W|+|R_W|=1$, so since $W$ takes only the values $\{p,1\}$, the zero area of $U_m$ is $o(1)$. Writing the nonzero component intervals of $U_m$ in nonincreasing order of length as $\alpha_i^m$, and writing $G_i^m$ for the connected $\Delta$-regular graph defining the corresponding component interval, we have
$$1-o(1)=|O_{U_m}|+|R_{U_m}|=\sum_{i\geq1}\frac{(\Delta+1)(\alpha_i^m)^2}{v(G_i^m)}\leq \sum_i(\alpha_i^m)^2\leq1\,,$$
because every connected $\Delta$-regular graph has at least $\Delta+1$ vertices. Hence $\sum_i(\alpha_i^m)^2\to1$, so $\alpha_1^m\to1$ and $\sum_{i\geq2}\alpha_i^m\to0$ as $m\to\infty$. It follows that
$$\frac{(\Delta+1)(\alpha_1^m)^2}{v(G_1^m)}\to1\,,$$
and therefore $v(G_1^m)=\Delta+1$ for all large $m$. It follows that $G_1^m=K_{\Delta+1}$ for all large $m$, and after a rearrangement of the vertex set, $U_m$ differs by $o(1)$ in $L^1$ from $W^\ast_\gamma$. Since also $\norm{U_m-W}_1=o(1)$, we obtain $\delta_\square(W,W^\ast_\gamma)=0$, so $W$ is equivalent to $W^\ast_\gamma$.
\end{proof}

\section{Entropy Density, Rate Function, and Rough Structure}
\label{sec:entropy}
In this section we prove \Cref{thm:main-entropy,thm:main-rate,thm:ent-graphons,thm:gnp-graphons} as well as rough structure of the conditional distribution in \Cref{lemma:rough-struc-fixed-g,lemma:rough-struc-gnp}.

\begin{proof}[Proof of \Cref{thm:main-entropy}]
Fix $\gamma\in(0,1)$. \Cref{prop:graphon-char-fixed-gamma} proves $\cX^\ast_\gamma=\cV_\gamma$ up to equivalence, so every graphon $W\in\cV_\gamma$ achieves $H(W)=\Phi_k(\gamma)$. From the definition of $\cV_\gamma$, it is easy to calculate that $H(W)=\sE_k(\gamma)$, where $\sE_k$ is defined in \eqref{eqn:entropy-dfn}. Combining with \Cref{lemma:FnmGraphonOptAsymps} now immediately implies the theorem.
\end{proof}

\begin{proof}[Proof of \Cref{thm:main-rate}]
Let $\cF:=\{W\in\cW:t_\ind(K_{1,k},W)=0\}$.
Exactly as in \cite[Eq.~17]{perkins2025typical}, we can write, for all fixed $p\in(0,1)$,
\begin{equation}\label{eqn:sup-gamma-calc}
\inf_{W\in\cF}I_p(W)=\inf_{0\leq\gamma\leq1}\left\{-\sE_k(\gamma)+\gamma\log\!\left(\frac{1-p}{p}\right)\right\}-\log(1-p)\,,
\end{equation}
where $\sE_k$ is defined in \eqref{eqn:entropy-dfn}. It remains to show the right-hand side equals $\sR_k(p)$.

We solve the optimization of $N_p(\gamma):=-\sE_k(\gamma)+\gamma\log\big(\frac{1-p}{p}\big)$ over $0\leq\gamma\leq1$ as follows. First consider $0\leq\gamma\leq\gamma_k$. The identity $p_k=(1-p_k)^{k-1}$ directly implies
$$H(p_k)=-(1+(k-2)p_k)\log(1-p_k)\,,$$
and hence
$$\frac{k-2}{1+(k-2)p_k}H(p_k)=-(k-2)\log(1-p_k)=\log\!\left(\frac{1-p_k}{p_k}\right)\,.$$
It follows that
$$N_p(\gamma)=\gamma\log\!\left(\frac{(1-p)p_k}{p(1-p_k)}\right)\,,$$
so $N_p$ is increasing on $[0,\gamma_k]$ if $p<p_k$, constant if $p=p_k$, and decreasing if $p>p_k$.

Now consider $\gamma_k\leq\gamma\leq1$, and write $x:=\frac{(k-1)\gamma-1}{k-2}\in[p_k,1]$.
We then have
$$N_p(\gamma)=-\frac{k-2}{k-1}H(x)+\frac{1+(k-2)x}{k-1}\log\!\left(\frac{1-p}{p}\right)\,,$$
which is strictly convex in $x$ and has derivative
$$\frac{k-2}{k-1}\left(-H'(x)+\log\!\left(\frac{1-p}{p}\right)\right)=\frac{k-2}{k-1}\log\!\left(\frac{x(1-p)}{(1-x)p}\right)\,.$$
Hence there is a unique critical point at $x=p$. It follows that if $p<p_k$ then $N_p$ is increasing on $[\gamma_k,1]$, while if $p\geq p_k$ then the minimum on $[\gamma_k,1]$ is attained uniquely at $x=p$, which corresponds to $\gamma=\frac{1+(k-2)p}{k-1}$.

Summarizing the above, if $p<p_k$ then $N_p$ is increasing on both pieces, so the unique minimizer is $\gamma=0$ and $\inf_\gamma N_p(\gamma)=0$. If $p=p_k$ then $N_p\equiv0$ on $[0,\gamma_k]$, and convexity on $[\gamma_k,1]$ shows that every $\gamma\in[0,\gamma_k]$ is optimal. If $p>p_k$ then the unique minimizer is $\gamma^*=\frac{1+(k-2)p}{k-1}$, and at this point, $N_p(\gamma^\ast)=\log(1-p)-\frac{1}{k-1}\log p$. Subtracting $\log(1-p)$ proves the right-hand side of \eqref{eqn:sup-gamma-calc} is $\sR_k(p)$. Finally, combining this with \Cref{lemma:LDPforGNPkl} implies the theorem.
\end{proof}

\begin{proof}[Proof of \Cref{thm:ent-graphons}]
This is an immediate consequence of \Cref{prop:graphon-char-fixed-gamma}.
\end{proof}

\begin{proof}[Proof of \Cref{thm:gnp-graphons}]
The proof of \Cref{thm:main-rate} gave a characterization of the optimizers of \eqref{eqn:sup-gamma-calc}. Combining this characterization with \Cref{prop:graphon-char-fixed-gamma} directly implies that the set of optimizers $\cX^p_\ast$ of $\Psi_k(p)$ is given by
\begin{equation}\label{eqn:CpStarFormalDef}
\cX^p_\ast=\begin{cases}\{W_0\}&p\in(0,p_k)\\[5pt]\bigcup_{\gamma\in[0,\gamma_k]}\cV_\gamma&p=p_k\\[5pt]\cV_{\gamma_p}&p\in(p_k,1)\end{cases}\,,
\end{equation}
where $W_0\equiv0$ is the all-zero graphon and $\gamma_p:=\frac{1+(k-2)p}{k-1}$. The theorem follows immediately from \eqref{eqn:CpStarFormalDef}.
\end{proof}

\subsection{Rough structure from graphon optimizers}
The following lemmas follow from the characterization of the optimizers of $\Phi_k(\gamma)$ and $\Psi_k(p)$ given in \Cref{prop:graphon-char-fixed-gamma} and \eqref{eqn:CpStarFormalDef}, respectively. See \cite[Propositions~3.8~\&~3.9]{perkins2025typical} for proofs, which are based on \citeauthor{chatterjee2011large} \cite[Theorem~3.1]{chatterjee2011large} and use a result of \citeauthor{hatami2018graph} \cite{hatami2018graph}.

\begin{lemma}\label{lemma:rough-struc-fixed-g}
Fix $k\geq3$ and $\gamma\in(0,1)$. Let $n,m\in\N$ and assume $m\sim\gamma\binom{n}{2}$. Let $G$ be the uniformly random element of $\cF^\ast_{n,m}(K_{1,k})$. For all $\epsilon>0$ and large enough $n$,
$$\bbP\{\delta_\square(G,\cX^\ast_\gamma)\geq\epsilon\}\leq e^{-Cn^2}\,,$$
where $C>0$ is a constant depending only on $k$, $\gamma$, and $\epsilon$.
\end{lemma}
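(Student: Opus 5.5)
The plan is the standard large-deviation argument: count graphs far from the optimizers, and show this count is exponentially smaller than $N^\ast_{n,m}(K_{1,k})$. Write $\cF_\gamma$ for the set of graphons of edge density $\gamma$ with $t_\ind(K_{1,k},W)=0$. Define $K_\epsilon:=\{W\in\cF_\gamma:\delta_\square(W,\cX^\ast_\gamma)\geq\epsilon/2\}$. This set is closed in the compact space $(\widetilde{\cW},\delta_\square)$, since both constraints are continuous in the cut metric. Because $H$ is upper semicontinuous and, by \Cref{prop:graphon-char-fixed-gamma}, every maximizer of $H$ on $\cF_\gamma$ lies in $\cX^\ast_\gamma=\cV_\gamma$, we get
\[
\sup_{W\in K_\epsilon}H(W)\leq\sE_k(\gamma)-3\eta
\]
for some $\eta=\eta(k,\gamma,\epsilon)>0$. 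Otherwise a maximizing sequence in $K_\epsilon$ would have a limit point in $K_\epsilon$ with entropy $\sE_k(\gamma)$, which is a contradiction.

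Next we transfer this to graphs. We need the upper bound half of \Cref{lemma:FnmGraphonOptAsymps} (the Chatterjee--Varadhan type argument from \cite[Proposition~2.11]{perkins2025typical}), but localized to a closed set of graphons. For a closed set $F\subseteq\widetilde{\cW}$, the number of graphs $G\in\cF^\ast_{n,m}(K_{1,k})$ with $W_G$ within $\epsilon/2$ of $F$ in cut distance is at most
\[
\exp\Bigl(\binom{n}{2}\Bigl(\sup_{W\in F'}H(W)+\eta\Bigr)\Bigr),
\]
where $F'$ is a small closed neighbourhood of $F$ intersected with $\cF_\gamma$. To justify this, cover the compact space by finitely many cut-metric balls. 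For each ball, count graphs via weak regularity partitions: the number of graphs close to a step graphon $U$ is $\exp(\binom{n}{2}(H(U)+o(1)))$.

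The key technical point is that graphs $G$ on $n$ vertices that are induced-$K_{1,k}$-free and have $m\sim\gamma\binom n2$ edges have $W_G$ close to $\cF_\gamma$. This holds because $t_\ind(K_{1,k},W_G)=O(1/n)$, and induced densities are cut-continuous. Any subsequential limit therefore lies in $\cF_\gamma$, so for large $n$ all such $W_G$ lie within any prescribed distance of $\cF_\gamma$. A compactness argument then lets us replace "near $\cF_\gamma$ and near $K_\epsilon$" by "near a point of $K_\epsilon$" at the cost of $\eta$ in entropy, using upper semicontinuity once more. Since $\delta_\square(G,\cX^\ast_\gamma)\geq\epsilon$ forces $W_G$ to be close to $K_\epsilon$, the number of bad graphs is at most $\exp(\binom{n}{2}(\sE_k(\gamma)-2\eta))$.

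Finally, \Cref{thm:main-entropy} gives $N^\ast_{n,m}(K_{1,k})\geq\exp(\binom n2(\sE_k(\gamma)-\eta))$ for large $n$. Dividing the two bounds yields
\[
\bbP\{\delta_\square(G,\cX^\ast_\gamma)\geq\epsilon\}\leq e^{-\eta\binom n2}\leq e^{-Cn^2}.
\]
I expect the main obstacle to be the localized counting upper bound, which must handle simultaneously the hard constraint of induced-freeness (zero density is not an open condition) and the fixed edge count. The cleanest route is to invoke the proofs of \cite[Propositions~3.8~\&~3.9]{perkins2025typical}, which rely on \cite[Theorem~3.1]{chatterjee2011large} together with \cite{hatami2018graph} to handle the closedness of the constraint set.
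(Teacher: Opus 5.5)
Your proposal is correct and follows essentially the paper's route. The paper gives no separate argument and defers to \cite[Propositions~3.8~\&~3.9]{perkins2025typical}, which is the Chatterjee--Varadhan Theorem~3.1 scheme you reconstruct: an LDP upper bound on a closed set of graphons bounded away from $\cX^\ast_\gamma$, a strict entropy gap from upper semicontinuity of $H$ and compactness, and the matching lower bound taken from \Cref{thm:main-entropy} (where the input from \cite{hatami2018graph} enters via \Cref{lemma:FnmGraphonOptAsymps}); your continuity argument for $t(K_2,\cdot)$ and $t_\ind(K_{1,k},\cdot)$ also correctly handles the fact that $t_\ind(K_{1,k},W_G)=O(1/n)$ rather than $0$.
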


\begin{lemma}\label{lemma:rough-struc-gnp}
Let $p\in(0,1)$ be a constant and let $G\sim G(n,p)$. For all $\epsilon>0$ and large enough $n$,
$$\bbP\{\delta_\square(G,\cX_\ast^p)\geq\epsilon\,|\,G\in\cF^\ast_n(K_{1,k})\}\leq e^{-Cn^2}\,,$$
where $C>0$ is a constant depending only on $\epsilon$ and $p$.
\end{lemma}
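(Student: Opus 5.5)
The plan is to transfer the large deviation principle of \citeauthor{chatterjee2011large} for $G(n,p)$ to the conditional law given the closed set $\cF^\ast_n(K_{1,k})$, and then use the characterization \eqref{eqn:CpStarFormalDef} of the minimizers of $\Psi_k(p)$. This follows the route of \cite[Proposition~3.9]{perkins2025typical}.

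\textbf{Step 1 (LDP upper bound on the bad event).} Let $\widetilde\cW$ be the space of graphons modulo equivalence with the cut metric; it is compact. Set
$$\cK:=\{W:t_\ind(K_{1,k},W)=0\},\qquad \cK_\epsilon:=\{W\in\cK:\delta_\square(W,\cX^p_\ast)\geq\epsilon\}.$$
Since $t_\ind(K_{1,k},\cdot)$ is cut-continuous, $\cK$ is closed, and so is $\cK_\epsilon$. Every graph $G\in\cF^\ast_n(K_{1,k})$ satisfies $t_\ind(K_{1,k},W_G)=O(1/n)$, because the only induced copies come from degenerate maps. Thus $W_G$ lies in a closed neighbourhood $\cK^{(\eta)}=\{t_\ind\leq\eta\}$ for large $n$. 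By the Chatterjee--Varadhan upper bound,
$$\limsup_n \binom n2^{-1}\log\bbP\{\delta_\square(G,\cX^p_\ast)\geq\epsilon,\ G\in\cF^\ast_n\}\leq -\inf\{I_p(W): t_\ind(K_{1,k},W)\leq\eta,\ \delta_\square(W,\cX^p_\ast)\geq\epsilon\}.$$
We then let $\eta\downarrow0$. Because $I_p$ is lower semicontinuous and the sets are compact and nested, this infimum converges to $\inf_{\cK_\epsilon}I_p$.

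\textbf{Step 2 (strict gap).} The key claim is $\inf_{\cK_\epsilon}I_p>\Psi_k(p)$. The infimum over the compact set $\cK_\epsilon$ is attained by lower semicontinuity. If it equalled $\Psi_k(p)$, the minimizer would lie in $\cX^p_\ast$, contradicting $\delta_\square\geq\epsilon$. One subtlety is that $\cX^p_\ast$ must be exactly the minimizer set in the unrestricted problem $\inf_\cK I_p$. In particular, graphons with $\rand(W)=0$ must be handled. For $p\in(0,1)$ these have $I_p$ equal to a mixture of $\log\frac1p$ and $\log\frac1{1-p}$, which is covered by \eqref{eqn:sup-gamma-calc}. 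Indeed, the decomposition over $\gamma$ includes $\gamma=0$ (the zero graphon) and the pure 0/1 graphons, which are not optimal except $W_0$. We rely on the proof of \Cref{thm:gnp-graphons} for this.

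\textbf{Step 3 (lower bound on the conditioning event).} By \Cref{thm:main-rate}, $\bbP\{G\in\cF^\ast_n\}=\exp(-(\Psi_k(p)+o(1))\binom n2)$. Dividing the bound from Step 1 by this gives a conditional probability of at most $e^{-Cn^2}$ with $C=\tfrac12(\inf_{\cK_\epsilon}I_p-\Psi_k(p))>0$.

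The main obstacle is Step 1's passage $\eta\to0$ combined with Step 2's gap. This needs $\cX^p_\ast$ to be the full minimizer set, including at $p=p_k$ where it is an infinite union, and that union must be closed. I would verify closedness of $\bigcup_{\gamma\le\gamma_k}\cV_\gamma$ via closedness of $\cV$ together with continuity of edge density, and handle the $\rand=0$ graphons using the hereditary-property LDP of \Cref{lemma:LDPforGNPkl}.
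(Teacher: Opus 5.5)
Your proposal is correct and follows essentially the paper's route. The paper defers to \cite[Proposition~3.9]{perkins2025typical}, which combines three ingredients: the Chatterjee--Varadhan \cite{chatterjee2011large} LDP upper bound on a closed set, the matching lower bound on $\bbP\{G\in\cF^\ast_n(K_{1,k})\}$ from \Cref{thm:main-rate}, and a strict gap coming from compactness, lower semicontinuity of $I_p$, and the characterization \eqref{eqn:CpStarFormalDef}.

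The closedness of $\cX^p_\ast$ that you flag as an obstacle is not needed. Your contradiction only uses that an attained minimizer lies in $\cX^p_\ast$ and so has distance $0<\epsilon$ from it. Closedness is also automatic, since the minimizer set of a lower semicontinuous functional on the compact set $\{t_\ind(K_{1,k},\cdot)=0\}$ is closed.
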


\section{The Supercritical Regime}
\label{sec:supercritical}
In this section we prove \Cref{thm:main-almostall} \ref{item:thm:main-almostall-super}. We first introduce recurring parameters and definitions. Throughout this section, fix $k\geq3$, let $\Delta:=k-2$, fix $\gamma\in[\gamma_k,1)$, let $m\sim\gamma\binom{n}{2}$, and define
$$\rho:=\frac{(k-1)\gamma-1}{k-2}\,.$$
The lemmas up to and including \Cref{lemma:super-FPiT} apply to the full half-open interval $\gamma\in[\gamma_k,1)$, as the equality case $\gamma=\gamma_k$ is needed in \Cref{sec:critical}. The remainder of the section applies only to the open interval $\gamma\in(\gamma_k,1)$. Throughout the section, we write $C_k>0$ for a constant depending only on $k$ that may vary from line to line.

The parameters below are chosen in the following order. First choose $0<\alpha<1/(100k)$ sufficiently small. The proof of \Cref{lemma:super-FPiT} below gives a constant $c_\mat=c_\mat(k,\gamma)>0$. Next choose $\delta>0$ sufficiently small so that
\begin{equation}\label{eqn:super-parameter-alpha-delta-K1k}
\delta<\frac{\alpha}{100}\,,\qquad C_k\bigl(H(3\alpha)+\delta\bigr)<\frac{c_\mat}{100}\,,\qquad \rho\pm3\delta\subseteq(0,1)\,.
\end{equation}
After $\alpha$ and $\delta$ are fixed, the proof of \Cref{lemma:super-FPiprime} gives a constant $c_\med=c_\med(k,\gamma,\alpha)>0$. Choose $\epsilon>0$ sufficiently small so that
\begin{equation}\label{eqn:super-parameter-epsilon-K1k}
C_k\bigl(H(\epsilon)+\delta\bigr)<\frac{c_\med}{100}\,,\qquad \epsilon=o(\alpha^k)\,.
\end{equation}
Finally define
$$\tau:=\frac{1}{2}\left(\frac{\epsilon}{2^8k}\right)^3\,.$$

For pairwise disjoint nonempty sets $P_1,\dots,P_{k-1}\subseteq V$, call
$$\Pi=\{P_1,\dots,P_{k-1}\}$$
a \emph{division} of $V$. For a division $\Pi$, write
$$V(\Pi):=\bigcup_{P\in\Pi}P\,,\qquad \Pi_\sparse:=V\setminus V(\Pi)\,.$$
Let $\sD$ denote the set of all divisions of $V$. For all $s\geq0$ define $\sD_s:=\{\Pi\in\sD:|\Pi_\sparse|=s\}$.
For all $G\in\cF^\ast_{n,m}(K_{1,k})$ and divisions $\Pi=\{P_1,\dots,P_{k-1}\}\in\sD$, define the graph $T(G,\Pi)$ on the vertex set $V$ by
$$E(T(G,\Pi)):=\bigcup_{i=1}^{k-1}E(G^c[P_i])\cup E(G[V(\Pi),\Pi_\sparse])\,.$$
Define the quantity
$$b(G,\Pi):=e(T(G,\Pi))+e(G[\Pi_\sparse])\,.$$
Let $\Pi(G)$ denote a canonically chosen division minimizing $b(G,\cdot)$, and let us write $\Pi_\sparse(G)$ instead of $\Pi(G)_\sparse$. Define
$$T(G):=T(G,\Pi(G))\,,\qquad b(G):=b(G,\Pi(G))\,.$$

\subsection{Proof anatomy}
Let $W^\ast:=W^\ast_\gamma$ be the unique optimal graphon in the supercritical regime defined in \eqref{eqn:opt-graphon-super}. Define the set of far graphs
$$\cF^\far:=\{G\in\cF^\ast_{n,m}(K_{1,k}):\delta_\square(G,W^\ast)\geq\tau\}\,.$$
For all $\Pi\in\sD$ define the sets of graphs
$$\arraycolsep=1.4pt
\begin{array}{ll}
\cF_\Pi &:= \{G\in\cF^\ast_{n,m}(K_{1,k})\setminus\cF^\far:\Pi(G)=\Pi\,,\,e(T(G))\geq1\} \,, \\[5pt]
\cF^\ast_\Pi &:= \{G\in\cF^\ast_{n,m}(K_{1,k})\setminus\cF^\far:\Pi(G)=\Pi\,,\,e(T(G))=0\} \,.
\end{array}$$
as well as
$$\arraycolsep=1.4pt
\begin{array}{ll}
\cT_\Pi &:= \{T(G)\cup G[\Pi_\sparse]:G\in\cF_\Pi\} \,, \\[5pt]
\cT^\ast_\Pi &:= \{G[\Pi_\sparse]:G\in\cF^\ast_\Pi\} \,.
\end{array}$$
Using the same degree trichotomy as in \cite{perkins2025typical}, for all $T\in\cT_\Pi$ and $P\in\Pi$ we say that $v$ has
\begin{enumerate}[(\emph{\roman*})]
	\item \emph{low degree in $P$}  if $d_T(v,P)<\alpha|P|$,
    \item \emph{medium degree in $P$} if $\alpha|P|\leq d_T(v,P)\leq(1-\alpha)|P|$, or
	\item \emph{high degree in $P$} if $d_T(v,P)>(1-\alpha)|P|$.
\end{enumerate}
If a vertex $v$ has medium degree in some $P\in\Pi$ with respect to $T\in\cT_\Pi$, we say that $v$ has $(\Pi,T)$-medium degree. For all $\Pi\in\sD$ and $T\in\cT_\Pi$ let
$$\arraycolsep=1.4pt
\begin{array}{ll}
\cF'_\Pi &:= \{G\in\cF_\Pi:\text{there exists a vertex of $(\Pi,T(G))$-medium degree}\} \,, \\[5pt]
\cF_{\Pi,T} &:= \{G\in\cF_\Pi\setminus\cF'_\Pi:T(G)\cup G[\Pi_\sparse]=T\} \,.
\end{array}$$

\begin{lemma}\label{lemma:SuperCloseStructureK1k}
For large enough $n$, every graph $G\in\cF^\ast_{n,m}(K_{1,k})\setminus\cF^\far$ satisfies the following conditions:
\begin{enumerate}[
  label=\textit{(\roman*)},
  ref=(\textit{\roman*}),
  topsep=5pt
]
    \item\label{item:lemSuperK1ki} By editing at most $\epsilon n^2$ edges of $G$, one can obtain a co-$(k-1)$-partite graph.
    \item\label{item:lemSuperK1kii} For all $P\in\Pi(G)$ it holds that $|P|\in\big(\frac{1}{k-1}\pm\delta\big)n$.
    \item\label{item:lemSuperK1kiii} We have $|\Pi_\sparse(G)|\leq\delta n/2$.
    \item\label{item:lemSuperK1kiv} For all distinct $P,Q\in\Pi(G)$ it holds that $|\rho-d_G(P,Q)|\leq\delta$.
    \item\label{item:lemSuperK1kv} For every $v\in V$ and every $P\in\Pi(G)$, the vertex $v$ does not have high degree in $P$ with respect to $T(G)\cup G[\Pi_\sparse(G)]$.
\end{enumerate}
\end{lemma}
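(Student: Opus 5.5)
We derive all five items from $\delta_\square(G,W^\ast)<\tau$, passing from cut distance to a concrete partition. The minimality of $b(G,\cdot)$ then transfers estimates from a good reference division to the canonical $\Pi(G)$.

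\textbf{Step 1: a reference division.} Since $\delta_\square(G,W^\ast)<\tau$, after relabeling vertices there is a partition $V=Q_1\cup\cdots\cup Q_{k-1}$ with $|Q_i|=n/(k-1)+O(1)$ and
\[
\bigl\|A_G-\textstyle\sum_i \bsone_{Q_i\times Q_i}-\rho\,\bsone_{\text{off}}\bigr\|_\square\le 2\tau .
\]
Here we discretize the measure-preserving bijection using the fact that $W^\ast$ is a step function. Taking $S=T=Q_i$ gives $e(G^c[Q_i])\le 2\tau n^2$, and taking $S=Q_i$, $T=Q_j$ gives $|d_G(Q_i,Q_j)-\rho|\le C_k\tau$. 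The division $\Pi_0=\{Q_i\}$ has $\Pi_{0,\sparse}=\emptyset$, so
\[
b(G)\le b(G,\Pi_0)\le C_k\tau n^2 .
\]
Deleting the missing edges inside the parts gives item (i), since $C_k\tau\le\epsilon$.

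\textbf{Step 2: structure of $\Pi(G)$.} Write $\Pi(G)=\{P_1,\dots,P_{k-1}\}$ and $S=\Pi_\sparse(G)$. Since $b(G)\le C_k\tau n^2$, each $G[P_i]$ is nearly complete and $G$ has few edges incident to $S$. Then
\[
m\le \sum_i\binom{|P_i|}{2}+\sum_{i<j}|P_i||P_j|+C_k\tau n^2 .
\]
Compare this with the cut-norm information: the cliques $G[P_i]$ must each lie mostly inside a single $Q_j$. Indeed, a clique of size $\beta n$ meeting two classes $Q_j,Q_{j'}$ in linear sets $X,Y$ would force $d_G(X,Y)\approx 1$, which contradicts the cut-norm bound $d_G(X,Y)\approx\rho<1$ once $|X|,|Y|\gtrsim \tau^{1/3}n$. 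So each $P_i$ lies, up to $O(\tau^{1/3}n)$ vertices, inside some $Q_{\pi(i)}$.

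Next, $\pi$ is injective and $|S|$ is small. If two $P_i$ lay in the same $Q_j$, or if $S$ were large, then the edge count would force a deficit. Concretely, a vertex of $S$ has $\ge\rho$-fraction (hence linearly many) $G$-neighbours, and all of these count in $b$. The total count $b\le C_k\tau n^2$ therefore gives $|S|\le C_k\tau n/\rho\le\delta n/2$, which is item (iii). With $S$ small, the $P_i$ cover $Q_{\pi(i)}$ up to $C_k\tau^{1/3}n$ vertices, so $\pi$ is a bijection and item (ii) follows. Item (iv) follows by comparing $d_G(P_i,P_j)$ with $d_G(Q_{\pi(i)},Q_{\pi(j)})$, since the symmetric differences are small relative to $\delta$.

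\textbf{Step 3: no high degree (item (v)), the main obstacle.} Suppose $v$ has $d_T(v,P)>(1-\alpha)|P|$, where $T=T(G)\cup G[S]$. There are three cases.
\begin{itemize}
\item If $v\in P$, then $v$ is nonadjacent to almost all of $P$. Moving $v$ to $S$ (or to another part) changes $b$ by at most the number of $G$-edges from $v$ to $V(\Pi)$, minus about $(1-\alpha)|P|$. We must check this is negative.
\item If $v\in S$, then $v$ is adjacent to most of $P$. Moving $v$ into $P$ removes at least $(1-\alpha)|P|$ from $b$ and adds only $d^c_G(v,P)+e_G(v,S)\le\alpha|P|+\delta n$.
\item If $v\in P'\ne P$, then $d_T(v,P)$ counts nothing, since $T$ has no edges between distinct parts. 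So this case is vacuous.
\end{itemize}
The delicate case is $v\in P$. Moving $v$ to another part $P'$ costs $d^c_G(v,P')$, so we need some part in which $v$ has fewer than $(1-\alpha)|P|$ non-neighbours. If no such part exists, then $v$ has almost no neighbours in $V(\Pi)$, and moving $v$ to $S$ costs at most $d_G(v,V(\Pi))\le k\alpha n$, less than the gain of $(1-\alpha)|P|$. Either way we contradict the minimality of $b$. Making this bookkeeping uniform, and ensuring that the canonical minimizer cannot have such a vertex, is the step we expect to require the most care. In particular, edges from $v$ to $S$ must be counted in both directions, using $|S|\le\delta n/2$ and $\delta<\alpha/100$.
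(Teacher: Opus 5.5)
Your architecture matches the paper's. A reference $(k-1)$-partition obtained from the cut-distance hypothesis bounds $b(G)$. The parts of $\Pi(G)$ are then aligned with it to get (ii)--(iv). Item (v) follows from the same local moves the paper uses: for $v\in P$, optimality against moving $v$ to another part $P'$ forces $d_G(v,P')$ to be small for every $P'$, after which moving $v$ to $\Pi_\sparse$ lowers $b$; for $v\in\Pi_\sparse$, moving $v$ into $P$ lowers $b$. In all of these moves, the edges from $v$ to $\Pi_\sparse$ are counted in $b$ both before and after the move, so they simply cancel and need no special care.

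The genuine gap is Step 1. The hypothesis $\delta_\square(G,W^\ast)<\tau$ is an infimum over measure-preserving bijections of $[0,1]$. A near-optimal bijection may split the interval of a single vertex among several diagonal blocks of $W^\ast$. Turning such a fractional overlay into an actual vertex partition $Q_1,\dots,Q_{k-1}$ is not a free discretization just because $W^\ast$ is a step function. Nothing you wrote shows that the cut-norm error stays at $2\tau$.

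The paper handles this rounding by replacing $W^\ast$ with its $n$th discretization $H_n$, at an extra cost of $\tau$. It then invokes the Borgs--Chayes--Lov\'asz--S\'os--Vesztergombi inequality $\widehat{\delta}_\square(G,H_n)\le 32\,\delta_\square(G,H_n)^{1/3}$, which loses a cube root. That loss is why $\tau=\frac12(\epsilon/(2^8k))^3$ is chosen, so that $32(2\tau)^{1/3}=\epsilon/(8k)$. With this input, your reference partition has cut-norm error $\epsilon/(8k)$ rather than $2\tau$. Your later estimates become $b(G)\le\epsilon n^2$, $|\Pi_\sparse(G)|=O(\epsilon n/\rho)$, and alignment errors $O_\gamma(\sqrt{\epsilon}\,n)$. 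These still suffice, because $\epsilon$ is chosen after, and much smaller than, $\delta$ and $\alpha$.

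Two smaller points in Step 2 also need repair.

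\begin{itemize}
\item A single vertex of $S=\Pi_\sparse(G)$ need not have linearly many neighbours, since cut-norm closeness says nothing about individual vertices. Argue on the whole set instead. The cut-norm bound for the pair $(S,V)$ gives $\sum_{x\in S}d_G(x)\ge\rho|S|n-\epsilon n^2/(8k)$. On the other hand, $\sum_{x\in S}d_G(x)=e_G(S,V(\Pi))+2e_G(S)\le 2b(G)$.
\item To make $\pi$ a bijection, you must also rule out small parts. Each $P_i$ has at most one class $Q_j$ with $|P_i\cap Q_j|\ge C\sqrt{\epsilon}\,n$. A class dominant for no part would be covered only by $S$ and $k-1$ pieces of size less than $C\sqrt{\epsilon}\,n$, contradicting $|Q_j|\approx n/(k-1)$. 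Since there are $k-1$ parts and $k-1$ classes, every part has a dominant class, these classes are distinct, and $\pi$ is a bijection; (ii) then follows.
\end{itemize}

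Finally, in item (i) you mean adding, not deleting, the missing edges inside each $Q_i$.
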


For an integer $r\geq2$ let $\cC^r_{n,m}$ denote the set of co-$r$-partite graphs on $n$ vertices and $m$ edges. As a direct consequence of \Cref{lemma:SuperCloseStructureK1k}, we have
\begin{equation}\label{eqn:union-bound-super}
\cF^\ast_{n,m}(K_{1,k})\subseteq\cC^{k-1}_{n,m}\cup\bigcup_{s=1}^{\delta n/2}\bigcup_{\Pi\in\sD_s}\cF^\ast_\Pi\cup\bigcup_{\Pi\in\sD}\Bigg(\bigcup_{T\in\cT_\Pi}\cF_{\Pi,T}\cup\cF'_\Pi\Bigg)\cup\cF^\far\,.
\end{equation}
The proof of \Cref{thm:main-almostall} \ref{item:thm:main-almostall-super} in this section consists in establishing bounds for the cardinalities of the sets on the right-hand side of \eqref{eqn:union-bound-super}. To establish \Cref{lemma:SuperCloseStructureK1k}, we need the following definition, similar to that in \cite{perkins2025typical}.

\begin{definition}[Discretization of a graphon]\label{def:DiscreteGraphon}
For a graphon $W\in\cW$ and an integer $n$, define a weighted graph $H_n$ on vertex set $[n]$ by assigning weight $W(i/n,j/n)$ to the pair $ij$, and assigning weight $1$ to each vertex. The $n$th discretization of $W$ is the graphon $W_n:=W_{H_n}$, where the graphon $W_H$ associated with a weighted graph $H$ is defined in \Cref{subsec:graphons}.
\end{definition}

\begin{proof}[Proof of \Cref{lemma:SuperCloseStructureK1k}]
For all $n\in\N$, let $H_n$ be the weighted graph on $V=[n]$ whose $ij$ edge weight is $W^\ast(i/n,j/n)$ and whose node-weights all equal $1$, and let $W_n:=W_{H_n}$ be the $n$th discretization of $W^\ast$. In the remainder, all statements hold for large enough $n$.

Since $W_n\to W^\ast$ pointwise almost everywhere, we have $\delta_\square(W^\ast,W_n)\to0$. Hence if $\delta_\square(G,W^\ast)<\tau$ then $\delta_\square(G,W_n)<2\tau$ for large enough $n$. By \cite[Lemma~8.9]{lovasz2012large} we have $\delta_\square(G,H_n)=\delta_\square(W_G,W_n)$, and by \cite[Theorem~2.3]{borgs2008convergent} we have
$$\widehat{\delta}_\square(G_1,G_2)\leq32\cdot(\delta_\square(G_1,G_2))^{1/3}$$
for edge-weighted graphs with weights in $[-1,1]$ (the exponent of $1/3$ instead of $1/67$ follows from the remark on p.~1831 of \cite{borgs2008convergent}). Since $32(2\tau)^{1/3}=\epsilon/(8k)$, we deduce that $\widehat{\delta}_\square(G,H_n)<\epsilon/(8k)$.

\smallskip
\noindent\emph{Proof of \ref{item:lemSuperK1ki}.}
The vertex set of $H_n$ can be partitioned into $k-1$ parts $V_1\cup\cdots\cup V_{k-1}$ along with at most $k$ exceptional vertices coming from the discretization, where $||V_i|-|V_j||\leq1$ and every edge inside each $V_i$ has weight $1$. Since $\widehat{\delta}_\square(G,H_n)<\epsilon/(8k)$, we may assume the vertices of $G$ are labeled such that $d_\square(G,H_n)=\widehat{\delta}_\square(G,H_n)\leq\epsilon/(8k)$. For every $i\in[k-1]$, the discrepancy between $G$ and $H_n$ on $V_i\times V_i$ is $e(G^c[V_i])$, and therefore $e(G^c[V_i])\leq\epsilon n^2/(8k)$ for all $i\in[k-1]$, which further implies
$$\sum_{i=1}^{k-1}e(G^c[V_i])\leq \frac{\epsilon n^2}{8}\,.$$
By adding all missing edges inside each $V_i$ and adding at most $2kn$ edges to include the exceptional vertices in $V_1$, we obtain a co-$(k-1)$-partite graph after at most $\epsilon n^2$ edits, proving \ref{item:lemSuperK1ki}.

\smallskip
\noindent\emph{Proof of \ref{item:lemSuperK1kii} and \ref{item:lemSuperK1kiii}.}
Write $\Pi:=\Pi(G)=\{P_1,\dots,P_{k-1}\}$ for the remainder of the proof. By \ref{item:lemSuperK1ki} there exists a partition $V=V_1\cup\cdots\cup V_{k-1}$ such that $\sum_i e(G^c[V_i])\leq \epsilon n^2$ and hence $b(G)\leq\epsilon n^2$.

Define the graph $H$ on $V$ with edge set
$$E(H):=\Bigg(\bigcup_{i=1}^{k-1}E(K_{P_i})\Bigg)\cup\Bigg(\bigcup_{1\leq i<j\leq k-1}E(G[P_i,P_j])\Bigg)\,,$$
that is, $H$ is obtained from $G$ by adding all missing edges inside each $P_i$ and deleting all edges incident to $\Pi_\sparse$. By construction, we have $\widehat{\delta}_\square(G,H)\leq b(G)/n^2\leq\epsilon$. Using $\widehat{\delta}_\square(G,H_n)<\epsilon/8$ we conclude that
\begin{equation}\label{eqn:triangle-superK1k}
\widehat{\delta}_\square(H,H_n)\leq \widehat{\delta}_\square(H,G)+\widehat{\delta}_\square(G,H_n)\leq \epsilon+\frac{\epsilon}{8}\,.
\end{equation}
We first show $|\Pi_\sparse|\leq \delta n/2$. Indeed, edges between $\Pi_\sparse$ and $V(\Pi)$ have weight $0$ in $H$, while edges in $H_n$ outside the exceptional rows and columns have weight at least $\rho$, hence
$$\widehat{\delta}_\square(H,H_n)\geq\rho\cdot\frac{|\Pi_\sparse|(n-|\Pi_\sparse|)}{n^2}-\frac{2k}{n}\,.$$
If $|\Pi_\sparse|>\delta n/2$ then the right-hand side is at least $\rho\frac{\delta}{4}(1-\frac{\delta}{2})$, which is larger than $\epsilon+\epsilon/8$ by the choice of $\epsilon$, contradicting \eqref{eqn:triangle-superK1k} and proving \ref{item:lemSuperK1kiii}.

Next we show that $\Pi$ is balanced. Order the sets in $\Pi$ so that $|P_1|\leq\cdots\leq|P_{k-1}|$. If $|P_1|\leq(\frac{1}{k-1}-\delta)n$ then since $|\Pi_\sparse|\leq\delta n/2$ we have
$$|P_{k-1}|\geq n-|\Pi_\sparse|-(k-2)|P_1|\geq \left(\frac{1}{k-1}+\frac{\delta}{2}\right)n$$
for all sufficiently small $\delta$. In $H$, the induced subgraph on $P_{k-1}$ is complete, while in $H_n$ no cluster of weight $1$ can contain more than $\lceil n/(k-1)\rceil$ vertices. Consequently, there are at least $\frac{\delta n}{4}\cdot\frac{n}{k-1}$ pairs $\{x,y\}\subseteq P_{k-1}$ whose edge has weight $1$ in $H$ but weight $\rho$ in $H_n$. It follows that
$$\widehat{\delta}_\square(H,H_n)\geq (1-\rho)\cdot \frac{1}{n^2}\cdot \frac{\delta n}{4}\cdot\frac{n}{k-1}\,,$$
which again exceeds $\epsilon+\epsilon/8$, contradicting \eqref{eqn:triangle-superK1k} and proving \ref{item:lemSuperK1kii}.

\smallskip
\noindent\emph{Proof of \ref{item:lemSuperK1kiv}.}
The preceding paragraph also shows that, after relabeling the parts of $\Pi$, each $P_i$ differs from the corresponding weight-$1$ block of $H_n$ in at most $C_k\delta n$ vertices. If $|d_G(P_i,P_j)-\rho|>\delta$ for some $i\neq j$ then the discrepancy between $H$ and $H_n$ on $P_i\times P_j$ is at least $c_k\delta n^2-O_k(\delta n^2)-O_k(n)$. By taking $\delta$ small first and then $\epsilon$ sufficiently small, this contradicts \eqref{eqn:triangle-superK1k} and proves \ref{item:lemSuperK1kiv}.

\smallskip
\noindent\emph{Proof of \ref{item:lemSuperK1kv}.}
First suppose there exists $v\in P_i$ such that
$$d_{T(G)}(v,P_i)=d^{c}_G(v,P_i)>(1-\alpha)|P_i|\,.$$
Let $j\neq i$ and consider the division $\Pi'$ obtained from $\Pi(G)$ by moving $v$ from $P_i$ to $P_j$. The quantity $b(G,\Pi)$ changes only through edges involving $v$, and we have
$$b(G,\Pi')-b(G,\Pi(G))=d^{c}_G(v,P_j)-d^{c}_G(v,P_i)\,.$$
By optimality of $\Pi(G)$ this is nonnegative, hence $d^{c}_G(v,P_j)>(1-\alpha)|P_i|$. Using \ref{item:lemSuperK1kii}, it follows that
$$d_G(v,P_j)=|P_j|-d^{c}_G(v,P_j)\leq |P_j|-(1-\alpha)|P_i|\leq 3\alpha n$$
for all sufficiently small $\delta$. Similarly we have $d_G(v,P_i)\leq\alpha|P_i|\leq\alpha n$. Summing over $j\in[k-1]$ and using \ref{item:lemSuperK1kiii}, we obtain $d_G(v)\leq4\alpha n$.

Now define the division $\Pi''$ obtained from $\Pi(G)$ by moving $v$ from $P_i$ into $\Pi_\sparse$. Then we have
$$b(G,\Pi'')-b(G,\Pi(G))=d_G(v)-d^{c}_G(v,P_i)\,.$$
Since $d^{c}_G(v,P_i)>(1-\alpha)|P_i|\geq (\frac{1}{k-1}-\delta)(1-\alpha)n$ while $d_G(v)\leq4\alpha n$, the right-hand side above is negative, contradicting optimality of $\Pi(G)$.

It remains to consider $v\in\Pi_\sparse$. If $v$ is moved from $\Pi_\sparse$ into $P_i$ then
$$b(G,\Pi')-b(G,\Pi(G))=d_G^c(v,P_i)-d_G(v,V(\Pi))\,,$$
which again is nonnegative by optimality. Therefore
$$|P_i|-d_G(v,P_i)=d_G^c(v,P_i)\geq d_G(v,V(\Pi))\geq d_G(v,P_i)\,,$$
and hence $d_G(v,P_i)\leq |P_i|/2<(1-\alpha)|P_i|$. Thus no vertex in $\Pi_\sparse$ has high degree into any $P_i$, completing the proof.
\end{proof}

\subsection{The counting argument}
For all $\Pi=\{P_1,\dots,P_{k-1}\}\in\sD$ and integers $t$, define
$$\cM_{\Pi,t}:=
\left\{\bsm\in\N^{\binom{k-1}{2}}:
\begin{array}{l}
\norm{\bsm}_1+e(\Pi^c)+t=m \text{ and } \\[8pt]
\text{for all $1\leq i<j\leq k-1$,}~~\frac{\bsm_{ij}}{|P_i|\cdot|P_j|}\in \rho\pm\delta
\end{array}\right\}\,,$$
where $e(\Pi^c)=\sum_{i=1}^{k-1}\binom{|P_i|}{2}$. Let us also write
$$\cM_\Pi:=\bigcup_{-\epsilon n^2\leq t\leq\epsilon n^2}\cM_{\Pi,t}\,.$$
Thus an $\bsm\in\cM_{\Pi,t}$ assigns $\bsm_{ij}$ edges to the bipartite graph between $P_i$ and $P_j$, and $t$ records the net contribution of $E(T(G))$ and $E(G[\Pi_\sparse])$ to the edge count. For all $\Pi=\{P_1,\dots,P_{k-1}\}\in\sD$ and $\bsm\in\cM_\Pi$, let us define
$$\binom{\Pi}{\bsm}:=\prod_{1\leq i<j\leq k-1}\binom{|P_i|\cdot|P_j|}{\bsm_{ij}}\,.$$
For $T\in\cT_\Pi$, define
$$\sigma_\Pi(T):=e_T(V(\Pi),\Pi_\sparse)+e(T[\Pi_\sparse])-e(T[V(\Pi)])\,,$$
where $e_T(V(\Pi),\Pi_\sparse)$ denotes the number of edges of $T$ with one endpoint in $V(\Pi)$ and the other in $\Pi_\sparse$.

\begin{lemma}\label{lemma:super-FPiprime}
There exists $c_\med=c_\med(k,\gamma,\alpha)>0$ such that the following holds. For all $\Pi\in\sD$ and $\bsm\in\cM_\Pi$, we have
$$\abs{\cF'_\Pi} \leq \binom{\Pi}{\bsm}\,e^{-c_\med n^2}\,.$$
\end{lemma}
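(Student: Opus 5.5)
Fix $\Pi=\{P_1,\dots,P_{k-1}\}\in\sD$ and $\bsm\in\cM_\Pi$. Every $G\in\cF'_\Pi$ has a vertex $v$ and a part $P\in\Pi$ with $\alpha|P|\leq d_T(v,P)\leq(1-\alpha)|P|$, where $T=T(G)\cup G[\Pi_\sparse]$. I will take a union bound over the choice of $v$, the index $i$ with $P=P_i$, and the set $N:=N_T(v,P_i)$ of defect neighbours of $v$ in $P_i$. This costs at most $k\,n\,2^n=e^{O(n)}$, so it suffices to show that for each fixed triple $(v,i,N)$ the number of graphs is at most $\binom{\Pi}{\bsm}e^{-2c_\med n^2}$.

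For the fixed triple, I first count the data other than the bipartite graphs $G[P_a,P_b]$. This data consists of the defect edges $E(T(G))$ and $G[\Pi_\sparse]$, totalling $b(G)\leq\epsilon n^2$ edges by \Cref{lemma:SuperCloseStructureK1k}. There are at most $\binom{n^2}{\leq\epsilon n^2}\leq e^{H(\epsilon)n^2}$ choices for it. The bipartite graphs have between-part edge counts $\bsm'$ with $\norm{\bsm'-\bsm}_1=O(\epsilon n^2)$. Comparing binomials with densities in $\rho\pm\delta$ shows $\binom{\Pi}{\bsm'}\leq\binom{\Pi}{\bsm}e^{C_k(\epsilon+\delta)n^2}$, with a polynomial factor for the number of choices of $\bsm'$. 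By \eqref{eqn:super-parameter-epsilon-K1k}, these losses total at most $e^{c_\med n^2/50}$, provided the bipartite part carries a penalty $e^{-c_\med n^2}$. I aim to get this penalty from Janson's inequality.

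To set up Janson, replace the $\bsm'$-uniform bipartite graphs by independent random bipartite graphs $G[P_a,P_b]\sim G(P_a,P_b,\rho')$. Conditioning on exact edge counts costs only a polynomial factor, and $\rho'\in\rho\pm\delta$. The key structural point is that the $\alpha|P_i|$ non-neighbours of $v$ inside $P_i$ (the set $N$ is a set of \emph{non}-edges of $G$ inside $P_i$ when $v\in P_i$) let us build many potential induced stars. The same happens for the edges of $v$ into $P_i$ when $v\in\Pi_\sparse$, or for $v\in P_j$ with $j\neq i$. Concretely, for $v\in P_i$ take a vertex $u\in N(v)\cap P_i$ (at least $\alpha|P_i|$ such, by medium degree), whose neighbourhood contains the clique-part $P_i$ minus defects. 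Choose leaves $w_1\in N^c(v)\cap P_i$, which $u$ is adjacent to but $v$ is not, and leaves $w_a\in P_a$ for the other $k-2$ parts $a\neq i$. Together with $v$ these give $k$ leaves from $k-1$ pairwise "clique" blocks, and the leaves must be pairwise non-adjacent. The edges among leaves in distinct parts are random bipartite edges, absent with probability $1-\rho'$. The edges from $u$ to the $w_a$ are random, present with probability $\rho'$. Some adaptation is needed since $v$ and $w_1$ both lie in $P_i$ but are non-adjacent there.

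Choose the family of potential stars so that each event is an intersection of monotone conditions. Presence events and absence events can be arranged to live on disjoint edge sets, by fixing orientations so that each random pair is used only as "present" or only as "absent", so Harris-type positive correlation holds as in \Cref{thm:JansonsInequality}. With $\Theta(n^{k})$ copies, each of constant probability, we get $\mu=\Theta_\alpha(n^k)$ and $\Delta=O(n^{2k-2})$. Hence $\mu^2/\Delta=\Omega_\alpha(n^2)$, and the probability of avoiding all copies is at most $e^{-c n^2}$ with $c$ depending on $k,\gamma,\alpha$. This defines $c_\med$.

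The main obstacle is the case analysis ensuring the star really is induced and avoids defect edges we do not control. The fix is to restrict leaves to vertices that are typical with respect to the fixed defect graph: only $O(\epsilon n)$-defect vertices are used, which is possible since $b(G)\leq\epsilon n^2$ and $\epsilon=o(\alpha^k)$. One must also handle the three locations of $v$ ($v\in P_i$, $v\in P_j$ with $j\neq i$, and $v\in\Pi_\sparse$) separately. In the last two cases, the medium degree gives both $\alpha|P_i|$ edges and $\alpha|P_i|$ non-edges from $v$ to the clique $P_i$. Taking $u\in N(v)\cap P_i$ and a leaf in $N^c(v)\cap P_i$ then reproduces the configuration directly.
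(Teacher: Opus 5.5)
There is a genuine gap in the main case $v\in P_i$. Your fixed data are the defect edges and $G[\Pi_\sparse]$. But for $v\in P_i$ the edges from $v$ to the other parts are not defects, so in your model they are random. Every star in your family has $v$ as a leaf and needs the random pairs $vw_a$ to be absent.

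\emph{The Janson estimate fails.} Two stars sharing only $v$ and one leaf $w_a$ are dependent. Each $K$ therefore has $\Theta(n^{k-1})$ dependent partners, not $O(n^{k-2})$. This gives $\Delta=\Theta(n^{2k-1})$ and $\mu^2/\Delta=O(n)$, so Janson yields only $e^{-\Theta(n)}$. That cannot absorb the $e^{H(\epsilon)n^2}$ choices of defect data.

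\emph{No other star family can fix this in your model.} With probability $e^{-O(n)}$, $v$ is adjacent to all of $V(\Pi)\setminus P_i$. Suppose the only defects are the non-edges from $v$ to $N$. An independent set then has at most one vertex in each $P_a$ with $a\neq i$, and at most $\{v,x\}$ with $x\in N$ inside $P_i$. Since $v$ is adjacent to everything outside $P_i$, there is no independent $k$-set, hence no induced $K_{1,k}$. So the probability you bound is at least $e^{-O(n)}$.

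\emph{The missing idea is the minimality of $b(G,\cdot)$ at $\Pi(G)=\Pi$.} Your argument never uses it, and it is exactly what excludes such graphs.
\begin{itemize}
\item Moving $v$ from $P_i$ to $P_j$ gives $d^c_G(v,P_j)\geq d^c_G(v,P_i)\geq\alpha|P_i|$.
\item For $v\in\Pi_\sparse$, moving $v$ into $P_j$ gives $d^c_G(v,P_j)\geq d_G(v,V(\Pi))\geq d_G(v,P_i)$.
\end{itemize}
Hence $v$ has a set $N_j\subseteq P_j$ of at least $\alpha|P_j|/2$ non-neighbours in every other part.

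The paper also union-bounds over the graph $F$ of adjacencies from $v$ to $V(\Pi)$, which costs only $2^n$. It then removes $v$ from the random bipartite graphs and restricts the leaves $w_a$ to $N_a$. Now every pair at $v$ is deterministic, and two copies are dependent only if they share a random pair among their $k$ free vertices. This gives $\mu=\Omega(\alpha^kn^k)$, $\Delta=O(n^{2k-2})$, and $\mu^2/\Delta=\Omega(\alpha^{2k}n^2)$.

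With this change, your star coincides with the paper's: centre a neighbour of $v$ in $P_i$, leaves $v$, a non-neighbour of $v$ in $P_i$, and one vertex of each $N_a$. Your orientation argument for positive correlation also carries over.

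Two smaller points:
\begin{itemize}
\item The case $v\in P_j$ with $j\neq i$ is vacuous, since $T$ has no edges between distinct parts.
\item For $v\in\Pi_\sparse$, the adjacencies of $v$ are already fixed by $T$. You still must choose the $w_a$ among non-neighbours of $v$, which again needs optimality, e.g.\ via item (v) of \Cref{lemma:SuperCloseStructureK1k}.
\end{itemize}
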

\begin{proof}
Let us write $\Pi=\{P_1,\dots,P_{k-1}\}$. For all $T\in\cT_\Pi$, $v\in V$, graphs $F$ on $P_1\cup\cdots\cup P_{k-1}\cup\{v\}$ whose edges are all incident with $v$, and $\bsm'\in\cM_\Pi$, let $\cF'_{\Pi,T,v,F,\bsm'}$ be the set of graphs $G\in\cF'_\Pi$ such that $T(G)\cup G[\Pi_\sparse]=T$, the vertex $v$ has $(\Pi,T)$-medium degree, the adjacencies between $v$ and $V(\Pi)$ are given by $F$, and $e_G(P_i,P_j)=\bsm'_{ij}$ for all $1\leq i<j\leq k-1$.
Then
\begin{equation}\label{eqn:super-medium-cover-K1k}
\abs{\cF'_\Pi}\leq \sum_{T\in\cT_\Pi}\sum_{v\in V}\sum_F\sum_{\bsm'\in\cM_\Pi}\abs{\cF'_{\Pi,T,v,F,\bsm'}}\,.
\end{equation}

Fix $T,v,F,\bsm'$ such that $\cF'_{\Pi,T,v,F,\bsm'}$ is nonempty. Since defect edges lie either inside one of the sets $P\in\Pi$ or are incident with $\Pi_\sparse$, any vertex of $(\Pi,T)$-medium degree outside $\Pi_\sparse$ must lie in the unique set $P\in\Pi$ in which it has medium degree. After relabeling the sets in $\Pi$, we may therefore assume that $v$ has medium degree in $P_1$ and that $v\in P_1\cup\Pi_\sparse$.

For all $G\in\cF'_{\Pi,T,v,F,\bsm'}$, \Cref{lemma:SuperCloseStructureK1k} gives $|P_i|\in(\frac{1}{k-1}\pm\delta)n$ for all $i\in[k-1]$. Write $a_i:=|P_i|$ and define
$$N:=\begin{cases}\{x\in P_1\setminus\{v\}:vx\not\in E(F)\}&\text{if }v\in P_1\\[8pt]\{x\in P_1:vx\in E(F)\}&\text{if }v\in \Pi_\sparse
\end{cases}\,.$$
We then have $\alpha a_1\leq |N|\leq (1-\alpha)a_1$. If $j\geq2$ then optimality of $\Pi(G)$ gives a set $N_j\subseteq P_j$ such that $|N_j|\geq\alpha a_j/2$ and $vx\not\in E(F)$ for all $x\in N_j$. Indeed, if $v\in P_1$ then $d_G^c(v,P_j)\geq d_G^c(v,P_1)=|N|$, while if $v\in\Pi_\sparse$ then $d_G^c(v,P_j)\geq d_G(v,V(\Pi))\geq d_G(v,P_1)=|N|$.

Let $Q_1:=P_1\setminus\{v\}$ if $v\in P_1$, and $Q_1:=P_1$ if $v\in\Pi_\sparse$. Also set $Q_i:=P_i$ for all $i\geq2$. Define integers $\widetilde\bsm_{ij}$ by setting $\widetilde\bsm_{ij}:=\bsm'_{ij}$ for all $2\leq i<j\leq k-1$, and also for all $1j$ if $v\in\Pi_\sparse$, while for $v\in P_1$ and $j\geq2$ we set
$$\widetilde\bsm_{1j}:=\bsm'_{1j}-\abs{\{x\in P_j:vx\in E(F)\}}\,.$$
Let $G=G_{\Pi,T,v,F,\bsm'}$ be the random graph on $V$ obtained by choosing, independently for $1\leq i<j\leq k-1$, a uniformly random $\widetilde\bsm_{ij}$-element subset $A_{ij}\subseteq E(K_{Q_i,Q_j})$, and setting
\begin{equation}\label{eqn:super-fixed-model-medium-K1k}
E(G):=\left(\bigcup_{i=1}^{k-1}\binom{P_i}{2}\cup E(F[V(\Pi)])\cup\bigcup_{1\leq i<j\leq k-1}A_{ij}\right)\triangle\,E(T)\,.
\end{equation}
Every graph in $\cF'_{\Pi,T,v,F,\bsm'}$ is an outcome of $G$, and $\prod_{1\leq i<j\leq k-1}\binom{|Q_i||Q_j|}{\widetilde\bsm_{ij}}\leq\binom{\Pi}{\bsm'}$. Combining this with \eqref{eqn:super-medium-cover-K1k} gives
\begin{equation}\label{eqn:super-medium-counting-setup-K1k}
\abs{\cF'_\Pi}\leq \sum_{T\in\cT_\Pi}\sum_{v\in V}\sum_F\sum_{\bsm'\in\cM_\Pi}\binom{\Pi}{\bsm'}\,\bbP\{G_{\Pi,T,v,F,\bsm'}\in\cF^\ast_{n,m}(K_{1,k})\}\,,
\end{equation}
where the summand is interpreted as $0$ if $\cF'_{\Pi,T,v,F,\bsm'}=\emptyset$.

For $1\leq i<j\leq k-1$, set $q_{ij}:=\widetilde\bsm_{ij}/(|Q_i||Q_j|)$. The random graph $G^\bin=G^\bin_{\Pi,T,v,F,\bsm'}$ is defined in the same way as $G$, except that now each $A_{ij}$ is obtained by including every edge of $E(K_{Q_i,Q_j})$ independently with probability $q_{ij}$.
Since $\cF'_{\Pi,T,v,F,\bsm'}$ is nonempty, all probabilities $q_{ij}$ lie in $\rho\pm2\delta$ for large enough $n$. Conditioning on the exact values $|A_{ij}|=\widetilde\bsm_{ij}$ for all $i<j$ recovers the random graph $G$. Since there are only $\binom{k-1}{2}$ pairs and the probabilities $q_{ij}$ are bounded away from $0$ and $1$, we have that for every event $\cE$, $\bbP\{G\in\cE\}\leq n^{O_k(1)}\bbP\{G^\bin\in\cE\}$.

We now define a family $\cK$ of copies of $K_{1,k}$. If $v\in P_1$ then for $x\in N$, $z\in Q_1\setminus(N\cup\{x\})$, and $y_j\in N_j$ for $2\leq j\leq k-1$, let $K(x,z,y_2,\dots,y_{k-1})$ be the copy whose center is $z$ and whose leaves are $v,x,y_2,\dots,y_{k-1}$. If $v\in\Pi_\sparse$ then for $x\in N$, $z\in P_1\setminus(N\cup\{x\})$, and $y_j\in N_j$ for $2\leq j\leq k-1$, let $K(x,z,y_2,\dots,y_{k-1})$ be the copy whose center is $x$ and whose leaves are $v,z,y_2,\dots,y_{k-1}$. In both cases, discard every choice for which some deterministic adjacency required inside $P_1$ is an edge of $T$. Since $e(T)\leq\epsilon n^2$ and $\epsilon=o(\alpha^k)$, the number of discarded choices is $o(\alpha^k n^k)$, so $|\cK|\geq c\alpha^kn^k$ for a constant $c=c(k)>0$.

For $K\in\cK$, let $A_K$ be the event that $K$ appears in $G^\bin$ as an induced copy of $K_{1,k}$. All random edges appearing in $A_K$ lie between distinct parts $Q_i$ and $Q_j$, and all probabilities $q_{ij}$ lie in $\rho\pm2\delta$. Therefore $\bbP\{A_K\}\geq c_1$ for a constant $c_1=c_1(k,\gamma)>0$, and hence
$$\mu:=\sum_{K\in\cK}\bbP\{A_K\}\geq c_2\alpha^kn^k\,.$$
If $A_K$ and $A_{K'}$ are dependent then the two copies share a random pair between two distinct parts of the partition $\{Q_1,\dots,Q_{k-1}\}$. For each fixed $K$, there are at most $C_kn^{k-2}$ possible choices of $K'$, and consequently
$$\Delta:=\sum_{K\sim K'}\bbP\{A_K\cap A_{K'}\}\leq C_kn^{2k-2}\,.$$
It follows that $\mu/2\geq c_3\alpha^kn^k$ and $\mu^2/(4\Delta)\geq c_3\alpha^{2k}n^2$. By \Cref{thm:JansonsInequality} we obtain
$$\bbP\!\left\{\bigcap_{K\in\cK}\{G^\bin\not\geq K\}\right\}\leq e^{-c_4\alpha^{2k}n^2}\,,$$
where $\not\geq$ denotes exclusion as an induced subgraph. It follows that
$$\bbP\{G\not\geq K_{1,k}\}\leq n^{O_k(1)}\bbP\{G^\bin\not\geq K_{1,k}\}\leq n^{O_k(1)}\bbP\!\left\{\bigcap_{K\in\cK}\{G^\bin\not\geq K\}\right\}\leq e^{-c_5\alpha^{2k}n^2}\,.$$
Inserting this bound into \eqref{eqn:super-medium-counting-setup-K1k}, and using that every $T\in\cT_\Pi$ has at most $\epsilon n^2$ edges by \Cref{lemma:SuperCloseStructureK1k}, that there are at most $2^n$ possibilities for $F$, and that $|\cM_\Pi|=n^{O(1)}$, we obtain
\begin{align*}
\abs{\cF'_\Pi}&\leq \sum_{T\in\cT_\Pi}\sum_{v\in V}\sum_F\sum_{\bsm'\in\cM_\Pi}\binom{\Pi}{\bsm'}e^{-c_5\alpha^{2k}n^2} \\
&\leq \max_{\bsm'\in\cM_\Pi}\binom{\Pi}{\bsm'}\exp\!\left(-c_5\alpha^{2k}n^2+H(\epsilon)(\log2)n^2+O(n)\right).
\end{align*}
Finally, if $\bsm,\bsm'\in\cM_\Pi$ then the corresponding densities all lie in $\rho\pm\delta$, and Stirling's formula implies $\log\binom{\Pi}{\bsm'}-\log\binom{\Pi}{\bsm}=O(\delta n^2)$. The choice of $\epsilon$ and $\delta$ completes the proof, after setting $c_\med>0$ sufficiently small.
\end{proof}

For $T\in\cT_\Pi$, let $M(T)$ be a canonically chosen maximum matching in the graph $T[V(\Pi),V]$ and define $h(T):=|M(T)|$.

\begin{lemma}\label{lemma:super-FPiT}
There exists $c_\mat=c_\mat(k,\gamma)>0$ such that the following holds. Let $\Pi\in\sD$, $T\in\cT_\Pi$, and $\bsm\in\cM_{\Pi,\sigma_\Pi(T)}$. If we define
$$\cF_{\Pi,T,\bsm}:=\left\{G\in\cF_{\Pi,T}:e_G(P_i,P_j)=\bsm_{ij}\text{ for all }1\leq i<j\leq k-1\right\}$$
then we have
$$\abs{\cF_{\Pi,T,\bsm}} \leq \binom{\Pi}{\bsm}\,e^{-c_\mat h(T)n}\,.$$
\end{lemma}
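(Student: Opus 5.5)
We bound $|\cF_{\Pi,T,\bsm}|$ by the probability that a random multipartite graph attuned to $\Pi$ is induced-$K_{1,k}$-free, as in the proof of \Cref{lemma:super-FPiprime}, but now with a penalty per matching edge. Let $G$ be the random graph on $V$ obtained by taking all pairs inside each $P_i$, choosing for each $i<j$ a uniformly random $\bsm_{ij}$-subset $A_{ij}\subseteq E(K_{P_i,P_j})$, and taking the symmetric difference with $E(T)$. Every graph in $\cF_{\Pi,T,\bsm}$ is an outcome of $G$, and the edge count is automatically $m$ because $\bsm\in\cM_{\Pi,\sigma_\Pi(T)}$. Hence
$$\abs{\cF_{\Pi,T,\bsm}}\leq\binom{\Pi}{\bsm}\,\bbP\{G\not\geq K_{1,k}\}.$$
Passing to the binomial model $G^\bin$ with densities $q_{ij}=\bsm_{ij}/(|P_i||P_j|)\in\rho\pm\delta$ costs only a factor $n^{O_k(1)}$. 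Since $h(T)\geq1$ and $e^{-c h(T)n}$ dominates $n^{O_k(1)}$ after shrinking $c$, it suffices to show $\bbP\{G^\bin\not\geq K_{1,k}\}\leq e^{-2c_\mat h(T)n}$.

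\textbf{Key step: independent per-edge penalties.} Let $M(T)=\{u_1w_1,\dots,u_hw_h\}$ with $u_s\in V(\Pi)$. Since $G\in\cF_\Pi\setminus\cF'_\Pi$ and by \Cref{lemma:SuperCloseStructureK1k}\ref{item:lemSuperK1kv}, each vertex has low degree in every part with respect to $T$. Greedily select a sub-matching $M'\subseteq M(T)$ of size $\geq h/C_k$ whose edges have pairwise disjoint endpoints and such that no two selected edges share a $T$-neighbor in a structured way. Each edge $u_sw_s$ is one of two kinds.
\begin{itemize}
\item[(a)] A missing edge inside some $P_i$, i.e.\ $u_s,w_s\in P_i$ with $u_sw_s\notin E(G)$. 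Take the star centered at $z\in P_i$ with leaves $u_s,w_s$ together with one vertex $y_j$ from each other part $P_j$, where $y_j$ is adjacent to $z$ but to none of the other leaves. Its leaves $u_s,w_s$ are non-adjacent by the defect, and the $y_j$ lie in distinct parts, so all the other leaf pairs are random cross pairs.
\item[(b)] An edge $u_sw_s$ with $w_s\in\Pi_\sparse$. Use $w_s$ as a leaf and a center $z=u_s$, or a neighbor of $w_s$ in $V(\Pi)$ together with leaves in the other parts. Here $w_s$ is almost non-adjacent to $V(\Pi)$: after removing low-degree $T$-neighbors, $w_s$ is joined to $V(\Pi)$ only through the few $T$-edges, so leaf non-adjacency to $w_s$ is deterministic for most choices.
\end{itemize}
Because all vertices have low $T$-degree, for each $s$ a constant fraction $c\,n^{k-1}$ of the choices of $(z,y_2,\dots)$, or even linearly many choices with $k-2$ leaves fixed, avoid $T$-edges. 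Each such choice forms an induced $K_{1,k}$ with probability bounded below by $c_1(k,\gamma)$.

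\textbf{Obtaining the $e^{-\Theta(hn)}$ penalty.} To get $\Theta(n)$ per matching edge rather than $\Theta(n^2)$, restrict for each $s$ to a family $\cK_s$ of stars in which the $k-2$ leaves other than the special vertices are fixed to a bounded set disjoint from the other families, while the center $z$ ranges over $\Theta(n)$ vertices of $P_i$. Then the events for different $z$ are nearly independent; the dependence occurs only through shared pairs, which are few. Applying \Cref{thm:JansonsInequality} to $\bigcup_s\cK_s$, where vertex-disjointness across $s$ keeps cross-dependencies small, gives $\mu\geq c\,hn$ and $\Delta=O(hn)$ after choosing the fixed leaves so that pairs are shared only rarely. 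This yields $\bbP\leq e^{-c\,hn}$. Alternatively, one can use exact independence by having the families for distinct $s$ and distinct $z$ use disjoint random pairs, and multiply $(1-c_1)^{\Theta(n)}$ over $s$.

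\textbf{Main obstacle.} The delicate part is the choice of stars so that, after fixing $T$, every required non-adjacency among leaves is either random (a cross pair) or forced by $T$. This must hold simultaneously for both defect kinds, in particular for vertices of $\Pi_\sparse$, whose adjacencies to $V(\Pi)$ are entirely $T$-determined. It must also keep the dependence count $\Delta$ linear in $hn$. I would handle $\Pi_\sparse$ by using as leaves only vertices of $V(\Pi)$ that are $T$-non-neighbors of $w_s$, which exist in abundance by low degree. I would use the optimality of $\Pi(G)$, as in \Cref{lemma:SuperCloseStructureK1k}\ref{item:lemSuperK1kv}, to guarantee $w_s$ has at most $|P_i|/2$ neighbors in each part, giving room for both centers and leaves.
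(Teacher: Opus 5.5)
Your reduction to the random graph $G^\bin$ matches the paper's, but the step that produces the $e^{-\Theta(h(T)n)}$ penalty does not work as written. For each matching edge $u_sw_s$ you fix the $k-2$ auxiliary leaves $y_2,\dots,y_{k-1}$ (or draw them from a bounded set) and let only the center $z$ vary. Then every copy in $\cK_s$ uses the same random pairs $u_sy_j$, $w_sy_j$, $y_jy_{j'}$, so the events for different $z$ are not nearly independent: every pair of them is dependent. This gives $\Delta_s=\Theta(n^2)$ against $\mu_s=\Theta(n)$, hence $\mu_s^2/\Delta_s=\Theta(1)$. Concretely, with probability bounded below by a constant the cross pair $u_sy_2$ is an edge, and that kills every copy in $\cK_s$ at once. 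So these families can only give a penalty $e^{-O(h)}$, and the claim $\Delta=O(hn)$ is false.

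The ``exact independence'' fallback does not repair this. For copies with different centers to use disjoint random pairs, the auxiliary leaves must vary with $z$, which contradicts your setup. Moreover, since $h(T)$ can be of order $n$, you would need $\Theta(hn)=\Theta(n^2)$ copies with pairwise disjoint random pairs. Vertex-disjointness cannot provide that many; it would require an explicit edge-disjoint packing (a transversal-design-type construction) that you have not given.

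There is no need to thin the family, which is where your remark ``to get $\Theta(n)$ per matching edge rather than $\Theta(n^2)$'' goes wrong. The paper uses, for each $xy\in M'$, all $\Theta(n^{k-1})$ stars, with the center and the $k-2$ cross leaves free. This gives $\mu\geq c|M'|n^{k-1}$. Two dependent copies share a random pair, hence two vertices, so $\Delta\leq C_k|M'|n^{2k-3}$ and $\mu^2/(4\Delta)\geq c|M'|n$. The order-$n$ penalty per matching edge comes out of the $\mu^2/\Delta$ term automatically.

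Two further ingredients are missing from your proposal. First, the paper restricts $M'$ to a single one of the $2(k-1)$ submatchings $M(T)[P_i]$ or $M(T)[P_i,\Pi_\sparse]$, halved so that $P_i\setminus V(M')$ stays linear. It then takes the centers (in case (a)) or the extra leaf (in case (b)) in $P_i\setminus V(M')$. As a result, every random cross pair is either always required present or always required absent. This is what makes the events positively correlated, as \Cref{thm:JansonsInequality} requires. If centers may be endpoints of other matching edges, a pair $u_{s'}y$ can be required present in one copy and absent in another.

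Second, your case (b) as described is one leaf short. A center in $P_i$ with leaf $w_s$ and one leaf in each of the other $k-2$ parts has only $k-1$ leaves. You need an additional leaf $z\in P_i\setminus V(M')$ that is adjacent to the center inside the near-clique $P_i$. It must also be non-adjacent to $w_s$, which holds deterministically via $T$, and non-adjacent to the cross leaves, which is part of the random event.
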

\begin{proof}
If $\cF_{\Pi,T,\bsm}=\emptyset$ there is nothing to prove. Write $\Pi=\{P_1,\dots,P_{k-1}\}$ and $h:=h(T)$. By \Cref{lemma:SuperCloseStructureK1k} and the definition of $\cF_{\Pi,T}$, every vertex has low degree in every $P_i$ with respect to $T$. In particular, in the graph $T[V(\Pi),V]$, every neighborhood in one of the parts $P_i$ has size at most $\alpha|P_i|$.

The matching $M(T)$ is partitioned into the $2(k-1)$ submatchings $M(T)[P_i]$ and $M(T)[P_i,\Pi_\sparse]$ for $i\in[k-1]$. Let $M_0$ be one such submatching with $|M_0|\geq h/(2(k-1))$, and let $M'\subseteq M_0$ be a submatching of size $\ceil*{|M_0|/2}$.

Let $G=G_{\Pi,T,\bsm}$ be the random graph obtained as follows: inside each $P_i$ we put all edges except those in $T[P_i]$, between $V(\Pi)$ and $\Pi_\sparse$ and inside $\Pi_\sparse$ we put exactly the edges of $T$, and between each pair $P_i,P_j$ with $i<j$ we choose a uniformly random $\bsm_{ij}$-element subset of $E(K_{P_i,P_j})$, independently over all such pairs. Let $G^\bin$ be defined in the same way as $G$, except that each edge between $P_i$ and $P_j$ is included independently with probability $q_{ij}:=\bsm_{ij}/(|P_i||P_j|)\in\rho\pm\delta$. Conditioning $G^\bin$ on the events $|E(G^\bin[P_i,P_j])|=\bsm_{ij}$ for all $1\leq i<j\leq k-1$ recovers $G$. Since the probabilities $q_{ij}$ are bounded away from $0$ and $1$, Stirling's formula gives that the intersection of these events has probability at least $n^{-O_k(1)}$, and therefore $\bbP\{G\in\cE\}\leq n^{O_k(1)}\bbP\{G^\bin\in\cE\}$ for every event $\cE$.

First suppose $M'\subseteq T[P_i]$. For every $xy\in M'$, let $\cK_{xy}$ be the family of copies of $K_{1,k}$ whose center is a vertex $z\in P_i\setminus V(M')$, whose leaves include $x$ and $y$, and whose remaining leaves consist of one vertex in each part $P_j$ with $j\neq i$. We discard choices for which $xz\in E(T)$ or $yz\in E(T)$; this discards only at most $O_k(\alpha n^{k-1})$ copies, since both $x$ and $y$ have low degree in $P_i$, and since $P_i\setminus V(M')$ and all other parts have size $\Omega_k(n)$. It follows that $|\cK_{xy}|\geq c_1n^{k-1}$ for every $xy\in M'$.

Now suppose $M'\subseteq T[P_i,\Pi_\sparse]$, and let $X:=V(M')\cap P_i$. For every $xy\in M'$, with $x\in P_i$ and $y\in\Pi_\sparse$, let $\cK_{xy}$ be the family of copies of $K_{1,k}$ centered at $x$, whose leaves include $y$, one vertex $z\in P_i\setminus X$, and one vertex in each part $P_j$ with $j\neq i$. We discard choices in which $z$ is adjacent to $y$, some chosen vertex in $P_j$ is adjacent to $y$, or $xz\in E(T)$. Since $y$ has low degree into every $P_j$, $x$ has low degree in $P_i$, and $P_i\setminus X$ has size $\Omega_k(n)$, we again have $|\cK_{xy}|\geq c_1n^{k-1}$ for every $xy\in M'$.

In either case, let $\cK:=\bigcup_{xy\in M'}\cK_{xy}$. For $K\in\cK$, define the event $A_K:=\{G^\bin[V(K)]\cong K\}$. Since $q_{ij}\in\rho\pm\delta$ for all $i,j$, we calculate that
$$\mu:=\sum_{K\in\cK}\bbP\{A_K\}\geq c_2|M'|n^{k-1}\,.$$
If $A_K$ and $A_{K'}$ are dependent then the two copies share a random pair between two distinct parts of $\Pi$. For a fixed $K$, after choosing such a shared pair, there are at most $C_kn^{k-2}+C_k|M'|n^{k-3}\leq C_kn^{k-2}$ choices for $K'$, hence
$$\Delta:=\sum_{K\sim K'}\bbP\{A_K\cap A_{K'}\}\leq C_k|M'|n^{2k-3}\,.$$
It follows that $\mu/2\geq c_3|M'|n$ and $\mu^2/(4\Delta)\geq c_3|M'|n$, after decreasing $c_3>0$. Since the events $A_K$ are positively correlated by the choice of the families $\cK_{xy}$, \Cref{thm:JansonsInequality} gives
$$\bbP\!\left\{\bigcap_{K\in\cK}\{G^\bin\not\geq K\}\right\}\leq e^{-c_3|M'|n}\leq e^{-c_4hn}\,.$$
Every graph in $\cF_{\Pi,T,\bsm}$ is an outcome of $G$, hence
$$\abs{\cF_{\Pi,T,\bsm}}\leq \binom{\Pi}{\bsm}\bbP\{G\not\geq K_{1,k}\}\leq \binom{\Pi}{\bsm}n^{O(1)}e^{-c_4hn}\leq \binom{\Pi}{\bsm}e^{-c_\mat hn}$$
for large enough $n$, after decreasing $c_\mat>0$.
\end{proof}

For all $\Pi=\{P_1,\dots,P_{k-1}\}\in\sD_s$, after ordering the parts so that $|P_1|\leq\cdots\leq|P_{k-1}|$, define the division
$$\Pi_\ast:=\{P_1\cup\Pi_\sparse,P_2,\dots,P_{k-1}\}$$
by moving the vertices from the sparse part $\Pi_\sparse$ to $P_1$.

\begin{lemma}\label{lemma:super-Fstar}
Assume $\gamma\in(\gamma_k,1)$. For all $s\geq1$, $\Pi=\{P_1,\dots,P_{k-1}\}\in\sD_s$, integers $0\leq t\leq\binom{s}{2}$, and $\bsm\in\cM_{\Pi,t}$, define the set of graphs
$$\cF^\ast_{\Pi,\bsm}:=\left\{G\in\cF^\ast_\Pi:e_G(P_i,P_j)=\bsm_{ij}\text{ for all }1\leq i<j\leq k-1\right\}\,.$$
We then have
\begin{equation}\label{eqn:fstarpim-enusn}
\abs{\cF^\ast_{\Pi,\bsm}}\leq\binom{\Pi_\ast}{\bsm}\,e^{-\nu sn}
\end{equation}
for a constant $\nu=\nu(k,\gamma)>0$.
\end{lemma}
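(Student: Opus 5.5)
Since $G\in\cF^\ast_\Pi$ has $e(T(G))=0$, such a graph is completely determined by the graph $G[\Pi_\sparse]$ and the $\binom{k-1}{2}$ bipartite graphs $G[P_i,P_j]$. Indeed, each $P_i$ is a clique, and there are no edges between $V(\Pi)$ and $\Pi_\sparse$. Moreover $t=e(G[\Pi_\sparse])$, so
$$\abs{\cF^\ast_{\Pi,\bsm}}\leq \binom{\binom{s}{2}}{t}\binom{\Pi}{\bsm}\leq 2^{s^2/2}\binom{\Pi}{\bsm}\,.$$
The plan is to compare $\binom{\Pi}{\bsm}$ with $\binom{\Pi_\ast}{\bsm}$. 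In $\Pi_\ast$ the part $P_1$ is enlarged to $P_1\cup\Pi_\sparse$, so each bipartite interface $P_1\times P_j$ gains $s|P_j|$ potential pairs, while the prescribed edge counts $\bsm_{1j}$ stay the same. The enlargement gain should beat the entropy of $G[\Pi_\sparse]$ by a margin of $e^{\Omega(sn)}$. Here I read $\binom{\Pi_\ast}{\bsm}$ as the same product with $|P_1|$ replaced by $|P_1|+s$.

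\textbf{Main estimate.} For $N'=N+sb$ with $b=|P_j|$ and $m'=\bsm_{1j}$, I will use
$$\log\binom{N'}{m'}-\log\binom{N}{m'}=\sum_{r=0}^{sb-1}\log\frac{N+r+1}{N+r+1-m'}\,.$$
Since $m'/N\in\rho\pm\delta$ by the definition of $\cM_{\Pi,t}$, and $sb\leq \delta n\cdot n$ is small relative to $N=\Theta(n^2)$ by \Cref{lemma:SuperCloseStructureK1k}\ref{item:lemSuperK1kiii}, each term is at least $\log\frac{1}{1-\rho+2\delta}$. Summing over $j=2,\dots,k-1$, and using $|P_j|\geq(\frac{1}{k-1}-\delta)n$ from \Cref{lemma:SuperCloseStructureK1k}\ref{item:lemSuperK1kii}, gives
$$\binom{\Pi_\ast}{\bsm}\geq\binom{\Pi}{\bsm}\exp\Big(s n\,\tfrac{k-2}{k-1}(1-C\delta)\ln\tfrac{1}{1-\rho+2\delta}\Big)\,.$$
Because $\rho>0$ for $\gamma>\gamma_k$ (indeed $\rho>p_k$), the gain exponent is $c\,sn$ for a constant $c=c(k,\gamma)>0$, once $\delta$ is small enough.

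\textbf{The obstacle: controlling $G[\Pi_\sparse]$.} The crude bound $2^{s^2/2}$ costs $e^{O(s^2)}$. Since $s\leq\delta n/2$, this is $e^{O(\delta sn)}$, which is smaller than $e^{c sn}$ provided $\delta$ is chosen small relative to $c(k,\gamma)$. I expect this to be consistent with \eqref{eqn:super-parameter-alpha-delta-K1k}, which allows shrinking $\delta$ depending on $k,\gamma$. This is where care is needed: $\delta$ must satisfy $\delta\log 2/2\leq c/2$, that is, $\delta$ is small compared to $\frac{k-2}{k-1}\ln\frac{1}{1-\rho}$.

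If that parameter dependence is unavailable, I would fall back on structure. I would use \Cref{lemma:SuperCloseStructureK1k}\ref{item:lemSuperK1kv} and the optimality of $\Pi(G)$ (moving a vertex of $\Pi_\sparse$ into some $P_i$ must not decrease $b$) to bound degrees inside $\Pi_\sparse$. I would also use induced-$K_{1,k}$-freeness of $G[\Pi_\sparse]$ to bound the number of choices of $G[\Pi_\sparse]$ more carefully.

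Combining the pieces gives
$$\abs{\cF^\ast_{\Pi,\bsm}}\leq\binom{\Pi_\ast}{\bsm}e^{-(c-O(\delta))sn}\,,$$
which is \eqref{eqn:fstarpim-enusn} with $\nu=c/2$. Note that $\bsm\in\cM_{\Pi,t}$ gives the same edge-count constraint regardless of which division is used, so no adjustment of $\bsm$ is needed.
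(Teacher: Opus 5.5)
Your proposal is correct and essentially identical to the paper's proof. It uses the same bound $|\cF^\ast_{\Pi,\bsm}|\leq 2^{\binom{s}{2}}\binom{\Pi}{\bsm}$, the same factor-wise comparison of the $P_1\times P_j$ binomials, and the same absorption of $2^{\binom{s}{2}}=e^{O(\delta sn)}$ by taking $\delta$ small after $k$ and $\gamma$ are fixed, so your fallback is not needed.

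The one difference is that you add one potential pair at a time. This gives a gain of $\ln\frac{1}{1-\rho}-O_k(\delta)$ per new pair; your $2\delta$ should really be $O_k(\delta)$, which is harmless. The paper instead bounds each factor by $\frac{a_1}{a_1+s}$, which gives the slightly weaker gain of about $\rho$ per new pair.
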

\begin{proof}
If $\cF^\ast_{\Pi,\bsm}=\emptyset$ there is nothing to prove. For all $G\in\cF^\ast_{\Pi,\bsm}$, since $\Pi(G)=\Pi$, \Cref{lemma:SuperCloseStructureK1k} gives $|P_i|\in(\frac{1}{k-1}\pm\delta)n$ for all $i\in[k-1]$ and $s\leq\delta n/2$.
Write $a_i:=|P_i|$ for $i\in[k-1]$. Since every $G\in\cF^\ast_{\Pi,\bsm}$ satisfies $e(T(G))=0$, it has no missing edges inside the parts $P\in\Pi$ and no edges between $V(\Pi)$ and $\Pi_\sparse$. Thus such a graph is determined by the induced bipartite graphs $G[P_i,P_j]$ together with $G[\Pi_\sparse]$. It follows that $\abs{\cF^\ast_{\Pi,\bsm}}\leq\binom{\Pi}{\bsm}2^{\binom{s}{2}}$. All factors in $\binom{\Pi}{\bsm}$ and $\binom{\Pi_\ast}{\bsm}$ agree except those involving $P_1$, hence
\begin{equation}\label{eqn:super-Fstar-ratio}
\binom{\Pi}{\bsm}\binom{\Pi_\ast}{\bsm}^{-1}
=\prod_{j=2}^{k-1}\binom{a_1a_j}{\bsm_{1j}}\binom{(a_1+s)a_j}{\bsm_{1j}}^{-1}\,.
\end{equation}
Fix $j\in\{2,\dots,k-1\}$. Since $\bsm\in\cM_{\Pi,t}$, we have $\bsm_{1j}\geq(\rho-\delta)a_1a_j$, further implying
\begin{align*}
\binom{a_1a_j}{\bsm_{1j}}\binom{(a_1+s)a_j}{\bsm_{1j}}^{-1}&=\prod_{r=0}^{\bsm_{1j}-1}\frac{a_1a_j-r}{(a_1+s)a_j-r}
\leq\left(\frac{a_1}{a_1+s}\right)^{\bsm_{1j}} \\
&\leq\exp\!\left(-\frac{\bsm_{1j}s}{a_1+s}\right)
\leq\exp\!\left(-\frac{(\rho-\delta)a_1a_j}{a_1+s}\,s\right).
\end{align*}
Now $a_i\geq(\frac{1}{k-1}-\delta)n$ for all $i\in[k-1]$, while $a_1+s\leq n$. Hence, for $\delta$ sufficiently small,
$$\frac{(\rho-\delta)a_1a_j}{a_1+s}\geq \frac{\rho}{2(k-1)^2}\,n\,.$$
Combining this estimate over all $j=2,\dots,k-1$ in \eqref{eqn:super-Fstar-ratio}, we obtain
$$\binom{\Pi}{\bsm}\leq\binom{\Pi_\ast}{\bsm}\exp\!\left(-\frac{(k-2)\rho}{2(k-1)^2}\,sn\right)\,.$$
Since $s\leq\delta n/2$, the factor $2^{\binom{s}{2}}$ is absorbed into the exponential penalty for sufficiently small $\delta$, proving \eqref{eqn:fstarpim-enusn}.
\end{proof}

\begin{proof}[Proof of \Cref{thm:main-almostall} \ref{item:thm:main-almostall-super}]
From \eqref{eqn:union-bound-super} we have
\begin{equation}\label{eqn:super-union-bd}
N^\ast_{n,m}(K_{1,k})\leq C^{k-1}_{n,m}+\sum_{s=1}^{\delta n/2}\sum_{\Pi\in\sD_s}\abs{\cF^\ast_\Pi}+\sum_{\Pi\in\sD}\Bigg(\sum_{T\in\cT_\Pi}\abs{\cF_{\Pi,T}}+\abs{\cF'_\Pi}\Bigg)+\abs{\cF^\far}\,,
\end{equation}
where $C^{k-1}_{n,m}:=|\cC^{k-1}_{n,m}|$. We bound the second, third, and fourth terms on the right-hand side of \eqref{eqn:super-union-bd} one at a time.

Define the set of balanced divisions
$$\sB:=\left\{\Pi\in\sD_0:|P|\in\left(\frac{1}{k-1}\pm\delta\right)n~\text{ for all }P\in\Pi\right\}\,.$$
If $\Pi\in\sD_s$ and $\cF^\ast_\Pi\neq\emptyset$ then \Cref{lemma:SuperCloseStructureK1k}, together with the choice of $P_1$, implies that $\Pi_\ast\in\sB$. Moreover, for all $\Pi'\in\sB$ and $s\geq1$, there are at most $\binom{n}{s}k^s$ divisions $\Pi\in\sD_s$ such that $\Pi_\ast=\Pi'$. Hence, using \Cref{lemma:super-Fstar},
\begin{align*}
\sum_{s=1}^{\delta n/2}\sum_{\Pi\in\sD_s}|\cF^\ast_\Pi|&= \sum_{s=1}^{\delta n/2}\sum_{\Pi\in\sD_s}\sum_{t=0}^{\binom{s}{2}}\sum_{\bsm\in\cM_{\Pi,t}}|\cF^\ast_{\Pi,\bsm}| \leq \sum_{s=1}^{\delta n/2}\sum_{\Pi\in\sD_s}\sum_{t=0}^{\binom{s}{2}}\sum_{\bsm\in\cM_{\Pi,t}}\binom{\Pi_\ast}{\bsm}e^{-\nu sn}\,.
\end{align*}
For each $\Pi\in\sD_s$ and $t\leq\binom{s}{2}$, the quantities $e(\Pi_\ast^c)$ and $e(\Pi^c)+t$ differ by $O(sn)$. Also, if $\bsm\in\cM_{\Pi,t}$ then the densities defining $\binom{\Pi_\ast}{\bsm}$ all lie in $\rho\pm C_k\delta$. Using Stirling's formula,
$$\sum_{t=0}^{\binom{s}{2}}\sum_{\bsm\in\cM_{\Pi,t}}\binom{\Pi_\ast}{\bsm}\leq n^{O(1)}e^{O(\delta sn)}\max_{\bsm'\in\cM_{\Pi_\ast,0}}\binom{\Pi_\ast}{\bsm'}\,.$$
All but a $o(1)$ fraction of co-$(k-1)$-partite graphs have a unique covering by $k-1$ cliques, and consequently
$$\sum_{\Pi\in\sB}\sum_{\bsm'\in\cM_{\Pi,0}}\binom{\Pi}{\bsm'}=O(C^{k-1}_{n,m})\,.$$
Therefore, after taking $\delta$ sufficiently small, the factor $e^{O(\delta sn)}$ is absorbed into $e^{-\nu sn}$ and
\begin{align*}
\sum_{s=1}^{\delta n/2}\sum_{\Pi\in\sD_s}|\cF^\ast_\Pi|&\leq O(C^{k-1}_{n,m})\,n^{O(1)}\sum_{s=1}^{\delta n/2}\binom{n}{s}k^se^{-\nu sn/2} \leq e^{-\nu n/5}C^{k-1}_{n,m}\,.
\end{align*}

We next bound the contribution from $\cF_{\Pi,T}$. Fix $\Pi\in\sD_s$ and write $T_0:=T[V(\Pi),V]$ and $J:=T[\Pi_\sparse]$. If $h(T)=h$ then the endpoints of $M(T)$ form a vertex cover of $T_0$. Since all vertices' neighborhoods in the parts $P_i$ are low and $s\leq\delta n/2$, we have
\begin{equation}\label{eqn:super-T0-edge-bound-K1k}
e(T_0)\leq C_kh(\alpha+\delta)n\,.
\end{equation}
Hence for fixed $h$, the number of possibilities for $T_0$ is at most $e^{C_kh(H(3\alpha)+\delta)n+C_kh\log n}$. Indeed, choose the at most $2h$ cover vertices, and then choose for each cover vertex its low neighborhoods inside the parts $P_i$ and its arbitrary neighborhood inside $\Pi_\sparse$.

For fixed $T_0$, write $\sigma_0:=e_{T_0}(V(\Pi),\Pi_\sparse)-e(T_0[V(\Pi)])$. By \eqref{eqn:super-T0-edge-bound-K1k}, $|\sigma_0|\leq C_kh(\alpha+\delta)n$. Combining \Cref{lemma:super-FPiT} with the same comparison used above for clean sparse graphs gives
\begin{align}\label{eqn:super-FPiT-summed-K1k}
\sum_{J\subseteq\binom{\Pi_\sparse}{2}}\sum_{\bsm\in\cM_{\Pi,\sigma_0+e(J)}}|\cF_{\Pi,T_0\cup J,\bsm}|&\leq e^{-c_\mat hn}\sum_{J\subseteq\binom{\Pi_\sparse}{2}}\sum_{\bsm\in\cM_{\Pi,\sigma_0+e(J)}}\binom{\Pi}{\bsm} \nonumber\\
&\leq n^{O(1)}e^{-c_\mat hn+O((\alpha+\delta)hn)}e^{-\nu sn/2}
\max_{\bsm'\in\cM_{\Pi_\ast,0}}\binom{\Pi_\ast}{\bsm'}.
\end{align}
Here the factor $e^{-\nu sn/2}$ is interpreted as $1$ if $s=0$. The proof of the second inequality is exactly the proof of \Cref{lemma:super-Fstar}, with the additional shift $\sigma_0=O(h(\alpha+\delta)n)$; the comparison with $\cM_{\Pi_\ast,0}$ contributes the factor $e^{O((\alpha+\delta)hn)}$, and the choices of $J$ contribute only $e^{O(\delta sn)}$.
Using \eqref{eqn:super-parameter-alpha-delta-K1k}, summing \eqref{eqn:super-FPiT-summed-K1k} over all $T_0$ with $h(T)=h$, and then summing over $h\geq1$, gives
$$\sum_{T\in\cT_\Pi}|\cF_{\Pi,T}|\leq n^{O(1)}e^{-c_\mat n/3}e^{-\nu sn/2}\max_{\bsm'\in\cM_{\Pi_\ast,0}}\binom{\Pi_\ast}{\bsm'}\,.$$
Summing over $\Pi\in\sD_s$ exactly as in the clean case yields
$$\sum_{\Pi\in\sD}\sum_{T\in\cT_\Pi}|\cF_{\Pi,T}|\leq e^{-c_\mat n/4}C^{k-1}_{n,m}\,.$$

For the medium-degree term, \Cref{lemma:super-FPiprime} gives $|\cF'_\Pi|\leq\binom{\Pi}{\bsm}e^{-c_\med n^2}$ for every $\Pi\in\sD$ and $\bsm\in\cM_\Pi$. Summing over $\Pi$ and $\bsm$ and using the same comparison as above gives
$$\sum_{\Pi\in\sD}|\cF'_\Pi|\leq e^{-c_\med n^2/2}C^{k-1}_{n,m}$$
for large enough $n$, by the choice of $\epsilon$ and $\delta$. Finally, \Cref{lemma:rough-struc-fixed-g} yields
$$|\cF^\far|\leq e^{-Cn^2}\,N^\ast_{n,m}(K_{1,k})$$
for some constant $C=C(k,\gamma,\tau)>0$. Substituting these bounds into \eqref{eqn:super-union-bd}, we find
$$N^\ast_{n,m}(K_{1,k})\leq(1+o(1))C^{k-1}_{n,m}+e^{-Cn^2}N^\ast_{n,m}(K_{1,k})\,.$$
Since every co-$(k-1)$-partite graph is induced-$K_{1,k}$-free, this proves $N^\ast_{n,m}(K_{1,k})=(1+o(1))C^{k-1}_{n,m}$. The estimates above also show that all but a $o(1)$ fraction of graphs in $\cF^\ast_{n,m}(K_{1,k})$ are co-$(k-1)$-partite, completing the proof.
\end{proof}

\section{The Critical Density}
\label{sec:critical}
In this section we prove \Cref{thm:main-almostall} \ref{item:thm:main-critical}. Throughout the section, let $p:=p_k$ and $m=\floor{\gamma_k\binom{n}{2}}$. We use the definitions from \Cref{sec:supercritical} with $\gamma=\gamma_k$ and $\rho=p$, and we write $W^\ast=W^\ast_\gamma$ for the unique optimal graphon at edge density $\gamma$. The proofs of \Cref{lemma:SuperCloseStructureK1k,lemma:super-FPiprime,lemma:super-FPiT} remain valid at $\gamma=\gamma_k$ since $p\in(0,1)$. Thus the far graphs, the medium-degree graphs, and the graphs with $e(T(G))\geq1$ contribute only $o(C^{k-1}_{n,m})$ to $N^\ast_{n,m}(K_{1,k})$. The calculations below replace the supercritical estimate \eqref{eqn:fstarpim-enusn}, and the additional shift coming from $T[V(\Pi),V]$ is $O(h(T)(\alpha+\delta)n)$, which is absorbed by the matching penalty in \Cref{lemma:super-FPiT}. It remains to bound the number of graphs in $\cF^\ast_\Pi$ when $|\Pi_\sparse|\geq L\log n$ for a sufficiently large constant $L=L(k)$. Our estimates, including \eqref{eqn:critical-binomial-expansion-K1k}, are generalizations of \cite{perkins2025typical} for general $k\geq3$.

\begin{lemma}\label{lemma:critical-clean-sparse-K1k}
There exists $L=L(k)>0$ such that
$$\sum_{s\geq L\log n}\sum_{\Pi\in\sD_s}|\cF^\ast_\Pi|=o(C^{k-1}_{n,m})\,.$$
\end{lemma}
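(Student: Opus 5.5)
The plan is to compare, for each $s\geq L\log n$, the total weight of clean graphs with sparse part of size $s$ against $C^{k-1}_{n,m}$. The comparison is a second-order expansion in $s$: at $\gamma=\gamma_k$ the first-order (order $sn$) penalty of \Cref{lemma:super-Fstar} vanishes. The reason is that the linear piece of $\sE_k$ is tangent to the convex piece at $\gamma_k$, which is the identity $\frac{k-2}{1+(k-2)p_k}H(p_k)=\log\frac{1-p_k}{p_k}$ from the proof of \Cref{thm:main-rate}. So the argument has to see a penalty of order $s^2$, and that penalty must beat the $e^{O(s\log n)}$ choices of $\Pi_\sparse$ and the entropy of $G[\Pi_\sparse]$.

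\textbf{Step 1 (reduction to a core count).} Fix $\Pi\in\sD_s$ and $t=e(G[\Pi_\sparse])$. Exactly as in \Cref{lemma:super-Fstar}, a graph $G\in\cF^\ast_\Pi$ is determined by the bipartite graphs $G[P_i,P_j]$ together with $G[\Pi_\sparse]$. Moreover $G[\Pi_\sparse]$ is itself induced-$K_{1,k}$-free. Hence
$$\abs{\cF^\ast_\Pi}\leq\sum_{t}N^\ast_{s,t}(K_{1,k})\,C^{k-1}_{V(\Pi),\,m-t}\,,$$
where the second factor counts co-$(k-1)$-partite graphs on $V(\Pi)$ with parts $\Pi$ and $m-t$ edges.

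\textbf{Step 2 (second-order expansion of the core).} I would write $\log\binom{\Pi}{\bsm}$ via Stirling with $N=n-s$ core vertices. The cross density is then $\rho'$, determined by
$$(k-2)\rho'+1\approx(k-1)\,\frac{m-t}{\binom{N}{2}}\,.$$
The target is
$$\log\frac{\sum_{\bsm}\binom{\Pi}{\bsm}}{\max_{\bsm'}\binom{\Pi_\ast}{\bsm'}}\leq t\log\frac{p}{1-p}-c\,s^2+O(s+\log n)\,,$$
with $c=c(k)>0$; this is the content of the expansion \eqref{eqn:critical-binomial-expansion-K1k}. The expansion works as follows:
\begin{itemize}
\item The linear term in $s$ cancels, by the tangency identity above.
\item Each of the $t$ edges placed in the sparse part shifts the core density and costs a factor $\frac{p}{1-p}$ to first order.
\item The quadratic term $-cs^2$ comes from the strict convexity of $x\mapsto H(x)$, equivalently of $\sE_k$ on $[\gamma_k,1]$. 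The density shift caused by removing $s$ vertices is of order $s/n$, and the curvature multiplied by $n^2(s/n)^2$ gives order $s^2$.
\end{itemize}
I would also control the $\binom{n}{s}k^s\leq e^{O(s\log n)}$ choices of $\Pi$ given $\Pi_\ast$, as in the supercritical proof.

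\textbf{Step 3 (the sparse part, the main obstacle).} After Step 2, the factor to control is
$$\sum_t N^\ast_{s,t}(K_{1,k})\Big(\frac{p}{1-p}\Big)^t=(1-p)^{-\binom{s}{2}}\,\bbP\{G(s,p)\not\geq K_{1,k}\}\,.$$
By \Cref{thm:main-rate} at $p=p_k$, this probability is $(1-p_k)^{\binom{s}{2}}e^{o(s^2)}$, because $\sR_k(p_k)=\log\frac{1}{1-p_k}$. So for every $\eta>0$ there is $s_0(\eta)$ such that the sum is at most $e^{\eta s^2}$ for $s\geq s_0$. This is exactly where criticality enters: at $p=p_k$ the sparse component gains no exponential order over being empty. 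Since $L\log n\geq s_0$ for large $n$, a fixed $\eta<c/2$ suffices.

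\textbf{Step 4 (summation).} Combining the steps,
$$\sum_{\Pi\in\sD_s}\abs{\cF^\ast_\Pi}\leq C^{k-1}_{n,m}\exp\bigl(-cs^2/2+C_ks\log n\bigr)\,,$$
where the right-hand side uses $\sum_{\Pi'\in\sB}\sum_{\bsm'}\binom{\Pi'}{\bsm'}=O(C^{k-1}_{n,m})$ as before. Choosing $L>4C_k/c$ makes the exponent at most $-cs^2/4$ for $s\geq L\log n$, and summing over such $s$ gives $o(C^{k-1}_{n,m})$.

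The delicate point is Step 2. The second-order Stirling expansion must be uniform over $\bsm\in\cM_{\Pi,t}$, over $t$ up to $\binom{s}{2}$, and over $s$ up to $\delta n/2$. For large $t$, $\rho'$ moves by $O(t/n^2)$, and the first-order factor $\big(\frac{p}{1-p}\big)^t$ must be accurate up to an error of $e^{O(t^2/n^2)}$. That error is $e^{O(s^4/n^2)}\leq e^{O(\delta^2 s^2)}$, which is absorbed into $-cs^2$ once $\delta$ is small.
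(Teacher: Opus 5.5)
The gap is in Step~2, in the choice of reference division. With $\Pi_\ast$ as in \Cref{sec:supercritical}, i.e.\ all of $\Pi_\sparse$ moved into one part, the inequality you state is false for large $k$.

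Take $\Pi\in\sD_s$ with $|P_i|=\frac{n-s}{k-1}$ for all $i$. Then $\Pi_\ast$ is itself unbalanced: its part-size deviations satisfy $\sum_iz_i^2=\frac{k-2}{k-1}s^2$. An imbalance converts $D=\frac12\sum_iz_i^2$ cross pairs into forced clique edges, so it multiplies the count by $\binom{N-D}{M-D}/\binom{N}{M}\approx p_k^{D}$. This is the factor $\bigl(\frac{r\gamma-1}{r-1}\bigr)^{\frac12\sum_iz_i^2}$ in the displayed formula for $C^r_{n,m}$ with $r=k-1$. Hence $\max_{\bsm'}\binom{\Pi_\ast}{\bsm'}$ is already smaller than the count for a balanced division by $p_k^{\frac{k-2}{2(k-1)}s^2}=(1-p_k)^{\frac{k-2}{2}s^2}$.

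The clean sparse configuration, by contrast, loses only the curvature factor $e^{-q_ks^2}$ against a balanced division, where $q_k=\frac{(1+(k-2)p_k)^2}{p_k(1-p_k)(k-1)(k-2)}$. So your Step~2 ratio is
$\bigl(\frac{p_k}{1-p_k}\bigr)^t\exp\bigl((\tfrac{k-2}{2}\ln\tfrac{1}{1-p_k}-q_k+O(\delta))s^2+O(s+\log n)\bigr)$.
As $k\to\infty$ we have $q_k\to0$, while $\frac{k-2}{2}\ln\frac{1}{1-p_k}=\frac12\ln\frac{1-p_k}{p_k}\to\infty$. So the $s^2$-coefficient is positive for large $k$; for instance at $k=9$ one has $p_9\approx0.188$ and $q_9\approx0.63<0.73\approx\frac72\ln\frac{1}{1-p_9}$. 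Step~4 then produces a factor $e^{+c's^2}$ rather than $e^{-cs^2}$, and no choice of $L$ helps. The supercritical argument can afford this $O(s^2)$ imbalance only because \Cref{lemma:super-Fstar} has a first-order penalty $e^{-\nu sn}$, and that penalty is exactly what vanishes at $\gamma_k$.

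The missing idea is to spread $\Pi_\sparse$ over the parts so that the enlarged division is balanced, and to compare against that division. For example, fill the smallest parts first, so that every part receiving a sparse vertex ends with at most $n/(k-1)+1$ vertices; at most $\binom ns(k-1)^s$ divisions $\Pi$ give the same enlarged division. As the paper does, first reduce to enlarged divisions that are $\sqrt{\log n}$-balanced, so that their cross density is $p_k+O(1/n)$. Against this reference the first-order factor is $\bigl(\frac{p_k}{1-p_k}\bigr)^te^{O(s)}$ and the only $s^2$ term is the curvature loss $-q_ks^2$; this is the paper's expansion \eqref{eqn:critical-binomial-expansion-K1k}.

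With this repair, your Step~3 is a genuinely different and valid way to finish. The paper bounds $N^\ast_{s,t}(K_{1,k})$ crudely by $\binom{\binom s2}{t}$, paying $(1-p_k)^{-\binom s2}$ on the sparse side, and therefore must verify $q_k>\frac12\ln\frac{1}{1-p_k}$. Invoking \Cref{thm:main-rate} at $p=p_k$ instead makes the sparse side $e^{o(s^2)}$, so $q_k>0$ suffices. The paper's route is more elementary; yours avoids the explicit inequality at the cost of using the full large deviation result.
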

\begin{proof}
Let $\Pi=\{P_1,\dots,P_{k-1}\}\in\sD_s$ and write $\pi_i:=|P_i|$. If $\cF^\ast_\Pi\neq\emptyset$ then \Cref{lemma:SuperCloseStructureK1k} gives $\pi_i\in(\frac{1}{k-1}\pm\delta)n$ and $s\leq\delta n/2$. Choose a partition $\Pi_\sparse=S_1\cup\cdots\cup S_{k-1}$ so that, with $s_i:=|S_i|$ and $A_i:=P_i\cup S_i$, every part receiving at least one sparse vertex has size at most $n/(k-1)+1$. Write $a_i:=|A_i|$ for $i\in[k-1]$.

Let $\cF^\ast:=\bigcup_{\Pi\in\sD_{W^\ast}}\cF^\ast_\Pi$. By comparing binomial coefficients, it is routine to show that for all fixed $S\subseteq V$ with $|S|\leq\delta n/2$, almost all graphs $G\in\cF^\ast$ with $\Pi(G)_\sparse=S$ have a $\sqrt{\log n}$-balanced division $\Pi(G)$. Writing $\Pi(G)=\{P'_1,\dots,P'_{k-1}\}$, this means that for all $1\leq i<j\leq k-1$, we have $||P'_i|-|P'_j||\leq\sqrt{\log n}$. See \cite{perkins2025typical} for calculations in the case $k=3$; for general $k$, the calculation is analogous.
Moreover, since the vertices of $\Pi_\sparse$ are distributed equitably among the sets $P_i$, it follows that $|a_i-a_j|\leq\sqrt{\log n}$ for all $i,j\in[k-1]$. In particular, writing $a_i=n/(k-1)+x_i$, we have $\sum_i x_i=0$ and $x_i=O_k(\sqrt{\log n})$ for all $i\in[k-1]$. Define the quantities
$$N:=\sum_{1\leq i<j\leq k-1}a_ia_j\,,\qquad M:=m-\sum_{i=1}^{k-1}\binom{a_i}{2}\,.$$
For the fixed division $\{A_1,\dots,A_{k-1}\}$, the number of co-$(k-1)$-partite graphs with this division is $\binom{N}{M}$. Since
$$N=\frac{k-2}{2(k-1)}n^2-\frac12\sum_i x_i^2=\frac{k-2}{2(k-1)}n^2+O_k(\log n)$$
and $m=\floor{\gamma_k\binom n2}$, we also have $M=pN+O_k(n)$.

The number of edges crossing between parts in the partition $\Pi$ is $\sum_{i<j}\pi_i\pi_j$. Define
$$K:=N-\sum_{1\leq i<j\leq k-1}\pi_i\pi_j=\sum_{i=1}^{k-1}s_i(n-a_i)-\sum_{1\leq i<j\leq k-1}s_is_j\,,$$
so $K$ is the number of additional cross edges after redistributing the vertices of $\Pi_\sparse$. Note that if $G[\Pi_\sparse]$ has $t$ edges, and if we define
$$L_t:=\sum_{i=1}^{k-1}s_i\pi_i+\sum_{i=1}^{k-1}\binom{s_i}{2}-t\,,$$
then we have $\sum_{i<j}\pi_i\pi_j=N-K=M+L_t$. Since $t\leq\binom{s}{2}$ and $s\leq\delta n/2$, we have $L_t\geq sn/(2(k-1))$ for sufficiently small $\delta$.

Using Stirling's formula and a second-order Taylor series, we have
\begin{align}\label{eqn:critical-binomial-expansion-K1k}
\frac{\binom{N-K}{M+L_t}}{\binom{N}{M}}&\leq \left(\frac{N-M}{N}\right)^K\!\!\left(\frac{N-M}{M}\right)^{L_t}
\!\!\exp\!\left(\!-\frac{(K+L_t)^2}{2(N-M)}+\frac{K^2}{2N}-\frac{L_t^2}{2M}+O_k\!\left(\frac{s^3}{n}+s\right)\!\right)
\end{align}
uniformly over $0\leq t\leq\binom{s}{2}$. Since $M=pN+O_k(n)$ and $p=(1-p)^{k-1}$, the first-order term in \eqref{eqn:critical-binomial-expansion-K1k} is
$$\left(\frac{N-M}{N}\right)^K\left(\frac{N-M}{M}\right)^{L_t}\leq e^{O_k(s)}(1-p)^{(k-2)t}\,.$$
Indeed, we calculate that
\begin{align*}
K-(k-2)L_t&=\sum_{i=1}^{k-1}s_i(n-(k-1)a_i)+\frac{(k-1)\sum_i s_i^2-s^2}{2}+\frac{(k-2)s}{2}+(k-2)t\,,
\end{align*}
where the first term is at least $-O_k(s)$ by the choice of the sets $S_i$, and the quadratic term is nonnegative by Cauchy's inequality.

Next we define the quantity
$$q_k:=\frac{k-1}{(k-2)(1-p)}-\frac{k-2}{k-1}+\frac{1}{p(k-1)(k-2)}\,.$$
Since
$$K=\left(\frac{k-2}{k-1}+O_k\!\left(\frac{s}{n}\right)\right)sn\,,\qquad L_t=\left(\frac{1}{k-1}+O_k\!\left(\frac{s}{n}\right)\right)sn\,,$$
where the $O_k(s/n)$ terms also absorb the error $O_k(\sqrt{\log n}/n)$ because $s\geq L\log n$, the quadratic exponent in \eqref{eqn:critical-binomial-expansion-K1k} is at most $-q_ks^2+O_k\!\left(s^3/n+s\right)$.
We also define $\pi_k:=q_k-\frac12\log\frac{1}{1-p}$, and note that this quantity is positive. Indeed, using $\log(1/(1-p))<p/(1-p)$, we have
$$q_k-\frac{p}{2(1-p)}=\frac{p(k-2)(k-3)+4(k-1)-2}{2(k-1)(k-2)(1-p)}+\frac{1}{p(k-1)(k-2)}>0\,.$$
Choosing $\delta$ sufficiently small and summing over the possible graphs on $\Pi_\sparse$, we get
\begin{align*}
\sum_{t=0}^{\binom{s}{2}}\binom{\binom{s}{2}}{t}\binom{N-K}{M+L_t}&\leq \binom{N}{M}\exp\!\left(-q_ks^2+O_k\!\left(\frac{s^3}{n}+s\right)\right)\sum_{t=0}^{\binom{s}{2}}\binom{\binom{s}{2}}{t}(1-p)^{(k-2)t} \\
&=\binom{N}{M}\exp\!\left(-q_ks^2+O_k\!\left(\frac{s^3}{n}+s\right)\right)(1-p)^{-\binom{s}{2}} \leq \binom{N}{M}e^{-\pi_ks^2/2}
\end{align*}
for all large enough $n$.

Define the set of divisions
$$\sB:=\left\{\Pi'\in\sD_0:||P|-|Q||\leq\sqrt{\log n}\text{ for all }P,Q\in\Pi'\right\}\,.$$
For a fixed enlarged division $\{A_1,\dots,A_{k-1}\}\in\sB$, there are at most $\binom{n}{s}(k-1)^s$ choices of the sparse set and its assignment back to the original division $\Pi$. It follows that
\begin{align*}
\sum_{s\geq L\log n}\sum_{\Pi\in\sD_s}|\cF^\ast_\Pi|&\leq \sum_{s\geq L\log n}\binom{n}{s}(k-1)^se^{-\pi_ks^2/2}\sum_{\Pi'\in\sB}\binom{\binom n2-e((\Pi')^c)}{m-e((\Pi')^c)} =o(C^{k-1}_{n,m})
\end{align*}
provided $L=L(k)$ is sufficiently large. In the last line we used, as in the supercritical proof,
$$\sum_{\Pi'\in\sB}\binom{\binom n2-e((\Pi')^c)}{m-e((\Pi')^c)}=O(C^{k-1}_{n,m})\,,$$
completing the proof.
\end{proof}

\begin{proof}[Proof of \Cref{thm:main-almostall} \ref{item:thm:main-critical}]
The result follows immediately by combining \Cref{lemma:critical-clean-sparse-K1k} with the inclusion given in \eqref{eqn:union-bound-super}, interpreted at $\gamma=\gamma_k$.
\end{proof}

\section{The Subcritical Regime}
\label{sec:subcritical}
In this section we prove \Cref{thm:main-almostall} \ref{item:thm:main-almostall-sub}. We first present definitions and parameter choices that recur in our proofs. Throughout the section, fix $k\geq3$, let $\Delta:=k-2$, fix $\gamma\in(0,\gamma_k)$, and let $m\sim\gamma\binom{n}{2}$. We write $p_k\in(0,1)$ for the unique solution to $p_k=(1-p_k)^{k-1}$, and set $L_k:=\log\frac{1-p_k}{p_k}=-\Delta\log(1-p_k)$. All graphs are on a vertex set $V$ of size $n$.

The parameters below are chosen in the following order. First let $\omega>0$ be arbitrary; all asymptotic notation in this section is with respect to the $\omega\to0$ limit. Next choose $0<\eta=o(\omega)$, and $R_0>0$ sufficiently large with $R_0^{-1}=o(\eta)$. Let $\lambda:=\eta/(4R_0)$ and $\rho:=\min\{p_k,1-p_k\}$. Define
$$\kappa:=\left\lceil\frac{10R_0^2}{\eta^2}\right\rceil$$
and let $c_\matsf=c_\matsf(k,\gamma,\eta,R_0)>0$ be the constant from \Cref{lemma:residual-matching-estimate-K1k}. After $\eta$ and $R_0$ are fixed, choose $\theta>0$ such that
\begin{equation}\label{eqn:sub-theta-parameter-condition-K1k}
\theta=o\!\left(\min\left\{\frac{\eta}{R_0^2},\frac{c_\matsf}{\kappa^2}\right\}\right)\,.
\end{equation}
After $\theta$ is fixed, choose $\alpha>0$ such that
\begin{equation}\label{eqn:sub-matching-parameter-condition-K1k}
\alpha<\frac{\theta}{100k}\,,\qquad H(5\alpha)+\theta<\frac{c_\matsf}{8\kappa^2}\,.
\end{equation}
Next, choose $\delta>0$ and then $\epsilon>0$ such that all inequalities below hold. Let
$$\beta:=\frac{\epsilon^6}{2^{22}\Delta^2}\,,\qquad\tau:=\frac{1}{2}\left(\frac{\beta}{32}\right)^3\,.$$
Writing $C_{k,\theta}>0$ for the constant depending on $k$ and $\theta$ appearing in the proof of \Cref{lemma:WtoWtildeMetricsK1k}, the parameters are chosen so that
\begin{equation}\label{eqn:sub-smaller-components-params}
\epsilon=o\bigg(\bigg(\frac{\alpha\delta\theta}{C_{k,\theta}}\bigg)^3\bigg)\,,\qquad \frac{\beta}{\alpha^2\theta^2}=o(\delta)\,,\qquad \epsilon=o(\alpha^{4k-4}\theta^2)\,.
\end{equation}
Throughout the section, we write $C_k>0$ for a constant depending only on $k$ that may vary from line to line.
Finally, recalling the notation from \Cref{sec:graphon-prob}, we let
$$\mu:=\left(\frac{\gamma(k-1)}{1+(k-2)p_k}\right)^{1/2},\qquad \blambda_\ast:=((\mu,K_{k-1}))\,,\qquad W^\ast:=W_{\blambda_\ast}\,.$$
According to \Cref{prop:graphon-char-fixed-gamma}, among the optimizers of the variational problem $\Phi_k(\gamma)$ stated in \eqref{eqn:var-prob-fixed-gamma}, $W^\ast$ is the unique one satisfying both $\abs{\blambda}=1$ and $G_1=K_{k-1}$.

The following definition introduces the combinatorial object that will model the graphs close to a graphon $W\in\cV_\gamma$.

\begin{definition}[Division]\label{dfn:division}
For any integer $\ell\geq1$, a \emph{division} of $V$ is a set
$$\Pi=\{\Pi_1,\dots,\Pi_\ell\}\,,$$
where for every $i\in[\ell]$ we have $\Pi_i=(Q_i,\cP_i)$, called a \emph{component} of $\Pi$, with the following properties:
\begin{itemize}[leftmargin=\parindent, topsep=4pt]
  \item $Q_i$ is a connected $\Delta$-regular graph on $[q_i]$ for some $q_i\geq\Delta+1$\,;
  \item $\cP_i=\{P_{i,1},\dots,P_{i,q_i}\}$ is a collection of pairwise disjoint nonempty subsets of $V$ called the \emph{parts} of the component $\Pi_i$\,;
  \item the sets $V_i:=\bigcup_{j=1}^{q_i}P_{i,j}$ are pairwise disjoint.
\end{itemize}
We always assume the components $\Pi_i$ with $q_i\leq R_0$ appear first in the ordering $\Pi_1,\dots,\Pi_\ell$, and within each of the sets $\{i:q_i\leq R_0\}$ and $\{i:q_i>R_0\}$, the numbers $v_i:=|V_i|$ are nonincreasing in $i$. Let $\sD$ denote the set of all divisions of $V$. For every $\Pi\in\sD$, define
$$V(\Pi):=\bigcup_{i=1}^\ell V_i\,,\qquad \Pi_\sparse:=V\setminus V(\Pi)\,.$$
We call an edge of $Q_i$ an \emph{active pair}. In this section, we will routinely invoke the various data associated to a division $\Pi$, including $\Pi_i$, $Q_i$, $\cP_i$, $\ell$, and so on, without redefining them every time; the division these data correspond to is always clear from the context.
\end{definition}

Fix $\blambda=((\lambda_1,G_1),(\lambda_2,G_2),\dots)\in\bLambda$ and let $W=W_\blambda\in\cV_\gamma$. For all $1\leq i\leq|\blambda|$, let
$$\mu_i:=\lambda_i-\lambda_{i-1}\,,\qquad r_i:=v(G_i)\,,$$
where $\lambda_0:=0$. Let $L$ be the set $\{1,\dots,|\blambda|\}$ if $|\blambda|<\infty$ and $\N$ otherwise. Fix a division $\Pi=\{\Pi_1,\dots,\Pi_\ell\}$ and define the set of indices
$$I:=\left\{i\in[\ell]:v_i\geq\frac{\eta n}{2}\,,\ q_i\leq R_0\right\}\,.$$
Let us say that $\Pi$ is \emph{$W$-compatible} if there exists an injection $\varphi:I\to L$ such that $Q_i\cong G_{\varphi(i)}$, $|v_i-\mu_{\varphi(i)}n|\leq\delta n$ for all $i\in I$, and for all $j\in L\setminus\varphi(I)$ we have either $\mu_j<\eta$ or $r_j>R_0$. Define the set of $W$-compatible divisions
$$\sD_W:=\left\{\Pi\in\sD:\Pi\text{ is $W$-compatible}\right\}\,.$$

\begin{definition}[$\Delta$-regular blow-up]\label{dfn:D-reg-blowup}
A graph $G$ on $n$ vertices is called a \emph{$\Delta$-regular blow-up} if there exists a division $\Pi=\{\Pi_1,\dots,\Pi_\ell\}\in\sD$ such that the following conditions hold: for all $i\in[\ell]$ and $j\in[q_i]$, the induced subgraph $G[P_{i,j}]$ is a clique; for all $i\in[\ell]$ and distinct $j,j'\in[q_i]$, the induced bipartite graph $G[P_{i,j},P_{i,j'}]$ is nonempty only if $jj'\in E(Q_i)$; for all distinct $i,i'\in[\ell]$, the induced bipartite graph $G[V_i,V_{i'}]$ is empty; and $G$ has no edges incident to $\Pi_\sparse$.
\end{definition}

For all graphs $G$ on $n$ vertices and divisions $\Pi=\{\Pi_1,\dots,\Pi_\ell\}\in\sD$, define the graph $T(G,\Pi)$ on vertex set $V$ by
\begin{align*}
E(T(G,\Pi)) &:= \bigcup_{i=1}^\ell\left(\bigcup_{j=1}^{q_i}E(G^c[P_{i,j}])\cup\bigcup_{jj'\in E(Q_i^c)}E(G[P_{i,j},P_{i,j'}])\right) \\
&\hspace{4cm}\cup\left(\bigcup_{1\leq i<j\leq\ell}E(G[V_i,V_j])\right)\cup E(G[V(\Pi),\Pi_\sparse]) \,.
\end{align*}
We note that $T(G,\Pi)$ has no edges contained in $\Pi_\sparse$.
Let
$$b(G,\Pi):=e(T(G,\Pi))+e(G[\Pi_\sparse])\,,$$
and let $\Pi(G)\in\sD$ be a canonically chosen division minimizing $b(G,\,\cdot\,)$. Let us define
$$T(G):=T(G,\Pi(G))\,,\qquad b(G):=b(G,\Pi(G))\,.$$
We call $T(G)$ the \emph{defect graph} of $G$, and the edges of $T(G)$ are called \emph{defects}. We note that $b(G)$ is the minimum edit distance from $G$ to a $\Delta$-regular blow-up.

For a division $\Pi$ and two parts $P_{i,j},P_{i',j'}$ of $\Pi$, define
$$w_\Pi(P_{i,j},P_{i',j'}):=\begin{cases}1,&i=i'\text{ and }j=j'\,,\\p_k,&i=i'\text{ and }jj'\in E(Q_i)\,,\\0,&\text{otherwise}
\end{cases}\,.$$
Let $H_\Pi$ be the weighted graph on $V$ which has edge weight $w_\Pi(P_{i,j},P_{i',j'})$ on $P_{i,j}\times P_{i',j'}$, and has edge weight $0$ on all pairs incident with $\Pi_\sparse$. Also define
$$I_\theta(\Pi):=\left\{i\in[\ell]:\max_{j\in[q_i]}|P_{i,j}|\geq\theta n\right\}\,,\qquad \cY_\Pi:=\{P_{i,j}:i\in I_\theta(\Pi),\ j\in[q_i]\}\,.$$
We call the parts in $\cY_\Pi$ \emph{$\theta$-visible}.

For all $n,m\in\N$, $W\in\cW$, and $r>0$, define the set of graphs
$$\cB_{n,m}(W,r):=\{G\in\cF^\ast_{n,m}(K_{1,k}):\delta_\square(G,W)<r\}\,,$$
as well as the set
\[\arraycolsep=1.4pt
\begin{array}{ll}
\cF_{W,\Pi} &:= \{G\in\cB_{n,m}(W,\tau):\Pi(G)=\Pi\}\,. \\[7pt]
\end{array}\]
The next lemma is the bridge between $\cB_{n,m}(W,r)$ and the combinatorial objects that will be counted below.

\begin{lemma}\label{lemma:WtoWtildeMetricsK1k}
For all $W\in\cV_\gamma$ and large enough $n$, every graph $G\in\cB_{n,m}(W,\tau)$ satisfies the following conditions:
\begin{enumerate}[
 label=\textit{(\roman*)},
 ref=(\textit{\roman*}),
 topsep=5pt
]
  \item\label{item:bgleqen2} $b(G)\leq \epsilon n^2$.
  \item\label{item:SubCloseK1k-compatible} $\Pi(G)$ is $W$-compatible.
  \item\label{item:SubCloseK1k-balanced} For all $\Pi'=(Q,\{P_1,\dots,P_r\})\in\Pi(G)$ such that $|V(\Pi')|\geq\eta n/(4R_0)$ and $r\leq R_0$, we have
  $$\abs{P_j}\in\left(\frac{1}{r}\pm\min\left\{\alpha,\frac{\omega}{4r}\right\}\right)|V(\Pi')|\,,\qquad |P_j|\geq \frac{|V(\Pi')|}{2R_0}$$
  for all $j\in[r]$.
  \item\label{item:SubCloseK1k-theta} For every component $\Pi_h=(Q_h,\{P_{h,1},\dots,P_{h,q_h}\})\in\Pi(G)$, if $|P_{h,u}|\geq\theta n$ for some $u\in[q_h]$ then $|P_{h,v}|\leq(1+\omega)|P_{h,w}|$ for all $v,w\in[q_h]$.
  \item\label{item:SubCloseK1k-cut} Let $P_{i,a},P_{h,b}\in\cY_{\Pi(G)}$, and let $X\subseteq P_{i,a}$ and $Y\subseteq P_{h,b}$ be disjoint sets with $|X|,|Y|\geq \alpha\theta n/4$. Then
  $$\left|d_G(X,Y)-w_{\Pi(G)}(P_{i,a},P_{h,b})\right|\leq\delta\,.$$
\end{enumerate}
\end{lemma}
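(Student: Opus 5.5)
The plan is to follow the template of the proof of \Cref{lemma:SuperCloseStructureK1k}, with the single block graphon replaced by $W=W_\blambda$. First truncate $W$ to a finite-block graphon $\widetilde W$. Keep the blocks $i$ with $\mu_i\geq\eta$ and $r_i\leq R_0$ exactly. Every other block, that is, blocks with $\mu_i<\eta$ or $r_i>R_0$, is replaced by its exact $\Delta$-regular blow-up, which is already of the required form. The point of the truncation is that \ref{item:SubCloseK1k-compatible} and \ref{item:SubCloseK1k-balanced} only see components with $q_i\leq R_0$. Small-measure blocks contribute little: since $\mu_i-\mu_{i-1}$ is nonincreasing, $\sum_i\mu_i^2/r_i$ is summable, so only finitely many blocks have $\mu_i\geq\eta$.

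Next discretize $\widetilde W$ as in \Cref{def:DiscreteGraphon} to a weighted graph $H_n$. Combine $\delta_\square(G,W)<\tau$ with \cite[Theorem~2.3]{borgs2008convergent} to get a labeling with $d_\square(G,H_n)\leq 32(2\tau)^{1/3}=\beta/\ldots$, which is of order $\beta$. For \ref{item:bgleqen2}, count edits. Inside every weight-$1$ part of $H_n$, the number of non-edges of $G$ is at most $\beta n^2$ per part. This is summed with care over parts, using that the cut norm tests each of the $O(\beta^{-1/2})$ parts of size at least $\sqrt\beta n$, while smaller parts contribute at most $\sum|P|^2\leq\sqrt\beta n^2$ in total. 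Weight-$0$ pairs are handled the same way. Weight-$p_k$ pairs need no editing, since any bipartite graph between active parts is allowed. Edges to the zero region are deleted. The total is $O(\beta^{1/2}n^2)\leq\epsilon n^2$ by the choice of $\beta$, so $b(G)\leq\epsilon n^2$.

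Then, exactly as in \eqref{eqn:triangle-superK1k}, form $H:=H_{\Pi(G)}$ with $G$'s edges on active pairs. This gives $\widehat\delta_\square(H,H_n)\leq 2\epsilon$, and $H$ and $H_n$ are both near-$\{0,p_k,1\}$-valued block structures. For \ref{item:SubCloseK1k-compatible}, \ref{item:SubCloseK1k-balanced} and \ref{item:SubCloseK1k-theta} I would argue by contradiction through a rigidity statement. Suppose two weighted block graphs have all blocks with weight-$1$ diagonal and $\Delta$-regular $p_k$-weighted active pairs, and suppose they are close in $\widehat\delta_\square$. Then their $\theta$-visible parts must match up, bijectively and up to $C_{k,\theta}\epsilon^{1/3}n$ vertices, with equal induced quotient graphs.

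The proof of this rigidity takes the two cliques, namely the weight-$1$ regions of size at least $\theta n$ in each. It compares their overlaps: a weight-$1$ square of side $\geq\theta n$ must be covered up to $O(\epsilon n^2/\theta)$ by a single weight-$1$ square of the other graph. This is because splitting it across two or more blocks creates a region of area $\Omega(\theta^2n^2)$ where the weights differ by at least $\min\{p_k,1-p_k\}=\rho$. Once the visible parts are matched, the active pairs are matched too, by the same area argument at weight $p_k$ versus $0$. This yields $Q_i\cong G_{\varphi(i)}$ and the volume estimates $|v_i-\mu_{\varphi(i)}n|\leq\delta n$. The balancedness in \ref{item:SubCloseK1k-balanced} and \ref{item:SubCloseK1k-theta} follows because the corresponding parts in $H_n$ are exactly equal, up to $\pm1$. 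The lower-bound clause of \ref{item:SubCloseK1k-balanced}, and the case of components with small parts, needs the minimality of $\Pi(G)$. If a part were tiny inside a large component, moving its vertices elsewhere or into $\Pi_\sparse$ would not increase $b$, in the style of the proof of \Cref{lemma:SuperCloseStructureK1k}\ref{item:lemSuperK1kv}. I expect this rigidity step to be the main obstacle. The issue is that $\Pi(G)$ is only a minimizer of edit distance, not a priori aligned with $W$, and components with many small parts (size below $\theta n$) are invisible in cut norm. They must be shown not to interfere, using that their total weight-$1$ and weight-$p_k$ area is $O(\theta)$ times their volume.

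Finally, \ref{item:SubCloseK1k-cut} follows from the matching in the previous step. Take $X\subseteq P_{i,a}$ and $Y\subseteq P_{h,b}$ of size at least $\alpha\theta n/4$. Their images lie, up to $C_{k,\theta}\epsilon^{1/3}n$ vertices, inside the corresponding blocks of $H_n$, where the weight is constant and equal to $w_{\Pi(G)}(P_{i,a},P_{h,b})$. The cut-norm bound gives $|e_G(X,Y)-w|X||Y||\leq(\beta+C_{k,\theta}\epsilon^{1/3})n^2$. Dividing by $|X||Y|\geq(\alpha\theta n/4)^2$ and invoking \eqref{eqn:sub-smaller-components-params} gives deviation at most $\delta$.
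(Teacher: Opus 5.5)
Your treatment of \textit{(i)} and \textit{(v)} follows the same lines as the paper's. The alignment step behind \textit{(ii)}--\textit{(iv)}, however, has a genuine gap, and the mechanism you propose for it does not work.

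The graph $H$ obtained by toggling the defects of $\Pi(G)$ carries $G$'s own edges on the active pairs of $\Pi(G)$. So it is not a $\{0,p_k,1\}$-valued block structure: nothing controls the density of $G$ on those pairs before the alignment is established, and that control is essentially \textit{(v)}. Consequently your ``region where the weights differ by at least $\rho$'' argument works in only one direction.
\begin{itemize}
\item The working direction: a part of $\Pi(G)$ of size at least $\theta n$ must lie, up to $m_0=\epsilon^{1/3}n$ vertices, in a single cell of $H_n$. This holds because $H$ is complete inside a part while $H_n\le1-\rho$ across distinct cells.
\item The failing direction: suppose a weight-$1$ cell $C$ of $H_n$ is split between two parts that are adjacent in some $Q_h$. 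Then $G$ is nearly complete between them and $H\approx H_n$ on $C\times C$, so no cut-norm comparison detects the split.
\item Similarly, a cell covered by many small pieces produces no large region of disagreement unless you choose test sets on which $H$ vanishes identically.
\end{itemize}

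The paper supplies exactly these missing ingredients.
\begin{itemize}
\item First, take a cell with $|C|\ge\theta n/20$ and join two pieces $C\cap P$ when their parts are adjacent in a common $Q_h$. This piece graph has maximum degree $\Delta$. If every piece had fewer than $m_0$ vertices, an independent set of pieces of total size at least $|C|/(\Delta+1)$ would give disjoint unions $X,Y$ of whole pieces with $H[X,Y]\equiv0$ and $H_n[X,Y]\equiv1$, a contradiction. Hence some part $P(C)$ meets $C$ in at least $m_0$ vertices.
\item Second, once $P_{h,u}$ is aligned with $C_{i,j}$, each neighboring cell $C_{i,j'}$ forces its associated part to be $Q_h$-adjacent to $P_{h,u}$; otherwise $H\equiv0$ where $H_n\equiv p_k$. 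Since $Q_h$ and $G_i$ are both $\Delta$-regular, this exhausts the neighbor slots of $P_{h,u}$. That is what excludes a second large piece of $C_{i,j}$, in particular the adjacent split.
\item Third, propagating along paths in the connected graph $G_i$ gives $Q_h\cong G_i$ with every part within $C_{k,\theta}m_0$ of its cell. This yields \textit{(iv)}, and then \textit{(ii)} and \textit{(iii)}.
\end{itemize}
Your sketch uses neither the $\Delta$-regularity of the quotient graphs nor connectivity. Your proposed bound ``area $O(\theta)$ times volume'' for components with small parts is too weak against cells of size $\Theta(\theta n)$, which is the scale that matters in \textit{(iv)}.

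Secondary points:
\begin{itemize}
\item Minimality of $\Pi(G)$ cannot be used the way you suggest. A part cannot be emptied, because parts are nonempty and $Q_i$ must stay connected and $\Delta$-regular.
\item It is also not needed. The paper uses minimality only through $b(G,\Pi(G))=b(G)\le\epsilon n^2$. The lower bound $|P_j|\ge|V(\Pi')|/(2R_0)$ in \textit{(iii)} follows from the balance statement, since $\alpha<1/(2R_0)$.
\item Your truncation $\widetilde W$ is vacuous, since every block of $W_\blambda$ is already an exact $\Delta$-regular blow-up.
\item In \textit{(i)}, testing weight-$0$ pairs pair by pair over $O(\beta^{-1/2})$ parts costs $O(\beta^{-1}\cdot\beta n^2)=O(n^2)$. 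Test each part against the union of its non-neighbors instead, or use the paper's coarser truncation: keep blocks with $\mu_i\ge\epsilon/4$ and $r_i\le 8\Delta/\epsilon$, and send all other vertices to $\Pi_\sparse$.
\item In \textit{(v)}, charge the deleted vertices as a relative density error $O(C_{k,\theta}m_0/(\alpha\theta n))=o(\delta)$; this is what the hierarchy $\epsilon=o((\alpha\delta\theta/C_{k,\theta})^3)$ is designed for. Your absolute error $C_{k,\theta}\epsilon^{1/3}n^2$ divided by $(\alpha\theta n/4)^2$ only gives $o(\delta/(\alpha\theta))$.
\end{itemize}
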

\begin{proof}
Fix $\blambda=((\lambda_i,G_i))_{i\in L}\in\bLambda$ such that $W_\blambda=W$, where $L=\{1,\dots,\abs{\blambda}\}$ if $\abs{\blambda}<\infty$ and $L=\N$ otherwise. Let $H_n$ be the weighted graph on $V$ with edge weight $W(i/n,j/n)$ on $ij$, and let $W_n:=W_{H_n}$. Since $W_n\to W$ pointwise almost everywhere, we have $\delta_\square(W_n,W)\to0$. Hence for all sufficiently large $n$, we have $\delta_\square(G,W_n)<2\tau$. By \cite[Lemma~8.9]{lovasz2012large} and \cite[Theorem~2.3]{borgs2008convergent}, together with the remark on p.~1831 there,
$$\widehat\delta_\square(G,H_n)\leq32\delta_\square(G,H_n)^{1/3}\leq\beta\,,$$
by the definition of $\tau$. Since the assertions of the lemma are invariant under relabeling the vertices of $G$, we assume from now on that $d_\square(G,H_n)\leq\beta$.

Write $\mu_i:=\lambda_i-\lambda_{i-1}$ and $r_i:=v(G_i)$, where $\lambda_0:=0$. Let $\ell$ be the greatest index such that $\mu_\ell\geq\epsilon/4$, taking $\ell=0$ if no such index exists, and set $R:=8\Delta\epsilon^{-1}$. For $i\in L$ and $j\in[r_i]$, define the sets of vertices
$$C_{i,j}:=\left\{t\in[n]:\frac{t}{n}\in\left(\lambda_{i-1}+\frac{j-1}{r_i}\mu_i,\lambda_{i-1}+\frac{j}{r_i}\mu_i\right)\right\}\,,\qquad C_i:=\bigcup_{j=1}^{r_i}C_{i,j}\,.$$
Define the set of vertices
$$J:=V\setminus\bigcup_{i\in[\ell],\,r_i\leq R}C_i\,.$$
Only $o(n)$ vertices of $J$ lie on the boundary of one of the intervals defining the sets $C_{i,j}$.

Define the graph $G'$ from $G$ as follows. For all $i\in[\ell]$ with $r_i\leq R$, add all missing edges inside the sets $C_{i,j}$ and delete all edges between $C_{i,j}$ and $C_{i,j'}$ whenever $jj'\in E(G_i^c)$. Delete all edges between distinct $C_i,C_{i'}$ with $i,i'\in[\ell]$ and $r_i,r_{i'}\leq R$, and delete all edges incident to $J$. For large enough $n$, $G'$ is a $\Delta$-regular blow-up, hence $b(G)\leq n^2d_1(G,G')$.

Since $d_\square(G,H_n)\leq\beta$, the edit distance between $G$ and $G'$ inside one set $C_i$ with $r_i\leq R$ is at most $r_i^2\beta n^2$, and the edit distance between two distinct sets $C_i$ and $C_j$ blocks is at most $\beta n^2$. The total edge weight of $H_n$ incident to the nonexceptional vertices of $J$ is at most $\epsilon n^2/2$. Indeed, each block with index larger than $\ell$ contains total edge weight at most $\sum_{i>\ell}\mu_i^2n^2\leq\epsilon n^2/4$, while each block with $i\in[\ell]$ and $r_i>R$ contains total edge weight at most
$$\sum_{i\in[\ell],\,r_i>R}\left(r_i+2e(G_i)\right)\left(\frac{\mu_i n}{r_i}\right)^2\leq\frac{1+\Delta}{R}n^2\leq\frac{\epsilon n^2}{4}\,.$$
Thus the number of edges of $G$ incident to $J$ is at most $3\epsilon n^2/4$ for all sufficiently large $n$. Since $\ell\leq4\epsilon^{-1}$, the definition of $\beta$ gives
$$b(G)\leq n^2d_1(G,G')\leq\ell R^2\beta n^2+\ell^2\beta n^2+\frac{3\epsilon n^2}{4}\leq\epsilon n^2\,,$$
proving \ref{item:bgleqen2}.

Let us write
$$\Pi(G)=\{\Pi_1,\dots,\Pi_t\}\,,\qquad \Pi_a=(Q_a,\{P_{a,1},\dots,P_{a,q_a}\})\,,\qquad V_a:=\bigcup_{b=1}^{q_a}P_{a,b}\,.$$
Define the graph $H$ on $V$ with edge set
$$E(H):=(E(G)\,\triangle\,E(T(G)))\setminus E(G[\Pi_\sparse])\,,$$
so $H$ is obtained from $G$ by toggling the defect edges of the canonical division and deleting the edges in the sparse part $\Pi_\sparse$. Since $d_1(G,H)=b(G)/n^2$, \ref{item:bgleqen2} and $d_\square(G,H_n)\leq\beta$ give
\begin{equation}\label{eqn:sub-HHn-close-K1k}
d_\square(H,H_n)\leq d_\square(H,G)+d_\square(G,H_n)\leq\epsilon+\beta\leq2\epsilon\,.
\end{equation}

We will repeatedly use the following elementary observation. Let $S$ be a finite set partitioned into parts, and let $m>0$. If $|S|\geq3m$ and no part has size greater than $|S|-m$ then there are disjoint unions $X,Y\subseteq S$ of whole parts such that $|X|,|Y|\geq m$. Indeed, if one part has size in $[m,|S|-m]$ then take this part and its complement; otherwise every part has size less than $m$, and a greedy union has size in $[m,2m)$.

Define the parameters
$$m_0:=\epsilon^{1/3}n,\qquad m_1:=\frac{\theta n}{20}\,.$$
By the choice of our parameters above,
\begin{equation}\label{eqn:sub-bridge-parameter-display-K1k}
\rho m_0^2>2\epsilon n^2\,,\qquad m_1\geq6(\Delta+1)m_0\,,\qquad \frac{m_0}{\alpha\theta n}=o(\delta)\,,\qquad R_0\frac{m_0}{n}=o(\delta)\,.
\end{equation}
The first inequality has the following consequence. If $X,Y\subseteq V$ satisfy $|X|,|Y|\geq m_0$ then the edge weights of $H$ and $H_n$ cannot differ by at least $\rho$ on every pair in $X\times Y$, since then \eqref{eqn:sub-HHn-close-K1k} would be contradicted.

We now compare the cells $C_{i,j}$ of $H_n$ with the parts of the canonical division $\Pi(G)$. Suppose first that $C=C_{i,j}$ satisfies $|C|\geq m_1$. Partition $C$ by the sets $P_{a,b}$ and by $\Pi_\sparse$. Define a graph $\Gamma_C$ on the nonempty pieces of this partition by joining two pieces if they lie in adjacent parts of the same component of $\Pi(G)$. Since $\Delta(\Gamma_C)\leq\Delta$, if every piece had size less than $m_0$ then some color class of $\Gamma_C$ would have size at least $|C|/(\Delta+1)\geq6m_0$. The observation above gives disjoint unions $X,Y\subseteq C$ of whole pieces with $|X|,|Y|\geq m_0$ and $H[X,Y]=0$, while every edge weight of $H_n[X,Y]$ equals $1$, a contradiction. Hence some part of $\Pi(G)$ or $\Pi_\sparse$ meets $C$ in at least $m_0$ vertices. The set $\Pi_\sparse$ cannot do so by the same argument, and therefore there is a part of $\Pi(G)$, denoted $P(C)$, such that $|C\cap P(C)|\geq m_0$.
Moreover, we have $|P(C)\setminus C|<m_0$, since otherwise with $X=C\cap P(C)$ and $Y=P(C)\setminus C$, the graph $H[X,Y]$ is complete while every edge weight of $H_n[X,Y]$ is at most $1-\rho$. In particular, distinct cells $C$ of size at least $m_1$ have distinct associated parts $P(C)$.

We next record the converse alignment for large parts of $\Pi(G)$. Suppose that $P=P_{a,b}$ satisfies $|P|\geq\theta n$. Partition $P$ by the cells $C_{i,j}$ and by the boundary vertices of the intervals defining them. Since the boundary contributes $o(n)<m_0$ vertices, we ignore it below. If no cell $C_{i,j}$ satisfies $|P\setminus C_{i,j}|<m_0$ then the above observation gives disjoint unions $X,Y\subseteq P$ of whole intersections $P\cap C_{i,j}$ with $|X|,|Y|\geq m_0$ and such that no pair in $X\times Y$ lies inside a single cell. On the one hand, $H[X,Y]$ is complete, since $X,Y\subseteq P$. On the other hand, every edge weight of $H_n[X,Y]$ is at most $1-\rho$. This contradicts \eqref{eqn:sub-HHn-close-K1k}, hence there is a cell, denoted $C(P)$, such that $|P\setminus C(P)|<m_0$.

Let $\Pi_h=(Q_h,\{P_{h,1},\dots,P_{h,q_h}\})$ be a component of $\Pi(G)$ containing a part $P_{h,u}$ with $|P_{h,u}|\geq\theta n$, and write $C(P_{h,u})=C_{i,j}$. Since $|P_{h,u}\setminus C_{i,j}|<m_0$, the cell $C_{i,j}$ has size at least $\theta n/2\geq m_1$. If $j'\in N_{G_i}(j)$ then $|C_{i,j'}|\geq\theta n/2$, and so $C_{i,j'}$ has an associated part $P(C_{i,j'})$. The cut between $P_{h,u}$ and $P(C_{i,j'})$ has $H_n$-weight $p_k$ on all but $O(m_0n)$ pairs. If these two parts were not adjacent in $Q_h$ then $H$ would have no edges between them, contradicting \eqref{eqn:sub-HHn-close-K1k}. Thus $P(C_{i,j'})$ is adjacent to $P_{h,u}$ in $Q_h$ for every $j'\in N_{G_i}(j)$. Since both $G_i$ and $Q_h$ are $\Delta$-regular and $P_{h,u}$ has exactly $\Delta$ neighbors, these are precisely the neighbors of $P_{h,u}$.

We now claim that no other part of $\Pi_h$ has intersection at least $m_0$ with $C_{i,j}$. Indeed, such a part would have to be adjacent to $P_{h,u}$, since otherwise $H$ would have no edges between two $m_0$-sets on which $H_n$ has weight $1$. But all $\Delta$ neighbors of $P_{h,u}$ have already been identified, and they are associated to the distinct neighboring cells $C_{i,j'}$, $j'\in N_{G_i}(j)$. Hence $C_{i,j}$ is contained in $P_{h,u}$ up to an error of at most $(\Delta+2)m_0$. Applying the same argument after moving to each neighbor and then iterating along paths in the connected graph $G_i$, we see that the whole component $\Pi_h$ is isomorphic to $G_i$, and every part of $\Pi_h$ is within $C_{k,\theta}m_0$ of the corresponding cell of $H_n$, where $C_{k,\theta}>0$ depends only on $k$ and $\theta$. This proves \ref{item:SubCloseK1k-theta}, because $m_0/(\theta n)=o(\omega)$ and $m_0/(\theta n)+\beta/\theta^2=o(\delta)$.

We now consider components with at most $R_0$ parts. If $i\in L$ satisfies $\mu_i\geq\eta$ and $r_i\leq R_0$ then every cell $C_{i,j}$ has size at least $\eta n/(2R_0)\geq m_1$, and the preceding paragraphs associate these cells to a component of $\Pi(G)$ isomorphic to $G_i$ whose total size differs from $\mu_i n$ by at most $\delta n$. Conversely, if $\Pi_h$ has $q_h\leq R_0$ and $|V_h|\geq\eta n/2$ then some part has size at least $\eta n/(2R_0)\geq\theta n$, so the preceding paragraph aligns $\Pi_h$ with a graphon block $G_i$ satisfying $r_i\leq R_0$ and $\mu_i\geq\eta/2-O(\delta)$. This completes the proof of \ref{item:SubCloseK1k-compatible}. The balance assertion \ref{item:SubCloseK1k-balanced} follows in the same way from the cell alignment, since any component $\Pi_h$ with $q_h\leq R_0$ and $|V_h|\geq\eta n/(4R_0)$ has some part of size at least $\eta n/(4R_0^2)\gg\theta n$.

It remains to prove \ref{item:SubCloseK1k-cut}. Let $P_{i,a},P_{h,b}\in\cY_{\Pi(G)}$, and let $X\subseteq P_{i,a}$ and $Y\subseteq P_{h,b}$ be disjoint sets with $|X|,|Y|\geq \alpha\theta n/4$. By \ref{item:SubCloseK1k-theta}, the two parts are each within $C_{k,\theta}m_0$ of their corresponding graphon cells. After deleting at most $C_{k,\theta}m_0$ vertices from each of $X$ and $Y$, we get sets $X'\subseteq X$ and $Y'\subseteq Y$ lying in the corresponding cells, with $|X'|,|Y'|\geq \alpha\theta n/8$. On $X'\times Y'$, the weight of $H_n$ is exactly $w_{\Pi(G)}(P_{i,a},P_{h,b})$. Consequently, $d_\square(G,H_n)\leq\beta$ gives $\left|e_G(X',Y')-w_{\Pi(G)}(P_{i,a},P_{h,b})|X'||Y'|\right|\leq\beta n^2$. Adding back the deleted vertices and using $\beta/(\alpha^2\theta^2)=o(\delta)$ and $m_0/(\alpha\theta n)=o(\delta)$ proves the required result.
\end{proof}

\begin{corollary}\label{cor:cut-closeness}
Fix a graph $G\in\cF_{W,\Pi}$. Fix distinct sets $Y_1,\dots,Y_r\in\cY_{\Pi(G)}$ along with subsets $A_i\subseteq Y_i$, each of size $|A_i|\geq\alpha\theta n/4$. Writing $A:=A_1\cup\cdots\cup A_r$, we have $d_\square(G[A],H_\Pi[A])=o(1)$ as $\omega\to0$.
\end{corollary}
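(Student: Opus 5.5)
The plan is to use \Cref{lemma:WtoWtildeMetricsK1k}\ref{item:SubCloseK1k-cut}, which controls densities between sufficiently large subsets of $\theta$-visible parts, and to lift this pairwise control to a bound on the cut norm of $A_G-A_{H_\Pi}$ restricted to $A$. The standard route is to pass through a weak regularity style decomposition. Write $A=A_1\cup\cdots\cup A_r$, where $r\leq\sum_{i\in I_\theta(\Pi)}q_i$. This is bounded in terms of $\theta$ and $k$: every $\theta$-visible component has a part of size at least $\theta n$, so by \Cref{lemma:WtoWtildeMetricsK1k}\ref{item:SubCloseK1k-theta} all of its parts have size at least $\theta n/(1+\omega)$. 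Hence at most $2/\theta$ such parts exist, and $r\leq 2/\theta$. Since $H_\Pi$ is constant, equal to $w_\Pi(Y_a,Y_b)$, on each block $A_a\times A_b$, it suffices to bound, for arbitrary $S,T\subseteq A$,
$$\Big|e_G(S,T)-\sum_{a,b}w_\Pi(Y_a,Y_b)\,|S\cap A_a|\,|T\cap A_b|\Big|\,.$$
Here $e_G(S,T)$ counts ordered pairs, and we normalize by $|A|^2\geq(\alpha\theta n/4)^2$.

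The first step is to split the sum into the $r^2$ blocks and treat each block separately. Fix $a,b$ and write $S_a:=S\cap A_a$ and $T_b:=T\cap A_b$.
\begin{itemize}
\item If $\min\{|S_a|,|T_b|\}<\alpha\theta n/4$, the block contributes at most $(\alpha\theta n/4)\,n$ to the discrepancy.
\item Otherwise, if $a\neq b$, the sets $S_a$ and $T_b$ lie in distinct parts and are disjoint. Then \ref{item:SubCloseK1k-cut} gives a discrepancy of at most $\delta|S_a||T_b|\leq\delta n^2$.
\item If $a=b$, I split $S_a$ and $T_a$ into $S_a\setminus T_a$, $T_a\setminus S_a$ and $S_a\cap T_a$. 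For the intersection, I partition it arbitrarily into two halves to obtain disjoint pieces, or note that on the diagonal part $w=1$, so the discrepancy is just the number of non-edges. Applying \ref{item:SubCloseK1k-cut} to pieces of size at least $\alpha\theta n/4$, and bounding small pieces trivially as before, again gives a discrepancy of $O(\delta n^2+\alpha\theta n^2)$.
\end{itemize}

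Summing over the at most $r^2\leq 4/\theta^2$ blocks, the total discrepancy is $O\big((\delta+\alpha\theta)n^2/\theta^2\big)$. Dividing by $|A|^2\geq \alpha^2\theta^2n^2/16$ yields
$$d_\square(G[A],H_\Pi[A])=O\!\left(\frac{\delta+\alpha\theta}{\alpha^2\theta^4}\right).$$

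This last normalization is the main obstacle. The trivial bound for small pieces, $\alpha\theta n^2$, is not small relative to $|A|^2\approx\alpha^2\theta^2n^2$. To fix this, I will lower the threshold for small pieces. Any piece of size below $\xi\alpha\theta n$ contributes at most $\xi\alpha\theta n\cdot|A|$, which is $\xi|A|^2$ after normalization up to constants, since $|A|\geq\alpha\theta n/4$. For the larger pieces I need a version of \ref{item:SubCloseK1k-cut} at the threshold $\xi\alpha\theta n$. Its proof goes through verbatim, since it only used $\beta/(\alpha^2\theta^2)=o(\delta)$ and $m_0/(\alpha\theta n)=o(\delta)$, and both remain valid with $\xi$ a fixed small constant by \eqref{eqn:sub-smaller-components-params}. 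Alternatively, I can apply $d_\square(G,H_n)\leq\beta$ directly after aligning parts with cells as in that proof, which gives an additive error $O(\beta n^2+C_{k,\theta}m_0 n)$ per block. That is $o(\alpha^2\theta^4 n^2)$ by the parameter choices $\beta/(\alpha^2\theta^2)=o(\delta)$ and $m_0=\epsilon^{1/3}n$ with $\epsilon=o(\alpha^{4k-4}\theta^2)$. The parameters $\delta,\epsilon,\beta$ are chosen after $\alpha,\theta$, and hence are $o(1)$ relative to them as $\omega\to0$. So the normalized discrepancy is $o(1)$, which proves the corollary.
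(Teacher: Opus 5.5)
Your proposal is correct in substance. The step that actually closes it, your ``alternatively'' route, is the paper's proof: discard the at most $C_{k,\theta}m_0$ vertices of each $A_i$ lying outside the graphon cell matched to $Y_i$, so that $H_\Pi$ agrees with $H_n$ on what remains (call it $A'$), and then invoke $d_\square(G,H_n)\leq\beta$.

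The paper does this without any block decomposition. For $S,T\subseteq A$, one application of the global cut bound to $(S\cap A')\times(T\cap A')$ costs $\beta n^2$, and the discarded vertices cost $O(rC_{k,\theta}m_0|A|)$. Since the $Y_i$ are disjoint, $|A|\geq r\alpha\theta n/4$. Normalizing gives $O(\beta/(\alpha^2\theta^2)+C_{k,\theta}m_0/(\alpha\theta n))$, which is precisely what \eqref{eqn:sub-smaller-components-params} makes $o(\delta)$. Your per-block accounting instead asks for $o(\alpha^2\theta^4n^2)$ per block. The conditions you cite do not yield this: they ignore the factor $C_{k,\theta}$ and give too weak a power of $\theta$. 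It could be arranged since $\epsilon$ is chosen last, but it is unnecessary.

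Your primary route, using \Cref{lemma:WtoWtildeMetricsK1k}\ref{item:SubCloseK1k-cut} as a black box, is genuinely different. Your diagnosis of its obstacle is right, but reproving (v) at a lower threshold is not needed. If $|S_a|<\alpha\theta n/4$, choose $Z\subseteq Y_a\setminus S_a$ with $|Z|=\lceil\alpha\theta n/4\rceil$, which is possible since $|Y_a|\geq\theta n/2$. Then write $e_G(S_a,T_b)=e_G(S_a\cup Z,T_b)-e_G(Z,T_b)$, and pad $T_b$ in the same way if it is also small. Now (v) itself controls every off-diagonal block, with total error $O(\delta(|S|+r\alpha\theta n)(|T|+r\alpha\theta n))=O(\delta|A|^2)$.

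There are two bookkeeping problems in your version:
\begin{enumerate}
\item Summing the trivial small-piece bound over up to $2r\leq4/\theta$ pieces loses a factor $r$ unless you use $|A|\geq r\alpha\theta n/4$. With $\xi$ fixed and $\theta\to0$, your bound would not tend to $0$.
\item For the diagonal blocks, halving $S_a\cap T_a$ once leaves the pairs inside each half uncontrolled. Instead, note that non-edges inside a part are defect edges. So all diagonal blocks together contribute at most $2b(G)\leq2\epsilon n^2=o(|A|^2)$, by \Cref{lemma:WtoWtildeMetricsK1k}\ref{item:bgleqen2} and $\epsilon=o(\alpha^{4k-4}\theta^2)$.
\end{enumerate}

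The payoff of the black-box route is that it uses only the \emph{statement} of \Cref{lemma:WtoWtildeMetricsK1k} and gives the bound $O(\delta+\epsilon/(\alpha\theta)^2)$. The paper's one-line proof instead reaches back into the cell alignment established inside that lemma's proof.
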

\begin{proof}
This follows immediately from the last paragraph of the proof of \Cref{lemma:WtoWtildeMetricsK1k}. Indeed, after deleting at most $C_{k,\theta}m_0$ vertices from each set $A_i$, the edge weights of $H_\Pi$ agree with those of $H_n$, and we further have $d_\square(G,H_n)\leq\beta$. It follows that
$$d_\square(G[A],H_\Pi[A])\leq O_{k,r}\!\left(\frac{\beta}{\alpha^2\theta^2}+\frac{m_0}{\alpha\theta n}\right)=o(1)\,,$$
where we used $\beta/(\alpha^2\theta^2)=o(\delta)$ and $m_0/(\alpha\theta n)=o(\delta)$.
\end{proof}

Using \Cref{lemma:WtoWtildeMetricsK1k}, we now record three lemmas that establish deterministic consequences of a graph $G$ belonging to the cut-ball $\cB_{n,m}(W,\tau)$. \Cref{lemma:deterministic-nonlow-bound-K1k,lemma:deterministic-medium-companion-K1k,lemma:deterministic-opposite-row-K1k} play an important role in the remainder of this section as they provide useful constraints on the adjacency profiles of vertices.
For $i\in I_\theta(\Pi)$ and $j\in[q_i]$, write
$$N_{i,j}:=\bigcup_{j'\in N_{Q_i}(j)}P_{i,j'}\,.$$
For all $v\in V$, define
$$\cN_{\Pi,G}(v):=\{(i,j):i\in I_\theta(\Pi),\ j\in[q_i],\ d_G(v,P_{i,j})\geq\alpha |P_{i,j}|\}\,.$$

\begin{lemma}\label{lemma:deterministic-nonlow-bound-K1k}
For every $G\in\cF_{W,\Pi}$ and every $v\in V$, we have $|\cN_{\Pi,G}(v)|\leq k-1$.
\end{lemma}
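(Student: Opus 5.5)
Suppose for contradiction that $|\cN_{\Pi,G}(v)|\geq k$ for some $v$. Pick $k$ distinct pairs $(i_1,j_1),\dots,(i_k,j_k)\in\cN_{\Pi,G}(v)$, and let $A_s:=N_G(v,P_{i_s,j_s})$, so that $|A_s|\geq\alpha|P_{i_s,j_s}|$. The plan is to find $x_s\in A_s$, one per pair, that are pairwise non-adjacent (and distinct from $v$). Then $v,x_1,\dots,x_k$ induce a copy of $K_{1,k}$ centred at $v$, contradicting $G\in\cF^\ast_{n,m}(K_{1,k})$.

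\textbf{Step 1: size and weight bounds.} Since $(i_s,j_s)$ has $i_s\in I_\theta(\Pi)$, part \ref{item:SubCloseK1k-theta} of \Cref{lemma:WtoWtildeMetricsK1k} gives $|P_{i_s,j_s}|\geq\theta n/(1+\omega)\geq\theta n/2$. Discarding $v$ if necessary, each $A_s$ has size at least $\alpha\theta n/4$. Hence \Cref{cor:cut-closeness} applies to the sets $A_1,\dots,A_k$ inside the distinct $\theta$-visible parts $Y_s=P_{i_s,j_s}$. Writing $A=\bigcup_s A_s$, we get $d_\square(G[A],H_\Pi[A])=o(1)$. For $s\neq t$, the weight $w_\Pi(Y_s,Y_t)$ is $p_k$ if the two parts are adjacent in the same $Q_i$, and $0$ otherwise. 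In either case it is at most $p_k<1$, so every pair $(A_s,A_t)$ has edge density at most $p_k+o(1)$. By part \ref{item:SubCloseK1k-cut}, the same bound holds for all subsets of size at least $\alpha\theta n/4$.

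\textbf{Step 2: greedy embedding (the main step).} We now embed an independent $k$-set transversal by a counting/greedy argument, as in the standard embedding lemma. Choose $x_1\in A_1$ typical, meaning its non-neighbourhood in each $A_t$ ($t\geq2$) has size at least $(1-p_k-2\delta)|A_t|$. Then shrink each $A_t$ to $A_t\setminus N_G(x_1)$ and repeat.

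The obstacle is that after $k-1$ rounds the sets shrink to roughly $(1-p_k)^{k-1}\alpha|Y_t|$. These must remain at least $\alpha\theta n/4$ for part \ref{item:SubCloseK1k-cut} to keep applying, and this is not guaranteed for fixed $\alpha$. To handle it, I would instead apply the cut-closeness on the fixed sets $A_s$ via a counting lemma. Cut-distance $o(1)$ between $G[A]$ and $H_\Pi[A]$ implies that the induced density of the empty graph $\overline{K_k}$ with one vertex in each $A_s$ is
\[
\prod_{s<t}\bigl(1-w_\Pi(Y_s,Y_t)\bigr)+o(1)\geq(1-p_k)^{\binom{k}{2}}-o(1)>0.
\]
This uses the standard counting lemma for cut distance on the $k$-partite weighted graph, with error $O(k^2)\,d_\square$. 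A positive density yields the required $x_1,\dots,x_k$, distinct from $v$.

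\textbf{Step 3: the degenerate case.} One must also check that $v$ itself cannot lie in some $A_s$, and that the positivity survives the $o(1)$ error. The first holds because $v\notin N_G(v)$. The second holds since $(1-p_k)^{\binom{k}{2}}$ is a constant depending only on $k$, while the error tends to $0$ as $\omega\to0$. I expect the counting-lemma step to be the only nontrivial point.
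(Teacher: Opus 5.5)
Your proof is correct, but it takes a different route from the paper's. The paper uses only the pairwise upper bounds from \Cref{lemma:WtoWtildeMetricsK1k}~\ref{item:SubCloseK1k-cut}. It takes $d_G(A_a,A_b)\le p_k+\zeta$ with $\zeta=\frac12\bigl(\frac1{k-1}-p_k\bigr)$, concludes that every pair has density above $\frac{k-2}{k-1}$ in $G^c$, and invokes Tur\'an's theorem to get a transversal $K_k$ in $G^c$. You instead use the cut-norm closeness of \Cref{cor:cut-closeness} together with a partite counting lemma. This is exactly how the paper proves the neighbouring \Cref{lemma:deterministic-medium-companion-K1k,lemma:deterministic-opposite-row-K1k}.

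The paper's shortcut would be more elementary if it worked, but pairwise density bounds alone are not enough here. First, $\zeta>0$ needs $p_k<\frac1{k-1}$, which fails for every $k\ge5$; for example $p_5\approx0.2755>\frac14$, and in general $p_k\sim\frac{\ln k}{k}$. Second, even for $k=3$ density alone does not force a transversal triangle. Here $1-p_3=\tau$ is the golden ratio conjugate, so $\tau^2=1-\tau$. Split $V_1=X\cup Y$ and $V_3=E\cup F$ in proportions $\tau:(1-\tau)$, and take the edges $X\times V_2$, $V_2\times E$, and $(X\times F)\cup(Y\times V_3)$. All three bipartite densities equal $1-p_3$, yet there is no transversal triangle; this is the sharp golden-ratio example of Bondy, Shen, Thomass\'e and Thomassen. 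So the quasirandomness your counting step uses is genuinely needed. Your observation that a greedy embedding driven only by \ref{item:SubCloseK1k-cut} breaks down once the sets shrink below $\alpha\theta n/4$ pinpoints the right obstruction.

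One bookkeeping point in Step~2 concerns normalisation.
\begin{itemize}
\item Run the counting lemma block by block, measuring the discrepancy on $A_s\times A_t$ relative to $|A_s||A_t|\ge(\alpha\theta n/4)^2$.
\item At that scale, the proof of \Cref{cor:cut-closeness} gives the bound $O\bigl(\beta/(\alpha\theta)^2+m_0/(\alpha\theta n)\bigr)=o(\delta)$.
\item The total error is therefore $\binom k2\cdot o(\delta)$, which is far below $(1-p_k)^{\binom k2}$.
\end{itemize}
If you instead plug in $d_\square(G[A],H_\Pi[A])$ normalised by $|A|^2$, the error is not simply $O(k^2)\,d_\square$. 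You must multiply by $|A|^2/(|A_s||A_t|)=O(k^2/(\alpha\theta)^2)$. This is still harmless, because $\delta$, $\beta$ and $\epsilon$ are chosen after $\alpha$ and $\theta$.
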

\begin{proof}
Let $\zeta:=\frac12(1/(k-1)-p_k)>0$, and suppose for contradiction that there are distinct pairs $(i_1,j_1),\dots,(i_k,j_k)\in\cN_{\Pi,G}(v)$. For each $a\in[k]$, choose $A_a\subseteq N_G(v)\cap P_{i_a,j_a}$ with $|A_a|=\lceil\alpha |P_{i_a,j_a}|/2\rceil$. By \Cref{lemma:WtoWtildeMetricsK1k} \ref{item:SubCloseK1k-theta}, every part in $\cY_\Pi$ has size at least $\theta n/(1+\omega)$; hence $|A_a|\geq\alpha\theta n/4$. By \Cref{lemma:WtoWtildeMetricsK1k} \ref{item:SubCloseK1k-cut}, for every $a\neq b$ we have $d_G(A_a,A_b)\leq p_k+\zeta$ after decreasing $\delta$. Thus, in the complement of $G$, every bipartite graph between two distinct parts $A_a,A_b$ has density at least $1-p_k-\zeta>(k-2)/(k-1)$. By Tur\'an's theorem, $G^c$ contains a $K_k$ subgraph with one vertex in each $A_a$. These $k$ vertices are pairwise non-adjacent in $G$ and all are adjacent to $v$, so they form an induced copy of $K_{1,k}$ centered at $v$, a contradiction.
\end{proof}

\begin{lemma}\label{lemma:deterministic-medium-companion-K1k}
Let $G\in\cF_{W,\Pi}$, $i\in I_\theta(\Pi)$, $j\in[q_i]$, and $v\in V\setminus N_{i,j}$. If
$$\alpha |P_{i,j}|\leq d_G(v,P_{i,j})\leq(1-\alpha)|P_{i,j}|$$
then there exists $j'\in N_{Q_i}(j)$ such that $d_G(v,P_{i,j'})\geq(1-\alpha)|P_{i,j'}|$.
\end{lemma}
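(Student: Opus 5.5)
We argue by contradiction and build an induced $K_{1,k}$ centered at a vertex of $P_{i,j}$. The template is the proof of \Cref{lemma:deterministic-nonlow-bound-K1k}, but the leaves are taken in non-neighborhoods of $v$ rather than neighborhoods. Suppose $v\notin N_{i,j}$ has medium degree in $P_{i,j}$ and $d_G(v,P_{i,j'})<(1-\alpha)|P_{i,j'}|$ for every $j'\in N_{Q_i}(j)$. Define
$$A:=N_G(v)\cap P_{i,j}\setminus\{v\},\qquad B:=N^c_G(v)\cap P_{i,j}.$$
Both have size at least $\alpha|P_{i,j}|-1$. For each of the $\Delta=k-2$ neighbors $j'$ of $j$ in $Q_i$, set $B_{j'}:=N^c_G(v)\cap P_{i,j'}$, which has size more than $\alpha|P_{i,j'}|-1$. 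Since $i\in I_\theta(\Pi)$, all these parts are $\theta$-visible. By \Cref{lemma:WtoWtildeMetricsK1k}~\ref{item:SubCloseK1k-theta} each part has size at least $\theta n/(1+\omega)$, so after trimming every set has size at least $\alpha\theta n/4$, which makes \ref{item:SubCloseK1k-cut} and \Cref{cor:cut-closeness} available.

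The candidate star has center $z\in A$ and leaves $v$, one vertex $x\in B$, and one vertex $y_{j'}\in B_{j'}$ for each $j'\in N_{Q_i}(j)$. That gives $k$ leaves in total. The leaves are nonadjacent to $v$ by the choice of $B$ and the $B_{j'}$, and $z$ is adjacent to $v$. It remains to find $z,x,(y_{j'})$ such that $z$ is adjacent to every other chosen vertex and the chosen vertices other than $v$ are pairwise nonadjacent. The required densities come from \ref{item:SubCloseK1k-cut}:
\begin{itemize}
\item inside $P_{i,j}$, $d_G(A,B)\geq 1-\delta$;
\item between $P_{i,j}$ and $P_{i,j'}$, the densities are $p_k\pm\delta$;
\item between $P_{i,j'}$ and $P_{i,j''}$ for distinct neighbors $j',j''$ of $j$, the density is $p_k\pm\delta$ if $j'j''\in E(Q_i)$ and at most $\delta$ otherwise.
\end{itemize}
In particular the nonadjacency density between $B$ and each $B_{j'}$, and between distinct $B_{j'}$'s, is at least $1-p_k-\delta$, bounded away from $0$.

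The count is a counting-lemma step. By \Cref{cor:cut-closeness}, $G$ restricted to $A\cup B\cup\bigcup_{j'}B_{j'}$ is $o(1)$-close in cut distance to the weighted graph $H_\Pi$. The required pattern on the $k$ vertices $z,x,(y_{j'})$ has an induced density under $H_\Pi$ of at least
$$1\cdot p_k^{\Delta}\cdot(1-p_k)^{\binom{k-1}{2}}>0,$$
a constant depending only on $k$. By the induced counting lemma, which is continuous in cut distance, the corresponding induced count in $G$ is positive once $\omega$ is small. This yields an induced $K_{1,k}$ and contradicts $G\in\cF^\ast_{n,m}(K_{1,k})$.

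The main obstacle is making the counting lemma quantitative with only cut-closeness. The induced density must be lower-bounded by a constant minus $O_k(1)\cdot d_\square/(\alpha\theta)^{2k}$, which needs $\beta$ and $m_0/n$ small relative to $(\alpha\theta)^{O(k)}$. This is consistent with the parameter hierarchy \eqref{eqn:sub-smaller-components-params}, where $\epsilon=o(\alpha^{4k-4}\theta^2)$. If this turns out too lossy, I would instead embed greedily. First pick $z\in A$ having about the typical number of neighbors in $B$ and in each $B_{j'}$, which holds for most $z$ by \ref{item:SubCloseK1k-cut}. Then pick the leaves one part at a time inside these neighborhoods, keeping common non-neighborhoods large using \ref{item:SubCloseK1k-cut} applied to sets of size at least $\alpha\theta n/4$. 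This needs $\alpha p_k$-fractions to remain at least $\alpha\theta/4$, which is why I would take $A,B$ of size about $\alpha|P|$ and shrink constants as needed.
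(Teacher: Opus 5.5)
Your proposal is correct and follows essentially the same route as the paper. Like the paper, you assume no high companion, take sets of size about $\alpha\theta n/4$ inside $N_G(v)\cap P_{i,j}$, $P_{i,j}\setminus N_G(v)$, and $P_{i,t}\setminus N_G(v)$ for $t\in N_{Q_i}(j)$, and count induced stars centered in the first set with $v$ as a leaf using \Cref{cor:cut-closeness} and the weighted counting lemma. Your explicit bound $p_k^{\Delta}(1-p_k)^{\binom{k-1}{2}}$ merely sharpens the paper's $\rho^{\binom{k+1}{2}}$.

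The quantitative loss you worry about is milder than $(\alpha\theta)^{-2k}$, so you do not need the greedy fallback. All $k$ sets have comparable size, and the cut distance is normalized on their union. Hence the partite induced count only incurs an error $O_k\bigl(\beta/(\alpha\theta)^2+m_0/(\alpha\theta n)\bigr)$, which the parameter hierarchy already makes $o(1)$.
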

\begin{proof}
Suppose the conclusion does not hold. Choose disjoint sets $X\subseteq N_G(v)\cap P_{i,j}$ and $Y\subseteq P_{i,j}\setminus N_G(v)$, and, for each $t\in N_{Q_i}(j)$, choose $Y_t\subseteq P_{i,t}\setminus N_G(v)$, all of size exactly $\left\lceil\alpha\theta n/4\right\rceil$. This is possible by our hypotheses and by \Cref{lemma:WtoWtildeMetricsK1k} \ref{item:SubCloseK1k-theta}. Consider the family of copies of $K_{1,k}$ centered at $x\in X$ with leaves $v$ and $y\in Y$ and $y_t\in Y_t$ ($t\in N_{Q_i}(j)$). In the weighted graph $H_\Pi$, the induced density of such copies is at least $\rho^{\binom{k+1}{2}}>0$. Indeed, every required edge has weight at least $\rho$ or is forced by a clique part, and every required nonedge has weight at least $\rho$ or is forced by a non-adjacent pair. By \Cref{cor:cut-closeness}, the induced subgraphs of $G$ and $H_\Pi$ on $X\cup Y\cup\bigcup_tY_t$ is $o(1)$. Thus the counting lemma for weighted graphs (e.g.\ \cite[Theorem~2.7]{borgs2008convergent}) implies the induced $K_{1,k}$ density in $G$ is at least $\rho^{\binom{k+1}{2}}-o(1)>0$, a contradiction.
\end{proof}

\begin{lemma}\label{lemma:deterministic-opposite-row-K1k}
If $G\in\cF_{W,\Pi}$ then there do not exist $i\in I_\theta(\Pi)$, $j\in[q_i]$, $s\in N_{Q_i}(j)$, and $v\in P_{i,j}$ such that
$$d_G^c(v,P_{i,j})\geq\alpha|P_{i,j}|\,,\qquad \alpha|P_{i,s}|\leq d_G(v,P_{i,s})\leq(1-\alpha)|P_{i,s}|\,,$$
and $d_G^c(v,P_{i,t})\geq\alpha|P_{i,t}|$ for every $t\in N_{Q_i}(s)\setminus\{j\}$.
\end{lemma}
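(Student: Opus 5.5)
The plan is to argue by contradiction exactly as in the proof of \Cref{lemma:deterministic-medium-companion-K1k}. From the assumed configuration I will build a family of $\Theta(n^{k})$ potential induced copies of $K_{1,k}$ with one fixed leaf $v$. I will check that this family has positive induced density in the weighted model $H_\Pi$, and then transfer this to $G$ via \Cref{cor:cut-closeness} and the weighted counting lemma. Since $G$ is induced-$K_{1,k}$-free, this is a contradiction.

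\textbf{Construction.} Suppose $i,j,s,v$ are as in the statement; write $\Gamma:=N_{Q_i}(s)\setminus\{j\}$, so $|\Gamma|=\Delta-1=k-3$. Since $i\in I_\theta(\Pi)$, \Cref{lemma:WtoWtildeMetricsK1k} \ref{item:SubCloseK1k-theta} shows that every part $P_{i,\cdot}$ has size at least $\theta n/(1+\omega)$. The hypotheses then let me choose pairwise disjoint sets, each of size $\lceil\alpha\theta n/4\rceil$:
\begin{itemize}
\item $X\subseteq N_G(v)\cap P_{i,s}$ and $Y\subseteq P_{i,s}\setminus N_G(v)$, using the medium-degree assumption;
\item $Z\subseteq N^c_G(v,P_{i,j})$, using $d^c_G(v,P_{i,j})\geq\alpha|P_{i,j}|$;
\item $Y_t\subseteq N^c_G(v,P_{i,t})$ for each $t\in\Gamma$.
\end{itemize}
The candidate stars have center $x\in X$ and the $k$ leaves $v$, $y\in Y$, $z\in Z$ and $y_t\in Y_t$ for $t\in\Gamma$.

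\textbf{Adjacencies involving $v$.} These hold deterministically by the choice of the sets. The center $x$ is adjacent to $v$. The leaf $v$ is non-adjacent to $y$, to $z$ and to every $y_t$.

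\textbf{Remaining adjacencies, read off from $H_\Pi$.}
\begin{itemize}
\item $x$ and $y$ lie in the same clique part $P_{i,s}$, so $w_\Pi=1$ and the edge is present.
\item $xz$ and $xy_t$ join $P_{i,s}$ to parts adjacent to $s$ in $Q_i$, so they have weight $p_k\geq\rho$.
\item $yz$ and $yy_t$ have weight $p_k$, so the required non-edges have weight $1-p_k\geq\rho$.
\item $zy_t$ and $y_ty_{t'}$ join distinct parts $j,t,t'$ of $Q_i$. Each such pair has weight $p_k$ or $0$, so the non-edge has weight at least $\rho$ in both cases.
\end{itemize}
Hence the induced $K_{1,k}$ density of $H_\Pi$, restricted to these configurations with $v$ fixed, is at least $\rho^{\binom{k+1}{2}}$.

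\textbf{Transfer to $G$.} By \Cref{cor:cut-closeness}, applied to $X,Y\subseteq P_{i,s}$, $Z$ and the $Y_t$, the graphs $G$ and $H_\Pi$ are $o(1)$-close in cut distance on $X\cup Y\cup Z\cup\bigcup_tY_t$. The case of two disjoint subsets of the same part is covered by \Cref{lemma:WtoWtildeMetricsK1k} \ref{item:SubCloseK1k-cut}, as in the previous lemma. The counting lemma \cite[Theorem~2.7]{borgs2008convergent} for the $k$ vertices other than $v$ then yields a positive density, hence at least one configuration in $G$. Together with the deterministic adjacencies to $v$, this configuration is an induced $K_{1,k}$, which is a contradiction.

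\textbf{Main obstacle.} The delicate point is the bookkeeping of the pairs $zy_t$ and $y_ty_{t'}$. Here $j$ and $t$ may or may not be adjacent in $Q_i$, so one must check that every required non-edge has weight bounded away from $1$ in both cases. Equally, one must check that the two sets $X,Y$ inside the same part $P_{i,s}$ are correctly handled by the cut estimate.
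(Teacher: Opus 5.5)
Your proposal is correct and matches the paper's proof. The paper reruns the proof of \Cref{lemma:deterministic-medium-companion-K1k} with $X,Y\subseteq P_{i,s}$, a set $Y_j\subseteq P_{i,j}\setminus N_G(v)$ (your $Z$), and sets $Y_t$ for $t\in N_{Q_i}(s)\setminus\{j\}$. It then combines positive induced density in $H_\Pi$ with \Cref{cor:cut-closeness} and the weighted counting lemma, as you do. Your explicit check that each leaf--leaf pair in distinct parts of $Q_i$ has weight $p_k$ or $0$ supplies bookkeeping the paper leaves implicit. So does your remark that the two disjoint subsets of the same part $P_{i,s}$ are covered by the cut estimate.
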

\begin{proof}
The proof is the same as for \Cref{lemma:deterministic-medium-companion-K1k}, except now we choose disjoint sets $X\subseteq N_G(v)\cap P_{i,s}$ and $Y\subseteq P_{i,s}\setminus N_G(v)$, a set $Y_j\subseteq P_{i,j}\setminus N_G(v)$, and, for every $t\in N_{Q_i}(s)\setminus\{j\}$, a set $Y_t\subseteq P_{i,t}\setminus N_G(v)$, all of size exactly $\left\lceil\alpha\theta n/4\right\rceil$. The rest of the proof proceeds as above.
\end{proof}

\subsection{The counting argument}\label{subsec:counting}
In this subsection, we fix $W\in\cV_\gamma$ and a $W$-compatible division $\Pi=\{\Pi_1,\dots,\Pi_\ell\}$. We use all the same notation relating to $\Pi$ as established in \Cref{dfn:division}. Let $\ell_\ast=\ell_\ast(\Pi)$ be the number of components $\Pi_i$ with $|V_i|\geq \eta n$ and $q_i\leq R_0$, and define
$$\Pi_\ast:=\{\Pi_i:i\in[\ell_\ast]\}\,,\qquad V_\ast:=\bigcup_{i=1}^{\ell_\ast}V_i\,,\qquad S_\Pi:=V\setminus V_\ast\,,\qquad s(\Pi):=|S_\Pi|\,.$$
We call the components $\Pi_i$ with $i\in[\ell_\ast]$, as well as all the vertices and parts these components contain, \emph{retained}. Recall that
$$I_\theta(\Pi):=\left\{i\in[\ell]:\max_{j\in[q_i]}|P_{i,j}|\geq\theta n\right\}\,,\qquad \cY_\Pi:=\{P_{i,j}:i\in I_\theta(\Pi),\ j\in[q_i]\}\,.$$
We also define the retained target set $\cY^\ast_\Pi:=\{P_{i,j}:i\in[\ell_\ast],\ j\in[q_i]\}$,
and split the sparse side into the $\theta$-visible and $\theta$-small portions
$$S_\lgsf(\Pi):=\bigcup_{i\in I_\theta(\Pi)\setminus[\ell_\ast]}V_i\,,\qquad S_\smsf(\Pi):=S_\Pi\setminus S_\lgsf(\Pi)=\Pi_\sparse\cup\bigcup_{i\notin I_\theta(\Pi)}V_i\,.$$
Define $\cI_\Pi:=\{(i,uv):i\in[\ell_\ast],\ uv\in E(Q_i)\}$.
For $e=(i,uv)\in\cI_\Pi$, write
$$N_e:=|P_{i,u}||P_{i,v}|\,,\qquad C_\Pi:=\sum_{i=1}^{\ell_\ast}\sum_{a=1}^{q_i}\binom{|P_{i,a}|}{2}\,.$$
Also define
$$\Omega_\Pi:=\bigcup_{(i,uv)\in\cI_\Pi}E(K_{P_{i,u},P_{i,v}})\,,\qquad \binom{\Pi}{\bsm}:=\prod_{e\in\cI_\Pi}\binom{N_e}{\bsm_e}\,.$$
For an integer $u$, define
$$\cM_{\Pi,u} := \left\{\bsm\in\N^{\cI_\Pi}:
\begin{array}{l}
C_\Pi+\sum_{e\in\cI_\Pi}\bsm_e+u=m\text{ and} \\[8pt]
p_k-2\delta\leq \frac{\bsm_e}{N_e}\leq p_k+2\delta\text{ for all }e\in\cI_\Pi
\end{array}\right\}\,.$$
Let $\cM^\nar_{\Pi,u}$ be defined in the same way, except with the narrower density window $p_k\pm\delta$. We also define
$$\cM^\nar_\Pi:=\bigcup_{-2\epsilon n^2\leq u\leq C_k\eta n^2+2\epsilon n^2}\cM^\nar_{\Pi,u}\,,$$
and
$$Z_\Pi(u):=\sum_{\bsm\in\cM_{\Pi,u}}\binom{\Pi}{\bsm}\,,\qquad Z^\nar_\Pi(u):=\sum_{\bsm\in\cM^\nar_{\Pi,u}}\binom{\Pi}{\bsm}\,.$$
Finally, define the clean retained partition function
\begin{equation}\label{eqn:clean-partition-function-K1k}
\cZ_\Pi:=\sum_{0\leq b\leq C_k\eta n^2}N^\ast_{s(\Pi),b}(K_{1,k})Z_\Pi(b)\,.
\end{equation}

\begin{lemma}\label{lemma:SubCompareSparseSideEdgesK1k}
There exists a constant $C_k>0$ depending only on $k$ such that, for every $G\in\cB_{n,m}(W,\tau)$ with canonical division $\Pi=\Pi(G)$, the following hold:
\begin{enumerate}[label=\textit{(\roman*)}, ref=(\textit{\roman*}), topsep=5pt]
  \item\label{item:SubCompareSparseSideEdgesK1k-total} $e_G(S_\Pi)\leq C_k\eta n^2$.
  \item\label{item:SubCompareSparseSideEdgesK1k-small} For every $A\subseteq S_\smsf(\Pi)$, we have $e_G(A)\leq C_k\theta n|A|+\epsilon n^2$.
  \item\label{item:SubCompareSparseSideEdgesK1k-row} For every $x\in V$, we have $d_G(x,S_\smsf(\Pi))\leq C_k\theta n$.
\end{enumerate}
\end{lemma}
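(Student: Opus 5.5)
The plan is to reduce all three assertions to the fact, from \Cref{lemma:WtoWtildeMetricsK1k}, that $G$ is close in cut distance to the discretization $H_n$ of $W=W_\blambda$. Throughout, I write $G=H\triangle T(G)$ plus the edges $G[\Pi_\sparse]$, where $H$ is the blow-up graph from the proof of \Cref{lemma:WtoWtildeMetricsK1k}. Since $b(G)\leq\epsilon n^2$, at most $\epsilon n^2$ edges of $G$ fail to be edges of $H$. So it suffices to bound edges of the $\Delta$-regular blow-up $H$ inside the relevant sets, up to an additive $\epsilon n^2$.

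For \ref{item:SubCompareSparseSideEdgesK1k-row}, I fix $x$ and note that $S_\smsf(\Pi)$ consists of $\Pi_\sparse$, where $H$ has no edges, together with components $V_i$ with $i\notin I_\theta(\Pi)$, all of whose parts have size less than $\theta n$. In $H$, a vertex $x$ has neighbors only in its own part and in the $\Delta$ parts adjacent to it in its component. Hence $d_H(x,S_\smsf)\leq(\Delta+1)\theta n$.

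The defect edges at a single vertex, however, are not controlled by $b(G)$. Here I use minimality of $\Pi(G)$. If $x\in S_\smsf$ had more than $C_k\theta n$ $G$-neighbours in $S_\smsf$, those neighbours would lie mostly outside $x$'s part and its $Q$-neighbours, so they would be defects at $x$. Moving $x$ to $\Pi_\sparse$, or into another part, would then change $b$ by $d_G(x)-(\text{defects at }x)$. I expect this move argument, in the style of the proof of \Cref{lemma:SuperCloseStructureK1k}\ref{item:lemSuperK1kv}, to be the main obstacle: it controls degrees only relative to other parts. I plan to combine it with \Cref{lemma:deterministic-nonlow-bound-K1k}, which shows $x$ sees at most $k-1$ visible parts non-lowly, and with \ref{item:SubCloseK1k-cut}. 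If the pointwise bound fails this way, I will instead prove \ref{item:SubCompareSparseSideEdgesK1k-row} from \ref{item:SubCompareSparseSideEdgesK1k-small} applied to $A=N_G(x)\cap S_\smsf$ and the neighbourhood of $x$. Concretely, if $x$ has $D\gg\theta n$ neighbours in $S_\smsf$, then the $k$ $\theta$-small pieces containing most of those neighbours are nearly independent from each other in $G$ by \ref{item:SubCompareSparseSideEdgesK1k-small}. A Tur\'an/counting step then finds an induced $K_{1,k}$ centred at $x$, contradicting induced-$K_{1,k}$-freeness.

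For \ref{item:SubCompareSparseSideEdgesK1k-small}, take $A\subseteq S_\smsf$. Then $e_G(A)\leq e_H(A)+\epsilon n^2$, and $e_H(A)\leq\sum_{a\in A}d_H(a,A)\leq(\Delta+1)\theta n|A|$ by the part-size argument above. This gives \ref{item:SubCompareSparseSideEdgesK1k-small} directly.

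For \ref{item:SubCompareSparseSideEdgesK1k-total}, I split $S_\Pi=S_\lgsf\cup S_\smsf$. By W-compatibility, every component in $S_\lgsf$ either has total size less than $\eta n$ or has $q_i>R_0$. A component on $v_i$ vertices with $q_i$ roughly balanced parts carries $e_H\leq(\Delta+1)v_i^2/q_i$ edges. Summing gives at most $\sum_{v_i<\eta n}v_i^2+(\Delta+1)n^2/R_0\leq\eta n^2+o(\eta)n^2$, using \ref{item:SubCloseK1k-theta} for balance on visible components. Applying \ref{item:SubCompareSparseSideEdgesK1k-small} with $A=S_\smsf$ bounds that part by $C_k\theta n^2+\epsilon n^2\leq\eta n^2$, since $\theta=o(\eta)$. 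Cross edges between $S_\lgsf$ and $S_\smsf$, and between distinct components, are defects or absent in $H$, so they contribute at most $\epsilon n^2$. Altogether this gives $C_k\eta n^2$.
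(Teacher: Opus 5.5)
Your arguments for \ref{item:SubCompareSparseSideEdgesK1k-small} and \ref{item:SubCompareSparseSideEdgesK1k-total} match the paper's proof. You pass to the $\Delta$-regular blow-up at total cost $b(G)\leq\epsilon n^2$, and you bound blow-up degrees inside $\theta$-small components by $(\Delta+1)\theta n$. For the total bound you split the non-retained components into those with $|V_h|<\eta n$ and those with $q_h>R_0$, using the balance from \Cref{lemma:WtoWtildeMetricsK1k}\ref{item:SubCloseK1k-theta} on $\theta$-visible components. That dichotomy is simply the definition of a retained component, not a consequence of $W$-compatibility.

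For \ref{item:SubCompareSparseSideEdgesK1k-row}, your primary route does not work, because minimality of $\Pi(G)$ only yields relative inequalities. For example, moving $x\in P_{h,a}$ to $\Pi_\sparse$ shows merely that $x$ has at least as many $G$-neighbours in $P_{h,a}$ and its $Q_h$-adjacent parts as non-neighbours in $P_{h,a}$. Likewise, edges between two vertices of $\Pi_\sparse$ are unaffected by any single move. So minimality gives no absolute bound on positive defects from $x$ into $S_\smsf(\Pi)$.

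Your fallback is the paper's argument, but as phrased its intermediate step is unjustified. The neighbours of $x$ in $S_\smsf(\Pi)$ may be spread over many parts of size less than $\theta n$, or lie in $\Pi_\sparse$, so there is no reason for $k$ pieces to contain most of them. No decomposition is needed. Take $A:=N_G(x,S_\smsf(\Pi))$ directly. Since $G$ is induced-$K_{1,k}$-free, $G^c[A]$ is $K_k$-free, so Tur\'an's theorem gives $e_G(A)\geq\binom{|A|}{2}-t_{|A|,k-1}\geq\frac{|A|^2}{2(k-1)}-\frac{|A|}{2}$. Comparing with \ref{item:SubCompareSparseSideEdgesK1k-small} yields $|A|\leq C_k\theta n+C_k\sqrt{\epsilon}\,n$, which is at most $C_k\theta n$ because $\epsilon=o(\theta^2)$. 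This argument also covers every $x\in V$, not only $x\in S_\smsf(\Pi)$.
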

\begin{proof}
Let $J$ be the $\Delta$-regular blow-up obtained from $G$ by toggling the defect graph $T(G,\Pi)$ and deleting all edges of $G[\Pi_\sparse]$. Since $b(G)\leq\epsilon n^2$ by \Cref{lemma:WtoWtildeMetricsK1k}, it suffices to prove the corresponding estimates for $J$, at the cost of increasing $C_k$.

We first prove \ref{item:SubCompareSparseSideEdgesK1k-total}. A component $V_h\subseteq S_\Pi$ with $|V_h|<\eta n$ contributes at most $|V_h|^2$ edges, and hence all such components together contribute at most $\eta n^2$. Now suppose $V_h\subseteq S_\Pi$ and $q_h>R_0$. Since $Q_h$ has maximum degree $\Delta$, we have $e_J(V_h)\leq C_k\sum_{a=1}^{q_h}|P_{h,a}|^2$. If every part $P_{h,a}$ has size less than $\theta n$ then $\sum_{a=1}^{q_h}|P_{h,a}|^2\leq \theta n|V_h|$. Otherwise, \Cref{lemma:WtoWtildeMetricsK1k} \ref{item:SubCloseK1k-theta} gives balance throughout the component, and therefore $\sum_{a=1}^{q_h}|P_{h,a}|^2\leq 2|V_h|^2/q_h\leq 2|V_h|^2/R_0$.
Summing over all components with $q_h>R_0$, and using $\theta=o(\eta)$ and $R_0^{-1}=o(\eta)$, gives $e_J(S_\Pi)\leq C_k\eta n^2$. Adding back the defect graph and the edges of $G[\Pi_\sparse]$ contributes at most $\epsilon n^2$, proving \ref{item:SubCompareSparseSideEdgesK1k-total}.

We next prove \ref{item:SubCompareSparseSideEdgesK1k-small}. Let $A\subseteq S_\smsf(\Pi)$ and decompose $A$ over the components of $\Pi$ contained in $S_\Pi$. If $A_h:=A\cap V_h$ then every part intersecting $A_h$ has size less than $\theta n$. Hence in the $\Delta$-regular blow-up $J$,
\begin{align*}
e_J(A_h) \leq \sum_{a=1}^{q_h}|A_h\cap P_{h,a}|\cdot|P_{h,a}|+\sum_{ab\in E(Q_h)}|A_h\cap P_{h,a}|\cdot|P_{h,b}| \leq C_k\theta n|A_h|\,.
\end{align*}
Summing over $h$ yields $e_J(A)\leq C_k\theta n\,|A|$, and adding back the defect graph contributes at most $\epsilon n^2$, proving \ref{item:SubCompareSparseSideEdgesK1k-small}.

Finally, let $A:=N_G(x,S_\smsf(\Pi))$. Since $G$ is induced-$K_{1,k}$-free, $G[A]$ has independence number less than $k$. Tur\'an's theorem then implies $e_G(A)\geq (k-1)^{-1}\binom{|A|}{2}-O_k(|A|)$. On the other hand, by \ref{item:SubCompareSparseSideEdgesK1k-small} we have $e_G(A)\leq C_k\theta n|A|+\epsilon n^2$. Combining the two estimates gives $|A|\leq C_k\theta n+C_k\sqrt{\epsilon}\,n$.
Since $\epsilon=o(\theta^2)$, this proves \ref{item:SubCompareSparseSideEdgesK1k-row} after increasing $C_k$.
\end{proof}

\subsubsection{Profiles}\label{subsec:subcritical-profiles-K1k}
For $v\in P_{i,j}\subseteq V_\ast$, let us write $P_v:=P_{i,j}$ and
$$\cY_{\Pi,v}:=\{P_{i',j'}\in\cY_\Pi:i'=i\,\Rightarrow j'\not\in N_{Q_i}(j)\cup\{j\}\}\,.$$
We now introduce a bookkeeping device called a profile, which records the local defect data relative to a fixed division that will be needed in the entropy and large-deviation estimates.

\begin{definition}[Profile]\label{def:profile-datum-K1k}
A profile for $\Pi$, denoted $\frp$, consists of the following data:
\begin{enumerate}[label=\textit{(\roman*)}, ref=(\textit{\roman*}), topsep=5pt]
  \item integers $0\leq b(\frp)\leq\binom{s(\Pi)}{2}$ and $0\leq\ell(\frp)\leq n/2$;
  \item disjoint sets $B_\ret(\frp)\subseteq V_\ast$ and $B_\outsf(\frp)\subseteq S_\Pi$;
  \item for every $v\in B_\ret(\frp)$, a collection $\cR_v(\frp)$ of pairs $(Y,r)$, where $Y\in\cY_{\Pi,v}$, $0\leq r\leq |Y\setminus B(\frp)|$, and no $Y$ appears more than once;
  \item for every $v\in B_\ret(\frp)$, an integer $0\leq I_v(\frp)\leq |P_v\setminus B(\frp)|$;
  \item for every $v\in B_\outsf(\frp)$, a collection $\cR_v(\frp)$ of pairs $(Y,r)$, where $Y\in\cY^\ast_\Pi$, $0\leq r\leq |Y\setminus B(\frp)|$, and no $Y$ appears more than once;
  \item a collection $\Tail(\frp)$ of triples $(v,j',t)$, where $v\in B_\ret(\frp)\cap P_{i,j}$, $j'\in N_{Q_i}(j)$, and $t\in\{\Upper,\Lower\}$; for every pair $(v,j')$, at most one of the two triples $(v,j',\Upper)$ and $(v,j',\Lower)$ belongs to $\Tail(\frp)$.
\end{enumerate}
Here and below,
$$B(\frp):=B_\ret(\frp)\cup B_\outsf(\frp)\,.$$
For all $v\in B(\frp)$, an element $(Y,r)\in\cR_v(\frp)$ is called a \emph{row}, $Y$ is called the \emph{target} of the row, and $v$ is called the \emph{root} of the row. Let $\frP_\Pi$ denote the set of all profiles for $\Pi$.
\end{definition}

For a graph $G\in\cF_{W,\Pi}$, write $B_\ret(G)\subseteq V_\ast$ for the set of vertices $v$ such that $d_G(v,Y)\geq4\alpha|Y|$ for some $Y\in\cY_{\Pi,v}$ or $d_G^c(v,P_v)\geq4\alpha|P_v|$. Also write $B_\outsf(G)\subseteq S_\Pi$ for the set of vertices $v\in S_\Pi$ such that $d_G(v,Y)\geq4\alpha|Y|$ for some retained part $Y\in\cY^\ast_\Pi$. Define $B(G):=B_\ret(G)\cup B_\outsf(G)$.

The \emph{residual defect graph} of $G$, denoted $T_\res(G)$, is obtained from $T(G)[V_\ast,V]$ by deleting all edges incident with $B(G)$. The \emph{rooted defect graph} of $G$, denoted $T_B(G)$, is the subgraph of $T:=T(G)[V_\ast,V]$ with the edge set consisting of the recorded rows incident with $B(G)$, namely:
\begin{itemize}[leftmargin=\parindent,topsep=5pt]
  \item for $v\in B_\ret(G)$, the edges from $v$ to $Y\setminus B(G)$ for all $Y\in\cY_{\Pi,v}$ such that $d_G(v,Y)\geq4\alpha|Y|$, together with the edges of $T(G)$ from $v$ to $P_v\setminus B(G)$;
  \item for $v\in B_\outsf(G)$, the edges from $v$ to $Y\setminus B(G)$ for all retained parts $Y\in\cY^\ast_\Pi$ such that $d_G(v,Y)\geq4\alpha|Y|$.
\end{itemize}
Edges with both endpoints in $B(G)$, and all low unrecorded rows incident with $B(G)$, are not included in $T_B(G)$.

\begin{definition}[Profile class]\label{def:profile-class-K1k}
For $\frp\in\frP_\Pi$, define $\cF_{\Pi,\frp}$ to be the set of all graphs $G\in\cF_{W,\Pi}$ satisfying the following conditions:
\begin{enumerate}[label=\textit{(\roman*)}, ref=(\textit{\roman*}), topsep=5pt]
  \item $e(G[S_\Pi])=b(\frp)$;
  \item $B_\ret(\frp)=B_\ret(G)$ and $B_\outsf(\frp)=B_\outsf(G)$;
  \item for every $v\in B_\ret(\frp)$, we have
  $$\cR_v(\frp)=\{(Y,d_G(v,Y\setminus B(\frp))):Y\in\cY_{\Pi,v},\ d_G(v,Y)\geq4\alpha |Y|\}\,;$$
  \item $I_v(\frp)=d_G^c(v,P_v\setminus B(\frp))$ for every $v\in B_\ret(\frp)$;
  \item for every $v\in B_\outsf(\frp)$, we have
  $$\cR_v(\frp)=\{(Y,d_G(v,Y\setminus B(\frp))):Y\in\cY^\ast_\Pi,\ d_G(v,Y)\geq4\alpha |Y|\}\,;$$
  \item $(v,j',\Upper)\in\Tail(\frp)$ if and only if $d_G(v,P_{i,j'})\geq(1-\alpha)|P_{i,j'}|$, and $(v,j',\Lower)\in\Tail(\frp)$ if and only if $d_G(v,P_{i,j'})\leq\alpha |P_{i,j'}|$;
  \item the residual graph $T_\res(G)$ has a maximum matching of size $\ell(\frp)$.
\end{enumerate}
\end{definition}

For $\frp\in\frP_\Pi$, define
$$\cT_{W,\Pi} := \{T(G)[V_\ast,V]:G\in\cF_{W,\Pi}\}\,,\qquad\cT^\root_{\Pi,\frp}:=\{T_B(G):G\in\cF_{\Pi,\frp}\}\,,$$
and, for $T_B\in\cT^\root_{\Pi,\frp}$,
$$\cT^\res_{\Pi,\frp}(T_B):=\{T_\res(G):G\in\cF_{\Pi,\frp}\,,\ T_B(G)=T_B\}\,.$$

\begin{lemma}\label{lemma:profile-covering-K1k}
For all $W$-compatible divisions, we have
\begin{equation}\label{eqn:fwpihigh-covering}
\cF_{W,\Pi}\subseteq\bigcup_{\frp\in\frP_\Pi}\cF_{\Pi,\frp}\,.
\end{equation}
Moreover, if $G\in\cF_{\Pi,\frp}$ then $|B(\frp)|\leq \rho_B n$ where $\rho_B:=2\epsilon/(\alpha\theta)$.
\end{lemma}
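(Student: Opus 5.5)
The covering \eqref{eqn:fwpihigh-covering} is essentially definitional. Given $G\in\cF_{W,\Pi}$, we read off a profile $\frp=\frp(G)$ from $G$ and check that it satisfies every clause of \Cref{def:profile-datum-K1k}; membership $G\in\cF_{\Pi,\frp}$ then holds by construction. Concretely, we set:
\begin{itemize}
\item $b(\frp):=e(G[S_\Pi])$, which lies in $[0,\binom{s(\Pi)}{2}]$;
\item $B_\ret(\frp):=B_\ret(G)$ and $B_\outsf(\frp):=B_\outsf(G)$, which are disjoint since $B_\ret(G)\subseteq V_\ast$ and $B_\outsf(G)\subseteq S_\Pi$;
\item the row collections $\cR_v(\frp)$ exactly as prescribed in \Cref{def:profile-class-K1k}, with each target $Y$ appearing once, $Y\in\cY_{\Pi,v}$ (resp.\ $\cY^\ast_\Pi$), and $r=d_G(v,Y\setminus B(\frp))\leq|Y\setminus B(\frp)|$;
\item $I_v(\frp):=d^c_G(v,P_v\setminus B(\frp))$, which lies in the allowed range;
\item $\Tail(\frp)$ as the set of $(v,j',\Upper)$ with $d_G(v,P_{i,j'})\geq(1-\alpha)|P_{i,j'}|$ together with the $(v,j',\Lower)$ with $d_G(v,P_{i,j'})\leq\alpha|P_{i,j'}|$;
\item $\ell(\frp)$ as the maximum matching size of $T_\res(G)$, which is at most $n/2$.
\end{itemize}
For $\Tail(\frp)$, at most one of the two triples occurs for each pair $(v,j')$ because $\alpha<1/2$ makes the two conditions mutually exclusive. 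All of these verifications are routine.

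The substantive claim is $|B(\frp)|\leq\rho_Bn$ with $\rho_B=2\epsilon/(\alpha\theta)$. The plan is to show that every $v\in B(G)$ is incident to many defect edges of $T(G)$, and then double count against $b(G)\leq\epsilon n^2$ from \Cref{lemma:WtoWtildeMetricsK1k}\ref{item:bgleqen2}. There are three cases.
\begin{itemize}
\item If $v\in B_\ret(G)$ with $d^c_G(v,P_v)\geq4\alpha|P_v|$, then these non-edges inside $P_v$ are all edges of $T(G)$.
\item If $v\in B_\ret(G)$ with $d_G(v,Y)\geq4\alpha|Y|$ for some $Y\in\cY_{\Pi,v}$, then $Y$ is neither $P_v$ nor adjacent to $P_v$ in $Q_i$. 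So $Y$ lies either in the same component at a non-active pair, or in a different component. In both cases every edge from $v$ to $Y$ is a defect edge by the definition of $T(G,\Pi)$.
\item If $v\in B_\outsf(G)$ with $d_G(v,Y)\geq4\alpha|Y|$ for a retained $Y\subseteq V_\ast$, then $v\in S_\Pi$ lies in a different component or in $\Pi_\sparse$. So again all these edges lie in $T(G)$, either as edges between distinct $V_i,V_j$ or as edges of $G[V(\Pi),\Pi_\sparse]$.
\end{itemize}
In every case the target is $\theta$-visible. By \Cref{lemma:WtoWtildeMetricsK1k}\ref{item:SubCloseK1k-theta}, its size is at least $\theta n/(1+\omega)$; for retained parts we instead use \ref{item:SubCloseK1k-balanced} together with $\eta/(2R_0)\geq\theta$. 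Hence each $v\in B(G)$ has $T(G)$-degree at least $4\alpha\theta n/(1+\omega)\geq2\alpha\theta n$.

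Since each edge of $T(G)$ is counted at most twice,
$$|B(G)|\cdot2\alpha\theta n\leq\sum_v d_{T(G)}(v)=2e(T(G))\leq2b(G)\leq2\epsilon n^2,$$
which gives $|B(G)|\leq\epsilon n/(\alpha\theta)\leq\rho_Bn$. The factor-$2$ slack in $\rho_B$ absorbs the $(1+\omega)$ loss and any double counting.

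The main point needing care is the middle step: confirming that every recorded edge from a bad vertex is genuinely a defect edge. This requires checking, against the definition of $T(G,\Pi)$, that $\cY_{\Pi,v}$ excludes exactly the active and same-part targets. For $v\in B_\outsf$ it also requires that retained parts lie in components different from $v$'s. A second check is the lower bound on the sizes of retained parts, which I would take from the balance in \ref{item:SubCloseK1k-balanced} since retained components have $|V_i|\geq\eta n$ and $q_i\leq R_0$.
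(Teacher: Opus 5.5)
Your proposal is correct and follows the paper's proof. The covering holds by reading the profile off $G$. The bound on $|B(\frp)|$ comes from showing that each vertex of $B(G)$ meets at least $2\alpha\theta n$ edges of $T(G)$, using \Cref{lemma:WtoWtildeMetricsK1k}~\ref{item:SubCloseK1k-theta}, and double counting against $e(T(G))\le b(G)\le\epsilon n^2$. Your explicit check that every recorded edge is a genuine defect edge, and your direct estimate $4\alpha\theta n/(1+\omega)\ge 2\alpha\theta n$, fill in details the paper leaves implicit (the paper says only ``after decreasing $\alpha$ by a constant factor'').
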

\begin{proof}
For every $G\in\cF_{W,\Pi}$, the data specified in \Cref{def:profile-class-K1k} define a profile $\frp\in\frP_\Pi$ with $G\in\cF_{\Pi,\frp}$, proving \eqref{eqn:fwpihigh-covering}. Next, if $v\in B(\frp)$ then $v$ is incident with at least $2\alpha\theta n$ defect edges after decreasing $\alpha$ by a constant factor and using \Cref{lemma:WtoWtildeMetricsK1k} \ref{item:SubCloseK1k-theta}. Since $e(T(G))\leq\epsilon n^2$ by \Cref{lemma:WtoWtildeMetricsK1k}, and every defect edge is counted at most twice in this incidence count, we have $|B(\frp)|\leq \rho_Bn$.
\end{proof}

\begin{lemma}\label{lemma:profile-residual-low-rows-K1k}
Let $\frp\in\frP_\Pi$ and let $G\in\cF_{\Pi,\frp}$. If $v\in V_\ast\setminus B(\frp)$ and $Y\in\cY_{\Pi,v}$ then $d_G(v,Y)<4\alpha |Y|$. If $v\in P_{i,j}\setminus B(\frp)$ then $d_G^c(v,P_{i,j})<4\alpha |P_{i,j}|$. Finally, if $v\in S_\Pi\setminus B(\frp)$ and $Y\in\cY^\ast_\Pi$ then $d_G(v,Y)<4\alpha |Y|$.
\end{lemma}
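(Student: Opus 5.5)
This is essentially definitional unwinding. The idea is that $B(\frp)$ is forced to equal $B(G)$ for every $G\in\cF_{\Pi,\frp}$ by \Cref{def:profile-class-K1k}, so any vertex outside $B(\frp)$ fails all the threshold conditions defining $B_\ret(G)$ and $B_\outsf(G)$. I would prove each of the three assertions by contrapositive against those definitions.

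Fix $\frp\in\frP_\Pi$ and $G\in\cF_{\Pi,\frp}$. By condition (ii) of \Cref{def:profile-class-K1k}, we have $B_\ret(\frp)=B_\ret(G)$ and $B_\outsf(\frp)=B_\outsf(G)$, hence $B(\frp)=B(G)$.

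\emph{First claim.} Take $v\in V_\ast\setminus B(\frp)$ and $Y\in\cY_{\Pi,v}$, and suppose $d_G(v,Y)\geq4\alpha|Y|$. Since $v\in V_\ast$, the definition of $B_\ret(G)$ puts $v\in B_\ret(G)=B_\ret(\frp)\subseteq B(\frp)$, a contradiction.

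\emph{Second claim.} Take $v\in P_{i,j}\setminus B(\frp)$. I would first check that $P_{i,j}$ is a retained part, so that $v\in V_\ast$ and $P_v=P_{i,j}$. This is where the only care is needed: the statement writes $P_{i,j}$ generally, but in this subsection the relevant parts are the retained ones $P_{i,j}$ with $i\in[\ell_\ast]$, and $P_v$ is only defined for $v\in V_\ast$. I would therefore read the hypothesis as $v\in P_{i,j}\subseteq V_\ast$. With that reading, if $d^c_G(v,P_{i,j})\geq4\alpha|P_{i,j}|$, then the second clause in the definition of $B_\ret(G)$ applies, and again $v\in B(\frp)$, a contradiction.

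\emph{Third claim.} Take $v\in S_\Pi\setminus B(\frp)$ and $Y\in\cY^\ast_\Pi$, and suppose $d_G(v,Y)\geq4\alpha|Y|$. Then the definition of $B_\outsf(G)$ puts $v\in B_\outsf(G)=B_\outsf(\frp)$, a contradiction.

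There is no real obstacle here beyond the bookkeeping in the second claim, namely making sure $P_v$ is defined by restricting to retained parts.
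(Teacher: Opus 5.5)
Your proof is correct and takes the same approach as the paper, which states only that the lemma is immediate from the definitions of $B_\ret(G)$ and $B_\outsf(G)$. The key input is the one you identify: condition (ii) of \Cref{def:profile-class-K1k} forces $B(\frp)=B(G)$. Reading the second claim as being about retained parts $P_{i,j}\subseteq V_\ast$ is also the right interpretation, since $P_v$ and the second clause in the definition of $B_\ret(G)$ are only defined for such parts.
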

\begin{proof}
This is immediate from the definitions of $B_\ret(G)$, $B_\outsf(G)$, and $\cR_v(\frp)$.
\end{proof}

\subsubsection{Random graph model}
Here we introduce the random graph that models graphs with a given subgraph on $S_\Pi$, a given defect graph $T$, and edge-count vector $\bsm$. This random graph along with its binomial counterpart play an important role in the remainder, as it allows us to obtain explicit penalties that constrain the count $|\cF_{\Pi,\frp}|$ for profiles $\frp\in\frP$.

\begin{definition}\label{def:active-random-models-K1k}
Fix a graph $H$ on $S_\Pi$, an integer $u$, a vector $\bsm\in\cM^\nar_{\Pi,u}$, and a graph $T\in\cT_{W,\Pi}$. The random graph $G_{\Pi,H,T,\bsm}$ is obtained by choosing, independently for each $e=(i,ab)\in\cI_\Pi$, a uniformly random $\bsm_e$-element subset of edges $A_e\subseteq E(K_{P_{i,a},P_{i,b}})$, and then setting
$$E(G_{\Pi,H,T,\bsm}) := \left(\bigcup_{i\in[\ell_\ast]}\bigcup_{j\in[q_i]}\binom{P_{i,j}}{2}\cup E(H) \cup \bigcup_{e\in\cI_\Pi}A_e\right)\triangle\,E(T)\,.$$
The graph $G^{\bin}_{\Pi,H,T,\bsm}$ is defined in the same way, except that now each $A_e$ is a random subset of $E(K_{P_{i,a},P_{i,b}})$ in which every edge is included independently with probability $\frac{\bsm_e}{|P_{i,a}|\cdot|P_{i,b}|}$.
\end{definition}

In the proofs below, we pass to the binomial version $G^{\bin}_{\Pi,H,T,\bsm}$ for the independence of its edges; this only introduces polynomial overhead, as the following lemma shows.

\begin{lemma}\label{lemma:fixed-count-transfer-K1k}
There exists an absolute constant $C>0$ such that the following holds. For all $\bsm\in\cM^\nar_{\Pi,u}$ and every event $\cE$, we have
$$\bbP\{G_{\Pi,H,T,\bsm}\in\cE\}\leq n^{C|\cI_\Pi|}\,\bbP\{G^\bin_{\Pi,H,T,\bsm}\in\cE\}\,.$$
\end{lemma}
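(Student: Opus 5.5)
The approach is to realize $G_{\Pi,H,T,\bsm}$ as the binomial model $G^\bin_{\Pi,H,T,\bsm}$ conditioned on the edge counts being exactly right, and to show that this conditioning event is only polynomially unlikely. For each $e=(i,ab)\in\cI_\Pi$, write $N_e=|P_{i,a}||P_{i,b}|$ and $q_e:=\bsm_e/N_e$, and let $\cA$ be the event that $|A_e|=\bsm_e$ for every $e\in\cI_\Pi$ (here $A_e$ is the random subset in the binomial model). Conditional on $\cA$, each set $A_e$ is a uniformly random $\bsm_e$-subset, because every subset of a fixed size has the same probability $q_e^{\bsm_e}(1-q_e)^{N_e-\bsm_e}$. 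The different $A_e$ remain independent, since $\cA$ is an intersection of events each depending on a single coordinate. The map from $(A_e)_e$ to the final graph is the same deterministic map in both models (take the union with the cliques and $H$, then take the symmetric difference with $T$). Hence the law of $G_{\Pi,H,T,\bsm}$ equals the law of $G^\bin_{\Pi,H,T,\bsm}$ conditioned on $\cA$, and
$$\bbP\{G_{\Pi,H,T,\bsm}\in\cE\}=\frac{\bbP\{G^\bin\in\cE,\ \cA\}}{\bbP\{\cA\}}\leq\frac{\bbP\{G^\bin\in\cE\}}{\bbP\{\cA\}}\,.$$

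It remains to bound $\bbP\{\cA\}$ from below. By independence, $\bbP\{\cA\}=\prod_{e\in\cI_\Pi}\bbP\{\Bin(N_e,q_e)=\bsm_e\}$. Since $q_eN_e=\bsm_e$ is an integer, $\bsm_e$ is the mean, and hence a mode, of $\Bin(N_e,q_e)$. The distribution is supported on $N_e+1$ values, so the modal probability is at least $1/(N_e+1)\geq 1/(n^2+1)\geq n^{-3}$ for $n\geq2$. This crude bound needs no assumption on $q_e$. Alternatively, Stirling's formula with $q_e\in p_k\pm\delta$ bounded away from $0$ and $1$ gives the sharper bound $\Omega(N_e^{-1/2})$, but it is not needed. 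Multiplying over $e\in\cI_\Pi$ gives $\bbP\{\cA\}\geq n^{-3|\cI_\Pi|}$, so the lemma holds with $C=3$.

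The only point requiring care is the degenerate case $q_e\in\{0,1\}$. The narrow window $\bsm\in\cM^\nar_{\Pi,u}$ rules this out. Even if it did occur, the modal probability would equal $1$, so the bound still holds. There is no real obstacle here: the argument is the standard conditioning argument already used informally in the proofs of \Cref{lemma:super-FPiprime,lemma:super-FPiT}. The constant is absolute because the per-pair bound $1/(N_e+1)$ does not depend on $k$, $\Pi$, or the densities.
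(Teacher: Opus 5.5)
Your proposal is correct and follows the paper's argument. In both, the fixed-count model is the binomial model conditioned on the product event that $|A_e|=\bsm_e$ for all $e\in\cI_\Pi$, and that event has probability at least $n^{-C|\cI_\Pi|}$. The only difference is the per-pair bound: the paper uses Stirling's formula with $\bsm_e/N_e$ bounded away from $0$ and $1$, while your observation that the integer mean $\bsm_e$ is a mode of $\Bin(N_e,\bsm_e/N_e)$ gives probability at least $1/(N_e+1)$ more elementarily, with no need for the density window.
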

\begin{proof}
In the binomial model, the number of chosen edges in the pair indexed by $e$ has distribution $\Bin(N_e,\bsm_e/N_e)$. Since $\bsm_e/N_e\in p_k\pm2\delta\subseteq[\rho/2,1-\rho/2]$ and $N_e\leq n^2$, Stirling's formula gives $\bbP\{\Bin(N_e,\bsm_e/N_e)=\bsm_e\}\geq n^{-C}$ for an absolute constant $C>0$ and for all $e\in\cI_\Pi$. Conditioning on these events over all $e\in\cI_\Pi$ recovers the fixed-count model $G_{\Pi,H,T,\bsm}$, completing the proof.
\end{proof}

\subsubsection{Local exponents}\label{subsec:local-exponents-K1k}
To bound $|\cF_{W,\Pi}|$, it follows from \eqref{eqn:fwpihigh-covering} that it suffices to bound the sizes of the profile classes $\cF_{\Pi,\frp}$. Since $\delta=o(1)$ and $\rho=\Theta(1)$, uniformly for $p\in[p_k-2\delta,p_k+2\delta]$, we have $\big|\!\log\frac{1-p}{p}-L_k\big|=O_k(\delta)$ and $\big|\!\log\frac{p}{1-p}+L_k\big|=O_k(\delta)$.
For a profile $\frp\in\frP_\Pi$ and a vertex $v\in B(\frp)$, let us write $B=B(\frp)$ and define
\begin{align*}
\Ent_v(\frp) &:= \sum_{(Y,r)\in\cR_v(\frp)}\left(|Y\setminus B|\,H\!\left(\frac{r}{|Y\setminus B|}\right)-L_kr\right)\\
&\hspace{2cm}+\bsone\{v\in B_\ret(\frp)\}\left(|P_v\setminus B|\,H\!\left(\frac{I_v(\frp)}{|P_v\setminus B|}\right)+L_kI_v(\frp)\right)\,.
\end{align*}

For a profile $\frp\in\frP_\Pi$ and a vertex $v\in B_\ret(\frp)\cap P_{i,j}$, define
$$\Tail_v(\frp):=\{(v,j',t)\in\Tail(\frp)\}\,.$$
For $\frt=(v,j',\Upper)\in\Tail_v(\frp)$, define the event
$$E_\frt:=\{G:d_G(v,P_{i,j'}\setminus B(\frp))\geq(1-2\alpha)|P_{i,j'}|\}\,,$$
and for $\frt=(v,j',\Lower)\in\Tail_v(\frp)$, define
$$E_\frt:=\{G:d_G(v,P_{i,j'}\setminus B(\frp))\leq2\alpha|P_{i,j'}|\}\,.$$
If $v\in B_\ret(\frp)$ and $\Tail_v(\frp)\neq\emptyset$ then define the exponent
$$J_v(\frp):=-\max\left\{\log\bbP_{G\sim G^\bin_{\Pi,H,T,\bsm}}\left\{\bigcap_{\frt\in\Tail_v(\frp)}E_\frt\right\}:\bsm\in\cM^\nar_\Pi\right\}\,,$$
while if $v\in B_\outsf(\frp)$ or $\Tail_v(\frp)=\emptyset$ then set $J_v(\frp):=0$. Note that $J_v(\frp)$ is independent of $H$ and $T$. Finally, for all $v\in B(\frp)$ let us define
$$\Psi_v(\frp):=\Ent_v(\frp)-J_v(\frp)\,.$$

We write $A:=L_k/\Delta$ and $U:=L_k+A=\log(1/p_k)$. We will use the elementary estimates
\begin{equation}\label{eqn:local-ent-estimates-K1k}
\max_{0\leq x\leq1}\{H(x)-L_kx\}=A\,,\qquad \max_{0\leq x\leq1}\{H(x)+L_kx\}=U\,.
\end{equation}
Also, if $x\geq1-2\alpha$ then we have
\begin{equation}\label{eqn:high-row-ent-K1k}
H(x)-L_kx\leq -L_k+H(2\alpha)+2L_k\alpha\,.
\end{equation}
For all $Y\in\cY_\Pi$, \Cref{lemma:profile-covering-K1k} and \Cref{lemma:WtoWtildeMetricsK1k} \ref{item:SubCloseK1k-theta} imply that $|Y\setminus B|\geq(1-o(1))|Y|$, so in the estimates below we may replace $|Y|$ by $|Y\setminus B|$ after losing an error $o(|Y|)$, uniformly over all profiles. Our parameter choices ensure $|B|=o(|Y|)$ since $\rho_B=o(\theta)$ and $|Y|\geq\theta n/2$.

\begin{lemma}\label{lemma:sparse-retained-row-budget-K1k}
Let $G\in\cF_{W,\Pi}$ and let $v\in S_\Pi$. Suppose $d_G(v,Y)\geq4\alpha |Y|$ for some retained part $Y\in\cY^\ast_\Pi$. Then there exists a part $P_{h,a}\in\cY_\Pi$ such that $v\in V_h$ and $d_G(v,P_{h,a})\geq\alpha |P_{h,a}|$. Moreover, we have $|\cN_{\Pi,G}(v)\cap\cY^\ast_\Pi|\leq \Delta$.
\end{lemma}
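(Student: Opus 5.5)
The plan is to use the minimality of $b(G,\cdot)$ at $\Pi=\Pi(G)$. The idea is that a vertex of $S_\Pi$ with substantial degree into a retained part could be moved into a retained component, and this would remove many defects unless $v$ also has large degree into a visible part of its own component. Throughout I use that every retained part has size at least $\eta n/(2R_0)-O(\delta n)\gg\theta n$, by \Cref{lemma:WtoWtildeMetricsK1k} \ref{item:SubCloseK1k-balanced}.

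\emph{Step 1 (a high row into a retained part).} Write $Y=P_{i,a}$ with $i\in[\ell_\ast]$. Since $v\in S_\Pi$, we have $v\notin V_i$, so in particular $v\notin N_{i,a}\cup P_{i,a}$. If $d_G(v,Y)\geq(1-\alpha)|Y|$, set $Z:=Y$. Otherwise $4\alpha|Y|\leq d_G(v,Y)\leq(1-\alpha)|Y|$, and \Cref{lemma:deterministic-medium-companion-K1k} gives some $a'\in N_{Q_i}(a)$ with $d_G(v,P_{i,a'})\geq(1-\alpha)|P_{i,a'}|$; set $Z:=P_{i,a'}$. In both cases $Z$ is a retained part with $d_G(v,Z)\geq(1-\alpha)|Z|$, and all of these edges are defects of $T(G)$, because they join $v$ to a different component, or to $V(\Pi)$ from $\Pi_\sparse$.

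\emph{Step 2 (compare with moving $v$ into $Z$).} Let $\Pi'$ be the division obtained by moving $v$ into $Z$. Only pairs at $v$ change status, so
\[
b(G,\Pi')-b(G,\Pi)\leq d^c_G(v,Z)+d_G\big(v,V\setminus(Z\cup N_Z)\big)-d_G(v,Z)-\mathrm{(old\ defects\ at\ }v\mathrm{\ outside\ }Z).
\]
Here $N_Z$ denotes the union of the parts adjacent to $Z$ in its component. Every edge of $v$ that was not a defect before lies inside $V_h$, where $V_h$ is the component of $v$. Nonnegativity of the difference therefore forces
\[
d_G(v,V_h)\geq d_G(v,Z)-d_G^c(v,Z)\geq(1-2\alpha)|Z|.
\]
If $v\in\Pi_\sparse$, every edge at $v$ is already a defect, and the move strictly decreases $b$. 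This contradicts minimality, so $v\in V_h$ for some component $\Pi_h$ with $h\notin[\ell_\ast]$.

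\emph{Step 3 (locate a visible part).} By \Cref{lemma:SubCompareSparseSideEdgesK1k} \ref{item:SubCompareSparseSideEdgesK1k-row}, $d_G(v,S_\smsf(\Pi))\leq C_k\theta n$, which is much smaller than $(1-2\alpha)|Z|$ since $\theta=o(\eta/R_0^2)$. Hence $v$ sends at least $|Z|/2$ edges into $V_h\setminus S_\smsf(\Pi)$. This forces $h\in I_\theta(\Pi)$. Because $Q_h$ is $\Delta$-regular, $v$ has neighbours in at most $\Delta+1$ parts of $V_h$ (its own part together with its $\Delta$ neighbours), so pigeonhole over these parts gives a part $P_{h,b}\in\cY_\Pi$ with $d_G(v,P_{h,b})\geq\alpha|P_{h,b}|$. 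This uses $\alpha$ small and the balance of visible components from \Cref{lemma:WtoWtildeMetricsK1k} \ref{item:SubCloseK1k-theta}.

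\emph{Step 4 (the count).} The pair $(h,b)$ lies in $\cN_{\Pi,G}(v)$, and $P_{h,b}\notin\cY^\ast_\Pi$ since $h\notin[\ell_\ast]$. By \Cref{lemma:deterministic-nonlow-bound-K1k}, $|\cN_{\Pi,G}(v)|\leq k-1=\Delta+1$, so at most $\Delta$ elements of $\cN_{\Pi,G}(v)$ correspond to retained parts.

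The main obstacle is Step 2: accounting precisely for which pairs at $v$ change defect status under the move, especially edges from $v$ to parts of its own component $V_h$ and to $S_\Pi$. A second delicate point is the quantitative comparison in Step 3, where the $C_k\theta n$ row bound must be beaten by $|Z|$. I would first try to handle Step 2 via the inequality $b(G,\Pi')\geq b(G,\Pi)$ written only in terms of $d_G(v,V_h)$, which should suffice.
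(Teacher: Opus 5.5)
Your argument follows the paper's proof. The medium-companion lemma gives a retained part $Z$ with $d_G(v,Z)\geq(1-\alpha)|Z|$. Minimality of $b(G,\cdot)$ under moving $v$ into $Z$ then rules out $v\in\Pi_\sparse$ and forces many edges from $v$ into its own component. Finally, \Cref{lemma:deterministic-nonlow-bound-K1k} gives the count.

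Your route to $h\in I_\theta(\Pi)$, via \Cref{lemma:SubCompareSparseSideEdgesK1k}~\ref{item:SubCompareSparseSideEdgesK1k-row}, differs slightly from the paper's. The paper instead works with the own part of $v$ together with its $\Delta$ neighbouring parts and deduces visibility from the lower bound on $v$'s degree into that set. Both work.

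One sentence in Step 3 is false as written. You claim $v$ has neighbours in at most $\Delta+1$ parts of $V_h$, but edges from $v$ to parts of $V_h$ not $Q_h$-adjacent to its own part are allowed; they are simply defects. What is true is that the non-defect edges at $v$ lie in $N:=P_{h,c}\cup\bigcup_{b\in N_{Q_h}(c)}P_{h,b}$, where $v\in P_{h,c}$. So your Step 2 computation actually yields $d_G(v,N)\geq(1-2\alpha)|Z|$, which is the paper's bound.

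The pigeonhole is not needed anyway. If every part $P$ of $V_h$ had $d_G(v,P)<\alpha|P|$, then $d_G(v,V_h)<\alpha|V_h|\leq\alpha n$. This contradicts $d_G(v,V_h)\geq(1-2\alpha)|Z|\geq\eta n/(4R_0)$, since $\alpha\ll\eta/R_0$. Moreover, every part of $V_h$ lies in $\cY_\Pi$ once $h\in I_\theta(\Pi)$, so the appeal to balance of visible components can also be dropped.
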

\begin{proof}
Let us write $Y=P_{i,j}$. Since $d_G(v,Y)\geq4\alpha |Y|$, \Cref{lemma:deterministic-medium-companion-K1k} proves there exists a part $P_{i,j'}$ with $j'\in N_{Q_i}(j)\cup\{j\}$ such that $d_G(v,P_{i,j'})\geq(1-\alpha)|P_{i,j'}|$.
We must have $v\not\in\Pi_\sparse$, as otherwise moving $v$ from $\Pi_\sparse$ to $P_{i,j'}$ would produce a division with strictly fewer defect edges than $\Pi=\Pi(G)$. It follows that $v\in P_{h,a}$ for some component $\Pi_h$.

Write $N:=P_{h,a}\cup\bigcup_{b\in N_{Q_h}(a)}P_{h,b}$, and compare $\Pi$ with the division obtained by moving $v$ from $P_{h,a}$ to $P_{i,j'}$. This move removes at least $(1-\alpha)|P_{i,j'}|$ defects from $v$ to $P_{i,j'}$. Apart from the rows inside $N$, all other rows are unchanged or improve in their defect count. Optimality of $\Pi(G)$ thus gives $d_G(v,N)\geq(1-3\alpha)|P_{i,j'}|-o(n)$. If every part $P_{h,a'}\subseteq N$ satisfied $d_G(v,P_{h,a'})<\alpha |P_{h,a'}|$ then $d_G(v,N)<\alpha |N|$, and moving $v$ to $\Pi_\sparse$ would strictly decrease $b(G,\Pi)$, contradicting optimality. It follows that some part $P_{h,a'}\subseteq N$ satisfies $d_G(v,P_{h,a'})\geq\alpha |P_{h,a'}|$. Since this part has size at least $\theta n$ by the preceding lower bound and our choice of parameters, it belongs to $\cY_\Pi$.

The preceding paragraph gives a pair $(h,a')\in\cN_{\Pi,G}(v)$ whose part lies in the component containing $v$, while every retained part counted by $\cN_{\Pi,G}(v)\cap\cY^\ast_\Pi$ also contributes an element of $\cN_{\Pi,G}(v)$. By \Cref{lemma:deterministic-nonlow-bound-K1k}, we have $|\cN_{\Pi,G}(v)|\leq k-1$, hence $|\cN_{\Pi,G}(v)\cap\cY^\ast_\Pi|\leq\Delta$.
\end{proof}

For any graph $T$ on $V$ with $E(T)\subseteq E(K_{V_\ast,V})$, define its signed size by
$$\sigma(T):=e(T)-2\sum_{i=1}^{\ell_\ast}\sum_{j=1}^{q_i}e(T[P_{i,j}])\,,$$
so positive defect edges are counted with weight $1$, while missing defect edges are counted with weight $-1$. Define
$$\Err(\frp):=C\,|B(\frp)|\,\bigl(H(5\alpha)n+\theta n+\delta n+\rho_Bn+\log n\bigr)\,,$$
where $C=C(k,R_0)>0$ is sufficiently large.

\subsubsection{The profile bound}\label{subsec:profile-count-K1k}
Let us say that a triple $(T_B,R,L)$ of graphs is \emph{compatible} (with the profile $\frp$) if there exists a graph $G_0\in\cF_{\Pi,\frp}$ such that, writing $T_0:=T(G_0)[V_\ast,V]$,
$$T_B=T_B(G_0)\,,\qquad R=T_\res(G_0)\,,\qquad L=T_0\setminus(T_B\cup R)\,.$$
By definition, if $(T_B,R,L)$ is compatible then $T_0=T_B\cup R\cup L$ as an edge-disjoint union. Let us say that a pair $(T_B,R)$ is compatible if an $L$ exists such that $(T_B,R,L)$ is compatible. Let us write
$$\cT^\leftsf_{\Pi,\frp}(T_B,R):=\{T(G)[V_\ast,V]\setminus(T_B\cup R):G\in\cF_{\Pi,\frp}\,,\,T_B(G)=T_B\,,\,T_\res(G)=R\}$$
for the set of leftover defect graphs after $T_B(G)$ and $T_\res(G)$ are removed.
For a graph $R$, let $\cE_\matsf(R)$ denote the event, on graphs $F$ on $V$, that no edge of $R$ is contained in an induced copy of $K_{1,k}$ in $F$.
Define
\begin{equation}\label{eqn:Psi-mat-definition-K1k}
\Psi_\matsf(\frp):=\log\Bigg(\max_{T_B\in\cT^\root_{\Pi,\frp}}\Bigg\{\sum_{R\in\cT^\res_{\Pi,\frp}(T_B)}e^{C_ke(R)}\max_{H,L,\bsm}\bbP_{G^\bin_{\Pi,H,T_B\cup R\cup L,\bsm}}\{\cE_\matsf(R)\}\Bigg\}\Bigg)\,,
\end{equation}
where $C_k>0$ is large enough depending only on $k$, and the inner maximum is over induced-$K_{1,k}$-free graphs $H$ on $S_\Pi$, graphs $L$ such that $(T_B,R,L)$ is compatible, and $\bsm\in\cM^\nar_\Pi$. The empty graph is allowed as an element of $\cT^\root_{\Pi,\frp}$. Thus $\cT^\root_{\Pi,\frp}=\emptyset$ occurs only if $\cF_{\Pi,\frp}=\emptyset$; otherwise, the maximum above is over at least one rooted defect graph $T_B$, possibly over the empty graph.

\begin{lemma}\label{lemma:active-level-comparison-K1k}
There exists a constant $C=C(k,\gamma)>0$ such that the following holds. Let $A_\Pi:=\sum_{e\in\cI_\Pi}N_e$. Fix integers $u,u'\in\Z$ and $a_+,a_-\geq0$ such that $|u'-u|\leq\delta A_\Pi/4$ and $u'-u=a_+-a_-$. Then
$$Z^\nar_\Pi(u')\leq n^{C|\cI_\Pi|}\,e^{-(L_k-C\delta)a_+ +(L_k+C\delta)a_-}Z_\Pi(u)\,.$$
\end{lemma}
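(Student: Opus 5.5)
We prove the bound one summand at a time: each vector $\bsm'\in\cM^\nar_{\Pi,u'}$ is mapped to a vector $\bsm=\phi(\bsm')\in\cM_{\Pi,u}$, and $\binom{\Pi}{\bsm'}$ is compared with $\binom{\Pi}{\bsm}$.

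\textbf{Setup.} Recall that $C_\Pi+\sum_e\bsm_e+u=m$. Hence passing from $u'$ to $u$ requires $\sum_e\bsm_e=\sum_e\bsm'_e+(u'-u)$. That is, we must add $d:=u'-u=a_+-a_-$ edges in total to the active pairs.

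\textbf{Step 1: the map.} Distribute $d$ across the active pairs proportionally to $N_e$. Set
$$\bsm_e:=\bsm'_e+\Big\lfloor d\,N_e/A_\Pi\Big\rfloor,$$
and fix the rounding by adding or removing at most one edge on $|\cI_\Pi|$ of the pairs. Since $|d|\leq\delta A_\Pi/4$, each density changes by at most $\delta/4+O(1/N_e)$. So $\bsm_e/N_e$ stays in $p_k\pm(\delta+\delta/4+o(1))\subseteq p_k\pm2\delta$, and therefore $\bsm\in\cM_{\Pi,u}$.

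\textbf{Step 2: the ratio.} For a single pair, the binomial ratio is
$$\binom{N}{a+x}\Big/\binom{N}{a}=\prod\frac{N-a-i}{a+i+1}.$$
Along the whole path, every density stays in $p_k\pm2\delta$. Each factor therefore equals $e^{L_k\pm C\delta}$ when $x>0$, and its reciprocal when $x<0$, with $O(1)$ boundary corrections from $\pm1$. Writing $x_e:=\bsm_e-\bsm'_e$, we get
$$\binom{\Pi}{\bsm'}\leq \binom{\Pi}{\bsm}\exp\Big(\sum_e x_e(L_k+\mathrm{sgn}(x_e)C\delta)+O(|\cI_\Pi|)\Big).$$
Here is the sign convention. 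Going from $\bsm'$ to $\bsm$ adds $x_e$ edges, and adding an edge at density near $p_k<1/2$ multiplies the count by roughly $(1-p)/p=e^{L_k}$. So $\binom{\Pi}{\bsm'}\approx\binom{\Pi}{\bsm}e^{-L_k\sum_ex_e}$. Since $\sum_ex_e=a_+-a_-$, this gives the factor $e^{-L_ka_++L_ka_-}$ up to the $C\delta$ corrections.

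\textbf{Step 3: attributing $a_\pm$ with the right error sign.} Because all the $x_e$ share the sign of $d$, we have $\sum_e|x_e|=|d|+O(|\cI_\Pi|)$. Splitting $d=a_+-a_-$, the error term $C\delta\sum_e|x_e|$ is at most $C\delta(a_++a_-)+O(|\cI_\Pi|)$. This yields the factor
$$e^{-(L_k-C\delta)a_++(L_k+C\delta)a_-}\,e^{O(|\cI_\Pi|)}.$$

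\textbf{Step 4: summing over $\bsm'$.} The map $\bsm'\mapsto\bsm$ is injective up to the rounding choices, which give at most $2^{|\cI_\Pi|}$ preimages per image. Summing over $\bsm'$ therefore gives $Z^\nar_\Pi(u')\leq 2^{|\cI_\Pi|}e^{O(|\cI_\Pi|)}(\cdots)Z_\Pi(u)$. All of this overhead is absorbed into $n^{C|\cI_\Pi|}$.

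\textbf{Main obstacle.} The delicate step is Step 2: keeping the per-edge log-ratio uniformly within $C\delta$ of $\pm L_k$. This needs every intermediate density to stay inside the window $p_k\pm2\delta$, which is exactly what the narrow-window hypothesis together with $|u'-u|\leq\delta A_\Pi/4$ guarantees. Here we use that $\log\frac{1-p}{p}$ is Lipschitz on $[\rho/2,1-\rho/2]$, as noted at the start of \Cref{subsec:local-exponents-K1k}.
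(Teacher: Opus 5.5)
Your proof is correct and is essentially the paper's argument. Both map each $\bsm'\in\cM^\nar_{\Pi,u'}$ to a shifted vector in $\cM_{\Pi,u}$ and bound each added (resp.\ removed) edge by a factor $e^{-(L_k-C\delta)}$ (resp.\ $e^{L_k+C\delta}$); the paper chooses the shift depending on $\bsm'$ and pays $n^{C|\cI_\Pi|}$ for the number of possible shifts, whereas your fixed proportional shift is a translation and hence injective outright (at the mild cost of needing each $N_e\gg 1/\delta$, which holds in the application since retained parts have linear size). One small slip: the display in Step~2 should read $\exp\bigl(-\sum_e x_e(L_k-\mathrm{sgn}(x_e)C\delta)+O(|\cI_\Pi|)\bigr)$, as your next sentence $\binom{\Pi}{\bsm'}\approx\binom{\Pi}{\bsm}e^{-L_k\sum_e x_e}$ correctly indicates.
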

\begin{proof}
Let $d:=u'-u$. Suppose first that $d\geq0$. For $\bsm=(\bsm_e)_{e\in\cI_\Pi}\in\cM^\nar_{\Pi,u'}$, let $\cS(\bsm)$ be the set of vectors $\bsa=(a_e)_{e\in\cI_\Pi}$ such that $a_e\geq0$, $\sum_ea_e=d$, and $\bsm_e+a_e\leq(p_k+2\delta)N_e$ for all $e$. Since $\bsm_e\leq(p_k+\delta)N_e$ and $d\leq\delta A_\Pi/4$, the set $\cS(\bsm)$ is nonempty. If $\bsa\in\cS(\bsm)$ then $\bsm+\bsa\in\cM_{\Pi,u}$ and
$$\binom{\Pi}{\bsm}\binom{\Pi}{\bsm+\bsa}^{-1}=\prod_{e\in\cI_\Pi}\prod_{q=1}^{a_e}\frac{\bsm_e+q}{N_e-\bsm_e-q+1}\leq e^{-(L_k-C\delta)d}\,.$$
Choose one element $\bsa(\bsm)\in\cS(\bsm)$ for every $\bsm$. For fixed $\bsa$, the map $\bsm\mapsto\bsm+\bsa$ is injective, and the number of possible vectors $\bsa$ is at most $n^{C|\cI_\Pi|}$. It follows that
$$Z^\nar_\Pi(u')\leq n^{C|\cI_\Pi|}e^{-(L_k-C\delta)d}Z_\Pi(u)\,.$$
The case $d<0$ is identical by subtracting a vector $\bsa$ with $\sum_ea_e=-d$.
\end{proof}

\begin{lemma}\label{lemma:profile-root-leftover-bound-K1k}
For all $\frp\in\frP_\Pi$, $T_B\in\cT^\root_{\Pi,\frp}$ and $R\in\cT^\res_{\Pi,\frp}(T_B)$, we have
$$|\cT^\leftsf_{\Pi,\frp}(T_B,R)|\leq e^{\Err(\frp)}\,.$$
Moreover, we have $\abs{\sigma(L)}\leq\Err(\frp)$ for all $L\in\cT^\leftsf_{\Pi,\frp}(T_B,R)$.
\end{lemma}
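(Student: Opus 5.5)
The plan is to identify exactly which defect edges can lie in the leftover graph $L=T_0\setminus(T_B\cup R)$, and to show that all of them are incident with $B:=B(\frp)$ and confined to a small set of low rows. Since $T_0:=T(G)[V_\ast,V]$ and $T_\res(G)$ is $T_0$ with every edge incident with $B(G)$ removed, the graph $L$ consists only of edges of $T_0$ incident with $B$ that were not recorded in $T_B$. By the definition of $T_B(G)$, these edges fall into three kinds:
\begin{enumerate}[label=\textit{(\alph*)}, topsep=5pt]
  \item edges with both endpoints in $B$;
  \item edges from $v\in B_\ret(\frp)$ to $Y\setminus B$ for a part $Y\in\cY_{\Pi,v}$ with $d_G(v,Y)<4\alpha|Y|$, or to a part not in $\cY_\Pi$, or to $S_\Pi$;
  \item edges from $v\in B_\outsf(\frp)$ to retained parts $Y\in\cY^\ast_\Pi$ with $d_G(v,Y)<4\alpha|Y|$.
\end{enumerate}
Edges between $B_\ret$ and the neighboring parts $P_{i,j'}$, $j'\in N_{Q_i}(j)$, are not defects at all, so they are absent from $T_0$.

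I then count choices root by root. Edges of kind (a) contribute at most $\binom{|B|}{2}\le\rho_B n|B|$ choices in the exponent, since $|B|\le\rho_Bn$ by \Cref{lemma:profile-covering-K1k}. For a root $v$, the low rows into parts of $\cY_\Pi$ are handled as follows. There are at most $O_{k,R_0}(1/\theta)$ such parts, since each has size at least $\theta n/2$, and each row is a subset of density below $4\alpha$. This gives at most $e^{C H(5\alpha)n}$ choices per root, after absorbing the number of visible parts into $C=C(k,R_0)$. Strictly, the number of visible parts is about $1/\theta$. I would bound the total instead by $\binom{n}{\le 4\alpha n}\le e^{H(4\alpha)n}$, using that the union of low rows to visible parts has at most $4\alpha n$ vertices.

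Edges from a root into non-visible parts, or into $S_\smsf(\Pi)$, are controlled by degree bounds. For $v\in V_\ast$, \Cref{lemma:SubCompareSparseSideEdgesK1k}\ref{item:SubCompareSparseSideEdgesK1k-row} gives $d_G(v,S_\smsf)\le C_k\theta n$. A set of size at most $C_k\theta n$ inside $n$ vertices has $e^{O(H(C_k\theta)n)}$ choices. This is where I expect the main obstacle. The bound $H(\theta)n$ is worse than $\theta n$ in $\Err$, so I must either absorb it using $\theta\log(1/\theta)$ with $C$ allowed to depend on the parameters, or argue more carefully. The careful argument would note that these edges lie in $G[V_\ast,S_\smsf]$, which is itself fixed by $H$ only partially. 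Failing that, I would reinterpret $\theta n$ in $\Err$ as absorbing $H(C_k\theta)n$, since $C$ may depend on $k,R_0$ and $\theta$ is chosen after $R_0$; the fallback is to enlarge $C$. Edges to $S_\lgsf$ parts behave like visible low rows and are handled identically.

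The $\log n$ term accounts for specifying row sizes and which rows are low. The $\delta n$ term covers missing edges inside $P_v$, which are already counted through $I_v$ being recorded, so they are not in $L$. Multiplying over $v\in B$ gives $|\cT^\leftsf|\le e^{\Err(\frp)}$. For the signed size, $|\sigma(L)|\le e(L)$, and every edge of $L$ is incident with $B$. Each root has at most $4\alpha n\cdot O(1)+C_k\theta n+|B|$ leftover edges, so $e(L)\le C|B|(\alpha n+\theta n+\rho_Bn)\le\Err(\frp)$, since $\alpha\le H(5\alpha)$.
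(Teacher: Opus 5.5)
Your decomposition of $L$ is correct: edges inside $B(\frp)$, low rows from a root to $\theta$-visible parts, and edges from $v\in B_\ret(\frp)$ into $S_\smsf(\Pi)$. Your handling of the first two kinds and of $|\sigma(L)|$ is also fine; bounding the union of low rows by $\binom{n}{\le 4\alpha n}$ is an acceptable substitute for the paper's product of $e^{H(5\alpha)|Y\setminus B|}$ over visible targets. The gap is where you suspected, in the count of $N_G(v,S_\smsf(\Pi))$.

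The naive bound $\binom{n}{\le C_k\theta n}\le e^{H(C_k\theta)n}$ is of order $e^{C_k\theta\log(1/\theta)n}$. It cannot be absorbed into $C\theta n$ by enlarging $C$. The constant $C=C(k,R_0)$ in $\Err(\frp)$ is fixed before $\theta$ is chosen, and $H(C_k\theta)/\theta\to\infty$ as $\theta\to0$. So the order of quantifiers works against you, not for you. As written, your argument only proves the lemma with $\theta n$ replaced by $H(C_k\theta)n$ in $\Err(\frp)$, which is not the stated bound.

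The missing idea is to use induced-$K_{1,k}$-freeness again, as the paper does. The graph $G[N_G(v,S_\smsf(\Pi))]$ has independence number at most $k-1$. Hence a maximal independent set $I$ in it satisfies $|I|\le k-1$ and dominates it, so
$$N_G(v,S_\smsf(\Pi))\subseteq\bigcup_{x\in I}\bigl(\{x\}\cup N_G(x,S_\smsf(\Pi))\bigr)\,.$$
Applying \Cref{lemma:SubCompareSparseSideEdgesK1k}\ref{item:SubCompareSparseSideEdgesK1k-row} to each $x\in I$, this set has at most $(k-1)(C_k\theta n+1)$ vertices. Choosing $I$ and then a subset gives at most $n^{k-1}2^{(k-1)(C_k\theta n+1)}\le e^{C_k\theta n+C_k\log n}$ possibilities. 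This is exactly the $\theta n+\log n$ part of $\Err(\frp)$.

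This count is made with $G[S_\smsf(\Pi)]$ held fixed. That is harmless in \Cref{lemma:profile-bound-K1k} if the sum over $H$ is taken outermost.

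Your weaker bound would in fact also suffice for \Cref{lemma:NtaunmWUpperBdK1k} if $\Err(\frp)$ were redefined with $H(C_k\theta)$ in place of $\theta$. This works because $\theta$ may be taken small after $\eta$, $R_0$ and $C(k,R_0)$ are fixed. But that changes the statement rather than proving this one.
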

\begin{proof}
Let $B:=B(\frp)$, and fix $T_B\in\cT^\root_{\Pi,\frp}$ and $R\in\cT^\res_{\Pi,\frp}(T_B)$. Let $L\in\cT^\leftsf_{\Pi,\frp}(T_B,R)$, and choose $G\in\cF_{\Pi,\frp}$ such that
$$T_B(G)=T_B\,,\qquad T_\res(G)=R\,,\qquad L=T(G)[V_\ast,V]\setminus(T_B\cup R)\,.$$
By definition, every edge of $L$ is incident with some vertex of $B$.

We first count the possible edges of $L$ incident with a fixed vertex $v\in B$ and not incident with another vertex of $B$. Consider a $\theta$-visible part $Y\in\cY_\Pi$ for which the pairs $\{vy:y\in Y\setminus B\}$ are contained in the defect graph $T(G)[V_\ast,V]$. If the pair $(Y,d_G(v,Y\setminus B))$ belongs to $\cR_v(\frp)$ then the corresponding edges have already been included in $T_B$ and hence do not belong to $L$. If no pair with target $Y$ belongs to $\cR_v(\frp)$ then by the definition of the profile we have $d_G(v,Y)<4\alpha |Y|$. Since $|B|\leq\rho_Bn$ and $\rho_B=o(\alpha\theta)$, this implies $d_G(v,Y\setminus B)\leq 5\alpha |Y\setminus B|$. Thus after $v$ and $Y$ are fixed, the number of possibilities for the edges of $L$ from $v$ to $Y\setminus B$ is at most $\sum_{0\leq r\leq 5\alpha |Y\setminus B|}\binom{|Y\setminus B|}{r}\leq e^{H(5\alpha)|Y\setminus B|}$.
Multiplying this bound over all such $\theta$-visible target parts $Y$ gives at most $e^{H(5\alpha)n}$ choices for the $\theta$-visible neighbors of $v$ that can appear in $L$.

It remains to consider the vertices of $S_\smsf(\Pi)$. If $v\in B_\ret(\frp)$ then \Cref{lemma:SubCompareSparseSideEdgesK1k} gives $d_G(v,S_\smsf(\Pi))\leq C_k\theta n$.
Since $G[N_G(v,S_\smsf(\Pi))]$ has independence number at most $k-1$, the possible sets $N_G(v,S_\smsf(\Pi))$ are contained in the closed neighborhood, inside $G[S_\smsf(\Pi)]$, of at most $k-1$ vertices. It follows that there are at most $n^{k-1}2^{C_k\theta n}\leq e^{C_k\theta n+C_k\log n}$ choices for the edges of $L$ from $v$ to $S_\smsf(\Pi)$. If $v\in B_\outsf(\frp)$ then no edge from $v$ to $S_\smsf(\Pi)$ lies in $T(G)[V_\ast,V]$, so there is nothing to count.

Finally, the edges of $L$ with both endpoints in $B$ have at most $2^{\binom{|B|}{2}}$ choices. Since $|B|\leq\rho_Bn$ and $\rho_B=o(H(5\alpha)+\theta)$, this factor is absorbed by $e^{C|B|(H(5\alpha)n+\theta n+\log n)}$. Multiplying over all $v\in B$ and increasing the constant in the definition of $\Err(\frp)$ implies
$$|\cT^\leftsf_{\Pi,\frp}(T_B,R)|\leq e^{\Err(\frp)}\,.$$

The same estimates also bound the number of edges of $L$. Indeed, for each $v\in B$, the contribution from $\theta$-visible target parts is at most $5\alpha n$, the contribution from $S_\smsf(\Pi)$ is at most $C_k\theta n$, and the contribution from edges with both endpoints in $B$ is at most $|B|$. It follows that after increasing the constant in $\Err(\frp)$, we have $\abs{\sigma(L)}\leq e(L)\leq \Err(\frp)$.
\end{proof}

\begin{lemma}\label{lemma:fixed-profile-probability-K1k}
Let $T_B\in\cT^\root_{\Pi,\frp}$, let $R\in\cT^\res_{\Pi,\frp}(T_B)$, let $L\in\cT^\leftsf_{\Pi,\frp}(T_B,R)$, and let $T:=T_B\cup R\cup L$. For every induced-$K_{1,k}$-free graph $H$ on $S_\Pi$ and every $\bsm\in\cM^\nar_\Pi$, we have
$$
\bbP\!\left\{G_{\Pi,H,T,\bsm}\in\bigcap_{\frt\in\Tail(\frp)}E_\frt\cap\cE_\matsf(R)\right\}
\leq n^{C_k|\cI_\Pi|}\exp\Bigg(-\sum_{v\in B_\ret(\frp)}J_v(\frp)+\Psi_\matsf(\frp)\Bigg)\,.
$$
\end{lemma}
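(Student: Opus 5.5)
First pass from the fixed-count model to the binomial model using \Cref{lemma:fixed-count-transfer-K1k}; this costs the factor $n^{C|\cI_\Pi|}$ and leaves the task of bounding
$$\bbP\Bigg\{G^\bin_{\Pi,H,T,\bsm}\in\bigcap_{\frt\in\Tail(\frp)}E_\frt\cap\cE_\matsf(R)\Bigg\}\,.$$
Next, write $X_v$ for the set of random pairs $vy$ with $v\in B_\ret(\frp)$ and $y\in P_{i,j'}\setminus B(\frp)$, where $j'\in N_{Q_i}(j)$. Each tail event $E_\frt$ depends only on the random edges in $X_v$ for its root $v$. The sets $X_v$ for distinct $v\in B_\ret(\frp)$ are pairwise disjoint, because their endpoints outside $B(\frp)$ are excluded. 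They are also disjoint from the pairs used by $\cE_\matsf(R)$, since $R=T_\res(G)$ has had all edges incident with $B(G)$ deleted.

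The main obstacle is that $\cE_\matsf(R)$ constrains random edges everywhere, including edges between vertices outside $B$ and vertices of some $X_v$, so independence does not hold directly. I would handle this by conditioning. Let $\cG_B$ be the $\sigma$-algebra generated by all random pairs incident with $B(\frp)$. The events $\bigcap_{\frt\in\Tail_v(\frp)}E_\frt$ are $\cG_B$-measurable and independent over $v$, so
$$\bbP\Bigg\{\bigcap_{\frt\in\Tail(\frp)}E_\frt\Bigg\}\leq\prod_{v\in B_\ret(\frp)}\exp(-J_v(\frp))\,.$$
This uses the definition of $J_v(\frp)$ as a maximum over $\bsm\in\cM^\nar_\Pi$, which is independent of $H$ and $T$.

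It then remains to show that, conditionally on any outcome of the edges incident with $B$, the probability of $\cE_\matsf(R)$ is at most $e^{\Psi_\matsf(\frp)}$. To do this I would restrict $\cE_\matsf(R)$ to the weaker event that no edge of $R$ lies in an induced $K_{1,k}$ avoiding $B(\frp)$. This weaker event depends only on pairs not incident with $B$, so it is independent of $\cG_B$. Its probability is bounded by the probability of $\cE_\matsf(R)$ in a binomial model $G^\bin_{\Pi,H,T_B\cup R\cup L',\bsm}$ with an admissible leftover $L'$; alternatively, the families of copies of $K_{1,k}$ used to define $\Psi_\matsf$ can be chosen to avoid $B$ from the outset, as in the Janson constructions of \Cref{lemma:super-FPiT}. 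Since $R\in\cT^\res_{\Pi,\frp}(T_B)$, a single summand in the definition \eqref{eqn:Psi-mat-definition-K1k} of $\Psi_\matsf(\frp)$ dominates this probability, with the factor $e^{C_ke(R)}\geq1$ only helping.

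Multiplying the two bounds gives the claim. The step I expect to be hardest is justifying this decoupling: checking that $\Psi_\matsf$, which is defined through the full event, still controls the restricted event that avoids $B$. If the definition does not allow this, I would instead redefine the relevant family of copies of $K_{1,k}$ so that it avoids $B$, absorbing any resulting loss into $\Err(\frp)$.
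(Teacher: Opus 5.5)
Your skeleton matches the paper's proof. You pass to $G^\bin_{\Pi,H,T,\bsm}$ via \Cref{lemma:fixed-count-transfer-K1k}, observe that the tail events of distinct roots $v\in B_\ret(\frp)$ are functions of the disjoint pair sets $X_v$, and multiply. The paper then simply asserts that $\cE_\matsf(R)$ ``uses no edge incident with $B(\frp)$'' and concludes independence.

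You are right that this does not hold directly. As literally defined (no edge of $R$ lies in \emph{any} induced $K_{1,k}$ of $F$), the event sees pairs meeting $B(\frp)$, including pairs in the sets $X_v$. So the one-line argument is valid only for the $B$-avoiding version of the event. That version is evidently what is intended; compare ``defined after deleting all edges incident with $B(\frp)$'' in the proof of \Cref{lemma:profile-bound-K1k}. Your conditioning on the pairs meeting $B(\frp)$ and your passage to the weaker event are fine.

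The gap is in how you return to $\Psi_\matsf(\frp)$. Let $\cE'$ be the event that no edge of $R$ lies in an induced $K_{1,k}$ on $V\setminus B(\frp)$. For fixed $H$, $\bsm$ and $R$, the graphs $T_B$ and $L'$ only toggle pairs meeting $B(\frp)$. Hence $\bbP(\cE')$ is the same in every model $G^\bin_{\Pi,H,T_B\cup R\cup L',\bsm}$, and in each of them $\cE_\matsf(R)\subseteq\cE'$. The comparison $\bbP(\cE')\le\bbP_{L'}(\cE_\matsf(R))$ that you invoke therefore goes the wrong way. With $\Psi_\matsf$ built from the unrestricted event, nothing in \eqref{eqn:Psi-mat-definition-K1k}, the weight $e^{C_ke(R)}$ included, yields $\bbP(\cE')\le e^{\Psi_\matsf(\frp)}$.

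Your alternative does not work as stated either. The Janson families are not part of the definition of $\Psi_\matsf$; they only appear in the proof of \Cref{lemma:residual-matching-estimate-K1k}.

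The repair is your fallback, made precise and with no loss to absorb into $\Err(\frp)$: set $\cE_\matsf(R):=\cE'$ from the start, both in \eqref{eqn:Psi-mat-definition-K1k} and in this lemma. Then the tail events and $\cE_\matsf(R)$ depend on disjoint sets of independent coordinates, and the lemma follows at once from the definitions of $J_v(\frp)$ and $\Psi_\matsf(\frp)$, with no conditioning needed. You must then check that both uses of the event survive:
\begin{itemize}
\item In \Cref{lemma:profile-bound-K1k}, every $G\in\cF_{\Pi,\frp}$ with $T_\res(G)=R$ still lies in $\cE'$, because $G$ has no induced $K_{1,k}$ at all.
\item In \Cref{lemma:residual-matching-estimate-K1k}, every copy in the Janson families avoids $B(\frp)$. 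Centers and extra leaves are chosen in $P\setminus B$, the endpoints of edges of $R=T_\res(G)$ lie outside $B(\frp)$, and the vertex $y\in S_\Pi$ in the last case is not in $B(\frp)$. So that bound holds verbatim for $\cE'$.
\end{itemize}
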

\begin{proof}
By \Cref{lemma:fixed-count-transfer-K1k}, it is enough to prove the corresponding bound with $G^\bin_{\Pi,H,T,\bsm}$ in place of $G_{\Pi,H,T,\bsm}$, since this incurs only the multiplicative factor $n^{C_k|\cI_\Pi|}$. In $G^\bin_{\Pi,H,T,\bsm}$, the events $\bigcap_{\frt\in\Tail_v(\frp)}E_\frt$ for distinct $v\in B_\ret(\frp)$ are determined by disjoint edge sets since $B(\frp)$ is deleted from every target part in the definition of the events $E_\frt$. The event $\cE_\matsf(R)$ also uses no edge incident with $B(\frp)$. Thus these events are independent in the binomial model, and the lemma follows from the definitions of $J_v(\frp)$ and $\Psi_\matsf(\frp)$.
\end{proof}

\begin{lemma}\label{lemma:profile-bound-K1k}
For every profile $\frp\in\frP_\Pi$,
$$|\cF_{\Pi,\frp}|\leq n^{C_k|\cI_\Pi|}\cZ_\Pi\exp\!\left(\sum_{v\in B(\frp)}\Psi_v(\frp)+\Psi_\matsf(\frp)+\Err(\frp)\right)\,.$$
\end{lemma}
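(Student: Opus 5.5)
We decompose each $G\in\cF_{\Pi,\frp}$ by first fixing the data the random model conditions on, and then using that model to bound the number of completions. Concretely, $G$ is determined by:
\begin{itemize}
\item its restriction $H:=G[S_\Pi]$, an induced-$K_{1,k}$-free graph with $b(\frp)$ edges, so $b(\frp)\leq C_k\eta n^2$ by \Cref{lemma:SubCompareSparseSideEdgesK1k};
\item the defect graph $T_0=T(G)[V_\ast,V]=T_B\cup R\cup L$;
\item the edge-count vector $\bsm$ on the active pairs $\cI_\Pi$.
\end{itemize}
Since $e(T(G))\leq\epsilon n^2$ and the cut-closeness of \Cref{lemma:WtoWtildeMetricsK1k} holds, every such $\bsm$ lies in $\cM^\nar_{\Pi,u}$ for $u=b(\frp)+\sigma(T_0)$. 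Given $(H,T_0,\bsm)$, the graphs $G$ with this data are exactly the outcomes of $G_{\Pi,H,T_0,\bsm}$ lying in $\bigcap_{\frt\in\Tail(\frp)}E_\frt\cap\cE_\matsf(R)$. Therefore
$$|\cF_{\Pi,\frp}|\leq\sum_{H}\sum_{T_B,R,L}\sum_{\bsm}\binom{\Pi}{\bsm}\,\bbP\Big\{G_{\Pi,H,T_0,\bsm}\in\textstyle\bigcap_\frt E_\frt\cap\cE_\matsf(R)\Big\},$$
and \Cref{lemma:fixed-profile-probability-K1k} bounds the probability by $n^{C_k|\cI_\Pi|}\exp(-\sum_vJ_v(\frp)+\Psi_\matsf(\frp))$ uniformly over the summation variables.

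It then remains to control the sum over $\bsm$ and over the defect graphs. We split $\sigma(T_0)=\sigma(T_B)+\sigma(R)+\sigma(L)$ and apply \Cref{lemma:active-level-comparison-K1k} to compare $Z^\nar_\Pi(b(\frp)+\sigma(T_0))$ with $Z_\Pi(b(\frp))$:
\begin{itemize}
\item Positive defects in $T_B$ and $R$ cost $e^{-(L_k-C\delta)}$ each.
\item Missing defects (the $I_v$ counts) gain $e^{L_k+C\delta}$ each.
\item The leftover $\sigma(L)$ costs at most $e^{O(\Err(\frp))}$ by \Cref{lemma:profile-root-leftover-bound-K1k}.
\end{itemize}
The number of choices of $L$ is likewise at most $e^{\Err(\frp)}$ by \Cref{lemma:profile-root-leftover-bound-K1k}. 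Summing over $H$ with weight $Z_\Pi(b(\frp))$ gives at most $N^\ast_{s(\Pi),b(\frp)}(K_{1,k})Z_\Pi(b(\frp))\leq\cZ_\Pi$.

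The factor for $R$ is $e^{(L_k+C\delta)e(R)}$ in the worst case. This is absorbed by the weight $e^{C_ke(R)}$ already built into $\Psi_\matsf$, choosing $C_k\geq L_k+1$, so the sum over $R$ together with the max over $(H,L,\bsm)$ is exactly $e^{\Psi_\matsf(\frp)}$ after maximizing over $T_B$. For $T_B$, given the profile, each root $v$ and each row $(Y,r)\in\cR_v(\frp)$ contributes $\binom{|Y\setminus B|}{r}\leq e^{|Y\setminus B|H(r/|Y\setminus B|)}$ choices and a level shift of $r$. Likewise $I_v(\frp)$ contributes $\binom{|P_v\setminus B|}{I_v}$ choices with a shift of $-I_v$. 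Multiplying by the level factors reproduces $e^{\Ent_v(\frp)}$. The $C\delta$ corrections total $C\delta|B|n$, which is absorbed into $\Err(\frp)$.

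\textbf{Main obstacle.} The difficulty is bookkeeping the sums over $T_B$ and $R$ against the maxima built into $\Psi_\matsf$ and $J_v$. The probability bound of \Cref{lemma:fixed-profile-probability-K1k} is uniform, but $\Psi_\matsf$ fixes $T_B$ inside a maximum while we must also sum over $T_B$. We handle this by ordering the summation: first sum over $T_B$, pulling out $\max_{T_B}$ of the inner $R$-sum, which is by definition $e^{\Psi_\matsf(\frp)}$; then the $T_B$-sum, weighted by level factors, is bounded separately by $\prod_v e^{\Ent_v}$. We also check that $|\sigma(T_0)|\leq\delta A_\Pi/4$, as \Cref{lemma:active-level-comparison-K1k} requires; this holds since $e(T_0)\leq\epsilon n^2\ll\delta\eta^2n^2$. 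Finally, the polynomial factors (choices of $u$, the at most $n^{O(1)}$ counts in $\cM$, and $\ell(\frp)$) are absorbed into $n^{C_k|\cI_\Pi|}$ and $\Err(\frp)$.
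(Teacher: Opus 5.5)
Your proposal is correct and follows essentially the same route as the paper's proof. You condition on $(H,\,T_B\cup R\cup L,\,\bsm)$, pass to the binomial model, split off the tail events from the matching event, and shift $Z^\nar_\Pi$ back to $Z_\Pi(b(\frp))$ via \Cref{lemma:active-level-comparison-K1k}; the $\sum_v\Ent_v(\frp)$ term then comes from the root counts \eqref{eqn:troot-bound}--\eqref{eqn:root-free-energy-identity-K1k}, and $\Err(\frp)$ from \Cref{lemma:profile-root-leftover-bound-K1k}.

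Your explicit absorption of the $R$-sum, with its level factor $e^{(L_k+C\delta)e(R)}$, into the weight $e^{C_ke(R)}$ inside $\Psi_\matsf(\frp)$ is in fact the intended reading of a step the paper's displayed chain glosses over. For this you need the factored bound $e^{-\sum_vJ_v(\frp)}\,\bbP\{\cE_\matsf(R)\}$ from the proof of \Cref{lemma:fixed-profile-probability-K1k}, which your summation ordering implicitly uses. You should not use that lemma's stated bound, which already contains $e^{\Psi_\matsf(\frp)}$ and would count it twice.
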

\begin{proof}
We may assume $\cF_{\Pi,\frp}\neq\emptyset$, and write $b:=b(\frp)$ and $B:=B(\frp)$. By \Cref{lemma:SubCompareSparseSideEdgesK1k}, every graph in $\cF_{\Pi,\frp}$ satisfies $b=e(G[S_\Pi])\leq C_k\eta n^2$ after increasing $C_k$. Hence the number of possibilities for $G[S_\Pi]$ is at most $N^\ast_{s(\Pi),b}(K_{1,k})$. We will use the fact that every graph in $\cF_{\Pi,\frp}$ is the outcome of the random graph $G_{\Pi,H,T,\bsm}$ for some induced-$K_{1,k}$-free graph $H$ on $S_\Pi$ with at most $C_k\eta n^2$ edges, some graph $T\in\cT_{W,\Pi}$, and some $\bsm\in\cM^\nar_{\Pi,e(H)+\sigma(T)}$.

For a compatible triple $(T_B,R,L)$, and writing $T:=T_B\cup R\cup L$, a graph $G\in\cF_{\Pi,\frp}$ is an outcome of $G_{\Pi,H,T,\bsm}$ if and only if $\bsm\in\cM^\nar_{\Pi,b'}$ where $b':=b+\sigma(T_B)+\sigma(R)+\sigma(L)$.
We will also use the bound
\begin{equation}\label{eqn:troot-bound}
\abs{\cT^\root_{\Pi,\frp}} \leq \prod_{v\in B(\frp)}\prod_{(Y,r)\in\cR_v(\frp)}\binom{|Y\setminus B|}{r}\cdot\prod_{v\in B_\ret(\frp)}\binom{|P_v\setminus B|}{I_v(\frp)}\,,
\end{equation}
which also gives
\begin{equation}\label{eqn:root-free-energy-identity-K1k}
\sum_{T_B\in\cT^\root_{\Pi,\frp}}e^{-L_k\sigma(T_B)}\leq \abs{\cT^\root_{\Pi,\frp}}\max_{T_B\in\cT^\root_{\Pi,\frp}}e^{-L_k\sigma(T_B)} \leq \exp\Bigg(\sum_{v\in B(\frp)}\Ent_v(\frp)\Bigg)\,,
\end{equation}
by the definitions of $T_B$, $\sigma$, and $\Ent_v$.

Using \Cref{lemma:fixed-profile-probability-K1k}, \Cref{lemma:profile-root-leftover-bound-K1k}, \Cref{lemma:active-level-comparison-K1k}, \eqref{eqn:troot-bound}, and \eqref{eqn:root-free-energy-identity-K1k}, we calculate the following, with details provided below:
\begin{align*}
|\cF_{\Pi,\frp}|&\leq \sum_{H\in\cF_{S_\Pi,b}(K_{1,k})}\sum_{T\in\cT_{W,\Pi}}\sum_{\bsm\in\cM^\nar_{\Pi,b+\sigma(T)}}\binom{\Pi}{\bsm}\,\bbP\{G_{\Pi,H,T,\bsm}\in\cF_{\Pi,\frp}\} \\
&\leq n^{C_k|\cI_\Pi|}N^\ast_{s(\Pi),b}(K_{1,k})\sum_{T\in\cT_{W,\Pi}}\max_{H,\bsm'}\left\{\bbP\{G^\bin_{\Pi,H,T,\bsm'}\in\cF_{\Pi,\frp}\}\right\}\sum_{\bsm\in\cM^\nar_{\Pi,b+\sigma(T)}}\binom{\Pi}{\bsm} \\
&\leq n^{C_k|\cI_\Pi|}N^\ast_{s(\Pi),b}(K_{1,k})\sum_{T_B\in\cT^\root_{\Pi,\frp}}\sum_{R\in\cT^\res_{\Pi,\frp}(T_B)}\sum_{L\in\cT^\leftsf_{\Pi,\frp}(T_B,R)} \bigg(Z^\nar_\Pi(b+\sigma(T_B)+\sigma(R)+\sigma(L)) \\
&\qquad\cdot \max_{\substack{H,\bsm\\T:=T_B\cup R\cup L}}\bigg\{\bbP_{G^\bin_{\Pi,H,T,\bsm}}\bigg\{\bigcap_{\frt\in\Tail(\frp)}E_\frt\bigg\}\cdot\bbP_{G^\bin_{\Pi,H,T,\bsm}}\{\cE_\matsf(R)\}\bigg\}\bigg) \\
&\leq n^{C_k|\cI_\Pi|}N^\ast_{s(\Pi),b}(K_{1,k})\sum_{T_B\in\cT^\root_{\Pi,\frp}}\sum_{R\in\cT^\res_{\Pi,\frp}(T_B)}\sum_{L\in\cT^\leftsf_{\Pi,\frp}(T_B,R)}Z^\nar_\Pi(b+\sigma(T_B)+\sigma(R)+\sigma(L))\\
&\qquad\cdot \exp\Bigg(-\sum_{v\in B_\ret(\frp)}J_v(\frp)+\Psi_\matsf(\frp)\Bigg)\\
&\leq n^{C_k|\cI_\Pi|}N^\ast_{s(\Pi),b}(K_{1,k})Z_\Pi(b)\sum_{T_B\in\cT^\root_{\Pi,\frp}}\sum_{R\in\cT^\res_{\Pi,\frp}(T_B)}\sum_{L\in\cT^\leftsf_{\Pi,\frp}(T_B,R)}e^{-L_k\sigma(T_B)-L_k\sigma(R)-L_k\sigma(L)}\\
&\qquad\cdot \exp\!\left(C\delta(|\sigma(T_B)|+|\sigma(R)|+|\sigma(L)|)-\sum_{v\in B_\ret(\frp)}J_v(\frp)+\Psi_\matsf(\frp)\right)\\
&\leq n^{C_k|\cI_\Pi|}N^\ast_{s(\Pi),b}(K_{1,k})Z_\Pi(b)\,\exp\!\left(\sum_{v\in B(\frp)}\Ent_v(\frp)-\sum_{v\in B_\ret(\frp)}J_v(\frp)+\Psi_\matsf(\frp)+\Err(\frp)\right)\\
&\leq n^{C_k|\cI_\Pi|}N^\ast_{s(\Pi),b}(K_{1,k})Z_\Pi(b)\,\exp\!\left(\sum_{v\in B(\frp)}\Psi_v(\frp)+\Psi_\matsf(\frp)+\Err(\frp)\right)\\
&\leq n^{C_k|\cI_\Pi|}\cZ_\Pi\,
\exp\!\left(\sum_{v\in B(\frp)}\Psi_v(\frp)+\Psi_\matsf(\frp)+\Err(\frp)\right)\,.
\end{align*}
In the third inequality, we decomposed $T$ as $T_B\cup R\cup L$, and we used that the event $\cF_{\Pi,\frp}$ is contained in the intersection of the profile-tail event $\bigcap_{\frt\in\Tail(\frp)}E_\frt$ and the residual matching event $\cE_\matsf(R)$; these events are independent in $G^\bin_{\Pi,H,T,\bsm}$ since the former only uses random edges incident with $B_\ret(\frp)$ while the latter is defined after deleting all edges incident with $B(\frp)$.
The fourth inequality uses the definition of $J_v(\frp)$ and $\Psi_\matsf(\frp)$.
In the fifth inequality, we applied \Cref{lemma:active-level-comparison-K1k}; the adjustment to the subscript $b$ is $\sigma(T_B)+\sigma(R)+\sigma(L)$, which has absolute value $O(\epsilon n^2)$, so the hypothesis of that lemma holds by our choice of parameters.
The sixth inequality follows by combining \Cref{lemma:profile-root-leftover-bound-K1k}, \eqref{eqn:troot-bound}, and \eqref{eqn:root-free-energy-identity-K1k}.
The seventh inequality is simply the definition of $\Psi_v(\frp)$ for $v\in B(\frp)$. Finally, the last inequality uses that $b\leq C_k\eta n^2$, so $N^\ast_{s(\Pi),b}(K_{1,k})Z_\Pi(b)$ is at most $\cZ_\Pi$.
\end{proof}

\subsubsection{Local exponent bounds}\label{subsec:local-exponent-bounds}
To make proper use of \Cref{lemma:profile-bound-K1k}, it remains to prove suitable bounds for the exponents $\Psi_v(\frp)$ and $\Psi_\matsf(\frp)$. In this part, \Cref{lemma:medium-internal-negative-root-local-K1k,lemma:high-internal-negative-root-local-K1k,lemma:high-internal-degree-root-local-K1k,lemma:outside-root-compensation-K1k} together bound $\Psi_v(\frp)$ for all $v\in B(\frp)$ by considering four separate cases. Then \Cref{lemma:residual-matching-estimate-K1k} proves a bound for $\Psi_\matsf(\frp)$.

We begin with some notation and estimates that will be used throughout the next four lemmas. Assume $\cF_{\Pi,\frp}\neq\emptyset$, fix $G\in\cF_{\Pi,\frp}$, and let $v\in B_\ret(\frp)\cap P_{i,j}$. Write $B=B(\frp)$ and $N:=|P_{i,j}|$. Since $P_{i,j}$ is retained, \Cref{lemma:WtoWtildeMetricsK1k} gives $N\geq \eta n/(4R_0)$. Also, by \Cref{lemma:profile-covering-K1k} we have $|P_{i,j}\setminus B|=(1+o(1))N$, and the same estimate holds for every part in $V_i$. Define
$$\arraycolsep=1.4pt
\begin{array}{ll}
U_v&:=\{x\in N_{Q_i}(j):d_G(v,P_{i,x})\geq(1-\alpha)|P_{i,x}|\}\,,\\[7pt]
L_v&:=\{x\in N_{Q_i}(j):d_G(v,P_{i,x})\leq\alpha |P_{i,x}|\}\,,\\[7pt]
M_v&:=N_{Q_i}(j)\setminus(U_v\cup L_v)\,.
\end{array}$$
We also define
$$\cR_v^\insf(\frp):=\{(Y,r)\in\cR_v(\frp):Y\subseteq V_i\}\,,\qquad \cR_v^\outsf(\frp):=\cR_v(\frp)\setminus\cR_v^\insf(\frp)$$
and
$$q_\insf:=|\cR_v^\insf(\frp)|\,,\quad q_\outsf:=|\cR_v^\outsf(\frp)|\,,\quad H_v:=\{(Y,r)\in\cR_v^\insf(\frp):r\geq(1-2\alpha)|Y\setminus B|\}\,.$$

\begin{fact}\label{fact:local-ent-estimates-K1k}
With the notation above, we use \eqref{eqn:local-ent-estimates-K1k} to deduce
\begin{equation}\label{eqn:first-summand-ent}
|P_{i,j}\setminus B|\,H\!\left(\frac{I_v(\frp)}{|P_{i,j}\setminus B|}\right)+L_kI_v(\frp)\leq(U+o(1))N\,.
\end{equation}
If $I_v(\frp)\leq C_0\alpha N$ for fixed $C_0>0$ then the left-hand side of \eqref{eqn:first-summand-ent} is $o(N)$, and if $I_v(\frp)\geq(1-2\alpha)|P_{i,j}\setminus B|$ then it is at most $(L_k+o(1))N$. Moreover, for every $(Y,r)\in\cR_v(\frp)$, we use \eqref{eqn:local-ent-estimates-K1k} to deduce
\begin{equation}\label{eqn:second-summand-ent}
|Y\setminus B|\,H\!\left(\frac{r}{|Y\setminus B|}\right)-L_kr\leq(A+o(1))|Y|\,.
\end{equation}
If $Y\subseteq V_i$ then the left-hand side of \eqref{eqn:second-summand-ent} is at most $(A+o(1))N$, and if additionally $r\geq(1-2\alpha)|Y\setminus B|$ then \eqref{eqn:high-row-ent-K1k} gives the stronger bound $(-L_k+o(1))N$.
\end{fact}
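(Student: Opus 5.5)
The plan is to reduce every assertion of \Cref{fact:local-ent-estimates-K1k} to the two one-variable maxima in \eqref{eqn:local-ent-estimates-K1k} and the tail bound \eqref{eqn:high-row-ent-K1k}. The only extra input is the size comparison $|Y\setminus B|=(1+o(1))|Y|$ for every $Y\in\cY_\Pi$, which we already have from \Cref{lemma:profile-covering-K1k} and \Cref{lemma:WtoWtildeMetricsK1k}~\ref{item:SubCloseK1k-theta}. Throughout, write $M:=|P_{i,j}\setminus B|$ and $x:=I_v(\frp)/M\in[0,1]$.

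For \eqref{eqn:first-summand-ent}, rewrite the left-hand side as $M\,(H(x)+L_kx)$. The second identity in \eqref{eqn:local-ent-estimates-K1k} bounds this by $UM\leq UN$, since $M\leq N$. Next suppose $I_v(\frp)\leq C_0\alpha N$. Then $x\leq C_0\alpha(1+o(1))$, and monotonicity of $H$ on $[0,1/2]$ gives $M(H(x)+L_kx)\leq (H(2C_0\alpha)+2C_0L_k\alpha)N$. This is $o(N)$ because $\alpha\to0$ in the $\omega\to0$ limit. Finally suppose $x\geq1-2\alpha$. Then $H(x)\leq H(2\alpha)$ and $L_kx\leq L_k$, so the left-hand side is at most $(L_k+H(2\alpha))N=(L_k+o(1))N$.

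For \eqref{eqn:second-summand-ent}, put $y:=r/|Y\setminus B|$ and write the term as $|Y\setminus B|\,(H(y)-L_ky)$. The first identity in \eqref{eqn:local-ent-estimates-K1k} bounds this by $A|Y\setminus B|\leq A|Y|$. If $Y\subseteq V_i$, then $Y$ is a part of the same retained component as $P_{i,j}$. Assertion \ref{item:SubCloseK1k-theta}, or the balance assertion \ref{item:SubCloseK1k-balanced}, of \Cref{lemma:WtoWtildeMetricsK1k} then gives $|Y|\leq(1+\omega)N$, so the bound becomes $(A+o(1))N$.

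The last case, $Y\subseteq V_i$ with $r\geq(1-2\alpha)|Y\setminus B|$, is where signs need care. Here \eqref{eqn:high-row-ent-K1k} gives $H(y)-L_ky\leq -L_k+H(2\alpha)+2L_k\alpha$. Multiplying by $|Y\setminus B|$, the leading coefficient is negative, so we need a \emph{lower} bound $|Y\setminus B|\geq(1-o(1))N$ rather than an upper bound. We get it from balance in the opposite direction, $|Y|\geq N/(1+\omega)$, together with $|Y\setminus B|=(1-o(1))|Y|$. This yields $(-L_k+o(1))N$.

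There is no genuine obstacle. The only point needing attention is the one above: using two-sided balance within $V_i$, so that the negative bound survives the replacement of $|Y\setminus B|$ by $N$.
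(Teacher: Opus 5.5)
Your proposal is correct and is essentially the paper's argument: the paper obtains the Fact directly from the two one-variable maxima in \eqref{eqn:local-ent-estimates-K1k}, the tail bound \eqref{eqn:high-row-ent-K1k}, the estimate $|Y\setminus B|=(1+o(1))|Y|$, and balance inside the retained component $V_i$, which is exactly what you use. Your remark that the last bound needs the lower estimate $|Y\setminus B|\geq(1-o(1))N$, because the leading coefficient $-L_k$ is negative, makes explicit a point the paper leaves implicit, and you handle it correctly.
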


\begin{claim}\label{claim:local-tail-estimate-K1k}
With the notation above,
$$J_v(\frp)\geq |U_v|(U-o(1))N+|L_v|(A-o(1))N\,.$$
\end{claim}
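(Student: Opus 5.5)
The plan is to compute $J_v(\frp)$ directly as a large deviation exponent for independent binomial variables. By definition of $\cF_{\Pi,\frp}$, the tail data record exactly the sets $U_v$ and $L_v$: $(v,x,\Upper)\in\Tail_v(\frp)$ iff $x\in U_v$, and $(v,x,\Lower)\in\Tail_v(\frp)$ iff $x\in L_v$. The proof then has three steps: identify the law of each relevant degree, factor the probability, and evaluate each factor with Chernoff.

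\textbf{Law of the degrees.} Fix any $\bsm\in\cM^\nar_\Pi$ and any admissible $H,T$. In $G^\bin_{\Pi,H,T,\bsm}$, for $x\in N_{Q_i}(j)$ the edges from $v$ to $P_{i,x}\setminus B(\frp)$ lie in the active pair $e=(i,jx)$. Their presence is an independent $\mathrm{Bernoulli}(q_e)$ variable with $q_e=\bsm_e/N_e\in p_k\pm\delta$, XORed with the fixed graph $T$. Hence $d(v,P_{i,x}\setminus B)$ is distributed as $D_1+(t_x-D_2)$, where:
\begin{itemize}
\item $t_x$ is the number of $T$-edges from $v$ to $P_{i,x}\setminus B$;
\item $D_1\sim\Bin(|P_{i,x}\setminus B|-t_x,\,q_e)$ and $D_2\sim\Bin(t_x,\,q_e)$ are independent.
\end{itemize}

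\textbf{Factorization.} For distinct $x$ these variables use disjoint edge sets, so the probability of $\bigcap_{\frt\in\Tail_v(\frp)}E_\frt$ factors as a product over $x\in U_v\cup L_v$. It therefore suffices to show, uniformly in $\bsm$, that
\[
-\log\bbP\{E_{(v,x,\Upper)}\}\geq(U-o(1))N \quad\text{and}\quad -\log\bbP\{E_{(v,x,\Lower)}\}\geq(A-o(1))N .
\]
Here $|P_{i,x}|=(1+o(1))N$ by \Cref{lemma:WtoWtildeMetricsK1k}\ref{item:SubCloseK1k-balanced}, and $|B|=o(N)$.

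\textbf{Bounding $t_x$.} The main subtlety is controlling the shift $t_x$. On the active pair, $T$-edges from $v$ are \emph{missing} defect edges only, since in a $\Delta$-regular blow-up the pair is random and not a defect. So $T$ has no edges between $v$ and $P_{i,x}$ for an active pair $jx$, and $t_x=0$. If that reading of $T(G)$ is wrong, the fallback is to note that $v$'s row into $P_{i,x}$ is not among the recorded defect rows. The number of affected edges is then at most $O(\alpha N)$, which changes the threshold by $o(N)$ after $\alpha\to0$.

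\textbf{Chernoff evaluation.} With $t_x=0$, the Chernoff--Hoeffding bound gives:
\begin{itemize}
\item $\bbP\{\Bin(N',q)\geq(1-2\alpha)N\}\leq\exp(-N' I_q(1-2\alpha)+o(N))$, and $I_q(1-2\alpha)\to\log(1/q)=\log(1/p_k)+O(\delta)=U+o(1)$;
\item $\bbP\{\Bin(N',q)\leq 2\alpha N\}\leq\exp(-N'I_q(2\alpha)+o(N))$, and $I_q(2\alpha)\to\log\frac{1}{1-q}=\log\frac{1}{1-p_k}+O(\delta)$.
\end{itemize}

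The remaining step is checking that $\log\frac{1}{1-p_k}=A$. Since $A=L_k/\Delta$ and $L_k=-\Delta\log(1-p_k)$, we get $A=-\log(1-p_k)$, as required.

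Taking the maximum over $\bsm$ only changes $q$ within $p_k\pm\delta$, which is absorbed into the $o(1)$ terms. Summing the exponents over $x\in U_v$ and $x\in L_v$ gives the claim.
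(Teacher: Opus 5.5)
Your proof is correct and is essentially the paper's argument. The tail events for distinct $x\in U_v\cup L_v$ depend on disjoint sets of independent edges in $G^\bin_{\Pi,H,T,\bsm}$, each is bounded by a binomial Chernoff estimate with rates $\log(1/p_k)=U$ and $-\log(1-p_k)=A$, and these bounds are uniform over $\bsm\in\cM^\nar_\Pi$ because $\bsm_e/N_e\in p_k\pm\delta$. Your observation that $T(G,\Pi)$ by definition contains no pairs from active rectangles (so $t_x=0$) is right and fills in a detail the paper leaves implicit, so the fallback paragraph is unnecessary.
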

\begin{proof}
Recall the definitions of the upper- and lower-tail events $E_\frt$ for $\frt\in\Tail_v(\frp)$ given in \Cref{subsec:local-exponents-K1k}. The events $\{E_\frt\}_{\frt\in\Tail_v(\frp)}$ are independent in $G^\bin_{\Pi,H,T,\bsm}$ since they correspond to the adjacency of $v$ into distinct sets $P_{i,x}$ with $x\in N_{Q_i}(j)$. If $x\in U_v$ then the upper-tail event $E_{v,x,\Upper}$ has probability at most $e^{-(U-o(1))N}$, by the usual large deviation estimate for binomial random variables. If $x\in L_v$ then the lower-tail event $E_{v,x,\Lower}$ has probability at most $e^{-(A-o(1))N}$. These estimates are uniform over $\bsm\in\cM^\nar_\Pi$ since $\bsm_e/N_e\in p_k\pm\delta$ for every such $\bsm$, hence taking the maximum over $\bsm$ proves the claim.
\end{proof}

\begin{claim}\label{claim:external-component-ent-K1k}
If $a\neq i$ and $N_a$ denotes the size of any part of the component $V_a$ then
$$\sum_{\substack{(Y,r)\in\cR_v(\frp)\\ Y\subseteq V_a}}\left(|Y\setminus B|\,H\!\left(\frac{r}{|Y\setminus B|}\right)-L_kr\right)\leq o(N_a)\,.$$
\end{claim}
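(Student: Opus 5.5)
The plan is to classify the rows of $v$ into $V_a$ using \Cref{lemma:deterministic-medium-companion-K1k}. Then each row with positive entropy contribution is paid for by a high row, which contributes the negative amount $-L_kN_a$.

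First, the setup. A row $(Y,r)\in\cR_v(\frp)$ with $Y\subseteq V_a$ exists only if $Y=P_{a,b}\in\cY_\Pi$, so $a\in I_\theta(\Pi)$. By \Cref{lemma:WtoWtildeMetricsK1k} \ref{item:SubCloseK1k-theta}, all parts of $V_a$ then have size in $[(1+\omega)^{-1}N_a,(1+\omega)N_a]$, which are within $o(N_a)$ of one another. By \Cref{lemma:profile-covering-K1k}, $|Y\setminus B|=(1+o(1))|Y|$ for each such $Y$. By \Cref{def:profile-class-K1k}, every recorded row satisfies $d_G(v,Y)\geq4\alpha|Y|$. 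I split these rows into two kinds.
\begin{itemize}
\item \emph{High rows}: those with $d_G(v,Y)\geq(1-\alpha)|Y|$. Then $r\geq(1-2\alpha)|Y\setminus B|$, and \eqref{eqn:high-row-ent-K1k} bounds the contribution of such a row by $(-L_k+O(H(2\alpha)+\alpha))|Y|=(-L_k+o(1))N_a$.
\item \emph{Medium rows}: the remaining ones, with $\alpha|Y|\leq 4\alpha|Y|\leq d_G(v,Y)<(1-\alpha)|Y|$. By \eqref{eqn:local-ent-estimates-K1k}, each contributes at most $(A+o(1))N_a$.
\end{itemize}
Let $m$ and $h$ be the numbers of medium and high rows into $V_a$. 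The left-hand side of the claim is then at most $mAN_a-hL_kN_a+(m+h)o(N_a)$. By \Cref{lemma:deterministic-nonlow-bound-K1k}, $m+h\leq|\cN_{\Pi,G}(v)|\leq k-1$, so the error term is $o(N_a)$. Since $A=L_k/\Delta$, it suffices to prove that $m\leq\Delta h$.

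This inequality is the main step, and I would obtain it from \Cref{lemma:deterministic-medium-companion-K1k}. Since $v\in P_{i,j}$ with $i\neq a$, we have $v\notin N_{a,b}$ for every $b\in[q_a]$, so the lemma applies to every medium row $P_{a,b}$. It gives some $b'\in N_{Q_a}(b)$ with $d_G(v,P_{a,b'})\geq(1-\alpha)|P_{a,b'}|$. This part $P_{a,b'}$ lies in $\cY_\Pi$ because $a\in I_\theta(\Pi)$. It also lies in $\cY_{\Pi,v}$ because $a\neq i$. Since $(1-\alpha)\geq4\alpha$, the pair with target $P_{a,b'}$ is recorded in $\cR_v(\frp)$, and it is a high row.

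Now assign each medium row $P_{a,b}$ to one such companion $P_{a,b'}$. A high part $P_{a,b'}$ can be the companion only of its neighbors in $Q_a$, and $Q_a$ is $\Delta$-regular, so it receives at most $\Delta$ medium rows. Hence $m\leq\Delta h$. Substituting this gives
$$mAN_a-hL_kN_a\leq h(\Delta A-L_k)N_a=0,$$
which proves the claim up to the $o(N_a)$ errors.

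The only delicate point is keeping the error uniform. Part sizes within $V_a$ differ by a factor $1+\omega$, and $\omega\to0$. Replacing $|Y|$ by $|Y\setminus B|$ costs a factor $1+o(1)$. The term $H(2\alpha)+\alpha$ in \eqref{eqn:high-row-ent-K1k} is $o(1)$ under the parameter choices. There are at most $k-1$ rows in total. Together these make every error $o(N_a)$.
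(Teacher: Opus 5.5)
Your proposal is correct and follows the paper's route. You split the rows into $V_a$ into medium and high ones, use \Cref{lemma:deterministic-medium-companion-K1k} with the $\Delta$-regularity of $Q_a$ to bound the number of medium rows by $\Delta$ times the number of high rows, and cancel via $L_k=\Delta A$. Your small variations only make the error bookkeeping more explicit: you threshold on $d_G(v,Y)$ rather than on $r$, you check that each companion is itself a recorded row, you cap the number of rows at $k-1$ via \Cref{lemma:deterministic-nonlow-bound-K1k}, and you cite item \ref{item:SubCloseK1k-theta} rather than \ref{item:SubCloseK1k-balanced} for balance, which is the more apt citation since $q_a$ need not be at most $R_0$.
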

\begin{proof}
Let $M'$ be the set of pairs $(Y,r)\in\cR_v(\frp)$ with $Y\subseteq V_a$ and $r<(1-2\alpha)|Y\setminus B|$, and let $H'$ be the set of pairs $(Y,r)\in\cR_v(\frp)$ with $Y\subseteq V_a$ and $r\geq(1-2\alpha)|Y\setminus B|$. By definition of $\cR_v(\frp)$, every pair in $M'$ has density at least $4\alpha+o(1)$ into its target. Since $\rho_B=o(\alpha\theta)$, every such row has density in $[\alpha,1-\alpha]$ into its target for all sufficiently large $n$. Hence \Cref{lemma:deterministic-medium-companion-K1k} implies that for every pair in $M'$, there is a pair in $H'$ whose target is adjacent to its target in $Q_a$. Since each part of $Q_a$ has degree $\Delta$, we have $|M'|\leq\Delta |H'|$. Using \Cref{fact:local-ent-estimates-K1k}, the balance of parts in the component $V_a$ from \Cref{lemma:WtoWtildeMetricsK1k} \ref{item:SubCloseK1k-balanced}, and $L_k=\Delta A$, we obtain
$$(A+o(1))|M'|N_a-(L_k-o(1))|H'|N_a\leq o(N_a)\,,$$
proving the claim.
\end{proof}

\begin{claim}\label{claim:local-nonlow-budget-K1k}
With the notation above, we have
$$|U_v|+|M_v|+q_\insf+q_\outsf\leq\Delta+1\,,\qquad q_\insf+q_\outsf\leq |L_v|+1\,.$$
If additionally $d_G(v,P_{i,j})\geq\alpha |P_{i,j}|$ then
$$|U_v|+|M_v|+q_\insf+q_\outsf\leq\Delta\,,\qquad q_\insf+q_\outsf\leq |L_v|\,.$$
\end{claim}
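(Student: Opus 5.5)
The plan is to derive both inequalities from \Cref{lemma:deterministic-nonlow-bound-K1k}, which says $|\cN_{\Pi,G}(v)|\leq k-1=\Delta+1$. The idea is to exhibit, for the retained vertex $v\in P_{i,j}$, an injection of the objects counted on the left-hand sides into $\cN_{\Pi,G}(v)$.

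First, each $x\in U_v\cup M_v$ gives the element $(i,x)\in\cN_{\Pi,G}(v)$. This holds because $d_G(v,P_{i,x})>\alpha|P_{i,x}|$ and, since $P_{i,j}$ is retained, $i\in I_\theta(\Pi)$. Next, each row $(Y,r)\in\cR_v(\frp)$ has a target $Y=P_{a,b}\in\cY_{\Pi,v}$ with $d_G(v,Y)\geq4\alpha|Y|$, so $(a,b)\in\cN_{\Pi,G}(v)$. By the definition of $\cY_{\Pi,v}$, such a target is neither $P_{i,j}$ nor a neighbouring part $P_{i,x}$ with $x\in N_{Q_i}(j)$. Hence these pairs are distinct from the ones coming from $U_v\cup M_v$, and distinct rows have distinct targets. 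This gives $|U_v|+|M_v|+q_\insf+q_\outsf\leq|\cN_{\Pi,G}(v)|\leq\Delta+1$. If in addition $d_G(v,P_{i,j})\geq\alpha|P_{i,j}|$, then $(i,j)$ is a further element of $\cN_{\Pi,G}(v)$, disjoint from all the previous ones, and the bound improves to $\Delta$.

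The second inequality follows by rearranging. Since $|U_v|+|M_v|+|L_v|=|N_{Q_i}(j)|=\Delta$, the first bound gives $q_\insf+q_\outsf\leq\Delta+1-(|U_v|+|M_v|)=|L_v|+1$. In the refined case the same computation gives $q_\insf+q_\outsf\leq|L_v|$.

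The main point to check is that the counting really is injective and lands in $\cN_{\Pi,G}(v)$. For this I need the target parts to belong to $\cY_\Pi$, which is built into the profile definition. I also need the degree threshold to be measured against the full part, and it is: the threshold for $\cN$ is $\alpha|P|$, while rows require $4\alpha|Y|$, so no adjustment for removing $B$ is needed. No probabilistic input is required; the claim is purely deterministic given $G\in\cF_{\Pi,\frp}$.
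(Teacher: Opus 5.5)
Your proof is correct and follows the paper's argument: you inject $U_v\cup M_v$, the row targets, and (in the refined case) the own part $(i,j)$ into $\cN_{\Pi,G}(v)$, apply $|\cN_{\Pi,G}(v)|\leq\Delta+1$ from \Cref{lemma:deterministic-nonlow-bound-K1k}, and rearrange using $|U_v|+|M_v|+|L_v|=\Delta$. Your explicit check that the row targets avoid $P_{i,j}$ and its $Q_i$-neighbours (via the definition of $\cY_{\Pi,v}$), and that distinct rows have distinct targets, spells out the disjointness that the paper leaves implicit.
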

\begin{proof}
The parts indexed by $U_v\cup M_v$ contribute $|U_v|+|M_v|$ elements of $\cN_{\Pi,G}(v)$, and the rows in $\cR_v(\frp)$ contribute $q_\insf+q_\outsf$ further elements. Thus \Cref{lemma:deterministic-nonlow-bound-K1k} gives $|U_v|+|M_v|+q_\insf+q_\outsf\leq\Delta+1$. Since $|U_v|+|L_v|+|M_v|=\Delta$, this proves $q_\insf+q_\outsf\leq |L_v|+1$. If additionally $d_G(v,P_{i,j})\geq\alpha |P_{i,j}|$ then the own part $P_{i,j}$ contributes one more element of $\cN_{\Pi,G}(v)$. Applying \Cref{lemma:deterministic-nonlow-bound-K1k} with this element included gives $1+|U_v|+|M_v|+q_\insf+q_\outsf\leq\Delta+1$. Since $|U_v|+|L_v|+|M_v|=\Delta$, this proves $q_\insf+q_\outsf\leq |L_v|$.
\end{proof}

\begin{lemma}\label{lemma:medium-internal-negative-root-local-K1k}
There exists $c=c(k,\gamma)>0$ such that the following holds. Assume $\cF_{\Pi,\frp}\neq\emptyset$. If $v\in B_\ret(\frp)\cap P_{i,j}$ and $2\alpha |P_{i,j}\setminus B|\leq I_v(\frp)\leq(1-2\alpha)|P_{i,j}\setminus B|$ then $\Psi_v(\frp)\leq -c\eta n/R_0$.
\end{lemma}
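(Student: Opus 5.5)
We bound $\Psi_v(\frp)=\Ent_v(\frp)-J_v(\frp)$ by splitting $\Ent_v(\frp)$ into four contributions: the internal term (first summand, bounded by \eqref{eqn:first-summand-ent}), the rows in $\cR_v^\insf(\frp)$, the rows in $\cR_v^\outsf(\frp)$ grouped by component, and the tail exponent $J_v(\frp)$ from \Cref{claim:local-tail-estimate-K1k}. The goal is to show that the internal term is matched by a deterministic tail penalty forced by medium internal non-degree.

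First we note that the hypothesis on $I_v(\frp)$ makes the own part a medium row. Indeed, since $|B|=o(N)$, we have $d^c_G(v,P_{i,j})\geq 2\alpha N(1-o(1))\geq\alpha N$ and $d_G(v,P_{i,j})\geq\alpha N$. Then \Cref{lemma:deterministic-medium-companion-K1k} cannot be applied directly, because $v\in P_{i,j}$. Instead we apply \Cref{lemma:deterministic-opposite-row-K1k} in the converse direction: for each neighbor $s\in N_{Q_i}(j)$, a medium row into $P_{i,s}$ must be accompanied by some $t\in N_{Q_i}(s)\setminus\{j\}$ with $v$ essentially complete to $P_{i,t}$.

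The main tool should be a direct induced-star argument centered at $v$ itself, analogous to the proof of \Cref{lemma:deterministic-medium-companion-K1k}. The center is $x\in N_G(v)\cap P_{i,j}$, with leaves $v$, $y\in P_{i,j}\setminus N_G(v)$, and one vertex $y_s$ from each $P_{i,s}\setminus N_G(v)$ with $s\in N_{Q_i}(j)$. This is a $K_{1,k}$ with positive density in $H_\Pi$ unless, for some $s$, $v$ has density at least $1-\alpha$ into $P_{i,s}$. Hence $U_v\neq\emptyset$.

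By \Cref{claim:local-tail-estimate-K1k}, this gives $J_v\geq (U-o(1))N+|L_v|(A-o(1))N$. The internal term is at most $(U+o(1))N$, and the refined version of \eqref{eqn:first-summand-ent} with $I_v$ bounded away from both endpoints gives strictly less: $(U-c')N$, since $H(x)+L_kx$ is maximized at $x=1-p_k$, which may lie in the window. This is the main obstacle, since the upper tail exactly cancels the maximal internal entropy. We first try to gain from the second-order terms. If that fails, we gain instead by noting that $|U_v|\geq1$ together with \Cref{claim:local-nonlow-budget-K1k} (own part non-low) gives $q_\insf+q_\outsf\leq|L_v|$ and forces $|U_v|+|M_v|+q\leq\Delta$.

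For the rows, we use \Cref{claim:external-component-ent-K1k} for $\cR_v^\outsf$ (contributing $o(n)$) and \Cref{fact:local-ent-estimates-K1k} for each $\cR_v^\insf$ row, each at most $(A+o(1))N$. Each such row is paired with a distinct element of $L_v$, which contributes $-(A-o(1))N$. Summing gives $\Psi_v\leq (U+o(1))N-(U-o(1))N+\sum(\text{row}-A)N$.

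The strict gain must come from one of two sources. Either $|U_v|\geq2$, which gives an extra $-UN$. Or, when $|U_v|=1$, from \Cref{lemma:deterministic-opposite-row-K1k} applied at $s\in U_v$. Combining it with the medium own row forces some additional constraint (a second high part or a low neighbor without a matching row), yielding at least $-\min\{A,U\}N/2$. Finally, $N\geq\eta n/(4R_0)$ converts this into $-c\eta n/R_0$.
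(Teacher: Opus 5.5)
\textbf{There is a genuine gap in the final step.} Your leading-order bookkeeping matches the paper's:
\begin{itemize}
\item the medium own row gives $|U_v|\geq1$ (note that \Cref{lemma:deterministic-medium-companion-K1k} \emph{does} apply directly, since its hypothesis only excludes $v\in N_{i,j}$, and $N_{i,j}$ does not contain $P_{i,j}$);
\item \Cref{claim:local-nonlow-budget-K1k} gives $q_\insf+q_\outsf\leq|L_v|$;
\item together these give $\Psi_v(\frp)\leq-c\eta n/R_0$ unless $|U_v|=1$, $q_\outsf=0$ and $q_\insf=|L_v|$.
\end{itemize}
In this residual case the internal term can genuinely reach $(U-o(1))N$, because $H(x)+L_kx$ peaks at $x=1-p_k$ inside the window. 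So your claimed refinement to $(U-c')N$ is false. Nothing can be gained from ``second-order terms'' either, since the $o(1)$ errors carry no sign.

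The gap is the strict gain you claim when $|U_v|=1$. \Cref{lemma:deterministic-opposite-row-K1k} requires $v$ to have \emph{medium} degree into $P_{i,s}$, so it cannot be applied at $s\in U_v$. Applied at $s\in M_v$, it only shows that $s$ is $Q_i$-adjacent to the unique $r\in U_v$, which is not a contradiction. More fundamentally, no consequence of induced-$K_{1,k}$-freeness can force ``a second high part or a low neighbor without a matching row.'' For $k=3$ one has $Q_i=K_2$, $L_v=M_v=\emptyset$ and no internal rows. Take $v\in P_{i,j}$ complete to $P_{i,r}$ and adjacent to a $p_k$-fraction of its own part. This $v$ satisfies every deterministic constraint, yet $\Ent_v(\frp)$ and $J_v(\frp)$ are both $(U\pm o(1))N$. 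Such a $v$ is simply a vertex of $P_{i,r}$ that has been placed in the wrong part.

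The missing idea is the minimality of $b(G,\cdot)$ at the canonical division $\Pi=\Pi(G)$; this is what makes the residual case vacuous. The paper splits into cases:
\begin{itemize}
\item If $I_v(\frp)\leq C_0\alpha N$ for a fixed $C_0>4(\Delta+1)$, the internal term is $o(N)$, so the upper-tail penalty $(U-o(1))N$ is uncompensated.
\item If some row of $\cR_v^\insf(\frp)$ is high, its summand is $(-L_k+o(1))N$ rather than $(A+o(1))N$.
\item Otherwise every internal row is medium. By \Cref{lemma:deterministic-medium-companion-K1k} each row target is adjacent to $P_{i,r}$, and by \Cref{lemma:deterministic-opposite-row-K1k} every $s\in M_v$ is adjacent to $r$. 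Now move $v$ from $P_{i,j}$ to $P_{i,r}$. This changes the defect count by at most $2\alpha|P_{i,r}|+2\alpha\sum_{s\in L_v}|P_{i,s}|-I_v(\frp)+o(n)$. That quantity is negative because $I_v(\frp)>C_0\alpha N$ and the parts are balanced, which contradicts the choice of $\Pi(G)$.
\end{itemize}
Your argument needs this case split on $I_v(\frp)$ and the division-comparison step in place of the appeal to \Cref{lemma:deterministic-opposite-row-K1k} at $s\in U_v$.
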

\begin{proof}
Fix $G\in\cF_{\Pi,\frp}$ and use the notation above. Since $I_v(\frp)\leq(1-2\alpha)|P_{i,j}\setminus B|$, we have $d_G(v,P_{i,j})\geq\alpha |P_{i,j}|$ for all sufficiently large $n$. Since also $I_v(\frp)\geq2\alpha |P_{i,j}\setminus B|$, the density of $v$ into $P_{i,j}$ lies in $[\alpha,1-\alpha]$. Applying \Cref{lemma:deterministic-medium-companion-K1k} to the pair $(v,P_{i,j})$ implies $|U_v|\geq1$.

By \Cref{claim:local-nonlow-budget-K1k}, we have $q_\insf+q_\outsf\leq |L_v|$. Also, by \Cref{claim:external-component-ent-K1k}, rows with targets in components other than $V_i$ have nonpositive leading contribution to $\Ent_v(\frp)$. Combining \Cref{fact:local-ent-estimates-K1k,claim:local-tail-estimate-K1k}, we get
\begin{align}
\Psi_v(\frp)&\leq (U+o(1))N+(A+o(1))q_\insf N-|U_v|(U-o(1))N-|L_v|(A-o(1))N \notag\\
&\leq \bigl((1-|U_v|)U+(q_\insf-|L_v|)A+o(1)\bigr)N\,. \label{eqn:medium-main-bound-K1k}
\end{align}
Since $q_\insf+q_\outsf\leq |L_v|$ and $|U_v|\geq1$, the right-hand side is at most $-c\eta n/R_0$ unless $|U_v|=1$, $q_\outsf=0$, and $q_\insf=|L_v|$. Assume we are in this remaining case, and let $r$ be the unique element of $U_v$.

If $I_v(\frp)\leq C_0\alpha N$, where $C_0>4(\Delta+1)$ is fixed then \Cref{fact:local-ent-estimates-K1k} gives
$$|P_{i,j}\setminus B|\,H\!\left(\frac{I_v(\frp)}{|P_{i,j}\setminus B|}\right)+L_kI_v(\frp)=o(N)\,,$$
while the upper-tail event $E_{v,r,\Upper}$ contributes $(U-o(1))N$ to $J_v(\frp)$, hence $\Psi_v(\frp)\leq-c\eta n/R_0$. We may therefore assume that $I_v(\frp)>C_0\alpha N$.

If some pair $(Y,r_Y)\in\cR_v^\insf(\frp)$ satisfies $r_Y\geq(1-2\alpha)|Y\setminus B|$ then \Cref{fact:local-ent-estimates-K1k} gives the stronger bound $(-L_k+o(1))N$ for the corresponding summand in $\Ent_v(\frp)$, instead of the crude $(A+o(1))N$ used in \eqref{eqn:medium-main-bound-K1k}. This again implies $\Psi_v(\frp)\leq-c\eta n/R_0$. Hence every pair $(Y,r_Y)\in\cR_v^\insf(\frp)$ satisfies $r_Y<(1-2\alpha)|Y\setminus B|$. By \Cref{lemma:deterministic-medium-companion-K1k}, each such pair has a high companion in an adjacent part. Since $r\in N_{Q_i}(j)$ is the unique index with $(v,r,\Upper)\in\Tail_v(\frp)$ and since there are no pairs $(Y,r_Y)\in\cR_v^\insf(\frp)$ with $r_Y\geq(1-2\alpha)|Y\setminus B|$, every target $Y$ of a pair in $\cR_v^\insf(\frp)$ is $Q_i$-adjacent to $P_{i,r}$.

We claim that every $s\in M_v$ is adjacent to $r$. Suppose otherwise, and fix $s\in M_v$ with $rs\notin E(Q_i)$. Since $s\in M_v$, we have
$$\alpha|P_{i,s}|\leq d_G(v,P_{i,s})\leq(1-\alpha)|P_{i,s}|\,.$$
Also, since $I_v(\frp)\geq2\alpha|P_{i,j}\setminus B|$ and $\rho_B=o(\alpha\theta)$, we have $d_G^c(v,P_{i,j})\geq\alpha|P_{i,j}|$ for all sufficiently large $n$. Finally, let $t\in N_{Q_i}(s)\setminus\{j\}$. If $t\in N_{Q_i}(j)$ then $t\neq r$ because $rs\notin E(Q_i)$, and hence $t\notin U_v$. Therefore $d_G^c(v,P_{i,t})\geq\alpha|P_{i,t}|$. If $t\notin N_{Q_i}(j)\cup\{j\}$ then either $(P_{i,t},d_G(v,P_{i,t}\setminus B))\in\cR_v^\insf(\frp)$, in which case the preceding paragraph gives
$$d_G(v,P_{i,t}\setminus B)<(1-2\alpha)|P_{i,t}\setminus B|\,,$$
or else $d_G(v,P_{i,t})<4\alpha|P_{i,t}|$. In the first case,
$$d_G(v,P_{i,t})\leq d_G(v,P_{i,t}\setminus B)+|B\cap P_{i,t}|<(1-2\alpha)|P_{i,t}\setminus B|+|B\cap P_{i,t}|\leq(1-\alpha)|P_{i,t}|$$
for all sufficiently large $n$, since $|B|\leq\rho_Bn$ and $\rho_B=o(\theta)$. In the second case, $d_G^c(v,P_{i,t})\geq\alpha|P_{i,t}|$ by our choice of $\alpha$. It follows that in both cases, $d_G^c(v,P_{i,t})\geq\alpha|P_{i,t}|$. It follows that \Cref{lemma:deterministic-opposite-row-K1k} applies, yielding a contradiction.

Let $\Pi'$ be the division obtained from $\Pi$ by moving $v$ from $P_{i,j}$ to $P_{i,r}$. The edges from $v$ to the parts indexed by $M_v$ remain in active pairs after this move, and the rows in $\cR_v^\insf(\frp)$ also become active. The missing edges from $v$ to $P_{i,r}$ contribute at most $2\alpha|P_{i,r}|+o(n)$ to $b(G,\Pi')$, while the lower-tail parts contribute at most $2\alpha\sum_{s\in L_v}|P_{i,s}|+o(n)$. On the other hand, the $I_v(\frp)$ missing edges from $v$ to $P_{i,j}\setminus B$ no longer contribute to the defect count, and all rows outside $V_i$ are unchanged. It follows that
\begin{align*}
b(G,\Pi')-b(G,\Pi)&\leq 2\alpha|P_{i,r}|+2\alpha\sum_{s\in L_v}|P_{i,s}|-I_v(\frp)+o(n)<0\,,
\end{align*}
by the balance of the parts given in \Cref{lemma:WtoWtildeMetricsK1k}, and by the choice of $C_0$. This contradicts the optimality of $\Pi=\Pi(G)$, completing the proof.
\end{proof}

\begin{lemma}\label{lemma:high-internal-negative-root-local-K1k}
There exists $c=c(k,\gamma)>0$ such that the following holds. Assume $\cF_{\Pi,\frp}\neq\emptyset$. If $v\in B_\ret(\frp)\cap P_{i,j}$ and $I_v(\frp)\geq(1-2\alpha)|P_{i,j}\setminus B|$ then $\Psi_v(\frp)\leq -c\eta n/R_0$.
\end{lemma}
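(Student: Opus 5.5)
Since $I_v(\frp)\geq(1-2\alpha)|P_{i,j}\setminus B|$, the vertex $v$ is missing almost all edges into its own part, so \Cref{fact:local-ent-estimates-K1k} bounds the own-part summand of $\Ent_v(\frp)$ by $(L_k+o(1))N$. That is cheaper than the crude $(U+o(1))N$ used in \Cref{lemma:medium-internal-negative-root-local-K1k}. I would first use minimality of $\Pi(G)$ to pin down the structure of $v$, and only then apply the entropy and tail bookkeeping.

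\textbf{Step 1: optimality.} Fix $G\in\cF_{\Pi,\frp}$ and compare $\Pi$ with the division $\Pi''$ obtained by moving $v$ into $\Pi_\sparse$. The move removes the $\geq(1-2\alpha)N-o(n)$ missing-edge defects at $v$ inside $P_{i,j}$, plus all row defects of $v$. It adds $d_G(v)$ new defects, minus those already counted. Optimality of $\Pi(G)$ therefore forces $d_G(v)\gtrsim(1-3\alpha)N$. Since $d_G(v,S_\smsf(\Pi))\leq C_k\theta n$ by \Cref{lemma:SubCompareSparseSideEdgesK1k}, $v$ must have substantial degree into some $\theta$-visible part. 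Similarly, comparing with the move of $v$ into a neighbouring part $P_{i,x}$ as in the proof of \Cref{lemma:super-FPiprime} shows the following: if $x\in U_v$, then moving $v$ to $P_{i,x}$ costs at most $2\alpha N+O(\alpha N)$ from the other neighbours and saves about $(1-2\alpha)N$. Such a move would contradict optimality unless $v$ has many non-neighbours in parts adjacent to $x$ but not to $j$; in that case those parts carry recorded rows or lower tails.

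\textbf{Step 2: exponent bound.} By \Cref{claim:local-nonlow-budget-K1k}, since $d_G(v,P_{i,j})<\alpha|P_{i,j}|$ here, we have $|U_v|+|M_v|+q_\insf+q_\outsf\leq\Delta+1$ and $q_\insf+q_\outsf\leq|L_v|+1$. Rows in other components contribute $o(N_a)$ by \Cref{claim:external-component-ent-K1k}, and rows to parts $Y\subseteq V_\ast$ outside every retained component are handled the same way or are empty. Combining \Cref{fact:local-ent-estimates-K1k} and \Cref{claim:local-tail-estimate-K1k}, and using $L_k=\Delta A$ and $U=L_k+A$, gives
$$\Psi_v(\frp)\leq\bigl(L_k+q_\insf A-|U_v|U-|L_v|A+o(1)\bigr)N\leq\bigl(L_k+A-|U_v|U-o(1)\bigr)N\,,$$
where the second inequality uses $q_\insf\leq|L_v|+1$. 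This is $\leq -cN$ as soon as $|U_v|\geq2$, or $|U_v|=1$ with $q_\insf\leq|L_v|$. Rows with $r\geq(1-2\alpha)|Y\setminus B|$ give $-L_k$ instead of $A$, which only helps. For $|U_v|=0$ the bound reads $(L_k+(q_\insf-|L_v|)A)N\leq(L_k+A)N$, which is useless. The gain must then come from the rows being high. Each medium row needs a high companion by \Cref{lemma:deterministic-medium-companion-K1k}, and high rows contribute $-L_k$ each.

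\textbf{Step 3: main obstacle.} The hard cases are the near-equality configurations. One is $|U_v|=1$ with $q_\insf=|L_v|+1$. The other is $|U_v|=0$, where $v$ behaves like a vertex that should belong to a different part. In both, the ``leading order'' exponent vanishes, and I would close them as in the last step of \Cref{lemma:medium-internal-negative-root-local-K1k}. First, \Cref{lemma:deterministic-opposite-row-K1k} and \Cref{lemma:deterministic-medium-companion-K1k} force every row target and every medium part to be $Q_i$-adjacent to a single high part $P_{i,r}$ (within $V_i$ or another component). Then moving $v$ to $P_{i,r}$ gives $b(G,\Pi')-b(G,\Pi)\leq O(\alpha N)-(1-2\alpha)N<0$, contradicting minimality of $\Pi(G)$. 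If no high part exists, the combinatorial constraint forces all of $v$'s non-low rows to be medium. \Cref{lemma:deterministic-medium-companion-K1k} then gives a contradiction, except in a sparse configuration where Step 1 already showed that moving $v$ to $\Pi_\sparse$ decreases $b$. The delicate part is checking that every equality case of the budget inequality falls into one of these two contradictions. Since $N\geq\eta n/(4R_0)$, in all remaining cases we get $\Psi_v(\frp)\leq-c\eta n/R_0$.
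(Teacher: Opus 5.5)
Your route is the paper's. You use the budget from \Cref{claim:local-nonlow-budget-K1k} and the bound $\bigl(L_k-|U_v|U+(q_\insf-|L_v|)A+o(1)\bigr)N$, with the $-L_k$ improvement for high in-rows (which also disposes of $|H_v|\ge2$). You then close the leading-order equality cases by showing that the medium neighbour parts are $Q_i$-adjacent to the unique high part $P_{i,r}$ and moving $v$ there, and you exclude $U_v=M_v=\emptyset$ by moving $v$ to $\Pi_\sparse$.

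Using the move uniformly is, if anything, slightly more robust than the paper's write-up. In the subcase $U_v=\emptyset$, $|H_v|=1$, $L_v\neq\emptyset$ the paper argues by a count. But when $q_\insf=|L_v|+1$, the medium in-rows cancel the lower tails at leading order, so that count does not close. After the move, by contrast, lower-tail neighbours not adjacent to $r$ cost only $O(\alpha N)$, while about $2N$ defects are saved.

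However, three steps are wrong as written, though each is repairable with tools you already cite.

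(i) Moving $v$ to $\Pi_\sparse$ does not remove row defects: afterwards every edge from $v$ to $V(\Pi)$ is a defect. The exact change is therefore $d_G(v,P_{i,j})+d_G(v,N_{i,j})-d^c_G(v,P_{i,j})$. Minimality gives $d_G(v,N_{i,j})\ge(1-4\alpha)N-o(n)$, hence $|U_v|+|M_v|\ge1$, which is the fact you actually need. Your stated conclusion (large $d_G(v)$, or large degree into some $\theta$-visible part) is too weak, since that degree could lie entirely in rows.

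(ii) Your description of when moving $v$ into $P_{i,x}$ fails is backwards. Parts adjacent to $x$ but not to $j$ become active after the move, so they can only save defects. The cost comes from parts adjacent to $j$ but not to $x$ into which $v$ sends many edges, i.e.\ $M_v$-parts not adjacent to $x$. That is precisely why adjacency of $M_v$ to $r$ must be proved.

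(iii) That adjacency, and the contradiction in your no-high-part case, cannot come from \Cref{lemma:deterministic-medium-companion-K1k}. For $s\in M_v$ its hypothesis $v\notin N_{i,s}$ fails, because $v\in P_{i,j}$ and $j\in N_{Q_i}(s)$. You need \Cref{lemma:deterministic-opposite-row-K1k}: use $d^c_G(v,P_{i,j})\ge\alpha|P_{i,j}|$, and check $d^c_G(v,P_{i,t})\ge\alpha|P_{i,t}|$ for $t\in N_{Q_i}(s)\setminus\{j\}$ as in \Cref{lemma:medium-internal-negative-root-local-K1k}. The companion lemma only serves for in-row targets. This matters: with $U_v=H_v=\emptyset$, no rows, and $M_v\neq\emptyset$, your exponent bound equals $+|M_v|AN$, so that configuration must be excluded structurally. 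The same lemma also places the high part inside $V_i$, so drop ``or another component''.

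Finally, $I_v(\frp)\ge(1-2\alpha)|P_{i,j}\setminus B|$ only gives $d_G(v,P_{i,j})\lesssim2\alpha N$, not $<\alpha|P_{i,j}|$. The weak budget you use holds regardless.
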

\begin{proof}
Fix $G\in\cF_{\Pi,\frp}$ and use the notation introduced above. We first show that $|U_v|+|M_v|\geq1$. If $U_v=M_v=\emptyset$ then for all $P_{i,x}$ with $x\in N_{Q_i}(j)$, we have $d_G(v,P_{i,x})\leq\alpha |P_{i,x}|$. Moving $v$ from $P_{i,j}$ to $\Pi_\sparse$ removes at least $(1-2\alpha)N+o(N)$ missing edges from $v$ to $P_{i,j}$, while the number of new defects inside $V_i$ is at most
$$2\alpha N+2\alpha\sum_{x\in N_{Q_i}(j)}|P_{i,x}|+o(N)\,,$$
contradicting the optimality of $\Pi=\Pi(G)$ after decreasing $\alpha$. It follows that $|U_v|+|M_v|\geq1$.

Assume first that $|U_v|\geq1$. By \Cref{claim:local-nonlow-budget-K1k}, we have $q_\insf+q_\outsf\leq |L_v|+1$. The derivation of \eqref{eqn:medium-main-bound-K1k} applies with the summand involving $I_v(\frp)$ bounded by $(L_k+o(1))N$, giving
\begin{align}
\Psi_v(\frp)&\leq (L_k+o(1))N+(A+o(1))q_\insf N-|U_v|(U-o(1))N-|L_v|(A-o(1))N\notag\\
&\leq\bigl(L_k-|U_v|U+(q_\insf-|L_v|)A+o(1)\bigr)N\,. \label{eqn:high-neg-main-bound-K1k}
\end{align}
Since $U=L_k+A$, the right-hand side is at most $-c\eta n/R_0$ unless $|U_v|=1$, $q_\outsf=0$, and $q_\insf=|L_v|+1$. Assume this remaining case holds, and let $r$ be the unique element of $U_v$.

If $H_v\neq\emptyset$ then \Cref{fact:local-ent-estimates-K1k} gives the stronger bound $(-L_k+o(1))N$ for at least one summand in $\Ent_v(\frp)$, instead of the crude $(A+o(1))N$ used in \eqref{eqn:high-neg-main-bound-K1k}. This gives $\Psi_v(\frp)\leq-c\eta n/R_0$, hence we may assume $H_v=\emptyset$. Similar to above, by \Cref{lemma:deterministic-medium-companion-K1k} every pair in $\cR_v^\insf(\frp)$ has a high companion in an adjacent part. Since $r$ is the unique index with $(v,r,\Upper)\in\Tail_v(\frp)$ and $H_v=\emptyset$, every target $Y$ of a pair in $\cR_v^\insf(\frp)$ is $Q_i$-adjacent to $P_{i,r}$.

The argument in the proof of \Cref{lemma:medium-internal-negative-root-local-K1k} showing that every $s\in M_v$ is adjacent to the unique element of $U_v$ applies here as well. Indeed, the assumption $H_v=\emptyset$ gives the same upper bound on $d_G(v,P_{i,t}\setminus B)$ for rows in $\cR_v^\insf(\frp)$ that was used there. It follows that every $s\in M_v$ is adjacent to $r$.

With the stronger lower bound $I_v(\frp)\geq(1-2\alpha)|P_{i,j}\setminus B|$, the final division comparison in the proof of \Cref{lemma:medium-internal-negative-root-local-K1k} gives a division $\Pi'$ with $b(G,\Pi')<b(G,\Pi)$, contradicting the optimality of $\Pi=\Pi(G)$.

It remains to consider the case $U_v=\emptyset$. Since $|U_v|+|M_v|\geq1$, we have $|M_v|\geq1$. We claim that every $s\in M_v$ has a companion in $H_v$ whose target is adjacent to $P_{i,s}$. Otherwise, choose $s\in M_v$ for which no such companion exists. The same case analysis over $t\in N_{Q_i}(s)\setminus\{j\}$ used in the proof of \Cref{lemma:medium-internal-negative-root-local-K1k} gives $d_G^c(v,P_{i,t})\geq\alpha|P_{i,t}|$ for every such $t$: if $t\in N_{Q_i}(j)$ then $t\notin U_v$, while if $t\notin N_{Q_i}(j)\cup\{j\}$ then either the corresponding row lies outside $H_v$ or $d_G(v,P_{i,t})<4\alpha|P_{i,t}|$. Since $s\in M_v$ and $d_G^c(v,P_{i,j})\geq\alpha|P_{i,j}|$, this contradicts \Cref{lemma:deterministic-opposite-row-K1k}.

If $|H_v|\geq2$ then \Cref{fact:local-ent-estimates-K1k} gives the stronger bound $(-L_k+o(1))N$ for at least two summands in $\Ent_v(\frp)$, which gives $\Psi_v(\frp)\leq-c\eta n/R_0$. If $|H_v|=1$ and $L_v\neq\emptyset$ then the unique summand indexed by $H_v$ contributes at most $(-L_k+o(1))N$, while one lower-tail event contributes $(A-o(1))N$ to $J_v(\frp)$; this again gives $\Psi_v(\frp)\leq-c\eta n/R_0$.

We are left with the case where $H_v$ consists of one pair, say with target $P_{i,r}$, and $L_v=\emptyset$. Then every active-neighbor adjacency belongs to $M_v$, and the claim in the preceding paragraph implies that every $s\in N_{Q_i}(j)$ is adjacent to $r$. Let $\Pi'$ be the division obtained from $\Pi$ by moving $v$ from $P_{i,j}$ to $P_{i,r}$. This makes the row into $P_{i,r}$ internal, makes the old own part an active-neighbor part, and keeps every old active-neighbor adjacency active. In this case it follows that
$$b(G,\Pi')-b(G,\Pi)\leq d_G^c(v,P_{i,r})+d_G(v,P_{i,j})-d_G^c(v,P_{i,j})-d_G(v,P_{i,r})+o(n)<0\,,$$
contradicting the optimality of $\Pi=\Pi(G)$, and proving the lemma.
\end{proof}

\begin{lemma}\label{lemma:high-internal-degree-root-local-K1k}
There exists $c=c(k,\gamma)>0$ such that the following holds. Assume $\cF_{\Pi,\frp}\neq\emptyset$. If $v\in B_\ret(\frp)\cap P_{i,j}$ and $I_v(\frp)<2\alpha|P_{i,j}\setminus B|$ then $\Psi_v(\frp)\leq -c\eta n/R_0$.
\end{lemma}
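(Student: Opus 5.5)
Since $I_v(\frp)<2\alpha|P_{i,j}\setminus B|$, the vertex $v$ is almost complete to its own part. So the own-part summand of $\Ent_v(\frp)$ is negligible, and the reason $v\in B_\ret(\frp)$ must be a genuine row. The plan is to show that the rows are paid for, with strict gain, by the lower-tail penalties in $J_v(\frp)$ and by the entropy loss of high rows. Fix $G\in\cF_{\Pi,\frp}$ and use the notation $N,U_v,L_v,M_v,q_\insf,q_\outsf,H_v$ introduced before \Cref{fact:local-ent-estimates-K1k}.

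\textbf{Step 1: the budget.} Since $I_v(\frp)<2\alpha|P_{i,j}\setminus B|$ and $|B|=o(\alpha\theta n)$, we have $d_G(v,P_{i,j})\geq\alpha|P_{i,j}|$. By \Cref{fact:local-ent-estimates-K1k} (the case $I_v(\frp)\leq C_0\alpha N$), the own-part summand of $\Ent_v(\frp)$ is $o(N)$. We are in the second half of \Cref{claim:local-nonlow-budget-K1k}, which gives
$$q_\insf+q_\outsf\leq|L_v|\qquad\text{and}\qquad |U_v|+|M_v|+q_\insf+q_\outsf\leq\Delta\,.$$
Moreover, by the definition of $B_\ret(G)$ and the fact that $d_G^c(v,P_v)<4\alpha|P_v|$, the vertex $v$ has at least one row, so $q_\insf+q_\outsf\geq1$. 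We do not assume $v\in B_\ret$ beyond this.

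\textbf{Step 2: crude bound.} Group the rows by the component of their target. Rows whose targets lie in a component $V_a$ with $a\neq i$ contribute at most $o(N_a)=o(n)$ by \Cref{claim:external-component-ent-K1k}; this is $o(N)$ because $N\geq\eta n/(4R_0)$ and the number of components with $\theta$-visible parts is bounded in terms of $\theta$. Rows into $V_i$ contribute at most $(A+o(1))N$ each, and this improves to $(-L_k+o(1))N$ for rows in $H_v$. Combining these with \Cref{claim:local-tail-estimate-K1k} gives
$$\Psi_v(\frp)\leq\bigl((q_\insf-|L_v|)A-|U_v|U-|H_v|(A+L_k)+o(1)\bigr)N\,.$$
The right-hand side is at most $-cN\leq -c\eta n/(4R_0)$ unless all of the following hold: $U_v=\emptyset$, $H_v=\emptyset$, $q_\outsf=0$, and $q_\insf=|L_v|$. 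In the remaining case $q_\insf\geq1$, so there is a row $(Y,r_Y)\in\cR^\insf_v(\frp)$ with $r_Y<(1-2\alpha)|Y\setminus B|$.

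\textbf{Step 3: ruling out the remaining case.} Write $Y=P_{i,s}$. By the definition of $\cY_{\Pi,v}$, the index $s$ is neither $j$ nor a $Q_i$-neighbour of $j$, so $v\notin N_{i,s}$. The row density of $v$ into $Y$ is at least $4\alpha-o(1)$ and at most $1-2\alpha+o(1)$, so it lies in $[\alpha,1-\alpha]$. Hence \Cref{lemma:deterministic-medium-companion-K1k} yields $s'\in N_{Q_i}(s)$ with $d_G(v,P_{i,s'})\geq(1-\alpha)|P_{i,s'}|$.
\begin{itemize}
\item $s'\neq j$, because $s$ is not adjacent to $j$.
\item $s'\notin N_{Q_i}(j)$, because otherwise $s'\in U_v=\emptyset$.
\end{itemize}
Therefore $P_{i,s'}\in\cY_{\Pi,v}$ and $d_G(v,P_{i,s'})\geq4\alpha|P_{i,s'}|$, so $P_{i,s'}$ is the target of a row. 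Removing $|B\cap P_{i,s'}|=o(\alpha|P_{i,s'}|)$ vertices shows this row has $r\geq(1-2\alpha)|P_{i,s'}\setminus B|$, so it belongs to $H_v$. This contradicts $H_v=\emptyset$.

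\textbf{Main obstacle.} The delicate point is uniformity of the error terms. The $o(1)N$ losses and the contribution of rows into components other than $V_i$ must be genuinely $o(N)$. This needs the balance from \Cref{lemma:WtoWtildeMetricsK1k} \ref{item:SubCloseK1k-balanced},\ref{item:SubCloseK1k-theta} (to compare $|Y|$ with $N$ for targets $Y$ in $V_i$), together with $\rho_B=o(\alpha\theta)$ and $N\geq\eta n/(4R_0)$. If summing \Cref{claim:external-component-ent-K1k} over external components turns out too lossy, I would instead use the bound $|\cN_{\Pi,G}(v)|\leq k-1$ from \Cref{lemma:deterministic-nonlow-bound-K1k}, which shows that at most $\Delta$ external components carry rows.
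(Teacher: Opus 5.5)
Your proposal is correct and follows the paper's proof essentially step for step. The shared steps are the second half of \Cref{claim:local-nonlow-budget-K1k}, then the crude bound that leaves only the case $U_v=H_v=\emptyset$, $q_\outsf=0$, $q_\insf=|L_v|\geq1$, and finally \Cref{lemma:deterministic-medium-companion-K1k} applied to an in-row to produce a high in-row contradicting $H_v=\emptyset$; your Step~3 just makes explicit what the paper leaves implicit ($v\notin N_{i,s}$, and that the companion part lies in $\cY_{\Pi,v}$ and so yields a row in $H_v$).

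On the uniformity worry, which the paper also glosses over: the inference ``$o(n)=o(N)$ because $N\geq\eta n/(4R_0)$'' does not hold on its own, since $\eta/R_0\to0$ with $\omega$. Your fallback via $|\cN_{\Pi,G}(v)|\leq k-1$ is the right tool. Here $v$'s own part already occupies one slot of $\cN_{\Pi,G}(v)$, so each external component carries at most $\Delta$ rows. The argument of \Cref{claim:external-component-ent-K1k} then makes that component's contribution strictly negative, not merely $o(N_a)$.
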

\begin{proof}
Fix $G\in\cF_{\Pi,\frp}$ and use the notation introduced above. Since $I_v(\frp)<2\alpha|P_{i,j}\setminus B|$, we have $d_G(v,P_{i,j})\geq(1-3\alpha)N$ for all sufficiently large $n$. By \Cref{claim:local-nonlow-budget-K1k}, we have $q_\insf+q_\outsf\leq |L_v|$.

The derivation of \eqref{eqn:medium-main-bound-K1k} applies with the summand involving $I_v(\frp)$ bounded by $o(N)$, so we obtain
\begin{equation}\label{eqn:high-degree-main-bound-K1k}
\Psi_v(\frp)\leq \bigl((q_\insf-|L_v|)A-|U_v|U+o(1)\bigr)N\,,
\end{equation}
with a strict improvement from \Cref{fact:local-ent-estimates-K1k} if $H_v\neq\emptyset$. By $q_\insf+q_\outsf\leq |L_v|$, the right-hand side is at most $-c\eta n/R_0$ unless $U_v=\emptyset$, $q_\outsf=0$, $q_\insf=|L_v|$, and $H_v=\emptyset$. Assume this last case holds.

Since $v\in B_\ret(\frp)$ and $I_v(\frp)<2\alpha|P_{i,j}\setminus B|$, we have $q_\insf+q_\outsf\geq1$, thus $q_\insf\geq1$. Choose a pair $(P_{i,s},r_s)\in\cR_v^\insf(\frp)$. Since $H_v=\emptyset$ and $\rho_B=o(\alpha\theta)$, this pair has density in $[\alpha,1-\alpha]$ into its target. By \Cref{lemma:deterministic-medium-companion-K1k}, it has a high companion in an adjacent part. This companion cannot be a pair in $\cR_v^\insf(\frp)$ because $H_v=\emptyset$, cannot be an upper-tail active-neighbor part because $U_v=\emptyset$, and cannot be the own part $P_{i,j}$ because $s\notin N_{Q_i}(j)\cup\{j\}$. This contradiction proves the lemma.
\end{proof}

\begin{lemma}\label{lemma:outside-root-compensation-K1k}
There exists $c=c(k,\gamma)>0$ such that the following holds. Assume $\cF_{\Pi,\frp}\neq\emptyset$. If $v\in B_\outsf(\frp)$ then $\Psi_v(\frp)\leq -c\eta n/R_0$.
\end{lemma}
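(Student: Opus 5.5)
Since $v\in B_\outsf(\frp)$, we have $J_v(\frp)=0$ by definition, so $\Psi_v(\frp)=\Ent_v(\frp)$. This is the sum, over the rows $(Y,r)\in\cR_v(\frp)$ with retained targets $Y\in\cY^\ast_\Pi$, of $|Y\setminus B|\,H(r/|Y\setminus B|)-L_kr$. There is no own-part term. The plan is to group the rows by the retained component $V_a$ containing their target, and to show that each group with at least one row contributes at most $-(A-o(1))N_a$. Here $N_a$ is the common (up to a factor $1+o(1)$) part size of $V_a$, as given by \Cref{lemma:WtoWtildeMetricsK1k} \ref{item:SubCloseK1k-balanced}. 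This gives the lemma, because $v\in B_\outsf(\frp)$ forces at least one row, and retained parts satisfy $N_a\geq \eta n/(4R_0)$ by \Cref{lemma:WtoWtildeMetricsK1k}.

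\textbf{Step 1: structure of a group of rows.} Fix $G\in\cF_{\Pi,\frp}$ and a retained component $V_a$, and split its rows, as in \Cref{claim:external-component-ent-K1k}, into high rows $H'$ (those with $r\geq(1-2\alpha)|Y\setminus B|$) and medium rows $M'$. Every row has density at least $4\alpha$ into its target, and $\rho_B=o(\alpha\theta)$, so every row in $M'$ has density in $[\alpha,1-\alpha]$. Since $v\in S_\Pi$, we have $v\notin N_{a,j}$ for every $j$. Hence \Cref{lemma:deterministic-medium-companion-K1k} applies to each medium row and yields an adjacent part $P_{a,j'}$ with $d_G(v,P_{a,j'})\geq(1-\alpha)|P_{a,j'}|$. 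That part is itself recorded as a row, because $(1-\alpha)\geq4\alpha$, and it is a high row after discarding the $o(|Y|)$ vertices of $B$. Consequently, if the group contains any row at all, then $|H'|\geq1$.

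\textbf{Step 2: the budget and the entropy estimate.} By \Cref{lemma:sparse-retained-row-budget-K1k}, $v$ is nonlow into at most $\Delta$ retained parts, so $|M'|+|H'|\leq\Delta$ for each group, and in fact summed over all groups. Applying \Cref{fact:local-ent-estimates-K1k}, a medium row contributes at most $(A+o(1))N_a$ and a high row contributes at most $(-L_k+o(1))N_a$. Using $L_k=\Delta A$ and $|H'|\geq1$, the group contributes at most
\[
\bigl(A|M'|-L_k|H'|+o(1)\bigr)N_a\leq\bigl(A(\Delta-1)-\Delta A+o(1)\bigr)N_a=-(A-o(1))N_a .
\]
Summing over the groups, every term is nonpositive and at least one is present. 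This gives $\Psi_v(\frp)\leq-(A-o(1))\eta n/(4R_0)$, which is at most $-c\eta n/R_0$ with $c$ depending only on $k$ through $A=L_k/\Delta$.

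\textbf{Main obstacle.} The delicate point is Step 1. One must check that the high companion produced by \Cref{lemma:deterministic-medium-companion-K1k} really is a high row in the profile's sense, with the threshold measured on $Y\setminus B$. One must also check that the errors from $|B|\leq\rho_Bn$ and from part imbalance within $V_a$ are genuinely $o(N_a)$. Both follow from $\rho_B=o(\alpha\theta)$ together with the balance in \Cref{lemma:WtoWtildeMetricsK1k} \ref{item:SubCloseK1k-balanced},\ref{item:SubCloseK1k-theta}, so I expect them to be routine bookkeeping.
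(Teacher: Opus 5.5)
Your proposal is correct and follows essentially the same route as the paper's proof. Both observe that $J_v(\frp)=0$ for $v\in B_\outsf(\frp)$ and split the rows in each retained component into medium and high rows. Both use \Cref{lemma:deterministic-medium-companion-K1k} to force a high row in every nonempty group, and both apply the budget $|M_i|+|H_i|\leq\Delta$ from \Cref{lemma:sparse-retained-row-budget-K1k}. Then $L_k=\Delta A$ shows that each nonempty group contributes at most $(-A+o(1))N_i$, where $N_i\geq\eta n/(4R_0)$.
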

\begin{proof}
Fix $G\in\cF_{\Pi,\frp}$. For each retained component $V_i$, let $M_i$ be the set of rows $(P_{i,j},r)\in\cR_v(\frp)$ such that $r<(1-2\alpha)|P_{i,j}\setminus B|$, and let $H_i$ be the set of rows $(P_{i,j},r)\in\cR_v(\frp)$ such that $r\geq(1-2\alpha)|P_{i,j}\setminus B|$. By the same application of \Cref{lemma:deterministic-medium-companion-K1k} used for rows in $\cR_v^\insf(\frp)$ in the proof of \Cref{lemma:medium-internal-negative-root-local-K1k}, every row in $M_i$ has a companion in $H_i$ whose target is adjacent to its target. In particular, $H_i\neq\emptyset$ whenever $M_i\cup H_i\neq\emptyset$.

By \Cref{lemma:sparse-retained-row-budget-K1k}, whenever $M_i\cup H_i\neq\emptyset$, the total number of retained rows rooted at $v$ is at most $\Delta$. In particular, $|M_i|+|H_i|\leq\Delta$. Let $N_i$ denote the size of any part of $V_i$. By \eqref{eqn:local-ent-estimates-K1k}, \eqref{eqn:high-row-ent-K1k}, the balance of the retained component $V_i$, and $L_k=\Delta A$, we have
\begin{align*}
\Psi_v(\frp)&=\Ent_v(\frp)\\
&=\sum_{(Y,r)\in\cR_v(\frp)} |Y\setminus B|\,H\!\left(\frac{r}{|Y\setminus B|}\right)-L_k\sum_{(Y,r)\in\cR_v(\frp)}r\\
&\leq\sum_{i=1}^{\ell_\ast}\left((A+o(1))|M_i|N_i-(L_k-o(1))|H_i|N_i\right)\\
&=\sum_{i=1}^{\ell_\ast}\bigl(A|M_i|-\Delta A|H_i|+o(1)\bigr)N_i\leq\sum_{\substack{i\in[\ell_\ast]\\ M_i\cup H_i\neq\emptyset}}(-A+o(1))N_i\,.
\end{align*}
The last sum is nonempty because $v\in B_\outsf(\frp)$. Since every retained part has size at least $\eta n/(4R_0)$ and the $o(1)$ term is made smaller than $A/2$ by the parameter hierarchy, the lemma follows.
\end{proof}

\begin{lemma}\label{lemma:local-compensation-K1k}
There exists $c=c(k,\gamma)>0$ such that the following holds. For every profile $\frp\in\frP_\Pi$ with $\cF_{\Pi,\frp}\neq\emptyset$ and every $v\in B(\frp)$, we have $\Psi_v(\frp)\leq -c\eta n/R_0$.
\end{lemma}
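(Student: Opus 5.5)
This lemma is a case assembly: the four preceding lemmas together cover every possible vertex $v\in B(\frp)$, so the plan is to check that the case split is exhaustive and take the minimum of the constants. Let $c$ be the minimum of the constants $c(k,\gamma)$ provided by \Cref{lemma:medium-internal-negative-root-local-K1k,lemma:high-internal-negative-root-local-K1k,lemma:high-internal-degree-root-local-K1k,lemma:outside-root-compensation-K1k}. Fix a profile $\frp$ with $\cF_{\Pi,\frp}\neq\emptyset$ and a vertex $v\in B(\frp)=B_\ret(\frp)\cup B_\outsf(\frp)$.

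First suppose $v\in B_\outsf(\frp)$. Then \Cref{lemma:outside-root-compensation-K1k} gives $\Psi_v(\frp)\leq -c\eta n/R_0$ directly.

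Otherwise $v\in B_\ret(\frp)\subseteq V_\ast$, so $v\in P_{i,j}$ for some retained component $i\in[\ell_\ast]$ and some $j\in[q_i]$. Write $x:=I_v(\frp)/|P_{i,j}\setminus B(\frp)|$. This denominator is positive, since $|P_{i,j}\setminus B|=(1+o(1))|P_{i,j}|$ by \Cref{lemma:profile-covering-K1k} and the retained part sizes in \Cref{lemma:WtoWtildeMetricsK1k}. The value $x$ falls into exactly one of three intervals, and each is handled by one earlier lemma.
\begin{itemize}
\item If $x<2\alpha$, apply \Cref{lemma:high-internal-degree-root-local-K1k}.
\item If $2\alpha\leq x\leq 1-2\alpha$, apply \Cref{lemma:medium-internal-negative-root-local-K1k}.
\item If $x>1-2\alpha$, apply \Cref{lemma:high-internal-negative-root-local-K1k}, whose hypothesis $I_v(\frp)\geq(1-2\alpha)|P_{i,j}\setminus B|$ is then satisfied.
\end{itemize}
These intervals cover $[0,1]$. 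Note also that $B_\ret(\frp)$ and $B_\outsf(\frp)$ are disjoint by the definition of a profile, so the first split is unambiguous.

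The only point requiring care, and the closest thing to an obstacle, is that the constants must be uniform. Each lemma's constant depends only on $(k,\gamma)$, and the $o(1)$ errors are absorbed by the common parameter hierarchy fixed at the start of \Cref{sec:subcritical}. Hence the minimum $c$ works simultaneously for all $\frp$ and all $v\in B(\frp)$. I expect nothing beyond this bookkeeping.
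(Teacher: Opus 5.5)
Your proposal is correct and is essentially the paper's own argument. You split $v\in B(\frp)$ into $B_\outsf(\frp)$, handled by \Cref{lemma:outside-root-compensation-K1k}, and $B_\ret(\frp)$, where \Cref{lemma:medium-internal-negative-root-local-K1k,lemma:high-internal-negative-root-local-K1k,lemma:high-internal-degree-root-local-K1k} cover every value of $I_v(\frp)$ relative to $|P_v\setminus B(\frp)|=(1+o(1))|P_v|$, and then take the minimum of the four constants.
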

\begin{proof}
Fix $G\in\cF_{\Pi,\frp}$ and $v\in B_\ret(\frp)$, and write $P_v=P_{i,j}$. By \Cref{lemma:WtoWtildeMetricsK1k}, we have $|P_v|\geq \eta n/(4R_0)$. Moreover, by \Cref{lemma:profile-covering-K1k} we have $|B(\frp)|\leq \rho_B n$. By our choice of parameters, we have $\rho_B=o(\eta/R_0)$, hence $|P_v\setminus B(\frp)|=(1+o(1))|P_v|$. Since \Cref{lemma:medium-internal-negative-root-local-K1k,lemma:high-internal-negative-root-local-K1k,lemma:high-internal-degree-root-local-K1k} cover all possibilities for the value of $I_v(\frp)$, these three lemmas together prove $\Psi_v(\frp)\leq -c\eta n/R_0$. Finally, \Cref{lemma:outside-root-compensation-K1k} proves $\Psi_v(\frp)\leq -c\eta n/R_0$ for all $v\in B_\outsf(\frp)$.
\end{proof}

\begin{lemma}\label{lemma:residual-matching-estimate-K1k}
There exists $c_\matsf=c_\matsf(k,\gamma,\eta,R_0)>0$ such that for every profile $\frp\in\frP_\Pi$ with $\ell(\frp)\geq1$, we have
$$\Psi_\matsf(\frp)\leq -\frac{c_\matsf}{2\kappa}\,\ell(\frp)n\,.$$
\end{lemma}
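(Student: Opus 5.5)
We bound $\Psi_\matsf(\frp)$ by estimating the sum in \eqref{eqn:Psi-mat-definition-K1k} one residual graph at a time: we show that every residual graph with a large matching is heavily penalized, and that this penalty beats the number of choices of $R$ and the weight $e^{C_ke(R)}$.

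Fix $T_B\in\cT^\root_{\Pi,\frp}$. For each $R\in\cT^\res_{\Pi,\frp}(T_B)$ we prove
$$\max_{H,L,\bsm}\bbP_{G^\bin_{\Pi,H,T_B\cup R\cup L,\bsm}}\{\cE_\matsf(R)\}\leq e^{-c\,\nu(R)n/\kappa},$$
where $\nu(R)=\ell(\frp)$ is the matching number of $R$.

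The edges of $R$ avoid $B(\frp)$ and join $V_\ast$ to $V$. By \Cref{lemma:profile-residual-low-rows-K1k}, every vertex of $R$ has low degree ($<4\alpha$) into every $\theta$-visible target part that is not an own or active-neighbour part. It also has low missing-degree into its own part.

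Take a maximum matching $M$ of $R$. There are at most $\kappa^2$ types of matching edges, a type being the pair of parts (retained part, or a part of the sparse side) containing the endpoints. Pigeonhole therefore gives a submatching $M'$ of a single type with $|M'|\geq\ell(\frp)/(2\kappa^2)$. We then pass to a further half $M''$ that is vertex-disjoint from a fixed retained part chosen as the centre region.

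For each $xy\in M''$ we build a family $\cK_{xy}$ of $\Omega(n^{k-1})$ potential induced copies of $K_{1,k}$ through $xy$, exactly as in \Cref{lemma:super-FPiT}. There are two cases.
\begin{itemize}
\item If $xy$ is a missing edge inside a part $P_{i,j}$, the centre $z$ ranges over $P_{i,j}$. The leaves are $x$, $y$, and one vertex in each part of $N_{Q_i}(j)$ that is non-adjacent to the other leaves. Since $Q_i$ is $\Delta$-regular this gives exactly $k$ leaves, and every random pair is active with probability about $p_k$.
\item If $xy$ is a positive defect edge, we centre at the endpoint $x$ in $V_\ast$. The leaves are $y$, a vertex of $P_x$, and one vertex in each active neighbour part of $P_x$. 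We discard choices for which $y$ is adjacent to a chosen leaf; this is possible because $y$ has low degree into all visible retained parts.
\end{itemize}
Since $\rho_B=o(\theta)$, deleting the vertices of $B(\frp)$ and the few discarded choices leaves families of size $c\,(\eta/R_0)^{k-1}n^{k-1}$. Retained parts have size at least $\eta n/(4R_0)$ by \Cref{lemma:WtoWtildeMetricsK1k}.

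Janson's inequality (\Cref{thm:JansonsInequality}) applies to these families. We have $\mu\geq c|M''|n^{k-1}$, and two copies are dependent only if they share a random active pair, so $\Delta\leq C|M''|n^{2k-3}$. This gives the required bound $e^{-c|M''|n}$ uniformly over $H$, $L$ and $\bsm\in\cM^\nar_\Pi$.

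Finally we sum over $R$ with $\nu(R)=\ell$. The endpoints of a maximum matching cover $R$, so $R$ is determined by at most $2\ell$ cover vertices together with their low neighbourhoods. Hence the number of such $R$, weighted by $e^{C_ke(R)}$, is at most $\exp\bigl(C\ell(H(5\alpha)+\theta)n+C\ell\log n\bigr)$. Here we use $e(R)\leq C\ell(\alpha+\theta)n$, which bounds neighbourhoods into $S_\smsf(\Pi)$ via \Cref{lemma:SubCompareSparseSideEdgesK1k} \ref{item:SubCompareSparseSideEdgesK1k-row}. By \eqref{eqn:sub-matching-parameter-condition-K1k} this is at most $e^{c_\matsf\ell n/(8\kappa^2)}$. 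It is absorbed by the Janson penalty provided $c_\matsf$ is chosen as the Janson constant divided by a suitable power of $\kappa$, and the result is $\Psi_\matsf(\frp)\leq -c_\matsf\ell(\frp)n/(2\kappa)$.

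The main obstacle is the Janson step for matching edges whose endpoint lies in $S_\Pi$ inside a $\theta$-small part. There the non-adjacency between $y$ and the chosen leaves is not controlled by cut-closeness. I would handle it by using only leaves in retained parts and relying on the low-row property. I would also check carefully that positive correlation holds, choosing all copies so that each required random pair is an edge, or each is a non-edge, consistently within each active pair.
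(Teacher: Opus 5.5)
Your proposal follows the paper's proof essentially step for step: you count residual graphs through the vertex cover formed by a maximum matching, pigeonhole to a single-type submatching, and thin it so that centres are separated from matching endpoints. You then build the same $K_{1,k}$ families (centre inside the part for a missing edge; centre at the $V_\ast$-endpoint, with leaves in $P_x$ and its active-neighbour parts, for a positive defect), apply Janson, and absorb the count of $R$ via \eqref{eqn:sub-matching-parameter-condition-K1k}; your two cases simply merge the paper's last three.

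Two points should be stated more carefully. First, positive correlation requires each individual vertex pair to carry the same edge/non-edge requirement in every copy. Consistency within each active pair is not enough, since an active pair contains both required edges and required non-edges; in the positive-defect case the leaf in $P_x$ must also lie outside $V(M'')$. Second, for leaves in parts that are active neighbours of $y$'s own part, non-adjacency to $y$ is a random requirement built into $A_K$, not something you can discard by the low-row property.
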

\begin{proof}
Fix $T_B\in\cT^\root_{\Pi,\frp}$ and write $\ell=\ell(\frp)$. We count the number of residual graphs $R\in\cT^\res_{\Pi,\frp}(T_B)$ as follows. For each such $R$, choose a maximum matching $M_R$ of size $\ell$. The endpoints of $M_R$ form a vertex cover of $R$, and there are at most $n^{2\ell}$ choices for this cover. By definition of the residual graph, no cover vertex belongs to $B(\frp)$. This implies that for all $G\in\cF_{\Pi,\frp}$ such that $T_\res(G)=R$, all cover vertices $v$, and all $Y\in\cY_\Pi$, we have $d_G(v,Y)\leq4\alpha|Y|$. Hence there are at most $e^{C_kH(5\alpha)n}$ possibilities for the neighborhood of $v$ in $\bigcup\cY_\Pi$. By induced-$K_{1,k}$-freeness, the neighborhood of every vertex in $G$ has independence number at most $k-1$; this implies that every neighborhood in $S_\smsf(\Pi)$ is contained in the closed neighborhood of at most $k-1$ vertices. Since \Cref{lemma:SubCompareSparseSideEdgesK1k} gives $d_G(y,S_\smsf(\Pi))\leq C_k\theta n$ for all $y\in V$, it follows that there are at most $n^{k-1}2^{C_k\theta n}\leq e^{C_k\theta n+C_k\log n}$ possibilities for such a neighborhood. The factor $e^{C_ke(R)}$ in the definition of $\Psi_\matsf(\frp)$ is absorbed by the same estimate after increasing $C_k$ since $d_R(v)\leq C_k(\alpha+\theta)n$ for all cover vertices $v$. Combining these estimates, we obtain
\begin{equation}\label{eqn:residual-graph-count-K1k}
\sum_{R\in\cT^\res_{\Pi,\frp}(T_B)}e^{C_ke(R)}\leq e^{C_k\ell(H(5\alpha)+\theta)n+C_k\ell\log n}\,.
\end{equation}

Now fix $R\in\cT^\res_{\Pi,\frp}(T_B)$ and let $M$ be a maximum matching in $R$. Every edge of $M$ is incident with $V_\ast$. We partition $M$ into the following submatchings:
\[\arraycolsep=1cm
\begin{array}{ll}
M[P_{i,j}] & i\in[\ell_\ast]\,,\,j\in[q_i]\,,\\[10pt]
M[P_{i,j},P_{i,j'}] & i\in[\ell_\ast]\,,\,jj'\in E(Q_i^c)\,,\\[10pt]
M[P_{i,j},P_{i',j'}] & 1\leq i<i'\leq\ell_\ast\,,\, j\in[q_i]\,,\, j'\in[q_{i'}]\,,\\[10pt]
M[P_{i,j},S_\Pi] & i\in[\ell_\ast]\,,\,j\in[q_i]\,.
\end{array}\]
The total number of submatchings displayed above is at most
$$2\sum_{i\leq\ell_\ast}q_i+\sum_{i\leq\ell_\ast}\binom{q_i}{2}+\sum_{1\leq i<i'\leq\ell_\ast}q_iq_{i'}\leq \kappa\,,$$
by the definition of $\kappa$, since $\ell_\ast\leq2/\eta$ and $q_i\leq R_0$ for every $i\leq\ell_\ast$. Hence one of the displayed submatchings contains a matching $M_0$ of size at least $\ell/\kappa$. Let $M'\subseteq M_0$ be a submatching of size $\ceil*{\lambda |M_0|/8}$. We now bound, uniformly over every induced-$K_{1,k}$-free graph $H$ on $S_\Pi$, every $L$ with $(T_B,R,L)$ compatible, and every $\bsm\in\cM^\nar_\Pi$, the probability of the event $\cE_\matsf(R)$ in the binomial model $G^\bin_{\Pi,H,T,\bsm}$, where $T:=T_B\cup R\cup L$. We consider the four cases separately.

First suppose $M'\subseteq M[P_{i,a}]$. For every $xy\in M'$, define $\cK_{xy}$ to be the family of copies of $K_{1,k}$ whose center is a vertex $z\in P_{i,a}\setminus (B\cup V(M'))$, whose leaves include $x$ and $y$, and whose remaining leaves consist of one vertex in each part $P_{i,c}\setminus B$ with $c\in N_{Q_i}(a)$. We discard choices using an edge of $T$ other than $xy$ in one of the required adjacencies or non-adjacencies. Since $e(T)\leq\epsilon n^2$, all relevant retained parts have size at least $\lambda n$, and $|V(M')|\leq \lambda n/4+O(1)$, for every $xy\in M'$ we have $|\cK_{xy}|\geq c_1(\lambda n)^{k-1}$. Let $\cK:=\bigcup_{xy\in M'}\cK_{xy}$. For $K\in\cK$, let $A_K$ be the event that $K$ appears as an induced copy of $K_{1,k}$. Every random adjacency or non-adjacency required by $A_K$ is a pair in $P_{h,r}\times P_{h,s}$ for some $h\leq\ell_\ast$ and $rs\in E(Q_h)$, where the binomial edge probability lies in $[\rho/2,1-\rho/2]$, so $\bbP\{A_K\}\geq c_2$ and this implies
$$\mu:=\sum_{K\in\cK}\bbP\{A_K\}\geq c_3|M'|(\lambda n)^{k-1}\,.$$
Moreover, if $A_K$ and $A_{K'}$ are dependent then the two copies share a pair of vertices contained in $P_{h,r}\times P_{h,s}$ for some $h\leq\ell_\ast$ and $rs\in E(Q_h)$. For a fixed $K$, after choosing such a shared pair of vertices, there are at most $C_kn^{k-2}+C_k|M'|n^{k-3}\leq C_kn^{k-2}$ choices for $K'$. It follows that
$$\Delta:=\sum_{K\sim K'}\bbP\{A_K\cap A_{K'}\}\leq C_k|M'|n^{2k-3}\,.$$
It follows that $\mu^2/(4\Delta)\geq c_4|M'|n$ and $\mu/2\geq c_4|M'|n$, after decreasing $c_4>0$. Since the events $A_K$ are positively correlated by the choice of the families $\cK_{xy}$, \Cref{thm:JansonsInequality} gives
\begin{equation}\label{eqn:residual-matching-janson-bound-K1k}
\bbP_{G^\bin_{\Pi,H,T,\bsm}}\{\cE_\matsf(R)\}\leq e^{-c_4|M'|n}\,.
\end{equation}

Next suppose $M'\subseteq M[P_{i,a},P_{i,b}]$ with $ab\in E(Q_i^c)$, and let $X:=V(M')\cap P_{i,a}$. For every $xy\in M'$ with $x\in P_{i,a}$ and $y\in P_{i,b}$, define $\cK_{xy}$ to be the family of copies of $K_{1,k}$ centered at $x$, whose leaves include $y$, one vertex in $P_{i,a}\setminus (B\cup X)$, and one vertex in each part $P_{i,c}\setminus B$ with $c\in N_{Q_i}(a)$. Since $|X|\leq\lambda n/8+O(1)$, discarding choices using a further edge of $T$ leaves at least $c_1(\lambda n)^{k-1}$ choices for each $xy\in M'$. With $\cK:=\bigcup_{xy\in M'}\cK_{xy}$ and $A_K$ defined as above, the same estimates give $\mu\geq c_3|M'|(\lambda n)^{k-1}$ and $\Delta\leq C_k|M'|n^{2k-3}$.
Therefore the same application of Janson's inequality gives \eqref{eqn:residual-matching-janson-bound-K1k}.

Now suppose $M'\subseteq M[P_{i,a},P_{i',b}]$ with $i<i'$, and let $X:=V(M')\cap P_{i,a}$. For every $xy\in M'$ with $x\in P_{i,a}$ and $y\in P_{i',b}$, define $\cK_{xy}$ to be the family of copies of $K_{1,k}$ centered at $x$, whose leaves include $y$, one vertex in $P_{i,a}\setminus (B\cup X)$, and one vertex in each part $P_{i,c}\setminus B$ with $c\in N_{Q_i}(a)$. As above, after discarding choices using an additional edge of $T$, for each $xy\in M'$ there are at least $c_1(\lambda n)^{k-1}$ choices. Hence, with $\cK:=\bigcup_{xy\in M'}\cK_{xy}$, we have $\mu\geq c_3|M'|(\lambda n)^{k-1}$ and $\Delta\leq C_k|M'|n^{2k-3}$.
The same application of Janson's inequality gives \eqref{eqn:residual-matching-janson-bound-K1k}.

Finally suppose $M'\subseteq M[P_{i,a},S_\Pi]$, and let $X:=V(M')\cap P_{i,a}$. For every $xy\in M'$ with $x\in P_{i,a}$ and $y\in S_\Pi$, define $\cK_{xy}$ to be the family of copies of $K_{1,k}$ centered at $x$, whose leaves include $y$, one vertex in $P_{i,a}\setminus (B\cup X)$, and one vertex in each part $P_{i,c}\setminus B$ with $c\in N_{Q_i}(a)$. Since $y\notin B(\frp)$, its adjacency into every $\theta$-visible target is less than $4\alpha$ of that target, and \Cref{lemma:SubCompareSparseSideEdgesK1k} controls its adjacency into $S_\smsf(\Pi)$. Thus deleting choices which create an extra adjacency or non-adjacency involving $y$, and choices using another edge of $T$, removes only an $O_k(\alpha+\theta+o(1))$ proportion of the possibilities. Hence $|\cK_{xy}|\geq c_1(\lambda n)^{k-1}$ for every $xy\in M'$. With $\cK:=\bigcup_{xy\in M'}\cK_{xy}$, we again have $\mu\geq c_3|M'|(\lambda n)^{k-1}$ and $\Delta\leq C_k|M'|n^{2k-3}$.
The same application of Janson's inequality gives \eqref{eqn:residual-matching-janson-bound-K1k}.

The preceding four cases show that \eqref{eqn:residual-matching-janson-bound-K1k} holds uniformly over every induced-$K_{1,k}$-free graph $H$ on $S_\Pi$, every $L$ with $(T_B,R,L)$ compatible, and every $\bsm\in\cM^\nar_\Pi$. Since $|M'|\geq\lambda\ell/(8\kappa)$, after decreasing $c_\matsf=c_\matsf(k,\gamma,\eta,R_0)>0$, its right-hand side is at most $\exp(-c_\matsf\ell n/\kappa)$. Combining this with \eqref{eqn:residual-graph-count-K1k}, and using \eqref{eqn:sub-matching-parameter-condition-K1k}, gives
\begin{align*}
\Psi_\matsf(\frp)&\leq C_k\ell\bigl(H(5\alpha)+\theta\bigr)n+C_k\ell\log n-\frac{c_\matsf}{\kappa}\ell n \leq -\frac{c_\matsf}{2\kappa}\ell n
\end{align*}
for all sufficiently large $n$.
\end{proof}

\subsection{Clean retained comparisons}\label{subsec:clean-retained}
From here until the end of the section, we assume $\omega>0$ is sufficiently small but fixed such that all lemmas in the previous subsection hold. This also fixes all other parameters from the previous subsection as constants, including $\eta$, $R_0$, $\alpha$, $\delta$, $\theta$, $\epsilon$, and $\tau$.

\begin{lemma}\label{lemma:NtaunmWUpperBdK1k}
For all $W\in\cV_\gamma$, there exists $c=c(k,\gamma,W)>0$ such that for all $\Pi\in\sD_W$,
$$|\cF_{W,\Pi}|\leq (1+e^{-cn})\cZ_\Pi\,.$$
Consequently, for a constant $C=C(k,\gamma,W)$, we have
$$|\cB_{n,m}(W,\tau)|\leq e^{Cn}\sum_{\Pi\in\sD_W}\cZ_\Pi\,.$$
\end{lemma}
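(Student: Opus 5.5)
We plan to sum the profile bound over all profiles, separating the unique clean profile from the defective ones. By the covering \eqref{eqn:fwpihigh-covering}, $|\cF_{W,\Pi}|\leq\sum_{\frp\in\frP_\Pi}|\cF_{\Pi,\frp}|$. First we treat the clean profiles: those with $B(\frp)=\emptyset$, $\Tail(\frp)=\emptyset$ and $\ell(\frp)=0$. For these the residual defect graph is empty. The leftover graph $L$ consists only of edges incident with $B=\emptyset$, so it is empty too. Hence $T(G)[V_\ast,V]=\emptyset$, and $G$ is determined by $G[S_\Pi]$ together with the active bipartite graphs, all at the exact densities prescribed by some $\bsm\in\cM_{\Pi,b}$. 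Summing over $b=e(G[S_\Pi])\leq C_k\eta n^2$ (\Cref{lemma:SubCompareSparseSideEdgesK1k}) gives directly $\sum_{\frp\text{ clean}}|\cF_{\Pi,\frp}|\leq\cZ_\Pi$, with no polynomial loss, since each such $G$ is counted once in $N^\ast_{s(\Pi),b}(K_{1,k})Z_\Pi(b)$. This is the leading term.

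Next we treat the defective profiles using \Cref{lemma:profile-bound-K1k} together with \Cref{lemma:local-compensation-K1k} and \Cref{lemma:residual-matching-estimate-K1k}. Write $\beta_0:=|B(\frp)|$ and $\ell:=\ell(\frp)$. Then
$$|\cF_{\Pi,\frp}|\leq n^{C_k|\cI_\Pi|}\cZ_\Pi\exp\!\Big(-\beta_0\big(c\eta n/R_0-C(H(5\alpha)+\theta+\delta+\rho_B)n-C\log n\big)-\tfrac{c_\matsf}{2\kappa}\ell n\Big).$$
Here $|\cI_\Pi|\leq\Delta\kappa$ is bounded, and the parameter hierarchy ($\theta=o(\eta/R_0^2)$, $\alpha$ and $\delta$ small) makes the bracket at least $c'\eta n/R_0$. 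Profiles with $B=\emptyset$ but $\Tail\neq\emptyset$ are impossible, since tails are rooted in $B_\ret$. So every non-clean profile has $\beta_0+\ell\geq1$, and it gains a factor $e^{-c''(\beta_0+\ell)n}$.

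It remains to count profiles with given $(\beta_0,\ell)$ and show this is $e^{o(n)(\beta_0+\ell)}$ times a polynomial that the $e^{-c''n}$ gain absorbs. The data are:
\begin{itemize}
\item $b(\frp)$ and $\ell$, which have $n^{O(1)}$ choices;
\item the sets $B_\ret,B_\outsf$, which have at most $\binom{n}{\beta_0}2^{\beta_0}\leq n^{2\beta_0}$ choices;
\item for each root, at most $k$ rows (by \Cref{lemma:deterministic-nonlow-bound-K1k}, since each row corresponds to an element of $\cN_{\Pi,G}(v)$), with targets among the $|\cY_\Pi|\leq O(1/\theta)$ parts and values $r\leq n$, giving $n^{O_k(1)}$ choices per root;
\item $I_v$ and the tail labels, giving $n\cdot 3^{\Delta}$ choices per root.
\end{itemize}
Altogether this is $n^{O(1)}\cdot n^{O_k(\beta_0)}$. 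The one delicate point is the $b(\frp)$ sum: $\cZ_\Pi$ already sums over $b$, so the extra choice of $b$ costs only $n^2$. Summing $n^{O(1)}n^{O(\beta_0)}e^{-c''(\beta_0+\ell)n}$ over $\beta_0+\ell\geq1$ gives $e^{-cn}\cZ_\Pi$ for large $n$, which proves the first bound.

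For the consequence, note that $\cB_{n,m}(W,\tau)=\bigcup_{\Pi}\cF_{W,\Pi}$, and by \Cref{lemma:WtoWtildeMetricsK1k}\ref{item:SubCloseK1k-compatible} only $\Pi\in\sD_W$ contribute. Therefore $|\cB_{n,m}(W,\tau)|\leq2\sum_{\Pi\in\sD_W}\cZ_\Pi$, and the $e^{Cn}$ factor leaves room for any slack. The main obstacle is the first step: verifying that clean profiles give exactly $\cZ_\Pi$ without a polynomial factor, so that the $(1+e^{-cn})$ form holds. If the leftover edges incident to $B=\emptyset$ were not literally empty, one would need to reexamine the definition of $L$; but every edge of $L$ is incident with $B$, so this works.
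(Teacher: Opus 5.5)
Your proposal is correct and follows essentially the same argument as the paper. You separate the clean profiles ($B(\frp)=\emptyset$, $\ell(\frp)=0$), whose union is exactly the set of graphs with $T(G)[V_\ast,V]=\emptyset$ and so has size at most $\cZ_\Pi$. You bound every other profile class via \Cref{lemma:profile-bound-K1k}, \Cref{lemma:local-compensation-K1k} and \Cref{lemma:residual-matching-estimate-K1k}, and let the $e^{-c(|B(\frp)|+\ell(\frp))n}$ gain absorb the $n^{O(1+|B(\frp)|+\ell(\frp))}$ count of profiles. Your explicit count (at most $k-1$ rows per root by \Cref{lemma:deterministic-nonlow-bound-K1k}, plus an extra $n^2$ for $b(\frp)$) is simply a more detailed version of the paper's one-line bound $n^{C_\Pi r}$ on the number of profiles.
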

\begin{proof}
Define
$$\cF^\clean_{W,\Pi}:=\{G\in\cF_{W,\Pi}:T(G)[V_\ast,V]=\emptyset\}$$
and set $\cF'_{W,\Pi}:=\cF_{W,\Pi}\setminus\cF^\clean_{W,\Pi}$. By \Cref{lemma:profile-covering-K1k}, every graph in $\cF_{W,\Pi}$ belongs to $\cF_{\Pi,\frp}$ for some $\frp\in\frP_\Pi$. If $\frp\in\frP_\Pi$ satisfies $B(\frp)=\emptyset$ and $\ell(\frp)=0$ then $\cF_{\Pi,\frp}\subseteq\cF^\clean_{W,\Pi}$, and conversely every graph in $\cF^\clean_{W,\Pi}$ belongs to $\cF_{\Pi,\frp}$ for such a profile. Since \Cref{lemma:SubCompareSparseSideEdgesK1k} gives $e_G(S_\Pi)\leq C_k\eta n^2$, we have $|\cF^\clean_{W,\Pi}|\leq \cZ_\Pi$.

Now fix $\frp\in\frP_\Pi$ such that $B(\frp)\neq\emptyset$ or $\ell(\frp)\geq1$. By \Cref{lemma:profile-bound-K1k,lemma:local-compensation-K1k,lemma:residual-matching-estimate-K1k}, and by the definition of $\Err(\frp)$, we have
\begin{align}
|\cF_{\Pi,\frp}| &\leq n^{C|\cI_\Pi|}\cZ_\Pi\exp\!\left(-c n|B(\frp)|-\frac{c_\matsf}{2\kappa}\ell(\frp)n+\Err(\frp)\right) \nonumber\\
&\leq \cZ_\Pi\exp\!\left(-c n|B(\frp)|-\frac{c_\matsf}{2\kappa}\ell(\frp)n\right)\label{eqn:fpifrp-mcn-bd}
\end{align}
after decreasing $c>0$ on the second line. Here we used $\Err(\frp)\leq c n|B(\frp)|/2$, while $n^{C|\cI_\Pi|}$ is a polynomial factor because $\Pi$ is fixed.

The number of profiles with $|B(\frp)|+\ell(\frp)=r$ is at most $n^{C_\Pi r}$, since one chooses the root set, the row labels, the row sizes, the tail symbols, and the matching size. Summing \eqref{eqn:fpifrp-mcn-bd} over all nontrivial profiles gives at most $e^{-cn}\cZ_\Pi$ after decreasing $c>0$ once more. Combining this with $|\cF^\clean_{W,\Pi}|\leq \cZ_\Pi$ proves $|\cF_{W,\Pi}|\leq (1+e^{-cn})\cZ_\Pi$.

By definition of $\cF_{W,\Pi}$ and \Cref{lemma:WtoWtildeMetricsK1k}, we have $\cB_{n,m}(W,\tau)\subseteq\bigcup_{\Pi\in\sD_W}\cF_{W,\Pi}$. Summing the first bound over $\Pi\in\sD_W$ gives the second assertion after increasing $C$.
\end{proof}

\begin{lemma}\label{lemma:canonical-clean-lower-K1k}
Let $\omega'>0$ be any cut radius to which \Cref{lemma:WtoWtildeMetricsK1k} applies. Then
$$\sum_{\Pi\in\sD_{W^\ast}}\cZ_\Pi\leq e^{o(n)}|\cB_{n,m}(W^\ast,\omega')|\,.$$
\end{lemma}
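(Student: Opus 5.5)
We will show that, up to an $e^{o(n)}$ factor, every term counted by $\sum_{\Pi\in\sD_{W^\ast}}\cZ_\Pi$ corresponds injectively to a graph in $\cB_{n,m}(W^\ast,\omega')$. The construction is the obvious one. For a $W^\ast$-compatible division $\Pi$, an integer $0\leq b\leq C_k\eta n^2$, an induced-$K_{1,k}$-free graph $H$ on $S_\Pi$ with $b$ edges, and a vector $\bsm\in\cM_{\Pi,b}$, we build the graph $G$ as follows. Each retained part $P_{i,j}$ becomes a clique. Each active pair $(i,uv)\in\cI_\Pi$ receives an arbitrary bipartite graph with $\bsm_{(i,uv)}$ edges. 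The graph on $S_\Pi$ is $H$. All remaining pairs are non-edges.

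By the definition of $\cM_{\Pi,b}$, the graph $G$ has exactly $m$ edges. It is induced-$K_{1,k}$-free for two reasons. First, a star cannot straddle $V_\ast$ and $S_\Pi$, since there are no edges between them. Second, inside a retained component the neighborhood of any vertex is covered by at most $\Delta+1=k-1$ cliques, because $Q_i$ is $\Delta$-regular and each part is a clique; so no $k$ pairwise non-adjacent neighbors exist. The number of graphs obtained this way, summed over $b$, is exactly $\cZ_\Pi$.

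It remains to show two things: that these graphs lie in the cut ball, and that the map from data to graphs loses at most $e^{o(n)}$.

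\textbf{Cut closeness.} Since $W^\ast$ is the one-component graphon with $G_1=K_{k-1}$ and $\Pi$ is $W^\ast$-compatible, $\Pi_\ast$ consists of a single component isomorphic to $K_{k-1}$ of size $(\mu\pm\delta)n$. The remaining blocks of $W^\ast$ are absent, and the components in $S_\Pi$ are those with $v_i<\eta n$ or $q_i>R_0$. Under the compatibility assumptions and \Cref{lemma:WtoWtildeMetricsK1k}~\ref{item:SubCloseK1k-balanced}, the retained parts are balanced up to $\omega$. Moreover, the density on every active pair is $p_k\pm2\delta$, and $e(G[S_\Pi])\leq C_k\eta n^2$. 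Hence $G$ differs from the step graphon of $W^\ast$ in cut norm by $O(\omega+\delta+\eta)$. The retained component must have total size $\mu n$ up to $o(1)$, which is forced by the edge count $m$: the sparse part carries only $O(\eta n^2)$ edges, so $C_\Pi+\sum\bsm_e=m-O(\eta n^2)$ pins down $|V_\ast|$. This gives $\delta_\square(G,W^\ast)<\omega'$ once the parameters are small relative to $\omega'$. If $\omega'$ is not small relative to them, we first shrink $\omega'$, which only makes the ball smaller.

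\textbf{Multiplicity.} A given graph $G$ can arise from several tuples $(\Pi,b,H,\bsm)$. The vector $\bsm$ and the graph $H$ are determined by $G$ and $\Pi$, so it suffices to bound the number of divisions $\Pi$ producing a given $G$. This is the main obstacle. The retained cliques in $G$ are only approximately determined by the graph: the parts are near-cliques joined at density $p_k$, but the boundary vertices and the contents of $S_\Pi$ (which may itself contain small dense pieces) are ambiguous.

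The plan is to bound the count by $e^{o(n)}$, and the naive route fails. Recovering $V_\ast$ from $G$ as a union of $k-1$ cliques with no edges to the rest could allow $\binom{n}{o(n)}$ choices, which is too many unless we show the ambiguity involves only $o(n)$ vertices chosen in $e^{o(n)}$ ways. Instead we will argue directly. Each vertex of $V_\ast$ has degree at least $|P_{i,j}|-1\geq(\mu/(k-1)-o(1))n$ into its own clique part and no neighbors in $S_\Pi$. Each vertex of $S_\Pi$ has degree at most $C_k\theta n$ within the graph, by \Cref{lemma:SubCompareSparseSideEdgesK1k}~\ref{item:SubCompareSparseSideEdgesK1k-row}, which applies once $G$ is in the ball, or else by bounding directly via Tur\'an for induced-$K_{1,k}$-free $H$ as in that proof. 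Since $\theta\ll\mu/(k-1)$, the set $V_\ast$ is exactly the set of vertices of large degree, so it is determined by $G$. Within $V_\ast$, a vertex's part is the unique part into which it is complete. Ties only occur when a vertex is complete to two parts, which is impossible since the $p_k$-density pairs are not complete. Thus $V_\ast$ and its partition, and hence $H$ and $\bsm$, are determined up to at most $n^{O(1)}$ choices for labels and the components inside $S_\Pi$. The components inside $S_\Pi$ do not affect $\cZ_\Pi$, since $\cZ_\Pi$ depends only on $\Pi_\ast$ and $s(\Pi)$. Therefore we will sum over $\Pi_\ast$ rather than $\Pi$, absorbing the choice of non-retained structure of $S_\Pi$, which would otherwise be an unbounded overcount. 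If the statement genuinely sums over full divisions, we must bound the number of divisions sharing a given $\Pi_\ast$ by $e^{o(n)}$. This fails in general, since a division of $S_\Pi$ has $e^{\Theta(n\log n)}$ choices. So we will interpret the claim as: $\cZ_\Pi$ depends only on $(\Pi_\ast,S_\Pi)$, and the relevant sum is over these equivalence classes. Settling this point is the step to check most carefully.
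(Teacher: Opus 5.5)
\textbf{Verdict: genuine gap.} Your cut-closeness step and your multiplicity step are both claimed for \emph{every} data tuple, and both fail for adversarial active edges. The reason is that $\binom{N_e}{\bsm_e}$ counts all bipartite graphs with $\bsm_e$ edges, not only random-like ones.

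\emph{Cut-closeness.} Density $p_k\pm2\delta$ on each active pair does not force cut-closeness. Join a set $X\subseteq P_1$ with $|X|\approx p_k|P_1|$ completely to $P_2$, and put no other edges between $P_1$ and $P_2$. Then $X\cup P_2$ is a clique of size about $(1+p_k)|P_1|$. This puts $G$ at cut distance $\Omega(1)$ from $W^\ast$, because every block of $W^\ast$ where it equals $1$ has measure only $\mu/(k-1)$. Such tuples never land in $\cB_{n,m}(W^\ast,\omega')$, so no injection of \emph{all} tuples into the ball exists.

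\emph{Multiplicity.} A pair of density $p_k$ does not prevent an individual vertex of $P_1$ from being complete to $P_2$. Such a vertex can be moved to $P_2$, which gives a second valid $\Pi_\ast$ producing the same graph. So your recovery rule ``each vertex lies in the unique part it is complete to'' is false in general.

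\emph{The missing idea is typicality, and this is exactly the paper's route.} The target has $e^{o(n)}$ slack, so it suffices that a $(1-o(1))$ fraction of tuples yield distinct graphs in the ball. Fix $(\Pi_\ast,b,H,\bsm)$ and choose the active edges uniformly at random; this is the model $G_{\Pi_\ast,H,\emptyset,\bsm}$. With probability $1-o(1)$ the following all hold:
\begin{itemize}
\item every active pair is cut-close to the constant $p_k$ (hypergeometric concentration plus a union bound over pairs of subsets);
\item every vertex has degree $(p_k\pm o(1))|P_j|$ into each other part;
\item there is no $K_{C\log n,C\log n}$ between two parts.
\end{itemize}
On this event $G$ lies in the ball. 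Moreover, any partition of $V_\ast$ into $k-1$ cliques must have each clique meeting only one original part in more than $O(\log n)$ vertices. The degree condition then forces the partition to coincide with $\{P_1,\dots,P_{k-1}\}$, so each such graph is counted once.

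\textbf{What is right.} The construction and its induced-$K_{1,k}$-freeness check are correct. Your observation that $\cZ_\Pi$ depends only on $\Pi_\ast$ is also correct, and a literal sum over full divisions would overcount by the $e^{\Theta(n\log n)}$ divisions of $S_\Pi$. The paper's proof tacitly makes the same identification when it speaks of ``a unique division $\Pi_\ast$.''

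\textbf{Smaller corrections.}
\begin{itemize}
\item Recovering $V_\ast$ by degree works, but do not cite \Cref{lemma:SubCompareSparseSideEdgesK1k}: it concerns canonical divisions $\Pi(G)$ and only bounds degrees into $S_\smsf$. Use the Tur\'an bound $d_H(x)=O_k(\sqrt{\eta})\,n$ instead, which follows from $e(H)\leq C_k\eta n^2$ and the fact that $N_H(x)$ has no independent set of size $k$.
\item Balance of the retained parts comes from $\cM_{\Pi,b}\neq\emptyset$, not from \Cref{lemma:WtoWtildeMetricsK1k}; that lemma concerns canonical divisions $\Pi(G)$ of graphs already in the ball.
\item Your remark about shrinking $\omega'$ runs the wrong way. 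Since $H$ alone may carry $C_k\eta n^2$ edges and $|V_\ast|$ may deviate from $\mu n$ by $\delta n$, you need $\omega'$ to be large compared with $C_k\eta+\delta$. Shrinking $\omega'$ cannot supply that. It does hold in the paper's application, where $\omega'=\tau+8\omega$ and $\eta=o(\omega)$.
\end{itemize}
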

\begin{proof}
For each $\Pi\in\sD_{W^\ast}$ and $0\leq b\leq C_k\eta n^2$, a graph counted by $Z_\Pi(b)N^\ast_{s(\Pi),b}(K_{1,k})$ is obtained by choosing a graph in $\cF^\ast_{S_\Pi,b}(K_{1,k})$ and then choosing the active edges in the retained pairs according to an edge-count vector in $\cM_{\Pi,b}$. Using standard cut-concentration of random bipartite graphs of constant density, and using \Cref{lemma:WtoWtildeMetricsK1k}, all but a $o(1)$ fraction of these choices are within $\omega'$ in cut metric of $W^\ast$. To account for multiplicity, it suffices to note that almost every graph counted by the sum $\sum_{\Pi\in\sD_{W^\ast}}\cZ_\Pi$ has a unique division $\Pi_\ast\in\sD_{W^\ast}$ for which it is counted by $\cZ_{\Pi_\ast}$, which is routine to show using the random graph $G_{\Pi_\ast,H,\emptyset,\bsm}$.
\end{proof}

\subsubsection{The comparison with $W^\ast$}\label{subsec:subcritical-comparison-K1k}
For $0\leq b\leq C_k\eta n^2$, set
\begin{equation}\label{eqn:sub-gamma-b-K1k}
\gamma_b:=\frac{m-b}{\binom n2}\,,\qquad \mu_b:=\left(\frac{\gamma_b(k-1)}{1+(k-2)p_k}\right)^{1/2}\,.
\end{equation}
Let $\sD_\ast(b)$ be the family of divisions $\Sigma$ for which $\Sigma_\ast$ consists of one component
$$\Sigma_1=(K_{k-1},\{P_1,\dots,P_{k-1}\})\,,$$
where $|P_1|+\cdots+|P_{k-1}|=\lfloor\mu_bn\rfloor+O(1)$ and the parts are as equal as possible, and for which $Z_\Sigma(b)\neq0$.

\begin{lemma}\label{lemma:sub-retained-mass-gap-K1k}
There exists $c_0=c_0(k,\gamma)>0$ such that, if $W\in\cV_\gamma$ and $\delta_\square(W,W^\ast)\geq8\omega$ then every $\Pi\in\sD_W$ satisfies
$$s(\Pi)\leq(1-\mu-c_0\omega)n\,.$$
Moreover, if $0\leq b\leq C_k\eta n^2$ and $\Sigma\in\sD_\ast(b)$ then $s(\Sigma)\geq(1-\mu-C\eta)n$.
\end{lemma}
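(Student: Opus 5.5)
The plan is to relate $s(\Pi)$ to the total measure of the large, few-part blocks of $W$, and then to show that $W^\ast$ is the unique optimizer whose retained mass is as small as $\mu$. Write $W=W_\blambda$ with block lengths $\mu_i$ and graphs $G_i$ on $r_i$ vertices. By \eqref{eqn:vgamma-condition},
$$\sum_i\frac{\mu_i^2}{r_i}=\frac{\gamma}{1+\Delta p_k}=\frac{\mu^2}{k-1}\,.$$
Since $r_i\geq k-1$ always, and $r_i=k-1$ forces $G_i=K_{k-1}$, the sum $\sum_i\mu_i^2/r_i$ is at most $\frac{1}{k-1}\big(\sum_i\mu_i\big)\max_i\mu_i$. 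This already gives $\sum_i\mu_i\geq\mu$, with near-equality only when one block dominates and has $r=k-1$.

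For the first claim, fix $\Pi\in\sD_W$. By $W$-compatibility, every index $j\in L$ with $\mu_j\geq\eta$ and $r_j\leq R_0$ is matched to a component $\Pi_i$ with $|v_i-\mu_jn|\leq\delta n$. Such a component has $v_i\geq(\eta-\delta)n$. I need $v_i\geq\eta n$ for it to be retained, so I would shrink the threshold slightly, or simply absorb the blocks with $\mu_j$ close to $\eta$ into the error. This gives
$$n-s(\Pi)\geq n\sum_{\mu_j\geq\eta,\,r_j\leq R_0}\mu_j-O(\delta n/\eta)\,.$$
The omitted blocks are of two kinds. Blocks with $\mu_j<\eta$ contribute at most $\eta\sum\mu_j\leq\eta$ to $\sum\mu_j^2/r_j$. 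Blocks with $r_j>R_0$ contribute at most $1/R_0$. So the retained blocks satisfy $\sum\mu_j^2/r_j\geq\mu^2/(k-1)-\eta-R_0^{-1}$.

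The key quantitative step is a stability inequality. I would show that if $\delta_\square(W,W^\ast)\geq8\omega$ then the retained length $\sum\mu_j$ exceeds $\mu+c_0'\omega$. I would split into two cases.
\begin{enumerate}[label=\textit{(\alph*)}, topsep=5pt]
\item \emph{Some retained block has $r_j\geq k$.} Then $\mu_j^2/r_j\leq\mu_j^2/(k-1)-c\mu_j^2$. This forces the total length up by $c'\mu_j^2$, and I would reduce to case (b) when $\mu_j$ is small.
\item \emph{All retained blocks are $K_{k-1}$.} Then $\sum\mu_j^2=\mu^2-O(\eta)$. Writing $\mu_1$ for the largest block, I have $\sum\mu_j\geq\sum\mu_j^2/\mu_1$, which exceeds $\mu$ by order $\mu-\mu_1$.
\end{enumerate}
It remains to convert the cut-distance hypothesis into the bound $\mu-\mu_1\gtrsim\omega$, or $\mu_j\gtrsim\omega$ in case (a). If $\mu_1\geq\mu-c\omega$ and $G_1=K_{k-1}$, then $W$ differs from $W^\ast$ in $L^1$ by $O(\mu-\mu_1)$ plus the mass of the other blocks. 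That mass is $O(\sum_{j\geq2}\mu_j^2)=O(\omega)$ plus the $O(\eta)$ omitted part. With constants chosen, this contradicts $\delta_\square\geq8\omega$, since $\eta=o(\omega)$. Finally, $\delta=o(\eta)$ and $\eta=o(\omega)$ make the $O(\delta/\eta)$ and $O(\eta)$ errors negligible against $c_0\omega$.

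The second claim is a direct computation. For $\Sigma\in\sD_\ast(b)$ the retained set has size $\lfloor\mu_bn\rfloor+O(1)$, so $s(\Sigma)=n-\mu_bn+O(1)$. Since $b\leq C_k\eta n^2$, we have $\gamma_b=\gamma-O(\eta)$, hence $\mu_b\leq\mu+C\eta$.

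The main obstacle I expect is the quantitative stability step: extracting a gain linear in $\omega$ from $\delta_\square(W,W^\ast)\geq8\omega$, uniformly over possibly infinite sequences $\blambda$.
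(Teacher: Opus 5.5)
Your plan follows the paper's proof in outline. You use $\sum_i\mu_i^2/r_i=\mu^2/(k-1)$, drop short blocks and blocks with $r_i>R_0$ at a cost of $O(\eta+R_0^{-1})$ in that sum, and show the retained length exceeds $\mu+2c_0\omega$ unless $W$ has a dominant $K_{k-1}$ block of length $\approx\mu$ (which would give $\delta_\square(W,W^\ast)<8\omega$). You then pass to $|V_\ast|$ through the compatibility injection, and your second claim is handled exactly as in the paper.

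The step that does not close as written is case (a). When $2\eta\leq\mu_j\ll\sqrt{\omega}$, the gain $c'\mu_j^2$ is below $c_0\omega$, and ``reduce to case (b)'' has no stated mechanism. Discarding all such blocks can remove up to order $\sqrt{\omega}$ from $\sum\mu_j^2$, which is far more than $\omega$.

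The case split is unnecessary. Since every $r_j\geq k-1$, your case (b) inequality holds for all retained blocks at once. Let $S$ be the retained length and $\mu_{\max}$ the largest retained block; then $S\geq\sum_{\mathrm{ret}}\mu_j^2/\mu_{\max}\geq(\mu^2-O(\eta+R_0^{-1}))/\mu_{\max}$. Hence if $S\leq\mu+2c_0\omega$, both $|\mu_{\max}-\mu|$ and $S-\mu_{\max}$ are $O(c_0\omega+\eta+R_0^{-1})$. Only then use $\frac{1}{k-1}-\frac{1}{k}>0$ to conclude that the dominant block has $r=k-1$, i.e.\ is $K_{k-1}$. This contrapositive is exactly how the paper argues.

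Two smaller points.
\begin{itemize}
\item Define retained blocks by $\mu_j\geq2\eta$, as the paper does, rather than absorbing blocks with $\mu_j\approx\eta$ into a length error. Such blocks can have total length $\Theta(1)$, but they cost only $O(\eta)$ in $\sum\mu_j^2/r_j$. The threshold $2\eta\geq\eta+\delta$ guarantees both that the block lies in $\varphi(I)$ and that its matched component is retained.
\item The loss of $\delta n$ for each of at most $1/(2\eta)$ matched components totals $O(\delta/\eta)\,n$. This requires $\delta/\eta=o(\omega)$, not merely $\delta=o(\eta)$. That is available because $\delta$ is chosen after $\eta$ and $\omega$.
\end{itemize}
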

\begin{proof}
Write $W=W_\blambda$, where $\blambda=((\lambda_i,G_i))_{i\geq1}$, and set $\mu_i:=\lambda_i-\lambda_{i-1}$ and $r_i:=v(G_i)$. Define
$$I_\ret(W):=\{i:r_i\leq R_0\text{ and }\mu_i\geq2\eta\}\,.$$
We claim that
\begin{equation}\label{eqn:sub-graphon-retained-mass-gap-K1k}
\sum_{i\in I_\ret(W)}\mu_i\geq\mu+2c_0\omega\,.
\end{equation}
If \eqref{eqn:sub-graphon-retained-mass-gap-K1k} were not to hold, then using $W\in\cV_\gamma$, we have $\sum_{i\geq1}\frac{\mu_i^2}{r_i}=\frac{\mu^2}{k-1}$. The indices outside $I_\ret(W)$ contribute at most $C\eta+C/R_0$ to the left-hand side. By the choice of $\eta$ and $R_0$, this is smaller than $c_0\omega/100$. Since $r_i\geq k-1$, this forces one index $j\in I_\ret(W)$ to satisfy
$$|\mu_j-\mu|\leq Cc_0\omega+C\eta+C/R_0\,,$$
while the total mass of all other indices is at most $Cc_0\omega+C\eta+C/R_0$. Returning to the preceding identity, we must have $r_j=k-1$, and hence $G_j\cong K_{k-1}$. Aligning this block with the block of $W^\ast$ gives $\delta_\square(W,W^\ast)<8\omega$, provided $c_0$ is sufficiently small and $\eta,R_0^{-1}$ are sufficiently small relative to $\omega$, yielding a contradiction, and proving \eqref{eqn:sub-graphon-retained-mass-gap-K1k}.

Let $\varphi:I\to L$ witness that $\Pi$ is $W$-compatible. If $a\in I_\ret(W)$ then $a\in\varphi(I)$, since otherwise compatibility would give either $\mu_a<\eta$ or $r_a>R_0$. For the unique $i\in I$ with $\varphi(i)=a$, we have $|V_i|\geq(\mu_a-\delta)n\geq\eta n$, hence $i\leq\ell_\ast$. Since $|I_\ret(W)|\leq1/\eta$, \eqref{eqn:sub-graphon-retained-mass-gap-K1k} gives $|V_\ast|\geq(\mu+c_0\omega)n$ after decreasing $c_0$, proving the first assertion. The second assertion follows from $|\mu_b-\mu|\leq C\eta$ and $|V(\Sigma_1)|=\mu_bn+O(1)$.
\end{proof}

\begin{lemma}\label{lemma:clean-retained-comparison-K1k}
For all $W\in\cV_\gamma$ and $0\leq b\leq C_k\eta n^2$, we have
$$\sum_{\Pi\in\sD_W}Z_\Pi(b)\leq n^{o(n)}\sum_{\Sigma\in\sD_\ast(b)}Z_\Sigma(b)\,.$$
\end{lemma}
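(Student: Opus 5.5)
Fix $W\in\cV_\gamma$ and $b$, and compare each $Z_\Pi(b)$ with a single $Z_\Sigma(b)$ through an explicit entropy calculation. The plan is to show that the logarithm of $Z_\Pi(b)$ is, up to $O(n\log n)$, a function of the retained data $(|P_{i,j}|)$ and $(Q_i)$ alone. That function is maximized, subject to the edge-count constraint $C_\Pi+\sum_e\bsm_e+b=m$, exactly when $\sum_{e}\bsm_e/N_e$ is near $p_k$ and $\Pi_\ast$ carries the extremal value of the retained-mass problem. The maximizer is attained by $\Sigma\in\sD_\ast(b)$. Since the number of divisions is at most $n^{O(n)}$, the sum over $\Pi\in\sD_W$ costs only a factor $n^{o(n)}$ once we know that $|\sD_W|$ restricted to retained data is $n^{o(n)}$. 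This count follows because $\ell_\ast\leq 1/\eta$, every $q_i\leq R_0$, and the retained vertex set together with its parts is determined by $O(1)$ set choices; the remaining vertices $S_\Pi$ do not enter $Z_\Pi(b)$ at all, which is why this factor must be handled with care.

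\textbf{Step 1.} Bound $\log Z_\Pi(b)$. By Stirling's formula and concavity, $\log Z_\Pi(b)\leq \max_{\bsm}\sum_e N_e H(\bsm_e/N_e)+O(|\cI_\Pi|\log n)$. Jensen's inequality shows that the maximum is at most $A_\Pi H(\bar p)$, where $A_\Pi=\sum_eN_e$ and $\bar p=(m-b-C_\Pi)/A_\Pi$. Writing $x:=2C_\Pi/n^2$ and $y:=2A_\Pi/n^2$, the regularity of each $Q_i$ together with Cauchy--Schwarz inside each component gives the discrete inequality $A_\Pi\leq \Delta C_\Pi+O(n)$; this is the finite analogue of $|R_W|\leq\Delta|O_W|$. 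Hence $(x,y)$ is feasible for the calculus problem \eqref{eqn:calculus-problem} with $\gamma$ replaced by $\gamma_b$, up to $O(1/n)$ errors. \Cref{lemma:k1k-calc-prob} then gives $\log Z_\Pi(b)\leq \binom n2\sE_k(\gamma_b)+O(n\log n)$. The $O(n\log n)$ is not good enough against $n^{o(n)}$, so refine: the error in the discrete inequality should be tracked exactly.

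\textbf{Step 2.} Show that $\Sigma$ attains this bound with only a polynomial loss, so that $Z_\Sigma(b)\geq n^{-O(1)}\exp(\max_{\bsm}\sum_eN_eH(\cdot))$. Because $\mu_b$ is defined so that the optimal density is $p_k$, the choice $K_{k-1}$ with equal parts gives $x=\mu_b^2/(k-1)$ and $y=\Delta x$ up to $O(1/n)$. This places $(x,y)$ at the optimizer \eqref{eqn:xsys-opt-coords}, with $\bar p=p_k+O(1/n)$ inside the window $p_k\pm2\delta$, so $Z_\Sigma(b)\neq0$.

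\textbf{Step 3 (the main obstacle).} Compare the subleading terms. For a general $\Pi$ we have $\log Z_\Pi(b)-\log Z_\Sigma(b)\leq C n\log n$ only crudely. The difference between the entropy function at two nearby feasible points $(x,y)$ and $(x^\ast,y^\ast)$ is second order in the distance, because the linear term vanishes on the constraint line $y=\Delta x$; that is exactly the criticality identity $p_k=(1-p_k)^{k-1}$ used in the proof of \Cref{thm:main-rate}. The discrete deviations are $O(1/n)$ (rounding of part sizes and the $+O(n)$ diagonal terms), so this contributes only $O(1)$ in the exponent. The genuine cost is the polynomial prefactors $n^{O(|\cI_\Pi|)}$, and $|\cI_\Pi|\leq \Delta R_0/\eta$ is bounded. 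Combining, $Z_\Pi(b)\leq n^{O(1)}Z_\Sigma(b)$ for every $\Sigma\in\sD_\ast(b)$. Summing over the $n^{o(n)}$ admissible retained configurations would finish the proof. I expect the delicate point to be the first-order comparison: it must show that the $O(n)$ boundary corrections in $C_\Pi$ (the $-\sum|P|/2$ terms) do not favour $\Pi$ over $\Sigma$ at order $n$. If they do, I would absorb this by choosing $\Sigma$ with $|V(\Sigma_1)|$ shifted by $O(1)$ to match the corrections, which is allowed by the $O(1)$ slack in the definition of $\sD_\ast(b)$.
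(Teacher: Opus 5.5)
\textbf{The gap is Step~3.} Your claim that $Z_\Pi(b)\leq n^{O(1)}Z_\Sigma(b)$ for every $\Pi\in\sD_W$ is false, for exactly the reason you flag at the end.

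Put $F(C,A):=A\,H\bigl((m-b-C)/A\bigr)$, so that $\log Z_\Pi(b)=F(C_\Pi,A_\Pi)+O(\log n)$ when the parts are balanced. The identity $p_k=(1-p_k)^{k-1}$ kills the first-order term of $F$ only in the direction $dA=\Delta\,dC$. Transversally, $\partial_AF=-\log(1-\bar p)=L_k/\Delta>0$ at $\bar p=p_k$. For balanced components one has exactly $A_\Pi=\Delta C_\Pi+\frac{\Delta}{2}|V_\ast|$. So your $O(1/n)$ deviation is transverse to the line, and it contributes $\frac{L_k}{2}|V_\ast|$ at order $n$, not $O(1)$.

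Here is a concrete counterexample. Take $W=W_\blambda\in\cV_\gamma$ with two $K_{k-1}$ blocks satisfying $\mu_1^2+\mu_2^2=\mu^2$, $2\eta\leq\mu_2\leq\mu_1$ and $\mu_1+\mu_2\leq1$; this is possible since $\mu<1$ for $\gamma<\gamma_k$. Take $b=0$, and take $\Pi\in\sD_W$ with two balanced $K_{k-1}$ components whose $C_\Pi$ is within $O(n)$ of $C_\Sigma$. Then
\[
\log Z_\Pi(0)-\log Z_\Sigma(0)=\tfrac{L_k}{2}\bigl(|V_\ast|-|V(\Sigma_1)|\bigr)+O(\log n),
\]
which is $\Theta(n)$, because $|V_\ast|$ is about $(\mu_1+\mu_2)n$ while $|V(\Sigma_1)|$ is about $\mu n$.

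Your proposed repair cannot absorb this. An $O(1)$ shift of $|V(\Sigma_1)|$ changes $\log Z_\Sigma(b)$ by only $O(1)$. A $\Theta(n)$ shift moves $C_\Sigma$ off the optimum by $\Theta(n^2)$, which costs $\Theta(n^2)$ in the exponent. This $e^{\Theta(n)}$ gain from larger retained mass is genuine. In the paper it is only overcome later, by the $n^{-\nu n}$ matching gain in \Cref{lemma:CompareBallsInCutMetricNlogNK1k}.

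\textbf{Step~3 is unnecessary.} Your Step~1 already gives an error of $O(n)$, not $O(n\log n)$.
\begin{itemize}
\item Since $|\cI_\Pi|\leq\Delta R_0/\eta$ is bounded, the Stirling and $|\cM_{\Pi,b}|$ overheads are only $O(\log n)$.
\item Every $\bsm\in\cM_{\Pi,b}$ has $\bar p\in p_k\pm2\delta$. So replacing $y$ by $y-O(1/n)$, to land in the feasible region of \eqref{eqn:calculus-problem}, changes the objective by $O(1/n)$. After multiplying by $\binom n2$ this is $O(n)$.
\end{itemize}
Combine this with your Step~2 lower bound $\log Z_\Sigma(b)\geq\binom n2\sE_k(\gamma_b)-O(n)$ and with the $e^{O(n)}$ choices of retained data. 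The result is a factor $e^{O(n)}=n^{O(n/\log n)}=n^{o(n)}$, which is all the lemma asserts.

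That is exactly the paper's proof, with one difference in how the upper bound is obtained. The paper applies \Cref{prop:graphon-char-fixed-gamma} to the step graphon of $(\Pi,\bsm)$. You instead reduce directly to \Cref{lemma:k1k-calc-prob} via Jensen and $A_\Pi\leq\Delta C_\Pi+O(n)$. Your route is more elementary and equally valid; the paper uses the same eigenvalue inequality in \Cref{lemma:critical-gnp-comparison-K1k}.
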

\begin{proof}
Fix $\Pi\in\sD_W$ and $\bsm\in\cM_{\Pi,b}$. The associated step graphon, equal to $1$ on the clique squares of retained parts, $\bsm_e/N_e$ on retained active rectangles, and $0$ elsewhere, has $K_{1,k}$ density zero and has edge density $\gamma_b+O(1/n)$. Hence, by \Cref{prop:graphon-char-fixed-gamma}, we have $\log\binom{\Pi}{\bsm}\leq \binom{n}{2}\sE_k(\gamma_b)+O(n)$. There are only at most $n^{O_k(R_0/\eta)}$ vectors $\bsm$ for each $\Pi$, and the number of possible choices of the components $\Pi_i$ with $i\in[\ell_\ast]$ and their parts is at most $e^{O(n)}$, hence we have
$$\log\sum_{\Pi\in\sD_W}Z_\Pi(b)\leq \binom{n}{2}\sE_k(\gamma_b)+O(n)\,.$$
On the other hand, for every $\Sigma\in\sD_\ast(b)$, Stirling's formula applied to an edge-count vector with density $p_k+O(1/n)$ on every pair indexed by $\cI_\Sigma$ gives $\log Z_\Sigma(b)\geq \binom{n}{2}\sE_k(\gamma_b)-O(n)$.
The family $\sD_\ast(b)$ is nonempty for every $0\leq b\leq C_k\eta n^2$, after adjusting the $O(1)$ term in the size of the retained component. Combining the two estimates proves the lemma.
\end{proof}

\begin{lemma}\label{lemma:sub-combined}
There exists $C=C(k,\gamma)>1$ such that the following holds. Let $\Pi$ be $W^\ast$-compatible, and let $B_\eta:=C_k\eta n^2+C_kn$. If $0\leq b\leq B_\eta$ then the following hold.
\begin{enumerate}[label=\textit{(\roman*)}, ref=(\textit{\roman*}), topsep=5pt]
  \item\label{item:sub-active-shift-K1k} If $0\leq t\leq B_\eta$ then $Z_\Pi(b)\leq C^{|t-b|+1}Z_\Pi(t)$.
  \item\label{item:sub-reference-ball-K1k} We have $Z_\Pi(b)N^\ast_{s(\Pi),b}(K_{1,k})\leq2|\cB_{n,m}(W^\ast,\omega)|$ for all sufficiently large $n$.
\end{enumerate}
\end{lemma}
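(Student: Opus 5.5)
For \ref{item:sub-active-shift-K1k}, I would use a direct injection argument in the style of \Cref{lemma:active-level-comparison-K1k}, but with window enlargement exchanged for a constant-factor loss. Since $\Pi$ is $W^\ast$-compatible, the retained components $\Pi_\ast$ consist of boundedly many parts, each of size $\Theta_\eta(n)$. Hence $A_\Pi=\sum_{e\in\cI_\Pi}N_e=\Theta(n^2)$, and every ratio $\bsm_e/N_e$ with $\bsm\in\cM_{\Pi,b}$ lies in $p_k\pm2\delta$, which is bounded away from $0$ and $1$ by $\rho/2$.

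Set $d:=b-t$. Given $\bsm\in\cM_{\Pi,b}$, I would move $|d|$ units of edge count between the active pairs to obtain some $\bsm'\in\cM_{\Pi,t}$, namely $\bsm'=\bsm+\bsa$ with $\sum_e a_e=d$. Each unit changes the binomial coefficient by a factor $\frac{\bsm_e+q}{N_e-\bsm_e-q+1}$ or its reciprocal, which lies in $[\rho/4,4/\rho]$. This gives $\binom{\Pi}{\bsm}\leq C_1^{|d|}\binom{\Pi}{\bsm'}$.

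The catch is that $\bsm'$ must remain inside the window $p_k\pm2\delta$. To guarantee this, I would distribute $\bsa$ proportionally to $N_e$. When $|d|\leq 2B_\eta=O(\eta n^2)$, this shifts each density by only $O(\eta)$. Using $\eta$ small compared with $\delta$ is not available in this hierarchy, though, so this is the step I expect to be the main obstacle.

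The remedy I would try first is to split each $\bsm$ according to whether its densities lie in the inner window $p_k\pm\delta$ or not. Concentration of the product of binomials shows that all but an $e^{-\Omega(\delta^2n^2)}$ fraction of $Z_\Pi(b)$ comes from vectors with densities within $o(1)$ of the common optimizer. The remaining mass can be absorbed into the $+1$ in the exponent. I would then shift only these near-typical vectors, and charge the fringe to the tail bound.

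The map $\bsm\mapsto\bsm'$ is at most $\binom{|d|+|\cI_\Pi|}{|\cI_\Pi|}$-to-one, which is at most $C_2^{|d|+1}$ since $|\cI_\Pi|=O_{k,\eta,R_0}(1)$. Combining this multiplicity with the per-unit factor gives the constant $C$.

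For \ref{item:sub-reference-ball-K1k}, I would mirror \Cref{lemma:canonical-clean-lower-K1k} but avoid its $e^{o(n)}$ loss by exhibiting an essentially injective map into the ball. Take a pair $(H,\bsm)$ with $H\in\cF^\ast_{S_\Pi,b}(K_{1,k})$ and $\bsm\in\cM_{\Pi,b}$, choose uniformly random bipartite graphs with $\bsm_e$ edges on the active pairs, make each retained part a clique, and place $H$ on $S_\Pi$.

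The resulting graph has $m$ edges. It is induced-$K_{1,k}$-free: a star centred in $V_\ast$ sees at most $k-1$ cliques, and $H$ is free with no edges joining $S_\Pi$ and $V_\ast$. The construction is injective for fixed $\Pi$, because $\Pi$, $H$ and the bipartite graphs are recovered from the graph. By cut concentration of random bipartite graphs, the graph is $o(1)$-close in cut metric to $W^\ast$ on $V_\ast$, using $W^\ast$-compatibility and $|V_\ast|\approx\mu n$. The sparse side contributes $e(H)\leq B_\eta$, which is at most $C_k\eta$ in cut distance, less than $\omega/2$ since $\eta=o(\omega)$. So all but an $o(1)$ fraction of the constructions land in $\cB_{n,m}(W^\ast,\omega)$, and the factor $2$ absorbs $(1-o(1))^{-1}$.
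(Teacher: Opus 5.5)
Your part (ii) is the paper's argument essentially verbatim. Both use a fixed $\Pi$, uniformly random $\bsm_e$-edge bipartite graphs on the active pairs, cliques on the retained parts, and $H$ on $S_\Pi$. Induced-$K_{1,k}$-freeness holds because pairwise nonadjacent neighbours of a vertex of $V_\ast$ lie in distinct cliques. The rest is cut concentration uniform in $(H,\bsm)$, injectivity, and the factor $2$.

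The gap is in (i), exactly at the step you flagged, and your remedy does not address it. Put $S_u:=m-u-C_\Pi$ and $A_\Pi:=\sum_{e\in\cI_\Pi}N_e$. Every $\bsm\in\cM_{\Pi,u}$ satisfies $\sum_e\bsm_e=S_u$, so its average density is \emph{forced} to equal $S_u/A_\Pi$. Passing from level $b$ to level $t$ moves this average by $(b-t)/A_\Pi$, which can be of order $\eta$. Concentration around the common optimum, and the split at the inner window $p_k\pm\delta$, only give a margin of order $\delta$. So when $\eta\gg\delta$, even perfectly typical vectors are pushed out of $p_k\pm2\delta$.

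In fact no argument can work in that regime. Every $\bsm\in\cM_{\Pi,t}$ has $\sum_e\bsm_e\geq(p_k-2\delta)A_\Pi$. A balanced $W^\ast$-compatible $\Pi$ with $|V_\ast|=\lfloor\mu n\rfloor$ has $S_0/A_\Pi=p_k+o(1)$, hence $Z_\Pi(0)>0$. But $\cM_{\Pi,t}=\emptyset$ for $t$ near $B_\eta$ as soon as $B_\eta/A_\Pi>2\delta+o(1)$ (e.g.\ when $\delta\ll\eta$), so (i) fails for $b=0$ and such $t$. For the same reason, ``absorbing the fringe into the $+1$'' is not a valid step: an extra factor $C$ controls nothing unless you already have a lower bound on $Z_\Pi(t)$.

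The paper does not argue probabilistically here. It asserts that, by the choice of parameters, $S_u/A_\Pi\in p_k\pm\delta/4$ for all $0\leq u\leq B_\eta$, and then compares adjacent levels:
\begin{itemize}
\item For $\bsm\in\cM_{\Pi,u}$ it subtracts $1$ from the first coordinate with $\bsm_e>(p_k-\delta/2)N_e$, which exists because the average exceeds $p_k-\delta/4$.
\item Each step costs a factor $(N_e-\bsm_e+1)/\bsm_e\leq C_0(k,\gamma)$.
\item Each image has at most $|\cI_\Pi|=\binom{k-1}{2}$ preimages, because the unique retained component of a $W^\ast$-compatible division is a $K_{k-1}$.
\item The upward direction is symmetric, and iterating gives $C=C_0\binom{k-1}{2}$.
\end{itemize}
Note that your bound $|\cI_\Pi|=O_{k,\eta,R_0}(1)$ would not give $C=C(k,\gamma)$; you need the exact count $\binom{k-1}{2}$.

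Your suspicion about the hierarchy is well founded. The section chooses $\delta$ after $\eta$, and \Cref{lemma:sub-retained-mass-gap-K1k} uses $\delta\leq\eta$, so the paper's ``by the choice of parameters'' is precisely the assumption you declined to make. The right response is to weaken (i), not to rescue the full statement by concentration. One option is to restrict to $|t-b|=O(n)$, starting from a level whose average density sits well inside the window. That is what \Cref{lemma:CompareBallsInCutMetricNlogNK1k} uses for $\Sigma\in\sD_\ast(b)$, and there the one-step comparison goes through.
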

\begin{proof}
Since $\Pi$ is $W^\ast$-compatible, the retained division consists of one component $\Pi_1=(Q_1,\cP_1)$, where $Q_1\cong K_{k-1}$, $|V_\ast|\in(\mu\pm C_k\eta)n$, and every part of $\cP_1=\{P_1,\dots,P_{k-1}\}$ has size $\mu n/(k-1)+O_k(\eta n)$. Therefore every $N_e$ with $e\in\cI_\Pi$ is $\Theta_{k,\gamma}(n^2)$, $A_\Pi:=\sum_{e\in\cI_\Pi}N_e=\Theta_{k,\gamma}(n^2)$, and $C_\Pi+p_kA_\Pi=m+O_k(\eta n^2+n)$. Thus, for every $0\leq u\leq B_\eta$, the integer $S_u:=m-u-C_\Pi$ satisfies $S_u=p_kA_\Pi+O_k(\eta n^2+n)$. By the choice of parameters, $S_u/A_\Pi\in p_k\pm\delta/4$ for all sufficiently large $n$.

We first prove \ref{item:sub-active-shift-K1k}. It is enough to compare adjacent values of $u$. Fix $0\leq u<u+1\leq B_\eta$. For each $\bsm\in\cM_{\Pi,u}$, choose the first $e\in\cI_\Pi$ such that $\bsm_e>(p_k-\delta/2)N_e$, and define $\bsm':=\bsm-\mathbf 1_e$. Such an index exists because $\sum_{e\in\cI_\Pi}\bsm_e=S_u$ and $S_u/A_\Pi>p_k-\delta/4$. It also holds that $\bsm'\in\cM_{\Pi,u+1}$. Since all densities in $\cM_{\Pi,u}$ lie in $p_k\pm2\delta$, we have
$$\binom{N_e}{\bsm_e}\binom{N_e}{\bsm_e-1}^{-1}=\frac{N_e-\bsm_e+1}{\bsm_e}\leq C_0$$
for some $C_0=C_0(k,\gamma)>0$. Each vector in $\cM_{\Pi,u+1}$ has at most $|\cI_\Pi|$ preimages, so $Z_\Pi(u)\leq C_0|\cI_\Pi|Z_\Pi(u+1)$. The reverse inequality $Z_\Pi(u+1)\leq C_0|\cI_\Pi|Z_\Pi(u)$ follows in the same way, adding $1$ to the first coordinate $e$ with $\bsm_e<(p_k+\delta/2)N_e$. Iterating the adjacent comparisons proves \ref{item:sub-active-shift-K1k} after increasing $C$.

We now prove \ref{item:sub-reference-ball-K1k}. Fix $H\in\cF^\ast_{S_\Pi,b}(K_{1,k})$ and $\bsm\in\cM_{\Pi,b}$. Choose exactly $\bsm_e$ edges uniformly from each pair $P_i\times P_j$ with $ij\in\cI_\Pi$, independently over all active pairs; make each $P_i$ a clique; put no edges between $S_\Pi$ and $V_\ast$; and put $H$ on $S_\Pi$. Every graph obtained in this way has $m$ edges and is induced-$K_{1,k}$-free, since an independent set in $V_\ast$ meets each clique $P_i$ in at most one vertex and there are no edges between $S_\Pi$ and $V_\ast$.

For every active pair $P_i\times P_j$, the random bipartite graph just defined has edge density in $p_k\pm2\delta$ by definition, and Hoeffding's inequality for hypergeometric random variables, followed by a union bound over all subsets of $P_i$ and $P_j$, gives cut distance $o(1)$ from the constant graphon of density $\bsm_{ij}/N_{ij}$ on this pair with probability $1-o(1)$, uniformly over $H$ and $\bsm$. Since $\Pi$ is $W^\ast$-compatible, the parts $P_1,\dots,P_{k-1}$ have sizes $\mu n/(k-1)+O_k(\eta n)$, and we have $|V(\Pi_1)|=(\mu+O_k(\eta))n$. Moreover, the graph $H$ has at most $B_\eta$ edges. Therefore, after aligning $P_1,\dots,P_{k-1}$ with the positive block of $W^\ast$, the resulting graph lies in $\cB_{n,m}(W^\ast,\omega)$ with probability $1-o(1)$, by the choice of $\eta$ and $\delta$.

Thus at least half of the graphs counted by $Z_\Pi(b)N^\ast_{s(\Pi),b}(K_{1,k})$ belong to $\cB_{n,m}(W^\ast,\omega)$ for all sufficiently large $n$. The choices of $H$ and of the active edges determine the graph uniquely, and hence $Z_\Pi(b)N^\ast_{s(\Pi),b}(K_{1,k})\leq2|\cB_{n,m}(W^\ast,\omega)|$.
\end{proof}

\begin{lemma}\label{lemma:sub-sparse-matching-transfer-K1k}
Let $M_q:=(2q)!/(2^qq!)$. If $s'\geq s+2q$ then for every $b$,
$$M_qN^\ast_{s,b}(K_{1,k})\leq N^\ast_{s',b+q}(K_{1,k})\,.$$
\end{lemma}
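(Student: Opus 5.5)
We prove the statement with an explicit injection. Here $M_q=(2q)!/(2^qq!)$ is the number of perfect matchings of $K_{2q}$, and $N^\ast_{s,b}(K_{1,k})$ counts labelled induced-$K_{1,k}$-free graphs on a fixed $s$-set. Fix vertex sets $S\subseteq S'$ with $|S|=s$ and $|S'|=s'$. Since $s'\geq s+2q$, we may fix a set $D\subseteq S'\setminus S$ with $|D|=2q$, and every vertex of $S'\setminus(S\cup D)$ is left isolated. To a pair $(F,M)$, where $F\in\cF^\ast_{S,b}(K_{1,k})$ and $M$ is a perfect matching of $D$, assign the graph $\Phi(F,M):=F\sqcup M$ on $S'$.

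There are three things to check.

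\emph{Edge count.} The graph $F\sqcup M$ has $b+q$ edges.

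\emph{Induced-$K_{1,k}$-freeness.} Every connected component of $F\sqcup M$ is either a component of $F$, a single edge, or an isolated vertex. An induced star $K_{1,k}$ is connected, so it lies inside one component. A component of $F$ contains no induced $K_{1,k}$ by assumption. A single edge or a single vertex has fewer than $k+1\geq4$ vertices, so it cannot contain one either. Hence $\Phi(F,M)\in\cF^\ast_{S',b+q}(K_{1,k})$.

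\emph{Injectivity.} Since $D$ and $S$ are fixed, $\Phi(F,M)$ determines $F=\Phi(F,M)[S]$ and $M=\Phi(F,M)[D]$.

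Therefore $M_q\,N^\ast_{s,b}(K_{1,k})\leq N^\ast_{s',b+q}(K_{1,k})$, which is the claim. No step here is a real obstacle; the only point needing attention is the component argument for induced-$K_{1,k}$-freeness, which uses that $K_{1,k}$ is connected with $k+1>2$ vertices.
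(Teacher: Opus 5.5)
Your proposal is correct and follows the same argument as the paper. The paper likewise places a perfect matching on a fixed set of $2q$ new vertices next to the given graph, leaves the remaining new vertices isolated, and counts the $M_q$ choices of matching. Your component argument for induced-$K_{1,k}$-freeness and your injectivity check via restriction to $S$ and $D$ fill in details the paper leaves implicit.
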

\begin{proof}
Given an induced-$K_{1,k}$-free graph on a fixed set of $s$ vertices with $b$ edges, choose a matching of size $q$ on a fixed set of $2q$ new vertices and leave all other new vertices isolated. The resulting graph is induced-$K_{1,k}$-free and has $b+q$ edges, and there are $M_q$ choices for the matching, proving the lemma.
\end{proof}

\begin{lemma}\label{lemma:CompareBallsInCutMetricNlogNK1k}
There exists $\nu=\nu(k,\gamma,\omega)>0$ such that, if $W\in\cV_\gamma$ and $\delta_\square(W,W^\ast)\geq8\omega$ then
$$|\cB_{n,m}(W,\tau)|\leq n^{-\nu n}|\cB_{n,m}(W^\ast,\omega)|\,.$$
\end{lemma}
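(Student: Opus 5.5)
We bound the left-hand side using \Cref{lemma:NtaunmWUpperBdK1k}, and compare the result with $|\cB_{n,m}(W^\ast,\omega)|$ by converting the retained-mass gap of \Cref{lemma:sub-retained-mass-gap-K1k} into a superexponential gain through the matching trick of \Cref{lemma:sub-sparse-matching-transfer-K1k}.

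\textbf{Step 1 (upper bound for $W$).} By \Cref{lemma:NtaunmWUpperBdK1k},
$$|\cB_{n,m}(W,\tau)|\leq e^{Cn}\sum_{\Pi\in\sD_W}\cZ_\Pi=e^{Cn}\sum_{0\leq b\leq C_k\eta n^2}\sum_{\Pi\in\sD_W}N^\ast_{s(\Pi),b}(K_{1,k})Z_\Pi(b)\,.$$
By the first assertion of \Cref{lemma:sub-retained-mass-gap-K1k}, every $\Pi\in\sD_W$ has $s(\Pi)\leq s_W:=\lfloor(1-\mu-c_0\omega)n\rfloor$. Since $N^\ast_{s,b}$ is monotone in $s$ (add isolated vertices), we may replace $N^\ast_{s(\Pi),b}$ by $N^\ast_{s_W,b}$. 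Then \Cref{lemma:clean-retained-comparison-K1k} bounds $\sum_{\Pi\in\sD_W}Z_\Pi(b)$ by $n^{o(n)}\sum_{\Sigma\in\sD_\ast(b)}Z_\Sigma(b)$. The $o(n)$ in that exponent must be small compared to $c_0\omega n$. If the lemma only gives $e^{O(n)}$, that is still acceptable, since the gain in Step 2 is of order $n\log n$.

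\textbf{Step 2 (matching transfer).} For $\Sigma\in\sD_\ast(b)$, the second assertion of \Cref{lemma:sub-retained-mass-gap-K1k} gives $s(\Sigma)\geq(1-\mu-C\eta)n\geq s_W+2q$ with $q:=\lfloor c_0\omega n/4\rfloor$, because $\eta=o(\omega)$. By \Cref{lemma:sub-sparse-matching-transfer-K1k},
$$N^\ast_{s_W,b}(K_{1,k})\leq M_q^{-1}N^\ast_{s(\Sigma),b+q}(K_{1,k})\,,$$
and $M_q=n^{(1-o(1))q}$. The edge count has now shifted from $b$ to $b+q$. To fix this, use \Cref{lemma:sub-combined}\ref{item:sub-active-shift-K1k}: the divisions in $\sD_\ast(b)$ are $W^\ast$-compatible, and $b+q\leq B_\eta$, so $Z_\Sigma(b)\leq C^{q+1}Z_\Sigma(b+q)$. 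This costs only $e^{O(n)}$. One subtlety is that $\Sigma\in\sD_\ast(b)$ was sized for $b$, not $b+q$. This is harmless because part (i) of \Cref{lemma:sub-combined} is stated for a fixed $W^\ast$-compatible $\Pi$ with any $t\leq B_\eta$.

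\textbf{Step 3 (reference ball).} \Cref{lemma:sub-combined}\ref{item:sub-reference-ball-K1k} now gives
$$Z_\Sigma(b+q)N^\ast_{s(\Sigma),b+q}(K_{1,k})\leq 2|\cB_{n,m}(W^\ast,\omega)|$$
for each $\Sigma$ and $b$. It remains to count $\Sigma\in\sD_\ast(b)$ and $b\leq C_k\eta n^2$. These are at most $e^{O(n)}$ (choice of part sets) times $n^2$. Collecting everything,
$$|\cB_{n,m}(W,\tau)|\leq e^{O_\omega(n)}n^{o(n)}n^{-(1-o(1))c_0\omega n/4}|\cB_{n,m}(W^\ast,\omega)|\,,$$
which gives the claim with $\nu=c_0\omega/8$.

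The main obstacle is bookkeeping in Step 1. The $n^{o(n)}$ loss in \Cref{lemma:clean-retained-comparison-K1k} and the $e^{O(n)}$ multiplicities must be genuinely smaller than $M_q$. If the losses there are really $e^{O(n)}$, with constants independent of $\omega$, this is fine. If they are only $n^{o(n)}$ with an $o$ depending on $\eta$, I would check that $\eta=o(\omega)$ makes the loss at most $n^{c_0\omega n/16}$.
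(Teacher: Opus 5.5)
Your proposal is correct and follows the paper's proof essentially step for step. You bound $|\cB_{n,m}(W,\tau)|$ via \Cref{lemma:NtaunmWUpperBdK1k}, replace $s(\Pi)$ by a uniform upper bound from the retained-mass gap using monotonicity in the number of vertices, and pass to $\sum_b N^\ast_{s_0,b}(K_{1,k})\sum_{\Sigma\in\sD_\ast(b)}Z_\Sigma(b)$ via \Cref{lemma:clean-retained-comparison-K1k}. You then gain $M_q=n^{(1-o(1))q}$ from \Cref{lemma:sub-sparse-matching-transfer-K1k} while paying only $C^{q+1}$ for the shift in \Cref{lemma:sub-combined}\ref{item:sub-active-shift-K1k}, and close with \Cref{lemma:sub-combined}\ref{item:sub-reference-ball-K1k}. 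The only differences are cosmetic: your $s_W$ and $q=\lfloor c_0\omega n/4\rfloor$ versus the paper's $s_0=\lfloor(1-\mu-c_0\omega/2)n\rfloor$ and $q=\lfloor c_0\omega n/8\rfloor$.
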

\begin{proof}
Let $c_0>0$ be the constant in \Cref{lemma:sub-retained-mass-gap-K1k}, and define
$$s_0:=\left\lfloor(1-\mu-c_0\omega/2)n\right\rfloor\,,\qquad q:=\left\lfloor\frac{c_0\omega n}{8}\right\rfloor\,.$$
By \Cref{lemma:sub-retained-mass-gap-K1k}, if $\Pi\in\sD_W$ then $s(\Pi)\leq s_0$, while every $\Sigma\in\sD_\ast(b)$ satisfies $s(\Sigma)\geq s_0+2q$ for all sufficiently large $n$. Since an induced-$K_{1,k}$-free graph on $s(\Pi)$ vertices with $b$ edges can be extended injectively to one on $s_0$ vertices with $b$ edges by adding $s_0-s(\Pi)$ isolated vertices, \Cref{lemma:NtaunmWUpperBdK1k,lemma:clean-retained-comparison-K1k} gives
\begin{align}\label{eqn:sub-comparison-start-K1k}
|\cB_{n,m}(W,\tau)|&\leq n^{o(n)}\sum_{0\leq b\leq C_k\eta n^2}N^\ast_{s_0,b}(K_{1,k})\sum_{\Sigma\in\sD_\ast(b)}Z_\Sigma(b)\,.
\end{align}
For each term in \eqref{eqn:sub-comparison-start-K1k}, \Cref{lemma:sub-sparse-matching-transfer-K1k} and \Cref{lemma:sub-combined} \ref{item:sub-active-shift-K1k} give
$$Z_\Sigma(b)N^\ast_{s_0,b}(K_{1,k})\leq \frac{C^{q+1}}{M_q}Z_\Sigma(b+q)N^\ast_{s(\Sigma),b+q}(K_{1,k})\,.$$
By \Cref{lemma:sub-combined} \ref{item:sub-reference-ball-K1k}, the last product is at most $2|\cB_{n,m}(W^\ast,\omega)|$. The number of possible values of $b$ is $O(n^2)$, and $|\sD_\ast(b)|=e^{O(n)}$. Since $M_q=e^{q\log n+O(n)}$, $C^q=e^{O(n)}$, and $q=\Theta(n)$, we have $C^{q+1}n^{o(n)}/M_q\leq n^{-\nu n}$ after decreasing $\nu>0$, proving the lemma.
\end{proof}

\begin{lemma}\label{lemma:sub-Wstar-sparse-lower-tail-K1k}
There exists $\nu>0$ such that, with $q:=\left\lfloor\omega n/100\right\rfloor$, we have
$$\sum_{\Pi\in\sD_{W^\ast}}\sum_{0\leq b<q}Z_\Pi(b)N^\ast_{s(\Pi),b}(K_{1,k})\leq n^{-\nu n}|\cB_{n,m}(W^\ast,\tau+8\omega)|\,.$$
\end{lemma}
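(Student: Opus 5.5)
We use the same matching-transfer mechanism as in the proof of \Cref{lemma:CompareBallsInCutMetricNlogNK1k}, but here the gain comes from the sparse edge count rather than from the number of sparse vertices. Fix $\Pi\in\sD_{W^\ast}$ and $0\leq b<q$. Since $\Pi$ is $W^\ast$-compatible, the retained division is a single $K_{k-1}$ component. By \Cref{lemma:sub-retained-mass-gap-K1k} (second assertion, transferred to compatible divisions via compatibility), we have $s(\Pi)\geq(1-\mu-C\eta)n$. In particular $s(\Pi)=\Theta(n)$, since $\mu<1$ for $\gamma<\gamma_k$ and $\eta=o(\omega)$.

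The first step is to add a matching inside the sparse side. Trivially $N^\ast_{s,b}(K_{1,k})\leq\binom{\binom{s}{2}}{b}\leq n^{2b}$, while \Cref{lemma:sub-sparse-matching-transfer-K1k} cannot be applied directly because $s(\Pi)$ is fixed. We therefore compare instead with graphs having $b'=b+q$ sparse edges on the same set $S_\Pi$. The graphs counted by $N^\ast_{s(\Pi),b+q}(K_{1,k})$ include all disjoint unions of a graph counted by $N^\ast_{s(\Pi)-2q,b}(K_{1,k})$ (placed on a fixed subset) with a perfect matching on the remaining $2q$ vertices. Hence
$$M_qN^\ast_{s(\Pi)-2q,b}(K_{1,k})\leq N^\ast_{s(\Pi),b+q}(K_{1,k})\,.$$
To compare $N^\ast_{s(\Pi),b}$ with $N^\ast_{s(\Pi)-2q,b}$, we use the crude bound that a graph with $b<q$ edges has at most $2q$ non-isolated vertices. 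So $N^\ast_{s,b}\leq\binom{s}{2q}N^\ast_{2q,b}\leq\binom{n}{2q}N^\ast_{s-2q,b}$, which costs only $e^{O(q\log(n/q))}=e^{O_\omega(n)}$.

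Second, \Cref{lemma:sub-combined} \ref{item:sub-active-shift-K1k} gives $Z_\Pi(b)\leq C^{q+1}Z_\Pi(b+q)$, because $b+q\leq B_\eta$. Combining the two steps,
$$Z_\Pi(b)N^\ast_{s(\Pi),b}(K_{1,k})\leq \frac{C^{q+1}\binom{n}{2q}}{M_q}\,Z_\Pi(b+q)N^\ast_{s(\Pi),b+q}(K_{1,k})\,.$$
Since $M_q=e^{q\log n+O(q\log(1/\omega))+O(n)}$ and $q=\Theta(\omega n)$, the prefactor is at most $n^{-2\nu n}$ for small $\nu=\nu(\omega)>0$.

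The main obstacle is the final step: summing the resulting terms over $\Pi\in\sD_{W^\ast}$ and $b$ and bounding the total by the ball $\cB_{n,m}(W^\ast,\tau+8\omega)$. Applying \Cref{lemma:sub-combined} \ref{item:sub-reference-ball-K1k} to each $\Pi$ separately loses a factor $|\sD_{W^\ast}|=e^{O(n\log n)}$, which may be comparable to $n^{\nu n}$. Instead, we will bound $\sum_{\Pi}Z_\Pi(b+q)N^\ast_{s(\Pi),b+q}(K_{1,k})$ collectively. Each term counts graphs built as in the proof of \Cref{lemma:sub-combined} \ref{item:sub-reference-ball-K1k}, and these lie in $\cB_{n,m}(W^\ast,\tau+8\omega)$ with probability $1-o(1)$. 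The sparse part has at most $q+b<2q$ edges and the retained mass is within $O(\eta)$ of $\mu$, which fits in the enlarged radius $\tau+8\omega$. For multiplicity, we argue as in \Cref{lemma:canonical-clean-lower-K1k}: almost every such graph determines its retained division up to $e^{o(n)}$ choices. The cliques $P_i$ are recoverable because the random bipartite pairs have density in $p_k\pm2\delta$, so each $P_i$ is determined up to $O(1)$ vertices of the sparse side, namely those vertices adjacent to matching edges. This yields
$$\sum_{\Pi}Z_\Pi(b+q)N^\ast_{s(\Pi),b+q}\leq e^{o(n)}|\cB_{n,m}(W^\ast,\tau+8\omega)|\,.$$
Summing over the $q$ values of $b$ costs only a factor $O(n)$, which completes the proof with $\nu$ in place of $2\nu$.
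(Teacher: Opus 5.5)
Your per-division comparison is essentially the paper's. The paper adds a $q$-matching on the at least $s(\Pi)-2q$ isolated vertices of $S_\Pi$ and bounds the reverse multiplicity by $\binom{b+q}{q}$. You instead first confine the $b$ edges to a $2q$-set at cost $\binom{n}{2q}$. Either way $Z_\Pi(b)N^\ast_{s(\Pi),b}(K_{1,k})\leq n^{-q+o(n)}Z_\Pi(b+q)N^\ast_{s(\Pi),b+q}(K_{1,k})$.

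The gap is in the step you single out as the main obstacle. You are right that $\sD_{W^\ast}$ itself has $e^{\Theta(n\log n)}$ elements. But this comes entirely from the ways of cutting $S_\Pi$ into non-retained components and $\Pi_\sparse$ (e.g.\ disjoint $K_{k-1}$-components with singleton parts). Such choices change neither $Z_\Pi$ nor $s(\Pi)$. All divisions sharing the retained component $(K_{k-1},\{P_1,\dots,P_{k-1}\})$ generate exactly the same graphs in the construction of \Cref{lemma:sub-combined}~\ref{item:sub-reference-ball-K1k}. So in $\sum_{\Pi\in\sD_{W^\ast}}$ each such graph is counted once per non-retained completion, however rigidly it determines its clique cover.

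Your uniqueness argument controls only the retained data, so it says nothing about this multiplicity. The bound $\sum_{\Pi}Z_\Pi(b+q)N^\ast_{s(\Pi),b+q}(K_{1,k})\leq e^{o(n)}|\cB_{n,m}(W^\ast,\tau+8\omega)|$ is therefore not justified by what you wrote. Separately, your mechanism for recovering the $P_i$, via sparse vertices adjacent to matching edges, does not match the construction. There are no $V_\ast$--$S_\Pi$ edges, so no sparse vertex can enter a $P_i$. The relevant facts are that $V_\ast$ is a union of components of $G$ and that w.h.p.\ $G[V_\ast]$ has a unique cover by $k-1$ cliques.

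The paper needs no multiplicity analysis here. Because the summand depends only on the retained data, the sum is effectively over the at most $k^n=e^{O(n)}$ choices of $(P_1,\dots,P_{k-1})$. This is what ``at most $e^{O(n)}$ divisions'' counts, exactly as in the proof of \Cref{lemma:clean-retained-comparison-K1k}. Then \Cref{lemma:sub-combined}~\ref{item:sub-reference-ball-K1k}, applied term by term with monotonicity of the ball in its radius, bounds each product by $2|\cB_{n,m}(W^\ast,\tau+8\omega)|$. The resulting $e^{O(n)}\cdot O(n)$ overhead is absorbed by $n^{-q+o(n)}$, since $q=\Theta(n)$. So the repair is to replace your collective step by this count of retained structures. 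An $e^{O(n)}$ loss is harmless, and no $e^{o(n)}$ multiplicity bound is needed.
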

\begin{proof}
Fix $\Pi\in\sD_{W^\ast}$. By \Cref{lemma:WtoWtildeMetricsK1k}, the retained part of $\Pi$ consists of a single balanced $K_{k-1}$-component on $(\mu\pm C\eta)n$ vertices. In particular, $s(\Pi)\geq4q$ for all sufficiently large $n$. If $0\leq b<q$ then every induced-$K_{1,k}$-free graph on $S_\Pi$ with $b$ edges has at least $s(\Pi)-2q$ isolated vertices. For $s\in\N$, define $M_q(s):=s!/(2^qq!(s-2q)!)$. It follows that
$$M_q(s(\Pi)-2q)N^\ast_{s(\Pi),b}(K_{1,k})\leq \binom{b+q}{q}N^\ast_{s(\Pi),b+q}(K_{1,k})\,.$$
Indeed, from a graph with $b$ edges we add a matching of size $q$ on isolated vertices. Conversely, a graph with $b+q$ edges has at most $\binom{b+q}{q}$ possible choices for the added matching edges. Since $b<q$, the binomial factor is $e^{O(n)}$, while $M_q(s(\Pi)-2q)=e^{q\log n+O(n)}$. By \Cref{lemma:sub-combined} \ref{item:sub-active-shift-K1k},
we have $Z_\Pi(b)\leq C^{q+1}Z_\Pi(b+q)$, so we calculate that
$$\sum_{0\leq b<q}Z_\Pi(b)N^\ast_{s(\Pi),b}(K_{1,k})\leq n^{-q+o(n)}\sum_{q\leq u<2q}Z_\Pi(u)N^\ast_{s(\Pi),u}(K_{1,k})\,.$$
Summing over $\Pi\in\sD_{W^\ast}$ contributes only at most $e^{O(n)}$ divisions. By \Cref{lemma:sub-combined} \ref{item:sub-reference-ball-K1k} and monotonicity in the radius, every product in the last sum is at most $2|\cB_{n,m}(W^\ast,\tau+8\omega)|$. Since $q=\Theta(n)$, the lemma follows.
\end{proof}

\begin{proof}[Proof of \Cref{thm:main-almostall} \ref{item:thm:main-almostall-sub}]
Let $\cN=\{W^\ast,W_1,\dots,W_\ell\}\subseteq\cX^\ast_\gamma$ be a $\tau$-net of $\cX^\ast_\gamma$ in cut metric. Define
$$\arraycolsep=1.4pt
\begin{array}{ll}
\cF_\close &:= \displaystyle \cB_{n,m}(W^\ast,\tau)\cup\bigcup_{i=1}^\ell \cB_{n,m}(W_i,\tau)\,, \\[18pt]
\cF_\far &:= \cF^\ast_{n,m}(K_{1,k})\setminus\cF_\close\,.
\end{array}$$
Let $I:=\{i\in[\ell]:\delta_\square(W_i,W^\ast)\geq8\omega\}$ and $\tau':=\tau+8\omega$. By \Cref{lemma:rough-struc-fixed-g}, there exists $C_0=C_0(k,\gamma,\tau)>0$ such that
$$|\cF_\far|\leq e^{-C_0n^2}|\cF^\ast_{n,m}(K_{1,k})|\,.$$
Additionally, we have
$$|\cF_\close|\leq|\cB_{n,m}(W^\ast,\tau')|+\sum_{i\in I}|\cB_{n,m}(W_i,\tau)|\,.$$
By \Cref{lemma:CompareBallsInCutMetricNlogNK1k}, we have
$$\sum_{i\in I}|\cB_{n,m}(W_i,\tau)|\leq \ell n^{-\nu n}|\cB_{n,m}(W^\ast,\omega)|\leq o(|\cB_{n,m}(W^\ast,\tau')|)\,.$$
Since $\cB_{n,m}(W^\ast,\tau')\subseteq \cF^\ast_{n,m}(K_{1,k})$, the preceding displays imply
\begin{equation}\label{eqn:subcritical-almostall-cutclose-K1k}
|\cF^\ast_{n,m}(K_{1,k})|\leq(1+o(1))|\cB_{n,m}(W^\ast,\tau')|\,.
\end{equation}

It remains to identify the typical structure inside $\cB_{n,m}(W^\ast,\tau')$. By shrinking parameters, the estimates preceding this subsubsection apply with $\tau'$ in place of $\tau$. Summing over $\Pi\in\sD_{W^\ast}$ and using \Cref{lemma:NtaunmWUpperBdK1k,lemma:canonical-clean-lower-K1k}, we get
$$\sum_{\Pi\in\sD_{W^\ast}}|\cF'_{W^\ast,\Pi}|=o(|\cB_{n,m}(W^\ast,\tau')|)\,.$$
Therefore, by \eqref{eqn:subcritical-almostall-cutclose-K1k}, almost every graph in $\cF^\ast_{n,m}(K_{1,k})$ lies in $\cF^\clean_{W^\ast,\Pi}$ for some $\Pi\in\sD_{W^\ast}$.

Fix $G\in\cF^\clean_{W^\ast,\Sigma}$ for some $\Sigma\in\sD_{W^\ast}$, and write
$$\Sigma=\{\Sigma_1,\dots,\Sigma_r\},\qquad \Sigma_i=(Q_i,\{P_{i,1},\dots,P_{i,q_i}\})\,.$$
By \Cref{lemma:WtoWtildeMetricsK1k}, the division $\Sigma$ has a unique retained component, say $\Sigma_1$, with $|V(\Sigma_1)|=(\mu\pm C\eta)n$ and $Q_1\cong K_{k-1}$. By definition of $\cF^\clean_{W^\ast,\Sigma}$, the induced subgraph $G[V(\Sigma_1)]$ is co-$(k-1)$-partite and there are no edges between $V(\Sigma_1)$ and $V\setminus V(\Sigma_1)$. It follows that $G=G_1\sqcup G_2$, where $G_1:=G[V(\Sigma_1)]$ is co-$(k-1)$-partite on $(\mu\pm C\eta)n$ vertices and $G_2:=G[V\setminus V(\Sigma_1)]$.

We now prove that $e(G_2)=o(n^2)$. Fix $\xi>0$, and run the argument above with $\eta>0$ chosen such that $C_k\eta<\xi/2$. For every graph in the resulting clean classes, we have $e(G_2)=e_G(S_\Sigma)\leq C_k\eta n^2+o(n^2)\leq \xi n^2$ for all sufficiently large $n$. Since $\xi>0$ was arbitrary, almost every graph satisfies $e(G_2)=o(n^2)$. Finally, \Cref{lemma:sub-Wstar-sparse-lower-tail-K1k} shows that the clean graphs in the $W^\ast$-ball with fewer than $\omega n/100$ edges in $G[S_\Pi]$ form an $o(1)$ fraction of $\cB_{n,m}(W^\ast,\tau')$. Hence, after decreasing the implicit constant,
$e(G_2)\geq \omega n/200$ for almost every $G\in\cF^\ast_{n,m}(K_{1,k})$, proving \Cref{thm:main-almostall} \ref{item:thm:main-almostall-sub}.
\end{proof}

\section{The Conditional \erdosrenyi{} Random Graph}
\label{sec:erdosrenyi}
In this section we prove \Cref{thm:gnp-typ-struc}. The proofs in this section generalize those in \cite{perkins2025typical}. Let $W_0\equiv0$ denote the all-zero graphon, and $W_1\equiv1$ the all-one graphon; also write $\cV_0:=\{W_0\}$ and $\cV_1:=\{W_1\}$. In this section we use notation from \Cref{sec:graphon-prob} relating to the optimal graphons in $\cV_\gamma$. If $\cF$ is a set of graphs on $V$ then define
$$Z_p(\cF):=(1-p)^{\binom n2}\sum_{G\in\cF}\left(\frac{p}{1-p}\right)^{e(G)}\,.$$
For all graphons $W\in\cW$ and all $r>0$, define
$$\cB_n(W,r):=\{G\in\cF^\ast_n(K_{1,k}):\delta_\square(G,W)<r\}\,.$$
Also let $\cC^{k-1}_n$ denote the set of co-$(k-1)$-partite graphs on $V$. The following lemma is the weighted analogue of the final comparison in \Cref{subsec:subcritical-comparison-K1k}.

\begin{lemma}\label{lemma:critical-gnp-comparison-K1k}
For every sufficiently small $\omega>0$, there exist constants $\tau,\nu>0$ such that the following holds. If $W\in\bigcup_{\gamma\in[0,\gamma_k]}\cV_\gamma$ and $\delta_\square(W,W_0)\geq8\omega$ then
$$Z_{p_k}(\cB_n(W,\tau))\leq n^{-\nu n}Z_{p_k}(\cB_n(W_0,\omega))\,.$$
\end{lemma}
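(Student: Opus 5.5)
The proof is the weighted analogue of \Cref{lemma:CompareBallsInCutMetricNlogNK1k}. The key observation is that at $p=p_k$ the weight $(p/(1-p))^{e(G)}$ makes $Z_{p_k}$ of a ball essentially a sum over edge counts $m$ of $N(m)\,(p_k/(1-p_k))^m$. By the computation in the proof of \Cref{thm:main-rate}, $N_{p_k}(\gamma)\equiv0$ on $[0,\gamma_k]$, so every density $\gamma\in[0,\gamma_k]$ contributes the same exponential order $e^{o(n^2)}(1-p_k)^{\binom n2}$. The comparison with $W_0$ therefore has to be won at order $n\log n$, using sparse matchings exactly as in \Cref{lemma:sub-sparse-matching-transfer-K1k}.

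First I would write $W=W_\blambda\in\cV_\gamma$ with $\gamma\in(0,\gamma_k]$, since $\delta_\square(W,W_0)\geq8\omega$ forces $\gamma\geq c\omega^2$ or so. I would decompose $\cB_n(W,\tau)$ by edge count $m$. Graphs in the ball have $m=(\gamma\pm o(1))\binom n2$, so by the subcritical analysis (\Cref{lemma:NtaunmWUpperBdK1k}, or its critical analogue built from \Cref{sec:critical} when $\gamma=\gamma_k$) I get
$$|\cB_{n,m}(W,\tau)|\leq e^{Cn}\sum_{\Pi\in\sD_W}\cZ_\Pi .$$
By \Cref{lemma:sub-retained-mass-gap-K1k}, every such $\Pi$ has $|V_\ast|\geq(\mu_\gamma-o(1))n$ retained vertices, with $\mu_\gamma\geq c\omega$, because $W$ is far from $W_0$. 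In particular $s(\Pi)\leq(1-c'\omega)n$.

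Next I would compare with the weighted count of nearly empty graphs. The retained part contributes
$$\sum_b Z_\Pi(b)\Big(\tfrac{p_k}{1-p_k}\Big)^{m}\leq e^{O(n)}(1-p_k)^{-\binom n2}\,\big(\text{weight of the empty graph on }V_\ast\text{ pairs}\big),$$
by Stirling's formula together with the identity $\tfrac{k-2}{1+(k-2)p_k}H(p_k)=\log\tfrac{1-p_k}{p_k}$. This identity is precisely the statement that a co-$(k-1)$-partite block of active density $p_k$ has free energy zero at $p_k$, and it must be verified to $O(n)$ precision rather than $o(n^2)$. The $O(n)$ loss comes from the diagonal clique terms $\sum\binom{|P|}{2}$ versus $|P|^2/2$, and from the number $e^{O(n)}$ of choices of $\Pi$. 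After this step I would replace the retained block by isolated vertices, so the sparse graph on $S_\Pi$ sits inside an empty graph on $n$ vertices with at least $c'\omega n$ extra free vertices.

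Then, as in \Cref{lemma:CompareBallsInCutMetricNlogNK1k}, I would add a matching of size $q=\lfloor c'\omega n/4\rfloor$ on these freed vertices. This gains a factor $M_q(p_k/(1-p_k))^q=n^{q+o(n)}$ while staying induced-$K_{1,k}$-free and within cut distance $\omega$ of $W_0$. This shows
$$Z_{p_k}(\cB_n(W,\tau))\leq e^{O(n)}n^{-q}Z_{p_k}(\cB_n(W_0,\omega)),$$
which gives the claim with $\nu\approx c'\omega/8$.

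The main obstacle is the second step: controlling the retained-block free energy to within $e^{O(n)}$ rather than $e^{o(n^2)}$, uniformly over $\Pi$ and over both the sparse-side edge count and the defect profiles. For this I would first try applying the exact binomial identity $\sum_M\binom{N}{M}x^M=(1+x)^N$ with $x=p_k/(1-p_k)$ to each active pair. This gives $(1-p_k)^{-N_e}$, and then the relation $p_k=(1-p_k)^{k-1}$ cancels it against the clique weight $(p_k/(1-p_k))^{C_\Pi}$ up to $e^{O(n)}$. Once every term carries the same factor $(1-p_k)^{\binom n2}$, the sum over $\gamma$ has only $O(n^2)$ terms and is harmless.
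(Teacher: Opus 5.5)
Your proposal is correct and follows the paper's proof. The profile machinery reduces $Z_{p_k}(\cB_n(W,\tau))$ to the clean weighted partition functions of $e^{O(n)}$ divisions. The binomial theorem together with $p_k/(1-p_k)=(1-p_k)^{k-2}$ shows that the retained block costs at most a factor $(1-p_k)^{\Delta C_\Pi-A_\Pi}\le e^{O(n)}$. A planted matching of size $\Theta(n)$ on $V_\ast$ then wins by $n^{\Theta(n)}$.

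Two points to tighten. First, retained parts are only approximately balanced, so the cancellation needs the one-sided bound $\sum_{uv\in E(Q_i)}|P_{i,u}||P_{i,v}|\le\frac{\Delta}{2}\sum_u|P_{i,u}|^2$, obtained from AM--GM and $\Delta$-regularity; the paper phrases this via the top eigenvalue of $Q_i$. The diagonal correction $\binom{|P|}{2}$ versus $|P|^2/2$ alone does not give it. Second, take the bound $|V_\ast|\ge c\,n$ from $\sum_{i\in I_\ret(W)}\mu_i\ge\sum_{i\in I_\ret(W)}\mu_i^2/r_i\ge c_0$ together with $W$-compatibility. \Cref{lemma:sub-retained-mass-gap-K1k} does not apply here, because its hypothesis is distance from $W^\ast$ rather than from $W_0$.
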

\begin{proof}
Let $\Delta:=k-2$ and $p:=p_k$. Since $W$ is nonnegative, we have
$$t(K_2,W)=\delta_\square(W,W_0)\geq8\omega\,.$$
Write $\gamma:=t(K_2,W)$. If $\gamma=\gamma_k$ then $W^\ast_{\gamma_k}$ is the graphon $W_\blambda$ with $\blambda=((1,K_{k-1}))$; otherwise write $W=W_\blambda$ with $\blambda=((\lambda_i,G_i))_{i\geq1}$. Let $\mu_i:=\lambda_i-\lambda_{i-1}$ and $r_i:=v(G_i)$. Choose $\eta>0$ sufficiently small and $R_0>0$ sufficiently large so that $2\eta+R_0^{-1}=o(\omega)$, and define
$$I_\ret(W):=\{i:\mu_i\geq2\eta\text{ and }r_i\leq R_0\}\,.$$
Using the identity defining $\cV_\gamma$, we have
$$\sum_{i\geq1}\frac{\mu_i^2}{r_i}=\frac{\gamma}{1+\Delta p_k}\geq\frac{8\omega}{1+\Delta p_k}\,.$$
The indices outside $I_\ret(W)$ contribute at most $2\eta+R_0^{-1}$ to the left-hand side. Hence, after decreasing a constant $c_0=c_0(k,\omega)>0$, we have
$$\sum_{i\in I_\ret(W)}\mu_i\geq\sum_{i\in I_\ret(W)}\frac{\mu_i^2}{r_i}\geq c_0\,.$$
Choose the remaining parameters $\theta,\alpha,\delta,\epsilon,\tau$ in the same order as in \Cref{sec:subcritical}, with $\eta$ small enough that $C_k\eta<\omega/10$. By $W$-compatibility, every division $\Pi\in\sD_W$ satisfies $|V_\ast|\geq c_0n$ after decreasing $c_0$.

Using the notation from \Cref{subsec:counting}, define the partition function
\begin{equation*}
\widehat{\cZ}_\Pi:=(1-p)^{\binom n2}\sum_{0\leq b\leq C_k\eta n^2}N^\ast_{s(\Pi),b}(K_{1,k})\left(\frac{p}{1-p}\right)^{C_\Pi+b}\prod_{e\in\cI_\Pi}\sum_{\substack{0\leq a\leq N_e\\ a/N_e\in p\pm2\delta}}\binom{N_e}{a}\left(\frac{p}{1-p}\right)^a\,.
\end{equation*}
The proof of \Cref{lemma:NtaunmWUpperBdK1k}, with the summation over $m$ left inside the $p$-weighted partition function, gives
\begin{equation}\label{eqn:critical-weighted-upper-K1k}
Z_p(\cB_n(W,\tau))\leq e^{Cn}\sum_{\Pi\in\sD_W}\widehat{\cZ}_\Pi
\end{equation}
for a constant $C=C(k,\omega)>0$. Indeed, the profile decomposition and all defect penalties are unchanged; only the clean class is counted with the weight in $\widehat{\cZ}_\Pi$ rather than with a fixed edge count.

We next bound the retained contribution in $\widehat{\cZ}_\Pi$. Define the quantity
$$A_\Pi:=\sum_{e\in\cI_\Pi}N_e\,.$$
Since $p_k=(1-p_k)^{k-1}$, we have $p/(1-p)=(1-p)^\Delta$. We also have
$$\prod_{e\in\cI_\Pi}\sum_{\substack{0\leq a\leq N_e\\ a/N_e\in p\pm2\delta}}\binom{N_e}{a}\left(\frac{p}{1-p}\right)^a\leq(1-p)^{-A_\Pi}\,.$$
For a retained component $\Pi_i=(Q_i,\{P_{i,1},\dots,P_{i,q_i}\})$, the adjacency matrix of $Q_i$ has largest eigenvalue $\Delta$, so
$$\sum_{uv\in E(Q_i)}|P_{i,u}||P_{i,v}|\leq\frac{\Delta}{2}\sum_{u=1}^{q_i}|P_{i,u}|^2\,.$$
Summing over retained components gives $A_\Pi\leq\Delta C_\Pi+O(n)$, hence we calculate that
$$\left(\frac{p}{1-p}\right)^{C_\Pi}\prod_{e\in\cI_\Pi}\sum_{\substack{0\leq a\leq N_e\\ a/N_e\in p\pm2\delta}}\binom{N_e}{a}\left(\frac{p}{1-p}\right)^a\leq(1-p)^{\Delta C_\Pi-A_\Pi}\leq e^{Cn}\,.$$
It follows that
\begin{equation}\label{eqn:critical-clean-sparse-side-K1k}
\widehat{\cZ}_\Pi\leq e^{Cn}(1-p)^{\binom n2}\sum_{0\leq b\leq C_k\eta n^2}N^\ast_{s(\Pi),b}(K_{1,k})\left(\frac{p}{1-p}\right)^b\,.
\end{equation}

Let $q:=\lfloor c_0n/8\rfloor$ and $M_q:=(2q)!/(2^qq!)$. Since $|V_\ast|\geq c_0n$, we have $s(\Pi)+2q\leq n$ for large enough $n$. Given an induced-$K_{1,k}$-free graph on $S_\Pi$ with $b\leq C_k\eta n^2$ edges, add a matching of size $q$ on a fixed set of $2q$ vertices in $V_\ast$ and leave all other vertices in $V_\ast$ isolated. The resulting graph is induced-$K_{1,k}$-free and belongs to $\cB_n(W_0,\omega)$ by the choice of $\eta$. This construction is injective after the matching is chosen, so we calculate
\begin{equation}\label{eqn:critical-matching-transfer-K1k}
M_q\left(\frac{p}{1-p}\right)^q(1-p)^{\binom n2}\sum_{0\leq b\leq C_k\eta n^2}N^\ast_{s(\Pi),b}(K_{1,k})\left(\frac{p}{1-p}\right)^b
\leq Z_p(\cB_n(W_0,\omega))\,.
\end{equation}
Combining \eqref{eqn:critical-clean-sparse-side-K1k} and \eqref{eqn:critical-matching-transfer-K1k}, and using $M_q=\exp(q\log n+O(n))$, gives
$$\widehat{\cZ}_\Pi\leq n^{-\nu n}Z_p(\cB_n(W_0,\omega))$$
after decreasing $\nu>0$. Finally, the number of divisions is $e^{O(n)}$, as in \Cref{lemma:clean-retained-comparison-K1k}. Summing over $\Pi\in\sD_W$ in \eqref{eqn:critical-weighted-upper-K1k} and decreasing $\nu$ once more proves the lemma.
\end{proof}

\begin{proof}[Proof of \Cref{thm:gnp-typ-struc}]
Let $G$ be the \erdosrenyi{} random graph $G(n,p)$ conditional on the event $G\in\cF^\ast_n(K_{1,k})$.

First assume $p\in(0,p_k)$. By \eqref{eqn:CpStarFormalDef}, we have $\cX^p_\ast=\{W_0\}$. Hence \Cref{lemma:rough-struc-gnp} implies that $\delta_\square(G,W_0)=o(1)$ with high probability. Since $W_G$ is nonnegative, this implies $e(G)=o(n^2)$.

Now suppose $p\in(p_k,1)$ and define
$$\gamma_p:=\frac{1+(k-2)p}{k-1}=p+\frac{1-p}{k-1},\qquad W_p:=W^\ast_{\gamma_p}\,.$$
By \eqref{eqn:CpStarFormalDef}, the graphon $W_p$ is the unique element of $\cX^p_\ast$ up to equivalence. Therefore \Cref{lemma:rough-struc-gnp} implies that $\delta_\square(G,W_p)=o(1)$ with high probability, and in particular $e(G)=(1+o(1))\gamma_p\binom n2$. It remains to prove that $G$ is co-$(k-1)$-partite with high probability. Let $\zeta>0$ be small enough that $[\gamma_p-\zeta,\gamma_p+\zeta]\subset(\gamma_k,1)$. The proof of \Cref{thm:main-almostall} \ref{item:thm:main-almostall-super} is uniform for all $m$ satisfying $m/\binom n2\in[\gamma_p-\zeta,\gamma_p+\zeta]$, and gives
$$|\cF^\ast_{n,m}(K_{1,k})\setminus\cC^{k-1}_{n,m}|=o(|\cC^{k-1}_{n,m}|)$$
uniformly in this range. Since every graph in $\cB_n(W_p,\zeta/2)$ has $m/\binom n2\in[\gamma_p-\zeta,\gamma_p+\zeta]$ for large enough $n$, we have
\begin{align*}
Z_p(\cB_n(W_p,\zeta/2)\setminus\cC^{k-1}_n)&\leq (1-p)^{\binom n2}\sum_{\substack{m\\ m/\binom n2\in[\gamma_p-\zeta,\gamma_p+\zeta]}}\left(\frac{p}{1-p}\right)^m|\cF^\ast_{n,m}(K_{1,k})\setminus\cC^{k-1}_{n,m}| \\
&= o\vast((1-p)^{\binom n2}\sum_{\substack{m\\ m/\binom n2\in[\gamma_p-\zeta,\gamma_p+\zeta]}}\left(\frac{p}{1-p}\right)^m|\cC^{k-1}_{n,m}|\vast) \\
&= o(Z_p(\cC^{k-1}_n))\,.
\end{align*}
Since $\cC^{k-1}_n\subseteq\cF^\ast_n(K_{1,k})$, and since \Cref{lemma:rough-struc-gnp} gives
$$Z_p(\cF^\ast_n(K_{1,k})\setminus\cB_n(W_p,\zeta/2))\leq e^{-Cn^2}Z_p(\cF^\ast_n(K_{1,k}))\,,$$
it follows that
$$Z_p(\cF^\ast_n(K_{1,k})\setminus\cC^{k-1}_n)=o(Z_p(\cF^\ast_n(K_{1,k})))\,.$$
Thus $G$ is co-$(k-1)$-partite with high probability.

It remains to consider the critical case $p=p_k$. Let $\omega>0$ be sufficiently small, and let $\tau_0<\omega$ and $\nu>0$ be given by \Cref{lemma:critical-gnp-comparison-K1k}. Let
$$\cN=\{W_0,W_1,\dots,W_\ell\}\subseteq\cX^{p_k}_\ast$$
be a $\tau_0/2$-net of $\cX^{p_k}_\ast$ in cut metric. We may assume each $W_i$ can be written $W_\blambda$ for some $\blambda\in\bLambda$. Define
$$\arraycolsep=1.4pt
\begin{array}{ll}
\cF_\close &:= \displaystyle \cB_n(W_0,\tau_0)\cup\bigcup_{i=1}^\ell\cB_n(W_i,\tau_0)\,, \\[18pt]
\cF_\far &:= \cF^\ast_n(K_{1,k})\setminus\cF_\close\,.
\end{array}$$
By \Cref{lemma:rough-struc-gnp}, there is a constant $C=C(k,\omega)>0$ such that
$$Z_{p_k}(\cF_\far)\leq e^{-Cn^2}Z_{p_k}(\cF^\ast_n(K_{1,k}))\,.$$
Let $I:=\{i\in[\ell]:\delta_\square(W_i,W_0)\geq8\omega\}$. Since $\tau_0<\omega$, we have
$$\cF_\close\subseteq\cB_n(W_0,9\omega)\cup\bigcup_{i\in I}\cB_n(W_i,\tau_0)\,.$$
Applying \Cref{lemma:critical-gnp-comparison-K1k} to every $W_i$ with $i\in I$ gives
$$\sum_{i\in I}Z_{p_k}(\cB_n(W_i,\tau_0))\leq \ell n^{-\nu n}Z_{p_k}(\cB_n(W_0,\omega))=o(Z_{p_k}(\cB_n(W_0,9\omega)))\,.$$
Since $\cB_n(W_0,9\omega)\subseteq\cF^\ast_n(K_{1,k})$, the previous displays imply
$$Z_{p_k}(\cF^\ast_n(K_{1,k})\setminus\cB_n(W_0,9\omega))=o(Z_{p_k}(\cF^\ast_n(K_{1,k})))\,.$$
It follows that $\delta_\square(G,W_0)<9\omega$ with high probability. Since $\omega>0$ was arbitrary, we have $\delta_\square(G,W_0)=o(1)$ with high probability, and hence $e(G)=o(n^2)$.
\end{proof}

\section{Extremal Structure and Stability of Edge Colorings}
\label{sec:edge-coloring}
In this section we prove extremal structure and stability results for red--green--blue edge colorings. We use the letters $\red$, $\green$, and $\blue$ to denote the three colors. Let $k\geq3$ be an integer fixed throughout this section.

\begin{definition}\label{dfn:Fk-free-cols}
Let $\cF_k$ be the set of all red--green--blue colorings $\varphi$ of $E(K_k)$ such that for some vertex $v\in V(K_k)$, the following holds: $\varphi(vx)=\red$ for all $x\neq v$, and $\varphi(xy)\in\{\red,\green\}$ for all distinct $x,y\in V(K_k)\setminus\{v\}$. We call the elements of $\cF_k$ \emph{forbidden patterns}. Let $\cC_k(n)$ be the set of red--green--blue colorings of $E(K_n)$ such that for all $S\subseteq V(K_n)$ with $|S|=k$, $\varphi|_S\not\in\cF_k$. We call the elements of $\cC_k(n)$ \emph{$\cF_k$-free colorings}.
\end{definition}

For a red--green--blue coloring $\varphi$ of $E(K_n)$, and all $u\in V(K_n)$ and $U\subseteq V(K_n)$, define 
\begin{align*}
N_\red(u) &:= \{v\in V(K_n):\varphi(uv)=\red\}\,, \\
d_\red(u) &:= |N_\red(u)|\,, \\
e_\red(U) &:= |\{e\in\textstyle\binom{U}{2}:\varphi(e)=\red\}|\,.
\end{align*}
We also define $N_\red(u,U):=N_\red(u)\cap U$ and $d_\red(u,U):=|N_\red(u,U)|$. The green and blue counterparts $N_\green(u)$, $N_\blue(u)$, etc.\ are all defined analogously. Write $E_\red(\varphi)$, $E_\green(\varphi)$, and $E_\blue(\varphi)$ for the sets of red, green, and blue edges of $\varphi$, respectively, and write $e_\red(\varphi)$, $e_\green(\varphi)$, and $e_\blue(\varphi)$ for their cardinalities.

Throughout this section, we will use the variable $\Delta:=k-2$. For any red--green--blue coloring $\varphi$ of $E(K_n)$, define
$$\Phi(\varphi):=e_\red(\varphi)-\Delta\,e_\blue(\varphi)\,.$$
Define the weighted degree $D(v):=d_\red(v)-\Delta\,d_\blue(v)$ of a vertex $v$. The weighted degree restricted to a subset $U$ is $D(v,U):=d_\red(v,U)-\Delta\,d_\blue(v,U)$. Note that $2\Phi(\varphi)=\sum_vD(v)$. We emphasize that $\Delta$, $\Phi$, and $D$ all depend on $k$, but we leave this dependence implicit since we think of $k$ as an integer fixed throughout the whole section. Additionally, the coloring that our notation refers to is typically clear from the context; in cases of ambiguity, we include the coloring as a superscript, as in $N_\red^\varphi(u)$ and $D^\varphi$.

The extremal objects in this section have a multipartite structure amenable to symmetrization. The following cloning operation plays a central role in our analysis.

\begin{definition}[Cloning]\label{dfn:cloning}
Let $\varphi$ be a red--green--blue coloring of $E(K_n)$. If $v\in V(K_n)$ and $U\subseteq V(K_n)\setminus\{v\}$, define the coloring $\varphi^{U\gets v}$ as follows:
\begin{itemize}[leftmargin=\parindent, topsep=4pt]
    \item for all $u\in U$ and $w\in V(K_n)\setminus(U\cup\{u,v\})$, set $\varphi^{U\gets v}(uw):=\varphi(vw)$,
    \item for all distinct $u,w\in U\cup\{v\}$, set $\varphi^{U\gets v}(uw):=\blue$,
    \item all other edges are the same as in $\varphi$.
\end{itemize}
If $U$ contains a single element $u$ then we write $\varphi^{u\gets v}:=\varphi^{U\gets v}$.
\end{definition}

\begin{fact}\label{fact:cloning-keeps-ckn}
If $\varphi\in\cC_k(n)$, $v\in V(K_n)$, and $U\subseteq V(K_n)\setminus\{v\}$ then $\varphi^{U\gets v}\in\cC_k(n)$.
\end{fact}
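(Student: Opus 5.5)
The plan is to argue by contradiction. Suppose $\varphi^{U\gets v}$ contains a forbidden pattern on some $k$-set $S$ with center $c$. By \Cref{dfn:Fk-free-cols}, this means $\varphi^{U\gets v}(cx)=\red$ for all $x\in S\setminus\{c\}$, and all edges inside $S\setminus\{c\}$ are red or green. In particular, a forbidden pattern uses no blue edges. I will write $U^+:=U\cup\{v\}$ and split according to $|S\cap U^+|$.

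\emph{Case $|S\cap U^+|\geq2$.} Then $S$ contains two distinct vertices $u,w\in U^+$, and by construction $\varphi^{U\gets v}(uw)=\blue$. This is impossible, since every edge of a forbidden pattern is red or green. So this case cannot occur.

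\emph{Case $|S\cap U^+|\leq1$.} Define the map $\pi:S\to V(K_n)$ that sends the unique element of $S\cap U^+$ (if there is one) to $v$ and fixes every other vertex. Since $S\setminus U^+$ does not contain $v$, the map $\pi$ is injective, so $S':=\pi(S)$ is a $k$-set. I claim $\varphi(\pi(x)\pi(y))=\varphi^{U\gets v}(xy)$ for all distinct $x,y\in S$, and I check this in three subcases.
\begin{itemize}
\item If neither endpoint lies in $U^+$, the edge $xy$ is unchanged by \Cref{dfn:cloning}.
\item If $x=u\in U$ and $y\notin U^+$, then by definition $\varphi^{U\gets v}(uy)=\varphi(vy)=\varphi(\pi(u)\pi(y))$.
\item If $x=v$ and $y\notin U^+$, the edge is unchanged and $\pi$ fixes both endpoints.
\end{itemize}
Consequently $\varphi|_{S'}$, transported along $\pi$, is exactly the pattern $\varphi^{U\gets v}|_S$, with center $\pi(c)$. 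Hence $\varphi|_{S'}\in\cF_k$, which contradicts $\varphi\in\cC_k(n)$.

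The only point requiring care is confirming that \Cref{dfn:cloning} only modifies edges incident to $U$, and that edges from $U$ to $v$ become blue. This follows directly from the definition, so I do not expect any real obstacle; the argument is a direct verification.
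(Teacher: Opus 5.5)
Your proof is correct and is the same argument as the paper's. A forbidden pattern contains no blue edge, so it meets the blue clique $U\cup\{v\}$ in at most one vertex, and replacing that vertex by $v$ gives the same pattern in $\varphi$. Your version just writes out the injectivity of $\pi$ and the edge-by-edge check that the paper leaves implicit.
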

\begin{proof}
If there were a forbidden pattern in $\varphi^{U\gets v}$ then it would use at most one vertex from $U\cup\{v\}$ (since $U\cup\{v\}$ is a blue clique), and replacing that vertex by $v$ would produce the same forbidden pattern in $\varphi$, which contradicts $\varphi\in\cC_k(n)$.
\end{proof}

\begin{lemma}\label{lemma:kthOrderMantel}
For all $k\geq3$ and $\varphi\in\cC_k(n)$,
\begin{equation}
\Phi(\varphi)\leq\floor*{\frac{\Delta n}{2}}\,.\label{eqn:kthOrderIneq}
\end{equation}
\end{lemma}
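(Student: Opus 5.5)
Since $2\Phi(\varphi)=\sum_v D(v)$ and $\Phi$ is an integer, it suffices to show $\sum_v D(v)\leq \Delta n$; then $\Phi\leq \Delta n/2$ and integrality gives the floor. The plan is to prove that in every $\cF_k$-free coloring the average of $D(v)$ is at most $\Delta$. A pointwise bound $D(v)\leq\Delta$ is false in general: a vertex may have many red edges balanced by blue edges elsewhere. So I will symmetrize with the cloning operation of \Cref{dfn:cloning}, which preserves $\cF_k$-freeness by \Cref{fact:cloning-keeps-ckn}.

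\textbf{Step 1 (symmetrization).} Among all maximizers of $\Phi$ on $\cC_k(n)$, pick one, and let $v$ be a vertex of maximum weighted degree. Take any $u\neq v$ and compare $\varphi^{u\gets v}$ with $\varphi$. The change in $\Phi$ is
\[
D(v)-D(u)-(\text{correction for the edge }uv),
\]
since $u$ copies $v$'s row and $uv$ becomes blue. The correction is bounded by $\Delta+1$. Next I will clone $v$ onto the whole blue class at once, replacing $u$ by $U$ and using $\varphi^{U\gets v}$. This lets me replace $\varphi$ by a maximizer with blow-up structure. The vertex set is partitioned into blue cliques (classes), and between two classes the color is constant. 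The standard Zykov-type argument works as follows. If two nonadjacent-by-blue classes $A,B$ have different rows, cloning the class with larger per-vertex weighted degree onto the other does not decrease $\Phi$, up to a correction from the internal edges, and that correction must be handled carefully. I expect this correction term is why the bound is $\Delta n/2$ rather than something cleaner.

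\textbf{Step 2 (reduced structure).} After symmetrization there are classes $C_1,\dots,C_t$ with sizes $c_i$. Each class is internally blue, and pairs of classes are colored red or green (two blue-joined classes merge). Consider the red graph $\Gamma$ on the classes. The $\cF_k$-freeness says this: a class adjacent in red to $k-1$ other classes that pairwise have no blue between them forms a forbidden pattern. Since all inter-class colors are red or green, the red degree in $\Gamma$ of each class is at most $\Delta$. Moreover, a red neighbor set of size $\Delta+1$ is forbidden, and one can even take the center inside a class together with members of distinct classes.

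\textbf{Step 3 (counting).} We have
\[
\Phi=\sum_{ij\in E(\Gamma)}c_ic_j-\Delta\sum_i\binom{c_i}{2}.
\]
Since $\Gamma$ has maximum degree at most $\Delta$, the inequality $c_ic_j\leq (c_i^2+c_j^2)/2$ gives $\sum_{E(\Gamma)}c_ic_j\leq\frac{\Delta}{2}\sum_i c_i^2$. Hence
\[
\Phi\leq \frac{\Delta}{2}\sum_i\bigl(c_i^2-c_i(c_i-1)\bigr)=\frac{\Delta n}{2}.
\]

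The main obstacle is Step 1. One must make sure that symmetrization never decreases $\Phi$ and terminates in the blow-up form, despite the $uv$-edge correction and the forced blue inside cloned sets. I would try a potential argument: at each step either $\Phi$ strictly increases, or the number of distinct rows decreases while $\Phi$ is preserved. If the correction breaks monotonicity, the fallback is an induction on $n$ that deletes a vertex of minimum $D$, or a blue-clique class, and checks that $D(v)\leq\Delta$ for some vertex.
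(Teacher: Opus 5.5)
Your Steps 2 and 3 are the same as the paper's endgame: the red reduced graph on blue classes has maximum degree at most $\Delta$, and then $v_iv_j\le\frac12(v_i^2+v_j^2)$ finishes. Step 1, however, has a genuine gap, and the move you propose for it cannot work.

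Take two blue cliques $A,B$ with a constant color $c\in\{\red,\green\}$ between them, and let $a\in A$, $b\in B$. A direct computation gives
$\Phi(\varphi^{B\gets a})-\Phi(\varphi)=|B|\,(D(a)-D(b))-(\Delta+\bsone\{c=\red\})\,|B|^2$.
The correction is therefore quadratic in the class size, not lower order. In the balanced blue blow-up of $K_{\Delta+1}$ with red between classes, every vertex has $D=\Delta$. There this move strictly decreases $\Phi$ by $(\Delta+1)|B|^2$, even though the configuration is extremal. Zykov-type cloning across red or green pairs is the wrong direction.

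Your fallback does not rescue Step 1 either. In that same blow-up every vertex has $D(v)=\Delta$. So deleting one vertex and applying $\Phi\le\Delta(n-1)/2$ inductively only gives $\Phi\le\Delta(n+1)/2$.

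The real obstruction is the one you assume away in Step 2 with ``two blue-joined classes merge'': blue edges between vertices whose rows differ. Between two distinct twin classes the color is automatically constant, so this is the only thing to fix. The missing idea is to clone along blue edges, where there is no correction at all. If $\varphi(xy)=\blue$, then $\Phi(\varphi^{x\gets y})-\Phi(\varphi)=D(y)-D(x)$ exactly, because the pair $xy$ is already blue. Hence in a $\Phi$-maximizer both endpoints of every blue edge have equal $D$, and both clonings $x\gets y$ and $y\gets x$ preserve $\Phi$. Both stay in $\cC_k(n)$ by \Cref{fact:cloning-keeps-ckn}.

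The paper breaks ties as follows. Among maximizers, choose one maximizing the number of twin pairs, where twins are blue-joined vertices with identical rows. Suppose $x,y$ are not twins, with twin classes $C_x,C_y$. Cloning $x\gets y$ splits no twin class avoiding $x$ and puts $x$ into $C_y$, so the twin count rises by at least $|C_y|-|C_x|+1$. Symmetrically, $y\gets x$ raises it by at least $|C_x|-|C_y|+1$. These two bounds sum to $2$, so one cloning contradicts the choice.

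Thus every blue edge joins twins, the twin classes are blue cliques, and the color between any two classes is a single red or green. That is exactly the input your Steps 2 and 3 need.
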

\begin{proof}
For a coloring $\chi$, two vertices $x,y$ are \emph{twins} if $\chi(xy)=\blue$ and $\chi(xz)=\chi(yz)$ for every $z\in V(K_n)\setminus\{x,y\}$. The relation of being equal or twin is an equivalence relation; call its equivalence classes \emph{twin classes}. Let $\tau(\chi)$ be the number of pairs of twin vertices in $\chi$.

Choose $\psi\in\cC_k(n)$ so that $\Phi(\psi)$ is maximum, and subject to this, $\tau(\psi)$ is maximum. We have $\Phi(\varphi)\leq\Phi(\psi)$, so it suffices to prove \eqref{eqn:kthOrderIneq} for $\psi$. We first claim that every blue edge of $\psi$ has twin endpoints. Indeed, for every blue edge $xy\in E_\blue(\psi)$, we have
$$\Phi(\psi^{x\gets y})-\Phi(\psi)=D^\psi(y)-D^\psi(x)\,,\qquad \Phi(\psi^{y\gets x})-\Phi(\psi)=D^\psi(x)-D^\psi(y)\,.$$
Since both cloned colorings belong to $\cC_k(n)$ by \Cref{fact:cloning-keeps-ckn}, maximality of $\Phi(\psi)$ implies $D^\psi(x)=D^\psi(y)$. Suppose now that $x$ and $y$ are not twins, and let $C_x$ and $C_y$ be their twin classes. Then $C_x\neq C_y$, and both clonings preserve $\Phi$. Under the cloning $x\gets y$, the vertex $x$ leaves $C_x$ and joins $C_y$, and no twin class not containing $x$ is split. It follows that
\begin{align*}
&\tau(\psi^{x\gets y})-\tau(\psi)\geq |C_y|-(|C_x|-1) \,, \qquad \tau(\psi^{y\gets x})-\tau(\psi)\geq |C_x|-(|C_y|-1)\,.
\end{align*}
Since the sum of the right-hand sides in the last two displays is $2$, one of the two clonings preserves $\Phi$ and strictly increases $\tau$, contradicting the choice of $\psi$ and proving our claim.

It follows that $\psi$ has the following structure: there exists a partition $V(K_n)=V_1\cup\cdots\cup V_l$ such that $\psi|_{V_i}\equiv\blue$ for all $i\in[l]$; and for all $1\leq i<j\leq l$ there exists $c\in\{\red,\green\}$ such that $\psi|_{E(V_i,V_j)}\equiv c$. Let $R$ be the graph with vertex set $[l]$ and edge set $\{ij:\psi|_{E(V_i,V_j)}\equiv\red\}$. Notice that $\Delta(R)\leq\Delta$, as otherwise $\psi$ contains a forbidden coloring. Let $v_i:=|V_i|$ for all $i\in[l]$. We then have
$$e_\red(\psi)=\sum_{ij\in E(R)}v_iv_j\,,\qquad e_\blue(\psi)=\sum_{i=1}^l\binom{v_i}{2}\,.$$
Using that $v_iv_j\leq\frac{1}{2}(v_i^2+v_j^2)$ for all $1\leq i<j\leq l$, we calculate
$$\sum_{ij\in E(R)}v_iv_j\leq\frac{1}{2}\sum_{i=1}^ld_R(i)\,v_i^2\leq\frac{\Delta}{2}\sum_{i=1}^lv_i^2\,,$$
which further implies
$$\Phi(\varphi)\leq\Phi(\psi)=\sum_{ij\in E(R)}v_iv_j-\Delta\sum_{i=1}^l\binom{v_i}{2}\leq\frac{\Delta}{2}\sum_{i=1}^lv_i^2-\frac{\Delta}{2}\sum_{i=1}^lv_i(v_i-1)=\frac{\Delta n}{2}\,,$$
completing the proof.
\end{proof}

\subsection{Stability of the extremal structure}
\begin{definition}[$\Delta$-regular core]\label{dfn:d-regular-cores}
Fix integers $3\leq k\leq n$. Let $\cR_k(n)$ be the set of red--green--blue colorings of $E(K_n)$ obtained as follows: for some integer $\ell\in[k-1,n]$, choose a connected $\Delta$-regular graph $R$ on $[\ell]$; form a partition $V(K_n)=V_1\cup\cdots\cup V_\ell$ such that $||V_i|-|V_j||\leq1$ for all $1\leq i<j\leq \ell$; color all edges inside each $V_i$ blue; for all $ij\in E(R)$, color all edges in $E(V_i,V_j)$ red; color all remaining edges green. The elements of $\cR_k(n)$ are called \emph{$\Delta$-regular cores}.
\end{definition}

\begin{definition}[Extremal family $\cE_k(n)$]\label{dfn:k1k-extremal-fam}
Let $\cE_k(n)$ be the set of red--green--blue colorings of $E(K_n)$ obtained as follows: for some integer $t\geq0$, choose integers $\kappa_1,\dots,\kappa_t$, each being at least $k$, such that $\kappa_1+\cdots+\kappa_t\leq n$; choose colorings $\psi_i\in\cR_k(\kappa_i)$ for all $i\in[t]$; choose injective embeddings $\iota_i:V(K_{\kappa_i})\to V(K_n)$ such that $\image(\iota_i)\cap\image(\iota_j)=\emptyset$ for all $1\leq i<j\leq t$; finally, $K_n$ inherits all the colors from the $\psi_i$ and $\iota_i$, and all remaining edges in $K_n$ are colored green.
\end{definition}

Note that the all-green coloring $\varphi\equiv\green$ is an element of $\cE_k(n)$ since one can take $t=0$.
The \emph{Hamming distance} between two colorings $\varphi$ and $\psi$ of $E(K_n)$ is
$$d(\varphi,\psi):=|\{e\in E(K_n):\varphi(e)\neq\psi(e)\}|\,.$$
The Hamming distance between a coloring $\varphi$ and a set $\cS$ of colorings is
$$d(\varphi,\cS):=\min\{d(\varphi,\psi):\psi\in\cS\}\,.$$
The following stability theorem is the main result of this section.

\begin{theorem}\label{thm:kth-order-stability}
For all $k\geq3$ and $\epsilon>0$ there exists $\delta>0$ such that the following holds for all large enough $n$. If $\varphi\in\cC_k(n)$ and
\begin{equation}\label{eqn:near-extremality}
\Phi(\varphi)\geq-\delta n^2
\end{equation}
then
$$d(\varphi,\cE_k(n))\leq\epsilon n^2\,.$$
\end{theorem}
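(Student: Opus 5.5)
The plan is to run a robust version of the proof of \Cref{lemma:kthOrderMantel}, using the local structure forced by $\cF_k$-freeness instead of exact symmetrization. The key local observation is the following. For every vertex $v$, the red neighbourhood $N_\red(v)$ contains no $k-1=\Delta+1$ vertices spanning only red/green edges, since together with $v$ such a set would form a forbidden pattern. Equivalently, the blue graph on $N_\red(v)$ has independence number at most $\Delta$. By Tur\'an's theorem, $e_\blue(N_\red(v))\geq d_\red(v)^2/(2\Delta)-O(n)$. Moreover, by the Erd\H{o}s--Simonovits stability theorem applied to the complement, near equality forces $N_\red(v)$ to be, up to $o(n^2)$ edits, a disjoint union of $\Delta$ nearly equal blue cliques. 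The whole argument should reduce near-extremality \eqref{eqn:near-extremality} to near equality in this local Tur\'an bound for almost all vertices, and then assemble the local clique structures into a global one.

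\textbf{Step 1 (cleaning).} Using $2\Phi(\varphi)=\sum_vD(v)$ and the upper bound of \Cref{lemma:kthOrderMantel} applied to induced subcolorings (which are still $\cF_k$-free), I would show that deleting the few vertices with $D(v)\leq-\sqrt\delta n$ changes little. More precisely, $\Phi$ restricted to any vertex subset is at most $\Delta n/2$, so near-extremality is inherited by the remaining vertices. Vertices of small red degree, that is $d_\red(v)\leq\sqrt\delta n$, should be recoloured: all their edges become green. This costs only $O(\sqrt\delta n^2)$ Hamming distance provided the blue edges at such vertices are few. Those blue edges are penalised by $-\Delta$ in $\Phi$, so they are controlled by the slack $\delta n^2$.

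\textbf{Step 2 (global averaging).} Double count $\sum_v e_\blue(N_\red(v))=\sum_{xy\in E_\blue}|N_\red(x)\cap N_\red(y)|\leq\sum_{xy\in E_\blue}\tfrac12\bigl(d_\red(x)+d_\red(y)\bigr)$. Compare this with $\sum_v d_\red(v)^2/(2\Delta)$ and use Cauchy--Schwarz together with $\Phi\geq-\delta n^2$. The goal is to show that, for all but $O(\delta^{c}n)$ vertices, three things nearly hold: the local Tur\'an bound is tight, blue neighbours have almost identical red neighbourhoods, and $d_\red(v)\approx\Delta\,d_\blue(v)$. This is the robust analogue of ``every blue edge joins twins'' from the proof of \Cref{lemma:kthOrderMantel}.

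\textbf{Step 3 (assembling the partition).} From approximate twin-ness, the blue graph is $o(n^2)$-close to a disjoint union of cliques $V_1,\dots,V_l$ of linear size, plus leftover vertices that can be recoloured green. Between two cliques the colouring is nearly monochromatic red or green. The reduced red graph $R$ has maximum degree at most $\Delta$ on large clusters, since $\Delta+1$ red cluster-neighbours would yield a forbidden pattern. Then $\Phi\geq-\delta n^2$ together with the inequality chain $\sum_{ij\in E(R)}v_iv_j\leq\frac12\sum_i d_R(i)v_i^2\leq\frac\Delta2\sum_iv_i^2$ forces near equality in both steps. Near equality in the second step means almost all mass sits on clusters with $d_R(i)=\Delta$. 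Near equality in the first step means $v_i\approx v_j$ along red edges, so within each connected component of $R$ the cluster sizes are nearly equal. Iteratively extracting components of $R$ that are exactly $\Delta$-regular, and rebalancing each to sizes differing by at most one, then produces an element of $\cE_k(n)$. Any remaining clusters, with $d_R(i)<\Delta$ or unbalanced, carry mass that is $O(\delta^{c})$ in $\sum v_i^2$, and all their edges are recoloured green.

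The main obstacle I expect is Step 3 when clusters are small or numerous. Components of $R$ may have unboundedly many vertices, and the extremal family allows arbitrarily many components, so a single regularity partition with boundedly many parts does not capture the structure. I would handle this by working directly with twin classes and weighting by $v_i^2$, so that clusters of size $o(n)$ cost nothing in Hamming distance. I would also prove a stability version of ``$\sum_{ij\in E(R)}v_iv_j\geq\frac\Delta2\sum v_i^2-\delta n^2$ implies closeness to a disjoint union of balanced $\Delta$-regular components'' by peeling off one component at a time and bounding the accumulated error by a geometric sum.
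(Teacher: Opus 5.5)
Your route is genuinely different from the paper's, and I believe it can be completed. However, Step~3 carries most of the weight and is largely asserted.

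\textbf{How the paper proceeds.} The paper never averages globally. It gets pointwise control of $D$ by cloning (\Cref{lemma:D-tails}). It proves near-Tur\'an structure only inside the red neighbourhood of a vertex of near-maximum red degree, again by cloning (\Cref{lemma:locally-turan}), and then explores outward from that seed. Maximality of $m=d_\red(x^{(0)})$, together with $|D|$ small, forces $d_\red\approx m$ and $d_\blue\approx m/\Delta$ at every typical vertex reached. So all clusters of a batch have size about $m/\Delta$, and the dominant colours (\Cref{lemma:vertex-level-mixed-mass}) and $\Delta$-regularity of the reduced graph follow from degree counting. Cores are then peeled off batch by batch, with the remainder kept near-extremal (\Cref{lemma:kth-order-recursion}), at the cost of a long backward parameter hierarchy.

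\textbf{What your route buys.} Your identity $\sum_v e_\blue(N_\red(v))=\sum_{xy\in E_\blue}|N_\red(x)\cap N_\red(y)|$, combined with the Tur\'an bound, gives $\sum_v d_\red(v)D(v)\le 2\Delta\,e_\red(\varphi)$ for every $\varphi\in\cC_k(n)$. This is tight on $\cE_k(n)$ when all parts have equal size. Given a pointwise bound $D\geq-\delta^{c}n$ from Step~1, it yields three things at once, with no cloning and no seed: local Tur\'an tightness, $|N_\red(x)\triangle N_\red(y)|=o(n)$ on almost all blue edges, and $|D|=o(n)$ at almost all vertices. Twin-ness also gives dominant colours more simply than \Cref{lemma:vertex-level-mixed-mass}. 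The red bipartite graph between two linear-size clusters is $o(n^2)$-close both to some $V_i\times S$ and to some $T\times V_j$, hence nearly empty or nearly complete. The price is that your information is only averaged and clusters of many sizes coexist. You must therefore do by hand what seeding at a maximum-degree vertex gives for free.

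\textbf{Points that need repair or detail.}

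First, in Step~1 use a threshold $-\delta^{c}n$ with $c<1/2$; the paper uses $\delta^{1/4}$. With $-\sqrt\delta n$, apply \Cref{lemma:kthOrderMantel} to $V\setminus Y'$ for a set $Y'$ of vertices with $D\le-\sqrt\delta n$. This gives $|Y'|(\sqrt\delta n-\Delta|Y'|/2)\le\delta n^2+O(n)$. Its left side never exceeds $\delta n^2/(2\Delta)$, so it bounds nothing.

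Second, red twin-ness alone does not make the blue graph a cluster graph; you need approximate transitivity of blue. One way is to count induced blue cherries inside typical red neighbourhoods, which are near-unions of $\Delta$ blue cliques by Erd\H{o}s--Simonovits. Transfer this count through twin-ness: for a cherry whose two edges join near-twins, the three red neighbourhoods nearly coincide. Then finish with an induced-$P_3$ removal argument.

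Third, discarding clusters of size below $\eta n$ a priori costs $O(\eta n^2)$ in $\Phi$, through their red edges to the large clusters. That swamps the $\eta^2n^2$ needed to force $d_R(i)=\Delta$. You have two options:
\begin{itemize}
\item Show those red edges are $o(n^2)$, using that a typical vertex's red neighbourhood consists of $\Delta$ near-cliques of the size of its own cluster.
\item Prove a mass-weighted stability for $\sum_{ij\in E(R)}v_iv_j\le\frac{\Delta}{2}\sum_iv_i^2$. The deficit is a sum of nonnegative per-component deficits, so no iteration or geometric sum is needed. Discard whole every component containing a deficient or strongly unbalanced cluster, and bound its mass by a drift argument along paths.
\end{itemize}
Exact twin classes play no role in this step, since they do not survive perturbation.
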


\subsubsection{Proof overview}
The proof of \Cref{thm:kth-order-stability} works by extracting subcolorings of $\varphi$ that resemble $\Delta$-regular cores. After showing that each core is green-adjacent to most of the rest of the graph, it follows that $\varphi$ is close to an element of $\cE_k(n)$. We first give an overview of the extraction of $\Delta$-regular cores, which is carried out in full in \Cref{lemma:kth-order-recursion}.
\begin{enumerate}[(\emph{\roman*}), topsep=2pt]
    \item First, using \Cref{lemma:D-tails}, we restrict to a subcoloring of $\varphi$ on $(1-o(1))n$ vertices such that $\abs{D(v)}\leq o(n)$ in the restricted coloring. This has the convenient consequence that $d_\red(v)=\Delta\,d_\blue(v)+o(n)$.
    \item We start with a vertex $x^{(0)}$ of maximum red degree $m:=d_\red(x^{(0)})$. The local Tur\'an lemma (\Cref{lemma:locally-turan}) proves that the red--green graph induced by the red neighborhood of $x^{(0)}$, written $H:=\varphi_{\red,\green}[N_\red(x^{(0)})]$, is near-Tur\'an.
    \item The \erdos{}--Simonovits theorem (\Cref{thm:erdos-simonovits}) then implies $N_\red(x^{(0)})$ admits a nearly balanced partition $A_1\cup\cdots\cup A_\Delta$ into blue-dense clusters with blue density $o(1)$ between clusters. Each $A_i$ has $|A_i|=m/\Delta+o(n)$.
    \item The blue-density of $A_i$ and $|D(v)|=o(n)$ imply that a typical vertex $x^{(1)}\in N_\red(x^{(0)})$ satisfies $d_\red(x^{(1)})=m-o(n)$. Now iterate: repeat steps (\emph{ii}) and (\emph{iii}) from $x^{(1)}$.
    \item After the iteration over vertices $x^{(j)}$ terminates, we have found disjoint, nearly balanced, blue-dense subsets $U_1,\dots,U_t$. \Cref{lemma:vertex-level-mixed-mass} proves that for all $1\leq i<j\leq t$, there is a dominant color $c\in\{\red,\green\}$ with $d_c(U_i,U_j)\geq1-o(1)$.
    \item Since a typical vertex satisfies $d_\red(v)=m-o(n)$, the reduced graph on $[t]$ whose edges are the pairs on which red is the dominant color must in fact be $\Delta$-regular. It follows that $U_1\cup\cdots\cup U_t$ induces a coloring close to a $\Delta$-regular core.
\end{enumerate}

In several places we will use the fact that for a coloring $\varphi$, a vertex $v$, and a subset $U\subseteq V(K_n)$ of size $s$,
\begin{equation}
\Phi(\varphi^{U\gets v})-\Phi(\varphi)\geq sD^\varphi(v)-\sum_{u\in U}D^\varphi(u)-(\Delta+1)s^2\,. \label{eqn:phi-clone-ineq}
\end{equation}
This inequality holds since for each $u\in U$, we recolor every edge $uw$ with $w\in V(K_n)\setminus(U\cup\{v\})$ to match $vw$, so this increases $\Phi$ by $sD(v)-\sum_{u\in U}D(u)$, and the at most $s^2$ edges in $\binom{U\cup\{v\}}{2}$ are recolored blue, each decreasing $\Phi$ by at most $\Delta+1$.

\subsubsection{Restriction to a subset with uniform weighted degrees}
The following lemma will allow us to restrict to a subset $X$ of $(1-o(1))n$ vertices, all of which have weighted degree $|D(v,X)|=o(n)$. In particular, $d_\red(v,X)=\Delta\,d_\blue(v,X)+o(n)$, which is suggestive of the extremal structure $\cE_k(n)$ and will turn out to be a very convenient identity.

\begin{lemma}\label{lemma:D-tails}
For all fixed $k\geq3$ and $\delta\in(0,1)$, there exists $n_0$ such that the following holds for all integers $n\geq n_0$. If $\varphi\in\cC_k(n)$ and $\Phi(\varphi)\geq-\delta n^2$ then the following hold:
\begin{enumerate}[
  label=\textit{(\roman*)},
  ref=(\textit{\roman*}),
  topsep=5pt
]
	\item\label{item:lemma:D-tails-i} $\max_{v\in V(K_n)}D(v)\leq8\Delta\sqrt{\delta}\,n$.
	\item\label{item:lemma:D-tails-ii} All but at most $10\Delta\delta^{3/4}n$ vertices satisfy $D(v)\geq-\delta^{1/4}n$.
	\item\label{item:lemma:D-tails-iii} Let $X:=\left\{v\in V(K_n):D(v)\geq-\delta^{1/4} n\right\}$. For all $v\in X$,
    $$\abs{D(v,X)} \leq 12\Delta^2\delta^{1/4}n\,.$$
\end{enumerate}
\end{lemma}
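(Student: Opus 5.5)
The whole argument runs on one tool, the cloning inequality \eqref{eqn:phi-clone-ineq}, together with \Cref{fact:cloning-keeps-ckn} (cloning preserves $\cF_k$-freeness) and the upper bound $\Phi\leq\Delta n/2$ from \Cref{lemma:kthOrderMantel}. The first two parts compare $\Phi(\varphi)$ with $\Phi$ of a cloned coloring. The third part uses a small computation on how $D$ changes when a vertex set is deleted.

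\emph{Part \ref{item:lemma:D-tails-i}.} Let $v$ be a vertex with $D(v)=x n$, and aim to show $x\leq 8\Delta\sqrt{\delta}$.
\begin{itemize}
\item Choose a set $U$ of $s=\sqrt{\delta}\,n$ vertices of smallest weighted degree, excluding $v$. Since $\sum_u D(u)=2\Phi\geq-2\delta n^2$, one has $\sum_{u\in U}D(u)\leq \frac{s}{n-1}\,2\Phi+O(n)$.
\item Cloning $v$ onto $U$ and applying \eqref{eqn:phi-clone-ineq} gives
\[\Phi(\varphi^{U\gets v})\geq \Phi(\varphi)\bigl(1-\tfrac{2s}{n}\bigr)+s\,x n-(\Delta+1)s^2-O(n).\]
\item We have $\Phi(\varphi)(1-2s/n)\geq-\delta n^2$. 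Combining this with $\Phi(\varphi^{U\gets v})\leq\Delta n$ yields $sxn\leq \delta n^2+(\Delta+1)s^2+O(n)$.
\item Substituting $s=\sqrt\delta n$ gives $x\leq \sqrt\delta+(\Delta+1)\sqrt\delta+o(1)\leq 8\Delta\sqrt\delta$.
\end{itemize}
The only care needed is the small-$\Delta$ constant and the sign handling when $\Phi$ is positive. If $\Phi>0$, simply bound $\sum_{u\in U}D(u)\leq s\cdot\max_u D(u)$ through the averaging argument instead.

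\emph{Part \ref{item:lemma:D-tails-ii}.} Let $Y=\{v:D(v)<-\delta^{1/4}n\}$. Split the degree sum over $Y$ and its complement:
\[-2\delta n^2\leq 2\Phi=\sum_v D(v)\leq -|Y|\,\delta^{1/4}n+n\cdot 8\Delta\sqrt\delta\, n,\]
where the last term uses part \ref{item:lemma:D-tails-i}. Rearranging gives $|Y|\leq (2\delta+8\Delta\sqrt\delta)\delta^{-1/4}n$. This is of order $\delta^{1/4}n$, not the stated $\delta^{3/4}n$, so the bare averaging argument is too weak.

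The fix is a cloning argument applied to $Y$ itself. Clone a vertex $v$ of maximum $D$ onto $U=Y$; by averaging, $D(v)\geq 2\Phi/n\geq-2\delta n$. Then \eqref{eqn:phi-clone-ineq} gives
\[\Delta n\geq \Phi(\varphi^{Y\gets v})\geq -\delta n^2-2\delta n|Y|+|Y|\delta^{1/4}n-(\Delta+1)|Y|^2.\]
Since part \ref{item:lemma:D-tails-i} already shows $|Y|=O(\delta^{1/4}n)$, the term $(\Delta+1)|Y|^2$ is only $O(\delta^{1/2}n^2)$. So the inequality gives $|Y|\delta^{1/4}\lesssim \delta n+(\Delta+1)|Y|^2/n$, and this closes only if $|Y|$ is much smaller than $\delta^{1/4}n$.

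To get there, apply the argument to a subset: take $U\subseteq Y$ of size $s=\min\{|Y|,\,\delta^{1/4}n/(2(\Delta+1))\}$. The right-hand side is then at least $s\delta^{1/4}n/2-2\delta n^2$, which forces $s\leq 4\delta^{3/4}n+o(n)$. Hence $s<\delta^{1/4}n/(2(\Delta+1))$, so $s=|Y|$, giving $|Y|\leq 10\Delta\delta^{3/4}n$.

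\emph{Part \ref{item:lemma:D-tails-iii}.} For $v\in X$, the restricted degree satisfies $D(v,X)=D(v)-D(v,Y)$, and $|D(v,Y)|\leq\Delta|Y|\leq 10\Delta^2\delta^{3/4}n$. This gives two bounds:
\begin{itemize}
\item Upper: part \ref{item:lemma:D-tails-i} gives $D(v)\leq 8\Delta\sqrt\delta n$, so $D(v,X)\leq 8\Delta\sqrt\delta n+10\Delta^2\delta^{3/4}n$.
\item Lower: $v\in X$ gives $D(v)\geq-\delta^{1/4}n$, so $D(v,X)\geq-\delta^{1/4}n-10\Delta^2\delta^{3/4}n$.
\end{itemize}
Both are at most $12\Delta^2\delta^{1/4}n$ in absolute value for $\delta<1$. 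The main obstacle is the subset-cloning trick in part \ref{item:lemma:D-tails-ii}, needed to obtain the $\delta^{3/4}$ exponent.
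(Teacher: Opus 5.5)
Your part (i) is essentially the paper's argument, and your diagnosis in (ii) is correct. Averaging alone gives only $|Y|\le(2\delta^{3/4}+8\Delta\delta^{1/4})n$. The paper's own proof of (ii) is exactly this averaging step, and its final inequality $\frac{\delta+4\Delta\sqrt\delta}{\delta^{1/4}/2}\,n\le10\Delta\delta^{3/4}n$ is false. So your subset-cloning step is a needed ingredient, not just an alternative route.

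\textbf{The gap: your repair only works for small $\delta$, but the lemma is claimed for every $\delta\in(0,1)$.}

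\emph{Where your written step fails.} The step ``hence $s<\delta^{1/4}n/(2(\Delta+1))$'' is deduced from $s\le4\delta^{3/4}n+o(n)$. This deduction requires $4\delta^{3/4}<\delta^{1/4}/(2(\Delta+1))$, i.e.\ $\delta<1/(64(\Delta+1)^2)$.

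\emph{Why better constants cannot fix it.} Cloning a vertex with $D(v)\ge-2\delta n$ onto $s=\sigma n$ vertices of $Y$ yields only
$(\Delta+1)\sigma^2-(\delta^{1/4}-2\delta)\sigma+\delta\ge-o(1)$.
For $\delta>1/(16(\Delta+1)^2)$ the left side is positive for every $\sigma$. So no choice of $s$ gives any information.

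\emph{Why this matters.} Statement (ii) is vacuous only when $10\Delta\delta^{3/4}\ge1$, i.e.\ $\delta\ge(10\Delta)^{-4/3}$. This threshold always exceeds $1/(16(\Delta+1)^2)$. Hence in the window $1/(16(\Delta+1)^2)<\delta<(10\Delta)^{-4/3}$ your proof establishes nothing about (ii). For example, take $\Delta=1$ and $\delta=1/50$: the claimed bound $10\delta^{3/4}n\approx0.53n$ is nontrivial, yet the quadratic above has no real roots.

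\textbf{How to fix it.} The simplest remedy is to state and prove the lemma only for $\delta\le\delta_1(k)$, with an explicit threshold such as $\delta_1=1/(100(\Delta+1)^2)$. Your argument then goes through verbatim. This restriction is harmless for the paper: \Cref{lemma:kth-order-recursion} applies the lemma only with $\delta<\delta_0\le\beta_0^6/C_k^2$. If you want the full range $\delta\in(0,1)$, you need a separate argument for the intermediate window.

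\textbf{A minor point in (iii).} Your upper estimate $8\Delta\sqrt\delta+10\Delta^2\delta^{3/4}\le12\Delta^2\delta^{1/4}$ is false for $\delta$ near $1$ (e.g.\ $\Delta=1$, $\delta=0.9$). Whenever $\delta\ge(12\Delta)^{-4}$, use the trivial bound $|D(v,X)|\le\Delta n\le12\Delta^2\delta^{1/4}n$ instead. This range includes both that region and the window above, so (iii) itself is unaffected by the gap in (ii).
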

\begin{proof}
{\it(i)} Suppose to the contrary that there exists $z\in V(K_n)$ with $D(z)\geq8\Delta\sqrt{\delta}n$.
Let $s:=\floor{\sqrt{\delta}\,n}$ and choose $U\subseteq V(K_n)\setminus\{z\}$ with $|U|=s$ so as to minimize $\sum_{u\in U}D(u)$ among all $s$-subsets of $V(K_n)\setminus\{z\}$. Write $S:=U\cup\{z\}$ and $T:=V(K_n)\setminus S$. Let $\psi:=\varphi^{U\gets z}$. Since the average of $D(\cdot)$ over $U$ is at most the overall average,
$$\frac{1}{s}\sum_{u\in U}D(u)\leq\frac{1}{n}\sum_{v\in V(K_n)}D(v)\leq \Delta\,,$$
so combining with \eqref{eqn:phi-clone-ineq} implies, for all large enough $n$,
\begin{align*}
\Phi(\psi)-\Phi(\varphi) &\geq sD(z)-\sum_{u\in U}D(u)-(\Delta+1)s^2 \\
&\geq s\cdot 8\Delta\sqrt{\delta}n - s\Delta - (\Delta+1)s^2 \\
&\geq 7\Delta\delta n^2 - \Delta\sqrt{\delta}n - 2\Delta\delta n^2 \geq 4\Delta\delta n^2\,,
\end{align*}
where we used that $\frac{7}{8}\sqrt{\delta}n\leq s\leq\sqrt{\delta}n$. On the other hand, since $\psi\in\cC_k(n)$ by \Cref{fact:cloning-keeps-ckn}, \Cref{lemma:kthOrderMantel} implies $\Phi(\psi)\leq\Delta n/2$. It follows that $\Phi(\psi)-\Phi(\varphi)\leq\Delta n/2+\delta n^2\leq2\delta n^2$. This contradicts the above display and completes the proof of {\it(i)}.

{\it(ii)} Let $x:=|X^c|$ (the set $X$ is defined in {\it(iii)}). Using part {\it(i)}, the definition of $X$, and near-extremality of $\Phi$,
$$-\delta n^2\leq\Phi(\varphi)=\frac{1}{2}\left(\sum_{v\in X}D(v)+\sum_{v\in X^c}D(v)\right)\leq 4\Delta\sqrt{\delta}n^2-\frac{x\delta^{1/4} n}{2}\,,$$
which immediately implies $x\leq\frac{\delta+4\Delta\sqrt{\delta}}{\delta^{1/4}/2}n\leq10\Delta\delta^{3/4}n$.

{\it(iii)} For all $v\in X$ we have $D(v)\geq-\delta^{1/4} n$, so using parts {\it(i)} and {\it(ii)},
$$D(v,X)=D(v)-d_\red(v,X^c)+\Delta\,d_\blue(v,X^c)\geq-\delta^{1/4}n-10\Delta\delta^{3/4}n\geq-12\Delta\delta^{1/4}n\,,$$
since $|X^c|\leq10\Delta\delta^{3/4}n$. Similarly,
$$D(v,X)\leq D(v)+\Delta\,d_\blue(v,X^c)\leq8\Delta\sqrt{\delta}n+10\Delta^2\delta^{3/4}n\leq12\Delta^2\delta^{1/4}n\,,$$
completing the proof.
\end{proof}

\subsubsection{Near-Tur\'an structure in red neighborhoods}
Next, we present a key structural lemma that states, roughly speaking, that if a coloring $\varphi\in\cC_k(n)$ is close to extremal, then inside the red neighborhood of a vertex of large enough red degree, the union of the red and green edges is close to Tur\'an. We write $T_{n,r}$ for the Tur\'an graph and denote $t_{n,r}:=e(T_{n,r})$.

\begin{lemma}\label{lemma:locally-turan}
For all fixed $\delta\in(0,1)$, $\alpha\in[0,1]$ and $k\geq4$, there exists $n_0$ such that the following holds for all $n\geq n_0$. Let $\varphi\in\cC_k(n)$ and assume
$$\Phi(\varphi)\geq-\delta n^2\,.$$
Fix $x\in V(K_n)$ such that
$$d_\red(x)\geq \max_{v\in V(K_n)} d_\red(v)-\alpha n\,,$$
and write $m:=d_\red(x)$. Assume $m\geq \vartheta n$ for some $\vartheta\in(0,1]$. Let $H$ be the graph on $N_\red(x)$ with edge set $E(H):=\{e\in\textstyle\binom{N_\red(x)}{2}:\varphi(e)\in\{\red,\green\}\}$. Then
\begin{equation}
e(H)\geq t_{m,\Delta}-12\bigg(\frac{\sqrt{\delta}}{\vartheta}+\frac{\alpha}{\vartheta}\bigg)m^2\,. \label{eqn:local-stability}
\end{equation}
\end{lemma}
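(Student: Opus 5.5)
The plan is to prove \eqref{eqn:local-stability} by a cloning/symmetrization argument. The key observation is that cloning $x$ onto a set gains roughly $m\,D(x)$-type mass. Conversely, if the red--green graph $H$ on $N_\red(x)$ had many fewer edges than Tur\'an, then $N_\red(x)$ would contain many blue edges. Those blue edges raise $e_\blue$ without a compensating red gain, which should push $\Phi$ well below the extremal value.

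The first step is the local constraint from $\cF_k$-freeness. Since every edge from $x$ to $N_\red(x)$ is red, no $k-1=\Delta+1$ vertices of $N_\red(x)$ can be pairwise red/green; otherwise together with $x$ they form a forbidden pattern. Hence $H$ is $K_{\Delta+1}$-free, and Tur\'an's theorem gives $e(H)\leq t_{m,\Delta}$. The blue graph on $N_\red(x)$ is then the complement of $H$, with $e_\blue(N_\red(x))=\binom m2-e(H)$.

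The second step is to relate blue degrees inside $N_\red(x)$ to red degrees. For $u\in N_\red(x)$ we have $D(u)=d_\red(u)-\Delta d_\blue(u)\geq\ldots$, and we use the two bounds already available. Lemma~\ref{lemma:D-tails}\ref{item:lemma:D-tails-i} gives $D(u)\leq 8\Delta\sqrt\delta n$. Summing $2\Phi=\sum_v D(v)\geq-2\delta n^2$ shows that the average of $D$ is at least $-2\delta n$. The cleaner route is to sum over $u\in N_\red(x)$ directly:
\[
\sum_{u\in N_\red(x)}\Delta\,d_\blue(u)=\sum_{u\in N_\red(x)}\bigl(d_\red(u)-D(u)\bigr)\leq m(m+\alpha n)-\sum_{u\in N_\red(x)}D(u),
\]
using $d_\red(u)\leq\max_v d_\red(v)\leq m+\alpha n$. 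For the lower bound on $\sum_{u}D(u)$ over $N_\red(x)$ I would use cloning. By \eqref{eqn:phi-clone-ineq} with $v$ a vertex of near-maximal $D$, if $\sum_{u\in N_\red(x)}D(u)$ were very negative, cloning a max-$D$ vertex onto the set would increase $\Phi$ by more than $\Delta n/2+\delta n^2$, contradicting Lemma~\ref{lemma:kthOrderMantel}. A more precise accounting is needed here: Lemma~\ref{lemma:D-tails}\ref{item:lemma:D-tails-ii} bounds the exceptional vertices, giving $\sum_{u\in N_\red(x)}D(u)\geq -C\sqrt\delta\, n m$ after absorbing exceptions via (i) and the global sum. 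This yields
\[
\sum_{u\in N_\red(x)}d_\blue(u)\leq \frac{m^2}{\Delta}+O\Bigl(\frac{(\alpha+\sqrt\delta)nm}{\Delta}\Bigr).
\]

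The final and main obstacle is converting this into a bound on blue edges inside $N_\red(x)$, because $d_\blue(u)$ counts blue neighbours everywhere, not only inside $N_\red(x)$. The plan is to show $d_\blue(u,N_\red(x))\geq m-1-d_H(u)$ and to average. Since $H$ is $K_{\Delta+1}$-free, the complement structure forces $\sum_u d_\blue(u,N_\red(x))=m(m-1)-2e(H)\geq m^2-2t_{m,\Delta}-m\approx m^2/\Delta$. So the blue mass inside $N_\red(x)$ alone nearly saturates the upper bound. Comparing the two, $2(t_{m,\Delta}-e(H))\leq \sum_u d_\blue(u,N_\red(x))-(m^2/\Delta-O(m))\leq O((\alpha+\sqrt\delta)nm)$. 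Dividing by the normalization and using $n\leq m/\vartheta$ gives $t_{m,\Delta}-e(H)\leq 12(\sqrt\delta/\vartheta+\alpha/\vartheta)m^2$. The delicate part is the lower bound on $\sum_{u\in N_\red(x)}D(u)$ with only $O(\sqrt\delta nm)$ loss. If the direct cloning estimate falls short, I would clone $x$ itself onto a minimal-$D$ subset of $N_\red(x)$ of size about $\sqrt\delta n$, mirroring the proof of Lemma~\ref{lemma:D-tails}.
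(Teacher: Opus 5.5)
Your averaging strategy is sound and genuinely different from the paper's. However, the step you flag as delicate, namely $\sum_{u\in N_\red(x)}D(u)\geq -C\sqrt\delta\,nm$, is not delivered by either route you offer, and everything else rests on it.

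\textbf{Why your two routes fail.}
\Cref{lemma:D-tails}\ref{item:lemma:D-tails-ii} only gives $D(u)\geq-\delta^{1/4}n$ off an exceptional set of size $O(\delta^{3/4}n)$. The exceptional vertices may have $D(u)$ near $-\Delta n$, so this route costs order $\delta^{1/4}nm+\delta^{3/4}n^2$. Part \ref{item:lemma:D-tails-i} together with $\sum_vD(v)=2\Phi(\varphi)$ gives
\[
\sum_{u\in N_\red(x)}D(u)\geq-2\delta n^2-8\Delta\sqrt\delta\,n(n-m)\,,
\]
a loss of order $\sqrt\delta\,n^2$ rather than $\sqrt\delta\,nm$. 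Pushed through your comparison, this gives $t_{m,\Delta}-e(H)\lesssim 4\sqrt\delta(1-\vartheta)m^2/\vartheta^2$, which exceeds $12\sqrt\delta\, m^2/\vartheta$ whenever $\vartheta<1/4$. So this route proves only a $\vartheta^{-2}$ version, not \eqref{eqn:local-stability}.

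Your fallback of cloning $x$ fails for a different reason. The gain is $sD(x)-\sum_{u\in U}D(u)-O(s^2)$, but the hypotheses control only the red degree of $x$. So $D(x)=m-\Delta d_\blue(x)$ may be very negative; in the paper's application, $x^{(0)}$ need not even lie in the well-behaved set $X$. Cloning a good vertex onto all of $N_\red(x)$ instead costs $(\Delta+1)m^2$, which swamps any gain.

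\textbf{The missing idea.}
Clone a good vertex onto the \emph{worst} $\sqrt\delta\, n$ vertices, then use that the average of the smallest values bounds the full average from below.
\begin{enumerate}
\item Averaging $2\Phi(\varphi)=\sum_vD(v)$ gives a vertex $z$ with $D(z)\geq-2\delta n$.
\item Let $s:=\lfloor\sqrt\delta n\rfloor$. If $m\leq s$ then $\vartheta\leq\sqrt\delta$, and \eqref{eqn:local-stability} is trivial. Otherwise let $U$ be the $s$ vertices of $N_\red(x)\setminus\{z\}$ with smallest $D$.
\item Since $\varphi^{U\gets z}\in\cC_k(n)$, \eqref{eqn:phi-clone-ineq} and \Cref{lemma:kthOrderMantel} give
\[
-\sum_{u\in U}D(u)\leq\delta n^2+\frac{\Delta n}{2}+2\delta ns+(\Delta+1)s^2\leq(\Delta+4)\delta n^2
\]
for large $n$.
\item Hence the average of $D$ on $U$ is at least $-(\Delta+5)\sqrt\delta\,n$. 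Every other vertex of $N_\red(x)\setminus\{z\}$ has $D$ at least $\max_U D$, so the same bound holds for the average over $N_\red(x)$; the possible term $D(z)\geq-2\delta n$ is harmless.
\item Your comparison then gives
\[
2\Delta\bigl(t_{m,\Delta}-e(H)\bigr)\leq\alpha nm+(\Delta+5)\sqrt\delta\,nm+\Delta m\,.
\]
Using $nm\leq m^2/\vartheta$ and absorbing $\Delta m$ for $n\geq n_0(\delta)$ yields \eqref{eqn:local-stability} with a constant well below $12$. Note that \Cref{lemma:D-tails} is not needed at all.
\end{enumerate}

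\textbf{Comparison with the paper.}
The paper argues by contradiction. It isolates the set $B$ of vertices with $d_\blue(v,N_\red(x))\geq(1/\Delta+\rho)m$, shows $|B|\geq\rho m$ and $D\leq\alpha n-\Delta\rho m$ on $B$, and clones $z$ onto $\rho m/16$ of them. Its cloning set is thus built from the Tur\'an deficit itself. Your repaired version is a direct inequality that never locates the bad vertices. It only bounds the total blue degree out of $N_\red(x)$, using the maximum red degree and the average of $D$, and it gives a better constant.
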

\begin{proof}
Let $\rho:=\frac{1}{m^2}(t_{m,\Delta}-e(H))$ and $C:=12$. We will prove $\rho\leq C\big(\frac{\sqrt{\delta}}{\vartheta}+\frac{\alpha}{\vartheta}\big)$, which is precisely \eqref{eqn:local-stability}. Assume to the contrary that $\rho\geq C\big(\frac{\sqrt{\delta}}{\vartheta}+\frac{\alpha}{\vartheta}\big)$.

Let $A:=N_\red(x)$. Since $\varphi\in\cC_k(n)$, $H$ is a $K_{\Delta+1}$-free graph, which implies $e(H)\leq t_{m,\Delta}$. If we define $b^\ast:=\binom{m}{2}-t_{m,\Delta}$ (the minimum number of nonedges in a $K_{\Delta+1}$-free graph), then we have $e_\blue(A)=b^\ast+t_{m,\Delta}-e(H)$. Using $b^\ast\geq\frac{m^2}{2\Delta}-\frac{m}{2}$, we calculate
\begin{equation}
\sum_{v\in A}d_\blue(v,A)=2e_\blue(A)=2b^\ast+2(t_{m,\Delta}-e(H)) \geq \frac{m^2}{\Delta}-m+2\rho m^2\,,\label{eqn:sumvAdb1}
\end{equation}
Define the high blue degree set
$$B:=\left\{v\in A:d_\blue(v,A)\geq\left(\frac{1}{\Delta}+\rho\right)m\right\}\,.$$
We claim $|B|\geq \rho m$. Indeed, we have $d_\blue(v,A)\leq m$ for all $v\in A$, and $d_\blue(v,A)\leq(\frac{1}{\Delta}+\rho)m$ for all $v\in A\setminus B$\,; hence, letting $b:=|B|$,
$$\sum_{v\in A}d_\blue(v,A)\leq bm+(m-b)\left(\frac{1}{\Delta}+\rho\right)m=\frac{m^2}{\Delta}+\rho m^2+bm\left(1-\frac{1}{\Delta}-\rho\right)\,,$$
and combining this with \eqref{eqn:sumvAdb1} gives $b\geq\frac{\rho m-1}{1-1/\Delta-\rho}\geq\rho m$ (since $C\big(\frac{\sqrt{\delta}}{\vartheta}+\frac{\alpha}{\vartheta}\big)\leq \rho<\frac{1}{2}$ and $0<1-\frac{1}{\Delta}-\rho<1$) for large enough $n$.

All vertices $v$ in $B$ have small weighted degree $D(v)$: by the choice of $x$, they satisfy $d_\red(v)\leq m+\alpha n$, and by definition of $B$ they satisfy $d_\blue(v)\geq d_\blue(v,A)\geq(\frac{1}{\Delta}+\rho)m$, hence
\begin{equation}\label{eqn:small-Ddeg-B}
D(v)=d_\red(v)-\Delta\,d_\blue(v)\leq (m+\alpha n)-\Delta\left(\frac{1}{\Delta}+\rho\right)m=\alpha n-\Delta\rho m\,.
\end{equation}
There must exist a vertex $z\in V(K_n)$ with $D(z)\geq-2\sqrt{\delta}n$; otherwise we would have $2\Phi(\varphi)=\sum_{v\in V(K_n)}D(v)\leq-2\sqrt{\delta}n^2$, which contradicts our assumption that $\Phi(\varphi)\geq-\delta n^2$. Let $s:=\floor{\rho m/16}$ and let $W\subseteq B$ be a subset of size $s$ not containing $z$. Let $\psi:=\varphi^{W\gets z}$. Using \eqref{eqn:phi-clone-ineq}, we calculate
\begin{align*}
\Phi(\psi)-\Phi(\varphi) &\geq s\,D^\varphi(z)-\sum_{w\in W}D^\varphi(w)-(\Delta+1)\,s^2 \\
&\geq - 2s\sqrt{\delta}n + s\cdot\Delta\rho m - s\alpha n - (\Delta+1)s^2 \\
&\geq - \frac{\rho\sqrt{\delta}mn}{8} + \frac{\Delta\rho^2m^2}{16} - \frac{\rho\alpha mn}{16} - \frac{(\Delta+1)\rho^2m^2}{2^8} - o(n^2) \\
&\geq \rho^2m^2\left(-\frac{\sqrt{\delta}n}{8\rho m}-\frac{\alpha n}{16\rho m}+\frac{\Delta}{16}-\frac{\Delta}{2^7}\right) - o(n^2) \\
&\geq \rho^2m^2\left(\frac{\Delta}{32}-\frac{3}{16C}\right)-o(n^2) \\
&\geq C^2\delta n^2\left(\frac{\Delta}{32}-\frac{3}{16C}\right)-o(n^2) \\[3pt]
&\geq 2\delta n^2-o(n^2)\,,
\end{align*}
where in the second inequality we used $D(z)\geq-2\sqrt{\delta}n$ and \eqref{eqn:small-Ddeg-B}; in the third inequality we used $s=(1+o(1))\rho m/16$; in the fifth inequality we used $m\geq\vartheta n$, $\rho\geq C\frac{\sqrt{\delta}}{\vartheta}$, and $\rho\geq C\frac{\alpha}{\vartheta}$, which imply
$$\frac{\sqrt{\delta}n}{8\rho m}\leq \frac{1}{8C}\qquad\text{and}\qquad\frac{\alpha n}{16\rho m}\leq \frac{1}{16C}\,;$$
in the sixth inequality we used $\rho^2m^2\geq C^2(\delta+\alpha^2+2\sqrt{\delta}\alpha)n^2\geq C^2\delta n^2$; and in the last inequality we used $\frac{\Delta C^2}{32}-\frac{3C}{16}\geq2$, which holds since $C=12$ and $\Delta\geq2$.

But since $\Phi(\varphi)\geq-\delta n^2$, this means $\Phi(\psi)>\Delta n/2$, contradicting \Cref{lemma:kthOrderMantel}, and completing the proof that $\rho\leq C\big(\frac{\sqrt{\delta}}{\vartheta}+\frac{\alpha}{\vartheta}\big)$.
\end{proof}

\begin{theorem}{\normalfont(\citeauthor{erdos1967some}~\cite{erdos1967some},~\citeauthor{simonovits1968method}~\cite{simonovits1968method})}\label{thm:erdos-simonovits}
For every $\epsilon>0$ there exist $\delta>0$ and $n_0\geq0$ such that the following holds for all $n\geq n_0$. Fix $r\geq2$ and a $K_{r+1}$-free graph $G$ on $n$ vertices. If
$$e(G)\geq t_{n,r}-\delta n^2$$
then there exists a partition $V(G)=V_1\cup\cdots\cup V_r$ such that
$$\sum_{i=1}^re(V_i)\leq\epsilon n^2$$
and $\left||V_i|-\frac{n}{r}\right|\leq \epsilon n$ for all $i\in[r]$.
\end{theorem}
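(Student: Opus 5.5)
The plan is to deduce the statement from an exact "deletion" inequality, and then get the balance of the parts by a separate convexity count. The inequality to prove first is Füredi's: every $K_{r+1}$-free graph $G$ on $n$ vertices contains an $r$-partite subgraph $G'$ with
$$e(G)-e(G')\leq t_{n,r}-e(G)\,.$$
Granting this, if $e(G)\geq t_{n,r}-\delta n^2$ we obtain a partition $V_1\cup\cdots\cup V_r$ with $\sum_i e(V_i)\leq\delta n^2$. This already gives the first conclusion once $\delta\leq\epsilon$.

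I would prove Füredi's inequality by induction on $r$. The case $r=1$ is trivial, since a $K_2$-free graph has no edges. For the inductive step:
\begin{itemize}
\item Let $x$ be a vertex of maximum degree $d$, and set $A:=N(x)$ and $B:=V\setminus A$. Since $G$ is $K_{r+1}$-free, $G[A]$ is $K_r$-free.
\item By induction, $G[A]$ has an $(r-1)$-partite subgraph obtained by deleting at most $t_{d,r-1}-e(A)$ edges.
\item Take $G'$ to be this $(r-1)$-partite graph on $A$, together with $B$ as the $r$-th part and all edges of $G$ between $A$ and $B$. The deleted edges are therefore those removed inside $A$ plus the $e(B)$ edges inside $B$.
\item Degree counting over $B$ gives $e(A,B)+2e(B)=\sum_{v\in B}d(v)\leq |B|d$, so
$$e(G)=e(A)+e(A,B)+e(B)\leq e(A)+(n-d)d-e(B)\,.$$
\item Using $t_{d,r-1}+d(n-d)\leq t_{n,r}$ (the complete $r$-partite graph with one part of size $n-d$ has at most $t_{n,r}$ edges), we get
$$e(G)-e(G')\leq t_{d,r-1}-e(A)+e(B)\leq t_{n,r}-e(G)\,,$$
which closes the induction.
\end{itemize}

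The second step is to show that near-extremality forces balance. Compare the edge count with the complete $r$-partite graph on the parts:
$$e(G)\leq\sum_{i<j}|V_i||V_j|+\sum_i e(V_i)\leq\sum_{i<j}|V_i||V_j|+\delta n^2\,.$$
Moreover,
$$\sum_{i<j}|V_i||V_j|=\tfrac12\Bigl(n^2-\sum_i|V_i|^2\Bigr)\,.$$
By convexity, $\sum_i|V_i|^2\geq n^2/r+\sum_i(|V_i|-n/r)^2$. Hence if some part has $\bigl||V_i|-n/r\bigr|>\epsilon n$, then $e(G)\leq t_{n,r}-\tfrac12\epsilon^2n^2+2\delta n^2+O(n)$. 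This contradicts $e(G)\geq t_{n,r}-\delta n^2$ once $\delta<\epsilon^2/8$ and $n\geq n_0$; the $O(n)$ term accounts for the rounding in $t_{n,r}$. So it suffices to take $\delta:=\min\{\epsilon,\epsilon^2/8\}$. As written, this $\delta$ does not depend on $r$, because the convexity bound above is uniform in $r$.

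The main obstacle is the inductive step of Füredi's inequality, specifically keeping the bookkeeping exact. The inequality $t_{d,r-1}+d(n-d)\leq t_{n,r}$ must hold for every $d$, not only near $n(r-1)/r$. I would verify it directly: the left side is the edge count of some complete $r$-partite graph on $n$ vertices, and $T_{n,r}$ maximizes that count. The remaining steps are routine.
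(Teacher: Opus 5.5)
Your proof is correct, but it follows a different route from the paper. The paper gives no proof: it imports the statement as the classical Erd\H{o}s--Simonovits stability theorem.

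\begin{itemize}
\item \emph{Deletion step.} You reproduce F\"uredi's deletion inequality $\sum_i e(V_i)\leq t_{n,r}-e(G)$ correctly. The maximum-degree vertex lies in $B$. The degree sum over $B$ gives $e(A,B)+2e(B)\leq (n-d)d$. The bound $t_{d,r-1}+d(n-d)\leq t_{n,r}$ holds for every $d$, because the left side counts the edges of a complete $r$-partite graph on $n$ vertices.
\item \emph{Balance step.} The convexity step is in fact an exact identity. The rounding loss $\frac{r-1}{2r}n^2-t_{n,r}=\frac{s(r-s)}{2r}$, where $s$ is the residue of $n$ modulo $r$, is at most $s/2\leq n/2$. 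So your $O(n)$ term really is uniform in $r$.
\end{itemize}

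Compared with simply citing the theorem, your approach has three advantages. It is self-contained. It gives the explicit, linear deletion bound $\sum_i e(V_i)\leq t_{n,r}-e(G)$, together with $\delta=\min\{\epsilon,\epsilon^2/8\}$ and $n_0=O(\epsilon^{-2})$. Most importantly, neither $\delta$ nor $n_0$ depends on $r$.

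That last point matters for the statement as the paper writes it, where $r$ is fixed only after $\delta$ and $n_0$ have been chosen. The classical Erd\H{o}s--Simonovits theorem is formulated for a fixed forbidden graph, so its $\delta$ is allowed to depend on $r$. The cited references therefore justify only the weaker quantifier order, while your argument justifies the order as stated.

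For the paper's actual use in \Cref{lemma:kth-order-recursion}, where $r=\Delta=k-2$ is fixed, either version suffices.
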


\subsubsection{Between-cluster dominant colors}
\begin{lemma}\label{lemma:vertex-level-mixed-mass}
For all fixed $k\geq3$ and $\eta\in(0,1)$ there exist $\beta_0>0$ and $C\geq1$ such that the following holds for all $\beta\in(0,\beta_0)$ and all sufficiently large $n$. Let $\varphi\in\cC_k(n)$, and let $U_1,\dots,U_t\subseteq V(K_n)$ be $t\geq k-1$ disjoint sets, and let $U:=\bigcup_{i=1}^tU_i$. Assume the following conditions hold:
\begin{enumerate}[
  label=\textit{(\roman*)},
  ref=(\textit{\roman*}),
  topsep=5pt
]
    \item\label{item:vertex-level-mixed-mass-1} $|U_i|\geq \eta n$ for all $i\in[t]$, and $||U_i|-|U_j||\leq \beta n$ for all $i,j\in[t]$;
    \item\label{item:vertex-level-mixed-mass-2} $d_\blue(U_i,U_j)\leq\beta$ for all distinct $i,j\in[t]$;
    \item\label{item:vertex-level-mixed-mass-3} for all $i\in[t]$, every $v\in U_i$ satisfies
    $$d_\blue(v,U_i)\geq (1-\beta)|U_i|\,,\qquad d_\blue(v,U\setminus U_i)\leq \beta n\,,\qquad\abs{D(v,U)}\leq \beta n\,.$$
\end{enumerate}
Then for all distinct $i,j\in[t]$,
\begin{equation}\label{eqn:dominant-color}
\max\{d_\red(U_i,U_j),\,d_\green(U_i,U_j)\} \geq 1-C\,\frac{\sqrt{\beta}}{\eta^2} \,.
\end{equation}
\end{lemma}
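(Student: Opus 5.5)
The plan is to prove a vertex-level dichotomy first and then lift it to pairs of clusters. Concretely, I will show that every vertex $v\in U_i$ is red to almost all of exactly $\Delta$ clusters $U_j$ ($j\neq i$), and red to almost none of every other cluster. A double-counting argument on the bipartite red graph between $U_i$ and $U_j$ then yields \eqref{eqn:dominant-color}. Throughout, write $s:=\min_i|U_i|\geq\eta n$. Condition \ref{item:vertex-level-mixed-mass-1} gives $|U_j|\leq s+\beta n$, and $t\leq 1/\eta$.

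\emph{Step 1 (few clusters carry red).} Fix $v\in U_i$ and set $a_j:=d_\red(v,U_j)/|U_j|$ for $j\neq i$. I claim that fewer than $\Delta+1$ indices satisfy $a_j\geq \lambda:=2k\beta/\eta$. Suppose $j_1,\dots,j_{\Delta+1}$ all do. Choose vertices $y_1\in N_\red(v,U_{j_1}),\dots,y_{\Delta+1}\in N_\red(v,U_{j_{\Delta+1}})$ greedily, each chosen vertex not blue to any previously chosen one. By the second inequality in \ref{item:vertex-level-mixed-mass-3}, each chosen vertex has at most $\beta n$ blue neighbours in $U\setminus U_{j_r}$. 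So at each step at most $\Delta\beta n<\lambda\eta n$ candidates are excluded, and the greedy choice succeeds. Then $v$ together with $y_1,\dots,y_{\Delta+1}$ spans $k$ vertices in which $v$ is red to all others and all other pairs are red or green. This is a forbidden pattern in $\cF_k$, contradicting $\varphi\in\cC_k(n)$.

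\emph{Step 2 (the red clusters are almost full).} By \ref{item:vertex-level-mixed-mass-3}, $d_\blue(v,U)\geq(1-\beta)|U_i|$ and $d_\blue(v,U)\leq|U_i|+\beta n$. Since $|D(v,U)|\leq\beta n$, this gives $d_\red(v,U)=\Delta|U_i|\pm O(\beta n)$. Red edges inside $U_i$ number at most $\beta|U_i|$. The clusters with $a_j<\lambda$ contribute at most $t\lambda(s+\beta n)=O(\beta n/\eta)$ red edges in total. The remaining at most $\Delta$ clusters therefore carry $\Delta s-O(\beta n/\eta)$ red edges. Each carries at most $|U_j|\leq s+\beta n$, so there are exactly $\Delta$ of them, and each has $a_j\geq 1-O(\beta/\eta^2)$. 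Thus every $v\in U_i$ is either \emph{full} toward $U_j$ ($a_j\geq 1-C_1\beta/\eta^2$) or \emph{empty} toward $U_j$ ($a_j<\lambda$), with no intermediate case.

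\emph{Step 3 (agreement within a cluster).} Fix distinct $i,j$ and consider the red bipartite graph between $U_i$ and $U_j$. By Step 2 applied on both sides, every vertex on either side is full or empty toward the other cluster. Suppose some $x\in U_i$ is full toward $U_j$. Then $x$ is red to all but $O(\beta/\eta^2)|U_j|$ vertices $w\in U_j$. Each such $w$ has red degree into $U_i$ at least $1$, but full/empty is measured by degree, so this alone is not enough: I will instead count. If a $\xi$-fraction of $U_i$ is full and the rest is empty, the red density is $\xi\pm O(\beta/\eta^2)$ computed from the $U_i$ side. It is also a value in $\{\text{near }0\}\cup\{\text{near }1\}$ mixture from the $U_j$ side, so mixed full/empty patterns are possible a priori.

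To rule these out, I will use a full $x$ and an empty $y$ in $U_i$ together with $x,y$'s blue adjacency ($d_\blue(v,U_i)\geq(1-\beta)|U_i|$). Take $w\in N_\red(x,U_j)\cap N_\green(y,U_j)$, which has size at least $\tfrac12|U_j|$. A typical such $w$ is full toward $U_i$, or else it is empty toward $U_i$ yet red to $x$. Averaging over $x$ shows that the full vertices of $U_j$ see red density roughly $\xi$ into $U_i$, which must be near $1$. Hence $\xi\geq 1-O(\beta/\eta^2)$ or $\xi\leq O(\beta/\eta^2)$. The same reasoning applies symmetrically with the roles of $U_i$ and $U_j$ exchanged. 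This gives \eqref{eqn:dominant-color} with error $O(\beta/\eta^2)\leq C\sqrt\beta/\eta^2$.

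The main obstacle is this last step: showing that every vertex of $U_i$ makes the same full/empty choice toward $U_j$, not merely most of them. If the degree-based counting above is insufficient, I would fall back on the slack in the target bound and pass to a $\sqrt\beta$ threshold. The fallback runs as follows: discard the $O(\sqrt\beta)$-fraction of vertices whose red neighbourhood in $U_j$ is not near $0$ or $1$; then the red bipartite graph is near-regular on both sides with degrees concentrated near $0$ or near $|U|$; this forces density near $0$ or near $1$, at the cost of the weaker bound $C\sqrt\beta/\eta^2$.
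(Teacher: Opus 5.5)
Your Steps 1 and 2 are correct, and they reach the paper's vertex-level dichotomy by a different and slightly sharper route. The paper bounds the number of clusters receiving red density at least $8\Delta\sqrt\beta$ from $v$ by choosing red neighbours at random and using the pair bound (ii); that threshold is where its $\sqrt\beta$ comes from. Your greedy choice instead uses the per-vertex bound $d_\blue(y,U\setminus U_{j_r})\le\beta n$ from (iii). It gives the cleaner statement that every vertex of every $U_i$ is either \emph{full} toward each other cluster (density at least $1-\epsilon_1$, with $\epsilon_1=O(\beta/\eta^2)$) or \emph{empty} toward it (density less than $\lambda=2k\beta/\eta$).

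The gap is Step 3, which as written does not establish anything.
\begin{itemize}
\item For one full $x$ and one empty $y$, a vertex $w\in N_\red(x,U_j)\cap N_\green(y,U_j)$ is consistent with $w$ being either full or empty, so that device leads nowhere.
\item The sentence ``the full vertices of $U_j$ see red density roughly $\xi$'' is about the wrong set. Full vertices see density near $1$ by definition, and you never explain why the empty ones cannot dominate.
\item The fallback is circular. Step 2 already gives bimodality for \emph{every} vertex, so there is nothing to discard, and ``this forces density near $0$ or near $1$'' is exactly the claim to be proved.
\end{itemize}

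The missing observation is that a mixed pattern on one side clashes with bimodality on the other side. A full vertex of $U_i$ is red to almost every empty vertex of $U_j$, while an empty vertex of $U_j$ is red to almost nothing in $U_i$. Concretely, let $H_i,L_i\subseteq U_i$ be the full and empty vertices toward $U_j$, and $H_j,L_j\subseteq U_j$ those toward $U_i$. Then
\[
|H_i|\bigl(|L_j|-\epsilon_1|U_j|\bigr)\le e_\red(H_i,L_j)\le\lambda|L_j||U_i|\,,
\]
so either $|L_j|<2\epsilon_1|U_j|$ or $|H_i|\le2\lambda|U_i|$. Symmetrically, either $|L_i|<2\epsilon_1|U_i|$ or $|H_j|\le2\lambda|U_j|$.
\begin{itemize}
\item If $|H_i|\le2\lambda|U_i|$, then $d_\red(U_i,U_j)\le3\lambda$, and by (ii) $d_\green(U_i,U_j)\ge1-3\lambda-\beta$.
\item Otherwise $|H_j|>(1-2\epsilon_1)|U_j|>2\lambda|U_j|$, hence $|L_i|<2\epsilon_1|U_i|$ and $d_\red(U_i,U_j)\ge(1-2\epsilon_1)(1-\epsilon_1)$.
\end{itemize}
This gives \eqref{eqn:dominant-color} with error $O(\beta/\eta^2)$.

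The paper does essentially this in a slightly different form. Assuming an intermediate pair density, it averages to find $u\in U_j$ that is red to all but a $2\mu$-fraction of $H_i$ and to at most a $2\mu$-fraction of $L_i$. Then $u$'s red density into $U_i$ is $|H_i|/|U_i|\pm2\mu$, contradicting the dichotomy for $u$. This is what your averaging sentence should have said, with ``a typical vertex of $U_j$'' in place of ``the full vertices of $U_j$''.
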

\begin{proof}
We will first show that for all $i\in[t]$ and $v\in U_i$,
\begin{equation}\label{eq:vertex-mixed-mass-final}
\sum_{j=1}^t\min\left\{\frac{d_\red(v,U_j)}{|U_j|},\,1-\frac{d_\red(v,U_j)}{|U_j|}\right\}\leq C\,\frac{\sqrt{\beta}}{\eta^2}
\end{equation}
for a constant $C=C(k)>0$. Define
$$\rho_{vj}:=\frac{d_\red(v,U_j)}{|U_j|},\qquad M(v):=\sum_{j\neq i}\min\{\rho_{vj},\,1-\rho_{vj}\}\,.$$
Let $\theta:=8\Delta\sqrt{\beta}$ and $A(v):=\{j\in[t]\setminus\{i\}: \rho_{vj}\geq\theta\}$.

We will first show that $|A(v)|\leq\Delta$. Assume to the contrary that $|A(v)|\geq \Delta+1$. Choose distinct indices $j_1,\dots,j_{\Delta+1}\in A(v)$, and for each $r\in[\Delta+1]$ let $S_r:=N_\red(v,U_{j_r})$. Then $|S_r|=d_\red(v,U_{j_r})\geq \theta |U_{j_r}|$. Choose $y_r\in S_r$ uniformly and independently at random for each $r\in[\Delta+1]$ and let $Z$ be the number of pairs $1\leq r<q\leq\Delta+1$ such that $\varphi(y_ry_q)=\blue$. We calculate that
\begin{align*}
\bbP\{Z=0\} &\geq 1-\sum_{1\leq r<q\leq\Delta+1}\bbP\{\varphi(y_ry_q)=\blue\} \\
&= 1-\sum_{1\leq r<q\leq\Delta+1}\frac{e_\blue(S_r,S_q)}{|S_r||S_q|} \\
&\geq 1-\sum_{1\leq r<q\leq\Delta+1}\frac{e_\blue(U_{j_r},U_{j_q})}{\theta^2|U_{j_r}||U_{j_q}|} \\
&\geq 1-\binom{\Delta+1}{2}\frac{\beta}{\theta^2} \geq 1-\frac{\Delta(\Delta+1)}{2}\cdot\frac{1}{64\Delta^2} > 0 \,,
\end{align*}
so there exists a choice of $y_1,\dots,y_{\Delta+1}$ with no blue edge among them. By construction, all edges $vy_r$ are red. Therefore the $k=\Delta+2$ vertices $\{v,y_1,\dots,y_{\Delta+1}\}$ span a forbidden pattern with center $v$, all center-incident edges red, and all leaf-leaf edges non-blue, which contradicts $\varphi\in\cC_k(n)$, proving that indeed $|A(v)|\leq\Delta$.

From item \emph{(iii)} of our hypotheses and the definition of $D(\cdot)$, it is easy to check that $d_\blue(v,U)=|U_i|+O_k(\beta n)$ and $d_\red(v,U)=\Delta|U_i|+O_k(\beta n)$. We use these identities to estimate $\sum_j\rho_{vj}$ as follows. Using that $||U_i|-|U_j||\leq\beta n$ and $\rho_{vj}\in[0,1]$,
\begin{align*}
\sum_{j=1}^t\rho_{vj} &= \frac{1}{|U_i|}\left(\sum_{j=1}^t\rho_{vj}|U_j|+\sum_{j=1}^t\rho_{vj}(|U_i|-|U_j|)\right) \\
&= \frac{d_\red(v,U)}{|U_i|} + \frac{t}{|U_i|}\cdot O(\beta n) = \Delta + O_k\!\left(\frac{\beta}{\eta^2}\right) \,,
\end{align*}
where we used that $|U_i|\geq\eta n$ and $t\leq1/\eta+1\leq2/\eta$.

For $j\in A(v)$ we have $\min\{\rho_{vj},1-\rho_{vj}\}\leq 1-\rho_{vj}$, while for $j\notin A(v)$ we have $\rho_{vj}\leq\theta$. Hence splitting indices into $A(v)$ and its complement,
\begin{align*}
\sum_{j=1}^t\min\{\rho_{vj},1-\rho_{vj}\} &\leq \sum_{j\in A(v)}(1-\rho_{vj}) + \sum_{j\in[t]\setminus A(v)}\rho_{vj} \\
&= |A(v)| - \left(\sum_{j=1}^t\rho_{vj}-\sum_{j\in[t]\setminus A(v)}\rho_{vj}\right) + \sum_{j\in[t]\setminus A(v)}\rho_{vj} \\
&\leq \Delta - \sum_{j=1}^t\rho_{vj} + \sum_{j\in[t]\setminus A(v)}2\rho_{vj} \\
&\leq \Delta - \sum_{j=1}^t\rho_{vj} + 2t\theta \leq O_k\!\left(\frac{\beta}{\eta^2}\right) + O\!\left(\frac{\sqrt{\beta}}{\eta}\right) \leq C\,\frac{\sqrt{\beta}}{\eta^2}
\end{align*}
for a constant $C=C(k)$ and all $\beta\in(0,\beta_0(k,\eta))$, completing the proof of \eqref{eq:vertex-mixed-mass-final}.

It remains to deduce \eqref{eqn:dominant-color} from \eqref{eq:vertex-mixed-mass-final}.
Set $\mu:=C\sqrt{\beta}/\eta^2$ and (by decreasing $\beta_0$ and increasing $C$) assume $\beta\leq \mu$ and $\mu\leq1/100$.
From \eqref{eq:vertex-mixed-mass-final} we know that for every $v\in U$ and every $j\in[t]$, $\min\{\rho_{vj},\,1-\rho_{vj}\}\leq \mu$, hence $\rho_{vj}\leq\mu$ or $\rho_{vj}\geq 1-\mu$.
Fix distinct $i,j\in[t]$ and suppose for a contradiction that
$$\max\{d_\red(U_i,U_j),\,d_\green(U_i,U_j)\}<1-10\mu\,.$$
Since $d_\blue(U_i,U_j)\leq\beta\leq\mu$, we have
\begin{align*}
1-10\mu &\geq d_\red(U_i,U_j) = 1 - d_\green(U_i,U_j) - d_\blue(U_i,U_j) \geq 10\mu - d_\blue(U_i,U_j) \geq 9\mu \,.
\end{align*}
If we define the vertex sets
$$H:=\{v\in U_i:\rho_{vj}\geq 1-\mu\}\,,\qquad L:=U_i\setminus H\,,$$
then for $v\in H$ we have $d_\green(v,U_j)+d_\blue(v,U_j)\leq \mu|U_j|$, and for $v\in L$ we have $d_\red(v,U_j)\leq \mu|U_j|$.
Double counting gives
$$(1-\mu)|H||U_j|\leq e_\red(U_i,U_j)\leq |H||U_j|+\mu|L||U_j|\,,$$
thus $\abs{d_\red(U_i,U_j)-|H|/|U_i|}\leq \mu$,
and in particular $|H|/|U_i|\in[8\mu,\,1-9\mu]$. Moreover,
$$e_\green(H,U_j)+e_\blue(H,U_j)\leq \mu|H||U_j|\,,\qquad e_\red(L,U_j)\leq \mu|L||U_j|\,,$$
so there exists $u\in U_j$ with
$$d_\green(u,H)+d_\blue(u,H)\leq 2\mu|H|\,,\qquad d_\red(u,L)\leq 2\mu|L|\,.$$
For this $u$,
$$\rho_{ui}=\frac{d_\red(u,U_i)}{|U_i|}=\frac{d_\red(u,H)+d_\red(u,L)}{|U_i|}\in\left[\frac{|H|}{|U_i|}-2\mu,\ \frac{|H|}{|U_i|}+2\mu\right]\subseteq[6\mu,\,1-7\mu]\,,$$
and hence $\min\{\rho_{ui},\,1-\rho_{ui}\}\geq 6\mu>\mu$, a contradiction. This proves \eqref{eqn:dominant-color}.
\end{proof}

\subsubsection{Extraction of a $\Delta$-regular core}

\begin{lemma}\label{lemma:kth-order-recursion}
For every fixed $k\geq3$ and $\eta\in(0,1)$ and every $\xi\in(0,\eta/20)$, there exist $\delta_0=\delta_0(k,\eta,\xi)\in(0,1)$ and $n_0=n_0(k,\eta,\xi)$ such that the following holds for all $n\geq n_0$ and all $\delta\in(0,\delta_0)$. Let $\varphi\in\cC_k(n)$ and assume
\begin{equation}\label{eq:recursion-assumption-phi-new}
\Phi(\varphi)\geq-\delta n^2
\end{equation}
and
\begin{equation}\label{eq:maxreddeg-assump}
\max_{v\in V(K_n)}d_\red(v)\geq \eta n\,.
\end{equation}
Then there exist a constant $c=c(k,\eta)>0$, an integer $\ell\geq k-1$, disjoint sets $U_0,U_1,\dots,U_\ell\subseteq V(K_n)$, and a $\Delta$-regular graph $R$ on $[\ell]$ such that, writing
$$U_\ast:=\bigcup_{i=0}^{\ell}U_i,\qquad T:=V(K_n)\setminus U_\ast\,,$$
the following hold:
\begin{enumerate}[
  label=\textit{(\roman*)},
  ref=(\textit{\roman*}),
  topsep=5pt
]
\item\label{item:kth-rec-blue-dense} $e_\blue(U_i)\geq(1-\xi)\binom{|U_i|}{2}$ for all $i\in[\ell]$.
\item\label{item:kth-rec-sizes} $|U_i|\geq cn$ and $\big||U_i|-|U_j|\big|\leq \xi n$ for all $i,j\in[\ell]$, $|U_0|\leq \xi n$, and $\abs{\bigcup_{i=1}^{\ell}U_i}\geq (\eta-\xi)n$.
\item\label{item:kth-rec-red-dens} $d_\red(U_i,U_j)\geq1-\xi$ for all $ij\in E(R)$.
\item\label{item:kth-rec-green-dens} $d_\green(U_i,U_j)\geq1-\xi$ for all $ij\notin E(R)$.
\item\label{item:kth-rec-small-bdry} $e_\red(U_\ast,U_\ast^c)+e_\blue(U_\ast,U_\ast^c)\leq \xi n^2$.
\item\label{item:kth-rec-extr-pres} $e_\red(T)-\Delta\,e_\blue(T)\geq-(\delta+\xi)n^2$.
\end{enumerate}
\end{lemma}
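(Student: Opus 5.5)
The plan follows the overview given before \Cref{lemma:D-tails}, using a hierarchy of constants $\delta\ll\beta\ll\xi$ (all depending on $k,\eta$). First apply \Cref{lemma:D-tails} to pass to $X=\{v:D(v)\geq-\delta^{1/4}n\}$. Then $|X^c|\leq10\Delta\delta^{3/4}n$, every $v\in X$ satisfies $|D(v,X)|=O(\delta^{1/4}n)$, and $\Phi(\varphi|_X)\geq-2\delta n^2$. The vertices of $X^c$ will eventually be placed in $T$ (or in $U_0$). Removing them changes red degrees by $o(n)$, so by \eqref{eq:maxreddeg-assump} the maximum red degree inside $X$ is still at least $(\eta-o(1))n$.

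Next pick $x^{(0)}\in X$ with maximum red degree $m\geq(\eta-o(1))n$ in $X$. Apply \Cref{lemma:locally-turan} with $\vartheta=\eta/2$ and $\alpha$ small; when $k=3$ the red neighbourhood is trivially blue-dense, since $\Delta=1$ and $H$ is $K_2$-free. Then \Cref{thm:erdos-simonovits} splits $N_\red(x^{(0)})$ into $\Delta$ nearly balanced clusters of size $m/\Delta+o(n)$. These clusters are almost entirely blue inside and have $o(1)$ blue density between them.

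For a typical vertex $y$ in a cluster $A$ we have $d_\blue(y)\geq(1-o(1))m/\Delta$, and $D(y,X)=o(n)$ forces $d_\red(y)\geq m-o(n)$. Hence $y$ is again almost maximal, and we may iterate from $y$ using the $\alpha$-slack in \Cref{lemma:locally-turan}. In the $\Delta$-partition of $N_\red(y)$, the part that does not overlap the already found clusters yields new blue-dense clusters. After cleaning atypical vertices into $U_0$, each cluster has the form $N_\blue(y)$ up to $o(n)$ for a typical member $y$. Because $\varphi$ forbids a red star with $\Delta+1$ non-blue leaves, two clusters either essentially coincide or are almost disjoint, so the cluster family is well defined. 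We continue a BFS over clusters, adding the clusters of every new red neighbourhood. Each cluster has size at least roughly $m/\Delta\geq\eta n/(2\Delta)$, so the process stops after at most $2\Delta/\eta$ clusters $U_1,\dots,U_\ell$. Vertices lost in the cleaning, of which there are $\xi n$ after choosing $\delta$ small, form $U_0$. This gives \ref{item:kth-rec-blue-dense} and \ref{item:kth-rec-sizes}; the bound $|U_1\cup\dots\cup U_\ell|\geq(\eta-\xi)n$ holds because the first $\Delta$ clusters already cover $N_\red(x^{(0)})$.

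Now apply \Cref{lemma:vertex-level-mixed-mass} to $U_1,\dots,U_\ell$. Its hypotheses (iii) follow from the cleaning step together with $|D(v,X)|=o(n)$, after showing that $D(v,U)\approx D(v,X)$. The lemma gives a dominant color on every pair, which defines $R$ and yields \ref{item:kth-rec-red-dens} and \ref{item:kth-rec-green-dens}. By closure of the BFS, every red neighbourhood of a cluster vertex is covered by clusters up to $o(n)$, so $d_\red(v)\approx d_\red(v,U)\approx\Delta|U_i|$. Each cluster has size about $m/\Delta$, so each vertex of $R$ has degree exactly $\Delta$; this also gives $\ell\geq\Delta+1=k-1$.

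For \ref{item:kth-rec-small-bdry}, cluster vertices have $o(n)$ red and blue degree outside $U_\ast$, as just argued, and $X^c$ is tiny. Finally, \ref{item:kth-rec-extr-pres} comes from additivity of $\Phi$: the core $U_\ast$ contributes $\Phi(\varphi|_{U_\ast})\leq\Delta n/2$ by \Cref{lemma:kthOrderMantel}, and the cross terms are $O(\xi n^2)$, so $\Phi(\varphi|_T)\geq-\delta n^2-O(\xi n^2)$; rescale $\xi$.

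The main obstacle is the closure/consistency step. We must show that the clusters produced from different roots align, that the BFS closes up with every red neighbourhood covered, and that the error accumulated over at most $2\Delta/\eta$ iterations stays $o(n)$ via the hierarchy $\delta\ll\beta\ll\xi$. The first thing to try is to characterise each cluster intrinsically as $N_\blue(y)$ for a typical $y$, and to use the forbidden-star argument from the proof of \Cref{lemma:vertex-level-mixed-mass} to force near-coincidence or near-disjointness.
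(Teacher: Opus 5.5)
Your architecture is the paper's: restrict via \Cref{lemma:D-tails}, root at a vertex of maximum red degree, split its red neighbourhood with \Cref{lemma:locally-turan} and \Cref{thm:erdos-simonovits}, re-root at typical cluster vertices, merge clusters by a coincide-or-disjoint dichotomy, apply \Cref{lemma:vertex-level-mixed-mass}, prove $\Delta$-regularity of $R$ by counting red degrees, and get \ref{item:kth-rec-extr-pres} from additivity of $\Phi$ and \Cref{lemma:kthOrderMantel}. The gap is the step you flag as the main obstacle, and the mechanism you propose for it is not the one that works.

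The forbidden red star, as used in \Cref{lemma:vertex-level-mixed-mass}, only shows that a vertex has non-negligible red density into at most $\Delta$ clusters that are pairwise almost blue-free. It says nothing about how two clusters obtained from different roots intersect. What you need is a \emph{ceiling on blue degrees}. Since $m$ is the maximum red degree in $X$ and $|D(v,X)|$ is small for all $v\in X$, every $v\in X$ has $d_\blue(v,X)=(d_\red(v,X)-D(v,X))/\Delta\leq m/\Delta+o(n)$. You only used the balance of $D$ in the other direction, to get $d_\red(y)\geq m-o(n)$.

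Take a typical $v$ in a cluster $A$; then $|A|\approx m/\Delta$ and $v$ is blue to almost all of $A$, so the ceiling gives $d_\blue(v,X\setminus A)=o(n)$. This is what actually justifies your description of clusters as $N_\blue(y)$ up to $o(n)$. If $v$ is typical in two clusters $W,W'$, it is blue to almost all of $W\cup W'$, and the ceiling forces $|W\triangle W'|=o(n)$. Otherwise the typical parts of $W$ and $W'$ are disjoint. This is the paper's proof of \eqref{eqn:OGP}.

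The closure step also needs the right stopping rule and parameter ladder. The paper iterates from \emph{any} typical vertex $x\in W^{(j)}_i$ with at least $\tau n$ red neighbours outside the current union $\bigcup_{j<q}A^{(j)}$. At termination, every vertex of $U$ then has fewer than $\tau n$ red neighbours outside $A$ by definition. With one root per cluster, this closure is not automatic for the other cluster vertices. The dichotomy then shows that each step creates a class with at least $\tau n/(2\Delta)$ new vertices, disjoint from the earlier classes, which bounds the number of steps by a constant $L$.

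Each new root has a red-degree deficit of order the square root of the previous step's error, and that deficit is the slack $\alpha$ fed into \Cref{lemma:locally-turan}. So the errors grow from step to step, and a single $\beta$ does not suffice. You need a ladder $\beta_0\ll\beta_1\ll\cdots\ll\beta_L\ll\tau\ll\xi$, with $\delta$ chosen below $\beta_0$.

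Finally, \Cref{lemma:vertex-level-mixed-mass} assumes $t\geq k-1$, so $\ell\geq k-1$ must be proved before invoking it, not read off afterwards from $\Delta$-regularity of $R$. To do this, compare $d_\red(v,U)\geq m-O(\tau n)$, which comes from closure, with $d_\red(v,U)\leq d_\red(v,U_r)+(t-1)(m/\Delta+o(n))$.
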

\begin{proof}
Throughout the proof, $C_k>0$ is a large enough constant depending only on $k$ that may change from line to line. If $k\geq4$, let $\sE:(0,1)\to(0,1)$ be the mapping $\epsilon_{\ref{thm:erdos-simonovits}}\mapsto\delta_{\ref{thm:erdos-simonovits}}$ from the statement of \Cref{thm:erdos-simonovits} with $r=\Delta$; if $k=3$, let $\sE(\epsilon)=\epsilon$. Let us define the following parameters:
\begin{align}
&\tau := \frac{1}{C_k}\min\left\{\eta^4\xi^2\,,\ \beta_0^{\ref{lemma:vertex-level-mixed-mass}}\Big(\frac{\eta}{C_k}\Big)\,,\ \eta^6\right\}\,,\label{eq:betaL}\quad L:=\ceil*{\frac{C_k}{\tau}} \,.\\[5pt]
&\beta_L := \frac{\tau^6}{C_k} \,,\quad \beta_q := \frac{1}{C_k}\min\left\{\tau^2\,,\ (\eta\,\beta_{q+1})^2\,,\ \left(\eta\,\sE(\beta_{q+1})\right)^2\right\}\,,\quad q=L-1,\dots,0\,, \label{eq:betaq}
\end{align}
where $\beta_0^{\ref{lemma:vertex-level-mixed-mass}}(\cdot)$ is the threshold from \Cref{lemma:vertex-level-mixed-mass}. We have $0<\beta_0<\cdots<\beta_L<\tau<1$.

Choose $\delta_0=\delta_0(k,\eta,\xi)$ small enough such that
\begin{equation}\label{eq:delta-small}
\delta_0 \leq \min\left\{\frac{\beta_0^{6}}{C_k^2},\,\frac{\eta}{C_k}\right\}\,.
\end{equation}
Henceforth fix $\delta\in(0,\delta_0)$ and assume $\varphi$ is a coloring satisfying the hypotheses of the lemma.

\Cref{lemma:D-tails} (applied with $\delta_{\ref{lemma:D-tails}}\gets\delta$) and \eqref{eq:delta-small} imply there exists a set $X\subseteq V(K_n)$ of at least $(1-C_k\delta^{3/4})n\geq(1-\beta_0^{4})n$ vertices such that for all $v\in X$,
$$\abs{D^\varphi(v,X)} \leq \beta_0n\,.$$
Fix a vertex
$$x^{(0)}\in\argmax_{v\in V(K_n)}d_\red^\varphi(v)$$
and set $\widetilde{X}:=X\cup\{x^{(0)}\}$. In the remainder, we work with the coloring $\psi\in\cC_k(n)$ defined by
$$\psi(e):=\begin{cases}\varphi(e)&e\subseteq \widetilde{X}\\\green&\text{otherwise}\end{cases}\,.$$
After increasing $n_0$ if necessary, for all $v\in X$ we have
\begin{equation}\label{eqn:vpr-deg-control}
\abs{D^\psi(v)} \leq 2\beta_0n\,.
\end{equation}
Since $\psi$ differs from $\varphi$ only on edges incident to $\widetilde{X}^c$, we have
\begin{equation}\label{eq:psi-near-extremal}
\Phi(\psi)\geq\Phi(\varphi)-\bigg(\binom{|\widetilde{X}^c|}{2}+|\widetilde{X}^c|\cdot n\bigg)\geq\Phi(\varphi)-\bigg(\binom{\ceil{\beta_0^{4}n}}{2}+\beta_0^{4}n^2\bigg)\geq-\deltatilde n^2\,,
\end{equation}
where $\deltatilde:=C_k\,\beta_0^{4}$. Henceforth, all uses of $d(\cdot)$ and $N(\cdot)$ are with respect to $\psi$.

\noindent\textbf{First step of the iteration.} Let $A^{(0)}:=N_\red^\psi(x^{(0)})$ and $m:=d^\psi_\red(x^{(0)})$. From \eqref{eq:maxreddeg-assump}, the choice of $x^{(0)}$, and $|\widetilde{X}^c|\leq\beta_0^4n\leq\eta n/2$, we have $m\geq\eta n/2$.

If $k=3$ then $\Delta=1$ and $A^{(0)}$ is a blue clique. Indeed, if $y,z\in A^{(0)}$ and $\psi(yz)\neq\blue$ then $\{x^{(0)},y,z\}$ spans a forbidden pattern centered at $x^{(0)}$. In this case we set $A^{(0)}_1:=A^{(0)}$. If $k\geq4$, by \eqref{eq:psi-near-extremal} the hypotheses of \Cref{lemma:locally-turan} are met with $x_{\ref{lemma:locally-turan}}=x^{(0)}$, $\delta_{\ref{lemma:locally-turan}}=\deltatilde$, $\alpha_{\ref{lemma:locally-turan}}=\beta_0^4$, and $\vartheta_{\ref{lemma:locally-turan}}=\eta/2$. Hence the edge-colored graph $H:=\psi_{\red,\green}[N_\red^\psi(x^{(0)})]$ satisfies
\begin{equation}\label{eqn:near-turan-iter0}
e(H)\geq t_{m,\Delta}-12\left(\frac{\sqrt{\deltatilde}}{\eta/2}+\frac{\beta_0^4}{\eta/2}\right)m^2\geq t_{m,\Delta}-12\beta_0 m^2\geq t_{m,\Delta}-\sE(\beta_1)\,m^2\,,
\end{equation}
where in the second and third inequalities we used \eqref{eq:betaq} and the definition $\deltatilde=C_k\beta_0^4$, specifically $\sqrt{\deltatilde}/\eta\leq \beta_0/4$, $\beta_0^4/\eta\leq\beta_0/4$, and $12\beta_0\leq\sE(\beta_1)$.
In this case, \Cref{thm:erdos-simonovits} applies to $H$, yielding a partition $A^{(0)}=A^{(0)}_1\cup\cdots\cup A^{(0)}_\Delta$. In both cases,
\begin{equation}\label{eq:A0-ES}
\sum_{i=1}^{\Delta}e_{\red,\green}(A^{(0)}_i)\leq \beta_1 m^2\,,\qquad ||A^{(0)}_i|-m/\Delta|\leq \beta_1m \,, \qquad i=1,\dots,\Delta \,.
\end{equation}

Next, let $\lambda:=(1-C_k\sqrt{\beta_1})m/\Delta$. By \eqref{eq:A0-ES}, after increasing $C_k$ and taking $n$ sufficiently large, we have $|A^{(0)}_i|-1-\lambda\geq \sqrt{\beta_1}m/C_k$. Hence Markov's inequality implies
\begin{align}
\begin{aligned}\label{eqn:stab-markov-1}
\left|\left\{v\in A^{(0)}_i:d_\blue(v,A^{(0)}_i)\leq\lambda\right\}\right| &\leq \frac{2\cdot e_{\red,\green}(A^{(0)}_i)}{|A^{(0)}_i|-1-\lambda} \leq C_k\sqrt{\beta_1}m \,.
\end{aligned}
\end{align}
For a vertex $v\in A^{(0)}_i$ satisfying $d_\blue(v,A^{(0)}_i)\geq\big(1-C_k\sqrt{\beta_1}\big)m/\Delta$, we use \eqref{eqn:vpr-deg-control} and the definition of $D(\cdot)$ to calculate that
\begin{align*}
d_\blue^\psi(v,V(K_n)\setminus A^{(0)}_i) &= d_\blue^\psi(v)-d_\blue^\psi(v,A^{(0)}_i) \\
&= \frac{d_\red^\psi(v)-D^\psi(v)}{\Delta}-d_\blue^\psi(v,A^{(0)}_i) \\
&\leq \frac{m+\beta_0^4n}{\Delta} + \frac{2\beta_0n}{\Delta} - \big(1-C_k\sqrt{\beta_1}\big)\frac{m}{\Delta} \leq C_k\sqrt{\beta_1}n \,.
\end{align*}
It follows that the set
$$W^{(0)}_i:=\left\{v\in A^{(0)}_i:d_\blue(v,A^{(0)}_i)\geq\big(1-C_k\sqrt{\beta_1}\big)\,\frac{m}{\Delta}\,,\,d_\blue(v,V(K_n)\setminus A^{(0)}_i)\leq C_k\sqrt{\beta_1}n\right\}$$
satisfies $\abs{W^{(0)}_i}\geq(1-C_k\sqrt{\beta_1})m/\Delta$. Using \eqref{eqn:vpr-deg-control}, we have that for all $v\in W^{(0)}_i$,
\begin{align*}
d_\red^\psi(v) &= \Delta\,d_\blue^\psi(v)+D^\psi(v) \\
&\geq \big(1-C_k\sqrt{\beta_1}\big)m - 2\beta_0n \geq m - C_k\sqrt{\beta_1}n
\end{align*}
and combining this with $\beta_1\leq\tau^2/C_k\leq\eta^2/C_k$ from \eqref{eq:betaq}, we have
\begin{equation}\label{eqn:vartheta-verif}
d_\red^\psi(v) \geq m - C_k\sqrt{\beta_1}n \geq (\eta /2 - C_k\sqrt{\beta_1})n \geq \eta n/4 \,.
\end{equation}
Let $W^{(0)}:=\bigcup_{i=1}^\Delta W^{(0)}_i$.

\smallskip
\noindent\textbf{Inductive step.}
Suppose that for some $1\leq q\leq L-2$ we have chosen vertices $x^{(0)},\dots,x^{(q-1)}$, with $x^{(0)}$ as above and $x^{(j)}\in X$ for $1\leq j<q$, sets $A^{(j)}:=N_\red^\psi(x^{(j)})$, partitions $A^{(j)}=\bigcup_{i=1}^\Delta A_i^{(j)}$, and typical subsets
$W_i^{(j)}\subseteq A_i^{(j)}$ for each $j=0,\dots,q-1$, with the following properties:
\begin{equation}\label{eq:Aj-sizes-uniform}
\sum_{i=1}^{\Delta}e_{\red,\green}(A^{(j)}_i)\leq \beta_{j+1}m^2 \,,\qquad \big||A_i^{(j)}|-m/\Delta\big|\leq C_k\beta_{j+1}m
\end{equation}
for all $i\in[\Delta]$;
\begin{equation}\label{eq:Wj-size-uniform}
|W_i^{(j)}|\geq \big(1-C_k\sqrt{\beta_{j+1}}\big)\,m/\Delta
\end{equation}
for all $i\in[\Delta]$; and every $v\in W_i^{(j)}$ satisfies
\begin{align}
& d^\psi_\blue(v,A_i^{(j)}) \geq \big(1-C_k\sqrt{\beta_{j+1}}\big)\,m/\Delta\,,\label{eq:Wj-good-1}\\
& d^\psi_\blue(v,V(K_n)\setminus A_i^{(j)}) \leq C_k\sqrt{\beta_{j+1}}\,n\,,\label{eq:Wj-good-2}\\
& d^\psi_\red(v) \geq m-C_k\sqrt{\beta_{j+1}}\,n\,.\label{eq:Wj-good-3}
\end{align}
For $j=0$ these are exactly the properties proved above (after enlarging the implicit constants $C_k$ once). Let $W^{(j)}:=W^{(j)}_1\cup\cdots\cup W^{(j)}_\Delta$ for all $j\in[q-1]$.

If there exists a vertex $x^{(q)}\in W^{(j)}_i$ for some $j\in[q-1]$ and $i\in[\Delta]$ such that
\begin{equation}\label{eq:choose-next-seed}
d_\red^\psi\Bigg(x^{(q)},V(K_n)\setminus\bigcup_{j=0}^{q-1}A^{(j)}\Bigg)\geq \tau n \,,
\end{equation}
then we continue the iteration as follows. Since $A^{(q)}=N_\red^\psi(x^{(q)})$, the set $A^{(q)}\setminus\bigcup_{j<q}A^{(j)}$ has size at least $\tau n$. Hence each successful step contributes at least $\tau n$ new vertices to $\bigcup_{j<q}A^{(j)}$, so the number of iterations is at most $1/\tau+1\leq L-1$, after increasing $C_k$.

By \eqref{eq:Wj-good-3} and \eqref{eq:betaq}, $d_\red^\psi(x^{(q)})\geq \eta n/4$ for all large $n$. Let $A^{(q)}:=N_\red^\psi(x^{(q)})$. If $k=3$ then $\Delta=1$ and $A^{(q)}$ is a blue clique, so we set $A^{(q)}_1:=A^{(q)}$. If $k\geq4$, apply \Cref{lemma:locally-turan} to $x^{(q)}$ with
$\delta_{\ref{lemma:locally-turan}}=\deltatilde$,
$\vartheta_{\ref{lemma:locally-turan}}=\eta/4$, and $\alpha_{\ref{lemma:locally-turan}}=C_k\sqrt{\beta_{q+1}}$. By \eqref{eq:betaq} and \eqref{eq:delta-small}, the error term in
\Cref{lemma:locally-turan} is at most $\sE(\beta_{q+2})$. Hence, writing $H:=\psi_{\red,\green}[A^{(q)}]$, \Cref{thm:erdos-simonovits} applies to $H$ and yields
a partition
$A^{(q)}=A_1^{(q)}\cup\cdots\cup A_\Delta^{(q)}$.

In both cases, the partition satisfies \eqref{eq:Aj-sizes-uniform} with $j=q$. Indeed, in the case $k=3$ this follows from $e_{\red,\green}(A^{(q)}_1)=0$, \eqref{eq:Wj-good-3}, and the bound $d_\red^\psi(x^{(q)})\leq m+\beta_0^4n$; in the case $k\geq4$ it follows from the preceding paragraph and the same estimates. Here \eqref{eq:Wj-good-3} gives $m-d_\red^\psi(x^{(q)})\leq C_k\sqrt{\beta_q}n$, while $d_\red^\psi(x^{(q)})-m\leq \beta_0^4n$, and both errors are at most $C_k\beta_{q+1}m$ by \eqref{eq:betaq}. Repeating the Markov-type argument from the first step, and discarding the possible vertex in $\widetilde{X}\setminus X$, then yields sets $W_i^{(q)}\subseteq A_i^{(q)}\cap X$ satisfying \eqref{eq:Wj-size-uniform}--\eqref{eq:Wj-good-3} with $j=q$. This completes the inductive step.

\smallskip
\noindent\textbf{Overlap dichotomy.}
We claim that for all $p,q\geq0$ and $r,s\in[\Delta]$,
\begin{equation}\label{eqn:OGP}
\abs{W^{(p)}_r\,\triangle\, W^{(q)}_s}\leq C_k\sqrt{\beta_L}n\qquad\text{or}\qquad\abs{W^{(p)}_r\cap W^{(q)}_s}=0
\end{equation}
where $C_k>0$ is large enough compared with the constants in \eqref{eq:Wj-size-uniform}--\eqref{eq:Wj-good-2}. To see this, assume $\abs{W^{(p)}_r\,\triangle\, W^{(q)}_s}\geq C_k\sqrt{\beta_L}n$, assume there exists $v\in W^{(p)}_r\cap W^{(q)}_s$, and let $C'_k$ denote the maximum of the constants in \eqref{eq:Wj-size-uniform}--\eqref{eq:Wj-good-2}. Let us assume $C_k\geq16 C'_k$ in \eqref{eqn:OGP}. We have
\begin{align*}
\begin{array}{l @{\hskip 8mm} r @{\hskip 1mm} l}
&W^{(q)}_s\setminus W^{(p)}_r &\subseteq (W^{(q)}_s\setminus A^{(p)}_r)\cup(A^{(p)}_r\setminus W^{(p)}_r) \\[5pt]
\Longrightarrow & \abs{W^{(q)}_s\setminus A^{(p)}_r} & \geq \abs{W^{(q)}_s\setminus W^{(p)}_r} - \abs{A^{(p)}_r\setminus W^{(p)}_r} \geq \dfrac{C_k\sqrt{\beta_L}n}{2} - 2C'_k\sqrt{\beta_L}n \\[7pt]
\Longrightarrow & d_\blue(v,W^{(q)}_s\setminus A^{(p)}_r) &\geq \abs{W^{(q)}_s\setminus A^{(p)}_r} - d_{\red,\green}(v,A^{(p)}_r) \geq \dfrac{C_k\sqrt{\beta_L}n}{2} - 4C'_k\sqrt{\beta_L}n \,,
\end{array}
\end{align*}
where in the last inequality we used that $d_{\red,\green}(v,A^{(p)}_r)\leq2C'_k\sqrt{\beta_L}n$, which is immediate from \eqref{eq:Aj-sizes-uniform} and \eqref{eq:Wj-good-1}.
It follows that
\begin{align*}
d_\blue^\psi(v) &\geq d_\blue(v,W^{(p)}_r) + d_\blue(v,W^{(q)}_s\setminus A^{(p)}_r) \\
&\geq (1-2C'_k\sqrt{\beta_L})\,\frac{m}{\Delta} + \left(\dfrac{C_k\sqrt{\beta_L}n}{2} - 4C'_k\sqrt{\beta_L}n\right) \\
&\geq \frac{m}{\Delta} + \left(\frac{C_k}{2}-6C'_k\right)\sqrt{\beta_L}n \geq \frac{m}{\Delta} + 2C'_k\sqrt{\beta_L}n \,,
\end{align*}
but this contradicts $d_\blue(v)\leq m/\Delta+C'_k\sqrt{\beta_L}n$ (which is immediate from \eqref{eq:Aj-sizes-uniform} and \eqref{eq:Wj-good-2}). Thus we have proven the dichotomy \eqref{eqn:OGP}. Henceforth we drop the distinction between $C_k$ and $C'_k$.

\smallskip
\noindent\textbf{Applying \Cref{lemma:vertex-level-mixed-mass}.}
Let $J\leq L-1$ be the number of steps performed in the above induction. In light of \eqref{eqn:OGP}, the relation $W^{(p)}_r\sim W^{(q)}_s$ defined by the condition $\abs{W^{(p)}_r\,\triangle\, W^{(q)}_s}\leq C_k\sqrt{\beta_L}n$ is an equivalence relation. Let
$$\{W_1,\dots,W_t\}:=\{W_i^{(q)}:0\leq q\leq J-1,\ i\in[\Delta]\}/\sim$$
be the set of equivalence classes. For each $r\in[t]$, let
$$U_r:=\bigcap_{W\in W_r}W\,,\qquad A:=\bigcup_{j=0}^{J-1}A^{(j)}\,,\qquad U:=\bigcup_{r=1}^tU_r\,.$$

Every successful step $q\in\{1,\dots,J-1\}$ creates at least one new equivalence class. Indeed, since $A^{(q)}\setminus\bigcup_{j<q}A^{(j)}$ has size at least $\tau n$, some index $i\in[\Delta]$ satisfies
$$\abs{A_i^{(q)}\setminus\bigcup_{j<q}A^{(j)}}\geq \frac{\tau n}{\Delta}\,.$$
Using \eqref{eq:Wj-size-uniform} and \eqref{eq:Aj-sizes-uniform}, this implies for all sufficiently large $n$
$$\abs{W_i^{(q)}\setminus\bigcup_{j<q}A^{(j)}}\geq \frac{\tau n}{2\Delta}>C_k\sqrt{\beta_L}n\,,$$
so $W_i^{(q)}$ is not equivalent to any set discovered earlier. Also, two distinct sets arising in the same step are disjoint and have size $\Omega_k(n)$ by \eqref{eq:Wj-size-uniform}, so they cannot be equivalent. Thus each class contains at most one representative from each step, and hence at most $L$ representatives in total.

We claim \Cref{lemma:vertex-level-mixed-mass} applies to $U=U_1\cup\cdots\cup U_t$ with $\eta_{\ref{lemma:vertex-level-mixed-mass}}=\eta/C_k$ and $\beta_{\ref{lemma:vertex-level-mixed-mass}}=C_k\tau$. We verify the hypotheses of \Cref{lemma:vertex-level-mixed-mass} one at a time.

First, disjointness of the sets $U_1,\dots,U_t$ is automatic from \eqref{eqn:OGP}. Since each class contains at most $L$ representatives and each pairwise symmetric difference inside a class has size at most $C_k\sqrt{\beta_L}n$, a union bound gives
\begin{equation}\label{eq:Ur-size}
\abs{U_i}\geq(1-C_k\sqrt{\beta_L})\frac{m}{\Delta}-\frac{L}{2}\cdot C_k\sqrt{\beta_L}n=\frac{m}{\Delta}-C_k\,L\sqrt{\beta_L}\,n\,.
\end{equation}
Moreover, if $W_i^{(p)}\in W_r$ then
\begin{equation}\label{eqn:Ajiur-ub}
\abs{A_i^{(p)}\setminus U_r}\leq \abs{A_i^{(p)}\setminus W_i^{(p)}}+\abs{W_i^{(p)}\setminus U_r}\leq C_kL\sqrt{\beta_L}n \,.
\end{equation}
Summing over the at most $\Delta L$ sets $A_i^{(p)}$ gives
\begin{equation}\label{eqn:AsmU-ub}
|A\setminus U| \leq \Delta L\cdot C_kL\sqrt{\beta_L}n\leq C_kL^2\sqrt{\beta_L}n \leq \frac{\tau}{C_k}\,n \,.
\end{equation}

We next show that $t\geq \Delta+1=k-1$. Fix $r\in[t]$ and $v\in U_r$, and choose $(i,p)$ such that $W_i^{(p)}\in W_r$. Since the iteration has terminated,
\[
d_\red^\psi(v,V(K_n)\setminus A)<\tau n \,.
\]
Using \eqref{eq:Wj-good-3} and \eqref{eqn:AsmU-ub}, we obtain
\begin{equation}\label{eqn:dredU-lb}
d_\red(v,U)\geq d_\red(v)-\tau n-\abs{A\setminus U}\geq m-C_k\sqrt{\beta_L}n-\tau n-\frac{\tau}{C_k}n\geq m-C_k\tau n \,.
\end{equation}
On the other hand, \eqref{eq:Wj-good-1} and \eqref{eqn:Ajiur-ub} imply
\[
d_\blue(v,U_r)\geq d_\blue(v,A_i^{(p)})-\abs{A_i^{(p)}\setminus U_r}\geq |U_r|-C_kL\sqrt{\beta_L}n \,,
\]
and hence
\[
d_\red(v,U_r)\leq |U_r|-d_\blue(v,U_r)\leq C_kL\sqrt{\beta_L}n \,.
\]
Therefore, using \eqref{eq:Ur-size},
\begin{equation}\label{eqn:dredU-ub}
d_\red(v,U)\leq d_\red(v,U_r)+\sum_{s\neq r}|U_s|\leq C_kL\sqrt{\beta_L}n+(t-1)\left(\frac{m}{\Delta}+C_k\beta_Ln\right) \,.
\end{equation}
Comparing \eqref{eqn:dredU-lb} and \eqref{eqn:dredU-ub}, and using $m\geq\eta n/2$, we conclude that $t\geq\Delta+1$ for all sufficiently large $n$.

In particular, for all large $n$,
\begin{equation}\label{eq:Ur-size-lower}
|U_r|\geq \eta n/C_k\qquad\text{and}\qquad \big||U_r|-|U_s|\big|\leq \tau n/C_k\,,
\end{equation}
$r,s\in[t]$, where the second estimate uses $\abs{U_r}\leq m/\Delta+C_k\beta_Ln$, \eqref{eq:Ur-size}, and $\beta_L\leq \tau^4/C_k$, completing the verification of \Cref{item:vertex-level-mixed-mass-1}. \Cref{item:vertex-level-mixed-mass-2} follows from \eqref{eq:Wj-size-uniform}--\eqref{eq:Wj-good-2}: for all distinct $r,s\in[t]$ we have
$$d_\blue(U_r,U_s)=\frac{e_\blue(U_r,U_s)}{|U_r|\cdot|U_s|}\leq\frac{m\cdot C_k\sqrt{\beta_L}n}{(m/\Delta-C_kL\sqrt{\beta_L}n)^2}\leq\frac{C_k\sqrt{\beta_L}n^2}{m^2/4\Delta^2}\leq 4\Delta^2C_k\,\frac{\sqrt{\beta_L}}{\eta^2}\leq\frac{\tau}{C_k}\,,$$
where we used \eqref{eq:betaL}. It remains to verify \Cref{item:vertex-level-mixed-mass-3}. Fix $v\in U_r$ and let $(i,p)$ be indices such that $U_r\subseteq W^{(p)}_i$. We calculate that
\begin{align}
d_\blue(v,U_r) &= d_\blue(v,A^{(p)}_i) - d_\blue(v,A^{(p)}_i\setminus U_r) \nonumber\\[3pt]
&\geq d_\blue(v,A^{(p)}_i) - \abs{A^{(p)}_i\setminus W^{(p)}_i} - \abs{W^{(p)}_i\setminus U_r} \nonumber\\[3pt]
&\geq (1-C_k\sqrt{\beta_L})\,\frac{m}{\Delta} - C_k\sqrt{\beta_L}m - (|W^{(p)}_i| - |U_r|) \label{eqn:dbvur-lb}\\
&\geq (1-C_k\sqrt{\beta_L})(|A^{(p)}_i|-C_k\beta_Lm) - C_k\sqrt{\beta_L}m \nonumber\\[5pt]
&\hspace{3cm}- \left(\left(\frac{m}{\Delta}+C_k\beta_Lm\right) - \left(\frac{m}{\Delta} - C_kL\sqrt{\beta_L}n\right)\right) \nonumber\\
&\geq (1-C_k\sqrt{\beta_L})|U_r| - C_kL\sqrt{\beta_L}n \nonumber\\
&\geq \left(1-\frac{C_kL\sqrt{\beta_L}}{\eta}\right)|U_r| \geq \left(1-\frac{\tau}{C_k}\right)|U_r| \,, \nonumber
\end{align}
where the third line used \eqref{eq:Aj-sizes-uniform}--\eqref{eq:Wj-good-1}, the fourth used \eqref{eq:Aj-sizes-uniform} and \eqref{eq:Ur-size}, and the last line used \eqref{eq:Ur-size-lower}. Next, using the inclusion
$$U\setminus U_r\subseteq (U\setminus A^{(p)}_i)\cup(A^{(p)}_i\setminus U_r)\subseteq(V(K_n)\setminus A^{(p)}_i)\cup(A^{(p)}_i\setminus U_r)\,,$$
we calculate that
\begin{equation}
\begin{aligned}\label{eqn:dbUUr-lb}
d_\blue(v,U\setminus U_r) &\leq d_\blue(v,V(K_n)\setminus A^{(p)}_i)+\abs{A^{(p)}_i\setminus U_r} \\
&\leq C_k\sqrt{\beta_L}n + C_kL\sqrt{\beta_L}n \leq \frac{C_k\sqrt{\beta_L}}{\tau}\,n \leq \frac{\tau^2}{C_k}\,n\,,
\end{aligned}
\end{equation}
where we used \eqref{eq:Wj-good-2} and the same bound on $\abs{A^{(p)}_i\setminus U_r}$ as in the previous display.

To verify that $\abs{D(v,U)}\leq C_k\tau n$, we need two-sided bounds on $d_\red(v,U)$ and $d_\blue(v,U)$.
From the condition \eqref{eq:choose-next-seed} defining the iteration, we have $d_\red(v,A)\geq d_\red(v)-\tau n$, hence
\begin{align*}
d_\red(v,U) &\leq d_\red(v)\leq m+\beta_0^4n\leq m+C_k\tau n\,,\\[5pt]
d_\red(v,U) &\geq d_\red(v,A) - \abs{A\setminus U} \\
&\geq d_\red(v) - \tau n - \Delta L\cdot\max_{r\in[t]}\max_{(i,j):W^{(j)}_i\in W_r}\abs{A^{(j)}_i\setminus U_r} \\
&\geq m - C_k\sqrt{\beta_L}n - \tau n - C_kL^2\sqrt{\beta_L}n \\
&\geq m - C_k\tau n \,,
\end{align*}
where in the first line we used the choice of $x^{(0)}$, $|\widetilde{X}^c|\leq\beta_0^4n$, and $\beta_0^4\leq C_k\tau$; in the third line we used that there are at most $\Delta L$ distinct sets $A^{(j)}_i$; and in the fourth line we used \eqref{eq:Wj-good-3} and the estimate $\abs{A^{(j)}_i\setminus U_r}\leq C_kL\sqrt{\beta_L}n$. We also have
\begin{align*}
\frac{m}{\Delta} - C_kL\sqrt{\beta_L}n &\leq d_\blue(v,U_r) \\
&\leq d_\blue(v,U) \\
&\leq |U_r| + d_\blue(v,U\setminus U_r) \\
&\leq \frac{m}{\Delta}+C_k\tau^2 n\,,
\end{align*}
where the first inequality follows from \eqref{eqn:dbvur-lb}, and the fourth inequality used \eqref{eqn:dbUUr-lb}. Combining the two-sided bounds on $d_\red(v,U)$ and $d_\blue(v,U)$, we have
\begin{align*}
D(v,U)&\geq \left(m-C_k\tau n\right)-\Delta\left(\frac{m}{\Delta}+C_k\tau^2n\right)\geq -C_k\tau n\,, \\[10pt]
D(v,U)&\leq m+C_k\tau n-\Delta\left(\frac{m}{\Delta}-C_kL\sqrt{\beta_L}n\right)\leq C_k\tau n\,,
\end{align*}
completing the verification of \Cref{item:vertex-level-mixed-mass-3}.

Define the parameter
$$\mu:=C_k\,\frac{\sqrt{\tau}}{\eta^2}\,.$$
By \eqref{eq:betaL}, we have $\tau\leq \eta^4\xi^2/C_k^3$, so after enlarging $C_k$ if needed, we may assume $\mu\leq \xi/20$ and $\mu\leq 1/4$. It follows from \Cref{lemma:vertex-level-mixed-mass}, with $\beta=C_k\tau$, that for all $1\leq i<j\leq t$,
\begin{equation}\label{eq:dominant-color-UiUj}
\max\{d_\red(U_i,U_j),\,d_\green(U_i,U_j)\} \geq 1-\mu \,.
\end{equation}

\smallskip
\noindent\textbf{Reduced graph and $\Delta$-regularity.}
Let $R$ be the graph on $[t]$ such that $rs\in E(R)$ if and only if $d_\red(U_r,U_s)\geq 1/2$. Since $d_\blue(U_r,U_s)\leq\tau/C_k\leq \mu$ for all distinct $r,s$, \eqref{eq:dominant-color-UiUj} implies that
\begin{equation}\label{eq:red-green-dichotomy-UiUj}
rs\in E(R)\Longrightarrow d_\red(U_r,U_s)\geq 1-\mu\,,\qquad rs\notin E(R)\Longrightarrow d_\red(U_r,U_s)\leq \mu\ \text{ and }\ d_\green(U_r,U_s)\geq 1-\mu \,.
\end{equation}
To prove $R$ is $\Delta$-regular, fix $r\in[t]$. Summing the identity $D(v,U)=d_\red(v,U)-\Delta d_\blue(v,U)$ over $v\in U_r$, and using $|D(v,U)|\leq C_k\tau n$, we obtain
\begin{align}
\sum_{s\in[t]\setminus\{r\}} e_\red(U_r,U_s) &= \sum_{v\in U_r}D(v,U) - \sum_{v\in U_r}d_\red(v,U_r) + \sum_{v\in U_r}\Delta\,d_\blue(v,U) \nonumber\\
&= O_k(\tau n^2) + |U_r|\cdot O_k(\sqrt{\beta_L}n) + |U_r|\cdot\Delta\left(\frac{m}{\Delta}+O_k(\tau^2n)\right) \nonumber\\
&= \frac{m^2}{\Delta} + O_k(\tau n^2) \,. \label{eqn:erurus-rhs}
\end{align}
On the other hand, by definition of $R$,
\begin{align*}
\sum_{s\neq r} e_\red(U_r,U_s) &= \sum_{s\in N_R(r)} (1+O_k(\mu))|U_r||U_s| + \sum_{s\in V(R)\setminus(N_R(r)\cup\{r\})} O_k(\mu)|U_r||U_s| \\
	&= d_R(r)\cdot\frac{m^2}{\Delta^2}+O_{k}(\mu n^2)
\end{align*}
where the second equality used \eqref{eq:Ur-size-lower}. Comparing with \eqref{eqn:erurus-rhs} and using $|U_r|\geq \eta n/C_k$, we obtain $d_R(r)=\Delta$ for all large enough $n$, so $R$ is $\Delta$-regular.

\smallskip
\noindent\textbf{Defining the output sets and verifying (i)--(vi).}
Let $\ell:=t$ and define
$$U_0:=(\widetilde{X}^c)\cup(A\setminus U)\,,\qquad U_\ast:=\bigcup_{i=0}^{\ell}U_i\,,\qquad T:=V(K_n)\setminus U_\ast\,.$$
Note that $U\subseteq A\subseteq \widetilde{X}$, hence $T\subseteq \widetilde{X}$.

All density statements below concerning pairs of sets contained in $\widetilde{X}$ are with respect to $\psi$ (as in the preceding proof) and thus hold for $\varphi$ as well, since $\psi$ agrees with $\varphi$ on $\widetilde{X}$.

\smallskip
\noindent\emph{Proof of \ref{item:kth-rec-blue-dense}.}
Fix $i\in[\ell]$ and $v\in U_i$. The verification of \Cref{item:vertex-level-mixed-mass-3} above (specifically the lower bound on $d_\blue(v,U_r)$) yields
$$d_\blue(v,U_i)\geq \left(1-\frac{\tau}{C_k}\right)|U_i|\geq (1-\xi)|U_i|\,,$$
so double counting gives $e_\blue(U_i)\geq (1-\xi)\binom{|U_i|}{2}$, proving \ref{item:kth-rec-blue-dense}.

\smallskip
\noindent\emph{Proof of \ref{item:kth-rec-sizes}.}
Since $|\widetilde{X}^c|\leq \beta_0^{4}n$ and $\abs{A\setminus U}\leq \tau n/C_k$ by \eqref{eqn:AsmU-ub}, we have $|U_0|\leq \xi n$.

Next, \eqref{eq:Ur-size-lower} gives $|U_i|\geq cn$, with $c=\eta/C_k$, and $\big||U_i|-|U_j|\big|\leq \tau n/C_k\leq \xi n$ for all $i,j\in[\ell]$.
Finally, since $W^{(0)}\subseteq A^{(0)}$ and $|W^{(0)}|\geq (1-C_k\sqrt{\beta_1})m$ by \eqref{eq:Wj-size-uniform}, while $W^{(0)}\subseteq A\subseteq U\cup U_0$, we have
$$|U|\geq |W^{(0)}|-|U_0|\geq (1-C_k\sqrt{\beta_1})m-\xi n\,.$$
Moreover $m=d_\red^\psi(x^{(0)})\geq \max_{v\in V(K_n)}d_\red^\varphi(v)-|\widetilde{X}^c|\geq \eta n-\xi n$ for all large $n$, so $|U|\geq (\eta-\xi)n$. This completes the proof of \ref{item:kth-rec-sizes}.

\smallskip
\noindent\emph{Proof of \ref{item:kth-rec-red-dens} and \ref{item:kth-rec-green-dens}.}
By \eqref{eq:red-green-dichotomy-UiUj} and by definition of $\mu$, if $ij\in E(R)$ then
\[
d_\red(U_i,U_j)\geq 1-\mu\geq 1-\xi \,,
\]
proving \ref{item:kth-rec-red-dens}. Likewise, if $ij\notin E(R)$ then
\[
d_\green(U_i,U_j)\geq 1-\mu\geq 1-\xi \,,
\]
which proves \ref{item:kth-rec-green-dens}.

\smallskip
\noindent\emph{Proof of \ref{item:kth-rec-small-bdry}.}
Since $U\subseteq A$ and $T=V(K_n)\setminus U_\ast=V(K_n)\setminus((\widetilde{X}^c)\cup A)$, every vertex $v\in U$ satisfies
\[
d_\red^\psi(v,T)\leq d_\red^\psi(v,V(K_n)\setminus A)<\tau n \,.
\]
Also, if $v\in U_r\subseteq W_i^{(p)}$ then $T\subseteq V(K_n)\setminus A_i^{(p)}$, so \eqref{eq:Wj-good-2} gives
\[
d_\blue^\psi(v,T)\leq d_\blue^\psi(v,V(K_n)\setminus A_i^{(p)})\leq C_k\sqrt{\beta_L}n \,.
\]
Summing these bounds over $v\in U$ yields
\[
e_\red^\psi(U,T)+e_\blue^\psi(U,T)\leq C_k(\tau+\sqrt{\beta_L})n^2 \leq \frac{\xi n^2}{2}
\]
by the choice of $\tau$. Since $\abs{U_0}\leq \beta_0^4n+\tau n/C_k\leq \xi n/2$ after decreasing $\delta_0$ if needed, we also have
\[
e_\red^\varphi(U_0,T)+e_\blue^\varphi(U_0,T)\leq \abs{U_0}n\leq \frac{\xi n^2}{2} \,.
\]
Combining the two bounds and using that $U_\ast=U_0\cup U$ yields
\[
e_\red(U_\ast,U_\ast^c)+e_\blue(U_\ast,U_\ast^c)=e_\red(U_\ast,T)+e_\blue(U_\ast,T)\leq \xi n^2 \,,
\]
which proves \ref{item:kth-rec-small-bdry}.

\smallskip
\noindent\emph{Proof of \ref{item:kth-rec-extr-pres}.}
Since $T\subseteq \widetilde{X}$, the induced colorings $\psi|_T$ and $\varphi|_T$ coincide, and hence
$$e_\red(T)-\Delta e_\blue(T)=\Phi(\varphi|_T)=\Phi(\psi|_T)\,.$$
Decompose $\Phi(\psi)$ over $U_\ast$, $T$, and the cut:
$$\Phi(\psi)=\Phi(\psi|_{U_\ast})+\Phi(\psi|_T)+\big(e_\red^\psi(U_\ast,T)-\Delta e_\blue^\psi(U_\ast,T)\big)\,.$$
Using \eqref{eq:recursion-assumption-phi-new} and that $\Phi(\psi)\geq \Phi(\varphi)-O(|\widetilde{X}^c|n)\geq -\delta n^2-\xi n^2$ (by $|\widetilde{X}^c|\leq \xi n$), together with the bound $e_\red(U_\ast,T)\leq \xi n^2$ from \ref{item:kth-rec-small-bdry} and the trivial inequality $-\Delta e_\blue(U_\ast,T)\leq 0$, we obtain
$$\Phi(\psi|_T)\geq -(\delta+2\xi)n^2-\Phi(\psi|_{U_\ast})\,.$$
Finally, since $\psi|_{U_\ast}\in\cC_k(|U_\ast|)$, \Cref{lemma:kthOrderMantel} gives $\Phi(\psi|_{U_\ast})\leq \Delta |U_\ast|/2\leq \Delta n/2\leq \xi n^2$ for all sufficiently large $n$.
Therefore $\Phi(\psi|_T)\geq -(\delta+3\xi)n^2$, and running the argument with $\xi/3$ in place of $\xi$ in the parameter hierarchy gives
$$e_\red(T)-\Delta e_\blue(T)\geq -(\delta+\xi)n^2\,,$$
which is \ref{item:kth-rec-extr-pres}.
\end{proof}

\begin{proof}[Proof of \Cref{thm:kth-order-stability}]
First, if $e_\red(\varphi)\leq\epsilon n^2/4$ and we take $\delta\leq\epsilon/4$ then by \eqref{eqn:near-extremality},
$$e_\blue(\varphi)=\frac{e_\red(\varphi)-\Phi(\varphi)}{\Delta}\leq\frac{\epsilon n^2/4+\delta n^2}{\Delta}\leq\frac{\epsilon n^2}{2}\,,$$
which implies $d(\varphi,\cE_k(n))\leq\epsilon n^2$, since it requires at most $\epsilon n^2$ edits to make all edges of $\varphi$ green. Thus in the remainder we may assume $e_\red(\varphi)\geq\epsilon n^2/4$.

Fix $\eta:=\epsilon/100$, let $r=r(\epsilon)\in\N$ be minimal such that $(1-\eta/2)^r\leq \epsilon/20$, and set $A:=(20/\epsilon)^2$. Let $C=C(k,\epsilon)>0$ be a sufficiently large constant for the editing estimates below. We choose positive constants $\Gamma_r,\Gamma_{r-1},\dots,\Gamma_0$ and $\xi_{r-1},\dots,\xi_0$ backwards as follows. First define $\Gamma_r:=\eta$. For $s=r-1,\dots,0$, having chosen $\Gamma_{s+1}$, choose $\xi_s\in(0,\eta/20)$ sufficiently small such that $A\xi_s\leq\Gamma_{s+1}/2$ and $\xi_s\leq \epsilon\eta/(8C)$. Let $\delta_s^\ast:=\delta_0^{\ref{lemma:kth-order-recursion}}(k,\eta,\xi_s)$ and $n_s^\ast:=n_0^{\ref{lemma:kth-order-recursion}}(k,\eta,\xi_s)$ be the thresholds from \Cref{lemma:kth-order-recursion}, and define
$$\Gamma_s:=\min\left\{\delta_s^\ast/2,\,\Gamma_{s+1}/(2A),\,\eta\right\}\,.$$
Finally choose $\delta>0$ small enough that $\delta\leq\min\{\Gamma_0,\epsilon/4\}$, and take $n$ large enough that $n\geq (20/\epsilon)\max_{0\leq s<r}n_s^\ast$.

We now define an iterative decomposition of $V(K_n)$.
Set $W_0:=V(K_n)$, $n_0:=n$, and $\delta_0:=\delta$, and for each $s\geq0$ let $\varphi^{(s)}:=\varphi|_{W_s}$.
Note that $\varphi^{(s)}\in\cC_k(n_s)$ for every $s$.
We maintain the inductive hypothesis
\begin{equation}\label{eq:iter-phi-lb-fixed}
\Phi(\varphi^{(s)})\geq -\delta_s n_s^2\,.
\end{equation}
We also maintain that the coloring already constructed on $V(K_n)\setminus W_s$ is a member of $\cE_k(n-n_s)$, every edge between $W_s$ and $V(K_n)\setminus W_s$ is green, and $\delta_s\leq\Gamma_s$.

\smallskip
\noindent\textbf{Stopping conditions.}
Fix $s\geq0$ and assume \eqref{eq:iter-phi-lb-fixed}.
If $n_s\leq \epsilon n/20$ then we recolor every non-green edge inside $W_s$ green. Since all edges between $W_s$ and its complement are already green, this uses at most $n_s^2\leq \epsilon n^2/20$ edits, and the process terminates.

Assume now $n_s\geq\epsilon n/20$. If $\max_{v\in W_s}d_\red^{\varphi^{(s)}}(v)<\eta n_s$ then $e_\red(\varphi^{(s)})\leq \eta n_s^2/2$ and hence by \eqref{eq:iter-phi-lb-fixed},
$$e_\blue(\varphi^{(s)})=\frac{e_\red(\varphi^{(s)})-\Phi(\varphi^{(s)})}{\Delta}\leq \frac{\eta n_s^2/2+\delta_s n_s^2}{\Delta}\,.$$
Thus the number of non-green edges inside $W_s$ is at most
$$e_\red(\varphi^{(s)})+e_\blue(\varphi^{(s)})\leq(\eta+\delta_s)n_s^2\leq2\eta n_s^2\,,$$
where we used $\delta_s\leq\Gamma_s\leq\eta$. Therefore recoloring every non-green edge inside $W_s$ green uses at most $2\eta n_s^2\leq 2\eta n^2$ edits, and the process terminates.

\smallskip
\noindent\textbf{Applying \Cref{lemma:kth-order-recursion}.}
Finally assume $n_s\geq\epsilon n/20$ and
$$\max_{v\in W_s}d_\red^{\varphi^{(s)}}(v)\geq \eta n_s\,.$$
Since $n_s\leq(1-\eta/2)^s n$ at every step at which the process continues, the present assumption implies $s<r$. Also $n_s\geq\epsilon n/20\geq n_s^\ast$ and $\delta_s\leq\Gamma_s\leq\delta_s^\ast/2$. Apply \Cref{lemma:kth-order-recursion} to $\varphi^{(s)}$ on $W_s$ with parameters $(\delta_{\ref{lemma:kth-order-recursion}},\eta_{\ref{lemma:kth-order-recursion}},\xi_{\ref{lemma:kth-order-recursion}})=(\delta_s,\eta,\xi_s)$.
We obtain an integer $\ell_s\geq k-1$, disjoint sets $V^{(s)}_0,V^{(s)}_1,\dots,V^{(s)}_{\ell_s}\subseteq W_s$, and a $\Delta$-regular graph $R_s$ on $[\ell_s]$ satisfying \ref{item:kth-rec-blue-dense}--\ref{item:kth-rec-extr-pres} of \Cref{lemma:kth-order-recursion}.
Write
$$S_s:=\bigcup_{i=0}^{\ell_s}V^{(s)}_i\,,\qquad C_s:=\bigcup_{i=1}^{\ell_s}V^{(s)}_i\,,\qquad W_{s+1}:=W_s\setminus S_s\,,\qquad n_{s+1}:=|W_{s+1}|\,.$$
By \ref{item:kth-rec-sizes} and $\xi_s\leq \eta/20$ we have $|C_s|\geq (\eta-\xi_s)n_s\geq \eta n_s/2$, hence $n_{s+1}\leq (1-\eta/2)n_s$. In particular $n_s\leq (1-\eta/2)^s n$, so whenever the process continues we must have $s<r$.

\smallskip
\noindent\textbf{Updating the near-extremality parameter.}
By \ref{item:kth-rec-extr-pres} we have $\Phi(\varphi^{(s+1)})\geq-(\delta_s+\xi_s)n_s^2$.
If $n_{s+1}>\epsilon n/20$ then $n_s/n_{s+1}\leq 20/\epsilon$, so \eqref{eq:iter-phi-lb-fixed} holds with $s\gets s+1$ after defining
$$\delta_{s+1}:=(\delta_s+\xi_s)\left(\frac{20}{\epsilon}\right)^2\,.$$
Moreover $\delta_{s+1}\leq A\Gamma_s+A\xi_s\leq\Gamma_{s+1}$, so the parameter induction is preserved. If $n_{s+1}\leq\epsilon n/20$, no further near-extremality estimate is needed.

\smallskip
\noindent\textbf{Editing $C_s$ to cores and making the cut green.}
Let $R_s$ have connected components $Q_{s,1},\dots,Q_{s,t_s}$.
For each $a\in[t_s]$, let $C_{s,a}:=\bigcup_{i\in V(Q_{s,a})}V^{(s)}_i$.
Using \ref{item:kth-rec-blue-dense}, \ref{item:kth-rec-sizes}, \ref{item:kth-rec-red-dens}, and \ref{item:kth-rec-green-dens}, for each $a$ we can modify the coloring on $C_{s,a}$ by changing at most $C\xi_s |C_{s,a}|^2$ edges so that it becomes a blow-up of $Q_{s,a}$ with all within-part edges blue and all between-part edges constant (red on edges of $Q_{s,a}$ and green otherwise).
By moving at most $C\xi_s |C_{s,a}|$ vertices between the parts inside $C_{s,a}$, we may additionally arrange that the part sizes differ by at most $1$, at the cost of at most $C\xi_s |C_{s,a}|^2$ further edge edits.
After these edits, the induced coloring on $C_{s,a}$ is an element of $\cR_k(|C_{s,a}|)$.

If $a\neq b$ then every pair of parts from $Q_{s,a}$ and $Q_{s,b}$ corresponds to a nonedge of $R_s$, so \ref{item:kth-rec-green-dens} implies that there are at most $\xi_s |C_{s,a}||C_{s,b}|$ non-green edges between $C_{s,a}$ and $C_{s,b}$. Recoloring all these edges green therefore requires at most $\xi_s |C_s|^2\leq \xi_s n_s^2$ edits.

Next, we recolor all non-green edges incident to $V^{(s)}_0$ inside $W_s$ green. Since $\abs{V^{(s)}_0}\leq \xi_s n_s$, this requires at most $\xi_s n_s^2$ edits.

Finally, \ref{item:kth-rec-small-bdry} gives
$$e_\red(S_s,W_{s+1})+e_\blue(S_s,W_{s+1})\leq \xi_s n_s^2 \,,$$
so recoloring all remaining non-green edges in $E(C_s,W_{s+1})$ green requires at most $\xi_s n_s^2$ edits. At this point, the partially-edited coloring, restricted to $V(K_n)\setminus W_{s+1}$, is a member of $\cE_k(n-n_{s+1})$, and every edge between $W_{s+1}$ and its complement is green, so the structural inductive hypothesis is preserved.

We finally tally the total edge count. Let $\xi_{\max}:=\max_{0\leq s<r}\xi_s$. Each application of \Cref{lemma:kth-order-recursion} at step $s$ produces cores on $C_s$ and makes all edges incident to $S_s$ consistent with the target structure at a total cost of at most $C\xi_{\max} n_s^2$ edits.
Since $n_{s+1}\leq (1-\eta/2)n_s$ whenever we apply \Cref{lemma:kth-order-recursion}, we have $\sum_{s\geq0}n_s^2\leq (2/\eta)n^2$.
Thus the total number of edits over all core-extraction steps is at most $(2C\xi_{\max}/\eta)n^2\leq \epsilon n^2/4$.
When the process terminates, the final stopping step contributes at most $\max\{2\eta,\epsilon/20\}n^2\leq\epsilon n^2/20$ further edits.
Altogether we obtain a coloring $\psi\in\cE_k(n)$ with $d(\varphi,\psi)\leq \epsilon n^2$, completing the proof.
\end{proof}

\section*{Acknowledgments}
The author is grateful to Will Perkins for many helpful conversations. The author is supported by an Algorithms and Randomness Center Fellowship and an NSF Graduate Research Fellowship.

\printbibliography

\appendix
\markboth{APPENDIX}{APPENDIX}

\section{Deferred Results}
\label{sec:deferred}
\begin{lemma}\label{lemma:type-lemma}
For all $\delta>0$ and integers $\ell,L,t\geq1$ there exists $\epsilon^\ast=\epsilon^\ast(\delta,\ell)>0$ such that for all $\eta\in(0,\epsilon^\ast)$ there exist integers $U=U(\delta,\ell,L,t,\eta)$ and $n_0=n_0(\delta,\ell,L,t,\eta)$ with the following property. Let $G$ be a graph on $n\geq n_0$ vertices and let $P'=\{V_1',\dots,V_s'\}$ be a partition of $V(G)$ such that $1\leq s\leq t$ and all parts differ in size by at most 1.
Then there exist an integer $r\geq1$ and an $(\eta,\delta,\ell)$-type $(P,R)$ for $G$ with
$P=\{V_0,V_1,\dots,V_k\}$ such that:
\begin{enumerate}[
  label=\textit{(\roman*)},
  ref=(\textit{\roman*}),
  topsep=5pt
]
    \item $|V_0|\leq \eta n$.
    \item $L\leq k\leq U$.
    \item There is a partition $[k]=J_1\cup\cdots\cup J_s$ with $|J_i|=r$ for all $i\in[s]$ such that $V_i'\setminus V_0=\bigcup_{j\in J_i} V_j$ and $V_j\subseteq V_i'$ for all $j\in J_i$.
\end{enumerate}
\end{lemma}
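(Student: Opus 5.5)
The plan is to derive \Cref{lemma:type-lemma} from the refinement form of Szemer\'edi's regularity lemma together with the induced embedding lemma of \citeauthor{bottcher2012perfect} \cite[Lemma~2.9]{bottcher2012perfect}. First, given $\delta$ and $\ell$, I take $\epsilon^\ast(\delta,\ell)$ to be the regularity threshold for which the induced embedding lemma holds: if $H$ has at most $\ell$ vertices and $H\hookrightarrow R$, where $R$ is the colored graph of an $\eta$-regular partition with $\eta<\epsilon^\ast$, thresholds $\delta$, $1-\delta$, and clusters of size at least $n_0'$, then $H\leq G$. The vertex colors of $R$ need a remark. A cluster is colored $\blue$ (resp.\ $\green$) only if it contains a clique (resp.\ independent set) of size about $\ell$ inside it after further regularization. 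The standard way, following \cite{bottcher2012perfect}, is to apply Ramsey's theorem inside each cluster: each cluster of size $\geq R(\ell,\ell)$ contains a $\blue$- or $\green$-witnessing homogeneous set of size $\ell$. The embedding lemma allows several vertices of $H$ mapped to one cluster, provided the cluster's color matches.

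Second, I build $P$ refining $P'$. I apply the regularity lemma, in the version that takes an initial equipartition with $s\leq t$ parts and at least $L$ target parts, with parameter $\eta$ replaced by a smaller $\eta'$. This yields an $\eta'$-regular equipartition $V_0\cup V_1\cup\dots\cup V_k$ with $L\leq k\leq U(\eta',L,t)$, each $V_j$ inside some $V'_i$. To force each $V'_i$ to contain exactly $r$ clusters, I first split every $V'_i$ arbitrarily into equal pieces, using the near-equality $||V'_i|-|V'_j||\leq1$. After regularization, I discard into $V_0$ leftover vertices so that each $V'_i$ is cut into exactly $r$ equal clusters, for a common $r$. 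This costs at most $\eta' n+O(kt)$ vertices, so $|V_0|\leq\eta n$ for large $n$. Then I define $R$ by declaring $ij\in E_R$ exactly for $\eta$-regular pairs and coloring by density thresholds, which gives conditions (i) and (ii) of \Cref{dfn:type}.

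Third, I verify condition \ref{item:type-colored-homo}. Here the embedding lemma is applied with cluster sizes $\geq(1-\eta)n/U\geq n_0'$, which holds once $n_0$ is large. This gives the required $(\eta,\delta,\ell)$-type with (i)--(iii).

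The main obstacle is the exact equal splitting in (iii) while preserving $\eta$-regularity of the resulting pairs and the bound on $|V_0|$. I would handle it by running regularity on the equipartition refining $P'$ with a common part count $r$. If exact divisibility fails, I move $O(1)$ vertices per cluster to $V_0$, using that regularity is robust under deleting an $o(1)$-fraction of each cluster (a pair that is $\eta'$-regular stays $2\eta'$-regular). This is where the choice $\eta'=\eta/2$ is made.
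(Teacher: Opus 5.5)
The gap is in how the vertex colors of $R$ are justified, and hence in your verification of condition~\ref{item:type-colored-homo} of \Cref{dfn:type}. That condition is the whole content of the lemma.

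\begin{itemize}
\item The induced embedding lemma \cite[Lemma~2.9]{bottcher2012perfect} is not a statement about an arbitrary $\eta$-regular partition with vertex colors chosen afterwards. It yields two parameters, $\epsilon_{2.9}$ and $\epsilon'_{2.9}$. Its hypothesis is a type in the sense of \cite{bottcher2012perfect}, which carries a second regularity parameter $\epsilon'$; that extra within-cluster structure is what allows several vertices of $H$ to be placed in one cluster.
\item Your paraphrase of Lemma~2.9 drops $\epsilon'$ entirely. Your Step~2 produces only a first-level regular partition with edge colors, and your Step~3 checks only $\eta<\epsilon^\ast$ and the cluster sizes. So the hypotheses of Lemma~2.9 are never established.
\item The Ramsey remark does not supply them. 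Suppose $H$ puts several vertices into $V_i$ and others into clusters $V_j$. A single clique or independent set of size $\ell$ in $V_i$ does not let you embed $H$. The vertices placed in $V_i$ must lie in candidate sets cut out by earlier choices and must have prescribed adjacencies into $V_j$. But $\eta$-regularity of $(V_i,V_j)$ says nothing about a fixed set of boundedly many vertices of $V_i$; they may all have identical neighborhoods in $V_j$.
\item What the embedding argument needs are robust witnesses: sets $W_{i,1},\dots,W_{i,\ell}\subseteq V_i$ with the following properties.
  \begin{itemize}
  \item Each has size at least a fixed fraction $\mu$ of $|V_i|$, with $\eta\ll\mu$, so that regularity towards the other clusters is inherited.
  \item They are pairwise $\epsilon'$-regular, with densities bounded away from $0$ (color $\blue$) or from $1$ (color $\green$).
  \end{itemize}
  Such witnesses come from a second regularization inside each cluster followed by Ramsey's theorem on the inner reduced graph, not on the vertices themselves.
\end{itemize}

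This is exactly the work done by the type lemma \cite[Lemma~2.7]{bottcher2012perfect}, which is why the paper does not start from a bare regularity lemma. It reruns the proof of Lemma~2.7 with $\epsilon_{2.7}=\eta$ and $\epsilon'_{2.7}=\epsilon'_{2.9}$, and takes $\epsilon^\ast=\epsilon_{2.9}$. The only change is that the first-level regularity step becomes an energy-increment argument started from $P'$.

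You can repair your plan the same way, or add the inner regularization and Ramsey step explicitly with the parameter $\epsilon'_{2.9}$. Since the witnesses lie inside clusters that already refine $P'$, item~(iii) of the lemma is unaffected.

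On item~(iii) itself your idea matches the paper's, with one correction. The common value of $r$ has to be built into the refinement: start from equal pieces of the $V_i'$ and split every current class into the same number of children at each step. Trimming $O(1)$ vertices per cluster afterwards only fixes divisibility of sizes. It cannot equalize the number of clusters inside different $V_i'$.
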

\begin{proof}
The lemma follows by slightly modifying the proofs in \cite{bottcher2012perfect}. Note that we use the labels $(\red,\green,\blue)$ in types instead of $(\frac{1}{2},0,1)$.

First, apply the induced embedding lemma \cite[Lemma~2.9]{bottcher2012perfect} with $k_{2.9}=k'_{2.9}=\ell$ and $d_{2.9}=\delta$, yielding $\epsilon_{2.9}$ and $\epsilon'_{2.9}$. Set $\epsilon^\ast:=\epsilon_{2.9}$ and assume $\eta\in(0,\epsilon^\ast)$ is given. Next, apply the type lemma \cite[Lemma~2.7]{bottcher2012perfect} with the following enhancement: in the proof of \cite[Lemma~2.7]{bottcher2012perfect}, instead of applying the standard regularity lemma \cite[Lemma~2.2]{bottcher2012perfect}, apply the standard energy-increment proof of the regularity lemma starting from an initial partition of size at most $t$. The additional uniformity in item~(iii) is obtained by refining every current class into the same number of children at each refinement step. This only changes the bound $U$. We apply this enhanced version of \cite[Lemma~2.7]{bottcher2012perfect} with $\epsilon_{2.7}=\eta$, $\epsilon'_{2.7}=\epsilon'_{2.9}$, $k'_{2.7}=\ell$, $(k_0)_{2.7}=L$, and $t_{2.7}=t$, yielding $U:=u_{2.7}$ and $n_0:=(n_0)_{2.7}$.
\end{proof}

\GraphonSeq*
\begin{proof}
Let $G_n:=G(n,W)$ for all $n\geq1$. Since $t_{\ind}(F,W)=0$, each $G_n$ is induced-$F$-free with probability $1$. We now follow the argument of \citeauthor{lovasz2006limits} (LS) \cite[Lemmas~5.1~\&~5.2]{lovasz2006limits}. We explain the only substantive change that is needed from their argument: the use of types in the sense of \Cref{dfn:type} instead of weak regularity partitions.

In the proof of \cite[Lemma~5.1]{lovasz2006limits}, the weak regularity lemma \cite[Lemma~4.2]{lovasz2006limits} is used to produce, for all $m\geq1$, a subsequence $\{G^m_n\}_{n\geq1}$ wherein each graph has a weakly $\frac{1}{m}$-regular partition $P_{n,m}$ into the same number $t_m$ of parts. In the induction step from $m$ to $m+1$, LS refine $P_{n,m}$ to obtain a weakly $\frac{1}{m+1}$-regular partition $P_{n,m+1}$ of $G^m_n$. The refinement divides each of the parts of $P_{n,m}$ into the same number $r_m$ of parts, so $P_{n,m+1}$ has $t_{m+1}=r_mt_m$ parts. The numbers $r_m$ arise from applying the weak regularity lemma with $\epsilon=\frac{1}{m+1}$.

We now explain how we tweak the induction step. We distinguish two kinds of partitions of the vertex sets $V(G_n)$: \emph{type partitions} (the partitions $P^\type_{n,m}$ associated with a type $(P^\type_{n,m},R)$ of $G_n$) and \emph{clean partitions} (partitions constructed \emph{from} type partitions by equitably distributing the exceptional set vertices to the non-exceptional classes). The purpose of clean partitions is that we use them as the initial partition $P'$ in \Cref{lemma:type-lemma}.

Fix the constant $\epsilon^\ast:=\epsilon^\ast(\delta,v(F))$ returned by \Cref{lemma:type-lemma}, and set $G_n^1:=G_n$ for all $n\geq1$. Apply \Cref{lemma:type-lemma} with $\delta_{\ref{lemma:type-lemma}}=\delta$, $\ell_{\ref{lemma:type-lemma}}=v(F)$, $L_{\ref{lemma:type-lemma}}=1$, $t_{\ref{lemma:type-lemma}}=1$, and $\eta_{\ref{lemma:type-lemma}}=\min\{\epsilon^\ast/2,1\}$, using the trivial partition $\{V(G_n)\}$ as the initial partition. Passing to a subsequence if necessary, we may assume that all resulting types have the same number $t_1$ of non-exceptional classes. Denote the corresponding type partitions by $P^\type_{n,1}$ and the corresponding colored graphs by $R_{n,1}$. Now define $P^\clean_{n,1}$ from $P^\type_{n,1}$ by distributing the vertices of the exceptional set $V^\type_{0,n,1}$ among the non-exceptional classes so that the resulting class sizes differ by at most $1$. For $m\geq1$, assume we have already formed clean partitions $P^\clean_{n,m}$.

Set $\eta_{m+1}:=\min\{\epsilon^\ast/2,1/m\}$. We apply \Cref{lemma:type-lemma} with $\delta_{\ref{lemma:type-lemma}}=\delta$, $\ell_{\ref{lemma:type-lemma}}=v(F)$, $L_{\ref{lemma:type-lemma}}=2t_m$, $t_{\ref{lemma:type-lemma}}=t_m$, and $\eta_{\ref{lemma:type-lemma}}=\eta_{m+1}$, yielding integers $u_{m+1}:=U_{\ref{lemma:type-lemma}}$ and $n_{m+1}:=(n_0)_{\ref{lemma:type-lemma}}$.

For $n\geq n_{m+1}$, \Cref{lemma:type-lemma} has the implication that with the initial partition $P'_{\ref{lemma:type-lemma}}=P^{\clean}_{n,m}$ and $s_{\ref{lemma:type-lemma}}=t_m$, the graph $G^m_n$ has an $(\eta_{m+1},\delta,v(F))$-type $(P^{\type}_{n,m+1},R_{n,m+1})$ with
$$P^{\type}_{n,m+1}=\{V^{\type}_{0,n,m+1},V^{\type}_{1,n,m+1},\dots,V^{\type}_{t_{m+1},n,m+1}\}$$
and a partition $[t_{m+1}]=J_1\cup\cdots\cup J_{t_m}$ with each $|J_i|=r_m$ such that
$$V^{\clean}_{i,n,m}\setminus V^{\type}_{0,n,m+1}=\bigcup_{j\in J_i}V^{\type}_{j,n,m+1}$$
and $V^{\type}_{j,n,m+1}\subseteq V^{\clean}_{i,n,m}$ for $j\in J_i$. Passing to a further subsequence if necessary, we may assume that the integers
$r$ and $k$ returned by \Cref{lemma:type-lemma} are independent of $n$.
We denote them by $r_m$ and $t_{m+1}$, respectively. In particular, $t_{m+1}=r_m t_m$.
Finally, define $P^{\clean}_{n,m+1}$ from $P^{\type}_{n,m+1}$ by distributing the vertices of $V^{\type}_{0,n,m+1}$ among the classes $V^{\type}_{1,n,m+1},\dots,V^{\type}_{t_{m+1},n,m+1}$ in such a way that each vertex of $V^{\type}_{0,n,m+1}$ is assigned to one of the $r_m$ children lying inside its parent class $V^{\clean}_{i,n,m}$. Then $P^{\clean}_{n,m+1}$ is a uniform refinement of $P^{\clean}_{n,m}$ in the sense that each class of $P^{\clean}_{n,m}$ is subdivided into exactly $r_m$ classes of $P^{\clean}_{n,m+1}$, and the classes of $P^{\clean}_{n,m+1}$ differ in size by at most $1$.

The rest of the argument in \cite[Lemma~5.1]{lovasz2006limits} and the proof of \cite[Lemma~5.2]{lovasz2006limits} now apply with the clean partitions $P^\clean_{n,m}$ in place of the weakly regular partitions of LS. Let $Q^\clean_{n,m}$ denote the $t_m\times t_m$ density matrix of the clean partition $P^\clean_{n,m}$. Since each class of $P^\clean_{n,m}$ is subdivided into exactly $r_m$ classes of $P^\clean_{n,m+1}$, and all classes of $P^\clean_{n,m}$ and $P^\clean_{n,m+1}$ differ in size by at most $1$, the matrices $Q^\clean_{n,m}$ satisfy the block-averaging relation of \cite[Lemma~5.1(ii)]{lovasz2006limits} up to an error $o_n(1)$ for each fixed $m$. Passing to the limit $n\to\infty$, the limiting matrices $Q_m$ therefore satisfy the exact relation in \cite[Lemma~5.1(ii)]{lovasz2006limits}. Hence we obtain the following:
\begin{enumerate}[(\emph{\alph*})]
    \item A subsequence $\{G'_n\}_{n\geq1}$ of $\{G_n\}_{n\geq1}$ such that for all $n\geq1$ and $1\leq m\leq n$, $G'_n$ has an $(\eta_m,\delta,v(F))$-type $(P^\type_{n,m},R_{n,m})$ on $t_m$ vertices and a clean partition $P^\clean_{n,m}$ into $t_m$ classes.
    \item We have $\eta_m\to0$ as $m\to\infty$.
    \item For fixed $m$, we have $Q^\clean_{n,m}\to Q_m$ entrywise as $n\to\infty$ for a $t_m\times t_m$ matrix $Q_m$.
    \item From \cite[Lemma~5.2(a)]{lovasz2006limits}, we have $W_{Q_m}\to W'$ a.e.\ for a graphon $W'$.
    \item For all $m\geq1$, choose an index $i(m)\geq m$ large enough such that $Q^\clean_{i(m),m}$ is entrywise within $1/m$ of $Q_m$, and let $U_m:=W_{Q^\clean_{i(m),m}}$. Then $\norm{U_m-W'}_1\to0$.
\end{enumerate}

Now let $R_m:=R_{i(m),m}$ and let $W_m$ be the graphon associated with the type $R_m$. Then $W_m$ satisfies~(\emph{iii}) by construction. Also, since $P^\clean_{i(m),m}$ is obtained from $P^\type_{i(m),m}$ by equitably redistributing the vertices of the exceptional set $V^\type_{0,i(m),m}$, each clean class differs from the corresponding non-exceptional type class by at most
$$\frac{|V^\type_{0,i(m),m}|}{t_m}+1\leq \frac{\eta_m |V(G'_{i(m)})|}{t_m}+1$$
vertices. In particular, after increasing $i(m)$ if necessary, every entry of $Q^\clean_{i(m),m}$ differs from the corresponding density between the non-exceptional classes of $P^\type_{i(m),m}$ by at most $C\eta_m$, where $C>0$ is an absolute constant, hence
$$\norm{W_m-U_m}_1\leq C\eta_m\,.$$
Since $\eta_m\to0$ and $\norm{U_m-W'}_1\to0$, it follows that
$$\norm{W_m-W'}_1\to0\,,$$
proving~(\emph{ii}). Moreover, $v(R_m)=t_m\to\infty$ by construction, so~(\emph{iv}) holds.

It only remains to verify that $\delta_\square(W_m,W)\to0$. Letting $H_m:=G'_{i(m)}$, we use the triangle inequality
$$\delta_\square(W_m,W)\leq\delta_\square(W_m,W_{H_m})+\delta_\square(W_{H_m},W)\,.$$
First, $\delta_\square(W_m,W_{H_m})\leq C\eta_m$ since $P^\type_{i(m),m}$ is an $\eta_m$-regular partition for $H_m$ (see \cite[\S9.1.2]{lovasz2012large} for this inequality). Second, $\delta_\square(W_{H_m},W)\to0$ since, by \cite[Corollary~2.6]{lovasz2006limits}, the sequence $G(n,W)$ is almost surely convergent with limit $W$, and by the standard equivalence between convergence in homomorphism density and cut metric (see \cite[Theorem~3.3]{borgs2008convergent}), this implies
$\delta_\square(W_{H_m},W)\to0$.
\end{proof}

\ActivateWarningFilters[pdftoc]
\section{Induced-$C_4$-Free Graphs of Fixed Density}
\DeactivateWarningFilters[pdftoc]
\label{sec:c4-free}
This appendix is devoted to proving \Cref{thm:c4-main}. Throughout this section, fix a constant $\gamma\in(0,1)$ and assume $m\sim\gamma\binom{n}{2}$. We use the parameter $\epsilon>0$ throughout this section. All statements are for sufficiently small $\epsilon>0$ and all large enough $n$. The proof of \Cref{thm:c4-main} is organized as follows:
\begin{enumerate}[
  label=\textit{(\roman*)},
  ref=(\textit{\roman*}),
  topsep=5pt
]
	\item First, \Cref{lemma:c4-rough-struc} shows almost all induced-$C_4$-free graphs of constant density $\gamma$ are within $\epsilon n^2$ edit distance of a split graph. This lemma follows from the solution to an entropy maximization problem and stability for colored graphs (\Cref{lemma:c4-stability}).
	\item Next, \Cref{lemma:almost-all-nondeg} shows that among the graphs $G\in\cF^\ast_{n,m}(C_4)$ that are $\epsilon n^2$-close to split, at most an exponentially small fraction have an optimal partition $\Pi=(A,B)$ whose sizes deviate by $\Theta(n)$ from an explicit optimal size that depends on $\gamma$.
	\item \Cref{lemma:FPi1,lemma:FPi2} show that among the remaining graphs, at most an exponentially small fraction have ``defect graphs'' containing a vertex of degree $\Theta(n)$ inside either of the parts $A$ or $B$ of an optimal partition $\Pi=(A,B)$.
	\item Finally, \Cref{lemma:c4-matching} is used to show that among all remaining graphs, at most a $2^{-\Omega(n)}$ fraction contain any defect edges at all, completing the proof of \Cref{thm:c4-main}.
\end{enumerate}

\subsection{Rough structure lemma}
Let us say that a graph $G$ is \emph{$t$-far from split} if the minimum edit distance from $G$ to a split graph is at least $t$. Define the sets of graphs
\[\arraycolsep=0.4mm
\begin{array}{ll}
\cF^\far &:= \{G\in\cF^\ast_{n,m}(C_4):\text{$G$ is $\epsilon n^2$-far from split}\} \,, \\[6pt]
\cF^\close &:= \cF^\ast_{n,m}(C_4)\setminus\cF^\far \,.
\end{array}\]
In this subsection we prove the following rough structure lemma.

\begin{lemma}\label{lemma:c4-rough-struc}
There exists a constant $c=c(\gamma,\epsilon)>0$ such that
$$|\cF^\far|\leq 2^{-cn^2}N^\ast_{n,m}(C_4)\,.$$
\end{lemma}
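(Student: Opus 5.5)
The plan is to reproduce for $C_4$ the graphon argument used for $K_{1,k}$ in \Cref{sec:graphon-prob,sec:entropy}: identify the optimizers of the fixed-density entropy problem, and then convert the graphon-level statement into an edit-distance statement. Let $\cX_\gamma(C_4):=\{W:t_\ind(C_4,W)=0,\ t(K_2,W)=\gamma\}$.

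The first and main step is to prove that every entropy-maximizing $W\in\cX_\gamma(C_4)$ is a split graphon. By this we mean there is a set $S\subseteq[0,1]$ of measure $\lambda$ with $W=1$ a.e.\ on $S\times S$, $W=0$ a.e.\ on $S^c\times S^c$, and $W$ equal to the constant $\beta=(\gamma-\lambda^2)/(2\lambda(1-\lambda))$ on the cross rectangles. The argument proceeds as follows.

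\begin{enumerate}
\item As in the proof of \Cref{prop:graphon-char-fixed-gamma}, Jensen's inequality shows that an optimizer takes only the values $\{0,p,1\}$.
\item \Cref{lemma:UFHatGraphonSeq} then produces types whose colored graphs $\varphi_m$ on $q_m\to\infty$ vertices satisfy $C_4\nHookrightarrow\varphi_m$. These $\varphi_m$ approximate $W$ in $L^1$, and $H(W)=\lim (2/q_m^2)f_\gamma(\varphi_m)+o(1)$, with $f_\gamma$ as in \eqref{eqn:overview-fgamma}.
\item I invoke the stability statement \Cref{lemma:c4-stability}, which is announced in the overview and which I take as available for colored graphs. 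Concretely, I expect it to say the following: for every $\epsilon'>0$ there is $\delta'>0$ such that any $J$ with $C_4\nHookrightarrow J$ and $f_\gamma(J)\geq\max f_\gamma-\delta'\ell^2$ lies within $\epsilon'\ell^2$ of a split coloring. A split coloring is a blue clique on one side, green on the other side, and red across.
\item Passing to the limit exactly as in the $U_m$ construction gives that $W$ is equivalent to a split graphon. Optimizing over $\lambda$ then yields the value on the right of \eqref{eqn:c4-entropy-main}, although for this lemma only the split structure of the maximizers is needed.
\end{enumerate}

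The second step is the transfer to graphs. I would follow \Cref{lemma:rough-struc-fixed-g}, that is, \cite[Prop.~3.8]{perkins2025typical}, which rests on \cite{chatterjee2011large} and \cite{hatami2018graph}. It gives a constant $C>0$ with the following property: all but a $2^{-Cn^2}$ fraction of $G\in\cF^\ast_{n,m}(C_4)$ satisfy $\delta_\square(G,\cX^\ast_\gamma(C_4))<\tau$, where $\tau$ is small in terms of $\epsilon$ and $\cX^\ast_\gamma(C_4)$ is the set of maximizers.

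The third step is to show that cut-closeness to a split graphon forces edit-closeness to a split graph. I would argue as in \Cref{lemma:SuperCloseStructureK1k}\ref{item:lemSuperK1ki}. Using the discretization $H_n$ and \cite[Theorem~2.3]{borgs2008convergent}, relabel $G$ so that $d_\square(G,H_n)\leq\epsilon/8$. Let $A$ and $B$ be the sets of vertices lying in the zero block and the one block respectively. The cut norm controls the density of $G$ on $A\times A$ and on $B\times B$ up to $\epsilon/8$. So $e(G[A])+e(G^c[B])\leq\epsilon n^2/4$ plus $O(n)$ boundary terms. Deleting the edges inside $A$ and adding the missing edges inside $B$ produces a split graph after fewer than $\epsilon n^2$ edits. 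Hence $\cF^\far$ is contained in the set of cut-far graphs, which proves the lemma with $c=C$.

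I expect the main obstacle to be the first step, namely the stability of $f_\gamma$ under $C_4\nHookrightarrow J$ for every fixed $\gamma$. If \Cref{lemma:c4-stability} must be proved here as well, my first attempt would mirror the proof of \Cref{lemma:kthOrderMantel}. I would symmetrize by cloning, since cloning preserves $C_4\nHookrightarrow$ by the same argument as in \Cref{fact:cloning-keeps-ckn}; this reduces to blow-ups of a reduced colored graph whose parts are all blue or all green. Then I would use the induced-$C_4$ constraint: two green parts cannot be red-joined to two common parts, and blue parts cannot be green-joined in a $C_4$ pattern. With these constraints I would argue that merging all blue parts and all green parts does not decrease $f_\gamma$. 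Strict concavity of $H$ would then give stability.
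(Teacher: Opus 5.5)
\textbf{Verdict.} Your route is sound but genuinely different from the paper's. There is one step in your Step~1 that needs more care than you give it; see the second paragraph.

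\textbf{Comparison of routes.} The paper never passes through graphons here. For the lower bound it takes a maximizing coloring $J^\ast$ and gets $\log N^\ast_{n,m}(C_4)\geq\Phi(n,\gamma,C_4)+o(n^2)$ directly. For the upper bound it proceeds as follows:
\begin{itemize}
\item it takes an $(\eta,\tau,4)$-type of each far graph $G$ (\Cref{lemma:type-lemma});
\item it blows the reduced graph up to a near-complete colored graph $R'_G$ on $[n]$ with $d(G,R'_G)\leq3\tau n^2$, so $R'_G$ is $\frac{\epsilon}{2}n^2$-far from $\cS(n)$;
\item \Cref{lemma:c4-stability} then gives $f_\gamma(R'_G)\leq\Phi(n,\gamma,C_4)-\delta_0n^2$;
\item it finishes with the per-type count \eqref{eqn:type-counting} and a union bound over $2^{o(n^2)}$ types.
\end{itemize}
So stability is applied once, at scale $n$. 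There is no LDP concentration, no identification of the optimal graphons, and no cut-to-edit conversion. Your version instead mirrors \Cref{prop:graphon-char-fixed-gamma} and \Cref{lemma:rough-struc-fixed-g}. It costs importing the general concentration statement (the paper states it only for $K_{1,k}$, though the cited proof is general) and the Borgs et al.\ comparison of $\widehat\delta_\square$ with $\delta_\square$. In exchange it gives the graphon-level fact that every optimizer is a split graphon, which also yields \eqref{eqn:c4-entropy-main} via \Cref{lemma:FnmGraphonOptAsymps}.

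\textbf{The step needing care.} The form in which you quote \Cref{lemma:c4-stability} drops the hypothesis $e(J^c)\leq\eta\ell^2$, and this hypothesis is essential. Consider the following colored graph $J$.
\begin{itemize}
\item Take a green class $A$ and a blue class $B$ with $|A|=|B|=\ell/2$.
\item Color every $A$--$B$ pair red, and leave all pairs inside $A$ and inside $B$ as non-edges.
\end{itemize}
A colored homomorphism must send any two distinct images to an edge of $J$. So $C_4$ would map onto one green and one blue vertex, which would make $C_4$ split; hence $J\in\cC(\ell,C_4)$. Yet $f_\gamma(J)\approx\frac{\ell^2}{4}H(2\gamma)$, which is about $0.18\ell^2$ at $\gamma=1/10$. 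By contrast, split colorings give $\ell^2\psi_\gamma(\lambda_\gamma)\approx0.115\ell^2$, and $J$ is at distance about $\ell^2/4$ from $\cS(\ell)$.

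This has two consequences for your Step~1.
\begin{itemize}
\item You must leave the irregular pairs of the types as non-edges. Recoloring them as in the $K_{1,k}$ argument can create $C_4\hookrightarrow$; for example, a blue pair between two green vertices does so.
\item The maximum in the near-maximality hypothesis must be taken over complete colorings. Over all of $\cC(q_m,C_4)$ it is at least $f_\gamma(J)$ above, while $f_\gamma(R_m)\approx\frac{q_m^2}{2}H(W)$, so your reduced graphs would never qualify.
\end{itemize}
With the maximum over complete colorings, blowing up complete colorings gives $\frac{2}{q^2}\max f_\gamma\leq\Phi^{\mathrm{gr}}_\gamma+o(1)$, where $\Phi^{\mathrm{gr}}_\gamma$ is the graphon supremum. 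Combined with $\frac{2}{q_m^2}f_\gamma(R_m)\to H(W)=\Phi^{\mathrm{gr}}_\gamma$, this verifies the hypothesis for $R_m$. Your sketch uses this inequality but never states it.

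(The paper's lower bound \eqref{eqn:lower-bound-Nstar} likewise needs its maximizer $J^\ast$ to be complete, since otherwise graphs in $\cP(J^\ast,m)$ need not be induced-$C_4$-free.)
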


To prepare for the proof of \Cref{lemma:c4-rough-struc}, we introduce several definitions and two lemmas relating to colored graphs and entropy. Recall also the definitions set forth in \Cref{subsec:colored-graphs}.
For $\gamma\in(0,1)$ and a colored graph $J$ on $n$ vertices, let
$$f_\gamma(J):=e_{\red}(J) \cdot H\!\left(\frac{\gamma\binom{n}{2}-e_{\blue}(J)}{e_{\red}(J)}\right)\,,$$
where we use the convention that $H(x)=-\infty$ whenever $x\not\in[0,1]$. For $n\geq1$ and a graph $H$, define the optimization problem
\begin{equation}\label{eqn:coloring-entropy-0}
\Phi(n,\gamma,H) := \max\left\{f_\gamma(J):J\in\cC(n,H)\right\}\,,
\end{equation}
where $\cC(n,H)$ is defined in \Cref{subsec:colored-graphs}.
As we will see below, to prove \Cref{lemma:c4-rough-struc}, it suffices to show that near-maximizers of $\Phi(n,\gamma,H)$ are close in edit distance to a coloring that represents a split graph.

\begin{definition}[Stability]\label{dfn:Jgamma-stable}
Let $H$ be a fixed graph, $\gamma\in(0,1)$ a constant, and $\cJ(n)$ a set of colored graphs on $n$ vertices. For a colored graph $J$, let $d(J,\cJ(n))$ denote the minimum Hamming distance between $J$ and an element of $\cJ(n)$, where vertex edits incur zero cost. Assume that for all $\epsilon>0$ there exist $\delta>0$ and $\eta>0$ such that the following holds for all large enough $n$: if $J\in\cC(n,H)$ such that $e(J^c)\leq\eta n^2$ and
$$f_\gamma(J)\geq\Phi(n,\gamma,H)-\delta n^2$$
then $d(J,\cJ(n))\leq\epsilon n^2$. In this case we say $H$ is \emph{$(\cJ,\gamma)$-stable}.
\end{definition}

\Cref{lemma:c4-stability} below establishes stability for $C_4$ and is the key technical lemma needed to prove \Cref{lemma:c4-rough-struc}. We also need the following lemma.

\begin{lemma}\label{lemma:entropy-BOOST}
For all fixed $\gamma\in(0,1)$ and $\alpha\in(0,\frac{1}{2})$ there exists $\delta>0$ such that the following holds for all large enough $n$. Let $F_\gamma(R,B):=R\cdot H\big(\frac{\gamma\binom{n}{2}-B}{R}\big)$. If
$$\alpha\leq\frac{\gamma\binom{n}{2}-B}{R}\leq1-\alpha\,,\qquad R\geq\alpha n^2\,,\qquad D_r\geq\alpha n^2\,,\qquad\text{and}\qquad|D_b|\leq n\,,$$
then
$$F_\gamma(R+D_r,B+D_b)\geq F_\gamma(R,B)+\delta n^2\,.$$
\end{lemma}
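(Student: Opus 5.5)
The plan is to treat $F_\gamma(R,B)=R\,H\big((\gamma\binom n2-B)/R\big)$ as the perspective of the concave function $H$, i.e.\ $F_\gamma(R,B)=R\,H(S/R)$ with $S:=\gamma\binom n2-B$. It is jointly concave in $(R,S)$ and positively $1$-homogeneous. Write $x:=S/R\in[\alpha,1-\alpha]$. The partial derivative in $R$ at fixed $S$ is
$$\partial_R\big(RH(S/R)\big)=H(x)-xH'(x)=-\log(1-x)\,,$$
and in $S$ it is $H'(x)=\log\frac{1-x}{x}$. The argument splits the increment into a red move $R\to R+D_r$ at fixed $S$, followed by a small blue move $B\to B+D_b$ that changes $S$ by at most $n$.

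\textbf{Step 1 (red move).} Set $y:=S/(R+D_r)$. Since $D_r\geq\alpha n^2$ and $R\leq\binom n2\leq n^2/2$, we get $y\leq x\cdot\frac{R}{R+\alpha n^2}\leq x(1-c_\alpha)$ with $c_\alpha=\alpha/(1/2+\alpha)$, and also $y\geq x/(1+D_r/R)$. The map $t\mapsto -\log(1-S/t)$ is positive and decreasing. Hence
$$F_\gamma(R+D_r,B)-F_\gamma(R,B)=\int_R^{R+D_r}-\log\!\Big(1-\frac{S}{t}\Big)\,dt\,.$$
Restricting to $t\in[R,R+\alpha n^2]\subseteq[R,R+D_r]$ on which the integrand is at least $-\log(1-S/(R+\alpha n^2))$, and noting $S/(R+\alpha n^2)\geq \alpha\cdot\frac{\alpha}{1/2+\alpha}=:c'$ because $R\geq\alpha n^2$, the increment is at least $\alpha n^2\cdot(-\log(1-c'))=:2\delta n^2$. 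This is a constant depending only on $\alpha$ (and $\gamma$ only through feasibility).

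\textbf{Step 2 (blue perturbation).} I need $|F_\gamma(R+D_r,B+D_b)-F_\gamma(R+D_r,B)|\leq|D_b|\cdot\sup|H'(z)|$, where $z$ ranges over $[\,(S-n)/(R+D_r),(S+n)/(R+D_r)\,]$. This is the main obstacle: $y$ may be close to $0$ when $D_r$ is huge, where $H'$ blows up. However $R+D_r\geq\alpha n^2$ and $S\geq\alpha R\geq\alpha^2n^2$, and $R+D_r$ is at most of order $n^2$, since it is a count of pairs in the application (I would add $R+D_r\leq\binom n2$ as implicit). Then $y\geq 2\alpha^2$ and $y\leq 1-\alpha$, so $z$ stays in a compact subinterval of $(0,1)$ for large $n$. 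Hence $|H'(z)|\leq C_\alpha$, and the change is $O_\alpha(n)=o(n^2)$.

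\textbf{Step 3 (conclusion).} Combining the two steps gives $F_\gamma(R+D_r,B+D_b)\geq F_\gamma(R,B)+2\delta n^2-O_\alpha(n)\geq F_\gamma(R,B)+\delta n^2$ for large $n$. If the statement is intended without the bound $R+D_r\leq\binom n2$, Step 2 still works: as long as $D_r$ grows, $H(y)$ can only tend to $0$ while the lower bound from Step 1 (monotone in $D_r$) persists. In that case I would bound the perturbation crudely by $|D_b|\log(e(R+D_r)/S)$, which is again $O(n\log n)$ whenever $R+D_r$ is polynomial in $n$.
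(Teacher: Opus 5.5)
Your Step 1 is fine: the integral of $-\log(1-S/t)$ over $[R,R+\alpha n^2]$ gives the gain, and the integrand is nonnegative on the rest of $[R,R+D_r]$. Step 2, however, leaves a genuine gap relative to the lemma as stated, and it comes from the order of your two moves.

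Because you do the blue perturbation \emph{after} the red move, you must bound $H'$ near the ratio $y=S/(R+D_r)$. Nothing in the hypotheses bounds $D_r$ (or $R$) from above, so $y$ can be arbitrarily close to $0$. The change in $F_\gamma$ caused by $D_b$ is then of order $n\log\big((R+D_r)/S\big)$, which is not $O_\alpha(n)$. You handle this by adding the hypothesis $R+D_r\leq\binom{n}{2}$, and your fallback still needs $R+D_r$ to be polynomial in $n$. The remark that $H(y)$ ``can only tend to $0$'' does not control the perturbation term. The bound $R\leq\binom{n}{2}$ invoked in Step 1 is likewise unstated, although the estimate you actually use there does not need it.

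In the paper's only application, the proof of \Cref{lemma:c4-stability}, one has $R+D_r=e_\red(J^\ast)\leq\binom{n}{2}$. So your restricted version would suffice there, but it is not the stated lemma.

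The fix is to reverse the order, which is what the paper does:
\begin{itemize}
\item First perturb $B$ at the original $R$. There the ratio $S/R$ lies in $[\alpha,1-\alpha]$ by hypothesis and $n/R\leq 1/(\alpha n)$, so $|F_\gamma(R,B+D_b)-F_\gamma(R,B)|\leq L(\alpha)\,n$.
\item Then run your Step 1 for $t\mapsto F_\gamma(t,B+D_b)$ with $S':=S-D_b$. For large $n$ you have $S'/(R+\alpha n^2)\geq(\alpha/2)R/(R+\alpha n^2)\geq\alpha/4$, and the integrand $-\log(1-S'/t)$ is nonnegative on the remainder of $[R,R+D_r]$.
\end{itemize}
This argument never evaluates the ratio at $R+D_r$, so no upper bound on $D_r$ is needed.

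Alternatively, you can keep your order but first use the monotonicity of $t\mapsto F_\gamma(t,B+D_b)$ to reduce to $D_r=\alpha n^2$. After that reduction $y\geq\alpha/2$, and your Step 2 goes through as written.
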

\begin{proof}
Since $\frac{\partial}{\partial B}F_\gamma(R,B)=-H'\!\big(\frac{\gamma\binom{n}{2}-B}{R}\big)$, there is a constant $L=L(\alpha)>0$ such that
$$|F_\gamma(R,B+D_b)-F_\gamma(R,B)|\leq L|D_b|\leq L n\,.$$
Now let $h(x):=F_\gamma(x,B+D_b)$. Since $|D_b|\leq n$ and $R\geq\alpha n^2$, for all large enough $n$ and all $x\in[R,R+\alpha n^2]$, we have
$$\frac{\gamma\binom{n}{2}-(B+D_b)}{x}\geq\frac{\gamma\binom{n}{2}-(B+D_b)}{R+\alpha n^2}\geq\frac{(\alpha/2)R}{R+\alpha n^2}\geq\frac{\alpha}{4}\,.$$
Since
$$h'(x)=-\log\!\left(1-\frac{\gamma\binom{n}{2}-(B+D_b)}{x}\right)\,,$$
it follows that $h'(x)\geq c_\alpha:= -\log(1-\alpha/4)>0$ on $[R,R+\alpha n^2]$. Using $D_r\geq\alpha n^2$,
$$F_\gamma(R+D_r,B+D_b)-F_\gamma(R,B+D_b)\geq\int_R^{R+\alpha n^2}h'(x)\,dx\geq c_\alpha\alpha n^2\,.$$
It follows that
$$F_\gamma(R+D_r,B+D_b)-F_\gamma(R,B)\geq c_\alpha\alpha n^2-L n\,,$$
so taking $\delta:=c_\alpha\alpha/2$ completes the proof.
\end{proof}

A colored graph $J\in\cC(n)$ is called a \emph{split coloring} if $J$ is complete and there exists a partition $V(J)=A\cup B$ such that all vertices and edges in $A$ are green, all vertices and edges in $B$ are blue, and all edges between $A$ and $B$ are red. It is easy to see that $C_4\nHookrightarrow J$ for such a coloring, hence $J\in\cC(n,C_4)$. For all $n$, let $\cS(n)$ be the set of split colorings on $n$ vertices, so that $\cS(n)\subseteq\cC(n,C_4)$.

\begin{lemma}\label{lemma:c4-stability}
For all fixed $\gamma\in(0,1)$, $C_4$ is $(\cS,\gamma)$-stable.
\end{lemma}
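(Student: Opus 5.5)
The plan is to first pin down exactly which colored graphs lie in $\cC(n,C_4)$, and then reduce stability to a low-dimensional calculus problem. The key observation is that a colored homomorphism of $C_4$ (vertices $a,b,c,d$ in cyclic order) may collapse opposite or adjacent vertices, which yields four local obstructions.
\begin{itemize}
\item \textbf{(a)} No red edge joins two green vertices. Map $a,c\mapsto u$ and $b,d\mapsto v$: the diagonals $ac,bd$ are non-edges inside green vertices, and the four cycle edges go to the red edge $uv$.
\item \textbf{(b)} No red edge joins two blue vertices. Map $a,b\mapsto u$ and $c,d\mapsto v$: the cycle edges $ab,cd$ land inside blue vertices, and $bc,da,ac,bd$ all go to the red edge $uv$.
\item \textbf{(c)} No blue edge joins two green vertices. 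Use the same map as in (a).
\item \textbf{(d)} No green edge joins two blue vertices. Use the same map as in (b).
\end{itemize}
Hence if $J\in\cC(n,C_4)$ is complete, writing $A$ for the green vertices and $B$ for the blue vertices, all edges inside $A$ are green and all edges inside $B$ are blue. Red edges occur only in $E(A,B)$, and the $A$--$B$ edges are arbitrary among the three colors. In particular $J$ differs from the split coloring on $(A,B)$ only on cross edges.

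To reduce to the complete case, given $J$ with $e(J^c)\leq\eta n^2$, I would delete the at most $\sqrt{\eta}\,n$ vertices of non-edge degree at least $\sqrt{\eta}\,n$. I would then recolor the remaining $O(\eta n^2)$ missing edges so that the result stays in $\cC(\cdot,C_4)$: inside $A$ green, inside $B$ blue, across red. By (a)--(d) this is legal, since a missing edge is never used by a colored homomorphism, and adding a red cross edge between a green and a blue vertex creates no obstruction because the obstructions above are exhaustive for a pair. The cost is a $o_\eta(1)n^2$ change in $f_\gamma$ and in the Hamming distance, and vertex edits are free in \Cref{dfn:Jgamma-stable}.

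For the complete case, write $|A|=\lambda n$, let $R:=e_\red(J)$, and let $X$ be the number of blue cross edges. Then
$$e_\blue(J)=\binom{(1-\lambda)n}{2}+X\,,\qquad R+X\leq\lambda(1-\lambda)n^2\,.$$
The objective $F_\gamma(R,e_\blue)$ is increasing in $R$ whenever the argument of $H$ lies in $(0,1)$. It is also affected by $X$ only through the numerator, where $X$ acts like an extra blue block.

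The comparison I would make is with the split coloring $S_{\lambda'}$ for a suitable $\lambda'$. Recoloring all green and blue cross edges red while shifting the partition size to absorb the blue count changes the objective toward
$$2\lambda'(1-\lambda')\,H\!\left(\frac{\gamma-\lambda'^2}{2\lambda'(1-\lambda')}\right)\binom{n}{2}\,.$$
This is the function on the right of \eqref{eqn:c4-entropy-main} after the relabeling $\lambda\leftrightarrow 1-\lambda$. One should verify that it equals $\Phi(n,\gamma,C_4)/\binom{n}{2}+o(1)$. If $J$ has at least $\epsilon n^2$ non-red cross edges, then either the density $(\gamma\binom{n}{2}-e_\blue)/R$ is bounded away from $0$ and $1$, and \Cref{lemma:entropy-BOOST} (or a variant allowing $D_b$ of order $n^2$ with the correct sign) gives a gain of $\delta n^2$, contradicting near-optimality. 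Or the density is near $0$ or $1$, and then $f_\gamma(J)$ is itself $\Omega(n^2)$ below $\Phi$.

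The main obstacle is the blue cross edges. Converting them to red decreases $e_\blue$, which moves the $H$-argument and is not covered directly by \Cref{lemma:entropy-BOOST} with $|D_b|\leq n$. I would handle this by showing that $\sup_{X,R}F_\gamma$ at fixed $\lambda$ is attained at $X=0$ and $R=\lambda(1-\lambda)n^2$, with a quantitative gap. To do this, treat the blue cross edges as enlarging the effective blue block: an upper bound by the two-variable concave maximization over $(R,e_\blue)$ subject to $R\leq\lambda(1-\lambda)n^2-X$. The gap then comes from strict concavity of $H$ together with the uniqueness of the optimal $\lambda$ for the analytic one-variable function. I expect this one-variable uniqueness, and the resulting $\delta$--$\epsilon$ dependence, to require the most care.
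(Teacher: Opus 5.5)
\textbf{Your item (d) is false, and the complete-case analysis rests on it.} Take $u,v$ blue with $uv$ green. Under the map $a,b\mapsto u$, $c,d\mapsto v$, the cycle edges $bc,da$ land on the green edge $uv$, which a colored homomorphism forbids. More generally, each blue vertex can only receive a clique of $C_4$, so two cycle edges always land on $uv$. Equivalently, a disjoint union of two cliques is induced-$C_4$-free.

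Consequences for your argument:
\begin{itemize}
\item A complete $J\in\cC(n,C_4)$ may carry $\Theta(n^2)$ green edges inside the blue class $B$.
\item The formula $e_\blue(J)=\binom{(1-\lambda)n}{2}+X$ fails, and $J$ need not agree with a split coloring away from the cross edges.
\item Bounding green edges inside $V_\blue$ is exactly one of the two inequalities the paper must prove, namely \eqref{eqn:many-green-in-vb}.
\item The correct pairwise facts (a)--(c) cannot prove it. Take $|A|=|B|=n/2$, all edges inside $A$ and inside $B$ green, all cross edges red. This satisfies (a)--(c), yet for $\gamma=1/8$ it has $f_\gamma\approx\frac12H(\frac14)\binom n2\approx0.41\binom n2$, well above the split value $\approx0.27\binom n2$.
\end{itemize}

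Your completion step has the same defect. Membership in $\cC(n,C_4)$ is not decided pair by pair. For example, take four blue vertices $y,u,v,x$ with $yu,uv,vx$ blue, $yv,ux$ green, and $xy$ missing. Filling $xy$ blue turns this clique blow-up of a path into a blown-up $C_4$.

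\textbf{The missing idea is a three-vertex obstruction.} If $v$ is green, every edge inside $N_\red(v)$ must be blue. Indeed, sending $a,c\mapsto v$ and $b,d$ to two red neighbours embeds $C_4$ whenever the base edge is red or green. With $e(J^c)\le\eta n^2$ this gives
\begin{itemize}
\item $\binom{d_\red(v)}{2}-\eta n^2\le e_\blue(J)$, hence $d_\red(v)\le b^\ast+2\sqrt{\eta}\,n$, where $b^\ast$ is maximal with $\binom{b^\ast}{2}\le e_\blue(J)$;
\item since red edges only join green to blue vertices, $e_\red(J)\le |V_\green|\,(b^\ast+2\sqrt\eta\, n)$.
\end{itemize}

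The paper then compares $J$ with the split coloring $J^\ast$ whose blue clique has exactly $b^\ast$ vertices. This gives $|D_b|\le n$, so \Cref{lemma:entropy-BOOST} applies as stated, and your worry about blue cross edges disappears. A case split on $g^\ast-g$ (with $g=|V_\green|$ and $g^\ast=n-b^\ast$) uses the red-edge bound and the identity \eqref{eqn:c4-hhstar-id}. It shows $D_r\ge\alpha n^2$ whenever \eqref{eqn:many-green-in-vb} or \eqref{eqn:few-nonred-on-cut} fails, contradicting near-optimality.

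The paper does not complete $J$; missing pairs are carried as error terms. It also needs no uniqueness of the optimal $\lambda$, because stability is only to the whole family $\cS(n)$.
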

\begin{proof}
Fix $\epsilon>0$.
Let $\alpha'\in(0,1/2)$ such that $H(\alpha')=\gamma(1-\gamma)/4$. Let $\delta_{\ref{lemma:entropy-BOOST}}$ be the constant from \Cref{lemma:entropy-BOOST} applied with $\gamma$ and $\alpha:=\min\{\frac{\epsilon\gamma(1-\gamma)}{320},\alpha'\}$. Let
$$\eta:=\left(\frac{\epsilon\gamma(1-\gamma)}{3000}\right)^2,\quad\delta:=\min\left\{\frac{\gamma(1-\gamma)}{8},\,\frac{\delta_{\ref{lemma:entropy-BOOST}}}{2}\right\}\,.$$
Let $J\in\cC(n,C_4)$ satisfy $e(J^c)\leq\eta n^2$ and
$$f_\gamma(J)\geq \Phi(n,\gamma,C_4)-\delta n^2\,.$$

Let $\sigma$ denote the coloring of $J$ and define the two vertex sets $V_\green:=\{v:\sigma(v)=\green\}$ and $V_\blue:=\{v:\sigma(v)=\blue\}$. For all $v\in V_\green$, every edge in $N_\red(v)$ is colored blue since $C_4\hookrightarrow\rrgtriangle$\,. (We use white vertices to convey that the colored homomorphism holds for any $\{\blue,\green\}$-labeling of those vertices.) Similarly, for all $v\in V_\blue$, every edge in $N_\red(v)$ is colored green since $C_4\hookrightarrow\rrbtriangle$\,. Moreover, all edges in $J[V_\green]$ are colored green since $C_4\hookrightarrow\grg$ and $C_4\hookrightarrow\gbg$; and $J[V_\blue]$ contains no red edges since $C_4\hookrightarrow\brb$.

To prove the lemma, it suffices to prove the inequalities
\begin{align}
&e_\green(V_\blue)\leq {\textstyle\frac{1}{3}}\epsilon n^2 \,, \label{eqn:many-green-in-vb}\\[3pt]
&e_\green(V_\green,V_\blue) + e_\blue(V_\green,V_\blue) \leq {\textstyle\frac{1}{3}}\epsilon n^2\,, \label{eqn:few-nonred-on-cut}
\end{align}
since by making all green edges in $V_\blue$ blue, all non-red edges between $V_\green$ and $V_\blue$ red, and forming the at most $\eta n^2$ nonedges, we can make $J$ a split coloring via at most $\epsilon n^2$ edits.

Assuming \eqref{eqn:many-green-in-vb} or \eqref{eqn:few-nonred-on-cut} fails, we will construct a coloring with entropy greater than $\Phi(n,\gamma,C_4)$, a contradiction. We introduce the construction now. Let $b^\ast$ be the greatest integer such that $\binom{b^\ast}{2}\leq e_\blue(J)$, and let $g^\ast:=n-b^\ast$. Define the complete colored graph $J^\ast$ as follows: partition the vertex set $V(K_n)$ into subsets $U_\green$ and $U_\blue$ of sizes $g^\ast$ and $b^\ast$, respectively; $J^\ast[U_\green]$ is a green clique and all its vertices are green; $J^\ast[U_\blue]$ is a blue clique and all its vertices are blue; all edges between $U_\green$ and $U_\blue$ are red. Henceforth the notation $d_c$ and $N_c$ for a color $c\in\{\red,\green,\blue\}$ refers to $J$ and not $J^\ast$. Write $g:=|V_\green|$ and $b:=|V_\blue|$.

To obtain the contradiction, we will apply \Cref{lemma:entropy-BOOST} with $R:=e_\red(J)$, $D_r:=e_\red(J^\ast)-e_\red(J)$, $B:=e_\blue(J)$, and $D_b:=e_\blue(J^\ast)-e_\blue(J)$. We will verify that the hypotheses of \Cref{lemma:entropy-BOOST} are satisfied with $\gamma$ and $\alpha$. First, from \eqref{eqn:Phi0-LB} we have
\begin{equation}\label{eqn:c4-red-lb}
e_\red(J)\geq f_\gamma(J)\geq\Phi(n,\gamma,C_4)-\delta n^2\geq\left(\frac{\gamma(1-\gamma)}{4}-\delta\right)n^2\geq\frac{\gamma(1-\gamma)}{8}\,n^2\,.
\end{equation}
The inequality $|D_b|\leq n$ holds trivially by definition of $J^\ast$. Using \eqref{eqn:c4-red-lb}, we also verify that
$$H\bigg(\frac{\gamma\binom{n}{2}-B}{R}\bigg)=\frac{f_\gamma(J)}{R}\geq\frac{\gamma(1-\gamma)}{4}\,,$$
so the symmetry of $H$ and the fact that $\alpha\leq\alpha'$ implies $\alpha\leq\frac{\gamma\binom{n}{2}-B}{R}\leq1-\alpha$. To complete the proof of the lemma, it suffices to prove $D_r\geq\alpha n^2$, since then all the hypotheses of \Cref{lemma:entropy-BOOST} are met, implying
$$f_\gamma(J^\ast)-f_\gamma(J)=F_\gamma(R+D_r,B+D_b)-F_\gamma(R,B)\geq\delta_{\ref{lemma:entropy-BOOST}} n^2\,,$$
which contradicts $f_\gamma(J)\geq\Phi(n,\gamma,C_4)-\delta_{\ref{lemma:entropy-BOOST}}n^2/2$. In the remainder, we show that the failure of \eqref{eqn:many-green-in-vb} or \eqref{eqn:few-nonred-on-cut} leads to $D_r\geq\alpha n^2$.

We will use the following observations. For all $v\in V_\green$,
$$\binom{b^\ast}{2}+n\geq e_\blue(J[N_\red(v)])\geq\binom{d_\red(v)}{2}-\eta n^2\,,$$
which implies $d_\red(v)\leq b^\ast+2\sqrt{\eta}n$ (for all large enough $n$). We calculate
\begin{equation}\label{eqn:c4-vgbeta-bd}
e_\red(J)=\sum_{v\in V_\green}d_\red(v)\leq g\cdot(b^\ast+2\sqrt{\eta}n)\,,
\end{equation}
which we combine with \eqref{eqn:c4-red-lb} to deduce $b^\ast\geq\frac{\gamma(1-\gamma)}{10}n$ and $g\geq\frac{\gamma(1-\gamma)}{10}n$.

We consider three cases depending on the discrepancy $g^\ast-g$.

\noindent\textbf{Case 1:} First, if $g^\ast\geq g+{\textstyle\frac{1}{16}}\epsilon n$ then
from \eqref{eqn:c4-vgbeta-bd} and $e_\red(J^\ast)=g^\ast b^\ast$, we deduce
\begin{align*}
e_\red(J^\ast)-e_\red(J)&\geq b^\ast(g^\ast-g)-2\sqrt{\eta}\,g\,n \\
&\geq \frac{\epsilon b^\ast n}{16}-2\sqrt{\eta}n^2\geq\frac{\epsilon\gamma(1-\gamma)}{160}n^2-\frac{\epsilon\gamma(1-\gamma)}{1500}n^2\geq\alpha n^2\,.
\end{align*}

\noindent\textbf{Case 2:} If $g\geq g^\ast+\tfrac{1}{16}\epsilon n$ then using $e_\green(J)\geq\binom{g}{2}-e(J^c)$ and $e_\green(J^\ast)=\binom{g^\ast}{2}$,
\begin{align*}
e_\green(J)-e_\green(J^\ast) &\geq \binom{g}{2}-\binom{g^\ast}{2}-e(J^c) \\
&= \frac{(g-g^\ast)(g+g^\ast-1)}{2}-e(J^c) \geq \frac{\epsilon\gamma(1-\gamma)}{320}n^2-e(J^c) \,,
\end{align*}
where we used $g-g^\ast\geq\tfrac{1}{16}\epsilon n$ and $g\geq\frac{\gamma(1-\gamma)}{10}n$. Using the identity
\begin{equation}\label{eqn:c4-hhstar-id}
\begin{aligned}
e_\red(J^\ast) &= \binom{n}{2} - e_\green(J^\ast) - e_\blue(J^\ast) \\
&= e_\red(J) + \big(e_\green(J)- e_\green(J^\ast)+ e(J^c)\big) + \big(e_\blue(J) - e_\blue(J^\ast)\big) \,,
\end{aligned}
\end{equation}
we obtain $e_\red(J^\ast)\geq e_\red(J)+\alpha n^2$.

\noindent\textbf{Case 3:} Finally assume $|g^\ast-g|\leq\tfrac{1}{16}\epsilon n$. If \eqref{eqn:many-green-in-vb} fails then comparing green edges,
$$e_\green(J)\geq\binom{g}{2}+e_\green(J[V_\blue])-\eta n^2\geq\binom{g}{2}+\frac{\epsilon n^2}{3}-\eta n^2\,.$$
We also have $e_\green(J^\ast)=\binom{g^\ast}{2}$ and
$$\binom{g^\ast}{2}-\binom{g}{2}=\frac{(g^\ast-g)(g^\ast+g-1)}{2}\leq(g^\ast-g)n\leq\frac{\epsilon n^2}{16}\,,$$
so combining the above bounds gives $e_\green(J)\geq e_\green(J^\ast)+{\textstyle\frac{1}{4}}\epsilon n^2$. Plugging this into \eqref{eqn:c4-hhstar-id} yields $e_\red(J^\ast)\geq e_\red(J)+{\textstyle\frac14}\epsilon n^2$. On the other hand, if \eqref{eqn:few-nonred-on-cut} fails then since the mapping $x\mapsto x(n-x)$ is $n$-Lipschitz on $[0,n]$,
$$|g^\ast b^\ast-gb|=|g^\ast(n-g^\ast)-g(n-g)| \leq n|g^\ast-g| \leq \tfrac{1}{16}\epsilon n^2\,,$$
which we combine with $e_\red(J)\leq gb-\frac13\epsilon n^2$ to deduce
$$D_r=g^\ast b^\ast-e_{\red}(J) \geq g^\ast b^\ast-\Big(gb-\tfrac13\epsilon n^2\Big) \geq -\tfrac{1}{16}\epsilon n^2+\tfrac13\,\epsilon n^2 = \tfrac14\epsilon n^2\,,$$
completing the proof.
\end{proof}

For a type $T$ and integers $n$ and $m$, define
$$N_{n,m}(T):=\abs{\{G\in\cG(n,m):G\text{ has }T\}}\,,$$
where \emph{having a type} is the notion defined in \Cref{dfn:type}.

\begin{definition}[Distance to a colored graph]
Let $G$ be a graph on $[n]$ and let $J=(F,\sigma)\in\cC(n)$ be a colored graph on $[n]$. Define
$$d(G,J):=\abs{\{e\in E(J):\sigma(e)=\blue,\, e\notin E(G)\}}+\abs{\{e\in E(J):\sigma(e)=\green,\, e\in E(G)\}}\,,$$
and for a subset $\cJ(n)\subseteq\cC(n)$, set $d(G,\cJ(n)):=\min\{d(G,J):J\in\cJ(n)\}$.
\end{definition}

We will use the following explicit lower bound on $\Phi(n,\gamma,C_4)$. Let $J_\spl\in\cS(n)$ be a split coloring whose blue clique is on $k:=\floor{\gamma n}$ vertices and whose green clique is on $l:=n-k$ vertices. It is easy to check that
$$\frac{\gamma\binom{n}{2}-e_\blue(J_\spl)}{e_\red(J_\spl)}=\frac{1}{2}+O_\gamma\!\left(\frac1n\right)\,,$$
which implies
\begin{equation}
\Phi(n,\gamma,C_4)\geq f_\gamma(J_\spl)\geq\frac{e_\red(J_\spl)}{2}\geq\frac{\gamma(1-\gamma)}{4}\,n^2 \label{eqn:Phi0-LB}
\end{equation}
for all large enough $n$.

\begin{proof}[Proof of \Cref{lemma:c4-rough-struc}]
We first prove a lower bound on $N^\ast_{n,m}(C_4)$ using an entropy maximizer. Let $J^\ast\in\cC(n,C_4)$ be a colored graph attaining the maximum $f_\gamma(J^\ast)=\Phi(n,\gamma,C_4)$ and write $q:=\frac{\gamma\binom{n}{2}-e_\blue(J^\ast)}{e_\red(J^\ast)}$.
By \eqref{eqn:Phi0-LB}, we have
$$e_\red(J^\ast)\,H(q)=\Phi(n,\gamma,C_4)\geq \frac{\gamma(1-\gamma)}{4}\,n^2\,.$$
In particular, $\frac{\gamma(1-\gamma)}{4}\,n^2\leq e_\red(J^\ast)\leq\binom{n}{2}$, hence $H(q)\geq \gamma(1-\gamma)/2$ and there exists $\alpha=\alpha(\gamma)\in(0,\frac12)$ such that $q\in[\alpha,1-\alpha]$ for all large enough $n$.

Now let $p:=\frac{m-e_\blue(J^\ast)}{e_\red(J^\ast)}$. Since $m=\gamma\binom{n}{2}+o(n^2)$, we have $|p-q|=o(1)$,
and therefore $p\in[0,1]$ for all large enough $n$. Consider the family $\cP(J^\ast,m)$ of (simple, uncolored) graphs $G$ on $n$ vertices and $m$ edges obtained as follows: include every blue edge of $J^\ast$, exclude every green edge of $J^\ast$, and choose the remaining $m-e_\blue(J^\ast)$ edges among the red edges of $J^\ast$. Note $\cP(J^\ast,m)$ is nonempty precisely because $p\in[0,1]$. Every $G\in\cP(J^\ast,m)$ is induced-$C_4$-free, since an induced copy of $C_4$ in $G$ would give a colored homomorphism $C_4\hookrightarrow J^\ast$, contradicting $J^\ast\in\cC(n,C_4)$. We now compute that
\begin{align}
\log N^\ast_{n,m}(C_4)&\geq \log|\cP(J^\ast,m)| \nonumber\\
&= \log\binom{e_\red(J^\ast)}{p\,e_\red(J^\ast)} \nonumber\\
&= e_\red(J^\ast)\,H(p)+o(n^2) \nonumber\\
&= e_\red(J^\ast)\,H(q)+o(n^2) \nonumber\\
&= f_\gamma(J^\ast)+o(n^2) \nonumber\\
&= \Phi(n,\gamma,C_4)+o(n^2) \,. \label{eqn:lower-bound-Nstar}
\end{align}

We now prove an upper bound for the number of graphs of a fixed type.
Using \Cref{lemma:c4-stability}, let $\delta_0>0$ and $\eta_0>0$ be the constants from $(\cS,\gamma)$-stability of $C_4$ for $\epsilon/2$.
Let $C=C(\gamma)>0$ be a sufficiently large constant. Choose $\tau\in(0,\frac12)$ so that $4\tau<\epsilon/2$ and $C\cdot H(\tau)+3\tau\leq \delta_0/4$.
Apply \Cref{lemma:type-lemma} with parameters $\tau$, $\ell:=4$, $L:=\lceil 1/\tau\rceil$, $t:=1$, and the trivial initial partition $\{V(G)\}$, and let $\eta>0$, $u\in\mathbb{N}$, and $n_1$ be the resulting constants. By reducing $\eta$ if necessary, we may assume $\eta\leq \min\{\tau,\eta_0/10\}$.
Hence every graph on at least $n_1$ vertices has an $(\eta,\tau,4)$-type $(P,R)$ with $\lceil 1/\tau\rceil\leq v(R)\leq u$. Fix $n\geq n_1$.

Let $T=(P,R)$ be such a type, and write $P=\{V_0,V_1,\dots,V_k\}$. Let $\sigma$ be the coloring of $R$. Define a colored graph $R'$ on the vertex set $[n]$ as follows:
for all $i\in[k]$, include all edges $xy\in\binom{V_i}{2}$ in $R'$, and color all vertices and edges in $V_i$ by $\sigma(i)$; for all $ij\in E(R)$ and all $(x,y)\in V_i\times V_j$, include $xy\in E(R')$ and color $xy$ by $\sigma(ij)$; all remaining pairs (i.e.\ pairs incident to $V_0$ or corresponding to $ij\notin E(R)$) are nonedges of $R'$, and vertices in $V_0$ are colored green.
Thus $R'$ is a colored graph on $[n]$ with $e((R')^c)\leq 2\eta n^2$, and a routine counting argument implies
\begin{equation}\label{eqn:type-counting}
N_{n,m}(T) \leq 2^{f_\gamma(R')+(2\eta+C\cdot H(\tau)+\tau)n^2+o(n^2)}\,.
\end{equation}
Indeed, the number of choices for edges incident to $V_0$ or between irregular pairs is at most $2^{2\eta n^2}$; for edges inside the parts $V_i$, it is at most $2^{\tau n^2}$; and for the remaining pairs, Stirling's formula gives a contribution of at most $2^{f_\gamma(R')+C\cdot H(\tau)n^2+o(n^2)}$, since green and blue pairs have densities in $[0,\tau)\cup(1-\tau,1]$, while the binomial coefficient for the red pairs has lower entry differing from $\gamma\binom{n}{2}-e_\blue(R')$ by $O(\tau n^2)$.

Since $d(G,\cS(n))$ is exactly the edit distance from $G$ to a split graph, we can write $\cF^\far=\{G\in\cF^\ast_{n,m}(C_4):d(G,\cS(n))\geq\epsilon n^2\}$.
For each $G\in\cF^\far$, fix a type $T_G=(P_G,R_G)$ for $G$ given by \Cref{lemma:type-lemma}, and let $R'_G$ be the associated colored graph on $[n]$ defined above. To pass from $G$ to a graph consistent with $R'_G$, one only needs to edit pairs incident to $V_0$, irregular pairs, regular green/blue pairs, and edges inside the parts $V_i$, hence
$$d(G,R'_G)\leq |V_0|n+\eta\binom{k}{2}\Big(\frac{n}{k}\Big)^2+\tau\binom{n}{2}+\sum_{i=1}^k \binom{|V_i|}{2}\leq 3\tau n^2\,,$$
where we used $\eta\leq\tau$ and $k\geq \lceil 1/\tau\rceil$. Since $G\in\cF^\far$, it follows that
$$d(R'_G,\cS(n))\geq d(G,\cS(n))-d(G,R'_G)\geq (\epsilon-3\tau)n^2\geq \frac{\epsilon}{2}\,n^2\,.$$
Moreover, we have $e((R'_G)^c)\leq 2\eta n^2\leq \eta_0 n^2$, and $R'_G\in\cC(n,C_4)$ since any colored homomorphism $\phi:C_4\hookrightarrow R'_G$ projects to a colored homomorphism $C_4\hookrightarrow R_G$, contradicting property \ref{item:type-colored-homo} of the type $T_G$, since $G$ is induced-$C_4$-free. Therefore, the $(\cS,\gamma)$-stability of $C_4$ from \Cref{lemma:c4-stability} implies
\begin{equation}\label{eqn:entropy-gap}
f_\gamma(R'_G) \leq \Phi(n,\gamma,C_4)-\delta_0 n^2\,.
\end{equation}

Now group graphs in $\cF^\far$ by their type $T=(P,R)$. The number of possible partitions $P$ and
colored graphs $R$ with $k\leq u$ is at most $\exp(O(n\log n))=2^{o(n^2)}$. Hence, combining
\eqref{eqn:type-counting} and \eqref{eqn:entropy-gap} and using the choice of $\tau$, we obtain
$$|\cF^\far| \leq 2^{\Phi(n,\gamma,C_4)-\frac{1}{2}\delta_0 n^2+o(n^2)}\,.$$
Together with \eqref{eqn:lower-bound-Nstar}, this completes the proof.
\end{proof}

\subsection{Final counting argument}
For all $G\in\cF^\ast_{n,m}(C_4)$ and all (ordered) partitions $\Pi=(A,B)$ of $V$, define
$$b(G,\Pi):=e(G[A])+e(G^c[B])$$
and let $\Pi(G)$ be a canonically chosen bipartition minimizing $b(G,\cdot)$. If $\Pi(G)=(A,B)$ then let $T(G)$ be the graph on $V$ with edge set
$$E(T(G)):=E(G[A])\cup E(G^c[B])\,.$$
Define the parameters
$$\zeta:=\epsilon^{1/20}\,,\qquad \alpha:=\zeta^4\,,$$
and define the function
$$\psi_\gamma(x):=x(1-x)\,H\!\left(\frac{\gamma-x^2}{2x(1-x)}\right)$$
for all $x\in[1-\sqrt{1-\gamma},\,\sqrt{\gamma}]$. A short calculus argument shows that $\psi_\gamma$ has a unique maximizer $\lambda_\gamma\in(0,1)$. For $\zeta>0$, let us say that a partition $(A,B)$ is \emph{$\zeta$-degenerate} if $||B|-\lambda_\gamma n|>\zeta n$, and \emph{$\zeta$-nondegenerate} otherwise. Let us also say that a graph $G\in\cF^\close$ is $\zeta$-degenerate if its optimal partition $\Pi(G)$ is $\zeta$-degenerate. Define the set of graphs
$$\cF^\degenerate:=\{G\in\cF^\close:G\text{ is $\zeta$-degenerate}\}\,.$$
Let $S_{n,m}$ be the number of split graphs on $n$ vertices and $m$ edges. Let us write $\sP$ for the set of all $\zeta$-nondegenerate partitions $(A,B)$ of the vertex set $V$. For all partitions $\Pi=(A,B)$, define the sets of graphs
\[\arraycolsep=0.4mm
\begin{array}{ll}
\cF_\Pi &:= \{G\in\cF^\close:\Pi(G)=\Pi\} \,, \\[4pt]
\cF^\ast_\Pi &:= \{G\in\cF^\close:\Pi(G)=\Pi,\,b(G,\Pi(G))=0\} \,, \\[4pt]
\cT_\Pi &:= \{T(G):G\in\cF_\Pi\} \,, \\[4pt]
\cF_{\Pi,T} &:= \{G\in\cF_\Pi:T(G)=T\} \,,
\end{array}\]
and write
$$s_\Pi:=\binom{|A|\cdot|B|}{m-e(K_B)}\,.$$

\begin{lemma}\label{lemma:almost-all-nondeg}
There exists a constant $c=c(\gamma,\epsilon)>0$ such that
$$|\cF^\degenerate|\leq 2^{-cn^2}\,N^\ast_{n,m}(C_4)\,.$$
\end{lemma}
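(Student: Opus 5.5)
We bound $|\cF^\degenerate|$ by grouping graphs according to their optimal partition $\Pi(G)=(A,B)$ and comparing with the lower bound \eqref{eqn:lower-bound-Nstar}. First, we identify the maximum of $\Phi(n,\gamma,C_4)$ up to $o(n^2)$. Every split coloring with $|B|=xn$ has $e_\red=x(1-x)n^2$ and $e_\blue=x^2n^2/2+O(n)$, so $f_\gamma=2\psi_\gamma(x)\binom{n}{2}+o(n^2)$. \Cref{lemma:c4-stability} together with \Cref{lemma:entropy-BOOST}-type continuity shows that every near-maximizer is $o(n^2)$-close to a split coloring. Hence $\Phi(n,\gamma,C_4)=2\psi_\gamma(\lambda_\gamma)\binom n2+o(n^2)$, and by \eqref{eqn:lower-bound-Nstar}
$$\log N^\ast_{n,m}(C_4)\geq n^2\psi_\gamma(\lambda_\gamma)-o(n^2).$$

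Next, fix a $\zeta$-degenerate partition $\Pi=(A,B)$, so that $||B|-\lambda_\gamma n|>\zeta n$. For $G\in\cF^\close$ with $\Pi(G)=\Pi$ we have $b(G,\Pi)\leq\epsilon n^2$, since the closest split partition is at least as good as $\Pi(G)$ by minimality. Such a $G$ is determined by two choices: the defect graph $T(G)$, which is a set of at most $\epsilon n^2$ pairs inside $A$ or $B$, and the bipartite graph $G[A,B]$. The number of choices for $T(G)$ is at most $2^{H(2\epsilon)n^2}$. Given $T$, the number of edges of $G[A,B]$ is $m-\binom{|B|}{2}+O(\epsilon n^2)$, so the number of choices for $G[A,B]$ is at most
$$\binom{|A||B|}{m-\binom{|B|}2+O(\epsilon n^2)}\leq 2^{n^2\psi_\gamma(|B|/n)+O(H(\epsilon)) n^2+o(n^2)}.$$
When the binomial argument falls outside $[0,|A||B|]$ the count is zero. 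The domain restriction $x\in[1-\sqrt{1-\gamma},\sqrt\gamma]$ is exactly this feasibility condition, up to $O(\epsilon)$ slack.

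The key quantitative input, and the step I expect to need the most care, is a uniform gap: there is $c_1=c_1(\gamma,\zeta)>0$ such that $\psi_\gamma(x)\leq\psi_\gamma(\lambda_\gamma)-c_1$ whenever $|x-\lambda_\gamma|>\zeta$. This follows from continuity of $\psi_\gamma$ on the compact domain together with uniqueness of the maximizer $\lambda_\gamma$ (the claimed calculus fact). We should also handle points near the domain endpoints, where $\psi_\gamma\to0$, so that the $O(\epsilon)$ slack in the edge count still lands in the gap regime.

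Finally, we choose $\epsilon$ small relative to $\zeta=\epsilon^{1/20}$, which makes the loss $O(H(2\epsilon))$ small compared with $c_1$. This is a point to check: we need $c_1(\gamma,\zeta)\gg H(\epsilon)$, and near $\lambda_\gamma$ the gap is quadratic, of order $\zeta^2=\epsilon^{1/10}$, which dominates $H(2\epsilon)\approx\epsilon\log(1/\epsilon)$. Summing over at most $2^n$ partitions then gives
$$|\cF^\degenerate|\leq 2^{n}\cdot 2^{n^2(\psi_\gamma(\lambda_\gamma)-c_1+O(H(2\epsilon)))+o(n^2)}\leq 2^{-cn^2}N^\ast_{n,m}(C_4).$$
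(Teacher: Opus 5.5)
Your proposal is correct and follows essentially the same route as the paper. You lower-bound $N^\ast_{n,m}(C_4)$ by split graphs with $|B|\approx\lambda_\gamma n$; for this you only need $\Phi(n,\gamma,C_4)\geq f_\gamma$ of such a split coloring, so the upper half of your claimed formula for $\Phi(n,\gamma,C_4)$ and its appeal to \Cref{lemma:c4-stability} can simply be dropped. For each $\zeta$-degenerate $\Pi$ you then bound the count by at most $2^{O(H(\epsilon))n^2}$ defect graphs times a cross-edge binomial of size $2^{\psi_\gamma(|B|/n)n^2+O(H(\epsilon))n^2+o(n^2)}$, apply the uniqueness gap for $\psi_\gamma$, and take a union bound over $2^n$ partitions, exactly as the paper does.

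Your explicit check that the gap is of order $\zeta^2=\epsilon^{1/10}\gg H(2\epsilon)$ sharpens a step the paper passes over with ``taking $\epsilon$ sufficiently small,'' even though $\zeta$ shrinks with $\epsilon$. That check is valid because $\psi_\gamma$ is strictly concave on its domain.
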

\begin{proof}
Fix a partition $(A_0,B_0)$ with $|B_0|=\floor{\lambda_\gamma n}$ and consider all graphs $G$ with $G[A_0]$ empty, $G[B_0]$ complete, and exactly $m-\binom{|B_0|}{2}$ edges between $A_0$ and $B_0$. Clearly, every such graph is split, and hence induced-$C_4$-free. Writing $p_\gamma:=\frac{\gamma-\lambda_\gamma^2}{2\lambda_\gamma(1-\lambda_\gamma)}$,
we have
$$\frac{m-\binom{|B_0|}{2}}{|A_0|\cdot|B_0|}=p_\gamma+o(1)\,,$$
and therefore
\begin{equation}\label{eqn:c4-split-lb}
N^\ast_{n,m}(C_4)\geq \binom{|A_0|\cdot|B_0|}{m-\binom{|B_0|}{2}}
=2^{\psi_\gamma(\lambda_\gamma)n^2+o(n^2)}\,.
\end{equation}

Now fix a $\zeta$-degenerate partition $\Pi=(A,B)$ and write $b:=|B|/n$. For every graph $G\in\cF_\Pi$ we have $e(T(G))\leq\epsilon n^2$, hence the number of possibilities for the defect graph $T(G)$ is at most $\sum_{t=0}^{\epsilon n^2}\binom{\binom{n}{2}}{t}\leq 2^{H(\epsilon)n^2}$. For each fixed $T$, writing $t:=e(T[A])-e(T[B])$, a graph $G$ with defect graph $T(G)=T$ is determined by its edges across the cut $(A,B)$, which implies
$$|\cF_{\Pi,T}|\leq \binom{|A|\cdot|B|}{m-\binom{|B|}{2}-t}\,.$$
Since $|t|\leq\epsilon n^2$, it follows from \cite[Lemma~A.1(ii)]{perkins2025typical} that $|\cF_{\Pi,T}|\leq 2^{(\psi_\gamma(b)+c\epsilon)n^2+o(n^2)}$
for some constant $c=c(\gamma)>0$. Hence
$$|\cF_\Pi|\leq 2^{(\psi_\gamma(b)+c\epsilon+H(\epsilon))n^2+o(n^2)}\,.$$
Since $\lambda_\gamma$ is the unique maximizer of $\psi_\gamma$ and $|b-\lambda_\gamma|\geq\zeta$, there exists a constant $c'=c'(\gamma,\zeta)>0$ such that $\psi_\gamma(b)\leq\psi_\gamma(\lambda_\gamma)-2c'$. Taking $\epsilon>0$ sufficiently small gives
$$|\cF_\Pi|\leq 2^{(\psi_\gamma(\lambda_\gamma)-c')n^2+o(n^2)}\,.$$
There are at most $2^n$ $\zeta$-degenerate partitions, so summing over partitions gives
$$|\cF^\degenerate|\leq 2^{(\psi_\gamma(\lambda_\gamma)-c')n^2+o(n^2)}\,.$$
Comparing with \eqref{eqn:c4-split-lb} implies the statement of the lemma.
\end{proof}

\begin{remark}[Consequences of nondegeneracy]\label{rem:c4-nondeg-bounds}
Since $\lambda_\gamma\in(0,1)$ is fixed in terms of $\gamma$, taking $\epsilon>0$ sufficiently small implies that every $\Pi=(A,B)\in\sP$ satisfies $|A|,|B|=\Theta_\gamma(n)$. Additionally, there exists a constant $\beta=\beta(\gamma)\in(0,\frac12)$ such that for all large enough $n$, if $\Pi=(A,B)\in\sP$, $|t|\leq \epsilon n^2+n$, and $0\leq z\leq n$ then
\begin{equation}\label{eqn:c4-p-bounded}
\beta\leq \frac{m-\binom{|B|}{2}-t}{|A|\cdot|B|-z}\leq1-\beta\,,
\end{equation}
and the same bound also holds if one subtracts $z$ from the numerator (by decreasing $\beta$).
\end{remark}

The following lemma quantifies how the number of graphs in $\cF_{\Pi,T}$ is constrained if the defect graph $T$ contains a matching of a given size.

\begin{lemma}\label{lemma:c4-matching}
There exists a constant $c=c(\gamma)>0$ such that the following holds. Let $\Pi=(A,B)\in\sP$ and $T\in\cT_\Pi$. If $T[A]$ or $T[B]$ has a matching of size $k\geq1$ then
$$|\cF_{\Pi,T}|\leq 2^{-ckn}\binom{|A|\cdot|B|}{m-e(K_B)-e(T[A])+e(T[B])}\,.$$
\end{lemma}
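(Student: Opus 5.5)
We follow the template of \Cref{lemma:super-FPiT}: we encode $\cF_{\Pi,T}$ as the outcomes of a random graph with all non-cut edges fixed, and then use Janson's inequality to show that most such outcomes contain an induced $C_4$.

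\textbf{The fixed-count model.} Fix $\Pi=(A,B)\in\sP$ and $T\in\cT_\Pi$, and set
$$M:=m-e(K_B)-e(T[A])+e(T[B])\,.$$
Every $G\in\cF_{\Pi,T}$ has the following form: inside $A$ the edges are exactly $E(T[A])$, inside $B$ they are exactly the complement of $E(T[B])$, and exactly $M$ edges cross the cut $(A,B)$. Let $G_{\Pi,T}$ be the random graph with these fixed parts and a uniformly random $M$-subset of $E(K_{A,B})$ across the cut. Then
$$|\cF_{\Pi,T}|\leq\binom{|A||B|}{M}\,\bbP\{G_{\Pi,T}\not\geq C_4\}\,.$$
By \eqref{eqn:c4-p-bounded}, the density $q:=M/(|A||B|)$ lies in $[\beta,1-\beta]$. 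Conditioning the binomial model with edge probability $q$ on having exactly $M$ edges costs only a factor $n^{O(1)}$. So it suffices to show that in the binomial model $G^\bin$,
$$\bbP\{G^\bin\not\geq C_4\}\leq e^{-ckn}\,.$$

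\textbf{The case of a matching in $T[A]$.} Let $M'\subseteq T[A]$ be a matching of size $k$; each $xy\in M'$ is an edge of $G$ inside $A$. Pass to a submatching of size about $\min\{k,\zeta n\}$ whose vertex set is small. For $xy\in M'$, take $u,w\in B$ distinct and let $K_{xy,u,w}$ be the 4-cycle $x,y,u,w$, with
\begin{itemize}
\item edges $xy$ (present) and $uw$ (present unless $uw\in T[B]$, which we discard);
\item random cross edges $yu$ and $xw$ required present;
\item random cross edges $xu$ and $yw$ required absent.
\end{itemize}
This is an induced $C_4$. Each such copy has probability at least $\beta^4$. The number of copies is $\Theta(n^2)$ per matching edge after discarding the $O(\epsilon n^2)$ choices hitting $T$, since $e(T)\leq\epsilon n^2$. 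Hence $\mu=\Theta(kn^2)$.

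\textbf{The case of a matching in $T[B]$.} This is symmetric: $xy\in T[B]$ is a non-edge, and we take $u,w\in A$ with $uw$ a non-edge of $G$. Then
\begin{itemize}
\item the random cross edges $xu,uy,yw,wx$ are all required present;
\item the non-edges $xy$ and $uw$ supply the two diagonals.
\end{itemize}
This again gives an induced $C_4$.

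\textbf{Correlation and the Janson bound.} The events are increasing/decreasing in fixed coordinates, so they lie in a lattice of positively correlated events as required by \Cref{thm:JansonsInequality}. In the $T[B]$ case all requirements are increasing, so this is immediate. In the $T[A]$ case the requirements are mixed, so we apply the usual trick: flip the meaning of the coordinates $xu$ and $yw$, orienting them as in \Cref{lemma:super-FPiT}. Two copies are dependent only if they share a random cross pair. For a fixed copy, the number of dependent copies is $O(n)$ from the same matching edge plus $O(k)$ through a shared cross pair with another matching edge. Hence
$$\Delta=O(kn^3+k^2n^2)\,.$$
With $k\leq n$ this gives $\mu^2/\Delta=\Omega(kn)$ and $\mu/2=\Omega(kn^2)$. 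Janson's inequality then yields the desired $e^{-ckn}$ after absorbing the polynomial transfer factor.

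\textbf{Main obstacle.} The subtle point is the positive-correlation hypothesis for mixed present/absent requirements in the $T[A]$ case. I would first try to choose, for each $xy$, a fixed orientation so that each cross coordinate appears with a consistent required value across all copies. For example, restrict $u\in U$ and $w\in W$ for a fixed split $B=U\cup W$, and use disjoint halves of $A$'s matching. With $yu$ and $xw$ present and $xu$ and $yw$ absent, coordinates at distinct $(a,b)$ never conflict in sign. If a conflict does arise, a further random halving of coordinates before applying Janson's inequality resolves it at a constant loss.
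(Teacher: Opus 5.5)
Your proof is correct, but it takes a different route from the paper's.

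\textbf{The paper's route.} The paper never uses Janson's inequality here. In the case $M\subseteq T[A]$, it notes that $G[B]=K_B\setminus T[B]$ has $\Omega(n^2)$ edges. By \cite[Lemma~A.3]{perkins2025typical} it therefore contains a matching $M'$ of size at least $cn$. For each pair $(e,f)\in M\times M'$ with $e=x_1x_2$ and $f=y_1y_2$, it uses the single induced $C_4$ with edges $e,f,x_1y_1,x_2y_2$. The case $M\subseteq T[B]$ is symmetric, using a linear matching in $G^c[A]$. Since $M$ and $M'$ are both matchings, distinct copies use disjoint quadruples of cross pairs. The events are therefore independent, and $(1-\beta^2(1-\beta)^2)^{|M||M'|}\leq 2^{-ckn}$ follows at once.

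\textbf{Your route.} You use all $\Theta(n^2)$ pairs $(u,w)$ for each matching edge, so $\mu=\Theta(kn^2)$ and $\Delta=O(kn^3)$. The exponent $\mu^2/(4\Delta)=\Omega(kn)$ is the same as the paper's, so the extra copies buy nothing. The cost is that you must verify the correlation hypothesis.

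\textbf{The correlation fix.} Your fix does handle this. Fix an orientation $(x,y)$ of each matching edge and a split $B=U\cup W$ with $u\in U$, $w\in W$. Then each cross pair $(a,b)$ carries a single required value, determined by the role of $a$ and the side of $b$. After complementing the ``absent'' coordinates, every event is increasing in independent coordinates, so \Cref{thm:JansonsInequality} applies. In the $T[B]$ case all requirements are already increasing, as you say. The restriction is genuinely necessary, not just a first attempt: without it, $K_{xy,u,w}$ and $K_{xy,w,u}$ are mutually exclusive events. It is also sufficient, so the random-halving fallback is not needed.

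\textbf{Two small corrections.}
\begin{enumerate}
\item The $O(k)$ contribution to $\Delta$ from other matching edges is actually zero. Vertex-disjoint matching edges share no cross pair.
\item Passing to a submatching of size $\min\{k,\zeta n\}$ is unnecessary. It would also make your constant depend on $\zeta$, hence on $\epsilon$, whereas the lemma asks for $c=c(\gamma)$. Keep the whole matching.
\end{enumerate}
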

\begin{proof}
Define $t:=e(T[A])-e(T[B])$ and $m':=m-e(K_B)-t$. Let $H'$ be the uniformly random bipartite subgraph of $K_{A,B}$ with $m'$ edges, and let $H''$ be the random bipartite subgraph of $K_{A,B}$ where each edge is included independently with probability $p:=\frac{m'}{|A|\cdot|B|}$. Let $G'$ and $G''$ be the random graphs on $V$ with edge sets
$$E(G')=(E(H')\cup K_B)\,\triangle\,E(T)\,,\qquad E(G'')=(E(H'')\cup K_B)\,\triangle\,E(T)\,,$$
where $\triangle$ is the symmetric difference. Using \cite[Lemma~A.4]{perkins2025typical} to relate $G'$ and $G''$, we have
\begin{equation}\label{eqn:c4-mat-init}
|\cF_{\Pi,T}|\leq\bbP\{G'\in\cF^\ast_n(C_4)\}\cdot\binom{|A|\cdot|B|}{m'}\leq\sqrt{2p}\,n\,\bbP\{G''\in\cF^\ast_n(C_4)\}\cdot\binom{|A|\cdot|B|}{m'}\,.
\end{equation}
In the remainder, we bound $\bbP\{G''\in\cF^\ast_n(C_4)\}$ by the probability $G''$ avoids a specific family $\cK$ of copies of $C_4$.

Let $M\subseteq T$ be a matching of size $k$ in $T[A]$ or $T[B]$. First assume $M\subseteq T[A]$. By \Cref{rem:c4-nondeg-bounds}, we have $|A|,|B|=\Theta_\gamma(n)$, and therefore for all $G\in\cF_{\Pi,T}$,
$$\min\{e(G^c[A]),e(G[B])\}\geq c n^2$$
for some constant $c=c(\gamma)>0$ since $e(T)\leq\epsilon n^2$ and $\epsilon>0$ is small enough. Using \cite[Lemma~A.3]{perkins2025typical}, this implies each of the graphs $G^c[A]$ and $G[B]=K_B\setminus T[B]$ has a matching on at least $cn$ edges. Fix a matching $M'\subseteq K_B\setminus T[B]$ with at least $cn$ edges. For each $e=\{x_1,x_2\}\in E(M)$ and $f=\{y_1,y_2\}\in E(M')$, write
$x_1<x_2$ and $y_1<y_2$, and let $K_{e,f}$ be the copy of $C_4$ on
$\{x_1,x_2,y_1,y_2\}$ with edge set $E(K_{e,f})=\{e,\,f,\,x_1y_1,\,x_2y_2\}$. Define
$$\cK:=\{K_{e,f}:e\in E(M),\,f\in E(M')\}$$
and note that $|\cK|=|M|\cdot|M'|\geq ckn$. For all $K\in\cK$, if we define the event $E_K:=\{G''[V(K)]\cong K\}$ then since the events $\{E_K:K\in\cK\}$ are independent and \eqref{eqn:c4-p-bounded} gives $p\in[\beta,1-\beta]$, we have
\begin{equation}\label{eqn:G''-janson-mat}
\begin{aligned}
\bbP\{G''\in\cF^\ast_n(C_4)\}\leq\bbP\!\left\{\bigcap_{K\in\cK}E_K^c\right\} &\leq \prod_{K\in\cK}\bbP\{E_K^c\} \\
&\leq (1-\beta^2(1-\beta)^2)^{|\cK|}\leq2^{-c|\cK|}\leq2^{-ckn}\,.
\end{aligned}
\end{equation}
By combining \eqref{eqn:c4-mat-init} and \eqref{eqn:G''-janson-mat}, this gives the bound in the case $V(M)\subseteq A$.

If instead $M\subseteq T[B]$ then the same argument applies with $G^c[A]$ in place of $G[B]$: fix a matching $M'\subseteq K_A\setminus T[A]$ of linear size and, for each $e\in E(M)$ and $f\in E(M')$, use the induced copy of $C_4$ on the four vertices whose non-edges are $e$ and $f$. Using \eqref{eqn:c4-p-bounded}, this again yields the $2^{-ckn}$ bound, possibly with a different constant $c=c(\gamma)>0$.
\end{proof}

For all $\Pi=(A,B)\in\sP$ define the sets of graphs
\[\arraycolsep=0.4mm
\begin{array}{ll}
\cF^\high_{\Pi,1} &:= \{G\in\cF_\Pi:\exists\,v\in A\,,\,d_{T(G)}(v)\geq\alpha|A|\}\,, \\[7pt]
\cF^\high_{\Pi,2} &:= \{G\in\cF_\Pi:\exists\,v\in B\,,\,d_{T(G)}(v)\geq\alpha|B|\}\,.
\end{array}\]

\begin{lemma}\label{lemma:FPi1}
There exists a constant $c=c(\gamma)>0$ such that for all $\Pi\in\sP$,
$$\abs{\cF^\high_{\Pi,1}}\leq2^{-c\alpha^2n^2}s_\Pi\,.$$
\end{lemma}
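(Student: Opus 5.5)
We will bound $\cF^\high_{\Pi,1}$ by fixing a high-degree vertex $v\in A$, finding a structural reason why such a vertex is costly, and then summing over all choices of $v$, the defect graph $T$, and the adjacency of $v$.

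Fix $v\in A$ with $d_{T(G)}(v)\geq\alpha|A|$. Since $v\in A$, its defects are edges of $G$ from $v$ into $A$, so $N:=N_G(v)\cap A$ has size at least $\alpha|A|$. First we use optimality of $\Pi(G)$. Moving $v$ to $B$ changes $b$ by $d^c_G(v,B)-d_G(v,A)\geq0$, so $d^c_G(v,B)\geq\alpha|A|$. Let $N'$ be the set of non-neighbours of $v$ in $B$.

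Now pick $x\in N$ and $y\in N'$. The pair $xy$ is a cross pair, while $vx\in E(G)$ and $vy\notin E(G)$. Take any $z\in B$ with $vz\in E(G)$, $zy\in E(G)$ and $zx\notin E(G)$. Then $\{v,x,z,y\}$ spans the edges $vx,vz,zy$ and the non-edges $vy,xz$. If moreover $xy\in E(G)$, this is an induced $C_4$ with cycle $v,x,y,z$. So the structure forces many cross edges between $N$ and $N'$ to be absent.

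To make this quantitative we pass to the random model of \Cref{lemma:c4-matching}. Fix $T$ and the row of $v$. Let $G''$ be $(H''\cup K_B)\triangle E(T)$, with each cross pair present independently with probability $p\in[\beta,1-\beta]$ by \eqref{eqn:c4-p-bounded}. Since $d_G(v,B)=|B|-|N'|$ and $|N'|\leq|B|/2$ by a similar optimality argument, the set $Z:=N_G(v)\cap B$ is linear in size.

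For $x\in N$, $y\in N'$ and $z\in Z$, consider the event $E_{x,y,z}=\{xy\in E,\ zx\notin E\}$. Together with $vx,vz,vy,zy$, which are deterministic up to $T$ and hold for most triples since $T[B]$ is sparse, this event creates an induced $C_4$. To get independence, pair things up disjointly: take a perfect-ish matching between $N$ and $N'$ of size $\Omega(\alpha n)$ together with distinct $z$'s. This gives $\Omega(\alpha n)$ disjoint quadruples, but that only yields a bound $2^{-c\alpha n}$, which is too weak.

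Instead use Janson (\Cref{thm:JansonsInequality}) on the family of all triples $(x,y,z)$. Then $\mu=\Theta(\alpha^2 n^3)$. Dependencies come from a shared pair $xy$ or $zx$, so $\Delta=O(n^4)$ and $\mu^2/\Delta=\Omega(\alpha^4n^2)$. Note, however, that the events require a non-edge and are therefore not increasing. A cleaner route is to note that $G''\in\cF^\ast_n(C_4)$ forces, for every $x\in N$, that $N_{G''}(x)\cap N'$ and $N^c_{G''}(x)\cap Z'$ cannot both be large, where $Z'\subseteq Z$ is the set of $z$ adjacent to most of $N'$. That is a large-deviation event for each $x$ of probability $2^{-\Omega(\alpha n)}$, independent across $x\in N$, giving $2^{-c\alpha^2n^2}$ overall.

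Finally we sum. There are $n$ choices for $v$. The number of $T$ with $e(T)\leq\epsilon n^2$ is at most $2^{H(\epsilon)n^2}$, but this is fatal unless it is compared properly. Following \Cref{lemma:almost-all-nondeg}, the shift $t$ changes the binomial by at most $2^{O(\epsilon)n^2}$ relative to $s_\Pi$. Since $\epsilon=\zeta^{20}$ and $\alpha=\zeta^4$, we have $H(\epsilon)+O(\epsilon)\ll\alpha^2$, so $2^{H(\epsilon)n^2+O(\epsilon n^2)+n}$ is absorbed into $2^{-c\alpha^2n^2/2}$.

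The main obstacle is the probabilistic step: obtaining an $\alpha^2n^2$ exponent despite the non-monotone $C_4$ events. I would try the per-$x$ large-deviation independence argument first, and fall back on Janson applied to the increasing event after conditioning on the rows of the $z$'s.
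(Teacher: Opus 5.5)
There is a genuine gap. Your whole construction needs $Z:=N_G(v)\cap B$ to have linear size, and the justification you give (``$|N'|\leq|B|/2$ by a similar optimality argument'') is not available. For $v\in A$ the only competing move is sending $v$ to $B$. That move yields the lower bound $d^c_G(v,B)\geq d_G(v,A)\geq\alpha|A|$, and nothing in the other direction.

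Graphs in which $v$ has $o(n)$, or even zero, neighbors in $B$ are therefore consistent with $\Pi(G)=\Pi$ and must be counted in $\cF^\high_{\Pi,1}$. An example is $v$ together with $N$ forming a star cut off from $B$; moving $v$ costs $|B|$ and saves only $|N|$. For such graphs your family of copies on $\{v,x,y,z\}$ is (nearly) empty. Neither the per-$x$ large-deviation argument nor the conditional Janson fallback then produces any penalty.

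In the regime $d_G(v,B)\geq c\alpha n$ your per-$x$ argument does work. The events for distinct $x\in N$ depend on disjoint rows, and each has probability $2^{-\Omega(\alpha n)}$. It just does not cover the remaining case.

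The missing idea is to use the internal structure of $N=N_T(v)=N_G(v,A)$ together with $N^c_G(v,B)$, which optimality does guarantee is large.

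\emph{Case $e_T(N)\geq|N|^2/4$.} Then $T[A]$ contains a matching of size $\Omega(\alpha n)$, and \Cref{lemma:c4-matching} already gives a factor $2^{-c\alpha n^2}$.

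\emph{Case $e_T(N)<|N|^2/4$.} Then $N$ contains $\Omega(|N|^2)$ non-edges $xy$ of $G$. Choose $Z\subseteq N^c_G(v,B)$ with $|Z|\geq|N|$, and condition on the row of $v$ by removing $K_{v,Z}$ from the random cross pairs. For $z\in Z$, the set $\{v,x,z,y\}$ spans an induced $C_4$ exactly when $xz$ and $yz$ are both present. This event is increasing, so \Cref{thm:JansonsInequality} applies with $\mu\gtrsim|N|^2|Z|$ and $\Delta\lesssim|N|^3|Z|$. Hence $\mu^2/\Delta\gtrsim|N||Z|\geq c\alpha^2n^2$.

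This argument is uniform in $d_G(v,B)$. It removes the need for your case distinction on $|Z|$ and for the non-monotone events altogether. Your summation over $T$ and the absorption of $2^{H(\epsilon)n^2+O(\epsilon n^2)}$ is fine as written.
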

\begin{proof}
In this proof, $c=c(\gamma)>0$ is a constant that may change from line to line. Let $T\in\cT_\Pi$ and assume there is a vertex $v$ (canonically chosen for each $T$) such that $d_T(v)\geq\alpha|A|$. Let us denote $N:=N_T(v)$. We split into two cases.

\smallskip
\noindent\textbf{Case 1.}
First assume $e_T(N)\geq|N|^2/4$. Then using \cite[Lemma~A.3]{perkins2025typical}, $T[N]$ has a matching of size at least $|N|/32\geq c\alpha n$. Hence by \Cref{lemma:c4-matching},
\begin{align}
|\cF_{\Pi,T}| &\leq 2^{-c\alpha n^2}\binom{|A|\cdot|B|}{m-e(K_B)-e(T[A])+e(T[B])} \nonumber\\
&\leq 2^{-c\alpha n^2+c\epsilon n^2}\binom{|A|\cdot|B|}{m-e(K_B)}
=2^{-c\alpha n^2+c\epsilon n^2}s_\Pi \leq2^{-c\alpha^2n^2}s_\Pi\,, \label{eqn:fhipi1-case1-bd}
\end{align}
where in the second inequality we used \cite[Lemma~A.1(ii)]{perkins2025typical}.
\smallskip

\noindent\textbf{Case 2.} Now assume $e_T(N)\leq|N|^2/4$. For all $G\in\cF_{\Pi,T}$ and $v\in A$, the optimality of $\Pi$ implies $d_G^c(v,B)\geq d_G(v,A)$, so let $Z=Z(G)\subseteq B$ be a canonically chosen subset of size at least $d_G(v,A)$ such that $Z\subseteq N^c_G(v,B)$. Let us write $\cF_{\Pi,T,v,Z}$ for the set of all $G\in\cF_{\Pi,T}$ for which $v$ and $Z$ are exactly as chosen above.

Let $t:=e(T[A])-e(T[B])$ and $m':=m-e(K_B)-t$. Let $H'$ be the random graph on $V$ whose edge set is a uniformly random choice of $m'$ edges from $E(K_{A,B}\setminus K_{v,Z})$. Let $H''$ be the random graph on $V$ in which each edge $e\in E(K_{A,B}\setminus K_{v,Z})$ is included in $H''$ independently with probability $p:=\frac{m'}{|A|\cdot|B|-|Z|}$. Let $G'$ be the random graph $(H'\cup K_B)\,\triangle\,T$ and let $G''$ be the random graph $(H''\cup K_B)\,\triangle\,T$. By definition, we have $e(G')=m$. Using \cite[Lemma~A.4]{perkins2025typical} to relate $G'$ and $G''$, we have
\begin{align}
|\cF_{\Pi,T,v,Z}| &\leq \bbP\{G'\in\cF^\ast_n(C_4)\}\cdot\binom{|A|\cdot|B|-|Z|}{m'} \nonumber\\
&\leq \sqrt{2p}\,n\,\bbP\{G''\in\cF^\ast_n(C_4)\}\cdot\binom{|A|\cdot|B|-|Z|}{m'} \nonumber\\
&\leq \bbP\{G''\in\cF^\ast_n(C_4)\} \cdot 2^{c\epsilon n^2}\,s_\Pi \,,\label{eqn:G''-PiTvZ-bd}
\end{align}
where in the third inequality we used \cite[Lemma~A.1(i),(ii),(iii)]{perkins2025typical}.
We now bound $\bbP\{G''\in\cF^\ast_n(C_4)\}$ by the probability $G''$ avoids a specific set of copies of $C_4$. For each $xy\in E(T^c[N])$ and $z\in Z$, let $K_{xy,z}$ be the copy of $C_4$ on $\{v,x,y,z\}$ with edge set $\{vx,vy,xz,yz\}$. Define the set of copies
$$\cK:=\{K_{xy,z}:xy\in E(T^c[N])\,,\, z\in Z\}\,,$$
so that, using the choice of $Z$ and $\Pi\in\sP$,
$$|\cK|=e(T^c[N])\cdot|Z|\geq\left(\binom{|N|}{2}-\frac{|N|^2}{4}\right)|Z|\geq \frac{|N|^2|Z|}{8}\,.$$
If we define the event $E_K:=\{G''[V(K)]\cong K\}$ for all $K\in\cK$ then $G''\in\cF^\ast_n(C_4)$ implies $\bigcap_{K\in\cK}E_K^c$. Since distinct events $E_{K_1}$ and $E_{K_2}$ are dependent only if the corresponding copies share the same vertex in $Z$ and share one endpoint in $N$, we have, using the notation of \Cref{thm:JansonsInequality},
\begin{align*}
\mu &:= \sum_{K\in\cK}\bbP\{E_K\} \geq p^2\,|\cK| \geq p^2\cdot\frac{|N|^2|Z|}{8} \,, \\
\Delta &:= \sum_{K_1\sim K_2}\bbP\{E_{K_1}\cap E_{K_2}\}\leq p^3\cdot|N|^3\cdot|Z|\,,
\end{align*}
which implies
$$\frac{\mu^2}{\Delta}\geq \frac{p^4|\cK|^2}{p^3|N|^3|Z|}\geq c\frac{|N|^4|Z|^2}{|N|^3|Z|}\geq c\alpha^2n^2\,.$$
Applying \Cref{thm:JansonsInequality}, it follows that
\begin{equation}\label{eqn:G''-pitvz-janson}
\bbP\{G''\in\cF^\ast_n(C_4)\} \leq \bbP\!\left\{\bigcap_{K\in\cK}E_K^c\right\} \leq \exp\!\left(-\min\left\{\frac{\mu}{2}\,,\,\frac{\mu^2}{4\Delta}\right\}\right) \leq e^{-c\alpha^2n^2}\,.
\end{equation}

As in the proof of \Cref{lemma:almost-all-nondeg}, the number of possibilities for $T$ is at most $2^{H(\epsilon)n^2}$. Also, the number of possibilities for $v$ and $Z$ is at most $n2^n$. Hence, the result follows by combining \eqref{eqn:fhipi1-case1-bd}, \eqref{eqn:G''-PiTvZ-bd}, and \eqref{eqn:G''-pitvz-janson}, summing over $T\in\cT_\Pi$, $v\in V$, and $Z\subseteq V$, and taking $\epsilon>0$ sufficiently small.
\end{proof}

\begin{lemma}\label{lemma:FPi2}
There exists a constant $c=c(\gamma)>0$ such that for all $\Pi\in\sP$,
$$\abs{\cF^\high_{\Pi,2}}\leq2^{-c\alpha^3n^2}s_\Pi\,.$$
\end{lemma}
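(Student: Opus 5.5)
The plan is to mirror the proof of \Cref{lemma:FPi1} with the roles of $A$ and $B$ swapped, equivalently passing to complements. For $v\in B$ with $d_{T(G)}(v)\geq\alpha|B|$, the defect edges at $v$ are missing edges inside $B$ (non-neighbors of $v$ in $B$). So set $N:=N_T(v)\subseteq B$, which means $N\subseteq N^c_G(v,B)$. Fix $T\in\cT_\Pi$, and split according to the density of $T[N]$.

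\textbf{Case 1:} $e_T(N)\geq|N|^2/4$. Then \cite[Lemma~A.3]{perkins2025typical} gives a matching in $T[N]\subseteq T[B]$ of size at least $c\alpha n$. Applying \Cref{lemma:c4-matching} and \cite[Lemma~A.1(ii)]{perkins2025typical} exactly as in \eqref{eqn:fhipi1-case1-bd} yields $|\cF_{\Pi,T}|\leq 2^{-c\alpha n^2+c\epsilon n^2}s_\Pi$.

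\textbf{Case 2:} $e_T(N)\leq|N|^2/4$. Here $G[N]$ is dense: the pairs of $N$ outside $T$ are edges of $G$, so $T^c[N]$ has at least $|N|^2/8$ pairs. Optimality of $\Pi$ (moving $v$ from $B$ to $A$) gives $d_G(v,A)\geq d_G^c(v,B)\geq|N|$. Choose canonically $Z\subseteq N_G(v,A)$ with $|Z|\geq|N|$, and condition on $v$ and $Z$ by fixing the cross edges $vz$ ($z\in Z$) as present. The random model removes $K_{v,Z}$ from the random cross pairs, and the count changes by a factor $2^{c\epsilon n^2}$ via \cite[Lemma~A.1]{perkins2025typical}, as in \eqref{eqn:G''-PiTvZ-bd}.

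For the family of induced $C_4$'s, take $x,y\in N$ with $xy\in E(G)$ (that is, $xy\notin T$) and $z\in Z$. On $\{v,x,z,y\}$ the pairs $vz$ and $xy$ are edges, while $vx,vy$ are non-edges. An induced $C_4$ with cycle $v\!-\!z\!-\!x\!-\!y\!-\!v$ would need $vy$ to be an edge, so that labeling fails. Instead use the cycle $z\!-\!x\!-\!y\!-\!z'$? Take instead two vertices $z,z'\in Z$ with $zz'\notin E(G)$, which is the generic case since $Z\subseteq A$. Set $K:=$ the cycle $v\!-\!z\!-\!x\!-\!y\!-\!z'\!-\!v$; this has 5 vertices, so it is also unsuitable. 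The clean choice is the complementary structure: the non-edges of an induced $C_4$ form a perfect matching. So take $x\in N$, $z,z'\in Z$, and use the $4$-set $\{v,x,z,z'\}$. Its non-edges are $vx$ (fixed) and $zz'$ (inside $A$, generically non-adjacent and not in $T$). The required edges are $vz,vz'$ (fixed) and $xz,xz'$, both random cross pairs with probability in $[\beta,1-\beta]$. The family is $\cK:=\{K_{x,\{z,z'\}}\}$ with $x\in N$ and $zz'\in E(G^c[Z])$ outside $T$. Since $e(T)\leq\epsilon n^2$, this gives $|\cK|\geq c|N||Z|^2$. Then $\mu\geq p^2|\cK|$, and dependent pairs share $x$ and one of $z,z'$, so $\Delta\leq p^3|N||Z|^3$. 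Hence $\mu^2/\Delta\geq c|N||Z|\geq c\alpha^2n^2$. \Cref{thm:JansonsInequality} then gives $e^{-c\alpha^2n^2}$, and the Case 1 bound is stronger still.

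Finally, sum over at most $2^{H(\epsilon)n^2}$ choices of $T$ and $n2^n$ choices of $(v,Z)$, and take $\epsilon$ small relative to $\alpha=\epsilon^{1/5}$. Since the statement only claims $\alpha^3$, there is slack to absorb losses, for instance if only $|Z|\geq\alpha|N|$ is guaranteed after removing $T$-affected pairs. The main obstacle is Case 2: verifying that enough pairs $z,z'\in Z$ avoid $T$, that the non-edge $vx$ is genuinely forced, and that the constant factors from conditioning on $Z$ are controlled. If a direct count is tight, I would shrink $Z$ to a subset of size $\alpha|N|$ and accept the $\alpha^3$ exponent.
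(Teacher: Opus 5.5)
Your final argument is correct and is essentially the paper's: the paper also uses optimality of $\Pi$ to get a canonical $Z\subseteq N_G(v,A)$ with $|Z|\geq|N|$, fixes the edges of $K_{v,Z}$ in the random model, and counts induced $C_4$'s on $\{v,x,z,z'\}$ with $x\in N$, $zz'\notin T$, and random cross pairs $xz,xz'$. The differences are minor. Your Case~1 is unnecessary, since the Case~2 construction never uses $e_T(N)\leq|N|^2/4$. The paper also takes the pairs $zz'$ from a matching in $T^c[Z]$, which makes the events independent and avoids Janson, at the cost of the weaker exponent $\alpha^3$ instead of your $\alpha^2$.
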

\begin{proof}
Let $T\in\cT_\Pi$ and assume there is a vertex $v$ (canonically chosen for each $T$) such that $d_T(v)\geq\alpha|B|$. Let us denote $N:=N_T(v)$. For all $G\in\cF_{\Pi,T}$ and $v\in B$, the optimality of $\Pi$ implies $d_G(v,A)\geq d_G^c(v,B)$, so let $Z=Z(G)\subseteq A$ be a canonically chosen subset of size at least $d_G^c(v,B)$ such that $Z\subseteq N_G(v,A)$. Let us write $\cF_{\Pi,T,v,Z}$ for the set of all $G\in\cF_{\Pi,T}$ for which $v$ and $Z$ are exactly as chosen above.

Let $t:=e(T[A])-e(T[B])$ and $m':=m-e(K_B)-t-|Z|$. Let $H'$ be the random graph on $V$ whose edge set is a uniformly random choice of $m'$ edges from $E(K_{A,B}\setminus K_{v,Z})$. Let $H''$ be the random graph on $V$ in which each edge $e\in E(K_{A,B}\setminus K_{v,Z})$ is included in $H''$ independently with probability $p:=\frac{m'}{|A|\cdot|B|-|Z|}$. Let $G'$ be the random graph $(H'\cup K_B\cup K_{v,Z})\,\triangle\,T$ and let $G''$ be the random graph $(H''\cup K_B\cup K_{v,Z})\,\triangle\,T$. By definition, we have $e(G')=m$. Using \cite[Lemma~A.4]{perkins2025typical} to relate $G'$ and $G''$, we have
\begin{align}
|\cF_{\Pi,T,v,Z}| &\leq \bbP\{G'\in\cF^\ast_n(C_4)\}\cdot\binom{|A|\cdot|B|-|Z|}{m'} \nonumber\\
&\leq \sqrt{2p}\,n\,\bbP\{G''\in\cF^\ast_n(C_4)\}\cdot\binom{|A|\cdot|B|-|Z|}{m'} \nonumber\\
&\leq \bbP\{G''\in\cF^\ast_n(C_4)\} \cdot 2^{c\epsilon n^2}\,s_\Pi \,, \label{eqn:G''-PiTvZ-bd-2}
\end{align}
where in the third inequality we used \cite[Lemma~A.1(i),(ii),(iii)]{perkins2025typical}.
It remains to bound $\bbP\{G''\in\cF^\ast_n(C_4)\}$. Since $|Z|\geq|N|\geq\alpha|B|\geq c\alpha n$ and $e(T)\leq\epsilon n^2$, we have
$$e_{T^c}(Z)\geq\binom{|Z|}{2}-\epsilon n^2\geq c\alpha^2n^2$$
for every graph $G\in\cF_{\Pi,T}$. Hence by \cite[Lemma~A.3]{perkins2025typical}, there exists a matching $M=M(T)\subseteq T^c[Z]$ of size at least $c\alpha^2n$. For all $x\in N$ and $yz\in E(M)$, let $K_{yz,x}$ be the copy of $C_4$ on $\{v,x,y,z\}$ with edge set $\{vy,vz,xy,xz\}$. Define the set of copies
$$\cK:=\{K_{yz,x}:yz\in E(M),\,x\in N\}\,,$$
so that
$$|\cK|=e(M)\cdot |N|\geq c\alpha^3n^2\,.$$
If we write $E_K:=\{G''[V(K)]\cong K\}$ for all $K\in\cK$ then again $G''\in\cF^\ast_n(C_4)$ implies $\bigcap_{K\in\cK}E_K^c$. Moreover, the events $\{E_K:K\in\cK\}$ are independent since each of these events depends on a distinct pair of random cross edges. Using \Cref{rem:c4-nondeg-bounds},
\begin{align*}
\bbP\{G''\in\cF^\ast_n(C_4)\} &\leq \bbP\!\left\{\bigcap_{K\in\cK}E_K^c\right\}=\prod_{K\in\cK}\bbP\{E_K^c\} \leq (1-\beta^2)^{|\cK|} \leq 2^{-c|\cK|}\leq 2^{-c\alpha^3n^2}\,.
\end{align*}
As in \Cref{lemma:FPi1}, the result now follows from \eqref{eqn:G''-PiTvZ-bd-2} by summing over the possibilities for $T$, $v$, and $Z$.
\end{proof}

\begin{proof}[Proof of \Cref{thm:c4-main}]
In this proof, the constant $c>0$ depends on both $\gamma$ and $\epsilon$ and may change from line to line. For all $G\in\cF^\close$, if $\Pi(G)=(A,B)$ then let $M(G)\subseteq T(G)$ be a canonically-chosen maximum matching subject to either $V(M(G))\subseteq A$ or $V(M(G))\subseteq B$. For all $\Pi\in\sP$, $k\geq1$, and graphs $T$, let us define
\[\arraycolsep=0.4mm
\begin{array}{ll}
\cF^\mat_{\Pi,k} &:= \{G\in\cF_\Pi\setminus(\cF^\high_{\Pi,1}\cup\cF^\high_{\Pi,2}):e(M(G))=k\} \,, \\[8pt]
\cT^\mat_{\Pi,k} &:= \{T(G):G\in\cF^\mat_{\Pi,k}\} \,.
\end{array}\]
Using the definitions of all the sets in this section, we have
\begin{equation}\label{eqn:c4-union-bound}
\cF^\ast_{n,m}(C_4)\subseteq\bigcup_{\Pi\in\sP}\Bigg(\cF^\ast_\Pi\cup\bigcup_{k\geq1}\cF^\mat_{\Pi,k}\cup\cF^\high_{\Pi,1}\cup\cF^\high_{\Pi,2}\Bigg)\cup\cF^\degenerate\cup\cF^\far\,.
\end{equation}
For all $\Pi\in\sP$, $k\geq1$, and $T\in\cT^\mat_{\Pi,k}$, let $M=M(T)$ be a canonically chosen maximum matching of $T$ subject to $V(M)\subseteq A$ or $V(M)\subseteq B$. Since $M$ is maximum among matchings contained in $A$ or in $B$, both graphs $T[A]$ and $T[B]$ have matching number at most $k$, and hence each has a vertex cover of size at most $2k$. Using additionally that $\Delta(T)\leq\alpha n$, it follows that
$$e(T)\leq e(T[A])+e(T[B])\leq 4k\alpha n\,.$$
Using \Cref{lemma:c4-matching}, we calculate that
\begin{align*}
|\cF_{\Pi,T}| &\leq 2^{-ckn}\binom{|A|\cdot|B|}{m-e(K_B)-e(T[A])+e(T[B])} \\
&\leq 2^{-ckn+c\alpha kn}\binom{|A|\cdot|B|}{m-e(K_B)}
=2^{-ckn+c\alpha kn}s_\Pi \leq 2^{-ckn}s_\Pi \,,
\end{align*}
where in the second inequality we used \cite[Lemma~A.1(ii)]{perkins2025typical}.
Again using the fact that both $T[A]$ and $T[B]$ have vertex covers of size at most $2k$, we have
$$|\cT^\mat_{\Pi,k}|\leq\sum_{X\in\binom{V}{\leq4k}}\prod_{v\in X}\binom{n}{\leq\alpha n}\leq\binom{n}{\leq4k}2^{4k\cdot H(\alpha)n}\leq 2^{5H(\alpha)kn}\,,$$
where we used $\binom{n}{\leq\alpha n}\leq2^{H(\alpha)n}$.
Summing over $k\geq1$ and $T\in\cT^\mat_{\Pi,k}$, we obtain
\begin{equation}\label{eqn:c4-fmat-sum}
\begin{aligned}
\sum_{k\geq1}|\cF^\mat_{\Pi,k}|\leq\sum_{k\geq1}\sum_{T\in\cT^\mat_{\Pi,k}}|\cF_{\Pi,T}| &\leq \sum_{k\geq1}|\cT^\mat_{\Pi,k}|\cdot2^{-ckn}s_\Pi \\
&\leq s_\Pi\sum_{k\geq1}2^{-ckn+5H(\alpha)kn} \leq 2^{-cn}\,s_\Pi \,.
\end{aligned}
\end{equation}
By the same estimate as used in the proof of \Cref{lemma:almost-all-nondeg} with $T=0$, we have
\begin{equation*}
\sum_{\Pi\notin\sP}\binom{|A|\cdot|B|}{m-e(K_B)}\leq 2^{-cn^2}S_{n,m}\,,
\end{equation*}
so using the fact that all but at most a $2^{-cn}$ fraction of split graphs have a unique cover by a clique and independent set, it is standard to show
\begin{equation}\label{eqn:c4-sPi-sum}
\sum_{\Pi\in\sP}s_\Pi\leq(1+2^{-cn})\,S_{n,m}\,.
\end{equation}

Combining \eqref{eqn:c4-union-bound}, \eqref{eqn:c4-fmat-sum}, \eqref{eqn:c4-sPi-sum}, \Cref{lemma:c4-rough-struc,lemma:almost-all-nondeg,lemma:FPi1,lemma:FPi2}, and using $\sum_{\Pi\in\sP}|\cF^\ast_\Pi|\leq S_{n,m}$, we calculate
\begin{align*}
N^\ast_{n,m}(C_4) &\leq \sum_{\Pi\in\sP}\Bigg(|\cF^\ast_\Pi|+\sum_{k\geq1}|\cF^\mat_{\Pi,k}|+|\cF^\high_{\Pi,1}|+|\cF^\high_{\Pi,2}|\Bigg)+\abs{\cF^\degenerate}+\abs{\cF^\far} \\
&\leq S_{n,m} + \sum_{\Pi\in\sP}\Big(2^{-cn}+2^{-c\alpha^2n^2}+2^{-c\alpha^3n^2}\Big)s_\Pi +2\cdot2^{-cn^2}\,N^\ast_{n,m}(C_4) \\
&\leq S_{n,m} + \Big(2^{-cn}+2^{-c\alpha^2n^2}+2^{-c\alpha^3n^2}\Big)(1+2^{-cn})\,S_{n,m} + 2^{-cn^2}\,N^\ast_{n,m}(C_4) \\
&\leq (1+2^{-cn})\,S_{n,m}+2^{-cn^2}\,N^\ast_{n,m}(C_4) \,.
\end{align*}
It follows that $N^\ast_{n,m}(C_4)=(1+O(2^{-cn}))\cdot S_{n,m}$, proving the theorem.

To deduce \eqref{eqn:c4-entropy-main}, for each $b\in\{0,\dots,n\}$ let $S_{n,m}(b):=\binom{n}{b}\binom{b(n-b)}{m-\binom{b}{2}}$, where the second binomial coefficient is defined as $0$ if its lower entry does not lie in $[0,b(n-b)]$. We have
$$\max_{0\leq b\leq n}\binom{b(n-b)}{m-\binom{b}{2}}\leq S_{n,m}\leq\sum_{b=0}^n S_{n,m}(b)\,,$$
and \eqref{eqn:c4-entropy-main} follows by using Stirling's formula, $m\sim\gamma\binom{n}{2}$, and $N^\ast_{n,m}(C_4)=(1+o(1))\cdot S_{n,m}$.
\end{proof}

\end{document}